\def\subsubsection{\@startsection{subsubsection}{3}%
  \z@\z@{-\fontdimen2\font}%
  {\normalfont\bfseries}}
\def\paragraph{\@startsection{paragraph}{4}%
  \z@\z@{-\fontdimen2\font}%
  {\normalfont\bfseries}}
\newcommand{\proofstep}[1]{%
  \par
  \addvspace{\medskipamount}
  \textit{#1\@addpunct{.}}\enspace\ignorespaces
}
\renewcommand{\d}{\ensuremath{\mathrm{d}}}
\newcommand{\R}{\ensuremath{\mathbb{R}}}
\newcommand{\N}{\ensuremath{\mathbb{N}}}
\newcommand{\NN}{\ensuremath{\mathcal N}}
\newcommand{\vertiii}[1]{{\left\lvert\kern-0.25ex\left\lvert\kern-0.25ex\left\lvert #1
    \right\rvert\kern-0.25ex\right\rvert\kern-0.25ex\right\rvert}}
\renewcommand{\d}{\ensuremath{{\rm d}}}
\newcommand{\floor}[1]{\left\lfloor#1\right\rfloor}
\newcommand{\ignore}[1]{}
\newtheorem{theorem}{Theorem}[section]
\newtheorem{definition}[theorem]{Definition}
\newtheorem{proposition}[theorem]{Proposition}
\newtheorem{lemma}[theorem]{Lemma}
\newtheorem{corollary}[theorem]{Corollary}
\newtheorem{assumptions}[theorem]{Assumptions}
\numberwithin{equation}{section}
\newcommand{\skliaustask}{\left\lvert\kern-0.25ex\left\lvert\kern-0.25ex\left\lvert}
\newcommand{\skliaustasd}{\right\rvert\kern-0.25ex\right\rvert\kern-0.25ex\right\rvert}
\newcommand{\reftext}[1]{#1}
\newcommand{\eqncr}{\\}
\newcommand{\noxml}[1]{#1}
\newcommand{\mathbh}[1]{\mathbbm{#1}}
\newcommand{\nobreakpostdisplay}{}
\newcommand{\sep}{, }
\begin{document}
\title[The Navier-slip thin-film equation for 3D fluid films]{The Navier-slip thin-film equation for 3D fluid films: existence and uniqueness}
\keywords{free boundary\sep classical solutions\sep moving contact line\sep fourth-order degenerate-parabolic equations\sep lubrication approximation\sep Navier slip}
\subjclass[2010]{35R35\sep 35A09\sep 35K65\sep 35K25\sep 35K55\sep 76A20\sep 76D08}
\thanks{The authors thank Slim Ibrahim, Herbert Koch, Nader Masmoudi, J\"urgen Saal, and Christian Seis for discussions. MVG received funding from the Fields Institute for Research in Mathematical Sciences in Toronto, the University of Michigan at Ann Arbor, the Max Planck Institute for Mathematics in the Sciences in Leipzig, as well as from the US National Science Foundation (grant NSF DMS-1054115) and the Deutsche Forschungsgemeinschaft (grant GN 109/1-1). MP received funding from the Fields Institute for Research in Mathematical Sciences in Toronto, from an FSMP fellowship, and from an EPDI fellowship, at different stages of the preparation of this work. MVG is grateful to the Hausdorff Center for Mathematics in Bonn for its kind hospitality. MP appreciates the kind hospitality of the Technical University of Munich, of ETH Z\"urich, and of the ICERM at Brown University. The authors are grateful to the organizers of the Fields Institute 2014 Thematic Program on \emph{Variational Problems in Physics, Economics and Geometry}, where this research was initiated.}
\date{\today}
\author[Manuel V.~Gnann]{Manuel~V.~Gnann}
\address[Manuel V.~Gnann]{Institute for Applied Mathematics, Faculty of Mathematics and Computer Sciences,
Heidelberg University, Im Neuenheimer Feld 205, 69120 Heidelberg, Germany}
\email{gnann@ma.tum.de}
\author[Mircea Petrache]{Mircea~Petrache}
\address[Mircea Petrache]{Pontificia Catolica Universidad de Chile, Avda. Vicuna Mackenna 4860, 6904441 Santiago, Chile}
\email{decostruttivismo@gmail.com}
\begin{abstract}
We consider the thin-film equation $\partial_t h + \nabla \cdot \left(h^2 \nabla \Delta h\right) = 0$ in physical space dimensions (i.e., one dimension in time $t$ and two lateral dimensions with $h$ denoting the height of the film in the third spatial dimension), which corresponds to the lubrication approximation of the Navier-Stokes equations of a three-dimensional viscous thin fluid film with Navier-slip at the substrate. This equation can have a free boundary (the contact line), moving with finite speed, at which we assume a zero contact angle condition (complete-wetting regime). Previous results have focused on the $1+1$-dimensional version, where it has been found that solutions are not smooth as a function of the distance to the free boundary. In particular, a well-posedness and regularity theory is more intricate than for the second-order counterpart, the porous-medium equation, or the thin-film equation with linear mobility (corresponding to Darcy dynamics in the Hele-Shaw cell). Here, we prove existence and uniqueness of classical solutions that are perturbations of an asymptotically stable traveling-wave profile. This leads to control on the free boundary and in particular its velocity.
\end{abstract}
\maketitle
%

\bigskip

\tableofcontents
\section{The setting}
\subsection{Formulation as a free-boundary problem}

We study the thin-film equation
\begin{subequations}\label{tfe_free}
\begin{equation}\label{tfe_higher}
\partial_t h + \nabla\cdot\left(h^2 \, \nabla\Delta h\right) = 0
\quad\mbox{for } \, t > 0 \, \mbox{ and } \, (y,z) \in\{h > 0\}
\end{equation}
in $1+2$ dimensions. Here, $t$ denotes the time variable
and $(y,z) \in \mathbb{R}^2$ the base point of the fluid film
with height $h = h(t,y,z)$
(cf.~\reftext{Fig.~\ref{fig:3dliquid}}).

\begin{figure}[ht]
\centering
\begin{tikzpicture}[scale=1]
\path[fill=lightblue] (1.5,7) to [out=0,in=195] (8.5,10.2) -- (8.5,7) -- (1.5,7);
\path[fill=lightgray] (-1,7) -- (8.5,7) -- (8.5,6.3) -- (-1,6.3);

\draw [very thick,->] (-1,7) -- (8.3,7);
\draw [very thick,->] (0,6) -- (0,11);

\draw[very thick,blue] (1.5,7) to [out=0,in=195] (8.5,10.2);

\draw [gray,dashed] (5.15,6.8) -- (5.15,8.4);
\draw [gray,dashed] (0,8.4) -- (5.15,8.4);
\draw [thick,blue,dashed,<->] (0,6.8) -- (5.15,6.8);
\draw [blue] (2.5,6.8) node[anchor=north] {$z$};


\draw (2,9.5) node[anchor=east] {gas};

\draw [blue] (7.7,8.5) node[anchor=east] {liquid};

\draw [thick,violet,->] (1.8,7.6) -- (1.5,7);
\draw [violet] (1.8,7.6) node[anchor=south] {triple junction};

\draw (0,10.5) node[anchor=east] {film height $h$};
\draw [blue] (0,8.4) node[anchor=east] {$h(t,y,z)$};
\draw (7.7,7) node[anchor=north] {$z$};

%
%

\draw [very thick,red,dashed, <-] 
(-1.5,3) 
to [out=135, in=-90] (-2.5,5)
to [out=90, in=-135] (-1.5,7)
;

\path[fill=lightblue] 
(1,-1) 
to [out=90,in=-90] (1.2,0.5)
to [out=90,in=-90] (1,1.5)
to [out=90,in=-90] (1.5,3)
to [out=90,in=-90] (1,5)
-- (8.5,5) -- (8.5,-1);
\path[fill=lightgray] 
(1,-1) 
to [out=90,in=-90] (1.2,0.5)
to [out=90,in=-90] (1,1.5)
to [out=90,in=-90] (1.5,3)
to [out=90,in=-90] (1,5)
-- (-1,5) -- (-1,-1);

\draw [very thick,->] (-1,0) -- (8.3,0);
\draw [very thick] (0,-1) -- (0,.5);
\draw [very thick,->] (0,1.7) -- (0,4.8);

\draw[very thick,violet] (1,-1) 
to [out=90,in=-90] (1.2,0.5)
to [out=90,in=-90] (1,1.5)
to [out=90,in=-90] (1.5,3)
to [out=90,in=-90] (1,5);

%

\draw [very thick,red,dashed] (-1,3) -- (8.5,3);

\draw (-.1,1.7) node[anchor=north] {$\begin{array}{c}\text{unwetted}\\ \text{region}\end{array}$};

\draw [blue] (3,1) node[anchor=east] {liquid};

\draw [thick,violet,->] (3,2) -- (1.5,2.5);
\draw [violet] (3,2) node[anchor=west] {triple junction};

\draw (0,4.5) node[anchor=east] {$y$};

\draw (7.7,0) node[anchor=north] {$z$};

\end{tikzpicture}
\caption{Schematic of a liquid thin film in the plane $y,z$, and in the plane $z,h$ at fixed coordinate $y$.}
\label{fig:3dliquid}
\end{figure}
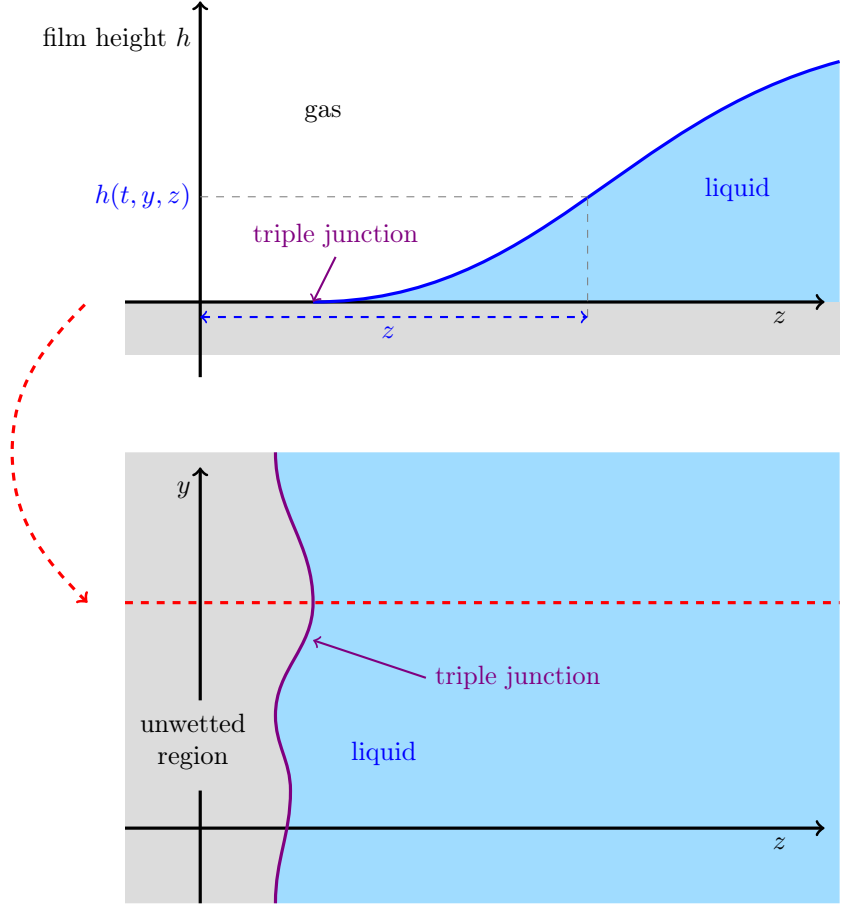

The differential operators in \reftext{\eqref{tfe_higher}} read
\begin{equation*}
\nabla:=
\begin{pmatrix} \partial_y \\ \partial_z
\end{pmatrix}
\quad\mbox{and} \quad\Delta:= \partial_y^2 + \partial_z^2.
\end{equation*}
It is known that \reftext{\eqref{tfe_free}} allows for solutions evolving with
finite speed of propagation (cf. \cite{b.1996,b.1996.2,hs.1998,g.2002,g.2003}), that is, a free
boundary $\partial\{h > 0\}$ (the \emph{contact line}) will appear.
This is a moving line in three-dimensional physical space, which
evolves in time and forms the \emph{triple junction} between the three
phases liquid, gas, and solid. Here, we assume a zero contact angle at
the contact line (the angle between the interfaces liquid-gas and
liquid-solid), that is,
%
%
\begin{equation}\label{contact1}
\left(\nu, \nabla h\right) = 0 \quad\mbox{at } \, \partial\{h >
0\},
\end{equation}
where $\nu= \nu(t,y,z) \in\mathbb{R}^2$ denotes the (inner) unit
normal of
the free boundary $\partial\{h > 0\}$. Reformulating equation~\reftext{\eqref{tfe_higher}} in divergence form as
\begin{equation}\label{def_V}
\partial_t h + \nabla\cdot\left(h V\right) = 0 \quad\mbox{for }
\, t
> 0 \, \mbox{ and } \, (y,z) \in\{h > 0\},
\end{equation}
we read off the transport velocity $V = h \nabla\Delta h$ of the film
height, for which the boundary value on $\partial\{h > 0\}$ has to
equal the velocity $V_0=V_{|\partial\{h > 0\}}$ of the free boundary. Thus we impose
another condition at the contact line, which reads
\begin{equation}\label{contact2}
h \nabla\Delta h = V_0 \quad\mbox{at } \, \partial\{h > 0\}.
\end{equation}
\end{subequations}
We note that the problem is invariant under changing the
function $h$ by a lateral translation. Hence, our subsequent
results are valid up to translating $h$ appropriately.

\subsection{The thin-film equation with general mobility}\label{sec:tfe_gen}

In fact, equation~\reftext{\eqref{tfe_higher}} is a special case
of the general thin-film equation
\begin{equation}\label{tfe_general}
\partial_t h + \nabla\cdot\left(h^n \nabla\Delta h\right) = 0
\quad
\mbox{in} \quad\{h > 0\},
\end{equation}
where $n \in(0,3)$ determines the mobility $h^n$ entering \reftext{\eqref{tfe_general}}. Note that the upper and lower bounds on $n$ are due to
the following observations:
\begin{enumerate}[(i)]
\item[(i)] For $n \le0$, equation~\reftext{\eqref{tfe_general}} is not degenerate
anymore and therefore has infinite speed of propagation. Secondly,
non-negativity of solutions (cf. \cite{bf.1990}) is not ensured
anymore. Therefore, in these cases solutions to \reftext{\eqref{tfe_general}}
bear no physical interpretation as fluid films.
\item[(ii)] For $n = 3$, equation~\reftext{\eqref{tfe_general}} is the lubrication
approximation of the Navier--Stokes equations with no slip at the
substrate. Then the variables $h$ and $z$ in the spatial part of \reftext{\eqref{tfe_general}} have a critical scaling leading to a singularity of $h$
at the contact line. Furthermore, the triple junction is fixed for all
times as a moving contact line would lead to infinite dissipation
(cf. \cite{dd.1974,hs.1971,m.1964}). As the degeneracy increases
with $n$, the free boundary cannot move for all $n \ge3$, that is, a
physical interpretation ceases to be valid.
\end{enumerate}
In fact, the integer cases $n = 1$ and $n = 2$ carry most physics. The
case of linear mobility $n = 1$ can be interpreted as the lubrication
approximation of the Darcy flow in the Hele-Shaw cell
(cf. \cite{go.2003,km.2013,km.2015}). In this case the fluid is trapped
between two narrow walls, so that the flow field is in good
approximation laminar and parallel to the walls and the dependence on
the coordinate perpendicular to the walls is a Poiseuille-type
parabola. Hence, it appears that the $1+1$-dimensional case is the
physically most relevant one for linear mobility $n = 1$.

In a physical $1+2$-dimensional lubrication model, linear mobilities
can be
reproduced by assuming a nonlinear slip condition in which the slip length
diverges like $h^{-1}$ as $h \searrow0$ (see~Greenspan~\cite{g.1978}).
However, in this case it may be
argued that a more natural choice (in line with the original work of
Navier \cite{n.1823}, where the slip condition together with the Navier--Stokes
equations has been proposed first) is to use a quadratic mobility $n = 2$,
corresponding to an $h$-independent slip length. This is in fact the
lubrication approximation of the Navier--Stokes equations with Navier
slip at
the substrate. That is the case on which we chose to concentrate in the
present work.

Note that the dissipation functional in the Navier-slip case is given
by the (scaled) sum of the dissipation functionals for inner friction
only (no-slip case) and purely outer friction (Darcy), so that Navier
slip merely is a balance of these two contributions. We refer to the
reviews by
Bonn, Eggers, Indekeu, Meunier,
and Rolley \cite{beimr.2009}, de Gennes \cite{d.1985}, and Oron, Davis, and
Bankoff \cite{odb.1997}
for (non-rigorous)
derivations of \reftext{\eqref{tfe_higher}} starting from the Navier--Stokes
equations with Navier slip at the liquid-solid interface and to
J{\"a}ger and Mikeli{\'c} \cite{jm.2001} for a rigorous derivation of the Navier-slip condition due to
a rough liquid-solid interface.

\subsection{Weak solutions to the thin-film equation}

We emphasize that a well-established global existence theory of weak
solutions to \reftext{\eqref{tfe_general}} has been developed, starting with
Bernis and Friedman \cite{bf.1990} and later on upgraded to the stronger \emph
{entropy-weak} solutions by
Beretta, Bertsch, and Dal Passo \cite{bbd.1995}, and independently by
Bertozzi and Pugh \cite{bp.1996}, which also exist
in higher dimensions
(cf. \cite{dgg.1998,g.2004.2}). An alternative
gradient-flow approach leading to \emph{generalized minimizing-movement
solutions} (that are weak solutions as well) is due to
Loibl, Matthes, and Zinsl \cite{lmz.2016},
Matthes, McCann, and Savar{\'e} \cite{mms.2009}, and Otto \cite{o.1998}.
Qualitative properties of weak solutions
have been the subject of for instance the works of
Bernis \cite{b.1996.2,b.1996}, Gr{\"u}n \cite{g.2002,g.2003}, and
Hulshof and Shishkov \cite{hs.1998},
where finite speed of
propagation has been proved,
Bertsch, {Dal Passo}, Garcke, and  Gr{\"u}n \cite{bdgg.1998},
{Dal Passo}, Giacomelli, and Gr{\"u}n \cite{dgg.2001},
Fischer \cite{f.2013,f.2014,f.2016},
Giacomelli and Gr{\"u}n \cite{g.2006}, and
Gr{\"u}n \cite{g.2004}, where
waiting-time phenomena have been considered, or
Carlen and Ulusoy \cite{cu.2014},
Carrillo and Toscani \cite{ct.2002}, and
Matthes, McCann, and Savar{\'e} \cite{mms.2009}, where the intermediate asymptotics of \reftext{\eqref{tfe_general}} have been investigated. Partial-wetting boundary
conditions have been considered by
Bertsch, Giacomelli, and Karali \cite{bgk.2005},
Esselborn \cite{e.2016},
Mellet \cite{m.2015}, and
Otto \cite{o.1998}. We refer to
Ansini and Giacomelli \cite{ag.2004},
Bertozzi \cite{b.1998}, and
Giacomelli and Shishkov \cite{gs.2005} for detailed reviews.

Nevertheless, unlike in the porous-medium case \reftext{\eqref{pme}}, this theory
does neither give uniqueness of solutions nor enough control at the
free boundary to give an expression like \reftext{\eqref{contact2}} a classical
meaning. Furthermore, the regularity of the free boundary as a
sub-manifold of $(0,\infty) \times\mathbb{R}^2$ appears to be inaccessible
within this theory. This explains the interest in a well-posedness and
regularity theory of classical solutions to \reftext{\eqref{tfe_free}}.

\subsection{Well-posedness and classical solutions}\label{ssec:wellposed}

Well-posedness and regularity for zero contact angles in the Hele-Shaw
case (equation~\reftext{\eqref{tfe_general}} with $n = 1$) have been treated by
Bringmann, Giacomelli, Kn{\"u}pfer, and Otto \cite{bgko.2016,gko.2008},
Giacomelli and Kn{\"u}pfer \cite{gk.2010},
the first author \cite{g.2015}, and
the first author, Ibrahim, and Masmoudi \cite{gim.2017} in $1+1$ dimensions
and by
John \cite{j.2015} and Seis \cite{s.2017} in any number of spatial dimensions while
nonzero contact angles have been the subject of the works of
Kn{\"u}pfer and Masmoudi \cite{km.2013,km.2015} for $1+1$ dimensions only. The remarkable result is
that solutions are smooth functions in the distance to the free
boundary only for the \emph{linear-mobility} case, i.e., for the case
$n = 1$. The reason for this feature is the strong analogy to the
porous-medium equation
%
%
\begin{equation}\label{pme}
\partial_t h - \Delta h^m = 0 \quad\mbox{in} \quad\{h > 0\},
\end{equation}
where $m > 1$, which is also degenerate-parabolic, but additionally
satisfies a comparison principle. In fact, the spatial part of the
linearizations in the works of
Giacomelli, Kn{\"u}pfer, and Otto \cite{gko.2008},
the first author \cite{g.2015}, John \cite{j.2015}, and Seis \cite{s.2017} is
nothing else but the square (or a second-order polynomial) of the spatial part of the corresponding
linearization of \reftext{\eqref{pme}}. For \reftext{\eqref{pme}} a well-established
well-posedness and regularity theory (giving smooth solutions) is
available (cf. \cite{a.1988,dh.1998,k.1999,k.2016,s.2015}), which in
fact transfers to \reftext{\eqref{tfe_general}} with $n = 1$ and zero contact
angle on $\partial\{h > 0\}$. In what follows, we give evidence for the
insight that the strong analogy to the porous-medium equation \reftext{\eqref{pme}} is lost when passing to general mobilities $n \in(0,3) \setminus
\{1\}$, in particular Navier slip with $n=2$:

Well-posedness and regularity for the $1+1$-dimensional counterpart of \reftext{\eqref{tfe_free}}, i.e.,
%
%
\begin{equation}\label{tfe_1d}
\partial_t h + \partial_z \left(h^2 \partial_z^3 h\right) = 0
\quad
\mbox{for} \quad t > 0 \quad\mbox{and} \quad z \in\{h > 0\},
\end{equation}
subject to complete-wetting boundary conditions, have already been
obtained by
Giacomelli, the first author, Kn{\"u}pfer, and Otto \cite{ggko.2014} and subsequently improved by the first author \cite{g.2016}.
There, perturbations of traveling waves
%
%
\begin{equation}\label{tw}
h =
\begin{cases} x^{\frac3 2} & \mbox{for } \, x > 0, \\ 0 & \mbox{for }
\, x \le0
\end{cases}
\quad\mbox{with} \quad x := z - \frac3 8 t,
\end{equation}
have been investigated (the wave velocity is without loss of generality
normalized to $\frac3 8$ and the contact point is without loss of
generality at $z = 0$ at initial time $t = 0$). The result \cite{ggko.2014} establishes well-posedness of solutions for sufficiently
regular initial data and a partial regularity result, which has been
upgraded in \cite{g.2016} to obtain full regularity in form of
%
%
\begin{align}\label{exp_1d}
h = x^{\frac3 2} \left(1 + \sum_{\beta\le j + \beta k < N} a_{jk}(t)
x^{j+\beta k} + O\left(x^N\right)\right) \quad\mbox{as } \, x
\searrow
0, \quad\mbox{where} \quad x := z - Z_0(t),
\end{align}
the variable $Z_0(t)$ denotes the contact point, and where the number
%
%
\begin{equation}\label{beta}
\beta:= \frac{\sqrt{13}-1}{4} \in \left(\frac 1 2, 1\right)
\end{equation}
has been introduced. Here, the $a_{jk}(t)$ are (at least) continuous
functions of time $t$ and the order $N \in\mathbb{N}$ of
expansion~\reftext{\eqref{exp_1d}} can be chosen arbitrarily large. Note that
expansion~\reftext{\eqref{exp_1d}} is in line with the findings of
Giacomelli, the first author, and Otto \cite{ggo.2013}, where it has
been found that source-type self-similar solutions $h(t,z) = t^{-\frac
1 6} H(x)$ with $x := t^{-\frac1 6} z$, have the form
\begin{equation*}
H(x) = C  {\left\lvert x \pm1 \right\rvert}^{\frac3 2}
\left(1 + v\left( {\left\lvert x \pm1 \right\rvert
}, {\left\lvert x \pm1 \right\rvert}^\beta\right
)\right) \quad\mbox{as} \quad(-1,1)
\owns x \to\mp1,
\end{equation*}
where $v = v(x_1,x_2)$ is analytic around $(x_1,x_2) = (0,0)$ with
$v(0,0) = 0$ (cf.
\cite[Appendix, for the generalization to higher dimensions]{bgk.2016}).
The analysis in \cite{g.2016} heavily uses the
fact that spatial and temporal regularity can only be treated jointly,
so that expansion~\reftext{\eqref{exp_1d}} also implies higher regularity in
time, in particular of the contact point $Z_0 = Z_0(t)$. This was also
used in the work of Kn{\"u}pfer \cite{k.2011}, where
partial-wetting boundary conditions
(i.e., a fixed nonzero contact angle at the triple junction)
has been treated. For the case of general mobilities, we refer to the works of
Belgacem, the first author, and Kuehn \cite{bgk.2016}, as well as
Giacomelli, the first author, and Otto \cite{ggo.2013}
for source-type self-similar solutions with
complete- and partial-wetting boundary conditions (partially also in
higher dimensions), to Kn{\"u}pfer \cite{k.2015,k.2017} for general solutions in
the $1+1$-dimensional case and partial-wetting boundary conditions, and
to Degtyarev \cite{d.2017} for general solutions in higher dimensions with
partial-wetting boundary conditions, where similar features can be observed.

This work is concerned with developing a well-posedness and stability
analysis for the Navier-slip thin-film equation with complete-wetting
boundary conditions in physical dimensions $1+2$. We are aware of only
three works (which have been mentioned afore) establishing
well-posedness and regularity of the free boundary in more than $1+1$
dimensions. The first two, John \cite{j.2015} and Seis \cite{s.2017}, treat the linear
mobility case in arbitrary dimensions, showing that level sets (and
therefore also the free boundary) are analytic manifolds of time and
space. The third paper due to Degtyarev \cite{d.2017} treats the quadratic
mobility case in arbitrary dimensions under partial wetting
conditions, i.e., condition \reftext{\eqref{contact1}} is replaced by the less
degenerate condition $\left(\nu, \nabla h\right) = g$ for a function
satisfying point-wise bounds $g \ge \varepsilon > 0$. In this case local-in-time
existence of a unique smooth solution (implying smoothness of the free
boundary as well) has been shown.

\subsection{Link to fractional Laplacian free boundary problems} \label{sec:fraclap}

We further point out some parallels to other free boundary problems,
such as the obstacle problems for the Laplacian and the extension method of
Caffarelli and Silvestre \cite{cs.2007} for the fractional Laplacian.

To start with, compare the operator $\nabla\cdot\left(h^2 \, \nabla
\Delta h\right)$ from \reftext{\eqref{tfe_higher}} to the operators $\nabla
\cdot
\left( {\left\lvert y \right\rvert}^\gamma\nabla
h\right)$ or $\nabla\cdot\left( {\left\lvert y \right
\rvert}^\gamma\nabla\Delta h\right)$, appearing in the formulation
of the
fractional Laplacian $(-\Delta)^s$, for \mbox{$0<s<1$}, and for $1<s<2$
(described e.g. in Yang \cite{y.2013}), respectively. We may interpret
$ {\left\lvert y \right\rvert}^\gamma$ as a simplified
version of the more nonlinear
behavior of the height $h^2$ near the free boundary.

In the case of the obstacle problem for the fractional Laplacian with
$0<s<1$, which is the most classical free boundary problem in this
case, it was found in Caffarelli, Salsa, and Silvestre \cite{css.2008}
and Caffarelli, Ros-Oton, and Serra \cite{crs.2017} that a
power-like expansion in terms of the distance to the free boundary
analogous to \reftext{\eqref{exp_1d}} is available at regular points. Similar
studies for higher order fractional Laplacians which are more closely
related to our problem, e.g. for $s=3/2$ and with a boundary condition
such as \reftext{\eqref{contact1}}, seem not to have been developed yet.

In Caffarelli and V{\'a}zquez \cite{cv.1995}, as well as
Serfaty and Serra \cite{ss.2017} the obstacle problem with an evolving free
boundary, motivated by statistical physics problems, was studied using
a hodograph transform adapted to that problem for the case of the
Laplacian, and we refer to Caffarelli and Vasseur \cite{cv.2010}, as well as
Kiselev, Nazarov, and Volberg \cite{knv.2007} for parabolic
versions of the fractional Laplacian evolution, with applications in
fluid dynamics. It would be interesting to push the link between our
problem and such higher-order fractional Laplacian problems further, in
the hope of gaining a more geometric understanding of equation \reftext{\eqref{tfe_free}}.

\subsection{Perturbations of traveling waves}\label{sec:perturb}

Due to the choice of complete-wetting boundary conditions (cf.~\reftext{\eqref{contact1}}), the generic situation is the one in which the thin fluid
film will ultimately cover the whole surface. Close to the contact
line, assumed here to be almost straight, one may model this situation
by a traveling wave solution to \reftext{\eqref{tfe_higher}}, which we assume for
convenience to move in the $z$-direction:
%
%
\begin{subequations}\label{tw_profile}
%
%
\begin{equation}\label{tw_profile_1}
h_\mathrm{TW}(t,y,z) := H(x), \quad\mbox{where} \quad x = z - V t
\end{equation}
and $H = H(x)$ is a fixed profile moving with velocity $V$. Using \reftext{\eqref{tw_profile}} in \reftext{\eqref{tfe_higher}} gives the fourth-order ordinary
differential equation (ODE)
\begin{equation*}
- V \frac{\mathrm{d} H}{\mathrm{d} x} + \frac{\mathrm{d}}{\mathrm
{d} x} \left(H^2 \frac{\mathrm{d}^3 H}{\mathrm{d}
x^3}\right) = 0 \quad\mbox{for} \quad x > 0.
\end{equation*}
With help of \reftext{\eqref{contact1}}, we obtain
$\frac{\mathrm{d}}{\mathrm{d} x}H(0)=0$,
and up to changing the variable $x$ by a translation
we have $\partial\{h > 0\}=\{z=Vt\}=\{x=0\}$ in this case,
which gives the condition $H(0)=0$.
Condition~\reftext{\eqref{contact2}} gives $\left(H \frac{\mathrm{d}^3
H}{\mathrm{d} x^3}\right
)(0) = V$ and by an easy integration we find
\begin{equation*}
H \frac{\mathrm{d}^3 H}{\mathrm{d} x^3} = V \quad\mbox{for} \quad
x > 0.
\end{equation*}
A traveling-wave profile $H$ with support $[0,\infty)$ is given by
%
%
\begin{equation}\label{tw_profile_2}
H(x) =
\begin{cases} x^{\frac3 2} & \mbox{ for } \; x > 0, \\ 0 & \mbox{ for
} \; x \le0,
\end{cases}
\end{equation}
\end{subequations}
where $V = - \frac3 8$ is the rescaled velocity of the wave.

%
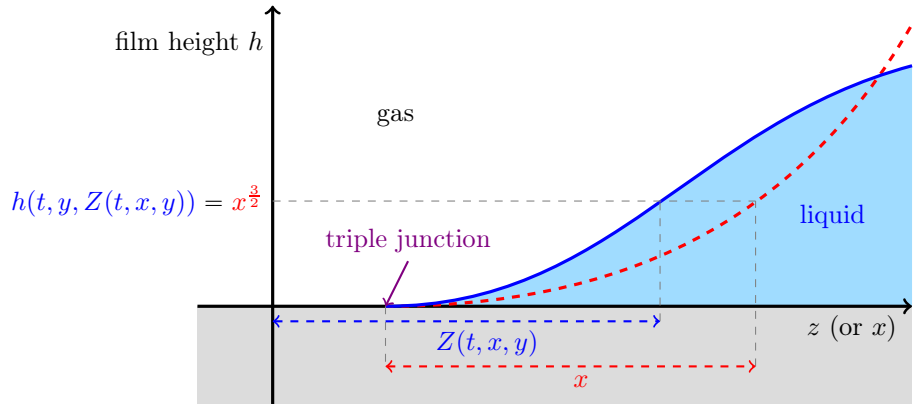
\begin{figure}[htp]
\centering
\begin{tikzpicture}[scale=1]
\path[fill=lightblue] (1.5,0) to [out=0,in=195] (8.5,3.2) -- (8.5,0) -- (1.5,0);
\path[fill=lightgray] (-1,0) -- (8.5,0) -- (8.5,-1.3) -- (-1,-1.3);

\draw [very thick,->] (-1,0) -- (8.5,0);
\draw [very thick,->] (0,-1.3) -- (0,4);

\draw[very thick,red,dashed] (1.5,0) to [out=0,in=240] (8.5,3.75);
\draw[very thick,blue] (1.5,0) to [out=0,in=195] (8.5,3.2);

\draw [gray,dashed] (5.15,-.2) -- (5.15,1.4);
\draw [gray,dashed] (6.42,-.8) -- (6.42,1.4);
\draw [gray,dashed] (1.5,0) -- (1.5,-.8);
\draw [gray,dashed] (0,1.4) -- (6.42,1.4);

\draw [thick,blue,dashed,<->] (0,-.2) -- (5.15,-.2);
\draw [blue] (2.85,-.2) node[anchor=north] {$Z(t,x,y)$};

\draw [thick,red,dashed,<->] (1.5,-.8) -- (6.42,-.8);
\draw [red] (4.1,-.8) node[anchor=north] {$x$};

\draw (0,1.4) node[anchor=east] {$\color{blue}h(t,y,Z(t,x,y))\color{black} = \color{red}x^{\frac 32}\color{black}$};

\draw (2,2.5) node[anchor=east] {gas};

\draw [blue] (8,1.2) node[anchor=east] {liquid};

\draw [thick,violet,->] (1.8,.6) -- (1.5,0);
\draw [violet] (1.8,.6) node[anchor=south] {triple junction};

\draw (0,3.5) node[anchor=east] {film height $h$};

\draw (7.7,0) node[anchor=north] {$z$ (or $x$)};

\end{tikzpicture}
\caption{Schematic of a liquid thin film and the von Mises transform \eqref{hodograph} at fixed coordinate $y$.}
\label{fig:hodograph}
\end{figure}

Our main result will establish well-posedness and stability of
perturbations of the traveling-wave profile \reftext{\eqref{tw_profile}}. As a
starting point, we transform equation~\reftext{\eqref{tfe_higher}} for $h$ to an
equation for the perturbation of $h_\mathrm{TW}$. Therefore, we define
new coordinates depending on the function $h$ and denoted by $(t,x,y)$,
which are related to the original coordinates $(t,y,z)$ via the
property that (cf. \reftext{Fig.~\ref{fig:hodograph}})
%
%
\begin{equation}\label{hodograph}
h\left(t,y,Z(t,x,y)\right) = x^{\frac3 2} \quad\mbox{for } \, t, x >
0 \quad\mbox{and} \quad y \in\mathbb{R}.
\end{equation}
The transformation interchanges dependent and independent variables and
we refer to it as the \emph{hodograph} or \emph{von Mises transform}. Since the active
coordinate is the variable $z$, under the assumption of a profile that
is strictly monotone in $z$ the transformation \reftext{\eqref{hodograph}} is
indeed well-defined, and we find that condition \reftext{\eqref{contact1}} is
automatically verified. Under this change of coordinates, the boundary
$\partial\{h > 0\}$ is transformed into the coordinate hyperplane $\{x
= 0\}$. Furthermore, in case of the traveling-wave solution
$h=h_\mathrm
{TW}$ given in \reftext{\eqref{tw_profile_1}} for $V=-\frac3 8$, we directly
find by comparison to \reftext{\eqref{tw_profile_2}} that \reftext{\eqref{hodograph}} is
satisfied by
%
%
\begin{equation}\label{ztw}
Z = Z_\mathrm{TW}(t,x,y) := x - \frac3 8 t.
\end{equation}

In Appendix~\ref{app:trafo_fix} we provide details on how the
free-boundary problem \reftext{\eqref{tfe_free}} transforms under the change of
coordinates \reftext{\eqref{hodograph}} into
%
%
\begin{equation}\label{tfe_transformed}
\begin{aligned}
& Z_t + F^{-1} \Big( D_y^2 - D_y G \left(D_x - \tfrac1 2\right) - G
D_y \left(D_x + \tfrac3 2\right) + G \left(D_x + \tfrac3 2\right) G
\left(D_x - \tfrac1 2\right) \\
& \qquad\qquad+ F \left(D_x + \tfrac3 2\right) F \left(D_x -
\tfrac12\right)\Big) \Big(D_y G - G \left(D_x + \tfrac1 2\right) G - F
\left
(D_x + \tfrac1 2\right) F\Big) = 0
\end{aligned}
\end{equation}
for $(t,x,y) \in(0,\infty)^2 \times\mathbb{R}$, where
%
%
\begin{equation}\label{def_fg}
F := Z_x^{-1} \qquad\mbox{and} \qquad G:= Z_x^{-1} Z_y
\end{equation}
and where we have introduced the operators
%
%
\begin{equation}\label{log_der}
D_x := x \partial_x = \partial_s \quad\mbox{with} \quad s := \ln x
\quad\mbox{and} \quad D_y := x \partial_y.
\end{equation}
Note that $D_x$ and $D_y$ do \emph{not} commute. For later purpose, we
also define
%
%
\begin{equation}\label{der_scale}
D := \left(D_x, D_y\right) \stackrel{\text{\reftext{\eqref{log_der}}}}{=} \left(x
\partial_x, x \partial_y\right) \quad\mbox{and} \quad D^\ell:=
D_y^{\ell_y}D_x^{\ell_x}, \quad\mbox{with} \quad\ell:= \left
(\ell
_x,\ell_y\right) \in\mathbb{N}_0^2,
\end{equation}
where the ordering of operators is crucial.

\subsection{The nonlinear Cauchy problem}

As a next step, we linearize equation~\reftext{\eqref{tfe_transformed}} around
the traveling-wave solution \reftext{\eqref{ztw}} by setting
%
%
\begin{equation}\label{def_v}
v := Z - Z_\mathrm{TW} \quad\mbox{with} \quad Z_\mathrm{TW}(t,x,y)
= x
- \frac3 8 t.
\end{equation}
Thus $Z_t = - \frac3 8 + v_t$ and by \reftext{\eqref{def_fg}} we observe that
\begin{equation*}
F^{-1} = 1+ v_x, \quad F = 1 - v_x + \mathrm{h.o.t.}, \quad\mbox
{and} \quad G =
v_y + \mathrm{h.o.t.},
\end{equation*}
where $\mathrm{h.o.t.}$ denotes terms of higher order (super-linear in
$\{v_x,
v_y\}$ or containing a term of the form $w \partial$, where $w \in\{
v_x, v_{y_j}\}$ and $\partial\in\{\partial_x,\partial_y\}$). We use
this in \reftext{\eqref{tfe_transformed}} and obtain
\begin{equation*}
D_y G - G \left(D_x + \tfrac1 2\right) G - F \left(D_x +
\tfrac12\right) F = - \frac1 2 + x^{-1} \left(D_x^2 + D_y^2\right) v +
\mathrm{h.o.t.}
\end{equation*}
Furthermore, we also have the operator identity
\begin{align*}
& D_y^2 - D_y G \left(D_x - \tfrac1 2\right) - G D_y \left(D_x +
\tfrac3 2\right) + G \left(D_x + \tfrac3 2\right) G \left(D_x -
\tfrac1 2\right) + F \left(D_x + \tfrac3 2\right) F \left(D_x -
\tfrac1 2\right) \\
& \quad= D_y^2 + \left(D_x + \tfrac3 2\right) \left(D_x - \tfrac12\right) + \frac1 2 x^{-1} \left(D_y^2 + D_x + 2\right) v + \mathrm{h.o.t.}
\end{align*}
Utilizing this in \reftext{\eqref{tfe_transformed}}, we obtain the \emph
{nonlinear Cauchy problem}
%
%
\begin{subequations}\label{nonlin_cauchy}
%
%
\begin{align}
x \partial_t v + q(D_x) v + D_y^2 r(D_x) v + D_y^4 v &= {\mathcal
N}(v) && \mbox
{for} \quad(t,x,y) \in(0,\infty)^2 \times\mathbb{R},\quad\label{eq_transformed}
\eqncr v_{|t = 0} &= v^{(0)} && \mbox{for} \quad(x,y) \in(0,\infty) \times\mathbb{R},
\end{align}
\end{subequations}
for given initial data $v^{(0)} = v^{(0)}(x,y) : \, (0,\infty) \times
\mathbb{R}\to\mathbb{R}$, where we have introduced the polynomial symbols
%
%
\begin{subequations}\label{poly_qr}
%
%
\begin{align}
q(\zeta) &:= \left(\zeta+ \tfrac1 2\right) \zeta\left(\zeta
^2-\tfrac
{3}{2}\zeta-\tfrac1 4\right)= \left(\zeta+ \tfrac1 2\right)
\left
(\zeta+ \beta- \tfrac1 2\right) \zeta\left(\zeta- \beta-
1\right
), \label{poly_q}
\eqncr
r(\zeta) &:= 2 \left(\zeta+ 1\right) \left(\zeta+ \tfrac12\right
),\label{poly_r}
\end{align}
\end{subequations}
with the irrational root
%
%
\begin{equation}\label{def_beta}
\beta= \frac{\sqrt{13}-1}{4} \in\left(\tfrac1 2,1\right).
\end{equation}
The nonlinearity ${\mathcal N}(v)$ is given by
%
%
\begin{equation}\label{def_nonlinearity}
\begin{aligned}
{\mathcal N}(v) := \, & - x F^{-1} \Big( D_y^2 - D_y G \left(D_x -
\tfrac12\right) - G D_y \left(D_x + \tfrac3 2\right) + G \left(D_x +
\tfrac32\right) G \left(D_x - \tfrac1 2\right) \\
& + F \left(D_x + \tfrac3 2\right) F \left(D_x - \tfrac1 2\right
)\Big
) \Big(D_y G - G \left(D_x + \tfrac1 2\right) G - F \left(D_x +
\tfrac
1 2\right) F\Big) \\
& + \frac3 8 x + q(D_x) v + D_y^2 r(D_x) v + D_y^4 v
\end{aligned}
\end{equation}
and is super-linear in $D^\ell v$, where $\ell\in\mathbb{N}_0^2$
with $1 \le
 {\left\lvert\ell\right\rvert} \le4$. We postpone a
precise characterization of its
algebraic structure to later sections (cf.~\S \ref{sec:nonlinear}) and
first concentrate on the characterization of the linear operator
%
%
\begin{equation}\label{def_l}
x \partial_t + {\mathcal L}= x \partial_t + {\mathcal L}\left
(x,\partial_x,\partial
_y\right) := x \partial_t + q(D_x) + D_y^2 r(D_x) + D_y^4.
\end{equation}
%

\subsection{The linearized evolution and loss of regularity at the free
boundary}\label{sec:lin_evol}

In this section, we develop a heuristic understanding of the properties
of the linear equation $\left(x \partial_t + {\mathcal L}\right) u =
f$, that is, we study the inhomogeneous linear problem (cf.~\reftext{\eqref{def_l}})
%
%
\begin{subequations}\label{lin_cauchy}
%
%
\begin{align}
x \partial_t v + q(D_x) v + D_y^2 r(D_x) v + D_y^4 v &= f && \mbox{for}
\quad(t,x,y) \in(0,\infty)^2 \times\mathbb{R},\quad \label{eq_lin}
\eqncr
v_{|t = 0} &= v^{(0)} && \mbox{for} \quad(x,y) \in(0,\infty) \times
\mathbb{R}
\end{align}
\end{subequations}
for a given right-hand side $f$. It appears to be convenient to apply
the Fourier transform in the variable $y$, that is, we set
%
%
\begin{equation}\label{ft_y}
\hat v(t,x,\eta) := \frac{1}{\sqrt{2 \pi}} \int_{-\infty}^\infty
e^{- i
y \eta} v(t,x,y) \, \mathrm{d} y.
\end{equation}
Thus, problem~\reftext{\eqref{lin_cauchy}} is transformed into
%
%
\begin{subequations}\label{lin_cauchy_f}
%
%
\begin{align}
x \partial_t \hat v + q(D_x) \hat v - x^2 \eta^2 r(D_x) \hat v + x^4
\eta^4 \hat v &= \hat f && \mbox{for} \quad(t,x,\eta) \in(0,\infty)^2
\times\mathbb{R},\qquad \label{lin_pde}
\eqncr
\hat v_{|t = 0} &= \hat v^{(0)} && \mbox{for} \quad(x,\eta) \in
(0,\infty) \times\mathbb{R}.
\end{align}
\end{subequations}
For sufficiently regular functions $\hat v$ and $\hat f$ at the contact
line, the terms $x \partial_t \hat v$, $x^2 \eta^2 r(D_x) \hat v$, and
$x^4 \eta^4 \hat v$ are higher-order corrections and the equation is
mainly dominated by the linear operator $q(D_x)$. The kernel of this
operator is given by
\begin{equation*}
\ker q(D_x) = \mbox{span}\{x^{-\gamma}: \, \gamma\mbox{ is a root
of }
q(D_x)\} \stackrel{\text{\reftext{\eqref{poly_q}}}}{=} \mbox{span}\left\{x^{-\frac1 2},
x^{\frac1 2 - \beta}, x^0, x^{1+\beta}\right\},
\end{equation*}
so that one may expect respective powers to also appear in the
solution. Yet, note that from \reftext{\eqref{poly_q}}, since $\beta\stackrel
{\text{\reftext{\eqref{def_beta}}}}{=} \frac{\sqrt{13}-1}{4} \in\left(\frac12,1\right
)$, the roots $-\frac12$ and $\frac1 2 - \beta$ of $q(\zeta)$ are
negative and therefore the powers $x^{-\frac1 2}, x^{\frac1 2 - \beta
}$ cannot appear in the expansion of the solution near $x=0$
(otherwise, because of $Z \stackrel{\text{\reftext{\eqref{def_v}}}}{=} Z_\mathrm{TW} +
v$, the contact line would be undefined). Therefore, a linear
combination of $x^0$ and $x^{1+\beta}$ is necessary and because of the
addend $x \partial_t \hat v$ a fixed-point iteration (and subsequently
undoing the Fourier transform in $y$, cf.~\reftext{\eqref{ft_y}}) will give
%
%
\begin{equation}\label{expansion_v}
D^\ell v(t,x,y) = D^\ell\left(v_0(t,y) + v_1(t,y) x + v_{1+\beta}(t,y)
x^{1 + \beta} + v_2(t,y)x^2\right) + o\left(x^{2+\delta}\right)
\end{equation}
as $x\searrow0$, where we have defined the boundary values (or \emph{traces})
%
%
\begin{subequations}\label{def_coeff}
%
%
\begin{eqnarray}
v_0(t,y) &:=& \lim_{x \searrow0} v(t,x,y),
\eqncr
v_1(t,y) &:=& \lim_{x \searrow0} x^{-1}\left(v(t,x,y) -
v_0(t,y)\right),
\eqncr
v_{1+\beta}(t,y) &:=& \lim_{x \searrow0} x^{-1-\beta}\left(v(t,x,y) -
v_0(t,y) - v_1(t,y) x\right),
\eqncr
v_2(t,y) &:=& \lim_{x \searrow0} x^{-2}\left(v(t,x,y) - v_0(t,y) -
v_1(t,y) x - v_{1+\beta}(t,y)x^{1+\beta}\right),
\end{eqnarray}
\end{subequations}
and where $\delta\in\left(0,2\beta-1\right)$ will be fixed later,
$\ell= (\ell_x,\ell_y) \in\mathbb{N}_0^2$ with $ {\left
\lvert\ell\right\rvert} := \ell_x + \ell
_y \le L+4$ and $\ell_y \le L_y+4$ for some fixed $L, L_y \in\mathbb{N}_0$,
and where we use the convention $D^\ell:= D_y^{\ell_y} D_x^{\ell_x}$.
The function $v_0 = v_0(t,y)$ determines the position of the contact
line, i.e., $Z_0(t,y) := Z(t,0,y) = - \frac3 8 t + v_0(t,y)$ is the
$z$-coordinate of the contact line, so that
\begin{equation*}
\mathbb{R}\owns y \mapsto
\begin{pmatrix} y \\ - \frac3 8 t + v_0(t,y)
\end{pmatrix}
\in\mathbb{R}^2
\end{equation*}
is the graph of the free boundary. In the analysis for the
$1+1$-dimensional counterpart \reftext{\eqref{tfe_1d}} in \cite{ggko.2014,g.2016} a slightly different transformation has been used,
that is, perturbations $u := F-1$, where $F = Z_x^{-1}$, have been
studied. The equation for the derivative in $x$ removes the position of
the contact line and appears to be convenient to prove appropriate
parabolic estimates. However, our problem cannot be transformed in a
closed problem in terms of $F$ as can be seen from the reasoning in \S \ref{sec:perturb}: the derivative $Z_y$ and thus the function $G =
Z_x^{-1} Z_y = F Z_y$, defined in \reftext{\eqref{def_fg}} and appearing in the
nonlinear equation \reftext{\eqref{tfe_transformed}}, cannot be extracted from
$F$ only. One possibility to make a stronger connection with the
setting of \cite{ggko.2014,g.2016}
is to apply the derivative $\partial_x$ to \reftext{\eqref{lin_pde}} divided by~$x$, so that we have with help of the
operator identity $x^{-1} D_x = (D_x+1) x^{-1}$
\begin{eqnarray*}
\partial_x (\partial_t + x^{-1} q(D_x)) &\stackrel{\text{\reftext{\eqref{poly_q}}}}{=}&
\partial_t \partial_x + x^{-1} D_x x^{-1} \left(D_x + \tfrac12\right)
\left(D_x + \beta- \tfrac1 2\right) \left(D_x - \beta- 1\right) D_x
\\
&\stackrel{\text{\reftext{\eqref{poly_q}}}}{=}& \partial_t \partial_x + x^{-1} D_x
\left
(D_x + \tfrac3 2\right) \left(D_x + \beta+ \tfrac1 2\right) \left
(D_x - \beta\right) \partial_x \\
&=& x^{-1} \left(x \partial_t + p(D_x)\right) \partial_x,
\end{eqnarray*}
where $p(\zeta) = \left(\zeta+ \tfrac3 2\right) \left(\zeta+
\beta+
\tfrac1 2\right) \zeta\left(\zeta-\beta\right)$ is the same
polynomial as in the works \cite{ggko.2014,g.2016}. However, again
the equation cannot be formulated in $v_x$ only, despite that we expect
in view of \reftext{\eqref{expansion_v}}
%
%
\begin{align}\label{expansion_u}
D^\ell u(t,x,y) &:= D^\ell\partial_x v(t,x,y) \nonumber\\
&= D^\ell\left(u_0(t,y) +
u_\beta(t,y) x^\beta+u_1(t,y)x\right) + o\left(x^{1+\delta}\right)
\quad\mbox{as } \, x \searrow0,
\end{align}
with some $\delta\in\left(0,2\beta-1\right)$, $ {\left
\lvert\ell\right\rvert} \le L+3$
and $\ell_y \le L_y+4$, where $u_0 = v_1$, $u_\beta= (1+\beta)^{-1}
v_{1+\beta}$, and $u_1=\frac12 v_2$. Expansion~\reftext{\eqref{expansion_u}} is
in line with the findings of \cite{ggko.2014,g.2016} for the 
$1+1$-dimensional case.

We further notice that by inserting expansion~\reftext{\eqref{expansion_v}} into \reftext{\eqref{lin_pde}}, we obtain the following expansion for the right-hand
side $f$:
%
%
\begin{equation}\label{expansion_f}
D^\ell f(t,x,y) = D^\ell\left(f_1(t,y) x+f_2(t,y)x^2\right) + o\left
(x^{2+\delta}\right) \quad\mbox{as} \quad x \searrow0,
\end{equation}
where $\delta\in\left(0,2\beta-1\right)$, $\ell\in\mathbb
{N}_0^2$ with
$ {\left\lvert\ell\right\rvert} \le L$ and $\ell_y
\le L_y$. Linear estimates will depend
on condition~\reftext{\eqref{expansion_f}}. Note that it is nontrivial to see
that the nonlinearity ${\mathcal N}(v)$ (cf.~\reftext{\eqref{def_nonlinearity}}) meets
this constraint.

In what follows we will not distinguish anymore in the notation between
Fourier transformed quantities $\hat f = \hat f(t,x,\eta)$ and
functions $f = f(t,x,y)$ since this will be apparent from the context
and from the choice of letters $y$ versus $\eta$.

\section{Main results and outline}

\subsection{Norms}\label{sec:norms}

Our main results concern existence, uniqueness, and stability of
solutions $v$ to the nonlinear Cauchy problem \reftext{\eqref{nonlin_cauchy}} and
control of the free boundary $\partial\{h > 0\}$ for initial data
$v^{(0)}$ with small norm $ {\left\lVert v^{(0)} \right
\rVert}_\mathrm{init}$, where we define
%
%
\begin{eqnarray}\label{norm_init}
 {\left\lVert v^{(0)} \right\rVert}_\mathrm{init}^2 &:=&
 {\left\lVert v^{(0)} \right\rVert}_{k,-1-\delta}^2 + {\left\lVert D_x v^{(0)} \right\rVert}_{\tilde k,
-\delta}^2 + {\left\lVert\tilde q(D_x) D_x v^{(0)}
\right\rVert}_{\tilde k, \delta
}^2+ {\left\lVert\tilde q(D_x)D_xv^{(0)} \right\rVert
}_{\check k,-\delta+1}^2\nonumber\\
&&+ {\left\lVert(D_x-3)(D_x-2)\tilde q(D_x)D_xv^{(0)}
\right\rVert}_{\check k,\delta
+1}^2+ {\left\lVert D_y \tilde q(D_x) D_x v^{(0)} \right\rVert}_{\breve
k,-\delta+2}^2
\end{eqnarray}
on the space of all locally integrable $v^{(0)}: \, (0,\infty) \times \mathbb R \to \mathbb R$ with ${\left\lVert v^{(0)} \right\rVert}_\mathrm{init} < \infty$.
Here,
\begin{equation*}
\tilde q(\zeta) := \left(\zeta+ \frac1 2\right) \left
(\zeta+ \beta- \frac1 2\right) \left(\zeta- 1\right)
\left(\zeta- \beta- 1\right)
\end{equation*}
is a fourth-order polynomial,
the integer parameters $k$, $\tilde k$, $\check k$, and $\breve k$
and the positive parameter $\delta$ will be specified later (see~\reftext{Assumptions~\ref{ass:parameters}}), and we have introduced the norm $\left\lVert\cdot\right\rVert_{k,\alpha}$, where
\begin{equation}\label{norm_2d_1}
 {\left\lVert v \right\rVert}_{k,\alpha}^2 := \sum_{0
\le j + j^\prime\le k} \int_{-\infty
}^\infty\int_0^\infty x^{-2 \alpha}\! \left(D_y^j D_x^{j^\prime}
v\right
)^2 x^{-2} \, \mathrm{d} x \, \mathrm{d} y \stackrel{\text{\reftext{\eqref{der_scale}}}}{=} \sum_{0 \le
{\left\lvert\ell\right\rvert} \le k} \int_{-\infty
}^\infty\int_0^\infty x^{-2 \alpha}
\!\left(D^\ell v\right)^2 x^{-2} \, \mathrm{d} x \, \mathrm{d} y
\end{equation}
with $k \in\mathbb{N}_0$ and $\alpha\in\mathbb{R}$. We will also
use the shorthand
$ {\left\lVert v \right\rVert}_\alpha:=
{\left\lVert v \right\rVert}_{0,\alpha}$. The norm ${\skliaustask
\cdot\skliaustasd}_\mathrm{Sol}$ for the solution $v$ is more
involved and given by
\begin{eqnarray}\label{norm_sol_simple}
{\skliaustask v\skliaustasd}_\mathrm{Sol}^2 &:=& \int_I \left
({\left\lVert\partial_t v \right\rVert}_{k - 2, - \delta- \frac32}^2 
+ {\left\lVert\partial_t D_x v \right\rVert}_{\tilde k
- 2, - \delta- \frac1 2}^2 \right) \mathrm{d}
t\nonumber\\
&& + \int_I \left( {\left\lVert\partial_t
\tilde q(D_x) D_x v \right\rVert}_{\tilde k - 2, \delta- \frac12}^2 + {\left\lVert\partial_t \tilde q(D_x) D_x v \right
\rVert}_{\check k - 2, - \delta+ \frac1 2}^2 \right) \mathrm{d} t
\nonumber\\
&& + \int_I \left( {\left\lVert\partial_t (D_x-3)
(D_x-2) \tilde q(D_x) D_x v \right\rVert}_{\check k - 2, \delta+
\frac1 2}^2 + {\left\lVert\partial_t D_y \tilde q(D_x) D_x v \right
\rVert}_{\breve k - 2, - \delta+ \frac3 2}^2 \right) \mathrm{d} t \nonumber\\
&& + \int_I \left( {\left\lVert v \right\rVert
}_{k + 2, - \delta- \frac1 2}^2
+ {\left\lVert D_x v \right\rVert
}_{\tilde k + 2, - \delta+ \frac1 2}^2
+ {\left\lVert\tilde q(D_x) D_x v \right\rVert}_{\tilde
k + 2, \delta+ \frac1 2}^2 \right) \mathrm{d} t \nonumber\\
&& + \int_I \left( {\left\lVert\tilde q(D_x) D_x v \right\rVert}_{\check k
+ 2, - \delta+ \frac32}^2 + {\left\lVert(D_x-3) (D_x-2) \tilde
q(D_x) D_x v \right\rVert}_{\check
k + 2, \delta+ \frac3 2}^2 \right) \mathrm{d} t \nonumber\\
&& + \int_I {\left\lVert D_y \tilde q(D_x) D_x v \right
\rVert}_{\breve k+2, - \delta+
\frac5 2}^2 \mathrm{d} t,
\end{eqnarray}
where $I \subseteq[0,\infty)$ is the time interval on which
problem~\reftext{\eqref{nonlin_cauchy}} is solved and 
the integer parameters
$k\ge 2$, $\tilde k\ge2$, $\check k\ge2$, $\breve k \ge 2$,
and the positive parameter $\delta$ will be determined
in what follows (see~\reftext{Assumptions~\ref{ass:parameters}}). Similarly to the case of the norm
${\left\lVert v^{(0)} \right\rVert}_\mathrm{init}$,
the quantity ${\skliaustask \cdot\skliaustasd}_\mathrm{Sol}$ gives a norm
on locally integrable functions $v: \, I \times (0,\infty) \times \mathbb R \to \mathbb R$ with ${\skliaustask v\skliaustasd}_\mathrm{Sol} < \infty$.

The significance of these norms will become clear from
the outline in \S \ref{sec:outline} and the heuristic discussions in
\S \ref{sec:coerc} and \S \ref{sec:lin_heur} while a rigorous justification
of the corresponding estimates is the subject of \S \ref{sec:lin_rig}.
Here, we just briefly motivate their choice:

First notice that for locally integrable $v$ with $ {\left
\lVert v \right\rVert}_\alpha<
\infty$ for some $\alpha\in\mathbb{R}$ we necessarily have $v =
o\left
(x^{\alpha+\frac12}\right)$ as $x \searrow0$ almost everywhere. By a standard embedding, $ {\left
\lVert v \right\rVert}_{2,\alpha
} < \infty$ implies $v = o\left(x^{\alpha}\right)$ as $x \searrow0$
classically. In this sense, the larger $\alpha$, the better the decay
of $v$ as $x \searrow0$, that is, $\alpha$ is linked to the regularity
of $v$ at the free boundary $\{x = 0\}$. On the other hand, just
increasing $k$ only leads to more regularity in the bulk $\{x > 0\}$
but this regularity is lost as $x \searrow0$ because each derivative
$D_x \stackrel{\text{\reftext{\eqref{der_scale}}}}{=} x \partial_x$ or $D_y \stackrel
{\text{\reftext{\eqref{der_scale}}}}{=} x \partial_y$ carries the factor $x$. Since we
expect $v$ and therefore also generic initial data $v^{(0)}$ to have an
expansion of the form \reftext{\eqref{expansion_v}}, our initial-data norm
$ {\left\lVert\cdot\right\rVert}_\mathrm{init}$
cannot have contributions $ {\left\lVert v^{(0)} \right
\rVert}_{k,\alpha}$ with $\alpha\ge0$ because $v^{(0)}_0 = \lim_{x
\searrow0} v^{(0)} \ne0$ in general. Nevertheless, we need to use
larger weights $\alpha$ as otherwise scaling-wise a control of the
Lipschitz constant of $v^{(0)}$ would be impossible. This control is
necessary because only for $v^{(0)}$ with small Lipschitz constant the
function $Z = Z(t,x,y)$ (cf.~\reftext{\eqref{def_v}}) is strictly increasing in
$x$ and thus the transformation \reftext{\eqref{hodograph}} is well-defined, as
reflected by the occurrence of factors $(1+v_x)^{-1}$ in the
nonlinearity ${\mathcal N}(v)$ (cf.~\S \ref{sec:nonlinear_struct}).

A way to allow for norms $ {\left\lVert\cdot\right
\rVert}_{k,\alpha}$ with larger $\alpha
$ is to apply operators $D_x - \gamma$ to $v$, where $\gamma\in\{k +
\beta\ell: \, (k, \ell) \in\mathbb{N}_0^2 \setminus\{(0,1)\}\}$, since
$x^\gamma$ spans the kernel of $D_x - \gamma$. Indeed, it is immediate
that for instance $D_x v = O(x)$ and $\tilde q(D_x) D_x v = O\left(x^2\right)$ as $x
\searrow0$ given expansion~\reftext{\eqref{expansion_v}}. Then it can be proved
that $ {\left\lVert v^{(0)} \right\rVert}_\mathrm{init}$
controls the Lipschitz constant
of $v^{(0)}$ and ${\skliaustask v\skliaustasd}_\mathrm{Sol}$ the
supremum in time of the
Lipschitz constant of $v$ in $(x,y)$ (cf.~\reftext{Lemmata~\ref{lem:equivalence_init},
\ref{lem:equivalence_sol}, and \ref{lem:bc0_bounds}}, \S \ref{sec:norms_prop}).

\subsection{Main result}

For our main result we need to make the following assumptions on the
number of derivatives $k$, $\tilde k$, $\check k$, and $\breve k$:
\begin{assumptions}\label{ass:parameters}
The numbers $k$, $\tilde k$, $\check k$, and $\breve k$
entering the
definitions of the norms ${\skliaustask\cdot\skliaustasd}_\mathrm
{Sol}$ (cf.~\reftext{\eqref{norm_sol_simple}}), $ {\left\lVert\cdot
\right\rVert
}_\mathrm{init}$ (cf.~\reftext{\eqref{norm_init}}), and ${\skliaustask\cdot
\skliaustasd}_\mathrm{rhs}$
(cf.~\reftext{\eqref{norm_rhs}})
meet the conditions
\begin{equation}\label{parameters}
\max\left\{13, \left\lfloor\frac{k+1}2\right\rfloor+3,\breve k+8\right\}\le \min\left\{\tilde k, \check k\right\}, \quad \max\left\{\tilde k, \check k+2\right\}+4\le k, \quad 4 \le \breve k.
\end{equation}
\end{assumptions}

We remark that we can satisfy conditions~\reftext{\eqref{parameters}} for the
explicit choice $k=19$, $\tilde k=\check k =13$, and $\breve k = 4$.

\medskip

Our main result reads as follows:
\begin{theorem}[Well-posedness and stability]\label{main_th}
Suppose that $\delta \in \left(0,\frac12\left(\beta-\frac12\right
)\right]$,
the numbers $k, \tilde k, \check k,\breve k \in\mathbb{N}$ satisfy
conditions~\reftext{\eqref{parameters}} of \reftext{Assumptions~\ref{ass:parameters}}, and
$I = [0,\infty)$. Then there exists $\varepsilon > 0$ such that if
$v^{(0)} =
v^{(0)}(x,y): \, (0,\infty) \times\mathbb{R}\to\mathbb{R}$ is
locally integrable and
satisfies $ {\left\lVert v^{(0)} \right\rVert}_\mathrm
{init} \le \varepsilon$, then problem~\reftext{\eqref{nonlin_cauchy}}
has a unique locally integrable solution
$v = v(t,x,y)\colon (0,\infty)^2 \times\mathbb{R}\to\mathbb{R}$
such that $\skliaustask v \skliaustasd_\mathrm{Sol} < \infty$.
This solution satisfies the a-priori estimate
\begin{equation}\label{main_estimate}
{\skliaustask v\skliaustasd}_\mathrm{Sol} \le C  {\left
\lVert v^{(0)} \right\rVert}_\mathrm{init},
\end{equation}
where $C$ only
depends on $k$, $\tilde k$, $\check k$, $\breve k$, and $\delta$.
Furthermore, we have
\[
\lVert v(t,\break\cdot,\cdot)
\rVert_\mathrm{init} \to 0 \quad \mbox{as}
\quad t \to\infty,
\]
i.e., the traveling-wave $v_\mathrm{TW}= 0$ is
asymptotically stable.
\end{theorem}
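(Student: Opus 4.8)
The plan is the standard one for quasilinear degenerate-parabolic free-boundary problems: first establish a sharp linear a-priori estimate in the norms of \S\ref{sec:norms}, then show that the nonlinearity $\NN(v)$ (cf.~\eqref{def_nonlinearity}) is a small perturbation of the linear part in these norms, close a contraction argument for \eqref{nonlin_cauchy}, and finally extract the asymptotic stability from the time-integral structure of ${\skliaustask\cdot\skliaustasd}_\mathrm{Sol}$.

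\emph{Linear estimate.} I would begin with the inhomogeneous linear Cauchy problem \eqref{lin_cauchy}. Applying the Fourier transform \eqref{ft_y} in $y$ turns \eqref{lin_pde} into a family, parametrized by the frequency $\eta$, of fourth-order degenerate ODEs in $(t,x)$ dominated near $x=0$ by $x\partial_t+q(D_x)$. Using the factorization \eqref{poly_q} of $q$ and the observation that only the nonnegative roots $0$ and $1+\beta$ are admissible (the negative roots $-\tfrac12$ and $\tfrac12-\beta$ would displace the contact line), one derives the boundary expansion \eqref{expansion_v}; then, by applying the operators $D_x$, $\tilde q(D_x)D_x$, $(D_x-3)(D_x-2)\tilde q(D_x)D_x$ and $D_y\tilde q(D_x)D_x$ — whose kernels strip off exactly the admissible powers $x^0,x,x^{1+\beta},x^2$ — one obtains a hierarchy of equations carrying the successively higher weights that appear in \eqref{norm_sol_simple}. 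Testing each equation against the appropriate weighted power of $x$, integrating by parts in $x$ and $y$, and exploiting coercivity of the polynomial symbols together with the spectral gap (which makes $x^2\eta^2 r(D_x)$ and $x^4\eta^4$ lower order near the boundary and dissipative in the bulk) should yield a linear estimate ${\skliaustask v\skliaustasd}_\mathrm{Sol}\lesssim{\vertii{v^{(0)}}}_\mathrm{init}+{\skliaustask f\skliaustasd}_\mathrm{rhs}$ valid on $I=[0,\infty)$, the decay in time of the top-order contributions being what makes the infinite time interval admissible; this is the content of \S\ref{sec:lin_rig}. Conditions~\eqref{parameters} of Assumptions~\ref{ass:parameters} are exactly the bookkeeping ensuring that the various components of the norm feed consistently into one another along this hierarchy.

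\emph{Nonlinear estimate and contraction.} With the algebraic description of $\NN(v)$ from \S\ref{sec:nonlinear} in hand, I would prove the two estimates
\[
{\skliaustask\NN(v)\skliaustasd}_\mathrm{rhs}\lesssim{\skliaustask v\skliaustasd}_\mathrm{Sol}^2,\qquad {\skliaustask\NN(v_1)-\NN(v_2)\skliaustasd}_\mathrm{rhs}\lesssim\bigl({\skliaustask v_1\skliaustasd}_\mathrm{Sol}+{\skliaustask v_2\skliaustasd}_\mathrm{Sol}\bigr){\skliaustask v_1-v_2\skliaustasd}_\mathrm{Sol},
\]
for $v,v_1,v_2$ small in ${\skliaustask\cdot\skliaustasd}_\mathrm{Sol}$. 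Three ingredients enter: (i) the factors $F=(1+v_x)^{-1}$ are controlled because ${\skliaustask v\skliaustasd}_\mathrm{Sol}$ bounds $\sup_t$ of the Lipschitz constant of $v$ in $(x,y)$ (Lemmata~\ref{lem:equivalence_sol}, \ref{lem:bc0_bounds}), so $F$ stays close to $1$ and may be expanded in a Neumann series; (ii) $\NN(v)$ is superlinear in $D^\ell v$, $1\le\verti{\ell}\le4$, so every term carries at least two factors of the perturbation and the weights add up correctly — in particular one must verify the nontrivial claim that $\NN(v)$ meets the boundary expansion \eqref{expansion_f}, i.e.\ $\NN(v)=O(x)$ as $x\searrow0$; (iii) the non-commutation of $D_x$ and $D_y$ only produces commutator terms of lower order, which are absorbed. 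Combined with product and embedding estimates in the weighted norms ${\vertii{\cdot}}_{k,\alpha}$, these yield the two displays. Existence and uniqueness then follow by a Banach fixed-point argument: let $\Psi(v)$ be the solution of \eqref{lin_cauchy} with right-hand side $\NN(v)$ and initial datum $v^{(0)}$; for ${\vertii{v^{(0)}}}_\mathrm{init}\le\eps$ with $\eps$ small enough, the linear and nonlinear estimates show that $\Psi$ maps the ball $\{{\skliaustask v\skliaustasd}_\mathrm{Sol}\le 2C{\vertii{v^{(0)}}}_\mathrm{init}\}$ into itself and contracts on it, hence has a unique fixed point, which is the desired solution, and \eqref{main_estimate} holds.

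\emph{Asymptotic stability, and the main difficulty.} The decay statement is then essentially free: since ${\skliaustask\cdot\skliaustasd}_\mathrm{Sol}$ is a time integral over $I=[0,\infty)$, its restriction to $[T,\infty)$ tends to $0$ as $T\to\infty$, and re-running the a-priori estimate on $[T,\infty)$ together with the trace and embedding Lemmata~\ref{lem:equivalence_init}, \ref{lem:equivalence_sol} (which bound $\sup_{t\ge T}{\vertii{v(t,\cdot,\cdot)}}_\mathrm{init}$ by this restricted norm) gives ${\vertii{v(t,\cdot,\cdot)}}_\mathrm{init}\to0$. I expect the genuine obstacles to be two, both hard: the linear estimate of \S\ref{sec:lin_rig}, where the elaborate norm ${\skliaustask\cdot\skliaustasd}_\mathrm{Sol}$ — with its many weighted components tied together by \eqref{parameters} — must be shown to be simultaneously produced by and stable under the degenerate fourth-order flow with non-commuting $D_x,D_y$; and, inside the contraction step, the verification that $\NN(v)$ lies in the right-hand-side space with the correct boundary behavior \eqref{expansion_f}, which the introduction explicitly flags as nontrivial.
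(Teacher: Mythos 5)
Your overall plan — a maximal-regularity estimate for the linearization, a Lipschitz estimate on $\mathcal N(v)$ in the ${\skliaustask\cdot\skliaustasd}_\mathrm{rhs}$-norm with small constant, and a fixed-point argument — is exactly the paper's strategy (\reftext{Proposition~\ref{prop:maxregevolution}}, \reftext{Proposition~\ref{prop:non_est}}, \S\ref{sec:well}), and your description of the linear hierarchy (stripping out $x^0,x,x^{1+\beta},x^2$ via $D_x$, $\tilde q(D_x)D_x$, $(D_x-3)(D_x-2)\tilde q(D_x)D_x$) and the ingredients of the nonlinear estimate ($F$ close to $1$ via the $BC^0$-bounds on $\nabla v$, superlinearity, commutators lower order) also matches. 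The asymptotic-stability step is likewise handled in the paper essentially as you describe, via \reftext{Corollary~\ref{coroll:contnorm}}.

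The genuine gap is in uniqueness. You assert "existence and uniqueness then follow by a Banach fixed-point argument", but the contraction-mapping principle only gives you uniqueness of the fixed point \emph{inside the small ball} $\bigl\{\skliaustask v\skliaustasd_\mathrm{Sol}\le 2C\lVert v^{(0)}\rVert_\mathrm{init}\bigr\}$, whereas the theorem claims uniqueness among \emph{all} locally integrable $v$ with $\skliaustask v\skliaustasd_\mathrm{Sol}<\infty$, which are in no way a priori small. The paper bridges this by a separate argument (\S\ref{sec:well}, "Uniqueness"): given two finite-norm solutions, one of which is the constructed small one, continuity of $\tau\mapsto\skliaustask v\skliaustasd_{\mathrm{sol},\tau}$ proved in \reftext{Corollary~\ref{coroll:contnorm}}(iii) — which crucially rests on the approximation theory of \S\ref{sec:approx} and the trace estimates of \reftext{Lemma~\ref{lem:trace_estimate}} — shows that the set on which the two solutions agree is closed and relatively open in time: starting afresh at the maximal agreement time $t^*$, the restriction $\skliaustask v^{(j)}\skliaustasd_{\mathrm{sol},\tau}$ tends to $\lVert v^{(j)}|_{t=t^*}\rVert_\mathrm{init}$ as $\tau\searrow t^*$, and the a-priori bound forces this to be small, so the Lipschitz estimate \reftext{\eqref{non_est_main}} applied on $[t^*,t^*+\tau)$ gives a contradiction to $v^{(1)}\ne v^{(2)}$ near $t^*$. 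Without something of this flavor your argument only yields uniqueness within the small ball, which is strictly weaker than the statement. You should also note that the fixed-point space must be pinned at $t=0$ (the paper works in $S_{\varepsilon',v^{(0)}}=\{v:\,\skliaustask v\skliaustasd_\mathrm{sol}\le\varepsilon',\,v|_{t=0}=v^{(0)}\}$), and that it must be shown nonempty — the paper does so by inserting the solution of the \emph{homogeneous} linear problem.
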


We recall that \reftext{\eqref{eq_transformed}} is obtained from \reftext{\eqref{tfe_higher}} via the coordinate changes \reftext{\eqref{hodograph}} and \reftext{\eqref{def_v}}. Inverting these transformations, the above result states that
we can ensure global existence and uniqueness under the condition that
in a suitable norm the initial data $h_{|t = 0}$ are close to the
traveling wave $h_\mathrm{TW}(t,z) = H\left(z - V t\right)$
(cf.~\reftext{\eqref{tw_profile}}). Moreover, as we will find in \S \ref{sec:transforiginal},
in the original coordinates the solution $h$ approaches the traveling wave $h_\mathrm{TW}$ as time $t \to\infty$ and in this sense,
$h_\mathrm{TW}$ is asymptotically stable.

We note that the principal improvement in the above statement with
respect to previous works for Navier slip and complete-wetting boundary
conditions in \cite{ggko.2014,g.2016}, is that we work in physical
space dimensions. Due to this, we meet the difficulty of designing
appropriate norms that account for the additional tangential coordinate
$y$. This leads to the additional contributions $D_y^2 r\left
(D_x\right
) v$ and $D_y^4 v$ in the linearized evolution \reftext{\eqref{eq_lin}} which
need to be absorbed in the linear estimates (cf.~\ref{sec:lin_heur}),
leading to a restriction of the range of weights $\alpha$ for which the
spatial part of the linear operator is coercive. As a consequence, we
obtain a hierarchy of estimates that need to be combined appropriately
in order to obtain sufficient control on the solution $v$. As explained
already in \S \ref{sec:lin_evol}, the transformations in
\cite{ggko.2014,g.2016} are not directly applicable in our case. In
particular, the boundary value $v_0 = v_0(t,y)$, determining the
position of the contact line $Z_0(t,y)$ (cf.~\reftext{\eqref{hodograph}}\&\reftext{\eqref{def_v}}), cannot be eliminated from our problem. Furthermore, the
nonlinearity ${\mathcal N}(v)$ cannot be expressed as a sum of
multilinear forms
as in \cite{ggko.2014,g.2016}, but is rather a rational
function of $
\begin{pmatrix} D^\ell v: \, \ell\in\mathbb{N}_0^2, \, 1 \le
 {\left\lvert\ell\right\rvert} \le4
\end{pmatrix}
$. This complicates the symmetry considerations and the algebraic
structure of the nonlinear terms discussed in \S \ref{sec:nonlinear_struct} and makes the proof of appropriate estimates for
${\mathcal N}(v)$ (cf.~\S \ref{sec:nonlinear_est}) more involved.

A rather natural question in the context of parabolic problems is how
the regularity of the free boundary propagates. This question has been
studied by Kienzler \cite{k.2016}, Koch \cite{k.1999}, and Seis \cite{s.2015} for the porous-medium equation \reftext{\eqref{pme}} and by John \cite{j.2015} and Seis \cite{s.2017} in the case of the thin-film
equation with linear mobility (i.e., for the case of \reftext{\eqref{tfe_general}} with $n=1$), but it has not been addressed in physical
dimensions for the more realistic case $n=2$ with complete-wetting
boundary conditions treated here. We do not prove a regularizing effect
in the tangential variables and we only prove partial regularity in the
normal variables. Note that through the a-priori estimate \reftext{\eqref{main_estimate}} and finiteness of
\begin{equation*}
\int_I  \left({\left\lVert v \right\rVert
}_{k + 2, - \delta- \frac1 2}^2
+ {\left\lVert(D_x-3) (D_x-2) \tilde q(D_x) D_x v
\right\rVert}_{\check k + 2,
\delta+ \frac3 2}^2\right) \mathrm{d} t
\end{equation*}
(cf.~\reftext{\eqref{norm_sol_simple}}) in conjunction with \reftext{Lemma~\ref{lem:mainhardy}} and the fact that $1$, $x$, $x^{1+\beta}$ and $x^2$ are
in the kernel of $(D_x-3) (D_x-2) \tilde q(D_x) D_x$, we infer that the
solution obeys the asymptotic expansion
\begin{equation}\label{as_sol}
D^\ell v(t,x,y) = D^\ell\left(v_0(t,y) + v_1(t,y) x + v_{1+\beta}(t,y)
x^{1+\beta} +v_2(t,y)x^2\right) + o\left(x^{2 + \delta}\right)
\end{equation}
as $x \searrow0$ almost everywhere, where we have $\ell\in\mathbb{N}_0^2$
satisfying the bounds $ {\left\lvert\ell\right\rvert
}\le\check k + 9$ and $\ell_y\le
\check k + 2$. Here, the coefficients $v_0$, $v_1,v_{1+\beta}$, $v_2$
fulfill a-priori estimates in terms of $ {\left\lVert
v^{(0)} \right\rVert}_\mathrm{init}$
in the following normed spaces (cf.~\reftext{Lemma~\ref{lem:bc0_bounds}}):
\begin{subequations}\label{bc0_bounds_summ}
\begin{align}
v_0 &\in BC^0\left([0,\infty);BC^1(\mathbb
{R})\cap H^2(\mathbb R)\right) \cap L^2\left([0,\infty);H^2(\mathbb{R})\right), \\
v_1 &\in BC^0\left([0,\infty);BC^0(\mathbb{R})\cap H^1(\mathbb R)\right)\cap L^2\left([0,\infty);H^1(\mathbb{R})\right), \\
v_{1+\beta} &\in L^2\left((0,\infty);H^1(\mathbb{R})\right
), \\
v_2&\in L^2\left((0,\infty)\times \mathbb{R}\right).
\end{align}
\end{subequations}
An analogous partial regularity result has been found in \cite[Equation~(3.1)]{ggko.2014}
and upgraded to \reftext{\eqref{exp_1d}} in \cite[Equation~(2.4)]{g.2016}.
Our expectation is that the full regularity study
of \cite{g.2016} can be adapted for proving the power
expansion of the solution near the free boundary to arbitrary order in
the higher-dimensional case considered here. More precisely, our
expectation is that the unique solution $v$ to the nonlinear Cauchy
problem \reftext{\eqref{nonlin_cauchy}} fulfills
\begin{equation}\label{exp_2d}
v(t,x,y) =v_0(t,y) + \sum_{\substack{(k,\ell) \in\mathbb{N} \times \mathbb{N}_0 \\
k + \beta\ell< N}} v_{k+\beta\ell}(t,y) x^{k+\beta\ell} + O\left
(x^N\right) \quad\mbox{as} \quad x \searrow0 \quad\mbox{classically},
\end{equation}
where $N \in\mathbb{N}$ is arbitrary and the functions $v_{k+\beta
\ell} =
v_{k + \beta\ell}(t,y)$ are (at least) continuous. On the other hand,
the main new difficulties due to the introduction of the extra
dimensions appear already for the setting present here, and the
extension to \reftext{\eqref{exp_2d}} following \cite{g.2016} would add a second
layer of technical detail. Therefore, we believe that investigating the
regularizing effect in the tangential as well as normal variables is
better suited for a separate future work.

We note that we expect our strategy to be applicable also for
(nonphysical) higher space dimensions, at the cost of some extra
technical difficulties. One approach could be by replacing the present
framework based on $L^2$-norms by a framework using $L^p$-norms with
$p>2$. This is required in order to obtain the spatial regularity in
the extra dimensions, for which in the current framework we use the
Sobolev embedding of $W^{k,2}$-spaces into $BC^\ell$-spaces in our
nonlinear estimates. For the case of $1+2$ dimensions, we may already
directly extend our bounds to $L^p$-bounds via the openness of the
range of exponents for which $L^p$-estimates are valid, as available
for instance due to Kalton and Mitrea \cite[Theorem~2.5]{km.1998}. However, for more
general dimension $d+1$ where $d > 2$, we would require to use more
general Sobolev embeddings, and the full strength of Calder\'on--Zygmund
estimates for $q(D_x)$. Note that while the basic embeddings are
available also in $L^p$, it is not yet clear how to prove the required
$L^p$ maximal-regularity for the linearized operator for general $p \in
(2,\infty)$ (see the study of Pr{\"u}ss and Simonett \cite{ps.2008}, where this question has
been addressed but not completely solved).

Another possible approach would be to apply more $\partial
_y$-derivatives to the linearization \reftext{\eqref{lin_cauchy}} of \reftext{\eqref{tfe_free}}, which then retains the same form. Hence, the linear
estimates that we are going to develop,
remain valid upon taking an arbitrary
number of $\partial_y$-derivatives. This implies better control in the
tangential directions $y$ and by Sobolev embedding, we obtain the
desired $BC^\ell$-bounds as well. However, in this case, treating the
nonlinear terms would produce further case distinctions in the products
that appear when treating the nonlinearity.

In either approach, while being mathematically more challenging, the
extension of our results to general dimensions would add a further
layer of technical difficulties to the current work, while not being
directly motivated by a physical model. Therefore, we leave this
endeavor to future work.

\subsection{Transformation into the original set of variables}\label{sec:transforiginal}

We reformulate the statement of \reftext{Theorem~\ref{main_th}} in terms of the
quantities appearing in the original problem \reftext{\eqref{tfe_free}}. First,
due to \reftext{\eqref{def_v}} we have that
\begin{equation}
Z(t,x,y) = Z_\mathrm{TW}(t,x,y) + v(t,x,y)= x-\frac3 8 t +v(t,x,y).
\end{equation}
Via \reftext{\eqref{hodograph}} we find
\begin{equation}\label{h_via_v}
h\left(t,y,x-\tfrac3 8 t + v(t,x,y)\right)= x^{\frac3 2} \quad
\mbox
{for } \, t, x > 0 \quad\mbox{and} \quad y \in\mathbb{R},
\end{equation}
and we have seen that through \reftext{\eqref{transf_deriv}}, equation~\reftext{\eqref{tfe_higher}} for $h(t,y,z)$ is equivalent to \reftext{\eqref{tfe_transformed}},
which in terms of $v$ is re-expressed as \reftext{\eqref{eq_transformed}}.

The assumed smallness condition $ {\left\lVert v^{(0)}
\right\rVert}_\mathrm{init} < \varepsilon$
at the initial time $t = 0$ means that $Z(0,x,y) = x + v^{(0)}(x,y)$ is
a small perturbation of the linear profile $x$. By \reftext{\eqref{h_via_v}} this
in turn means that $h^{(0)}(y,z):=\lim_{t\searrow0}h(t,y,z)=h(0,y,z)$
is a small perturbation of the traveling-wave profile $h_\mathrm{TW}$
(cf.~\reftext{\eqref{tw_profile}}) at time $t=0$. Indeed, using estimate~\reftext{\eqref{bc0_grad_init}} of \reftext{Lemma~\ref{lem:bc0_bounds}} we find
\begin{equation}\label{est_bc0_intro_2}
{\left\lVert D^\ell\nabla v^{(0)} \right\rVert
}_{BC^0\left((0,\infty) \times\mathbb{R}\right)} + \left\lvert v_0^{(0)} \right\rvert_{BC^0(\R)} \le C  {\left\lVert v^{(0)} \right\rVert}_\mathrm{init}
\end{equation}
for $0 \le {\left\lvert\ell\right\rvert} \le\min
\left\{\tilde k - 2, \check k - 2\right
\}$, where $C$ is a constant
depending only on $\tilde k$, $\check k$, and $\delta$.

The transform \reftext{\eqref{hodograph}} is well defined due to the
point-wise estimate
%
%
\begin{equation}\label{Z_incr_cond}
 {\left\lvert\nabla v(t,x,y) \right\rvert} < 1 \quad
\mbox{for} \quad t, x > 0 \quad\mbox
{and} \quad y \in\mathbb{R}.
\end{equation}
Property \reftext{\eqref{Z_incr_cond}} is a consequence of the bound
$ {\left\lVert v^{(0)} \right\rVert}_\mathrm{init} <
\varepsilon$, of the a-priori estimate \reftext{\eqref{main_estimate}}, and of
%
%
\begin{equation}\label{est_bc0_intro}
{\skliaustask D^\ell \nabla v \skliaustasd
}_{BC^0\left(I \times (0,\infty) \times\mathbb{R}
\right)} \le C {\skliaustask v\skliaustasd}_\mathrm{sol}
\end{equation}
for $0 \le {\left\lvert\ell\right\rvert} \le\min
\left\{\tilde k - 2, \check k - 2\right\}
$ coming from estimate~\reftext{\eqref{bc0_grad_sol}} of \reftext{Lemma~\ref{lem:bc0_bounds}}, and where $C$
is a constant depending only on $\tilde k$, $\check k$, and $\delta$.\vspace*{1pt}

Furthermore, because of \reftext{\eqref{est_bc0_intro_2}}, $ {\left
\lVert v(t,\cdot,\cdot) \right\rVert}_\mathrm{init} \le
{\skliaustask v\skliaustasd}_\mathrm{sol}$ (cf.~\reftext{\eqref{norm_init}} and \reftext{\eqref{norm_sol_simple}}), and the a-priori estimate \reftext{\eqref{main_estimate}}, we have that $v(t,x,y)$
stays close to a translate of the
linear profile $x$ for all $t > 0$ and
\reftext{Theorem~\ref{main_th}} further
implies that
\begin{equation*}
\left\lVert D^\ell \nabla v(t,\cdot,\cdot)
\right\rVert_{BC^0\left((0,\infty) \times\mathbb{R}\right)}
+ \left\lvert v_0(t,\cdot) \right\rvert_{BC^0(\R)}  \to0 \quad \mbox{as} \quad t \to\infty
\end{equation*}
for $0 \le {\left\lvert\ell\right\rvert} \le\min\left\{\tilde
k - 2, \check k - 2\right\}$ giving stability
of the traveling-wave profile because of \reftext{\eqref{h_via_v}}.\vspace*{1pt}

The explicit computations pertaining to the remainder of this
subsection concentrate on finding expansions of $h$ and the velocity $V
= h \nabla\Delta h$ close to the free boundary and are contained in
Appendix~\ref{app:advection}. Our results can be re-expressed in terms
of the original formulation as follows. If we parametrize
\begin{eqnarray*}
\partial\{(y,z)\in\mathbb{R}^2:\, h(t,y,z) > 0\} &=&
\{
(y,Z_0(t,y)): \, y\in\mathbb{R}\},\\
\partial\{(y,z)\in\mathbb{R}^2:\, h(0,y,z) > 0\} &=&
\left\{\left
(y,Z_0^{(0)}(y)\right):\ y\in\mathbb{R}\right\},
\end{eqnarray*}
then we have almost everywhere
%
%
\begin{align}\label{hodo3}
h(t,y,z) = \tilde z^{\frac32}\left(\frac{1}{(1+v_1)^{\frac
32}}-\frac
32\frac{v_{1+\beta}}{(1+v_1)^{\frac52+\beta}} \tilde z^\beta-
\frac32
\frac{v_2}{(1+v_1)^{\frac72}}\tilde z + o\left(\tilde z^{1+\delta
}\right
)\right)\text{ as } \tilde z \searrow0,
\end{align}
where the coordinate $\tilde z$ is given in terms of the distance to
the free boundary as $\tilde z := z-Z_0(t,y)$.

In order to compute $Z_0(t,y)$, we may express $\partial\{h > 0\}$ as the
solution of the following system of ODEs
%
%
\begin{subequations}\label{advection}
%
%
\begin{align}
\partial_t(Y,Z)(t,y)&=V_0(t,Y(t,y)) \quad\mbox{for} \quad(t,y)\in
(0,\infty)\times\mathbb{R},
\eqncr
(Y,Z)(0,y)&=(y, Z_0^{(0)}(y)) \quad\mbox{for} \quad y\in\mathbb{R},
\end{align}
\end{subequations}
where $V_0(t,y)$, appearing also in \reftext{\eqref{contact2}}, can be
characterized as the first term of the asymptotic expansion of the
advection velocity $V$ from \reftext{\eqref{def_V}}
%
%
\begin{equation}\label{asy_V}
V(t,y,Z(t,x,y)) = V_0(t,y) + V_1(t,y)x + o\left(x^{1+\delta}\right)
\quad\mbox{as} \quad x \searrow0
\end{equation}
and can be expressed in terms of the function $v$ from \reftext{Theorem~\ref{main_th}} as
%
%
\begin{equation}\label{def_V0}
V_0 =
\begin{pmatrix}V_0^{(y)}\\V_0^{(z)}
\end{pmatrix}
= -\frac38 \frac{1+(v_0)_y^2}{(1+v_1)^3}
\begin{pmatrix} -(v_0)_y \\ 1
\end{pmatrix}
.
\end{equation}
We remark that singular terms in expansion~\reftext{\eqref{asy_V}} enter in
higher-order terms given that $v$ meets an expansion as in \reftext{\eqref{exp_2d}}
(for details in the $1+1$-dimensional case see
\cite[\S2.3]{g.2016}). Since~-- as pointed out in \S \ref{sec:tfe_gen} -- the
free-boundary problem \reftext{\eqref{tfe_free}} is the lubrication approximation
of the Navier--Stokes equations in a moving-cusp domain, there is the
possibility that such singular expansions also occur in the underlying
Navier--Stokes problem itself, as the velocity $V$ in \reftext{\eqref{asy_V}} is
nothing but the vertically-averaged horizontal component of the fluid
velocity. This question is in fact not addressed in the literature so
far, but it may have connections to the elliptic regularity theory in
non-smooth domains, where such expansions are known to occur
(see \cite{g.2011,kmr.1997}).

\subsection{Outline}\label{sec:outline}

We start our study in \S \ref{sec:linear_th} by studying the
linearization \reftext{\eqref{eq_lin}} of \reftext{\eqref{tfe_transformed}}, in order to
determine suitable norms which allow for a maximal-regularity estimate
of the linear problem \reftext{\eqref{lin_cauchy}} that reads
%
%
\begin{equation}\label{mr_lin}
{\skliaustask v\skliaustasd}_\mathrm{Sol} \le C \left(
{\left\lVert v^{(0)} \right\rVert}_\mathrm{init} +
{\skliaustask f\skliaustasd}_\mathrm{rhs}\right),
\end{equation}
where $C$ is a
constant depending only on $k$, $\tilde k$, $\check k$, $\breve k$, and
$\delta$, and the norm ${\skliaustask \cdot \skliaustasd}_\mathrm{rhs}$ is defined through
%
%
\begin{eqnarray}\label{norm_rhs}
{\skliaustask f\skliaustasd}_\mathrm{rhs}^2 &:=& \int_I
\left({\left\lVert f \right\rVert}_{k-2,-\delta-\frac12}^2 + {\left\lVert(D_x-1)f \right\rVert}_{\tilde
k-2,-\delta+\frac12}^2 \right)\mathrm{d} t \nonumber\\
&&+\int_I\left( {\left\lVert\tilde q(D_x-1)(D_x-1)f
\right\rVert}_{\tilde k-2,\delta
+\frac12}^2 +  {\left\lVert\tilde q(D_x-1)(D_x-1)f \right
\rVert}_{\check k - 2,-\delta
+\frac32}^2\right)\mathrm{d} t\nonumber\\
&&+ \int_I \left({\left\lVert(D_x-4)(D_x-3)\tilde
q(D_x-1)(D_x-1)f \right\rVert}_{\check
k-2,\delta+\frac32}^2 + {\left\lVert D_y \tilde q(D_x-1) (D_x-1) f \right\rVert
}_{\breve k-2, -\delta+\frac
52}^2\right) \mathrm{d} t \nonumber\\
\end{eqnarray}
We remark that ${\skliaustask \cdot \skliaustasd}_\mathrm{rhs}$ is a norm on the space of all
locally integrable $f: \, I \times (0,\infty) \times \mathbb R \to \mathbb R$ with ${\skliaustask f \skliaustasd}_\mathrm{rhs} < \infty$.
Note that for estimate \reftext{\eqref{mr_lin}} to yield maximal regularity, the
solution norm ${\skliaustask\cdot\skliaustasd}_\mathrm{Sol}$ has to
control $4$ spatial
and $1$ temporal derivative more than the norm ${\skliaustask\cdot
\skliaustasd}_\mathrm
{rhs}$ for the right-hand side $f$. It is known from the theory of
linear (higher-order) parabolic equations that $L^2$ maximal regularity
follows from \emph{coercivity} of the spatial part of the linear
operator (cf.~Mielke \cite{m.1987}). For equation~\reftext{\eqref{eq_lin}} we cannot
obtain such a coercivity bound for unweighted Sobolev norms, but only
for \emph{weighted} ones. Essentially, our elliptic estimates are based
on the study of the one-dimensional operator $q(D_x)$ in \reftext{\eqref{eq_lin}}
for which a quantitative result is given by \reftext{Lemma~\ref{lem:coercivity}}
in \S \ref{sec:coerc} giving coercivity with respect to $
(\cdot,\cdot)_\alpha$ for $\alpha\in\left(- \frac{\sqrt
{13}-3}{4},0\right)$, where $\left(v,w\right)_\alpha := \int_0^\infty x^{-2\alpha} v w \frac{\mathrm{d} x}{x}$, that is, $\left(v,q(D_x) v\right)_\alpha \ge C (v,v)_\alpha$ for $v \in C^\infty_\mathrm{c}((0,\infty))$, with $C > 0$ depending only on $\alpha$. Since
we furthermore need to absorb the terms coming from the operators
$D_y^2 r(D_x)$ and $D_y^4$, the coercivity range is further restricted
to $\left(-\frac{1}{10},0\right)$ (cf.~\S \ref{sec:basic_weak}) and we
arrive at a maximal-regularity estimate of the form
\begin{align*}
& \sup_{t \in I}  {\left\lVert D_y^j v \right\rVert
}_{k,-\delta- 1 + j}^2 + \int_I \left
( {\left\lVert\partial_t D_y^j v \right\rVert
}_{k-2,-\delta- \frac3 2 + j}^2 +  {\left\lVert D_y^j v
\right\rVert}_{k+2,-\delta- \frac1 2 + j}^2\right) \mathrm{d} t \\
& \quad\le C \left( {\left\lVert D_y^j v_{|t = 0} \right
\rVert}_{k,-\delta- 1 + j}^2 +
\int_I  {\left\lVert D_y^j f \right\rVert}_{k-2,-\delta
- \frac1 2 + j}^2 \mathrm{d} t\right)
\quad\mbox{for} \quad\delta\in\left(0,\tfrac{1}{10}\right),\
j\ge
0,\ k\ge2,
\end{align*}
and where $C$ depends only on $j$, $k$, and
$\delta$, as stated in \reftext{\eqref{estimate1_int}} of \S \ref{sec:basicstrongest}. Although we can take $k \ge2$ arbitrarily large,
this only leads to better regularity of the solution $v$ in the bulk $\{
x > 0\}$ but not at the free boundary $\{x = 0\}$. In particular, due
to the negative weight $-\delta\in\left(-\frac{1}{10},0\right)$,
control of both the Lipschitz constant of $v$ and even the position of
the free boundary determined by $v_{|x = 0} = v_0 = v_0(t,y)$ fails due
to the scaling in $x$ of the norms that appear above. For control of
$v_0$ we would have to allow for weight exponents $\alpha> 0$ while
for control of the Lipschitz constant we would require $\alpha> 1$.
Yet, these weights are excluded as a nonzero boundary value $v_0$ would
lead to blow-up of terms appearing on the left-hand side of
estimate~\reftext{\eqref{estimate1_int}}. As a first step, we apply the operator
$D_x$ to equation~\reftext{\eqref{eq_lin}}, canceling $v_0$ in expansion~\reftext{\eqref{expansion_v}} and leading to $D^\ell v = O(x)$ as $x \searrow0$, in
which $\ell= (\ell_x,\ell_y) \in\mathbb{N}_0^2$, $
{\left\lvert\ell\right\rvert}\le L+4$, $\ell_x \ge 1$, and
$\ell_y \le L_y+4$, where $L$ and $L_y$ are the total number of $D$-
and of $D_y$-derivatives controlled for $f$, respectively. The
resulting equation is \reftext{\eqref{lin_cauchy2}}, i.e.,
\begin{equation*}
\left(x \partial_t + \tilde q(D_x)\right) D_x v + D_y^2 \tilde
r(D_x) v
+ D_y^4 (D_x+3) v = (D_x-1)f,
\end{equation*}
where $\tilde q(\zeta)$ and $\tilde r(\zeta)$ are fourth- and third-order real polynomials in $\zeta$, respectively,
and $\tilde q(D_x)$ has coercivity range $(0,1)$. Unlike in the
$1+1$-dimensional case, the equation does \emph{not} have a closed form
in $D_x v$ and $(D_x-1) f$, so that additional remnant terms appear
that need to be absorbed with \reftext{\eqref{estimate1_int}}. We consider
weights $\delta$ and $1-\delta$ in this range. For the weight $1-\delta$
the resulting estimate is \reftext{\eqref{higher_est_1}} presented at the end of
\S \ref{sec:higher_1}. In order to allow for control of the norm
${\skliaustask v_x\skliaustasd}_{BC^0\left(I \times(0,\infty)
\times\mathbb{R}\right)}$ by
Sobolev embedding as in \reftext{Lemma~\ref{lem:bc0_bounds}}, we require to use
weights $2\pm\delta$. Therefore, firstly in \S \ref{sec:higher_2} we
apply the operator $\tilde q(D_x-1)$ to \reftext{\eqref{lin_cauchy2}} leading to
equation~\reftext{\eqref{lin_eq_qd}}, i.e.,
\begin{equation*}
\left( x\partial_t + \tilde q(D_x - 1)\right) \tilde q(D_x) D_x v -
\eta^2 x^2 \tilde q(D_x+1) \tilde r(D_x) v + \eta^4 x^4 \tilde
q(D_x+3)(D_x+3)v = \tilde q(D_x-1) (D_x-1) f.
\end{equation*}
Now the coercivity range of $\tilde q(D_x-1)$ contains the interval
$(1,2)$, which allows for the weight $2-\delta$, and the operator
$\tilde q(D_x) D_x = \left(D_x + \frac1 2\right) \left(D_x + \beta-
\frac1 2\right) \left(D_x - 1\right) \left(D_x - \beta- 1\right) D_x$
cancels expansion~\reftext{\eqref{expansion_v}} up to order $O\left(x^2\right)$. Since
equation~\reftext{\eqref{lin_eq_qd}} does not have a closed form in $\tilde
q(D_x) D_x v$ and $\tilde q(D_x-1) (D_x-1) f$ as in the
$1+1$-dimensional case, we again need to restrict the range of
admissible weights and absorb the remnant contributions coming from the
additional terms in \reftext{\eqref{lin_eq_qd}} by those coming from 
\reftext{\eqref{higher_est_1}}. This leads to estimate \reftext{\eqref{higher_est_2}} 
from \S \ref{sec:higher_2}.

A third step is needed in order to reach a coercivity range including
weights $2+\delta$, which -- together with $2-\delta$ -- are required
for controlling the norm ${\skliaustask v_x\skliaustasd}_{BC^0\left(I
\times(0,\infty)
\times\mathbb{R}\right)}$ as in \reftext{Lemma~\ref{lem:bc0_bounds}}. For
this reason,
we apply $(D_x-4)(D_x-3)$ to \reftext{\eqref{lin_eq_qd}}, and obtain equation 
\reftext{\eqref{lin_eq_qdqd}}, i.e.
\begin{align*}
& \left( x\partial_t + \breve q(D_x)\right)(D_x-3) (D_x-2) \tilde
q(D_x) D_x v - \eta^2 x^2 \breve r_1(D_x) (D_x-1) D_x v + \eta^4 x^4
\breve r_2(D_x) D_x v \\
& \quad= (D_x-4) (D_x-3) \tilde q(D_x-1) (D_x-1) f,
\end{align*}
where $\breve q(\zeta)$, $\breve r_1(\zeta)$, and $\breve r_2(\zeta)$ are real polynomials of order four, seven, and six, respectively, and $\breve q(D_x)$ has coercivity range containing the interval $\left[2,2+\frac{1}{10}\right)$.
In order to reach the final estimate with weight $2+\delta$ via this
equation, we also make use of versions of \reftext{\eqref{higher_est_1}} and \reftext{\eqref{higher_est_2}} with weights $\delta$ and $1+\delta$, respectively.
These bounds will appear in \reftext{\eqref{higher_est_1_alt}}, \reftext{\eqref{higher_est_2_alt}}, respectively. The resulting bound is \reftext{\eqref{higher_est_3}} from \S \ref{sec:higher_3}. Finally, by suitably
combining the previous estimates, we obtain a maximal-regularity
estimate of the form \reftext{\eqref{mr_lin}} with slightly more complicated
norms. These results are summarized in \S \ref{sec:max_reg}. Then in
\S \ref{sec:equivnorms} we reabsorb some of the terms in the norms, in
order to simplify their form. This allows to reach \reftext{\eqref{mr_lin}}
itself with norms $ {\left\lVert\cdot\right\rVert
}_\mathrm{init}$ as introduced in \reftext{\eqref{norm_init}}, ${\skliaustask\cdot
\skliaustasd}_\mathrm{Sol}$ as
given by \reftext{\eqref{norm_sol_simple}}, and ${\skliaustask\cdot\skliaustasd
}_\mathrm
{rhs}$ as defined in \reftext{\eqref{norm_rhs}}.

In \S \ref{sec:elliptic} several properties of the norms and in
particular embeddings and control of the coefficients $v_0$, $v_1$, and
$v_{1+\beta}$ are discussed. The reasoning mainly relies on elementary
estimates, combined with elliptic-regularity estimates based on Hardy's
inequality. The embeddings are necessary to rigorously define
appropriate function spaces, but also for the treatment of the
nonlinear Cauchy problem \reftext{\eqref{nonlin_cauchy}}. In order to lighten the
presentation, many of the proofs of this section have been outsourced
to Appendix~\ref{app:proofs}. In \S \ref{sec:lin_rig} the treatment of
the linear theory is concluded by discussing all arguments to make
estimate~\reftext{\eqref{mr_lin}} rigorous and to prove existence and uniqueness
for the linear problem \reftext{\eqref{lin_cauchy}} (cf.~\reftext{Proposition~\ref{prop:maxregevolution}}). This is achieved through a time-discretization
procedure, relying on a thorough understanding of the resolvent equation
\begin{equation*}
x v + q(D_x) v - \eta^2 x^2 r\left(D_x\right) v + \eta^4 x^4 v = f
\quad\mbox{for} \quad x > 0 \quad \mbox{and} \quad \eta \in \R,
\end{equation*}
which is \reftext{\eqref{resolvent}}, discussed in \S \ref{sec:resolvent}. This
is done
through a matching argument of solution manifolds with convenient asymptotic
properties as $x \searrow0$ and $x \to\infty$
(cf.~\reftext{Proposition~\ref{prop:resolvent}}). In particular the construction
of the
solution manifold with convenient asymptotics as $x \to\infty$ is quite
different from the arguments of
\cite[\S6]{ggko.2014}, because of the
additional terms coming from the tangential direction $\partial_y$.
Note that
the resolvent equation \reftext{\eqref{resolvent}} is in essence nothing but the
time-discretized linear equation \reftext{\eqref{eq_lin}}. A solution for the linear
problem \reftext{\eqref{lin_cauchy}} is obtained by compactness in the limit in which
the time step tends to zero. The bounds for the discrete case are the
same as
in the equations coming from the computations of, and leading to,
\S \ref{sec:max_reg}, with the only exception that the continuous time
derivative has to be replaced by a discrete difference quotient. This allows
to prove the bounds for \reftext{\eqref{lin_cauchy}} with the usual time
derivative in
the limit. Note that our approach does not essentially rely on the
time-discretization argument and a semi-group approach appears to be
applicable, too (see monographs by Lunardi \cite{l.1995} and Pazy \cite{p.1983} and the work of
Mielke \cite{m.1987} for this approach). Yet, firstly the mathematical ingredients
of both approaches are essentially the same, namely a solid
understanding of
the resolvent equation \reftext{\eqref{resolvent}} has to be obtained, and secondly
also the resulting estimates will be identical.

In order to apply the maximal-regularity estimate \reftext{\eqref{mr_lin}} valid
for the linearized evolution \reftext{\eqref{lin_cauchy}} to the actual nonlinear
problem \reftext{\eqref{nonlin_cauchy}} to prove well-posedness and the a-priori
estimate \reftext{\eqref{main_estimate}}, we dedicate \S \ref{sec:nonlinear} to
studying the nonlinearity ${\mathcal N}(v)$ defined in \reftext{\eqref{def_nonlinearity}}.
We note that ${\mathcal N}(v)$ is in fact a local rational function in
$x$ and
$\mathcal D:=
\begin{pmatrix} D^\ell v: \, \ell\in\mathbb{N}_0^2, \, 1 \le
 {\left\lvert\ell\right\rvert} \le
4
\end{pmatrix}
$ and super-linear in $\mathcal D$, too. In \S \ref{sec:nonlinear_struct} we detail the algebraic structure of ${\mathcal N}(v)$
and derive suitable decompositions that in particular make the
nontrivial expansion \reftext{\eqref{expansion_f}} apparent also on the level of
the nonlinear problem \reftext{\eqref{nonlin_cauchy}}. In \S \ref{sec:nonlinear_est} we then prove our main estimate for the
nonlinearity (cf.~\reftext{\eqref{non_est_main}} of \reftext{Proposition~\ref{prop:non_est}}), i.e.,
\begin{equation*}
{\skliaustask{\mathcal N}\left(v^{(1)}\right) - {\mathcal N}\left
(v^{(2)}\right)\skliaustasd}_\mathrm
{rhs} \le C \left({\skliaustask v^{(1)}\skliaustasd}_\mathrm{sol} +
{\skliaustask v^{(2)}\skliaustasd}_\mathrm{sol}\right) {\skliaustask
v^{(1)} - v^{(2)}\skliaustasd}_\mathrm{sol},
\end{equation*}
provided that ${\skliaustask v^{(j)}\skliaustasd}_\mathrm{sol} \le
c$, for constants $C$ and $c > 0$ depending only on $k$, 
$\tilde k$, $\check k$, $\breve k$, and $\delta$. The above estimate allows to
establish Lipschitz continuity of ${\mathcal N}(v)$ in the norm
${\skliaustask\cdot\skliaustasd}_\mathrm{rhs}$ (appearing on the
right-hand side of the
maximal-regularity estimate \reftext{\eqref{mr_lin}}) with a small Lipschitz
constant, if $v$ belongs to a small ${\skliaustask\cdot\skliaustasd
}_\mathrm{sol}$-ball
centered at $0$. This allows to use \reftext{\eqref{mr_lin}} for the linearized
evolution \reftext{\eqref{lin_cauchy}} in order to solve \reftext{\eqref{tfe_transformed}}
and to prove the a-priori estimate \reftext{\eqref{main_estimate}} of
\reftext{Theorem~\ref{main_th}} using the contraction-mapping theorem with the underlying
norm ${\skliaustask\cdot\skliaustasd}_\mathrm{sol}$ (cf.~\S \ref{sec:well}). The
uniqueness will follow by a classical argument based on the continuity
of $t\mapsto {\left\lVert v(t,\cdot,\cdot) \right\rVert
}_\mathrm{init}$, as shown in
\reftext{Corollary~\ref{coroll:contnorm}} from \S \ref{sec:approx}.

\subsection{Notation}
Throughout the paper, all constants have a finite value.

If $f:\mathbb R\to\mathbb R, g:\mathbb R\to[0,\infty)$ we write
\begin{enumerate}[(i)]
\item[(i)]\emph{$f(x)=O(g(x))$ as $x\to a\in\mathbb R\cup\{\pm\infty\}
$} if
there exist constants $C$ and a neighborhood $U$ of $a$ such that
$ {\left\lvert f(x) \right\rvert}\le C g(x)$ for all $x\in U$.
\item[(ii)]\emph{$f(x)=o(g(x))$ as $x\to a\in\mathbb R\cup\{\pm\infty\}
$} if
$\lim_{x\to a} {\left\lvert f(x)/g(x) \right\rvert}=0$.
\end{enumerate}

If $A,B\in\mathbb R$ and $\mathcal P$ is a set of parameters, then
\begin{enumerate}[(i)]
\item[(i)] we will write $A\lesssim_\mathcal{P} B$ if there exists a
constant $C$, with $C$ only depending on $\mathcal P$, such that
$A\le C B$. We write $S \sim_\mathcal{P} B$ if $A \lesssim_\mathcal{P}
B$ and $B \lesssim_\mathcal{P} A$.
\item[(ii)] we say that \emph{property $(X)$ holds if $A\gg_\mathcal{P} B$}
in case there exists a constant $C$ only depending on
$\mathcal P$ such that $(X)$ holds whenever $A\ge C B$.
\end{enumerate}

We write $\mathbb{N}:=\{1,2,3,\ldots\}$ and we denote by $\mathbb
{N}_0:=\{0,1,2,3,\ldots
\}$ the set of natural numbers including $0$. If $a\in\mathbb{R}$
then we
denote by $\left\lfloor a\right\rfloor$ the integer part of $a$,
i.e., the largest
integer smaller than $a$.

For a multi-index $\alpha\in\mathbb{N}_0^k$ of the form $\alpha
=(\alpha
_1,\ldots,\alpha_k)$ we write $|\alpha|:=\alpha_1+\cdots+\alpha
_k$. We
will indicate by $\ell\in\mathbb{N}_0^2$ the multi-index of
derivatives $\ell
=(\ell_x,\ell_y)$ and in that case we write $D^\ell:=D_y^{\ell
_y}D_x^{\ell_y}$, as already indicated in \reftext{\eqref{der_scale}}.

For a complex number $z \in\mathbb{C}$ we write $z^*$ for its complex
conjugate.

If $E_1, E_2, \ldots, E_N$ are a finite number of expressions of the
form $E_i=\prod_{j=1}^{N_i} D^{\ell_{j,i}} f_{j,i}$ for $i=1,\ldots,N$,
then we write $E_1 \times E_2 \times\ldots\times E_N$ to indicate
that operators $D$ within the expression $E_i$ act on \emph{everything
to their right within $E_i$}. Otherwise, we write $f_t$, $f_x$, or
$f_y$ if the derivative shall act on the function $f$ only.

For derivatives we usually use the notation $\partial_t, \partial_x,
\partial_y$ whenever several variables play a role in our computations,
while total derivatives $\frac{\mathrm{d}}{\mathrm{d} t}, \frac
{\mathrm{d}}{\mathrm{d} x}$ are used in
order to emphasize that we deal with (in-)equalities depending of only
one variable, i.e., ordinary differential equations (ODE) theory is used.

Our function space norms will be denoted by $ {\left\lvert
\cdot\right\rvert},  {\left\lVert\cdot\right\rVert
}, {\skliaustask\cdot\skliaustasd}$ if they are norms on functions depending
respectively on $1$, $2$, or $3$ of the variables $t,x,y$ appearing in
our original problem.

We will denote by $I=[0,T)\subseteq[0,\infty)$ with $T \in (0,\infty]$ a time interval
starting at zero, which will appear in our norms.

If $A,B\subseteq\mathbb R^d$ we write $A\Subset B$ in case there
exists a compact set $K$ and an open set $U$ such that $A\subseteq
K\subseteq U\subseteq B$.

\section{Linear theory}\label{sec:linear_th}

\subsection{Coercivity}\label{sec:coerc}

Our aim is to derive suitable maximal-regularity estimates for
problem~\reftext{\eqref{lin_cauchy}} that are strong enough to control the
nonlinearity. We consider a general fourth-order operator $Q(D_x)$,
where $Q(\zeta)$ is a polynomial in $\zeta$ of degree $4$ with real
roots $\gamma_1 \le\gamma_2 \le\gamma_3 \le\gamma_4$. A key concept
is the coercivity of $Q(D_x)$ with respect to the inner product
\begin{equation*}
(f,g)_\alpha:=\int_0^\infty x^{-2\alpha} f(x) g^*(x) \, \frac
{\mathrm{d} x}{x}.
\end{equation*}
More precisely, our aim is to find $\alpha\in\mathbb{R}$ such that
%
%
\begin{equation}\label{coercivity_q}
\left(v,Q(D_x) v\right)_\alpha\gtrsim_\alpha {\left
\lvert v \right\rvert}_{2,\alpha}^2
\quad\mbox{for} \quad v: \, (0,\infty) \to\mathbb{R}\mbox{ smooth
with }
 {\left\lvert v \right\rvert}_{2,\alpha} < \infty,
\end{equation}
where we have introduced the norm $\lvert\cdot\rvert_{k,\alpha}$ with
%
%
\begin{equation}\label{norm_1d}
 {\left\lvert v \right\rvert}_{k,\alpha}^2 := \sum_{j =
0}^k \int_0^\infty x^{-2 \alpha}
 {\left\lvert D_x^j v \right\rvert}^2 \, \frac{\mathrm
{d} x}{x}, \quad \mbox{where} \quad k \in\mathbb{N}
_0, \; \alpha\in\mathbb{R}.
\end{equation}
In what follows, we will also write $ {\left\lvert v \right
\rvert}_\alpha:=  {\left\lvert v \right\rvert
}_{0,\alpha}$. If \reftext{\eqref{coercivity_q}} holds, then we say that
$Q(D_x)$ is \textit{coercive with respect to $(\cdot,\cdot)_\alpha
$}. A
sufficient criterion for this property to hold is given in \cite[Proposition~5.3]{ggko.2014}.
We need a quantitative version of this result,
stated as follows:
%
%
\begin{lemma}\label{lem:coercivity}
Suppose that $\alpha\in\mathbb{R}$ and $v: \, (0,\infty) \to
\mathbb{R}$ is smooth
with $ {\left\lvert v \right\rvert}_{2,\alpha} < \infty
$, then
%
%
\begin{subequations}\label{coer_quant}
%
%
\begin{equation}
\left(Q(D_x)v,v\right)_\alpha=  {\left\lvert(D_x -
\alpha)^2v \right\rvert}^2_\alpha+
\omega(\alpha)  {\left\lvert(D_x -\alpha)v \right
\rvert}^2_\alpha+ Q(\alpha)  {\left\lvert v \right
\rvert}_\alpha^2,
\end{equation}
where
%
%
\begin{equation}\label{def_om}
\omega(\alpha) := - \sum_{1 \le j < k \le4} (\gamma_j - \alpha)
(\gamma
_k - \alpha) = 2 \left(\sigma^2 - 3 (\alpha-m)^2\right)
\end{equation}
and
%
%
\begin{equation}\label{mean_var}
m = \frac1 4 \sum_{j = 1}^4 \gamma_j, \quad\sigma= \sqrt{\frac1 4
\sum_{j = 1}^4 (\gamma_j - m)^2}.
\end{equation}
\end{subequations}
In particular, the operator $Q(D_x)$ is coercive with respect to
$(\cdot
,\cdot)_\alpha$ if the weight $\alpha$ meets the necessary condition
%
%
\begin{subequations}\label{coer_cond}
%
%
\begin{equation}\label{coer_cond_1}
\alpha\in(-\infty,\gamma_1) \cup(\gamma_2,\gamma_3) \cup(\gamma
_4,\infty)
\end{equation}
and additionally
%
%
\begin{equation}\label{coer_cond_2}
\alpha\in\left[m - \frac{\sigma}{\sqrt3},m + \frac{\sigma
}{\sqrt
3}\right].
\end{equation}
\end{subequations}
\end{lemma}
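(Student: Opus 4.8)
The plan is to prove the algebraic identity \eqref{coer_quant} first, and then read off the coercivity conditions \eqref{coer_cond} as a direct consequence. The key structural observation is that $Q(D_x) = \prod_{j=1}^4 (D_x - \gamma_j)$, and that $D_x = x\partial_x$ is formally skew-adjoint with respect to $(\cdot,\cdot)_0$ up to the weight shift: a short computation with integration by parts (using that $v$ and its derivatives decay sufficiently fast, which is guaranteed by $\lvert v \rvert_{2,\alpha} < \infty$ together with the remark in the paper that such $v$ is $o(x^{\alpha})$ as $x \searrow 0$) shows that for any polynomial $P$ one has $\left(P(D_x) v, v\right)_\alpha = \left(v, P^*(D_x) v\right)_\alpha$ where $P^*(\zeta) := P(2\alpha - \zeta)$, i.e., the adjoint of $D_x$ with respect to $(\cdot,\cdot)_\alpha$ is $2\alpha - D_x$. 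Hence the quadratic form $\left(Q(D_x) v, v\right)_\alpha$ depends only on the symmetric part of $Q$ under $\zeta \mapsto 2\alpha - \zeta$.

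First I would substitute $\mu := D_x - \alpha$, so that $D_x - \gamma_j = \mu - (\gamma_j - \alpha)$ and $\mu$ is skew-adjoint with respect to $(\cdot,\cdot)_\alpha$. Then
\[
Q(D_x) = \prod_{j=1}^4 \left(\mu - (\gamma_j - \alpha)\right) = \mu^4 - e_1 \mu^3 + e_2 \mu^2 - e_3 \mu + e_4,
\]
where $e_i = e_i(\gamma_1 - \alpha, \dots, \gamma_4 - \alpha)$ are the elementary symmetric polynomials. Since $\mu$ is skew-adjoint, $\left(\mu^3 v, v\right)_\alpha = 0$ and $\left(\mu v, v\right)_\alpha = 0$, so the odd terms drop out and
\[
\left(Q(D_x) v, v\right)_\alpha = \left(\mu^4 v, v\right)_\alpha + e_2 \left(\mu^2 v, v\right)_\alpha + e_4 \left(v,v\right)_\alpha = \lvert \mu^2 v \rvert_\alpha^2 - e_2 \lvert \mu v \rvert_\alpha^2 + e_4 \lvert v \rvert_\alpha^2,
\]
using skew-adjointness once more to write $\left(\mu^4 v, v\right)_\alpha = \lvert \mu^2 v\rvert_\alpha^2$ and $\left(\mu^2 v, v\right)_\alpha = -\lvert \mu v\rvert_\alpha^2$. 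Comparing with the claimed identity, it remains to identify $-e_2 = \omega(\alpha)$, which is exactly the first equality in \eqref{def_om}, and $e_4 = Q(\alpha)$, which is immediate since $e_4 = \prod_j (\gamma_j - \alpha) = (-1)^4 \prod_j (\alpha - \gamma_j) = Q(\alpha)$. The second equality in \eqref{def_om}, namely $\omega(\alpha) = 2(\sigma^2 - 3(\alpha - m)^2)$, is a routine algebraic rearrangement: expanding $\sum_{j<k}(\gamma_j - \alpha)(\gamma_k - \alpha)$ in terms of $\sum_j (\gamma_j - \alpha)$ and $\sum_j (\gamma_j - \alpha)^2$ and substituting the definitions of $m$ and $\sigma$ in \eqref{mean_var}.

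For the coercivity criterion, I would argue as follows. Each of the three terms on the right of \eqref{coer_quant} must combine to a nonnegative lower bound proportional to $\lvert v \rvert_{2,\alpha}^2$. The term $\lvert (D_x - \alpha)^2 v\rvert_\alpha^2$ is always nonnegative; the term $\omega(\alpha)\lvert(D_x - \alpha) v\rvert_\alpha^2$ is nonnegative precisely when $\omega(\alpha) \geq 0$, i.e., when $\sigma^2 \geq 3(\alpha - m)^2$, which is condition \eqref{coer_cond_2}; and $Q(\alpha)\lvert v\rvert_\alpha^2$ is nonnegative when $Q(\alpha) \geq 0$, which (since $Q$ has leading coefficient $1$ and real roots $\gamma_1 \leq \gamma_2 \leq \gamma_3 \leq \gamma_4$) holds exactly on $(-\infty,\gamma_1] \cup [\gamma_2,\gamma_3] \cup [\gamma_4,\infty)$; the open version \eqref{coer_cond_1} ensures strict positivity of $Q(\alpha)$, from which, combined with a one-dimensional Hardy-type / interpolation inequality bounding $\lvert D_x v\rvert_\alpha$ and $\lvert D_x^2 v\rvert_\alpha$ by $\lvert v\rvert_\alpha + \lvert (D_x-\alpha)^2 v\rvert_\alpha$, one upgrades the lower bound $Q(\alpha)\lvert v\rvert_\alpha^2$ to full control of $\lvert v\rvert_{2,\alpha}^2$ (absorbing the cross terms using the nonnegativity of the other two pieces, and using $\alpha \notin \{\gamma_1,\dots,\gamma_4\}$ so that the shifted operators are invertible on the relevant weighted space). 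The main obstacle I anticipate is not the algebraic identity — which is essentially bookkeeping with symmetric functions and skew-adjointness — but rather justifying the integration by parts with no boundary contributions at $x = 0$ and $x = \infty$ under only the hypothesis $\lvert v\rvert_{2,\alpha} < \infty$; this requires the decay statement $v = o(x^\alpha)$ quoted in the paper (and analogous control of $D_x v$), and a density argument reducing to $v \in C_c^\infty((0,\infty))$, where the identity is clean and the boundary terms manifestly vanish.
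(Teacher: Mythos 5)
Your proof is correct, but it takes a genuinely different route from the paper's. The paper passes to the variable $s = \ln x$, applies the Fourier transform in $s$, and uses Plancherel to reduce the claim to the pointwise identity $Q(i\xi+\alpha) = \xi^4 + \omega(\alpha)\xi^2 + Q(\alpha)$ (with the imaginary odd-order terms vanishing because $\lvert w(\xi)\rvert^2$ is even for real $v$). You instead work directly in real space: you observe that $\mu := D_x - \alpha$ is skew-adjoint with respect to $(\cdot,\cdot)_\alpha$, expand $Q(D_x) = \prod_j(\mu - (\gamma_j-\alpha))$ in elementary symmetric polynomials of the shifted roots, drop the odd powers of $\mu$, and read off $-e_2 = \omega(\alpha)$ and $e_4 = Q(\alpha)$. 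These are dual computations (Fourier multipliers versus operator calculus) and yield the identical three-term decomposition. Your version is more elementary in that it avoids the Fourier transform entirely and makes the role of the symmetric functions transparent; the paper's version sidesteps the discussion of boundary terms by working on the Fourier side, where $\partial_s$ is manifestly a multiplier by $i\xi$. One small quibble: in the upgrade from non-negativity of the three terms to full coercivity $\gtrsim \lvert v\rvert_{2,\alpha}^2$, your invocation of a ``Hardy-type'' inequality is a mislabel---what is needed is the elementary interpolation $\lvert \mu v\rvert_\alpha^2 = -(\mu^2 v,v)_\alpha \le \lvert \mu^2 v\rvert_\alpha\lvert v\rvert_\alpha$ together with the triangle inequality to pass between powers of $\mu$ and powers of $D_x$; no weight inequality is involved. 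That said, the paper's own proof is equally terse on this last step, merely noting the positivity of $Q(\alpha)$ and $\omega(\alpha)$ and leaving the absorption to the cited reference, so your level of detail matches the source.
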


Since the quantitative version has not been stated in \cite[Proposition~5.3]{ggko.2014}, we briefly outline the reasoning once more:
\begin{proof}[Proof of \reftext{Lemma~\ref{lem:coercivity}}]
Passing to the variable $s := \ln x$ and employing the Fourier
transform in~$s$ (which as in our usual convention is not highlighted
in the notation), we obtain
\begin{align*}
\left(Q(D_x)v,v\right)_\alpha &= \int_0^\infty x^{-2 \alpha} v(x)
Q(D_x) v^*(x) \, \frac{\mathrm{d} x}{x} = \int_0^\infty\underbrace
{x^{- \alpha
} v(x)}_{=: w(\log x)} Q(D_x+\alpha) x^{- \alpha} v^*(x) \, \frac
{\mathrm{d}
x}{x} \\
&= \int_{-\infty}^\infty w(s) Q(\partial_s+\alpha) w^*(s) \,
\mathrm{d} s =
\int_{-\infty}^\infty Q(i \xi+ \alpha)  {\left\lvert
w(\xi) \right\rvert}^2 \, \mathrm{d}\xi\\
&= \int_{-\infty}^\infty\left(\xi^4 - \sum_{1 \le j < k \le4}
(\gamma
_j - \alpha) (\gamma_k - \alpha) \xi^2 + \prod_{j = 1}^4 (\gamma
_j -
\alpha) \right)  {\left\lvert w(\xi) \right\rvert}^2
\, \mathrm{d}\xi\\
&= \int_{-\infty}^\infty\left( {\left\lvert\partial
_s^2 w(s) \right\rvert}^2 - \sum_{1
\le j < k \le4} (\gamma_j - \alpha) (\gamma_k - \alpha)
 {\left\lvert\partial_s w(s) \right\rvert}^2 + \prod
_{j = 1}^4 (\gamma_j - \alpha)  {\left\lvert w(s) \right
\rvert}^2\right)
\, \mathrm{d} s \\
&=  {\left\lvert(D_x - \alpha)^2v \right\rvert
}^2_\alpha\underbrace{- \sum_{1 \le j < k
\le4} (\gamma_j - \alpha) (\gamma_k - \alpha)}_{= \omega(\alpha)}
 {\left\lvert(D_x -\alpha)v \right\rvert}^2_\alpha+
\underbrace{\prod_{j=1}^4(\gamma
_j-\alpha)}_{=Q(\alpha)}  {\left\lvert v \right\rvert
}_\alpha^2.
\end{align*}
Then condition~\reftext{\eqref{coer_cond_1}} ensures positivity of
$Q(\alpha)$ and condition~\reftext{\eqref{coer_cond_2}} ensures non-negativity of
$\omega(\alpha)$, given that \reftext{\eqref{def_om}} holds true. To prove \reftext{\eqref{def_om}}, observe that
\begin{align*}
\omega(\alpha) &= - \sum_{1 \le j < k \le4} \gamma_j \gamma_k +
12 m
\alpha- 6 \alpha^2 = - \sum_{1 \le j < k \le4} \gamma_j \gamma_k
+ 6
m^2 - 6 (\alpha-m)^2 \\*
&= \frac1 2 \sum_{j = 1}^4 \gamma_j^2 - \frac1 2 \sum_{j, k = 1}^4
\gamma_j \gamma_k + 6 m^2 - 6 (\alpha-m)^2 = 2 \left(\frac1 4 \sum_{j
= 1}^4 \gamma_j^2 - m^2 - 3 (\alpha-m)^2\right) \\
&= 2 \left(\sigma^2 - 3 (\alpha-m)^2\right).
\end{align*}
\end{proof}

In the specific situation of the operator $q(D_x)$ defined through \reftext{\eqref{poly_q}} (the left-hand side of \reftext{\eqref{lin_pde}} neglecting the
$D_y$-terms), the roots of $q(\zeta)$ are in strictly increasing order
$\gamma_1 := - \frac1 2$, $\gamma_2 := \frac1 2 - \beta$, $\gamma_3
:= 0$, $\gamma_4: = 1 + \beta$, their mean is
$m=\frac1 4$ and their variance is $\sigma^2 = \frac{11}{16}$. In our
case, we have from \reftext{\eqref{coer_cond}}
\begin{equation*}
m-\tfrac{\sigma}{\sqrt3} \approx-0.2287 < \gamma_2 \stackrel
{\text{\reftext{\eqref{poly_q}}}}{=} \frac1 2 - \beta= \frac{3-\sqrt{13}}{4} \approx-0.1514.
\end{equation*}
On the other hand
\begin{equation*}
m + \frac{\sigma}{\sqrt3} \approx0.7287 > \gamma_3 = 0.
\end{equation*}
Therefore, we have
\begin{equation*}
\gamma_1 < m - \frac{\sigma}{\sqrt3} < \gamma_2 < \gamma_3 < m +
\frac
{\sigma}{\sqrt3} < \gamma_4.
\end{equation*}
This implies:
%
%
\begin{lemma}\label{lem:coer_q}
The operator $q(D_x)$ given through \reftext{\eqref{poly_q}} is coercive with
respect to $\left(\cdot,\cdot\right)_\alpha$ if
%
%
\begin{equation}\label{coer_range_x}
\alpha\in\left(\frac1 2 - \beta,0\right) = \left(- \frac{\sqrt
{13} -
3}{4},0\right) \supset\left(-\frac{1}{10}, 0\right).
\end{equation}
\end{lemma}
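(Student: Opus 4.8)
The plan is to derive Lemma~\ref{lem:coer_q} as a direct application of Lemma~\ref{lem:coercivity} to the polynomial $Q = q$ given by~\eqref{poly_q}. First I would recall the factorization $q(\zeta) = \left(\zeta + \tfrac12\right)\left(\zeta + \beta - \tfrac12\right)\zeta\left(\zeta - \beta - 1\right)$ and read off that the roots, listed in increasing order, are $\gamma_1 = -\tfrac12$, $\gamma_2 = \tfrac12 - \beta$, $\gamma_3 = 0$, and $\gamma_4 = 1 + \beta$; one checks $\gamma_1 < \gamma_2 < \gamma_3 < \gamma_4$ using $\beta \in \left(\tfrac12, 1\right)$. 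Then I would compute the mean $m = \tfrac14\sum_j \gamma_j = \tfrac14$ (the $\pm\beta$ contributions cancel) and the variance $\sigma^2 = \tfrac14\sum_j (\gamma_j - m)^2 = \tfrac{11}{16}$, just as in the text preceding the statement.

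Next I would verify the two sufficient conditions~\eqref{coer_cond_1} and~\eqref{coer_cond_2} from Lemma~\ref{lem:coercivity} for weights $\alpha$ in the claimed interval. Condition~\eqref{coer_cond_1}, namely $\alpha \in (-\infty,\gamma_1) \cup (\gamma_2,\gamma_3) \cup (\gamma_4,\infty)$, is satisfied precisely by $\alpha \in \left(\tfrac12 - \beta, 0\right)$ among the weights of interest (this is the middle gap $(\gamma_2,\gamma_3)$, which is the only one of the three components that is relevant here since we want negative weights close to $0$). Condition~\eqref{coer_cond_2} requires $\alpha \in \left[m - \tfrac{\sigma}{\sqrt3}, m + \tfrac{\sigma}{\sqrt3}\right]$. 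Using $m = \tfrac14$ and $\sigma = \tfrac{\sqrt{11}}{4}$, one has $m - \tfrac{\sigma}{\sqrt3} = \tfrac14 - \tfrac{\sqrt{11}}{4\sqrt3} \approx -0.2287$ and $m + \tfrac{\sigma}{\sqrt3} \approx 0.7287$; the numerical comparisons in the text show $m - \tfrac{\sigma}{\sqrt3} < \gamma_2 = \tfrac12 - \beta = \tfrac{3 - \sqrt{13}}{4} \approx -0.1514$ and $m + \tfrac{\sigma}{\sqrt3} > 0 = \gamma_3$. Hence $\left(\tfrac12 - \beta, 0\right) \subset \left[m - \tfrac{\sigma}{\sqrt3}, m + \tfrac{\sigma}{\sqrt3}\right]$, and so both conditions hold on the whole interval $\left(\tfrac12 - \beta, 0\right)$.

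Combining these observations with Lemma~\ref{lem:coercivity} gives coercivity of $q(D_x)$ with respect to $(\cdot,\cdot)_\alpha$ for all $\alpha \in \left(\tfrac12 - \beta, 0\right) = \left(-\tfrac{\sqrt{13}-3}{4}, 0\right)$, and finally one notes $\tfrac{\sqrt{13}-3}{4} > \tfrac{1}{10}$ (equivalently $\sqrt{13} > \tfrac{17}{5}$, i.e. $13 > \tfrac{289}{25} = 11.56$) so that $\left(-\tfrac{1}{10}, 0\right) \subset \left(\tfrac12 - \beta, 0\right)$, which yields the stated inclusion. There is no real obstacle here: the entire argument is a bookkeeping exercise in identifying the roots, computing $m$ and $\sigma^2$, and checking the two inequalities numerically (or, if one prefers, symbolically via $\beta = \tfrac{\sqrt{13}-1}{4}$). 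The only mild subtlety worth spelling out is why we restrict attention to the middle gap $(\gamma_2,\gamma_3)$ rather than the other two intervals in~\eqref{coer_cond_1}: the outer intervals $(-\infty,\gamma_1)$ and $(\gamma_4,\infty)$ either violate~\eqref{coer_cond_2} or correspond to weights irrelevant for the subsequent analysis, so the interval $\left(\tfrac12 - \beta, 0\right)$ is the sharp coercivity window that our norms will exploit.
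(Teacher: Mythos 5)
Your argument is correct and follows the same route as the paper: both read off the ordered roots of $q$, compute $m=\tfrac14$ and $\sigma^2=\tfrac{11}{16}$, and check that the middle gap $(\gamma_2,\gamma_3)=\left(\tfrac12-\beta,0\right)$ lies inside $\left[m-\tfrac{\sigma}{\sqrt3},\,m+\tfrac{\sigma}{\sqrt3}\right]$, so that both conditions of Lemma~\ref{lem:coercivity} hold there. The only addition you make beyond the paper is the explicit symbolic check $\sqrt{13}>\tfrac{17}{5}$ for the final inclusion, which is a harmless supplement.
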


Observe that in view of \reftext{\eqref{expansion_v}}, the boundary value $v_0$
of the solution $v$ to \reftext{\eqref{nonlin_cauchy}} is in general
non-vanishing, which makes the condition $\alpha< 0$ in \reftext{\eqref{coer_range_x}} intuitive.

\subsection{Heuristic treatment of the linear equation}\label{sec:lin_heur}

In this part, we derive estimates for the linear equation that are
sufficient in order to treat the nonlinear problem \reftext{\eqref{nonlin_cauchy}}. The following arguments are based on the assumption
that sufficiently regular solutions already exist on a time interval $I
= [0,T) \subseteq[0,\infty)$, where $T \in (0,\infty]$. A rigorous justification is based on
treating the resolvent problem associated to \reftext{\eqref{nonlin_cauchy}} in
conjunction with a time-discretization argument. We postpone these
arguments to \S \ref{sec:lin_rig}.

\subsubsection{A basic weak estimate}\label{sec:basic_weak}

We assume that $-\delta$ belongs to the range \reftext{\eqref{coer_range_x}} and
test \reftext{\eqref{lin_pde}} with $v$ in the inner product $(\cdot,\cdot
)_{-\delta}$
%
%
\begin{equation}\label{test_basic}
(x\partial_tv,v)_{-\delta} + \left(q(D_x)v,v\right)_{-\delta} -
\eta
^2\left(x^2 r(D_x)v,v\right)_{-\delta} + \eta^4(x^4v,v)_{-\delta} =
(f,v)_{-\delta}.
\end{equation}
We treat each term in \reftext{\eqref{test_basic}} separately: For the first term
we obtain
%
%
\begin{equation}\label{1term}
(x\partial_tv,v)_{-\delta}=\frac12\frac{\mathrm{d}}{\mathrm{d} t}
 {\left\lvert v \right\rvert}^2_{-\delta
-\frac12}.
\end{equation}
For the second term in \reftext{\eqref{test_basic}} we use the coercivity of
$q(D_x)$ (cf.~\reftext{\eqref{coercivity_q}}) in the quantitative version \reftext{\eqref{coer_quant}} of \reftext{Lemma~\ref{lem:coercivity}} under the assumptions \reftext{\eqref{coer_range_x}} of \reftext{Lemma~\ref{lem:coer_q}} as
%
%
\begin{equation}\label{2term}
\left(q(D_x)v,v\right)_{-\delta} =  {\left\lvert
(D_x+\delta)^2 v \right\rvert}_{-\delta}^2
+ \omega(-\delta)  {\left\lvert(D_x+\delta) v \right
\rvert}_{-\delta}^2 + q(-\delta)  {\left\lvert v \right
\rvert}_{-\delta}^2,
\end{equation}
where $\omega(-\delta)$ is defined in \reftext{\eqref{def_om}} and the polynomial
$q(-\delta)$ was introduced in \reftext{\eqref{poly_q}}. For the term in \reftext{\eqref{test_basic}} proportional to $\eta^2$, we have
%
%
\begin{eqnarray}
- \eta^2 \left(x^2 r(D_x)v,v\right)_{-\delta}&
\stackrel{\text{\reftext{\eqref{poly_r}}}}{=}& - 2 \eta^2 \left(\left(D_x + 1\right) \left(D_x +
\tfrac
1 2\right) v, x^{2 + 2 \delta} v\right)_0 \nonumber\\
&=& 2 \eta^2 \left(\left(D_x + \tfrac1 2\right) v, \left(D_x -
1\right
) x^{2 + 2 \delta} v\right)_0 \nonumber\\
&=& 2 \eta^2 \left((D_x +\tfrac12)v,(D_x +1+2\delta)v\right
)_{-\delta
-1} \nonumber\\
&=& \eta^2 \left(2  {\left\lvert(D_x+1+\delta) v \right
\rvert}_{-\delta-1}^2 - \delta(2
\delta+ 1) {\left\lvert v \right\rvert}_{-\delta
-1}^2\right), \label{3term}
\end{eqnarray}
where we have used the skew-symmetry of $D_x$ with respect to $(\cdot
,\cdot)_0$ and $(v,D_x v)_{-\delta-1} = -(\delta+1)
{\left\lvert v \right\rvert}_{-\delta
-1}^2$ (which both immediately follow through integration by parts).
Finally, for the term in $\eta^4$ we have
%
%
\begin{equation}\label{4term}
\eta^4 (x^4v,v)_{-\delta} = \eta^4  {\left\lvert v
\right\rvert}_{-\delta-2}^2.
\end{equation}
For the right-hand side of \reftext{\eqref{test_basic}}, we use Young's
inequality and obtain
%
%
\begin{equation}\label{5term}
(f,v)_{-\delta} \le\frac{1}{2\varepsilon}  {\left\lvert
f \right\rvert}_{-\delta}^2 + \frac{\varepsilon
}{2}  {\left\lvert v \right\rvert}_{-\delta}^2 \quad
\mbox{for any} \quad\varepsilon> 0.
\end{equation}
The contribution from the last term in \reftext{\eqref{3term}} will be absorbed
by the last terms of \reftext{\eqref{2term}} and \reftext{\eqref{4term}} as follows:
%
%
\begin{eqnarray}
\eta^2 \delta(2 \delta+ 1)  {\left\lvert v \right\rvert
}_{-\delta-1}^2 &\le& 2 \left(\eta
^2  {\left\lvert v \right\rvert}_{-\delta-2}\right)
\left(\delta(\delta+ \tfrac12)  {\left\lvert v \right
\rvert}_{-\delta}\right)\nonumber\\
&\le& K(\delta) \left(q(-\delta)  {\left\lvert v \right
\rvert}_{-\delta}^2 + \eta^4  {\left\lvert v \right
\rvert}_{-\delta-2}^2\right) \quad\mbox{for} \quad\delta\in
\left
(0,\tfrac{1}{10}\right), \label{est3term}
\end{eqnarray}
where $K(\delta) < 1$. Estimate~\reftext{\eqref{est3term}} follows by
Young's inequality, that is,
\begin{equation*}
2 \left(\eta^2  {\left\lvert v \right\rvert}_{-\delta
-2}\right) \left(\delta(\delta+\tfrac
12)  {\left\lvert v \right\rvert}_{-\delta}\right) \le
C \delta^2 (\delta+\tfrac1 2)^2
 {\left\lvert v \right\rvert}_{-\delta}^2 + C^{-1} \eta
^4  {\left\lvert v \right\rvert}_{-\delta-2}^2 \quad
\mbox{with} \quad C > 1.
\end{equation*}
Then we have
\begin{equation*}
q(-\delta) - C \delta^2 \left(\delta+\tfrac12\right)^2 \stackrel{\text{\reftext{\eqref{poly_q}}}}{=} \delta\left((1-C)\delta^3 + (1-C) \delta^2 - \left
(\tfrac
C 4 + 1\right) \delta+ \tfrac1 8\right)
\end{equation*}
and in the limit $C \searrow1$ the expression is positive for $\delta
\in\left(0,\tfrac{1}{10}\right)$, which is more restrictive than \reftext{\eqref{coer_range_x}}.

We combine \reftext{\eqref{1term}}, \reftext{\eqref{2term}}, \reftext{\eqref{3term}}, \reftext{\eqref{4term}},
\reftext{\eqref{est3term}}, and then absorb the second sum in \reftext{\eqref{5term}} with
$\varepsilon$ small enough depending on $\delta$. This gives
%
%
\begin{equation}\label{lin_est1}
\frac{\mathrm{d}}{\mathrm{d} t} {\left\lvert v \right
\rvert}_{-\delta-\frac12}^2 + \sum_{\ell=0}^2 \eta
^{2\ell}  {\left\lvert v \right\rvert}_{\ell,-\delta
-\ell}^2 \lesssim_\delta {\left\lvert f \right\rvert
}_{-\delta}^2 \quad\mbox{for} \quad\delta\in\left(0,\tfrac
{1}{10}\right).
\end{equation}
This is a weak estimate, as the regularity gain in space is only $2$
(i.e., up to two derivatives $D \stackrel{\text{\reftext{\eqref{der_scale}}}}{=}
(D_x,D_y)$ more compared to those acting on the right-hand side $f$),
while equation~\reftext{\eqref{lin_pde}} is of order $4$. Upgrading it to a
strong estimate will be addressed in what follows.

\subsubsection{A basic strong estimate}\label{sec:basicstrongest}

For upgrading \reftext{\eqref{lin_est1}} to a strong estimate, we test \reftext{\eqref{lin_pde}} against
%
%
\begin{equation}\label{basicstrongtest}
\eta^{2 \ell} \left(\cdot,(-D_x - 1 - 2 \ell- 2 \delta)^{k-\ell}
D_x^{k-\ell} v\right)_{-\delta-\ell} =: \left(\cdot,S\right
)_{-\delta
-\ell},
\end{equation}
where $k \ge2$ and $0 \le\ell\le k$. We obtain the following terms:
\begin{subequations}\label{terms_estimate_k_deriv}
\begin{eqnarray}
(x\partial_tv,S)_{-\delta-\ell} &=& \frac1 2 \eta
^{2 \ell} \frac{\mathrm
{d}}{\mathrm{d} t} {\left\lvert D_x^{k-\ell} v \right\rvert
}_{-\delta- \ell- \frac1 2}^2,
\label{terms_estimate_k_deriv_1}
\eqncr
(q(D_x)v,S)_{-\delta-\ell} &=& \eta^{2 \ell} \left
(q(D_x)(D_x-1)^{k-\ell
-2}v,(D_x+1+2\ell+ 2\delta)^2 D_x^{k-\ell} v\right)_{-\delta-\ell
}\nonumber\\
&\stackrel{\text{\reftext{\eqref{poly_q}}}}{\ge}& \eta^{2 \ell} \left(\frac1 2
 {\left\lvert v \right\rvert}^2_{k-\ell+2,-\delta-\ell
} - C  {\left\lvert v \right\rvert}_{-\delta-\ell
}^2\right),
\label{terms_estimate_k_deriv_2}
\eqncr
- \eta^2 \left(x^2 r(D_x)v,S\right)_{-\delta-\ell} &\stackrel{\text{\reftext{\eqref{poly_r}}}}{\ge}& \eta^{2 (\ell+1)} \left
( {\left\lvert v \right\rvert}_{k+1-\ell,-\delta-\ell
-1}^2 - C  {\left\lvert v \right\rvert}_{-\delta-\ell
-1}^2\right
),\!\!\!\!\label{terms_estimate_k_deriv_3}
\eqncr
\eta^4(x^4v,S)_{-\delta-\ell} &=& \eta^{2(\ell+2)} \left(
(D_x+3)^{k-\ell} v,D_x^{k-\ell} v\right)_{-\delta-\ell
-2}^2\nonumber\\
&\ge& \eta^{2 (\ell+2)} \left(\frac1 2  {\left\lvert v
\right\rvert}_{k-\ell,-\delta-\ell
-2}^2 - C {\left\lvert v \right\rvert}_{-\delta-\ell
-2}^2\right),\label{terms_estimate_k_deriv_4}
\end{eqnarray}
\end{subequations}
where $C \gg_{k,\delta} 1$ comes from interpolation estimates for
intermediate terms. We do not have the coefficient $\frac12$ in \reftext{\eqref{terms_estimate_k_deriv_3}} because $r(\zeta)$ has leading coefficient
$2$ (cf.~\reftext{\eqref{poly_r}}). For the right-hand side we obtain through
integration by parts and Young's inequality
\begin{align}\label{terms_estimate_k_deriv_5}
\eta^{2 \ell} (f,S)_{-\delta} \lesssim_{k,\delta}
\begin{cases} \varepsilon^{-1}  {\left\lvert f \right
\rvert}_{k-2,-\delta}^2 + \varepsilon {\left\lvert v
\right\rvert}_{k+2,-\delta}^2 & \mbox{for } \, \ell= 0, \\\noalign{\vspace{2pt}}
\varepsilon^{-1}  {\left\lvert f \right\rvert
}_{k-2,-\delta}^2 + \varepsilon\, \eta^4  {\left\lvert v
\right\rvert}_{k,-\delta-2}^2 & \mbox
{for } \, \ell= 1, \\\noalign{\vspace{2pt}} \varepsilon^{-1} \eta^{2 (\ell-2)}
 {\left\lvert f \right\rvert}_{k-\ell
,-\delta+2-\ell}^2 + \varepsilon\, \eta^{2 (\ell+2)}
{\left\lvert v \right\rvert}_{k-\ell,-\delta
-\ell-2}^2 & \mbox{for } \, 2 \le\ell\le k,
\end{cases}
\end{align}
where $\varepsilon> 0$ is arbitrary. By absorbing the $v$-terms of \reftext{\eqref{terms_estimate_k_deriv_5}} on the left-hand side, we obtain
\begin{align*}
& \eta^{2 \ell} \frac{\mathrm{d}}{\mathrm{d} t} 
{\left\lvert D_x^{k-\ell} v \right\rvert}_{-\delta
-\ell-\frac12}^2 + \sum_{\ell^\prime=0}^2 \eta^{2 (\ell
+\ell
^\prime)} \left( {\left\lvert v \right\rvert}_{k+2-\ell
-\ell^\prime,-\delta-\ell-\ell^\prime
}^2 - C  {\left\lvert v \right\rvert}_{-\delta-\ell
-\ell^\prime}^2\right) \\
& \quad\lesssim_\delta
\begin{cases}  {\left\lvert f \right\rvert}_{k-2,-\delta
}^2 & \mbox{for } \, \ell\in\{0,
1\}, \\ \eta^{2 (\ell-2)}  {\left\lvert f \right\rvert
}_{k-\ell,-\delta+2-\ell}^2 & \mbox
{for } \, 2 \le\ell\le k,
\end{cases}
\end{align*}
where $C \gg_{k,\delta} 1$ and $k \ge2$. An induction argument,
starting with the base \reftext{\eqref{lin_est1}}, leads to
\begin{equation}\label{estimate1}
\begin{aligned}
& \frac{\mathrm{d}}{\mathrm{d} t} \left(\sum_{\ell= 0}^k C_\ell
\, \eta^{2 \ell} {\left\lvert D_x^{k-\ell} v \right
\rvert}_{-\delta-\frac1 2-\ell}^2 + \left\lvert v \right\rvert_{-\delta-\frac12}^2\right)
+ \sum_{\ell=0}^{k+2} \eta^{2 \ell}
 {\left\lvert v \right\rvert}_{k+2-\ell,-\delta-\ell
}^2 \\
& \quad \lesssim_{k,\delta} \sum_{\ell= 0}^{k-2} \eta^{2 \ell}
 {\left\lvert f \right\rvert}_{k-2-\ell,-\delta-\ell
}^2, \quad \mbox{with constants} \quad C_\ell > 0 \quad \mbox{and} \quad \delta\in\left(0,\tfrac{1}{10}\right). 
\end{aligned}
\end{equation}
Next we return to physical space-time $(t,x,y)$ and use the norms
$ {\left\lVert\cdot\right\rVert}_{k,\alpha}$, defined through
\begin{eqnarray}
 {\left\lVert v \right\rVert}_{k,\alpha}^2 &\stackrel{\eqref{norm_2d_1}}{=}& \sum_{0
\le j + j^\prime\le k} \int_{-\infty
}^\infty\eta^{2j} {\left\lvert D_x^{j^\prime} v \right
\rvert}_{\alpha+\frac12-j}^2 \mathrm{d}\eta
= \sum_{0 \le j + j^\prime\le k} \int_{-\infty}^\infty
{\left\lvert D_y^j D_x^{j^\prime} v \right\rvert}_{\alpha+ \frac12}^2\mathrm{d} y \nonumber\\
&=& \sum_{0 \le j + j^\prime\le k} \int_{-\infty}^\infty\int
_0^\infty
x^{-2 \alpha} \!\left(D_y^j D_x^{j^\prime} v\right)^2 x^{-2} \,
\mathrm{d} x \,
\mathrm{d} y = \sum_{0 \le {\left\lvert\ell\right
\rvert} \le k} \int_{(0,\infty)_x \times\mathbb{R}_y}
x^{-2 \alpha} \!\left(D^\ell v\right)^2 x^{-2} \, \mathrm{d} x \, \mathrm{d} y, \nonumber\\ \label{norm_2d}
\end{eqnarray}
where $\ell= (\ell_x,\ell_y) \in\mathbb{N}_0^2$ with $
{\left\lvert\ell\right\rvert} := \ell_x
+ \ell_y$ and $D^\ell:= D_y^{\ell_y} D_x^{\ell_x}$ (cf.~\reftext{\eqref{der_scale}}). Again, we use the short-hand
notation $ {\left\lVert v \right\rVert}_\alpha:=
 {\left\lVert v \right\rVert}_{0,\alpha}$. Integration
in time of \reftext{\eqref{estimate1}} multiplied with $\eta^{2 j}$, where $j \in
\mathbb{N}_0$ is
arbitrary, Fourier back transform, and a standard interpolation
estimate for the norm \reftext{\eqref{norm_2d}} give
\begin{equation*}
\sup_{t \in I}  {\left\lVert D_y^j v \right\rVert
}_{k,-\delta-1 + j}^2 + \int_I  {\left\lVert D_y^j v
\right\rVert}_{k+2,-\delta- \frac1 2 + j}^2 \mathrm{d} t \lesssim
_{k,\delta}
 {\left\lVert D_y^j v_{|t = 0} \right\rVert}_{k,-\delta
-1 + j}^2 + \int_I  {\left\lVert D_y^j f \right\rVert
}_{k-2,-\delta- \frac1 2 + j}^2 \mathrm{d} t.
\end{equation*}
With help of the linear equation \reftext{\eqref{eq_lin}}, we can also obtain
control on the time derivative $\partial_t v$ according to
\begin{eqnarray*}
\int_I  {\left\lVert\partial_t D_y^j v \right\rVert
}_{k-2,-\delta- \frac3 2 + j}^2 \mathrm{d} t &\lesssim_k& \int_I \left( {\left\lVert D_y^j f \right\rVert
}_{k-2,-\delta- \frac1 2 + j}^2 +
 {\left\lVert q\left(D_x-j\right) D_y^j v \right\rVert
}_{k-2,-\delta- \frac1 2 +
j}^2\right) \mathrm{d} t \\
&& + \int_I \left( {\left\lVert D_y^2 r(D_x-j) D_y^j v
\right\rVert}_{k-2,-\delta- \frac
1 2 + j}^2 +  {\left\lVert D_y^4 D_y^j v \right\rVert
}_{k-2,-\delta- \frac1 2 +
j}^2\right) \mathrm{d} t \\
&\lesssim_{j,k}& \int_I \left( {\left\lVert D_y^j v \right
\rVert}_{k+2,-\delta- \frac1 2+ j}^2
+  {\left\lVert D_y^j f \right\rVert}_{k-2,-\delta- \frac1 2
+ j}^2\right) \mathrm{d} t
\end{eqnarray*}
and thus, with the notation 
\begin{equation}\label{not_3barnorm}
\skliaustask w\skliaustasd_{\kappa,\alpha,I}^2:=\sup_{t\in I}\left\lVert w\right\rVert_{\kappa,\alpha}^2+\int_I\left(\left\lVert \partial_tw\right\rVert_{\kappa-2,\alpha-\frac12}^2+\left\lVert w\right\rVert_{\kappa+2,\alpha+\frac12}^2\right)\mathrm{d} t,
\end{equation}
we obtain
\begin{equation}\label{estimate1_int}
\skliaustask D_y^jv\skliaustasd_{k,-\delta-1+j,I}^2 \lesssim_{j,k,\delta}  {\left\lVert D_y^j v_{|t =
0} \right\rVert}_{k,-\delta- 1
+ j}^2 + \int_I  {\left\lVert D_y^j f \right\rVert
}_{k-2,-\delta- \frac1 2 + j}^2 \mathrm{d} t
\quad\mbox{for} \quad\delta\in\left(0,\tfrac{1}{10}\right),
\end{equation}
where $j \in\mathbb{N}_0$. While estimate~\reftext{\eqref{estimate1_int}} is
maximal in the sense that one time derivative or four spatial derivatives
more are controlled for the solution $v$ than for the right-hand side $f$
(the maximal gain in regularity possible in view of
equation~\reftext{\eqref{eq_lin}}), the estimate is still too weak in order
to treat the nonlinearity ${\mathcal N}(v)$
(cf.~\reftext{\eqref{def_nonlinearity}}). The reason for this is a loss of
regularity of the employed norms on approaching the boundary $\{x = 0\}$. Increasing the number $k$ of derivatives in the norms does not improve the control on regularity because of $D_x
\stackrel{\text{\reftext{\eqref{log_der}}}}{=} x \partial_x$ and $D_y
\stackrel{\text{\reftext{\eqref{log_der}}}}{=} x \partial_y$,
while increasing $j$ only improves the scaling in the tangential
direction, and no matter how large we take $k$, the
norms $\sup_{t
\in I}   {\left\lVert D_y^j v \right\rVert}_{k,-\delta- 1 + j}$
cannot bound the norm ${\skliaustask\nabla v\skliaustasd}_{BC^0\left(I
\times(0,\infty) \times\mathbb{R}\right)}$ because $-\delta-1 < 1$. On the other
hand, control of the norm ${\skliaustask\nabla v\skliaustasd}_{BC^0\left(I
\times(0,\infty) \times\mathbb {R}\right)}$ appears to be necessary in order
to estimate the nonlinearity, because from \reftext{\eqref{def_nonlinearity}}
we recognize that factors $F \stackrel{\text{\reftext{\eqref{def_fg}}, \reftext{\eqref{def_v}}}}{=} (1+v_x)^{-1}$ need to be controlled.
This problem can be overcome by
combining parabolic estimates of the form \reftext{\eqref{estimate1_int}}
with elliptic regularity estimates, which will be dealt with in \S
\ref{sec:higher_1}, \S \ref{sec:higher_2}, and \S \ref{sec:higher_3}.

\subsubsection{Higher regularity I}\label{sec:higher_1}

In view of expansion~\reftext{\eqref{expansion_v}} we can only hope for estimates
with negative weights for $v$, since $v_0 = v_{|x = 0}$ can be nonzero (it is zero only if the free boundary is a straight line) and
norms $ {\left\lvert v \right\rvert}_\alpha$ can be
infinite if this happens. Applying $D_x-1$ to \reftext{\eqref{lin_cauchy}}, we get an equation in terms of $D_x v$ and $v$, which reads
Fourier-transformed in the $y$-variable
\begin{equation}\label{lin_cauchy2}
\left(x \partial_t + \tilde q(D_x)\right) D_x v - \eta^2 x^2\tilde
r(D_x) v + \eta^4 x^4(D_x+3) v = (D_x-1)f,
\end{equation}
where $\tilde q(\zeta)$ and $\tilde r(\zeta)$ are monic polynomials of
degrees $4$ and $3$, respectively:
%
%
\begin{subequations}\label{poly_qr2}
%
%
\begin{align}
\tilde q(\zeta) &:= \left(\zeta+ \tfrac1 2\right) \left(\zeta+
\beta
- \tfrac1 2\right) \left(\zeta-1\right) \left(\zeta- \beta-
1\right
),\label{poly_q2}
\eqncr
\tilde r(\zeta) &:= 2\left(\zeta+ 1\right)^2 \left(\zeta+ \tfrac12\right).\label{poly_r2}
\end{align}
\end{subequations}
Now we have $D_x v \stackrel{\text{\reftext{\eqref{expansion_v}}}}{=} O(x)$ as $x \searrow0$ and thus we expect to obtain
maximal-regularity estimates with new increased weight exponents. The
subsequent reasoning is similar to the one leading from \reftext{\eqref{lin_pde}}
to \reftext{\eqref{lin_est1}}, only that this time we test against $D_x v$
instead of $v$ in the inner product $(\cdot,\cdot)_{\tilde\alpha}$:
%
%
\begin{align}\label{test_cauchy2}
\begin{aligned}
& \left(x \partial_t D_x v, D_x v\right)_{\tilde\alpha} + \left
(\tilde
q(D_x) D_x v, D_x v\right)_{\tilde\alpha} - \eta^2 \left(x^2\tilde
r(D_x) v,D_x v\right)_{\tilde\alpha} + \eta^4 \left(x^4(D_x+3) v,D_x
v\right)_{\tilde\alpha} \\
& \quad= \left((D_x-1)f,D_x v\right)_{\tilde\alpha}.
\end{aligned}
\end{align}
This gives
%
%
\begin{equation}\label{1term1}
\left(x \partial_t D_x v, D_x v\right)_{\tilde\alpha} = \frac1 2
\frac
{\mathrm{d}}{\mathrm{d} t}  {\left\lvert D_x v \right
\rvert}_{\tilde\alpha-\frac1 2}^2
\end{equation}
as in \reftext{\eqref{1term}} for the first term of \reftext{\eqref{test_cauchy2}}. The new
mean $\tilde m$ and variance $\tilde\sigma$ of the zeros of $\tilde
q(\zeta)$, characterizing the coercivity range of $\tilde q(\zeta)$ as
in \reftext{Lemma~\ref{lem:coercivity}}, are given by
\begin{equation*}
\tilde m = \frac1 2 \quad\mbox{and} \quad\tilde\sigma^2 \stackrel
{\text{\reftext{\eqref{poly_q}}, \reftext{\eqref{def_beta}}}}{=} \frac1 2 \left(\beta^2 +
\frac12 \beta+ \frac3 4\right) = \frac3 4,
\end{equation*}
so that with criterion~\reftext{\eqref{coer_cond}} we infer that $\tilde q(\zeta
)$ is coercive with respect to $\tilde\alpha$ if
%
%
\begin{equation}\label{meanvar2}
\tilde\alpha\in\left(\frac1 2 - \beta, 1\right) \cap\left
[\tilde
m-\frac{\tilde\sigma}{\sqrt3}, \tilde m+\frac{\tilde\sigma
}{\sqrt
3}\right] = [0,1).
\end{equation}
Then indeed for $\tilde\alpha$ meeting \reftext{\eqref{meanvar2}} we have for the
second term in \reftext{\eqref{test_cauchy2}}
%
%
\begin{equation}\label{2term1}
\left(\tilde q(D_x) D_x v, D_x v\right)_{\tilde\alpha} \stackrel
{\text{\reftext{\eqref{coercivity_q}}}}{\gtrsim_{\tilde\alpha}}  {\left\lvert
D_x v \right\rvert}_{2,\tilde\alpha}^2.
\end{equation}
For the third term in \reftext{\eqref{test_cauchy2}} we obtain through
integration by parts and interpolation
\begin{equation}\label{3term1}
- \left(\tilde r(D_x) v, D_x v\right)_{{\tilde\alpha}-1} \stackrel
{\text{\reftext{\eqref{poly_r2}}}}{\ge}  {\left\lvert D_x v \right
\rvert}^2_{1,\tilde\alpha-1} - C  {\left\lvert v \right
\rvert}^2_{{\tilde\alpha}-1},
\end{equation}
where $C = C(\tilde\alpha)$ is sufficiently large. By interpolation
furthermore
\begin{equation}\label{4term1}
\left(x^4(D_x+3) v ,D_x v\right)_{\tilde\alpha} \ge\frac1 2
 {\left\lvert D_x v \right\rvert}_{\tilde\alpha-2}^2 -
C  {\left\lvert v \right\rvert}_{{\tilde\alpha}-2}^2,
\end{equation}
with $C = C(\tilde\alpha)$ sufficiently large. For the right-hand side
of \reftext{\eqref{test_cauchy2}} we may use the elementary
%
%
\begin{equation}\label{5term1}
\left((D_x-1)f, D_xv\right)_{\tilde\alpha} \le\frac
{1}{2\varepsilon}  {\left\lvert(D_x - 1) f \right\rvert
}_{\tilde\alpha}^2 + \frac{\varepsilon}{2}  {\left
\lvert D_x v \right\rvert}_{\tilde
\alpha}^2
\end{equation}
for $\varepsilon> 0$. Now we combine \reftext{\eqref{1term1}}, \reftext{\eqref{2term1}},
\reftext{\eqref{3term1}}, \reftext{\eqref{4term1}}, and \reftext{\eqref{5term1}} in \reftext{\eqref{test_cauchy2}} and get by absorption of the terms with prefactor $\varepsilon$ in \reftext{\eqref{5term1}} and
interpolation
\begin{equation}\label{lin_est2}
\frac{\mathrm{d}}{\mathrm{d} t}  {\left\lvert D_x v
\right\rvert}_{{\tilde\alpha}-\frac12}^2 +  {\left
\lvert D_xv \right\rvert}_{2,\tilde\alpha}^2 + \eta^2
{\left\lvert D_x v \right\rvert}_{1,\tilde\alpha-1}^2 +
\eta^4  {\left\lvert D_x v \right\rvert}_{\tilde\alpha
- 2}^2 \lesssim_{\tilde\alpha} {\left\lvert(D_x - 1) f
\right\rvert}_{\tilde\alpha}^2 + \eta^2  {\left\lvert
v \right\rvert}_{\tilde\alpha-1}^2 +
\eta^4  {\left\lvert v \right\rvert}_{\tilde\alpha-2}^2.
\end{equation}
We let ${\tilde k}\ge2$, $0 \le\ell\le\tilde k$, and test
equation~\reftext{\eqref{lin_cauchy2}} against
\begin{equation*}
T:= \eta^{2 \ell} (-D_x - 1 - 2 \ell+ 2 \tilde\alpha)^{\tilde k -
\ell
} D_x^{\tilde k + 1 - \ell} v
\end{equation*}
in the $(\cdot,\cdot)_{\tilde\alpha-\ell}$ inner product, giving
%
%
\begin{eqnarray}\label{eqn2test}
\begin{aligned}
&\eta^{2 \ell} \left(\partial_t D_x v, T\right)_{\tilde\alpha - \ell
-\frac12}
+ \eta^{2 \ell} \left(\tilde q(D_x) D_x v, T\right)_{\tilde
\alpha - \ell}
- \eta^{2 (\ell+1)} \left(\tilde r(D_x)v, T\right)_{\tilde\alpha - \ell
-1} \\
&+ \eta^{2 (\ell+2)} \left((D_x+3) v, T\right)_{\tilde\alpha - \ell-2} 
= \eta^{2 \ell} \left((D_x-1)f, T\right)_{\tilde\alpha - \ell}.
\end{aligned}
\end{eqnarray}
Then the terms appearing in \reftext{\eqref{eqn2test}} can be simplified
according to
%
%
\begin{subequations}\label{simplify_2t}
\begin{eqnarray}
\eta^{2 \ell} \left(\partial_t D_x v, T\right)_{{\tilde\alpha
} - \ell-\frac12} &=& \frac1 2 \eta^{2\ell} \frac{\mathrm{d}}{\mathrm{d} t} 
{\left\lvert D_x^{\tilde k + 1} v \right\rvert
}_{\tilde\alpha - \ell- \frac1 2}^2,
\eqncr
\eta^{2 \ell} \left(\tilde q(D_x)D_xv, T\right)_{\tilde\alpha - \ell}
&=& \eta
^{2 \ell} \left(\tilde q(D_x)(D_x-1)^{{\tilde k}-2-\ell}D_xv,
(D_x+1+2\ell-2{\tilde
\alpha})^2D_x^{\tilde k+1-\ell}v\right)_{\tilde\alpha - \ell}\nonumber\\
&\ge& \eta^{2 \ell} \left(\frac1 2  {\left\lvert D_x v
\right\rvert}_{\tilde k + 2 -\ell, \tilde
\alpha - \ell}^2 - \tilde C  {\left\lvert D_x v \right\rvert
}_{\tilde\alpha - \ell}^2\right),\label{2term2}
\eqncr
-\eta^{2 (\ell+1)} \left(\tilde r(D_x)v, T\right)_{{\tilde\alpha} - \ell-1}
&\stackrel{\text{\reftext{\eqref{poly_r2}}}}{\ge}& \eta^{2 (\ell+1)} \left
( {\left\lvert D_x v \right\rvert}_{\tilde k + 1-\ell, \tilde
\alpha - \ell-1}^2 - \tilde C  {\left\lvert D_x v \right\rvert
}_{{\tilde
\alpha} - \ell-1}^2\right),
\eqncr
\eta^{2 (\ell+2)} \left((D_x+3)v, T\right)_{{\tilde\alpha} - \ell-2}
&\ge& \eta
^{2 (\ell+2)} \left(\frac1 2  {\left\lvert D_x v \right
\rvert}_{\tilde k-\ell, \tilde\alpha - \ell
-2}^2 - \tilde C  {\left\lvert D_x v \right\rvert
}_{{\tilde\alpha} - \ell-2}^2\right),
\end{eqnarray}
\end{subequations}
where in the last two estimates we used the elliptic regularity result of \reftext{Lemma \ref{lem:mainhardy}}, contained in \reftext{\S \ref{sec:elliptic}}. Here the constant $\tilde C$ only depends on $\tilde\alpha$ and
$\tilde k$. The right-hand side of \reftext{\eqref{eqn2test}} can be estimated
as follows:
%
%
\begin{eqnarray}\label{5term2}
\begin{aligned}
& \eta^{2 \ell} ((D_x-1) f,T)_{\tilde\alpha - \ell} \\
& \lesssim_{\tilde k,\tilde\alpha}
\begin{cases} \varepsilon^{-1}  {\left\lvert(D_x-1) f
\right\rvert}_{\tilde k-2,\tilde\alpha}^2 +
\varepsilon {\left\lvert D_x v \right\rvert}_{\tilde
k+2,\tilde\alpha}^2 & \mbox{for } \, \ell=
0, \\ \varepsilon^{-1}  {\left\lvert(D_x-1) f \right
\rvert}_{\tilde k-2,\tilde\alpha}^2 + \varepsilon\,
\eta^4  {\left\lvert D_x v \right\rvert}_{\tilde
k,\tilde\alpha-2}^2 & \mbox{for } \, \ell
= 1, \\ \varepsilon^{-1} \eta^{2 (\ell-2)}  {\left\lvert
(D_x-1) f \right\rvert}_{\tilde k-\ell
,\tilde\alpha+2-\ell}^2 + \varepsilon\, \eta^{2 (\ell+2)}
 {\left\lvert D_x v \right\rvert}_{\tilde k-\ell,\tilde
\alpha-\ell-2}^2 & \mbox{for } \, 2 \le\ell
\le k,
\end{cases}
\end{aligned}\nonumber\\
\end{eqnarray}
for any $\varepsilon> 0$. Now we insert \reftext{\eqref{simplify_2t}} and \reftext{\eqref{5term2}}, with $\varepsilon> 0$ sufficiently small, into \reftext{\eqref{eqn2test}} and
arrive at
%
%
\begin{equation}\label{lin_est1.5}
\begin{aligned}
& \eta^{2\ell} \frac{\mathrm{d}}{\mathrm{d} t}
{\left\lvert D_x^{\tilde k + 1 - \ell} v \right\rvert}_{{\tilde
\alpha - \ell} - \frac1 2}^2 + \sum_{\ell^\prime=0}^2 \eta
^{2 \left(\ell+\ell^\prime\right)} \left( {\left
\lvert D_x v \right\rvert}_{\tilde k + 2 -
\ell- \ell^\prime, \tilde\alpha- \ell-\ell^\prime}^2 - \tilde C
 {\left\lvert D_x v \right\rvert}_{\tilde\alpha- \ell-
\ell^\prime}^2 \right) \\
& \quad\lesssim_\alpha
\begin{cases}  {\left\lvert f \right\rvert}_{k-2,\alpha
}^2 & \mbox{for } \, \ell\in\{0,
1\}. \\ \eta^{2 (\ell-2)}  {\left\lvert f \right\rvert
}_{k-\ell,\alpha+2-\ell}^2 & \mbox
{for } \, 2 \le\ell\le k.
\end{cases}
\end{aligned}
\end{equation}
Then we infer inductively from \reftext{\eqref{lin_est2}} and \reftext{\eqref{lin_est1.5}} that
%
%
\begin{equation}\label{estimate2-}
\begin{aligned}
& \frac{\mathrm{d}}{\mathrm{d} t} \left(\sum_{\ell= 0}^{\tilde k}
\tilde C_\ell\, \eta
^{2 \ell} {\left\lvert D_x^{\tilde k + 1 - \ell} v
\right\rvert}_{\tilde\alpha-\frac12-\ell}^2
+ \left\lvert D_x v\right\rvert_{\tilde\alpha-\frac12}^2\right) + \sum
_{\ell=0}^{\tilde k+2} \eta^{2 \ell}  {\left\lvert D_x v
\right\rvert}_{\tilde k + 2 - \ell
,\tilde\alpha-\ell}^2 \\
& \quad\lesssim_{\tilde k,\tilde\alpha} \sum_{\ell= 0}^{\tilde k-2}
\eta^{2 \ell}  {\left\lvert(D_x - 1) f \right\rvert
}_{\tilde k-2-\ell,\tilde\alpha-\ell
}^2 + \eta^2  {\left\lvert v \right\rvert}_{\tilde
\alpha-1}^2 + \eta^4  {\left\lvert v \right\rvert
}_{\tilde
\alpha-2}^2,
\end{aligned}
\end{equation}
with constants $\tilde C_\ell> 0$ and $\tilde\alpha\in[0,1)$.
Passing to physical space-time $(t,x,y)$ by Fourier back transforming in
$\eta$, we get with the same steps as those leading from \reftext{\eqref{estimate1}} to \reftext{\eqref{estimate1_int}}
%
%
\begin{equation}\label{estimate2-_int}
{\skliaustask D_y^{\tilde j} D_x v \skliaustasd}_{\tilde k,\tilde
\alpha- 1 + \tilde j, I}^2 \lesssim_{\tilde j, \tilde k,\tilde\alpha}  {\left\lVert
D_y^{\tilde j} D_x v_{|t = 0} \right\rVert}_{\tilde k,\tilde\alpha-
1 + \tilde j}^2 + \int_I  {\left\lVert D_y^{\tilde j}
(D_x-1) f \right\rVert}_{\tilde k-2,\tilde\alpha- \frac1 2 +
\tilde j}^2 \mathrm{d} t + \sum_{\ell= 1}^2 \int_I
{\left\lVert D_y^{\tilde j + \ell} v \right\rVert}_{\tilde\alpha
- \frac1 2 + \tilde j}^2 \mathrm{d} t,
\end{equation}
where $\tilde j \in\mathbb{N}_0$ is arbitrary. Observe that we can
absorb the
remnant terms, i.e.,
%
%
\begin{equation}\label{remnant_terms}
\sum_{\ell= 1}^2 \int_I  {\left\lVert D_y^{\tilde j +
\ell} v \right\rVert}_{\tilde\alpha
- \frac1 2 + \tilde j}^2 \mathrm{d} t
\end{equation}
in the second line of \reftext{\eqref{estimate2-_int}}, by those appearing on the
left-hand side of \reftext{\eqref{estimate1_int}} if $\tilde\alpha= 1-\delta$,
if $-\delta$ is in the intersection of $\left(-\frac{1}{10},0\right)$
(cf.~\reftext{\eqref{estimate1_int}}) and the range \reftext{\eqref{meanvar2}} (which
simply leads to $\delta\in\left(0,\frac{1}{10}\right)$ again), and if
$j = \tilde j + 1$. Then adding a large multiple of \reftext{\eqref{estimate1_int}} to \reftext{\eqref{estimate2-_int}} to eliminate \reftext{\eqref{remnant_terms}} (where for convenience we make the choice $k := \tilde
k-1$), we obtain a higher-order maximal-regularity estimate of the form
%
%
\begin{align}\nonumber
& \skliaustask D_y^j v\skliaustasd_{\tilde k - 1, -\delta-1 + j,I}^2 + \skliaustask D_y^{j-1} D_x v \skliaustasd_{\tilde k,-\delta- 1 + j,I}^2 \\
& \quad\lesssim_{j,\tilde k, \delta} {\left\lVert D_y^j v_{|t =
0} \right\rVert}_{\tilde k -
1,-\delta- 1 + j}^2 + {\left\lVert D_y^{j-1} D_x v_{|t =
0} \right\rVert}_{\tilde
k,-\delta- 1 + j}^2 \nonumber\\
& \quad \phantom{\lesssim_{j,\tilde k, \delta}} + \int_I \left({\left\lVert D_y^j f \right\rVert
}_{\tilde k-3,-\delta- \frac1 2 +
j}^2 + {\left\lVert D_y^{j-1} (D_x-1) f \right\rVert
}_{\tilde k-2,- \delta- \frac1 2 +
j}^2\right) \mathrm{d} t \label{higher_est_1}
\end{align}
for $\delta\in\left(0,\frac{1}{10}\right)$, where $\tilde k \ge3$
and $j \ge1$. Estimate~\reftext{\eqref{higher_est_1}} gives stronger control of the solution
$v$ in terms of the initial data $v_{|t = 0}$ and the right-hand side
$f$, as the weight $\alpha$ has increased by $+1$. Yet,
estimate~\reftext{\eqref{higher_est_1}} is still insufficient to treat the
nonlinear problem \reftext{\eqref{nonlin_cauchy}}. The reason is that the scaling of neither $\sup
_{t \in I}  {\left\lVert D_y^j v \right\rVert}_{\tilde
k,-\delta-1+j}$ nor $\sup_{t \in I}
 {\left\lVert D_y^{j-1} D_x v \right\rVert}_{\tilde
k,-\delta- 1 + j}$ is sufficient in
order to get control of ${\skliaustask\nabla v\skliaustasd
}_{BC^0\left(I \times
(0,\infty) \times\mathbb{R}\right)}$ (for details we refer to the discussion
of such scalings at the end of \S \ref{sec:norms}). This requires to
apply another polynomial in $D_x$ to \reftext{\eqref{lin_cauchy2}}, which will be
the subject of \S \ref{sec:higher_2}.

Estimate~\reftext{\eqref{higher_est_1}} is $\delta$-sub-critical with respect to
the addend $v_1 x$ in expansion~\reftext{\eqref{expansion_v}} for $v$ while
estimate~\reftext{\eqref{estimate1_int}} is $\delta$-sub-critical with respect to
the addend $v_0$ of $v$ in \reftext{\eqref{expansion_v}}. For convenience for the
subsequent reasoning, we will also use an estimate which is $\delta
$-super-critical with respect to $v_0$. Therefore, we additionally use the values $\tilde\alpha= \delta$,
$\tilde k = k$, and $\tilde j = j-1$ where $j \ge1$ in \reftext{\eqref{estimate2-_int}}, so that in view of \reftext{\eqref{remnant_terms}} the remnant
contributions are $\int_I  {\left\lVert D_y^{j+\ell-1} v \right
\rVert}_{\delta- \frac 3 2 + j}^2 \mathrm{d}
t$ with $\ell= 1,2$. By the definition of the norm \reftext{\eqref{norm_2d}} we
have the interpolation estimate
\begin{equation*}
 {\left\lVert D_y^{j+\ell-1} v \right\rVert}_{\delta-
\frac3 2 + j} \lesssim {\left\lVert D_y^{j+\ell-1} v
\right\rVert}_{- \delta- \frac3 2 + j} +  {\left\lVert
D_y^{j+\ell-1} v \right\rVert}_{- \delta- \frac1 2 + j}.
\end{equation*}
Then we notice that $\int_I {\left\lVert D_y^{j+\ell-1} v
\right\rVert}_{- \delta- \frac32 + j}^2 \mathrm{d} t$ can be absorbed by $\int_I  {\left
\lVert D_y^{j-1} v \right\rVert}_{k+2,-\delta
-\frac3 2 + j}^2 \mathrm{d} t$ appearing on the left-hand side of \reftext{\eqref{estimate1_int}} with $j$ replaced by $j-1$ while $\int_I
{\left\lVert D_y^{j+\ell-1} v \right\rVert}_{- \delta - \frac 1 2 + j}^2
\mathrm{d} t$ can be absorbed by $\int_I
 {\left\lVert D_y^j v \right\rVert}_{\tilde k + 2, -
\delta- \frac1 2 + j}^2 \mathrm{d} t$,
appearing in \reftext{\eqref{estimate1_int}} as well where $j$ is the same and
$k$ is replaced by $\tilde k - 1$. By combining these estimates, we
find an inequality of the same form as \reftext{\eqref{higher_est_1}} for the
exponent $\tilde\alpha=\delta$:
%
%
\begin{align}\nonumber
& \skliaustask D_y^{j-1}v\skliaustasd_{k,-\delta-2+j,I}^2 +
\skliaustask D_y^{j-1}D_x v \skliaustasd_{k,\delta
-2+j,I}^2 + \skliaustask D_y^jv \skliaustasd_{\tilde k -
1,-\delta-1 + j,I}^2 \\
& \quad \lesssim_{j, k, \tilde k, \delta} {\left\lVert D_y^{j-1}
v_{|t = 0} \right\rVert}_{k,-\delta-2+j}^2 + {\left
\lVert D_y^{j-1} D_x v_{|t = 0} \right\rVert}_{k,\delta
-2+j}^2 + {\left\lVert D_y^j v_{|t = 0} \right\rVert
}_{\tilde k - 1,-\delta-1+j}^2
\nonumber\\
& \quad \phantom{\lesssim_{j, k, \tilde k, \delta}} + \int_I \left({\left\lVert D_y^{j-1} f \right\rVert
}_{k-2,-\delta- \frac3 2 + j}^2
+ {\left\lVert D_y^{j-1} (D_x-1) f \right\rVert
}_{k-2,\delta- \frac3 2 + j}^2 + {\left\lVert D_y^j f
\right\rVert}_{\tilde k-3,-\delta- \frac1 2 + j}^2\right) \mathrm
{d} t, \label{higher_est_1_alt}
\end{align}
where $\delta\in\left(0,\frac{1}{10}\right)$, $k \ge2$, $\tilde k
\ge3$, and $j \ge1$.

\subsubsection{Higher regularity II}\label{sec:higher_2}

Next, we apply $\tilde q(D_x - 1)$ to \reftext{\eqref{lin_cauchy2}} and arrive at
%
%
\begin{align}\label{lin_eq_qd}
\left( x\partial_t + \tilde q(D_x - 1)\right) \tilde q(D_x) D_x v -
\eta
^2 x^2 \check r_1(D_x) D_x v + \eta^4 x^4 \check r_2(D_x) v = \tilde
q(D_x-1) (D_x-1) f,
\end{align}
where in view of \reftext{\eqref{poly_qr2}}, $\check
r_1(\zeta)$ and $\check r_2(\zeta)$ are polynomials of degrees six and five, respectively, defined by
\begin{subequations}\label{def_check_r12}
\begin{eqnarray}
\check r_1(\zeta) &:=& \zeta^{-1}\tilde q(\zeta+ 1)\tilde r(\zeta) = 2
\left(\zeta+ \tfrac3 2\right) \left(\zeta+1\right)^2 \left
(\zeta+
\tfrac1 2\right) \left(\zeta+ \beta+ \tfrac1 2\right) \left
(\zeta-\beta\right), \label{def_check_r1}\\
\check r_2(\zeta) &:=& \tilde q(\zeta+3)(\zeta+3) \stackrel{\text{\reftext{\eqref{poly_q2}}}}{=} \left(\zeta+ \tfrac72\right) \left(\zeta+ \beta+
\tfrac{5}{2}\right) \left(\zeta+ 3\right) \left(\zeta+ 2\right)
\left
(\zeta- \beta+ 2\right).\label{def_check_r2}
\end{eqnarray}
\end{subequations}
The motivation for this step is that in view of expansion~\reftext{\eqref{expansion_v}} we expect $\tilde q(D_x) D_x v(x) = O\left(x^2\right)$ as $x
\searrow0$, as the powers $x$ and $x^{1+\beta}$ are in the kernel of
$\tilde q(D_x)$ (cf.~\reftext{\eqref{poly_q2}}). This enables us to obtain
estimates in norms with larger weight exponents and therefore stronger control
at the free boundary $\{x = 0\}$.

We test \reftext{\eqref{lin_eq_qd}} with $\tilde q(D_x) D_x v$ in the inner
product $\left(\cdot,\cdot\right)_{\check\alpha}$, where $\check
\alpha$
is in the coercivity range of $\tilde q(D_x-1)$. The latter is the
coercivity range of $\tilde q(D_x)$ shifted by $+1$, that is, it suffices to have $\check
\alpha \in [1,2)$. This gives, like before in \reftext{\eqref{lin_est2}},
\begin{equation}\label{lin_est3}
\begin{aligned}
& \frac{\mathrm{d}}{\mathrm{d} t}  {\left\lvert\tilde
q(D_x) D_x v \right\rvert}_{\check\alpha- \frac
12}^2 +  {\left\lvert\tilde q(D_x) D_x v \right\rvert
}_{2,\check\alpha}^2 + \eta^2  {\left\lvert D_x v \right
\rvert}_{5,\check\alpha-1}^2+ \eta^4  {\left\lvert v
\right\rvert}_{5,\check\alpha-2}^2 \\
& \quad\lesssim_{\check\alpha}  {\left\lvert\tilde
q(D_x-1) (D_x-1) f \right\rvert}_{\check\alpha}^2 + \eta^2
 {\left\lvert D_x v \right\rvert}_{\check\alpha-1}^2 +
\eta^4
 {\left\lvert v \right\rvert}_{\check\alpha-2}^2.
\end{aligned}
\end{equation}
The only difference in the argumentation leading to \reftext{\eqref{lin_est3}}
compared to the reasoning before \reftext{\eqref{lin_est2}} is the second but
last term $ {\left\lvert D_x v \right\rvert}_{\check
\alpha-1}^2$ in the second line, which
can be obtained as in \reftext{\eqref{3term1}} noting that $\tilde q(\zeta+1)$
has a root in $\zeta= 0$. With the same reasoning as before, we can
upgrade \reftext{\eqref{lin_est3}} to
\begin{align}\label{estimate3-}
\begin{aligned}
& \frac{\mathrm{d}}{\mathrm{d} t} \left(\sum_{\ell= 0}^{\check k}
\check C_\ell\, \eta
^{2 \ell}  {\left\lvert D_x^{\check k - \ell}\tilde
q(D_x)D_x v \right\rvert}_{\check\alpha
-\frac1 2-\ell}^2 +  {\left\lvert\tilde q(D_x)D_x v
\right\rvert}_{\check\alpha- \frac
1 2}^2\right) + \sum_{\ell=0}^{\check k+2} \eta^{2 \ell}
 {\left\lvert\tilde q(D_x) D_x v \right\rvert}_{\check
k + 2 - \ell,\check\alpha-\ell}^2 \\
& \quad\lesssim_{\check k,\check\alpha} \sum_{\ell= 0}^{\check k-2}
\eta^{2 \ell}  {\left\lvert\tilde q(D_x-1)(D_x - 1) f
\right\rvert}_{\check k-2-\ell
,\check\alpha-\ell}^2 + \eta^2  {\left\lvert D_x v
\right\rvert}_{\check\alpha-1}^2 + \eta
^4  {\left\lvert v \right\rvert}_{\check\alpha-2}^2,
\end{aligned}
\end{align}
with constants $\check C_\ell > 0$ and $\check\alpha \in [1,2)$. Then
from the above we obtain a strong estimate of the form:
\begin{eqnarray}\nonumber
\skliaustask D_y^{\check j}
\tilde q(D_x) D_x v \skliaustasd_{\check k,\check\alpha- 1 +
\check j,I}^2 &\lesssim_{\check j, \check k,\check\alpha}& {\left\lVert
D_y^{\check j} \tilde q(D_x) D_x v_{|t = 0} \right\rVert}_{\check
k,\check\alpha- 1 + \check j}^2 + \int_I
{\left\lVert D_y^{\check j} \tilde q(D_x-1) (D_x-1) f
\right\rVert}_{\check k-2,\check
\alpha- \frac1 2 + \check j}^2 \mathrm{d} t \nonumber\\
&& + \sum_{\ell= 1}^2 \int_I {\left\lVert D_y^{\check j
+ \ell} D_x v \right\rVert}_{\check\alpha- \frac1 2 + \check j}^2
\mathrm{d} t. \label{estimate3-_int}
\end{eqnarray}
The last line in \reftext{\eqref{estimate3-_int}} contains the remnant terms,
which can be absorbed by adding a large multiple of \reftext{\eqref{higher_est_1}}, where we choose $j = \check j + 2$ and $\tilde k = \check k - 1$ provided $\check\alpha= -\delta
+ 2$. Thus,
estimate~\reftext{\eqref{estimate3-_int}} upgrades to
\begin{align}\nonumber
& \skliaustask D_y^j v \skliaustasd_{\check k - 2,-\delta-
1 + j,I}^2 +  {\skliaustask D_y^{j-1} D_x v \skliaustasd}_{\check k - 1,-\delta- 1 + j, I}^2 +
 {\skliaustask D_y^{j-2} \tilde q(D_x) D_x v \skliaustasd}_{\check k,-\delta- 1 +
j,I}^2 \nonumber\\
& \quad\lesssim_{j, \check k, \delta} {\left\lVert D_y^j v_{|t =
0} \right\rVert}_{\check k -
2,-\delta- 1 + j}^2 \! +\!  {\left\lVert D_y^{j-1} D_x v_{|t =
0} \right\rVert}_{\check k -
1,-\delta- 1 + j}^2 \! +\!  {\left\lVert D_y^{j-2} \tilde
q(D_x) D_x v_{|t = 0} \right\rVert}_{\check k,-\delta- 1 + j}^2
\nonumber\\
& \quad\phantom{\lesssim_{j, \check k, \delta}} + \int_I \left( {\left\lVert D_y^j f \right\rVert
}_{\check k - 4,-\delta- \frac1 2 +
j}^2 +  {\left\lVert D_y^{j-1} (D_x-1) f \right\rVert
}_{\check k - 3,-\delta- \frac1 2 +
j}^2\right) \mathrm{d} t \nonumber\\
& \quad\phantom{\lesssim_{j, \check k, \delta}} + \int_I  {\left\lVert D_y^{j-2} \tilde q(D_x-1)
(D_x-1) f \right\rVert}_{\check
k-2,-\delta- \frac1 2 + j}^2 \mathrm{d} t
\label{higher_est_2}
\end{align}
for $\delta\in\left(0,\frac{1}{10}\right)$, where $\check k \ge4$
and $j \ge2$.

Estimate~\reftext{\eqref{higher_est_2}} is $\delta$-sub-critical with respect to
the addend $v_2 x^2$ in expansion~\reftext{\eqref{expansion_v}}. For later
purposes we also choose the weight $\check\alpha= \delta+ 1$ in \reftext{\eqref{estimate3-_int}} being $\delta$-super-critical with respect to
the addend $v_1 x$ in \reftext{\eqref{expansion_v}} (i.e., respective norms blow up unless this addend is annihilated). Choosing $\check k :=
\tilde
k$ and $\check j := j-2$, where $j \ge2$, it then suffices to bound
the remnant terms $\int_I  {\left\lVert D_y^{j+\ell-2}
D_x v \right\rVert}_{\delta
-\frac3 2+j}^2 \mathrm{d} t$, where $\ell= 1, 2$. We now use that
$\int_I
 {\left\lVert D_y^{j+\ell-2} D_x v \right\rVert}_{\delta
-\frac3 2 + j}^2 \mathrm{d} t$ can be
absorbed by $\int_I  {\left\lVert D_y^{j-1} v \right
\rVert}_{\tilde k+1,\delta- \frac3 2
+ j}^2 \mathrm{d} t$ appearing on the left-hand side of \reftext{\eqref{higher_est_1_alt}}, where $k$ is replaced by $\tilde k - 1$ and $\tilde
k$ is replaced by $\check k - 1$. By combining these estimates, we find
the following bounds for the case $\check \alpha = \delta+1$
%
%
\begin{align}\nonumber
& {\skliaustask D_y^{j-1} v
\skliaustasd}_{\tilde k - 1,-\delta
-2+j,I}^2 +  {\skliaustask D_y^{j-1} D_x v \skliaustasd}_{\tilde k - 1,\delta-2+j,I}^2 +  {\skliaustask D_y^{j-2}
\tilde q(D_x) D_x v \skliaustasd}_{\tilde k,\delta- 2 + j,I}^2 + {\skliaustask D_y^j v \skliaustasd}_{\check k - 2,-\delta-1 + j, I}^2 \nonumber\\
& \quad\lesssim_{j, \tilde k, \check k, \delta} {\left\lVert
D_y^{j-1} v_{|t = 0} \right\rVert}_{\tilde k - 1,-\delta-2+j}^2 \! +\!
 {\left\lVert D_y^{j-1} D_x v_{|t = 0} \right\rVert
}_{\tilde k - 1,\delta-2+j}^2 \! +\!  {\left\lVert D_y^{j-2}
\tilde q(D_x) D_x v_{|t = 0} \right\rVert}_{\tilde k,\delta- 2 +
j}^2 \qquad\qquad\nonumber\\
& \quad\phantom{\lesssim_{j, \tilde k, \check k, \delta}} +  {\left\lVert D_y^j v_{|t = 0} \right\rVert}_{\check
k - 2,-\delta-1+j}^2 \nonumber\\
& \quad\phantom{\lesssim_{j, \tilde k, \check k, \delta}} + \int_I \left( {\left\lVert D_y^{j-1} f \right\rVert
}_{\tilde k - 3,-\delta- \frac32 + j}^2 +  {\left\lVert D_y^{j-1} (D_x-1) f \right\rVert
}_{\tilde k - 3,\delta- \frac32 + j}^2\right) \mathrm{d} t \nonumber\\
& \quad\phantom{\lesssim_{j, \tilde k, \check k, \delta}} + \int_I \left( {\left\lVert D_y^{j-2} \tilde q(D_x-1)
(D_x-1) f \right\rVert}_{\tilde
k-2,\delta- \frac3 2 + j}^2 +  {\left\lVert D_y^j f
\right\rVert}_{\check k - 4,-\delta
- \frac1 2 + j}^2\right) \mathrm{d} t \label{higher_est_2_alt}
\end{align}
for $\delta\in\left(0,\frac{1}{10}\right)$, $\tilde k \ge3$,
$\check
k \ge4$, and $j \ge2$.

\subsubsection{Higher regularity III}\label{sec:higher_3}

In order to additionally control the term $v_2 x^2$ in \eqref{expansion_v}, we apply $(D_x-4) (D_x-3)$ to equation~\reftext{\eqref{lin_eq_qd}} leading to
\begin{align}\label{lin_eq_qdqd}
\begin{aligned}
& \left( x\partial_t + \breve q(D_x)\right)(D_x-3) (D_x-2) \tilde
q(D_x) D_x v - \eta^2 x^2 \breve r_1(D_x) (D_x-1) D_x v + \eta^4 x^4
\breve r_2(D_x) D_x v \\
& \quad= (D_x-4) (D_x-3) \tilde q(D_x-1) (D_x-1) f,
\end{aligned}
\end{align}
where in view of \reftext{\eqref{poly_r2}}
%
%
\begin{subequations}\label{poly_breve}
%
%
\begin{eqnarray}
\breve r_1(\zeta) &=& (\zeta-2) \check r_1(\zeta) \stackrel{\text{\reftext{\eqref{def_check_r1}}}}{=} 2 \left(\zeta+ \tfrac3 2\right) \left(\zeta
+1\right
)^2 \left(\zeta+ \tfrac1 2\right) \left(\zeta+ \beta+ \tfrac12\right) \left(\zeta- \beta\right) (\zeta-2),\qquad \label{poly_breve_r} \\
\breve r_2(\zeta) &=& (\zeta+1) \check r_2(\zeta) \stackrel{\text{\reftext{\eqref{def_check_r2}}}}{=} (\zeta+1) \tilde q(\zeta+3)
(\zeta
+3) \nonumber \\
&=& \left(\zeta+ \tfrac72\right) \left(\zeta+ \beta+ \tfrac
{5}{2}\right) \left(\zeta+ 3\right) \left(\zeta+ 2\right) \left
(\zeta
- \beta+ 2\right) (\zeta+1)
\end{eqnarray}
are polynomials of order $7$ and $6$, respectively, and where we have
introduced the operator $\breve q(D_x)$ fulfilling $(D_x-4) (D_x-3)
\tilde q(D_x-1) = \breve q(D_x) (D_x-3) (D_x-2)$, i.e., (cf.~\reftext{\eqref{poly_q2}}) $\breve q(\zeta)$ is given by
%
%
\begin{equation}\label{def_breve_q}
\breve q(\zeta) = \left(\zeta- \tfrac1 2\right) \left(\zeta+
\beta-
\tfrac3 2\right) \left(\zeta- \beta- 2\right) \left(\zeta-
4\right).
\end{equation}
\end{subequations}
Our motivation for applying the operator $(D_x-4) (D_x-3)$ is that we
want the resulting operator $\breve q(D_x)$ to be coercive with respect
to a weight which is $\delta$-super-critical with respect to the term
$v_2x^2$ from expansion~\reftext{\eqref{expansion_v}}. We are led to prefer the
weight $\delta+2$ since remnant contributions can be easily absorbed
using estimates from the previous steps (see below). Note that applying operators
$D_x-\gamma$, where $\gamma-1$ is a root
of $\check q(\zeta)$, preserves the structure of the $\eta^0$-terms on
the left-hand side of \reftext{\eqref{lin_eq_qd}}. Since the root $2$ is the
third-largest root of $\tilde q(\zeta-1)$ and by criterion \reftext{\eqref{coer_cond_1}} of \reftext{Lemma~\ref{lem:coercivity}} limits the coercivity
range, we are lead to apply $D_x-3$ to \reftext{\eqref{lin_eq_qd}}, resulting in
the linear operator
\begin{equation*}
\left(D_x - \tfrac1 2\right) \left(D_x + \beta- \tfrac3 2\right)
\left(D_x - \beta- 2\right) \left(D_x - 3\right)
\end{equation*}
acting on $v$ in the $\eta^0$-term. This operator has coercivity range
containing a neighborhood around~$2$. It also produces a factor $D_x-1$
acting on $D_x v$ in the $\eta^2$-term, so that the fist two terms
$v_0$ and $v_1 x$ in expansion~\reftext{\eqref{expansion_v}} are canceled and in
a weak estimate with weight $\delta+2$, where $\delta> 0$, the
resulting term remains finite. However, in the resulting equation the
$\eta^4$-term has no $D_x$ acting on $v$, which because of the boundary
value $v_0$ (cf.~\reftext{\eqref{expansion_v}}) would lead to a blow-up in a weak
estimate with weight $\delta+2$ where $\delta> 0$. For this reason, we
need the operator to be applied to~\reftext{\eqref{lin_eq_qd}} to have a factor
$(D_x-4)$, so that after commuting with $x^4$, the operator $D_x$ acts
on $v$ in the $\eta^4$-term.

Further note that the operator $(D_x-3) (D_x-2) \tilde q(D_x) D_x$
acting on $v$ (cf.~\reftext{\eqref{poly_q2}}) does not cancel the addend $O\left
(x^{1+2\beta}\right)$ from \reftext{\eqref{expansion_v}}, which, as it turns out
later, generically appears via the nonlinear equation (cf.~\S \ref{sec:nonlinear_struct}).
However, the weight $\delta+2$,
which we introduce for $\delta\in\left(0,\frac{1}{10}\right)$ as before,
meets $\delta+2 < 1+2\beta$ in view of \reftext{\eqref{def_beta}} and thus also
avoids the aforementioned $O(x^{1+2\beta})$-terms.\footnote{In the
framework of the more general thin-film equation \reftext{\eqref{tfe_general}},
where $\beta$ is a function of the mobility exponent $n$, this
restricts our analysis to those $n$ for which $\beta> \frac1 2$.}

It remains to verify coercivity of $\breve q(D_x)$ employing
criterion~\reftext{\eqref{coer_cond}} of \reftext{Lemma~\ref{lem:coercivity}}. We find that
the mean of the zeros of $\breve q(\zeta)$ is given by $\breve m = 2$
and the root of the variance fulfills $\breve\sigma> \frac5 4 >
\frac
{\sqrt3}{10}$, so that indeed weights $2+\delta$ with $\delta\in
\left
(0,\frac{1}{10}\right)$ are admissible.

Testing \reftext{\eqref{lin_eq_qdqd}} with $(D_x-3) (D_x-2) \tilde q(D_x) D_x v$
in the inner product $\left(\cdot,\cdot\right)_{\delta+2}$, we obtain
analogously to the computations in \S \ref{sec:basic_weak} and \S \ref{sec:higher_1}
\begin{eqnarray}\label{test_higher_3}
\begin{aligned}
& \frac{\mathrm{d}}{\mathrm{d} t}  {\left\lvert(D_x-2)
\tilde q(D_x) D_x v \right\rvert}_{1,\delta+ \frac
3 2}^2 +  {\left\lvert(D_x-2) \tilde q(D_x) D_x v \right
\rvert}_{3,\delta+2}^2 + \eta^2
 {\left\lvert(D_x-1) D_x v \right\rvert}_{6,\delta+1}^2
+ \eta^4  {\left\lvert D_x v \right\rvert}_{6,\delta
}^2 \\
& \quad\lesssim_\delta {\left\lvert(D_x-4) (D_x-3)
\tilde q(D_x-1) (D_x-1) f \right\rvert}_{\delta+2}^2 + \eta^2
 {\left\lvert (D_x-1) D_x v \right\rvert}_{\delta
+1}^2 + \eta
^4  {\left\lvert D_x v \right\rvert}_\delta^2.
\end{aligned}
\end{eqnarray}
Upgrading the weak estimate~\reftext{\eqref{test_higher_3}} follows the lines of
the respective discussion when passing from the weak estimates \reftext{\eqref{lin_est1}}, \reftext{\eqref{lin_est2}}, and \reftext{\eqref{lin_est3}} to the strong
estimates \reftext{\eqref{estimate1}}, \reftext{\eqref{estimate2-}}, and \reftext{\eqref{estimate3-}}, respectively. Therefore, we skip all details and arrive at
\begin{align}
& \frac{\mathrm{d}}{\mathrm{d} t} \left(\sum_{\ell= 0}^{\check k}
\breve C_\ell\eta
^{2\ell}  {\left\lvert D_x^{\check k - \ell} (D_x-3)
(D_x-2) \tilde q(D_x) D_x v \right\rvert}_{\delta+ \frac3 2 - \ell
}^2 + {\left\lvert(D_x-3) (D_x-2) \tilde q(D_x) D_x v
\right\rvert}_{\delta+ \frac3 2}^2\vphantom{\sum_{\ell= 0}^{\check k}}\right) \nonumber\\
& + \sum_{\ell= 0}^{\check k + 2} \eta^{2\ell}  {\left
\lvert(D_x-3) (D_x-2) \tilde q(D_x) D_x v \right\rvert}_{\check k +
2 - \ell,\delta+2 - \ell}^2 \label{strong_higher_3}\\
& \quad\lesssim_{\check k,\delta} \sum_{\ell= 0}^{\check k - 2}
\eta
^{2\ell}  {\left\lvert(D_x-4) (D_x-3) \tilde q(D_x-1)
(D_x-1) f \right\rvert}_{\check k -
2 - \ell,\delta+2}^2 + \eta^2  {\left\lvert (D_x-1) D_x v \right\rvert}_{\delta+1}^2
+ \eta^4  {\left\lvert D_x v \right\rvert}_\delta
^2,\nonumber
\end{align}
with constants $\check k \ge2$, $\breve C_\ell> 0$, and $\delta\in
\left(0,\frac{1}{10}\right)$. In integrated form and after employing
equation~\reftext{\eqref{lin_eq_qdqd}} to obtain control on the time derivative
$\partial_t v$, we find
\begin{align}\nonumber
& {\skliaustask D_y^{\breve j} (D_x-3) (D_x-2) \tilde q(D_x) D_x v \skliaustasd}_{\check k,\delta+1 + \breve j, I}^2 \\
&\quad\lesssim_{\breve j, \check k,\delta} {\left\lVert D_y^{\breve j}
(D_x-3) (D_x-2) \tilde q(D_x) D_x v_{|t = 0} \right\rVert}_{\check
k,\delta+ 1 + \breve j}^2
\nonumber\\
&\quad\phantom{\lesssim_{\breve j, \check k,\delta}} + \int_I  {\left\lVert D_y^{\breve j} (D_x-4) (D_x-3)
\tilde q(D_x-1) (D_x-1) f \right\rVert}_{\check k-2,\delta+ \frac32 + \breve j}^2 \mathrm{d} t \nonumber
\\
&\quad\phantom{\lesssim_{\breve j, \check k,\delta}} + \int_I  {\left\lVert D_y^{\breve j + 1} (D_x-1)
D_x v \right\rVert}_{\delta+
\frac3 2 + \breve j}^2 \mathrm{d} t + \int_I  {\left
\lVert D_y^{\breve j + 2} D_x v \right\rVert}_{\delta+ \frac3 2 +
\breve j}^2 \mathrm{d} t
\label{strong_higher_int}
\end{align}
for $\breve j \ge0$.

In order to absorb the remnant terms
\begin{equation*}
\int_I \left( {\left\lVert D_y^{\breve j + 1} (D_x-1) D_x v \right\rVert}_{\delta+
\frac3 2 + \breve j}^2 +  {\left\lVert D_y^{\breve j + 2}
D_x v \right\rVert}_{\delta+
\frac3 2 + \breve j}^2\right) \mathrm{d} t,
\end{equation*}
we choose $\breve j := j - 3$ and assume $j \ge3$,
so that the remnant terms can be bounded by
\begin{equation*}
\int_I \left( {\left\lVert D_y^{j-1} D_x v \right\rVert
}_{\check k,\delta-\frac3 2 + j}^2
+  {\left\lVert D_y^{j-2} \tilde q(D_x) D_x v \right\rVert
}_{\check k + 1,\delta- \frac32 + j}^2\right) \mathrm{d} t,
\end{equation*}
appearing on the left-hand side of \reftext{\eqref{higher_est_2_alt}} with
$\tilde k$ replaced by $\check k - 1$ and $\check k$ replaced by
$\breve k$, where $\breve k \ge4$. Note that this requires
using \reftext{Lemma~\ref{lem:mainhardy}} of \S\ref{sec:elliptic}.
This results in the estimate
\begin{align}\nonumber
& {\skliaustask D_y^{j-1} v \skliaustasd}_{\check k - 2,-\delta -2+j, I}^2 
+  {\skliaustask D_y^{j-1} D_x v \skliaustasd}_{\check k - 2,\delta-2+j,I}^2 +  {\skliaustask D_y^{j-2}
\tilde q(D_x) D_x v \skliaustasd}_{\check k - 1,\delta- 2 +j,I}^2
\\
& + {\skliaustask D_y^{j-3} (D_x-3) (D_x-2) \tilde q(D_x) D_x v \skliaustasd}_{\check k,\delta-2 + j,I}^2 +  {\skliaustask D_y^j v \skliaustasd}_{\breve k
- 2,-\delta-1 + j, I}^2 \nonumber\\
&\quad\lesssim_{j,\check k, \breve k, \delta} {\left\lVert
D_y^{j-1} v_{|t = 0} \right\rVert}_{\check k - 2,-\delta-2+j}^2 \!+\!
 {\left\lVert D_y^{j-1} D_x v_{|t = 0} \right\rVert
}_{\check k - 2,\delta-2+j}^2 \!+\!  {\left\lVert D_y^{j-2}
\tilde q(D_x) D_x v_{|t = 0} \right\rVert}_{\check k - 1,\delta- 2 +
j}^2 \nonumber \\
&\quad\phantom{\lesssim_{j,\check k, \breve k, \delta}} +  {\left\lVert D_y^{j-3} (D_x-3) (D_x-2)
\tilde q(D_x) D_x v_{|t = 0} \right\rVert}_{\check k,\delta- 2 +
j}^2 +  {\left\lVert D_y^j v_{|t = 0} \right\rVert
}_{\breve k
- 2,-\delta-1+j}^2 \nonumber \\
&\quad\phantom{\lesssim_{j,\check k, \breve k, \delta}} + \int_I \left( {\left\lVert D_y^{j-1} f
\right\rVert}_{\check k - 4,-\delta-
\frac3 2 + j}^2 +  {\left\lVert D_y^{j-1} (D_x-1) f \right
\rVert}_{\check k -4,\delta-
\frac3 2 + j}^2\right) \mathrm{d} t \nonumber\\
&\quad\phantom{\lesssim_{j,\check k, \breve k, \delta}} + \int_I  {\left\lVert D_y^{j-2} \tilde
q(D_x-1) (D_x-1) f \right\rVert}_{\check k - 3,\delta- \frac3 2 +
j}^2 \mathrm{d} t \nonumber\\
&\quad\phantom{\lesssim_{j,\check k, \breve k, \delta}} + \int_I\left(  {\left\lVert D_y^{j - 3}
(D_x-4) (D_x-3) \tilde q(D_x-1) (D_x-1) f \right\rVert}_{\check
k-2,\delta- \frac3 2 + j}^2 +  {\left\lVert D_y^j f
\right\rVert}_{\breve k - 4,-\delta- \frac1 2 + j}^2 \right
)\mathrm{d} t \nonumber\\
\label{higher_est_3}
\end{align}
for $\delta\in\left(0,\frac{1}{10}\right)$, where $\check k \ge4$,
$\breve k \ge4$, and $j \ge3$.\goodbreak

\subsubsection{Maximal regularity for the linear degenerate-parabolic
equation}\label{sec:max_reg}

In this section, we combine
\begin{itemize}
\item estimate~\reftext{\eqref{estimate1_int}} with $j=0$,
\item estimate~\reftext{\eqref{higher_est_1}} with $j =1$,
\item estimate~\reftext{\eqref{higher_est_2}} with $j = 2$,
\item estimate~\reftext{\eqref{higher_est_2}} with $j=3$ and $\tilde k$ replaced by $\breve k$,
\item estimate~\reftext{\eqref{higher_est_2_alt}} with $j = 2$, and
\item estimate~\reftext{\eqref{higher_est_3}} with $j = 3$
\end{itemize}
to obtain maximal regularity for the
linear degenerate-parabolic problem \reftext{\eqref{lin_cauchy}}
in form of
\begin{equation}\label{mr}
{\skliaustask v\skliaustasd}_\mathrm{sol} \lesssim_{k,\tilde k,
\check k, \breve k, \delta}  {\left\lVert v^{(0)} \right\rVert
}_\mathrm{init}^\prime+ {\skliaustask f\skliaustasd}_\mathrm
{rhs}^\prime,
\end{equation}
where we have introduced the following norms:

\medskip

The norm ${\skliaustask\cdot\skliaustasd}_{\mathrm{sol}}$ for the
solution $v$ is defined through
\begin{align}\nonumber
{\skliaustask v\skliaustasd}_\mathrm{sol}^2
&:=
{\skliaustask v \skliaustasd}_{k, - \delta-1 ,I}^2
+ {\skliaustask D_y v \skliaustasd}_{\tilde k - 1, - \delta,I}^2
+ {\skliaustask D_x v \skliaustasd}_{\tilde k, - \delta,I}^2
+  {\skliaustask D_y D_x v \skliaustasd}_{\tilde k - 1, \delta, I}^2
+ {\skliaustask\tilde q(D_x) D_x v \skliaustasd}_{\tilde k, \delta, I}^2
\nonumber\\
&\phantom{:=} + {\skliaustask D_y^2 v \skliaustasd}_{\check k - 2, - \delta+1,I}^2
+ {\skliaustask D_y D_x v \skliaustasd}_{\check k -1, - \delta+ 1, I}^2
+ {\skliaustask\tilde q(D_x) D_x v\skliaustasd}_{\check k, - \delta+ 1, I}^2
+ {\skliaustask D_y^2 D_x v \skliaustasd}_{\check k - 2, \delta+ 1, I}^2
\nonumber\\
&\phantom{:=} + {\skliaustask D_y \tilde q(D_x)D_x v \skliaustasd}_{\check k - 1, \delta+ 1, I}^2 
+ {\skliaustask(D_x-3) (D_x-2) \tilde q(D_x) D_x v \skliaustasd}_{\check k, \delta+ 1, I}^2 
\nonumber\\
&\phantom{:=} +  {\skliaustask D_y^3 v \skliaustasd}_{\breve k - 2, - \delta+ 2,I}^2
+ {\skliaustask D_y^2D_x v \skliaustasd}_{\breve k - 1, - \delta+ 2,I}^2
+ {\skliaustask D_y \tilde q(D_x) D_x v \skliaustasd}_{\breve k, - \delta+ 2,I}^2.
\label{norm_sol}
\end{align}
Its equivalence to the norm ${\skliaustask\cdot\skliaustasd}_\mathrm
{Sol}$ defined in \reftext{\eqref{norm_sol_simple}}
for $I = [0,\infty)$ is given through \reftext{Lemma~\ref{lem:equivalence_sol}} (cf.~\S\ref{sec:equivnorms}).\goodbreak

The norm $ {\left\lVert\cdot\right\rVert}_\mathrm
{init}^\prime$ for the initial data
$v^{(0)}$ is given through
%
%
\begin{align}\nonumber
\left( {\left\lVert v^{(0)} \right\rVert}_\mathrm{init}^\prime\right)^2
&:=
{\left\lVert v^{(0)} \right\rVert}_{k, -1- \delta}^2
+ {\left\lVert D_y v^{(0)} \right\rVert}_{\tilde k-1, - \delta}^2
+ {\left\lVert D_x v^{(0)} \right\rVert}_{\tilde k, - \delta}^2
+ {\left\lVert D_y D_x v^{(0)}\right\rVert}_{\tilde k - 1, \delta}^2
+ {\left\lVert\tilde q(D_x) D_x v^{(0)}\right\rVert}_{\tilde k, \delta}^2
\nonumber\\
&\phantom{:=} + {\left\lVert D_y^2 v^{(0)} \right\rVert}_{\check k - 2, - \delta+ 1}^2
+ {\left\lVert D_y D_x v^{(0)} \right\rVert}_{\check k - 1, - \delta+ 1}^2
+ {\left\lVert \tilde q(D_x) D_x v^{(0)} \right\rVert}_{\check k, - \delta+ 1}^2 + {\left\lVert D_y^2 D_x v^{(0)} \right\rVert}_{\check k - 2, \delta+ 1}^2 \nonumber\\
&\phantom{:=}+ {\left\lVert D_y \tilde q(D_x) D_x v^{(0)} \right\rVert}_{\check k - 1, \delta+
1}^2 + {\left\lVert(D_x-3) (D_x-2) \tilde q(D_x) D_x v^{(0)} \right\rVert}_{\check k,
\delta+ 1}^2 \nonumber\\
&\phantom{:=}+  {\left\lVert D_y^3 v^{(0)} \right\rVert}_{\breve k - 2, - \delta+2}^2
+ {\left\lVert D_y^2D_x v^{(0)} \right\rVert}_{\breve k - 1, - \delta+2}^2
+ {\left\lVert D_y \tilde q(D_x) D_x v^{(0)} \right\rVert}_{\breve k, - \delta+2}^2
\label{norm_init_p}
\end{align}

The norm ${\skliaustask\cdot\skliaustasd}_\mathrm{rhs}^\prime$ for
the right-hand side $f$ is determined by
\begin{align}\nonumber
\left({\skliaustask f\skliaustasd}_\mathrm{rhs}^\prime\right)^2
&:= \int_I \left({\left\lVert f \right\rVert}_{k - 2, - \delta- \frac1 2}^2
+ {\left\lVert D_y f \right\rVert}_{\tilde k - 3, - \delta+ \frac1 2}^2
+ {\left\lVert(D_x-1) f \right\rVert}_{\tilde k - 2, - \delta+ \frac1 2}^2
+ {\left\lVert D_y (D_x-1) f \right\rVert}_{\tilde k - 3, \delta+ \frac1 2}^2 \right)
\mathrm{d} t \nonumber\\
&\phantom{:=}
+ \int_I\left( {\left\lVert\tilde q(D_x-1) (D_x-1) f\right\rVert}_{\tilde k - 2,\delta+ \frac1 2}^2
+ {\left\lVert D_y^2 f \right\rVert}_{\check k - 4, - \delta+\frac3 2}^2
+ {\left\lVert D_y (D_x-1) f \right\rVert}_{\check k - 3, - \delta+ \frac32}^2
\right)\mathrm{d} t \nonumber\\
&\phantom{:=}
+ \int_I \left( {\left\lVert\tilde q(D_x-1) (D_x-1) f \right\rVert}_{\check k - 2, -\delta+ \frac3 2}^2
+ {\left\lVert D_y^2 (D_x-1) f\right\rVert}_{\check k - 4, \delta+ \frac3 2}^2
\right) \mathrm{d} t \nonumber\\
&\phantom{:=}
+ \int_I \left(
{\left\lVert D_y \tilde q(D_x-1)(D_x-1) f \right\rVert}_{\check k - 3, \delta+ \frac3 2}^2
+ {\left\lVert(D_x-4) (D_x-3)\tilde q(D_x-1) (D_x-1) f \right\rVert}_{\check k - 2, \delta+ \frac
3 2}^2\right)
\mathrm{d} t \nonumber\\
&\phantom{:=} + \int_I \left(
{\left\lVert D_y^3 f \right\rVert}_{\breve k - 4, - \delta+ \frac5 2}^2
+ {\left\lVert D_y^2(D_x-1) f \right\rVert}_{\breve k - 3, - \delta+ \frac5 2}^2
+ {\left\lVert D_y \tilde q(D_x-1) (D_x-1) f \right\rVert}_{\breve k - 2, - \delta+ \frac5 2}^2
\right) \mathrm{d}t. \nonumber\\
\label{norm_rhs_p}
\end{align}
Here, $k\ge2, \tilde k \ge3$, $\check k \ge4$, $\breve k \ge4$, and $\delta
\in\left(0,\frac{1}{10}\right)$. Further conditions on the number of
derivatives $k$, $\tilde k$, $\check k$, and $\breve k$ will be imposed for the approximation results of \S \ref{sec:approx} and when
treating the nonlinearity ${\mathcal N}(v)$ (cf.~\reftext{\eqref{def_nonlinearity}}) in \S \ref{sec:nonlinear_est}. A simplification of the norms $
{\left\lVert\cdot\right\rVert}_\mathrm{init}$ and ${\skliaustask
\cdot\skliaustasd}_\mathrm{rhs}^\prime$ is provided
in \reftext{Lemmata~\ref{lem:equivalence_init} and~\ref{lem:equivalence_rhs}} of
\S \ref{sec:elliptic}. A rigorous justification of the
maximal-regularity estimate \reftext{\eqref{mr}} will be the subject of \S \ref{sec:lin_rig}.

\subsection{Norms and their properties}\label{sec:norms_prop}

\subsubsection{Simplifications of our norms}\label{sec:equivnorms}

The following weighted trace estimate gives control on the time trace,
which follows directly by using the fundamental theorem of calculus and
Young's inequality (see for instance \cite[Lemma~B.2]{ggko.2014}). The precise version included below is convenient
because it can be used to estimate the supremum terms from the
definition \reftext{\eqref{norm_sol}} of ${\skliaustask v\skliaustasd}_\mathrm
{sol}$ by the
integral terms appearing in the same norm.
%
%
\begin{lemma}[Time trace]\label{lem:trace_estimate}
Assume that $T \in (0,\infty]$, $I=[0,T) \subseteq[0,\infty)$, and let $w \colon I \times(0,\infty) \times\mathbb{R} \to \R$ be locally integrable. Then we have
%
%
\begin{equation}\label{traceest0}
\sup_{t\in I}  {\left\lvert w \right\rvert}_{\ell
,\gamma}^2\lesssim_{\ell,\gamma}  {\left\lvert w|_{t=0}
\right\rvert}_{\ell,\gamma}^2+\int_I {\left\lvert
\partial_tw \right\rvert}_{\ell-2,\gamma
-\frac12}^2\mathrm{d} t + \int_I {\left\lvert w \right
\rvert}_{\ell+2,\gamma+\frac12}^2 \mathrm{d} t,
\end{equation}
and for the case $I=[0,\infty)$ we have
%
%
\begin{equation}\label{traceest}
\sup_{t\in [0,\infty)}  {\left\lvert w \right\rvert}_{\ell
,\gamma}^2\lesssim_{\ell,\gamma} \int_0^\infty {\left\lvert\partial_tw \right\rvert
}_{\ell-2,\gamma-\frac12}^2\mathrm{d} t + \int
_0^\infty {\left\lvert w \right\rvert}_{\ell+2,\gamma+\frac
12}^2 \mathrm{d} t
\end{equation}
for $\gamma\in\mathbb{R}$ and $\ell\ge2$.
\end{lemma}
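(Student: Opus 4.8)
The plan is to derive \reftext{\eqref{traceest0}} from the fundamental theorem of calculus applied to the scalar function $g(t):=|w(t,\cdot)|_{\ell,\gamma}^2$, and then to obtain \reftext{\eqref{traceest}} as a limiting case. First I would reduce, by a density argument, to $w$ that is smooth and compactly supported in $x\in(0,\infty)$ uniformly for $t$ in compact subintervals of $I$; this makes $g$ continuously differentiable and makes the boundary terms in the $x$-integrations by parts below vanish, the general locally integrable case then following by approximation together with lower semicontinuity of the left-hand sides under convergence in the relevant weighted $L^2$-topology.

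Differentiating the definition \reftext{\eqref{norm_1d}} gives
\begin{equation*}
\frac{\mathrm{d}}{\mathrm{d} t}g(t)=2\sum_{j=0}^{\ell}\int_0^\infty x^{-2\gamma}\left(D_x^j w\right)\left(D_x^j\partial_t w\right)\frac{\mathrm{d} x}{x},
\end{equation*}
and for each $j\in\{0,\dots,\ell\}$ I would integrate by parts in $x$ at most twice, precisely $m_j:=\max\{0,j-\ell+2\}$ times, in order to shift $m_j$ copies of $D_x=x\partial_x$ off $\partial_t w$ and onto $w$, using the skew-symmetry of $D_x$ with respect to $\int_0^\infty\cdot\,\tfrac{\mathrm{d} x}{x}$ and the commutator rule $D_x(x^{-2\gamma}\cdot)=x^{-2\gamma}(D_x-2\gamma)\cdot$. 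Because $\ell\ge2$, one checks that $0\le m_j\le\min\{j,2\}$, $j-m_j\le\ell-2$ and $j+m_j\le\ell+2$, so after writing $x^{-2\gamma}=x^{-\gamma-\frac12}\,x^{-\gamma+\frac12}$ and applying Cauchy--Schwarz the $j$-th term is controlled by $|w(t,\cdot)|_{\ell+2,\gamma+\frac12}\,|\partial_t w(t,\cdot)|_{\ell-2,\gamma-\frac12}$ up to lower-order remainders (fewer $D_x$'s on $w$) which are absorbed by a standard interpolation inequality for the norms \reftext{\eqref{norm_1d}}. Young's inequality then yields $\bigl|\tfrac{\mathrm{d}}{\mathrm{d} t}g(t)\bigr|\lesssim_{\ell,\gamma}\Phi(t)$ with $\Phi(t):=|\partial_t w(t,\cdot)|_{\ell-2,\gamma-\frac12}^2+|w(t,\cdot)|_{\ell+2,\gamma+\frac12}^2$, and integrating from $0$ to $t$ and taking $\sup_{t\in I}$ gives \reftext{\eqref{traceest0}}.

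For \reftext{\eqref{traceest}} I would integrate the same differential inequality from a variable base point $t_0$ instead of from $0$, obtaining $g(t)\le g(t_0)+C_{\ell,\gamma}\int_{t_0}^{\infty}\Phi(s)\,\mathrm{d} s$ for all $t$; assuming the right-hand side of \reftext{\eqref{traceest}} is finite (otherwise there is nothing to prove), $\Phi\in L^1([0,\infty))$, so $g$ is bounded and $\lim_{t\to\infty}g(t)$ exists, and letting $t_0\to\infty$ along a minimizing sequence while using $\int_{t_0}^\infty\Phi\to0$ reduces matters to the claim $\lim_{t\to\infty}|w(t,\cdot)|_{\ell,\gamma}=0$, which I would extract from a weighted interpolation bound together with finiteness of $\int_0^\infty|w(t,\cdot)|_{\ell+2,\gamma+\frac12}^2\,\mathrm{d} t$, exactly as in \cite[Lemma~B.2]{ggko.2014}. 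The main technical point is the bookkeeping in the second paragraph — tracking which weight and how many $D_x$'s land where after the (at most two) integrations by parts, and verifying that $\ell\ge2$ is precisely what makes the constraints $j-m_j\le\ell-2$, $j+m_j\le\ell+2$, $m_j\le j$ simultaneously solvable — while the one subtlety specific to the unbounded interval is the vanishing of $\lim_{t\to\infty}|w(t,\cdot)|_{\ell,\gamma}$.
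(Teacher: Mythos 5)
Your argument for \eqref{traceest0} is correct and is essentially the route the paper indicates (fundamental theorem of calculus in $t$, integration by parts in $x$ to shift at most two $D_x$'s off $\partial_t w$, Cauchy--Schwarz with the weight splitting $x^{-2\gamma}=x^{-(\gamma-\frac12)}x^{-(\gamma+\frac12)}$, and Young, as in \cite[Lemma~B.2]{ggko.2014}); your bookkeeping $m_j=\max\{0,j-\ell+2\}$ is exactly what makes $\ell\ge2$ enter, and the commutator remainders need no interpolation at all, since the norms $\lvert\cdot\rvert_{\ell+2,\gamma+\frac12}$ and $\lvert\cdot\rvert_{\ell-2,\gamma-\frac12}$ already contain all lower derivative orders.

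The gap is in your proof of \eqref{traceest}: the closing claim that $\lim_{t\to\infty}\lvert w(t,\cdot)\rvert_{\ell,\gamma}=0$ can be ``extracted from a weighted interpolation bound together with finiteness of $\int_0^\infty\lvert w(t,\cdot)\rvert^2_{\ell+2,\gamma+\frac12}\,\mathrm{d} t$'' does not work as stated. Since the weight $x^{-2(\gamma+\frac12)}$ is \emph{weaker} than $x^{-2\gamma}$ for $x>1$, the norm $\lvert w(t,\cdot)\rvert_{\ell,\gamma}$ is neither dominated by nor interpolated from $\lvert w(t,\cdot)\rvert_{\ell+2,\gamma+\frac12}$ alone: smallness of the latter at some time says nothing about the portion of the $\lvert\cdot\rvert_{\ell,\gamma}$-mass located at large $x$, so time-integrability of the higher-order norm cannot by itself force decay (even along a sequence) of the middle norm; one must also use the $\partial_t w$ term to rule out escape of norm mass to $x=\infty$. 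Two standard ways to close this: (i) use the genuinely two-sided interpolation $\lvert w\rvert^2_{\ell,\gamma}\lesssim\lvert w\rvert_{\ell+2,\gamma+\frac12}\,\lvert w\rvert_{\ell-2,\gamma-\frac12}$ (midpoint in both weight and derivative count), combine it with $\lvert w(t,\cdot)\rvert_{\ell-2,\gamma-\frac12}\le\lvert w(t_0,\cdot)\rvert_{\ell-2,\gamma-\frac12}+\sqrt{t-t_0}\,\bigl(\int_{t_0}^\infty\lvert\partial_t w\rvert^2_{\ell-2,\gamma-\frac12}\,\mathrm{d} s\bigr)^{1/2}$, and evaluate along times $t_n$ with $\sqrt{t_n}\,\lvert w(t_n,\cdot)\rvert_{\ell+2,\gamma+\frac12}\to0$ (such $t_n$ exist because $\int\lvert w\rvert^2_{\ell+2,\gamma+\frac12}\,\mathrm{d} t<\infty$); the finiteness of $\lvert w(t_0,\cdot)\rvert_{\ell-2,\gamma-\frac12}$ at one time is supplied by your initial smoothing/truncation reduction; or (ii) split the $x$-integral of $g(t)=\lvert w(t,\cdot)\rvert^2_{\ell,\gamma}$ at a large radius $R$: the region $x\le R$ contributes at most $R\,\lvert w(t,\cdot)\rvert^2_{\ell+2,\gamma+\frac12}$, while the tail obeys your differential inequality (with the boundary terms at $x=R$ handled, e.g., by averaging in $R$), so it is bounded by its value at one fixed large time plus $C\int_{t_0}^\infty\Phi\,\mathrm{d} s$; letting $R\to\infty$ and then $t_0\to\infty$ gives the vanishing. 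With either repair your scheme $g(t)\le g(t_0)+C\int_0^\infty\Phi\,\mathrm{d} s$ with $t_0\to\infty$ closes, but as written this step --- the only point where \eqref{traceest} genuinely differs from \eqref{traceest0} --- is not justified.
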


Next, we give the following simplification of the norms $
{\left\lVert\cdot\right\rVert}_\mathrm{init}^\prime$,
${\skliaustask\cdot\skliaustasd}_\mathrm{rhs}^\prime$, and
${\skliaustask\cdot\skliaustasd}_\mathrm{sol}$, whose proofs can be
found in \S \ref{app:equivnorm}.
%
%
\begin{lemma}\label{lem:equivalence_init}
Suppose $\delta\in\left(0,\frac1{10}\right)$, $k \ge \max\{\tilde k+4, \check k + 6, \breve k+5\}$, $\tilde k \ge1$,
$\check k \ge2$, $\breve k\ge2$, and define the norm $
{\left\lVert v^{(0)} \right\rVert}_\mathrm{init}$ for a locally
integrable function $v^{(0)}: \,
(0,\infty) \times\mathbb{R}\to\mathbb{R}$ through \reftext{\eqref{norm_init}}, i.e.,
\begin{eqnarray*}
 {\left\lVert v^{(0)} \right\rVert}_\mathrm{init}^2 &=&
 {\left\lVert v^{(0)} \right\rVert}_{k, -\delta-1}^2 + {\left\lVert D_x v^{(0)} \right\rVert}_{\tilde k,
-\delta}^2 + {\left\lVert\tilde q(D_x) D_x v^{(0)}
\right\rVert}_{\tilde k, \delta}^2 +
 {\left\lVert\tilde q(D_x)D_xv^{(0)} \right\rVert
}_{\check k, -\delta+1}^2\\
&&+ {\left\lVert(D_x-3)(D_x-2)\tilde q(D_x)D_xv^{(0)}
\right\rVert}_{\check k,\delta
+1}^2+ {\left\lVert D_y \tilde q(D_x) D_xv^{(0)} \right\rVert}_{\breve
k,-\delta+2}^2.
\end{eqnarray*}
Then the norms $ {\left\lVert\cdot\right\rVert
}_\mathrm{init}^\prime$ (cf.~\reftext{\eqref{norm_init_p}}) and $ {\left
\lVert\cdot\right\rVert
}_\mathrm{init}$ are equivalent on the
space of all locally integrable $v^{(0)}: \, (0,\infty) \times\mathbb
{R}\to\mathbb{R}
$ with $ {\left\lVert v^{(0)} \right\rVert}_\mathrm
{init} < \infty$.
\end{lemma}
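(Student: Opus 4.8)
The plan is to show the two-sided inequality $\|v^{(0)}\|_{\mathrm{init}} \lesssim \|v^{(0)}\|_{\mathrm{init}}' \lesssim \|v^{(0)}\|_{\mathrm{init}}$ by comparing the summands of \eqref{norm_init_p} and \eqref{norm_init} one by one. The easy direction is $\|v^{(0)}\|_{\mathrm{init}} \lesssim \|v^{(0)}\|_{\mathrm{init}}'$: every term in the definition \eqref{norm_init} of $\|v^{(0)}\|_{\mathrm{init}}$ literally appears as one of the fourteen summands of $\|v^{(0)}\|_{\mathrm{init}}'$ in \eqref{norm_init_p} — namely $\|v^{(0)}\|_{k,-\delta-1}$, $\|D_x v^{(0)}\|_{\tilde k,-\delta}$, $\|\tilde q(D_x) D_x v^{(0)}\|_{\tilde k,\delta}$, $\|\tilde q(D_x) D_x v^{(0)}\|_{\check k,-\delta+1}$, $\|(D_x-3)(D_x-2)\tilde q(D_x) D_x v^{(0)}\|_{\check k,\delta+1}$, and $\|D_y \tilde q(D_x) D_x v^{(0)}\|_{\breve k,-\delta+2}$ — so this direction is immediate once one matches the labels and drops the remaining nonnegative terms.

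For the reverse direction, the point is that each of the fourteen terms of $\|v^{(0)}\|_{\mathrm{init}}'$ is controlled by the six terms of $\|v^{(0)}\|_{\mathrm{init}}$. The mechanism is twofold. First, any term involving $D_y$-derivatives, such as $\|D_y v^{(0)}\|_{\tilde k-1,-\delta}$, $\|D_y D_x v^{(0)}\|_{\tilde k-1,\delta}$, $\|D_y^2 v^{(0)}\|_{\check k-2,-\delta+1}$, $\|D_y^2 D_x v^{(0)}\|_{\check k-2,\delta+1}$, $\|D_y \tilde q(D_x) D_x v^{(0)}\|_{\check k-1,\delta+1}$, $\|D_y^3 v^{(0)}\|_{\breve k-2,-\delta+2}$, and $\|D_y^2 D_x v^{(0)}\|_{\breve k-1,-\delta+2}$, can be absorbed because $D_y$ acting on a function with $\ell$ spatial derivatives in the $\|\cdot\|_{m,\alpha}$-norm is bounded by $\|\cdot\|_{m+\ell_y,\alpha}$; since the exponent $\alpha$ is unchanged and $k$ is assumed much larger than $\tilde k, \check k, \breve k$ (precisely $k \ge \max\{\tilde k+4,\check k+6,\breve k+5\}$), these are dominated either by $\|v^{(0)}\|_{k,-\delta-1}$ (after observing $-\delta-1 < -\delta < -\delta+1 < -\delta+2$, which by the weight monotonicity discussed in \S\ref{sec:norms} is the direction in which control propagates when the function vanishes to sufficient order, and the operators $\tilde q(D_x) D_x$ etc.\ supply exactly the needed vanishing) or by the corresponding un-$D_y$'d term already present in $\|v^{(0)}\|_{\mathrm{init}}$. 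Second, the term $\|(D_x-3)(D_x-2)\tilde q(D_x) D_x v^{(0)}\|_{\check k,-\delta+1}$ in \eqref{norm_init_p} at weight $-\delta+1$ is weaker than the weight-$\delta+1$ version already in $\|v^{(0)}\|_{\mathrm{init}}$ combined with the lower-order terms, via the interpolation inequality for $\|\cdot\|_{k,\alpha}$ in the variable $\alpha$ (as used repeatedly in \S\ref{sec:basic_weak} and \S\ref{sec:higher_1}).

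Concretely I would organize the reverse bound as follows. For the five terms carrying the operator $\tilde q(D_x) D_x$ or its further composition with $(D_x-3)(D_x-2)$, I use that $1,x,x^{1+\beta}$ lie in $\ker(\tilde q(D_x) D_x)$ and additionally $x^2 \in \ker((D_x-3)(D_x-2)\tilde q(D_x) D_x)$, so that these quantities vanish to the order that makes the corresponding higher-weight norms finite; then I bound the lower-weight or lower-order variants appearing in $\|v^{(0)}\|_{\mathrm{init}}'$ by interpolating between them and the six reference terms, using Hardy's inequality (\reftext{Lemma~\ref{lem:mainhardy}}) when a weight needs to be traded for a derivative. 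For the three terms not involving $\tilde q$, namely $\|v^{(0)}\|_{k,-\delta-1}$, $\|D_x v^{(0)}\|_{\tilde k,-\delta}$, and $\|D_y v^{(0)}\|_{\tilde k-1,-\delta}$, the first is in $\|v^{(0)}\|_{\mathrm{init}}$ verbatim, the second is in $\|v^{(0)}\|_{\mathrm{init}}$ verbatim, and the third follows from the second by the $D_y$-absorption just described together with the hypothesis $k \ge \tilde k+4$. Summing the $\lesssim$ estimates for all fourteen terms against the six reference terms and using $\check k,\breve k \le k$ throughout gives $\|v^{(0)}\|_{\mathrm{init}}' \lesssim \|v^{(0)}\|_{\mathrm{init}}$, which combined with the trivial direction yields the claimed equivalence on the space of locally integrable $v^{(0)}$ with $\|v^{(0)}\|_{\mathrm{init}} < \infty$.

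The main obstacle I anticipate is \emph{bookkeeping}: one must keep careful track, for each of the fourteen summands, of the number of derivatives, the weight, and which kernel elements of the relevant polynomial operator guarantee the finiteness that licenses the weight-raising interpolation. The substantive analytic inputs — interpolation in the weight parameter for $\|\cdot\|_{k,\alpha}$, Hardy's inequality from \reftext{Lemma~\ref{lem:mainhardy}}, and the elementary bound $\|D_y w\|_{m,\alpha} \le \|w\|_{m+1,\alpha}$ — are all standard and already available in the paper; the only real care is in verifying that the derivative budgets match under the precise hypothesis $k \ge \max\{\tilde k+4,\check k+6,\breve k+5\}$, $\tilde k\ge1$, $\check k\ge2$, $\breve k\ge2$, which is exactly why these numerical conditions are stated. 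I would relegate the term-by-term verification to a short lemma-style enumeration and keep the main text at the level of describing the two mechanisms above.
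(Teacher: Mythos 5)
Your easy direction (inclusion of the six terms of $\left\lVert\cdot\right\rVert_\mathrm{init}$ among the fourteen of $\left\lVert\cdot\right\rVert_\mathrm{init}^\prime$) is correct and matches the paper, but the reverse direction as you describe it does not close, and the central mechanism you invoke fails. The ``elementary bound'' $\left\lVert D_y w\right\rVert_{m,\alpha}\le\left\lVert w\right\rVert_{m+1,\alpha}$ is true but useless here: applied to $\left\lVert D_y v^{(0)}\right\rVert_{\tilde k-1,-\delta}$ it produces $\left\lVert v^{(0)}\right\rVert_{\tilde k,-\delta}$, which is \emph{not} one of the six reference terms and is in fact generically infinite. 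For $\alpha=-\delta\in\left(-\tfrac1{10},0\right)$ finiteness of $\left\lVert v^{(0)}\right\rVert_\alpha$ requires $v^{(0)}=o\left(x^{\alpha+\frac12}\right)$ as $x\searrow0$ with $\alpha+\tfrac12>0$, but $v^{(0)}$ has a nonzero trace $v^{(0)}_0$ at $x=0$ in general (precisely the point emphasized in \S\ref{sec:norms}). The vanishing which makes $\left\lVert D_y v^{(0)}\right\rVert_{\tilde k-1,-\delta}$ finite comes from the factor $x$ inside $D_y=x\partial_y$, and your step (a) throws that factor away by absorbing $D_y$ into the derivative count at unchanged $\alpha$. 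Your second mechanism fares no better: the term $\left\lVert(D_x-3)(D_x-2)\tilde q(D_x)D_xv^{(0)}\right\rVert_{\check k,-\delta+1}$ you propose to interpolate in $\alpha$ does not appear in \eqref{norm_init_p} at all (the $(D_x-3)(D_x-2)$-terms there carry weight $\delta+1$, and the weight-$(-\delta+1)$ term carries only $\tilde q(D_x)D_x$, which is already in $\left\lVert\cdot\right\rVert_\mathrm{init}$). Interpolation in the weight plays no role in the actual proof.

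The correct mechanism, and the one the paper uses, is to write $D_y^{\ell_y}=x^{\ell_y}\partial_y^{\ell_y}$ so that the weight is shifted by $\ell_y$ and the remaining operator $\partial_y^{\ell_y}$ commutes with $D_x$, then apply Hardy's inequality (\reftext{Lemma~\ref{lem:mainhardy}}) to insert additional $D_x$-factors at the \emph{same} weight. For instance
\begin{align*}
\left\lVert D_y v^{(0)}\right\rVert_{\tilde k-1,-\delta}^2
&\sim\sum_{0\le\ell_x+\ell_y\le\tilde k-1}\left\lVert\partial_y^{\ell_y+1}D_x^{\ell_x}v^{(0)}\right\rVert_{-\delta-1-\ell_y}^2
\lesssim\sum_{0\le\ell_x+\ell_y\le\tilde k-1}\left\lVert\partial_y^{\ell_y+1}D_x^{\ell_x+1}v^{(0)}\right\rVert_{-\delta-1-\ell_y}^2
\lesssim\left\lVert D_x v^{(0)}\right\rVert_{\tilde k,-\delta}^2,
\end{align*}
where the middle step invokes \reftext{Lemma~\ref{lem:mainhardy}} with $\overline\gamma=0>\rho=-\delta-1-\ell_y$, justified by the decay at $x\to\infty$ implied by $\left\lVert v^{(0)}\right\rVert_{k,-\delta-1}<\infty$ and $k\ge\tilde k$. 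Each of the other $D_y$-bearing terms in \eqref{norm_init_p} is similarly reduced to a $D_x$-bearing reference term by iterating this device with different choices of $\overline\gamma$ (and sometimes $\underline\gamma$) adapted to the roots of $\tilde q(\zeta)$ and the weights involved, and the hypotheses $k\ge\max\{\tilde k+4,\check k+6,\breve k+5\}$ serve precisely to guarantee the asymptotic conditions needed for each Hardy application. A proof along the lines you sketch cannot succeed without this step; in particular no amount of weight interpolation or trivial $D_y$-absorption resolves the mismatch between the weights appearing in \eqref{norm_init_p} and those in \eqref{norm_init}.
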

%

%
%
\begin{lemma}\label{lem:equivalence_rhs}
Suppose $\delta\in\left(0,\frac1{10}\right)$,
$k \ge \max\{\tilde k+4, \check k +6, \breve k+5\}$,
$\tilde k \ge3$, $\check k \ge4$, $\breve k \ge4$,
and define the norm ${\skliaustask
f\skliaustasd}_\mathrm{rhs}$ for a locally integrable function $f: \,
I \times
(0,\infty) \times\mathbb{R}\to\mathbb{R}$, where $I = [0,T)
\subseteq[0,\infty)$ with $T \in (0,\infty]$,
through \reftext{\eqref{norm_rhs}}, i.e.,
\begin{eqnarray*}
{\skliaustask f\skliaustasd}_\mathrm{rhs}^2 &=& \int_I
\left({\left\lVert f \right\rVert}_{k-2,-\delta-\frac12}^2 
+ {\left\lVert(D_x-1)f \right\rVert}_{\tilde
k-2,-\delta+\frac12}^2 \right)\mathrm{d} t  \\
&&+\int_I\left( {\left\lVert\tilde q(D_x-1)(D_x-1)f
\right\rVert}_{\tilde k-2,\delta
+\frac12}^2 +  {\left\lVert\tilde q(D_x-1)(D_x-1)f \right
\rVert}_{\check k - 2,-\delta
+\frac32}^2\right)\mathrm{d} t\\
&&+ \int_I\left( {\left\lVert(D_x-4)(D_x-3)\tilde
q(D_x-1)(D_x-1)f \right\rVert}_{\check
k-2,\delta+\frac32}^2+ {\left\lVert D_y \tilde q(D_x-1) (D_x-1)f \right\rVert
}_{\breve k-2, -\delta+\frac
52}^2\right)\mathrm{d} t.
\end{eqnarray*}
Then the norms ${\skliaustask\cdot\skliaustasd}_\mathrm{rhs}^\prime
$ (cf.~\reftext{\eqref{norm_rhs_p}}) and ${\skliaustask\cdot\skliaustasd}_\mathrm{rhs}$
are equivalent on the
space of all locally integrable $f: \, (0,\infty)^2 \times\mathbb
{R}\to\mathbb{R}$
with ${\skliaustask f\skliaustasd}_\mathrm{rhs} < \infty$.
\end{lemma}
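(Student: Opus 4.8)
The plan is to establish the two inclusions separately. One direction, ${\skliaustask f\skliaustasd}_\mathrm{rhs}\le{\skliaustask f\skliaustasd}_\mathrm{rhs}^\prime$, is immediate: every one of the six space-time integrals in the definition \eqref{norm_rhs} of ${\skliaustask\cdot\skliaustasd}_\mathrm{rhs}$ already appears verbatim among the fifteen integrals of \eqref{norm_rhs_p} defining ${\skliaustask\cdot\skliaustasd}_\mathrm{rhs}^\prime$. Hence the whole content is the reverse bound ${\skliaustask f\skliaustasd}_\mathrm{rhs}^\prime\lesssim{\skliaustask f\skliaustasd}_\mathrm{rhs}$, which reduces to controlling the eight ``extra'' terms of \eqref{norm_rhs_p}: the weighted norms of $D_yf$, of $D_y(D_x-1)f$ (at two weights), of $D_y^2f$, of $D_y^2(D_x-1)f$ (at two weights), of $D_y\tilde q(D_x-1)(D_x-1)f$ with $\check k-3$ derivatives and weight $\delta+\tfrac32$, and of $D_y^3f$. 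Each such term differs from a term already present in ${\skliaustask\cdot\skliaustasd}_\mathrm{rhs}$ by one or more extra factors $D_y$, together with a compensating increase of the weight exponent.

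To bound these I would pass, as in \eqref{norm_2d}, to the Fourier transform in $y$, turning each $D_y=x\partial_y$ into a factor $x$ and one extra power of $\eta$; in the one-dimensional notation \eqref{norm_1d} a $D_y$ lowers by one the weight of the summand it sits in while raising its $\eta$-weight by two. Comparing powers of $\eta$, each offending term is controlled provided one knows $f$ (resp.~$(D_x-1)f$, resp.~$\tilde q(D_x-1)(D_x-1)f$) in a weighted norm whose exponent is one unit higher than the one supplied by the matching term of ${\skliaustask\cdot\skliaustasd}_\mathrm{rhs}$, and this missing unit of weight is gained by the elliptic-regularity (Hardy-type) estimate \reftext{Lemma~\ref{lem:mainhardy}} in the $x$-variable: from $P(D_x)g$ controlled in a norm of weight $\alpha$ together with low-regularity control of $g$ at the lower weight $\alpha-1$ one recovers $g$ itself at weight $\alpha$, with a gain of one derivative per factor of $P$ removed. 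The operator $P$ that links one controlled quantity of ${\skliaustask\cdot\skliaustasd}_\mathrm{rhs}$ to the next is precisely $D_x-1$, then $\tilde q(D_x-1)$ (cf.~\eqref{poly_q2}), then $(D_x-4)(D_x-3)$; the terms with two or three $D_y$'s simply require iterating the boost along this chain, feeding in also the term $\|D_y\tilde q(D_x-1)(D_x-1)f\|_{\breve k-2,-\delta+\frac52}$ of ${\skliaustask\cdot\skliaustasd}_\mathrm{rhs}$. Because $f$ obeys the expansion \eqref{expansion_f}, the quantities $(D_x-1)f$, $\tilde q(D_x-1)(D_x-1)f$, etc.\ vanish to high enough order as $x\searrow0$ that the kernel powers $x^\gamma$ obstructing Hardy's inequality are absent, so the boosts go through even when the weight crosses a root of $P$.

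The remaining points are of a checking nature. One verifies that the half-integer-shifted weights $\pm\delta+c$ occurring along each chain stay compatible with the roots $1,\tfrac12,\tfrac32-\beta,2,2+\beta,3,4$ of the operators involved and with the powers $1,x,x^{1+\beta},x^2$ from \eqref{expansion_v}--\eqref{expansion_f}; this is where the hypothesis $\delta\in\left(0,\tfrac1{10}\right)$ and the value $\beta=\tfrac{\sqrt{13}-1}4\in\left(\tfrac12,1\right)$ are used. The non-commutativity of $D_x$ and $D_y$ (recall $P(D_x)D_y=D_y\,P(D_x+1)$ and $D_x\,x=x\,(D_x+1)$) produces only lower-order remnants, with $D_x$ shifted by a small integer, which are absorbed by induction on the number of derivatives together with interpolation for $\left\lVert\cdot\right\rVert_{k,\alpha}$. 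The derivative counts never run short: the terms supplying $f$, $(D_x-1)f$, $\tilde q(D_x-1)(D_x-1)f$ and $D_y\tilde q(D_x-1)(D_x-1)f$ in ${\skliaustask\cdot\skliaustasd}_\mathrm{rhs}$ carry $k-2$, $\tilde k-2$, $\{\tilde k-2,\check k-2\}$ and $\breve k-2$ derivatives, so the gaps $k-\tilde k\ge4$, $k-\check k\ge6$, $k-\breve k\ge5$, $\check k,\breve k\ge4$ imposed in the statement leave enough spare derivatives for the (at most four) Hardy boosts needed for any single extra term. \reftext{Lemma~\ref{lem:equivalence_init}} is proved identically, with the time integral dropped and all weights shifted by $\pm\tfrac12$. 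I expect the only genuine difficulty to be exactly this bookkeeping -- writing out, for each of the eight extra terms, the correct chain of controlled quantities with the operator and weight at each link, and verifying simultaneously that every Hardy boost is admissible and that the supply of $D_x$- and $D_y$-derivatives is never exhausted; there is no analytic obstacle beyond this, since all the substantive weighted estimates are already contained in the elliptic-regularity results of \S\ref{sec:elliptic}.
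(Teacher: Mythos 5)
Your proposal is correct and follows essentially the same route as the paper: after noting that one inclusion is trivial, each of the eight extra terms in $\skliaustask\cdot\skliaustasd_\mathrm{rhs}^\prime$ is chained back to one of the six terms of $\skliaustask\cdot\skliaustasd_\mathrm{rhs}$ by trading each $D_y$ for a $D_x$-type factor via the Hardy estimate of \reftext{Lemma~\ref{lem:mainhardy}}, exactly along the ladder $f\to(D_x-1)f\to\tilde q(D_x-1)(D_x-1)f\to(D_x-4)(D_x-3)\tilde q(D_x-1)(D_x-1)f$, with the index hypotheses ensuring both enough spare derivatives for each boost and admissibility of the Hardy weights. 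The only cosmetic differences are that the paper works directly in physical space (using commutativity of $\partial_y$ with $D_x$) rather than Fourier-transforming in $y$, and that the decay needed for Hardy is justified by finiteness of the already-controlled norms rather than by invoking the expansion \reftext{\eqref{expansion_f}} itself; neither affects the substance.
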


Thanks to \reftext{Lemma~\ref{lem:trace_estimate}} we can also prove the
following analogue of \reftext{Lemmata~\ref{lem:equivalence_init} and \ref{lem:equivalence_rhs}}.
%
%
\begin{lemma}\label{lem:equivalence_sol}
Suppose that $\delta\in\left(0,\frac1{10}\right)$,
$k \ge \max\{\tilde k+4, \check k +6, \breve k+5\}$,
$\tilde k \ge3$, $\check k \ge4$, $\breve k\ge4$,
and consider the norm ${\skliaustask v\skliaustasd}_\mathrm{Sol}$
for a locally integrable function
$v \colon [0,\infty) \times (0,\infty) \times\mathbb{R}\to\mathbb{R}$, given by
\begin{eqnarray*}
{\skliaustask v\skliaustasd}_\mathrm{Sol}^2 &\stackrel{\text{\reftext{\eqref{norm_sol_simple}}}}{=}&  \int_0^\infty \left
({\left\lVert\partial_t v \right\rVert}_{k- 2, - \delta- \frac3 2}^2 + {\left\lVert\partial_t D_x v \right\rVert}_{\tilde k
- 2, - \delta- \frac1 2}^2 \right) \mathrm{d}
t\nonumber\\
&& + \int_0^\infty \left( {\left\lVert\partial_t
\tilde q(D_x) D_x v \right\rVert}_{\tilde k - 2, \delta- \frac12}^2 + {\left\lVert\partial_t \tilde q(D_x) D_x v \right
\rVert}_{\check k - 2, - \delta+ \frac1 2}^2 \right) \mathrm{d} t
\nonumber\\
&& + \int_0^\infty \left( {\left\lVert\partial_t (D_x-3)
(D_x-2) \tilde q(D_x) D_x v \right\rVert}_{\check k - 2, \delta+
\frac1 2}^2 + {\left\lVert\partial_t D_y \tilde q(D_x) D_x v \right
\rVert}_{\breve k - 2, - \delta+ \frac3 2}^2 \right) \mathrm{d} t \nonumber\\
&& + \int_0^\infty \left( {\left\lVert v \right\rVert
}_{k+2, - \delta- \frac1 2}^2
+ {\left\lVert D_x v \right\rVert
}_{\tilde k + 2, - \delta+ \frac1 2}^2
+ {\left\lVert\tilde q(D_x) D_x v \right\rVert}_{\tilde
k + 2, \delta+ \frac1 2}^2 \right) \mathrm{d} t \\
&& + \int_0^\infty \left( {\left\lVert\tilde q(D_x) D_x v \right\rVert}_{\check k
+ 2, - \delta+ \frac32}^2 + {\left\lVert(D_x-3) (D_x-2) \tilde
q(D_x) D_x v \right\rVert}_{\check
k + 2, \delta+ \frac3 2}^2 \right) \mathrm{d} t \\
&& + \int_0^\infty {\left\lVert D_y \tilde q(D_x) D_x v \right
\rVert}_{\breve k+2, - \delta+
\frac5 2}^2 \mathrm{d} t.
\end{eqnarray*}
Then the norms ${\skliaustask\cdot\skliaustasd}_\mathrm{sol}$ for the case $I=[0,\infty)$
(cf.~\reftext{\eqref{norm_sol}})
and ${\skliaustask\cdot\skliaustasd}_\mathrm{Sol}$ are equivalent
on the space of all
locally integrable $v: [0,\infty) \times(0,\infty) \times\mathbb{R}\to
\mathbb{R}$ with
${\skliaustask v\skliaustasd}_\mathrm{Sol} < \infty$.
\end{lemma}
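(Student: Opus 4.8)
The plan is to establish the two inequalities ${\skliaustask v\skliaustasd}_\mathrm{Sol}\le{\skliaustask v\skliaustasd}_\mathrm{sol}$ and ${\skliaustask v\skliaustasd}_\mathrm{sol}\lesssim{\skliaustask v\skliaustasd}_\mathrm{Sol}$ separately, after first unwinding the abbreviation \eqref{not_3barnorm}: each of the fourteen summands ${\skliaustask w\skliaustasd}_{\kappa,\alpha,I}^2$ of ${\skliaustask v\skliaustasd}_\mathrm{sol}^2$ in \eqref{norm_sol} decomposes into a supremum part $\sup_{t\in I}\|w\|_{\kappa,\alpha}^2$, a time-derivative part $\int_I\|\partial_t w\|_{\kappa-2,\alpha-\frac12}^2\,\mathrm dt$ and a gradient part $\int_I\|w\|_{\kappa+2,\alpha+\frac12}^2\,\mathrm dt$. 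A direct comparison of weights and derivative counts identifies the twelve integral terms of ${\skliaustask v\skliaustasd}_\mathrm{Sol}^2$ in \eqref{norm_sol_simple} with exactly the time-derivative and gradient parts of the six \emph{core} summands, the ones built on $v$, $D_x v$, $\tilde q(D_x)D_x v$ (with orders $\tilde k$ and $\check k$), $(D_x-3)(D_x-2)\tilde q(D_x)D_x v$ and $D_y\tilde q(D_x)D_x v$. Thus ${\skliaustask v\skliaustasd}_\mathrm{Sol}^2$ is literally a sub-sum of ${\skliaustask v\skliaustasd}_\mathrm{sol}^2$, which gives the first inequality with constant $1$.

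For the converse it remains to bound, by the integral terms of ${\skliaustask v\skliaustasd}_\mathrm{Sol}$, (i) every supremum part, and (ii) the integral parts of the eight remaining \emph{surplus} summands, those carrying extra tangential derivatives, namely the ones built on $D_y v$, $D_y D_x v$ (orders $\tilde k-1$ and $\check k-1$), $D_y^2 v$, $D_y^2 D_x v$ (orders $\check k-2$ and $\breve k-1$), $D_y\tilde q(D_x)D_x v$ (order $\check k-1$) and $D_y^3 v$. Step (i) is precisely the time-trace Lemma~\ref{lem:trace_estimate}: applied in its $I=[0,\infty)$ form \eqref{traceest} to the norms $\|\cdot\|_{\kappa,\alpha}$ it yields $\sup_{t\in I}\|w\|_{\kappa,\alpha}^2\lesssim_{\kappa,\alpha}\int_I\|\partial_t w\|_{\kappa-2,\alpha-\frac12}^2\,\mathrm dt+\int_I\|w\|_{\kappa+2,\alpha+\frac12}^2\,\mathrm dt$ for every $\kappa\ge2$, so each supremum part is absorbed by the two integral parts of its own building block. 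It is here that $I=[0,\infty)$ is essential: on a bounded interval the weaker \eqref{traceest0} would force an additional initial-datum term to appear. This reduces the whole assertion to comparing the integral parts of the eight surplus blocks with the twelve integral terms of ${\skliaustask v\skliaustasd}_\mathrm{Sol}$.

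Step (ii) rests on two mechanisms. First, reordering: by \eqref{log_der} one has $[D_x,D_y]=D_y$, equivalently $D_x^{j}D_y=D_y(D_x+1)^{j}$ and $D_y D_x^{j}=(D_x-1)^{j}D_y$, so every surplus factor $D_y$ can be moved past all $D_x$-factors and past $\tilde q(D_x)$ and $(D_x-3)(D_x-2)$, producing only remainders of the same algebraic type with fewer derivatives; moreover $D_y^a w=x^a\partial_y^a w$ carries $a$ genuine powers of $x$, which is exactly why in \eqref{norm_sol} a block with $a$ surplus tangential derivatives appears at a weight exceeding that of its $D_y$-free counterpart by the integer $a$. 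Second, weight interpolation: the elliptic-regularity (Hardy-type) estimates of Lemma~\ref{lem:mainhardy} let one slide the weight exponent between the values $-\delta-\tfrac32$, $-\delta-\tfrac12$, $-\delta+\tfrac12$, $\delta+\tfrac12$, $\delta+\tfrac32$, $-\delta+\tfrac32$, $-\delta+\tfrac52$ actually occurring in ${\skliaustask v\skliaustasd}_\mathrm{Sol}$ (provided the weight avoids the roots of the relevant polynomial in $D_x$), while the number-of-derivatives index is absorbed by ordinary interpolation, which is permitted because the hypotheses $k\ge\max\{\tilde k+4,\check k+6,\breve k+5\}$, $\tilde k\ge3$, $\check k\ge4$, $\breve k\ge4$ leave ample surplus of derivatives in the $k$-, $\tilde k$- and $\check k$-based core blocks. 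For instance the gradient part $\int_I\|D_y v\|_{\tilde k+1,-\delta+\frac12}^2\,\mathrm dt$ of the $D_y v$-block is handled by commuting the $D_y$ to the left, so that up to lower-order pieces one controls $\int_I\|D_y^{a}D_x^{b}v\|_{0,-\delta+\frac12}^2\,\mathrm dt$ with $a\ge1$, $a+b\le\tilde k+2$: for $b\ge1$ this lies inside the core term $\int_I\|D_x v\|_{\tilde k+2,-\delta+\frac12}^2\,\mathrm dt$, and for $b=0$ one uses that $D_y^a v$ vanishes to order $a\ge1$ at $x=0$ together with a Hardy interpolation off the core term $\int_I\|v\|_{k+2,-\delta-\frac12}^2\,\mathrm dt$.

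The main obstacle is precisely the bookkeeping of step (ii): for each surplus block and each of its integral pieces one must verify that the combined weight- and derivative-count interpolations never call for a forbidden weight — a weight $\ge a-\tfrac12$ for a quantity vanishing only to order $a$ at $x=0$, hence $\ge-\tfrac12$ for one with non-vanishing trace — which would make the norm diverge because the coefficients $v_0$, and with it $\partial_t v_0$, $\partial_y v_0$, $v_1,\dots$, in the asymptotic expansion \eqref{as_sol} are generically non-zero. The surplus tangential derivatives are exactly what raises these thresholds and renders the interpolations legitimate, but the matching has to be checked case by case; these routine verifications can be carried out in \S\ref{app:equivnorm}, along the lines of the proofs of Lemmata~\ref{lem:equivalence_init} and~\ref{lem:equivalence_rhs}.
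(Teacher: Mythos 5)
Your proposal is correct and follows essentially the same strategy as the paper's proof: the trivial direction is immediate by recognizing $\skliaustask\cdot\skliaustasd_\mathrm{Sol}^2$ as a sub-sum of $\skliaustask\cdot\skliaustasd_\mathrm{sol}^2$ after unfolding \eqref{not_3barnorm}, the global-time trace estimate \eqref{traceest} of Lemma~\ref{lem:trace_estimate} absorbs the supremum parts (and is indeed where $I=[0,\infty)$ is used), and Lemma~\ref{lem:mainhardy} converts the surplus tangential-derivative blocks into the core ones. One small inaccuracy in presentation: the Hardy step does not interpolate between the listed weights but keeps the weight fixed while adding a $D_x$-derivative, e.g.\ $\|D_y w\|_\alpha=\|\partial_y w\|_{\alpha-1}\lesssim\|\partial_y D_x w\|_{\alpha-1}=\|D_y D_x w\|_\alpha$, with the $k$-based block serving only to verify the asymptotic hypotheses of Lemma~\ref{lem:mainhardy} — but this affects the wording, not the validity of the argument.
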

%
%

\subsubsection{Elliptic regularity and embeddings}\label{sec:elliptic}

Note that the norms ${\skliaustask\cdot\skliaustasd}_\mathrm{sol}$,
$ {\left\lVert\cdot\right\rVert}_\mathrm{init}$, and
${\skliaustask\cdot\skliaustasd}_\mathrm{rhs}$ (cf.~\reftext{\eqref{norm_sol}},
\reftext{\eqref{norm_init}}, and \reftext{\eqref{norm_rhs}}) contain terms in
$(D_x-3-\kappa) (D_x-2-\kappa) \tilde q(D_x-\kappa) (D_x-\kappa) w$, $\tilde q(D_x-\kappa) (D_x-\kappa) w$, or $(D_x-\kappa) w$, where $(w,\kappa) \in \left\{(v,0),\left(v^{(0)},0\right),(f,1)\right\}$, that are not
explicit enough in terms of $w$ in the sense that they partially do not change on
changing boundary values such as $w_0$, $w_1$, $w_{1+\beta}$, or $w_2$. On the
other hand, we are precisely interested in control on these terms,
which requires to study the elliptic regularity of the operators
$D_x-\gamma$ with $\gamma \in \mathbb R$. Therefore, we consider the following direct
consequence of Hardy's inequality (cf.~\cite[Lemma~A.1]{gko.2008},
\cite[Lemma~7.4]{ggko.2014},
\cite[Proposition~3.1 and Lemma~3.1]{g.2016}).
\begin{lemma}\label{lem:mainhardy}
Suppose $\underline f, \overline f \in H^1_{\mathrm{loc}}\left((0,\infty)\right)$ and $\rho, \underline\gamma, \overline\gamma
\in\mathbb{R}
$ with $\underline\gamma< \rho< \overline\gamma$ such that
\begin{equation*}
\lim_{n\to\infty} \underline x_n^{-\underline \gamma} \underline f\left(x_{1,n}\right)
= \lim_{n\to\infty} \overline x_n^{-\overline \gamma} \overline f\left(x_{2,n}\right)
= 0 \quad\mbox{for sequences} \quad \underline x_n \searrow 0
\quad \mbox{and} \quad \overline x_n \to \infty.
\end{equation*}
Then
\begin{equation}\label{est_hardy}
 {\left\lvert\underline f \right\rvert}_{1,\rho
} \lesssim_{\underline\gamma
,\rho} {\left\lvert(D_x-\underline\gamma) \underline f \right
\rvert}_\rho  \qquad\mbox{and} \qquad {\left\lvert\overline f \right\rvert}_{1,\rho} 
\lesssim_{\overline\gamma
,\rho}
 {\left\lvert(D_x-\overline
\gamma)\overline f \right\rvert}_\rho.
\end{equation}
\end{lemma}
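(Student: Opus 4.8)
The plan is to reduce both inequalities to the classical one-dimensional Hardy inequality after a logarithmic change of variables $s=\ln x$, which turns $D_x=x\partial_x$ into $\partial_s$ and the weighted $L^2$-norm $|\cdot|_\rho$ into the flat $L^2(\mathbb R_s)$-norm of $e^{-\rho s}(\cdot)$. Concretely, for the first bound I would set $w(s):=e^{-\rho s}\underline f(e^s)$, so that $|\underline f|_\rho=\|w\|_{L^2(\mathbb R)}$ and $(D_x-\underline\gamma)\underline f$ corresponds, under the substitution, to $e^{\rho s}\big(\partial_s-(\underline\gamma-\rho)\big)w$; writing $\mu:=\rho-\underline\gamma>0$ this means $|(D_x-\underline\gamma)\underline f|_\rho=\|(\partial_s+\mu)w\|_{L^2(\mathbb R)}$. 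The decay hypothesis $\underline x_n^{-\underline\gamma}\underline f(x_{1,n})\to 0$ along a sequence $\underline x_n\searrow 0$ translates into $e^{(\rho-\underline\gamma)s_n}w(s_n)\to 0$, i.e. $e^{\mu s_n}w(s_n)\to 0$ for some $s_n\to-\infty$; this is exactly the boundary condition at $-\infty$ needed to run the standard Hardy estimate for the operator $\partial_s+\mu$ with $\mu>0$.

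The core one-dimensional fact I would invoke (or quickly prove by integration by parts) is: if $\mu>0$ and $g\in H^1_{\mathrm{loc}}(\mathbb R)$ satisfies $e^{\mu s_n}g(s_n)\to 0$ for some sequence $s_n\to-\infty$ and $(\partial_s+\mu)g\in L^2(\mathbb R)$, then $g,\partial_s g\in L^2(\mathbb R)$ with $\|g\|_{L^2}+\|\partial_s g\|_{L^2}\lesssim_\mu \|(\partial_s+\mu)g\|_{L^2}$. To see this one writes $g(s)=\int_{-\infty}^s e^{-\mu(s-\tau)}\big((\partial_s+\mu)g\big)(\tau)\,\mathrm d\tau$ — the representation of $g$ as the convolution of $(\partial_s+\mu)g$ with the $L^1$-kernel $\mathbf 1_{\{\tau>0\}}e^{-\mu\tau}$, valid precisely because of the decay of $e^{\mu s}g$ at $-\infty$ — and then Young's convolution inequality gives $\|g\|_{L^2}\le\mu^{-1}\|(\partial_s+\mu)g\|_{L^2}$, after which $\partial_s g=(\partial_s+\mu)g-\mu g$ is controlled as well. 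Undoing the substitution yields $|\underline f|_{1,\rho}\lesssim_{\underline\gamma,\rho}|(D_x-\underline\gamma)\underline f|_\rho$.

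For the second bound the only change is a sign: with $\overline\gamma>\rho$ put $\nu:=\overline\gamma-\rho>0$, so that $|(D_x-\overline\gamma)\overline f|_\rho=\|(\partial_s-\nu)\widetilde w\|_{L^2(\mathbb R)}$ where $\widetilde w(s):=e^{-\rho s}\overline f(e^s)$, and now the relevant kernel is $\mathbf 1_{\{\tau<0\}}e^{\nu\tau}$, i.e. one integrates from $+\infty$: $\widetilde w(s)=-\int_s^{\infty}e^{\nu(s-\tau)}\big((\partial_s-\nu)\widetilde w\big)(\tau)\,\mathrm d\tau$, which is legitimate because the hypothesis $\overline x_n^{-\overline\gamma}\overline f(x_{2,n})\to 0$ with $\overline x_n\to\infty$ says exactly that $e^{-\nu s_n}\widetilde w(s_n)\to 0$ for some $s_n\to+\infty$. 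Young's inequality again gives $\|\widetilde w\|_{L^2}\le\nu^{-1}\|(\partial_s-\nu)\widetilde w\|_{L^2}$ and then control of $\partial_s\widetilde w$, which is the claim.

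I do not expect a serious obstacle here; the statement is essentially a repackaging of Hardy's inequality (and is cited from \cite{gko.2008,ggko.2014,g.2016}). The one point requiring a little care is the justification that the convolution representation of $w$ (resp.\ $\widetilde w$) genuinely holds under the stated one-sided decay along a \emph{sequence} rather than a full limit: one argues that $h(s):=e^{\mu s}w(s)$ has $\partial_s h=e^{\mu s}(\partial_s+\mu)w\in L^1_{\mathrm{loc}}$ and, using $(\partial_s+\mu)w\in L^2$ together with Cauchy–Schwarz on bounded intervals, that $h$ has a limit as $s\to-\infty$ which must be $0$ since it vanishes along $s_n$; the analogous remark applies at $+\infty$ for the second inequality. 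Once this is in place the remaining steps are the routine change of variables and Young's inequality.
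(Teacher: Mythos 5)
Your proposal is correct and takes essentially the same route as the paper, which (in the appendix proof) writes $(D_x-\gamma)f = x^{1+\gamma}\partial_x\bigl(x^{-\gamma}f\bigr)$ and then invokes the weighted Hardy inequality as a cited black box (\cite[Lemma~A.1]{gko.2008}) under exactly the one-sided decay condition you isolate. You pass to $s=\ln x$ first and prove the underlying Hardy estimate from scratch via the convolution representation and Young's inequality; this is a more self-contained presentation of the same calculation, and your justification that the one-sided decay along a sequence upgrades to a full limit at the relevant endpoint is correct and is the only point requiring care.
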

A proof is contained in \S \ref{app:embedding}. Note that for each of the inequalities in \reftext{\eqref{est_hardy}}, if the left-hand side is finite, then the asymptotic behavior in the respective case is automatically fulfilled and the respective estimate holds true.

\medskip

The following lemma yields $BC^0$-bounds on the solution $v$ to \reftext{\eqref{nonlin_cauchy}}, and its gradient. Its proof is contained in \S \ref{app:embedding}.
\begin{lemma}\label{lem:bc0_bounds}
Suppose that $k\ge \max\left\{\tilde k+4,\check k + 6, \breve k +5\right\}$, $\tilde k\ge3$, $\check k\ge4$, $\breve k \ge4$, and $\delta\in
\left(0,\frac1{10}\right)$. Then for functions $v^{(0)}:
(0,\infty) \times\mathbb{R}\to\mathbb{R}$, $v: (0,\infty)^2
\times\mathbb{R}\to\mathbb{R}$, and
$f: (0,\infty)^2 \times\mathbb{R}\to\mathbb{R}$ that are locally
integrable with
$ {\left\lVert v^{(0)} \right\rVert}_\mathrm{init} <
\infty$, ${\skliaustask v\skliaustasd}_\mathrm{sol} <
\infty$, and ${\skliaustask f\skliaustasd}_\mathrm{rhs} < \infty$,
the expansions
\begin{equation}\label{expansion_v0}
D^\ell v^{(0)}(x,y) = D^\ell\left(v_0^{(0)}(y) + v_1^{(1)}(y) x\right) + o\left(x^{\frac 32+\delta}\right) \quad \mbox{as} \quad x \searrow 0
\end{equation}
(with $\ell = (\ell_x,\ell_y) \in \mathbb N_0^2$
such that $\lvert\ell\rvert \le \check k + 7$ and $\ell_y \le \check k$),
\reftext{\eqref{expansion_v}} (with $L := \check k + 5$ and $L_y := \check k - 2$),
and \reftext{\eqref{expansion_f}} (with $L := \check k + 5$ and $L_y := \check k - 2$)
are valid almost everywhere, where the coefficients $v_j^{(0)}$ ($j \in \{0,1\}$), $v_j$ ($j \in \{0,1,1+\beta,2\}$), and $f_j$ ($j \in \{1,2\}$) satisfy the
following estimates, with constants only depending on $\tilde k$,
$\check k$, and $\delta$: 
\begin{subequations}\label{bc0_bounds}
\begin{equation}\label{bc0_grad_init}
{\left\lVert D^\ell v^{(0)}_{x} \right\rVert}_{BC^0\left((0,\infty) \times\mathbb{R}\right)}
+ {\left\lVert D^\ell v^{(0)}_{y} \right\rVert}_{BC^0\left
((0,\infty) \times\mathbb{R}\right)} 
+ {\left\lvert v_0^{(0)} \right\rvert}_{BC^1\left(\mathbb{R}\right
)} 
+  {\left\lvert v^{(0)}_1 \right\rvert}_{BC^0\left(\mathbb{R}\right)} \lesssim {\left\lVert v^{(0)} \right\rVert}_\mathrm{init}
\end{equation}
and
\begin{equation}\label{bc0_grad_sol}
{\skliaustask D^\ell v_x \skliaustasd}_{BC^0\left(I\times(0,\infty)
\times\mathbb{R}\right)} + {\skliaustask D^\ell v_y \skliaustasd}_{BC^0\left(I\times(0,\infty) \times\mathbb{R}\right)} +  {\left\lVert v_0 \right\rVert}_{BC^0\left
(I;BC^1\left(\mathbb{R}\right)\right)} +  {\left\lVert v_1 \right\rVert}_{BC^0\left(I\times\mathbb{R}
\right)} \lesssim {\skliaustask v\skliaustasd}_\mathrm{sol}
\end{equation}
for $0 \le {\left\lvert\ell\right\rvert} \le\min
\left\{\tilde k - 2, \check k - 2\right
\}$,
\begin{align} \label{bc0_v1beta_v2}
& {\left\lVert v_1 \right\rVert}_{L^2\left(I; H^1\left(\mathbb{R}\right)\right)}
+ {\left\lVert \left(v_0\right)_y \right\rVert}_{L^2\left(I; H^1\left(\mathbb{R}\right)\right)}
+{\left\lVert v_1\right\rVert}_{BC^0\left(I; H^1\left(\mathbb R\right)\right)}
+ {\left\lVert \left(v_0\right)_y\right\rVert}_{BC^0\left(I; H^1\left(\mathbb R\right)\right)} \nonumber \\
& \quad + {\left\lVert v_{1+\beta} \right\rVert}_{L^2\left(I;BC^0(\mathbb{R})\right)}
+ {\left\lVert v_{1+\beta}\right\rVert}_{L^2\left(I ;H^1(\mathbb{R})\right)}
+ {\left\lVert v_2 \right \rVert}_{L^2(I\times\mathbb{R})}
\lesssim {\skliaustask v\skliaustasd}_\mathrm{sol},
\end{align}
and
\begin{equation}\label{bcint_f1_f2}
{\left\lVert f_1 \right
\rVert}^2_{L^2(I\times\mathbb{R})} +  {\left\lVert f_2 \right\rVert}^2_{L^2(I\times\mathbb{R})}
\lesssim
{\skliaustask f\skliaustasd}_\mathrm{rhs}.
\end{equation}
\end{subequations}
\end{lemma}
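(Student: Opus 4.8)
The plan is to combine the elliptic-regularity estimate of Lemma~\ref{lem:mainhardy} with two-dimensional Sobolev embeddings and the time-trace estimate of Lemma~\ref{lem:trace_estimate}, following the strategy developed for the one-dimensional operator $q(D_x)$ in \cite{ggko.2014,g.2016}. For $(w,\kappa)\in\left\{(v^{(0)},0),(v,0),(f,1)\right\}$ the norms $\vertii{v^{(0)}}_{\mathrm{init}}$, $\vertiii{v}_{\mathrm{sol}}$, $\vertiii{f}_{\mathrm{rhs}}$ control, besides $w$ at a low weight, the images $\tilde q(D_x-\kappa)(D_x-\kappa)w$ and $(D_x-3-\kappa)(D_x-2-\kappa)\tilde q(D_x-\kappa)(D_x-\kappa)w$ at successively larger weights. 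I would peel off the commuting first-order factors $D_x-\gamma$ of these composite operators one at a time, iterating Lemma~\ref{lem:mainhardy}: when $\gamma$ lies below the current weight one uses the $x\searrow 0$ version of the inequality after subtracting the boundary-layer monomial $x^\gamma$ (whose coefficient is the limit of the intermediate derivative combination), and when $\gamma$ lies above the current weight one uses the $x\to\infty$ version; the asymptotic hypotheses of Lemma~\ref{lem:mainhardy} hold a priori on the smooth compactly supported approximants of \S\ref{sec:approx} and pass to the limit. For $\kappa=0$ the roots are $\gamma\in\left\{-\tfrac12,\tfrac12-\beta,0,1,1+\beta,2,3\right\}$, and for $\kappa=1$ they are shifted by $+1$. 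Accounting for the shift between the $2D$-weight $\alpha$ of $\vertii{\cdot}_{k,\alpha}$ and the effective $1D$-weight $\alpha+\tfrac12$ (coming from the factor $x^{-1}$ in $\vertii{\cdot}_{0,\alpha}$), together with the extra half-order furnished by the time-integrated terms in $\vertiii{\cdot}_{\mathrm{sol}}$ and $\vertiii{\cdot}_{\mathrm{rhs}}$, this produces \eqref{expansion_v} and \eqref{expansion_f} for $v$ and $f$ with remainders $o(x^{2+\delta})$ for almost every $t$, and \eqref{expansion_v0} for $v^{(0)}$ with the weaker remainder $o(x^{3/2+\delta})$ (no static weight beyond $\delta+1$ being available there, the terms $x^{1+\beta}$ and $x^2$ are absorbed into the remainder, using $\delta<\beta-\tfrac12$). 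The would-be monomials $x^{-1/2}$ and $x^{1/2-\beta}$ enter with vanishing coefficients, since their presence contradicts finiteness of the weight-$(-\delta)$ norm of $D_x w$ (respectively of the time-integrated low-weight norm of $v$).

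\textbf{Step~2 (coefficient estimates).} The coefficients $v^{(0)}_j$, $v_j$, $f_j$ are the $x\searrow 0$ traces produced in Step~1; one-dimensional weighted-trace inequalities in $x$, applied fibrewise and then integrated over $y$ (respectively over $(t,y)$), bound them by the weighted Sobolev norms occurring in $\vertii{\cdot}_{\mathrm{init}}$, $\vertiii{\cdot}_{\mathrm{sol}}$, $\vertiii{\cdot}_{\mathrm{rhs}}$. Counting the $D_y$-derivatives surviving in each block yields the claimed $H^1(\mathbb R)$-, $L^2(\mathbb R)$-, respectively $L^2(I\times\mathbb R)$-estimates \eqref{bc0_v1beta_v2} and \eqref{bcint_f1_f2}, and Lemma~\ref{lem:trace_estimate} upgrades the $t$-integrability of $v_0$, $v_1$, and $(v_0)_y$ to the $BC^0(I;\cdot)$-bounds.

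\textbf{Step~3 ($BC^0$-bounds on $v$ and its gradient).} For \eqref{bc0_grad_init} and \eqref{bc0_grad_sol} I would split, for instance, $\partial_x v=v_1+(\partial_x v-v_1)$: the coefficient $v_1=v_1(t,y)$ already lies in $BC^0$ by Step~2, while $\partial_x v-v_1=x^{-1}(D_x v-v_1 x)=O(x^{\beta})$ as $x\searrow 0$ and, together with up to $\min\{\tilde k-2,\check k-2\}+2$ additional $D$-derivatives, is controlled in a weighted Sobolev norm of strictly positive weight obtained (via Step~1) from the $\tilde q(D_x)D_x v$- and $(D_x-3)(D_x-2)\tilde q(D_x)D_x v$-blocks. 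The two-dimensional embedding $W^{m,2}\hookrightarrow BC^0$ on $(0,\infty)_x\times\mathbb R_y$ --- which costs two derivatives and thereby forces $\lvert\ell\rvert\le\min\{\tilde k-2,\check k-2\}$ --- then gives the pointwise bound, the supremum over $t$ in \eqref{bc0_grad_sol} being handled once more by Lemma~\ref{lem:trace_estimate}. The same decomposition applied to $D^\ell\partial_y v$ and to $v_0$ and $D_y v_0$ produces the remaining summands.

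\textbf{Main obstacle.} The conceptual input is light; essentially all of the work lies in the bookkeeping of Step~1 --- verifying, for each of the three (shifted) operator families and each target weight, that every intermediate weight in the iterated Hardy procedure falls strictly between two consecutive roots, that the monomials subtracted are exactly $1,x,x^{1+\beta},x^2$ for $v$ (and $x,x^2$ for $f$, $1,x$ for $v^{(0)}$), and that enough $D_x$- and $D_y$-derivatives survive each peeling step and the final Sobolev embedding. This is precisely what dictates the hierarchy $k\ge\max\{\tilde k+4,\check k+6,\breve k+5\}$, $\tilde k\ge 3$, $\check k\ge 4$, $\breve k\ge 4$ in the statement. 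The one genuine technicality --- that Lemma~\ref{lem:mainhardy} as phrased presupposes finiteness of its left-hand side --- is circumvented by running the argument first on the approximants of \S\ref{sec:approx} and passing to the limit via the a priori bounds.
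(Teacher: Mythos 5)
Your proposal is correct and follows essentially the same route as the paper: peel off the linear factors of the composite operators $\tilde q(D_x-\kappa)(D_x-\kappa)$ and $(D_x-3-\kappa)(D_x-2-\kappa)\tilde q(D_x-\kappa)(D_x-\kappa)$ one at a time via Lemma~\ref{lem:mainhardy}, subtract the boundary-layer monomials, and then embed. The one presentational difference is in Step~3: you invoke a ``two-dimensional embedding $W^{m,2}\hookrightarrow BC^0$ on $(0,\infty)_x\times\mathbb R_y$,'' whereas the paper packages this in the auxiliary Lemma~\ref{lem:bc0_auxiliary}, which first passes to $s=\log x$ and applies the one-dimensional $W^{1,2}(\mathbb R_s)\hookrightarrow BC^0$ embedding (with a smooth cutoff to handle the two different weights $\alpha_1<0<\alpha_2$) and then the $y$-interpolation $\sup_y|w|_\alpha^2\lesssim\|w\|_\alpha^2+\|D_y w\|_\alpha^2$; the naive two-dimensional statement on the weighted half-strip is not literally available, though the derivative count and hence the constraint $\lvert\ell\rvert\le\min\{\tilde k-2,\check k-2\}$ come out the same. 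Also, the $BC^0(I;\cdot)$-bounds in the paper are extracted directly from the $\sup_{t\in I}$-terms already present in $\skliaustask\cdot\skliaustasd_{\mathrm{sol}}$ rather than by an extra appeal to Lemma~\ref{lem:trace_estimate}; that lemma enters only one level up, in proving the equivalence $\skliaustask\cdot\skliaustasd_{\mathrm{sol}}\sim\skliaustask\cdot\skliaustasd_{\mathrm{Sol}}$. Both are matters of bookkeeping, not substance.
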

%

\subsubsection{Approximation results}\label{sec:approx}

Our aim is to approximate the solution $v$, the initial data $v^{(0)}$,
and the right-hand side $f$ in the norms ${\skliaustask\cdot
\skliaustasd}_\mathrm
{sol}$, ${\skliaustask\cdot\skliaustasd}_\mathrm{init}$, and
${\skliaustask\cdot\skliaustasd}_\mathrm
{rhs}$, where the approximating sequences have well-controlled behavior
as $x \searrow0$ and $x \to\infty$. The following two definitions
specify the necessary asymptotic behavior precisely:
%
%
\begin{definition}\label{def:g0eta}
A function $f : \, (0,\infty) \times \mathbb{R}_\eta\to\mathbb{C}$ such
that $f(\cdot,\eta
) \in C^\infty((0,\infty))$ for every \mbox{$\eta\in\mathbb{R}$},
satisfies $(G_0)$
if for every compact interval
${\mathcal J}\Subset\mathbb{R}\setminus\{0\}$ or ${\mathcal J} = \{0\}$, there
exists $\varepsilon> 0$
and a function $\bar f = \bar f\left(x_1,x_2, \eta\right) \colon [0,\varepsilon]
\times \left[0,\varepsilon^\beta\right]\times{\mathcal J}\to \mathbb{C}$ such that
\begin{enumerate}[(i)]
\item[(i)]$f(x, \eta) = \bar f\left(x, x^\beta, \eta\right)$ for
$(x,\eta) \in\left(0,\varepsilon^\beta\right]\times{\mathcal J}$ and
for each $\eta\in{\mathcal J}$
the function $\bar f(\cdot,\cdot,\eta)$ is analytic with $\partial_{x_2}^j \bar f (0,0,\eta) = 0$ for $j \ge 1$;
\item[(ii)] the map ${\mathcal J}\owns\eta\mapsto
\bar f \in
\Omega$, where
\begin{equation*}
\Omega:= \left\{\bar g: \,  {\left\lvert\bar g \right
\rvert}_\omega:= \sum_{(k,\ell) \in
\mathbb{N}_0^2} \frac{\varepsilon^{k + \beta\ell}}{k! \ell!}
 {\left\lvert\partial_{x_1}^k \partial_{x_2}^\ell\bar
g(0,0) \right\rvert} < \infty\right\},
\end{equation*}
is well-defined and continuous.
\end{enumerate}
\end{definition}

Note that the power $\beta$ is in line with our expectation
stated in \eqref{exp_2d} an also appears
in \cite{ggko.2014} (compare to \S \ref{sec:lin_evol} and in
particular the discussion leading to \reftext{\eqref{expansion_u}}). As
already outlined in \S \ref{sec:lin_evol} (see the discussion after \reftext{\eqref{lin_cauchy_f}}) the terms $- \eta^2 x^2 r\left(D_x\right) v$ and
$\eta^4 x^4 v$ in \reftext{\eqref{eq_lin}} are, for fixed $\eta\in\mathbb
{R}$, merely higher-order corrections.

We introduce an analogous definition for the asymptotic behavior as $x \to \infty$.
\begin{definition}\label{def:ginftyeta}
A function $f : \,
(0,\infty)
\times\mathbb{R}\to\mathbb{C}$ such that $f(\cdot,\eta) \in
C^\infty((0,\infty))$
for every $\eta\in\mathbb{R}$, meets $(G_\infty)$ if
\begin{enumerate}[(i)]
\item[(i)] for $\eta= 0$, for all $j\in\mathbb
N_0$, and all $\nu \in \left[0,2^{\frac32}\right)$ we have
\begin{equation*}
\limsup_{x\to\infty}\left|e^{\nu \sqrt[4]{x}} \tfrac{\mathrm{d}^j
f}{\mathrm{d} x^j}(x)\right
|<\infty;
\end{equation*}
\item[(ii)] for $\eta\ne0$, for all $\nu\in
[0,|\eta|)$, and all $j\in\mathbb N_0$ we have
\begin{equation*}
\limsup_{x\to\infty}\left|e^{\nu x} \tfrac{\mathrm{d}^j
f}{\mathrm{d} x^j}(x)\right
|<\infty;
\end{equation*}
\item[(iii)] for all $x_0 > 0$, $k \ge0$, and $\nu
\in
\left[0, {\left\lvert\eta\right\rvert}\right)$ the
mapping $\left(\mathbb{R}\setminus\{0\}\right
) \owns\eta\mapsto f(\cdot,\eta) \in{\mathcal V}$ is well-defined and
continuous, where
\begin{equation*}
{\mathcal V}:= \left\{g: \,  {\left\lvert g \right\rvert
}_{k,x_0,\nu} := \max_{j = 0,\ldots,k} \sup
_{x \ge x_0} e^{\nu x}  {\left\lvert\tfrac{\mathrm{d}^j
g}{\mathrm{d} x^j}(x) \right\rvert}\right\}.
\end{equation*}
\end{enumerate}
\end{definition}

Note that item~(i) coincides with the choice
in \cite[Definition~6.2, $(G_\infty)$]{ggko.2014}.

For the proof of the following approximation results, see \S \ref{app:approx}.
\begin{lemma}\label{lem:approx_init}
Suppose that $k\ge \max\left\{\tilde k+4,\check k + 6, \breve k +5\right\}$, $\tilde k\ge3$, $\check k\ge4$, $\breve k \ge4$, and $\delta\in
\left(0,\frac1{10}\right)$. Then for each locally integrable $v^{(0)}:
(0,\infty
)\times\mathbb R_y\to\mathbb R$ with $ {\left\lVert v^{(0)}
\right\rVert}_\mathrm
{init}<\infty$ there exists a sequence $\left(v^{(0,n)}\right)_n$ in $C^\infty
((0,\infty)\times\mathbb R_y)$ such that $ {\left\lVert
v^{(0,n)}-v^{(0)} \right\rVert}_\mathrm{init}\to0$ as $n\to\infty
$ and
\begin{enumerate}[\textit{(iii)}]
\item[\textit{(i)}]$v^{(0,n)}$ is smooth in $(0,\infty) \times \mathbb R_\eta$ and $(0,\infty) \times \mathbb R_y$,
\item[\textit{(ii)}]$v^{(0,n)}(x,\eta)= v_0^{(0,n)}(\eta) + v_1^{(0,n)}(\eta)x $ for
$x\ll_n 1$, where $ v_0^{(0,n)}, v_1^{(0,n)}$ are smooth, so that in
particular $v^{(0,n)}$ satisfies $(G_0)$ from \reftext{Definition~\ref{def:g0eta}}.
\item[\textit{(iii)}]$v^{(0,n)}(x,y) =0$ for $x \gg_n 1$ or $\lvert y\rvert \gg_n 1$, so that in particular
$v^{(0,n)}$ satisfies $(G_{\infty})$ from \reftext{Definition~\ref{def:ginftyeta}}.
\end{enumerate}
\end{lemma}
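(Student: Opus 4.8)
\textbf{Proof proposal for Lemma~\ref{lem:approx_init}.}

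The plan is to build the approximating sequence in two independent stages: first a \emph{truncation/cutoff} stage that makes the function compactly supported in $x$ and $y$, and second a \emph{near-boundary correction} stage that replaces $v^{(0)}$ near $\{x=0\}$ by its affine Taylor polynomial $v_0^{(0)}(y)+v_1^{(0)}(y)x$. Throughout I would work with a fixed smooth cutoff $\chi\in C^\infty([0,\infty))$ with $\chi\equiv 1$ on $[0,1]$ and $\chi\equiv 0$ on $[2,\infty)$, and its logarithmic rescalings $\chi_n(x):=\chi(x/R_n)$, $\psi_n(x):=1-\chi(x/(r_n))$ for suitable radii $r_n\searrow 0$, $R_n\to\infty$; similarly a cutoff in $y$ at scale $\rho_n\to\infty$. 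The key point that makes this work is that the norm $\|\cdot\|_\mathrm{init}$ is built entirely out of the weighted norms $\|\cdot\|_{\kappa,\alpha}$ of \reftext{\eqref{norm_2d_1}}, each of which uses the scale-invariant derivatives $D_x,D_y$ and the scale-invariant measure $x^{-2}\,\mathrm dx\,\mathrm dy$; consequently cutting off at dyadic-in-$\log x$ scales produces error contributions that are controlled by the tail of an already-finite integral, hence tend to $0$ as the scales run off to $0$ and $\infty$.

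Concretely, the steps I would carry out are: (1) \emph{Mollification in the bulk.} Away from $x=0$, mollify $v^{(0)}$ in $(s,y)=(\ln x,y)$ at a scale $\to 0$; since all operators appearing in $\|\cdot\|_\mathrm{init}$ (including $D_x-\gamma$, $\tilde q(D_x)$, $(D_x-3)(D_x-2)\tilde q(D_x)D_x$, and $D_y$) are constant-coefficient in $(s,y)$, mollification commutes with them and converges in every $\|\cdot\|_{\kappa,\alpha}$, giving (i). (2) \emph{Near-boundary replacement.} Using the expansion \reftext{\eqref{expansion_v0}} of \reftext{Lemma~\ref{lem:bc0_bounds}} (valid with $|\ell|\le\check k+7$, $\ell_y\le\check k$, which under the hypotheses on $k,\tilde k,\check k,\breve k$ covers all derivatives occurring in the norm), define $\tilde v^{(0,n)}:=\psi_n\,v^{(0)}+(1-\psi_n)\big(v_0^{(0)}+v_1^{(0)}x\big)$; on $\{x<r_n\}$ this equals the affine polynomial, yielding (ii), while on $\{r_n<x<2r_n\}$ the difference $v^{(0)}-(v_0^{(0)}+v_1^{(0)}x)$ is $o(x^{3/2+\delta})$ together with its $D$-derivatives, and each commutator $[D,\psi_n]$ costs only an $L^\infty$-bounded factor, so the contribution to each $\|\cdot\|_{\kappa,\alpha}$-piece is bounded by $\int_0^{2r_n}\!\!\int_{\mathbb R}x^{-2\alpha}o(x^{3/2+\delta})^2 x^{-2}\,\mathrm dx\,\mathrm dy\to 0$, because every weight $\alpha$ appearing in $\|\cdot\|_\mathrm{init}$ — namely $-1-\delta,\ -\delta,\ \delta,\ -\delta+1,\ \delta+1,\ -\delta+2$ after accounting for the operators $D_x-\gamma$ and $\tilde q(D_x)D_x$ that raise the order of vanishing — satisfies $\tfrac32+\delta-\alpha-1>\tfrac12$, i.e.\ the integrand is an integrable power near $0$ with room to spare. (One checks this weight-by-weight; the worst case is the $\|(D_x-3)(D_x-2)\tilde q(D_x)D_x\,\cdot\|_{\check k,\delta+1}$ term, for which the relevant comparison is $\tfrac72+\delta$ versus $2+\delta+1$, again with margin $\tfrac12$.) (3) \emph{Compactification in $x$ and $y$.} Multiply by $\chi_n(x)$ and by a $y$-cutoff at scale $\rho_n$; the $x$-cutoff error lives in $\{x>R_n\}$ where again a tail of a finite integral appears, and the $y$-cutoff error is handled identically since $D_y$ is translation-invariant in $y$ and $\|\cdot\|_\mathrm{init}<\infty$ controls the relevant $L^2_y$-tails. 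This gives (iii), and since the affine structure near $x=0$ is preserved (the $x$-cutoff is $\equiv 1$ there for $n$ large), (ii) survives. Finally, concatenating the three operations along a diagonal sequence of scales and mollification parameters produces the desired $v^{(0,n)}$ with $\|v^{(0,n)}-v^{(0)}\|_\mathrm{init}\to 0$.

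The main obstacle I expect is \emph{bookkeeping the order of vanishing under the high-order differential operators in the norm}. The operators $\tilde q(D_x)D_x$ and $(D_x-3)(D_x-2)\tilde q(D_x)D_x$ annihilate $1,x,x^{1+\beta},x^2$, so they raise the order of vanishing of $v^{(0)}-(v_0^{(0)}+v_1^{(0)}x)$ — but only because that difference genuinely has the refined expansion; one must be careful that \reftext{\eqref{expansion_v0}} is only an $o(x^{3/2+\delta})$ statement (not a full power series), so the gain is $3/2+\delta$ rather than $2$, and one must verify that $3/2+\delta$ is enough against \emph{every} weight in the norm after the operators act — this is the margin-$\tfrac12$ computation above, and it is exactly where the hypothesis $\delta\in(0,\tfrac1{10})$ (equivalently $\beta>\tfrac12$) and the specific constellation of weights in \reftext{\eqref{norm_init}} are used. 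A secondary technical point is that mollification in $(s,y)$ must be reconciled with the subsequent cutoffs so that item (ii) asks for $v^{(0,n)}$ to be \emph{exactly} affine (with smooth coefficients) near $x=0$; this forces the near-boundary replacement to be done \emph{after} any bulk mollification, or the mollification to be localized away from a neighborhood of $\{x=0\}$, which is why I separated the stages as above.
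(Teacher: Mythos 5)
Your overall plan — cut off the data, mollify, and replace $v^{(0)}$ near $x=0$ by its affine part $v_0^{(0)}+v_1^{(0)}x$ — is structurally the same as the paper's, and you correctly identify the main subtlety of sequencing the mollification and the near-boundary surgery. Two points, however, are genuine gaps that need repair.

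First, invoking \reftext{\eqref{expansion_v0}} of \reftext{Lemma~\ref{lem:bc0_bounds}} is circular: the paper's proof of \reftext{Lemma~\ref{lem:bc0_bounds}} begins precisely by appealing to \reftext{Lemma~\ref{lem:approx_init}} (and its siblings) to reduce to smooth data with explicit affine behavior near $x=0$. In the paper's own proof of \reftext{Lemma~\ref{lem:approx_init}}, the decomposition $v^{(0)}=v_0^{(0)}+v_1^{(0)}x+R$ with $R=o(x^{3/2+\delta})$ is obtained directly: by iterating the explicit one-dimensional solution formula for $(D_x-\gamma)w^{(1)}=w^{(2)}$ ($\gamma\in\{0,1\}$), then using \reftext{Lemma~\ref{lem:mainhardy}} together with the fact that $(D_x-3)(D_x-2)\tilde q(D_x)D_x$ annihilates the affine part to get $\|R\|_{\tilde k,\delta+1}\le\|v^{(0)}\|_\mathrm{init}<\infty$. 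You should derive the expansion that way, not cite the downstream lemma.

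Second, the "margin-$\tfrac12$" computation is incorrect. For the worst weight $\alpha=\delta+1$ one has $x^{-2\alpha}\bigl(x^{3/2+\delta}\bigr)^2 x^{-2}\,\mathrm dx = x^{-1}\,\mathrm dx$, i.e.\ the integrability criterion $\gamma>\alpha+\tfrac12$ with $\gamma=\tfrac32+\delta$ is an \emph{equality}; the margin is $0$, and $\int_0^{2r_n}o(x^{-1})\,\mathrm dx$ need not be finite, let alone tend to $0$, because the $o$ in \reftext{\eqref{expansion_v0}} is not uniform in $y$. The source of the error is the claim that $\tilde q(D_x)D_x$ and $(D_x-3)(D_x-2)\tilde q(D_x)D_x$ "raise the order of vanishing" of $R$: constant-coefficient operators in $D_x=x\partial_x$ do not raise orders on generic powers ($D_x\,x^\gamma=\gamma x^\gamma$), and since \reftext{\eqref{expansion_v0}} only resolves $R$ to $o(x^{3/2+\delta})$ with $\tfrac32+\delta<1+\beta$, there is no $x^{1+\beta}$ or $x^2$ term in that expansion to be cancelled. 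What actually gives convergence is the $L^2$ weighted control $\|R\|_{\tilde k,\delta+1}\lesssim\|(D_x-3)(D_x-2)\tilde q(D_x)D_x v^{(0)}\|_{\check k,\delta+1}<\infty$ from \reftext{Lemma~\ref{lem:mainhardy}}, combined with dominated convergence over the shrinking strip; and the paper additionally sets up the cutoff in log-coordinates with width $\to\infty$, so that each commutator $[D_x^j,\chi_n]$ carries an extra factor $O(1/n)$ — a gain your algebraic-scale cutoff $\psi_n(x)=1-\chi(x/r_n)$ does not provide (its commutators are merely $O(1)$), so you must appeal to the Hardy/dominated-convergence route rather than the pointwise $o$-estimate.
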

\begin{lemma}\label{lem:approx_rhs}
Suppose that $k\ge \max\left\{\tilde k+4,\check k + 6, \breve k +5\right\}$, $\tilde k\ge3$, $\check k \ge4$, $\breve k \ge4$, $\delta\in
\left
(0,\frac1{10}\right)$,
$T \in (0,\infty]$, and $I := [0,T)$.
Then for each locally integrable $f: I \times(0,\infty) \times\mathbb R_y
\to\mathbb R$ with ${\skliaustask f\skliaustasd}_\mathrm{rhs}<\infty
$ there exists a
sequence $\left(f^{(n)}\right)_n$ in $C^\infty(I \times(0,\infty) \times
\mathbb
R_y)$ such that ${\skliaustask f^{(n)}-f\skliaustasd}_\mathrm{rhs}\to
0$ as $n\to\infty$ and
\begin{enumerate}[\textit{(iii)}]
\item[\textit{(i)}]$f^{(n)}$ is smooth in $I \times (0,\infty) \times \mathbb R_\eta$ and $I \times (0,\infty) \times \mathbb R_y$,
\item[\textit{(ii)}] for every $t \in I$ we have $f^{(n)}(t,x,\eta)= f_1^{(n)}(t,\eta)x + f_2^{(n)}(t,\eta) x^2$
for $x\ll_n 1$, where $f_1^{(n)}$ and $f_2^{(n)}$
are smooth on $I \times\mathbb{R}$, so that in
particular $f^{(n)}(t,\cdot,\cdot)$ satisfies $(G_0)$ from
\reftext{Definition~\ref{def:g0eta}}.
\item[\textit{(iii)}]$f^{(n)}(t,x,\eta) =0$ for $x \gg_n 1$ or $\lvert y\rvert \gg_n 1$, so that in particular
$f^{(n)}(t,\cdot,\cdot)$ satisfies $(G_{\infty})$ from
\reftext{Definition~\ref{def:ginftyeta}} for all $t \in I$.
\end{enumerate}
\end{lemma}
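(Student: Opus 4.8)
The plan is to construct the approximating sequence $f^{(n)}$ by a three-stage surgery: first cut off in the far field ($x\to\infty$ and $|y|\to\infty$), then correct the near-field behavior at $x\searrow 0$ so that $f^{(n)}$ becomes \emph{exactly} of the form $f_1^{(n)}x + f_2^{(n)}x^2$ for small $x$, and finally mollify to gain smoothness, checking at each stage that the error is small in $\skliaustask\cdot\skliaustasd_\mathrm{rhs}$. The structural input we rely on is \reftext{Lemma~\ref{lem:bc0_bounds}}: since $\skliaustask f\skliaustasd_\mathrm{rhs}<\infty$, $f$ already admits the expansion \reftext{\eqref{expansion_f}}, $D^\ell f = D^\ell(f_1(t,y)x + f_2(t,y)x^2) + o(x^{2+\delta})$ as $x\searrow 0$, with $f_1,f_2 \in L^2(I\times\mathbb R)$ and the appropriate higher-regularity control built into the norm. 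The operators appearing in $\skliaustask\cdot\skliaustasd_\mathrm{rhs}$, namely $(D_x-1)$, $\tilde q(D_x-1)(D_x-1)$, and $(D_x-4)(D_x-3)\tilde q(D_x-1)(D_x-1)$, all annihilate the span of $x$ and $x^2$ (note $\tilde q(D_x-1)(D_x-1) = (D_x-\tfrac12)(D_x+\beta-\tfrac32)(D_x-2)(D_x-\beta-2)(D_x-1)$ has roots including $1$ and $2$), so the leading-order terms $f_1 x + f_2 x^2$ are transparent to the heaviest-weight parts of the norm; this is exactly what makes the surgery at $x=0$ affordable.

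First I would handle the far-field truncation. Fix a smooth cutoff $\chi_R$ equal to $1$ on $\{x\le R,\,|y|\le R\}$ and vanishing for $x\ge 2R$ or $|y|\ge 2R$, and set $f^{[R]} := \chi_R f$. Since $D_x\chi_R$ and $D_y\chi_R$ are bounded by absolute constants (the weight $x$ in $D_x = x\partial_x$ exactly compensates the scale $R$ of the $x$-cutoff), all the weighted norms of $f - f^{[R]}$ are dominated by the tails of the integrals defining $\skliaustask f\skliaustasd_\mathrm{rhs}$ over $\{x\ge R\}\cup\{|y|\ge R\}$, hence tend to $0$ as $R\to\infty$ by dominated convergence. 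Second, the near-field correction: choose $\psi_\rho$ smooth, equal to $1$ on $[0,\rho]$ and supported in $[0,2\rho]$, and replace $f^{[R]}$ near $x=0$ by its ``Taylor truncation'' $\psi_\rho(x)\,(f_1(t,y)x + f_2(t,y)x^2) + (1-\psi_\rho(x))f^{[R]}$. The difference from $f^{[R]}$ is supported in $\{x\le 2\rho\}$ and equals $\psi_\rho(x)\big(f_1 x + f_2 x^2 - f^{[R]}\big)$, which by \reftext{\eqref{expansion_f}} and its $D^\ell$-versions is $o(x^{2+\delta})$ pointwise together with the controlled derivatives; since the largest weight exponent appearing in $\skliaustask\cdot\skliaustasd_\mathrm{rhs}$ is $\delta+\tfrac52 < 3$ and the correction carries the factor $x^{2+\delta}$ with $\delta>0$, each such weighted integral over $\{x\le 2\rho\}$ is controlled by a quantity going to $0$ as $\rho\searrow 0$. (Here one uses that the heavy-weight terms of the norm are hit by $\tilde q(D_x-1)(D_x-1)$, which kills $f_1 x + f_2 x^2$, so on those terms the correction is simply $\tilde q(D_x-1)(D_x-1)$ applied to something which is $O(x^{2+\delta})$ and already controlled.) Third, mollify $f^{[R,\rho]}$ in $(t,x,y)$ using a mollifier adapted to the scales — convolving in $\log x$ rather than in $x$, and in $t$ and $y$ — so that the resulting $f^{(n)}$ is smooth, retains the property of being exactly $f_1^{(n)}x + f_2^{(n)}x^2$ for small $x$ (this is where the $\log x$-mollification is essential, so that the monomial structure is preserved) and $0$ for large $x$ or $|y|$, and approximates $f^{[R,\rho]}$ in $\skliaustask\cdot\skliaustasd_\mathrm{rhs}$; the smoothness in $\eta$ follows since $f^{(n)}$ is smooth in $y$, compactly supported in $y$, so its partial Fourier transform is smooth in $\eta$. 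A diagonal sequence $f^{(n)} := f^{[R_n,\rho_n]}$ mollified at scale $1/n$, with $R_n\to\infty$, $\rho_n\to 0$, then satisfies $\skliaustask f^{(n)} - f\skliaustasd_\mathrm{rhs}\to 0$ and properties (i)--(iii), noting that $(G_0)$ holds trivially because $f^{(n)}$ is literally an affine-plus-quadratic polynomial in $x$ near $0$ (so the analytic function $\bar f$ is just that polynomial, with no $x^\beta$ dependence), and $(G_\infty)$ holds because $f^{(n)}$ vanishes identically for large $x$.

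The main obstacle I anticipate is verifying the near-field step for the \emph{highest-weight} terms of $\skliaustask\cdot\skliaustasd_\mathrm{rhs}$, namely those with weight $\delta+\tfrac52$: one must confirm that the expansion \reftext{\eqref{expansion_f}} together with the control of $D^\ell f$ for $\ell_y$ up to $\breve k - 2$ and $|\ell|$ up to $\breve k - 2$ — all guaranteed by $\skliaustask f\skliaustasd_\mathrm{rhs}<\infty$ via \reftext{Lemma~\ref{lem:bc0_bounds}} with the parameter choices there — is genuinely strong enough that $\tilde q(D_x-1)(D_x-1)$ applied to the correction term is $o(x^{2+\delta})$ with the right number of tangential derivatives, so that the $x^{-2\delta-3}$-weighted $L^2$-norm over $\{x\le 2\rho\}$ is finite and vanishes as $\rho\to 0$. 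This is a bookkeeping issue about matching the index constraints ($\breve k$, $\check k$, $\tilde k$, $k$ under \reftext{Assumptions~\ref{ass:parameters}}) rather than a conceptual difficulty, but it is where the proof has to be done carefully; the analogous one-dimensional statement is \cite[\S6]{ggko.2014} and \cite{g.2016}, and the argument here is a routine adaptation with the extra $D_y$-derivatives carried along. The mollification step contains the usual minor subtlety that convolution must be performed in the variable $s = \log x$ (so that $D_x = \partial_s$ commutes with it and the weights transform cleanly) and that one must mollify only on the region $x\gtrsim\rho$ where $f^{[R,\rho]}$ is not already polynomial, gluing smoothly to the polynomial part; this is standard and I would not belabor it.
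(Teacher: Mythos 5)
Your construction — far-field cut-off, near-field Taylor surgery replacing $f$ by $f_1 x + f_2 x^2$ below some threshold, and $\log x$-mollification — is the same construction the paper uses, just in a permuted order (the paper mollifies in $s=\log x$ and $t$ and truncates in $y$ \emph{before} the near-field step, whereas you do the near-field correction first). The permutation is harmless: as you can check directly, convolving against a mollifier of fixed width $\varepsilon$ in $s=\log x$ maps $x^\gamma$ to a constant multiple of $x^\gamma$, so on the region $x<\rho e^{-\varepsilon}$ the mollified $f^{[R,\rho]}$ is automatically still of the form $(\text{const})\cdot f_1^{(n)}x+(\text{const})\cdot f_2^{(n)}x^2$; the ``glue smoothly to the polynomial part'' caveat you raise is not actually needed.

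The genuine gap is the structural input: you invoke \reftext{Lemma~\ref{lem:bc0_bounds}} to get the expansion \reftext{\eqref{expansion_f}} and the $L^2$ control of $f_1,f_2$, but the paper's proof of \reftext{Lemma~\ref{lem:bc0_bounds}} \emph{opens} with an approximation argument that cites \reftext{Lemma~\ref{lem:approx_rhs}} itself. Using it here is circular. The paper avoids this by extracting the coefficients directly: for $\gamma\in\{1,2\}$ one writes $f$ as the solution of successive first-order ODEs $(D_x-\gamma)w^{(1)}=w^{(2)}$, where the right-hand side is controlled by a piece of $\skliaustask f\skliaustasd_\mathrm{rhs}$, invokes the explicit integral formulas \reftext{\eqref{gensolw1}}--\reftext{\eqref{explicitw}}, and then applies Hardy's inequality (\reftext{Lemma~\ref{lem:mainhardy}}) to the resulting remainder $R$. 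The identity $\tilde q(D_x-1)(D_x-1)R=\tilde q(D_x-1)(D_x-1)f$ then gives $\lVert R\rVert_{\check k-2,\delta+\tfrac32} \lesssim \lVert(D_x-4)(D_x-3)\tilde q(D_x-1)(D_x-1)f\rVert_{\check k-2,\delta+\tfrac32}<\infty$ and hence $R=o(x^{2+\delta})$, which is exactly what your near-field step needs. So the fix is purely to replace your citation of \reftext{Lemma~\ref{lem:bc0_bounds}} by this elementary ODE/Hardy argument applied directly to the norm $\skliaustask f\skliaustasd_\mathrm{rhs}$.

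One smaller point: your far-field cut-off truncates in $(x,y)$, which is fine for your stated goal, but note the paper first truncates in the Fourier variable $\eta$ and only afterwards truncates in $y$; this yields smoothness in $y$ immediately and smoothness in $\eta$ later, whereas you get smoothness in $\eta$ from compactness of the $y$-support. Both are valid; the paper's route just streamlines the bookkeeping.
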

\begin{lemma}\label{lem:approx_sol}
Suppose $k\ge \max\left\{\tilde k+4,\check k + 6, \breve k +5\right\}$, $\tilde k \ge3$, $\check k \ge4$, $\breve k\ge4$, $\delta\in
\left
(0,\frac1{10}\right)$, $T \in (0,\infty]$, and $I := [0,T)$.
Then for each locally integrable $v: I \times(0,\infty) \times
\mathbb
R_y \to\mathbb R$ with ${\skliaustask v\skliaustasd}_\mathrm
{sol}<\infty$ there exists a
sequence $\left(v^{(n)}\right)_n$ in $C^\infty(I \times(0,\infty) \times
\mathbb
R_y)$ such that ${\skliaustask v^{(n)}-v\skliaustasd}_\mathrm{sol}\to
0$ as $n\to\infty$ and
\begin{enumerate}[\textit{(iii)}]
\item[\textit{(i)}] if $I=(0, \infty)$ then $v^{(n)}(t,x,\eta)=0$ whenever $t>2n$,
\item[\textit{(ii)}]$v^{(n)}$ is smooth in $I \times (0,\infty) \times \mathbb R_\eta$ and $I \times (0,\infty) \times \mathbb R_y$,
\item[\textit{(iii)}] for every $t \in I$ we have
\begin{equation*}
v^{(n)}(t,x,\eta) = v_0^{(n)}(t,\eta)+v_1^{(n)}(t,\eta)x +
v_{1+\beta
}^{(n)}(t,\eta)x^{1+\beta} + v_2^{(n)}(t,\eta) x^2 \quad\mbox{for}
\quad x\ll_n 1,
\end{equation*}
where $v_0^{(n)}, v_1^{(n)}, v_{1+\beta}^{(n)}, v_2^{(n)}: I \times
\mathbb{R}
\to\mathbb{R}$ are smooth, so that in particular $v^{(n)}(t,\cdot
,\cdot)$
satisfies $(G_0)$ from \reftext{Definition~\ref{def:g0eta}} for all $t\in I$,
\item[\textit{(iv)}]$v^{(n)}(t,x,\eta) =0$ for $x \gg_n 1$ or $\lvert y\rvert \gg_n 1$, so that in particular
$v^{(n)}(t,\cdot,\cdot)$ satisfies $(G_{\infty})$ from
\reftext{Definition~\ref{def:ginftyeta}} for all $t \in I$.
\end{enumerate}
\end{lemma}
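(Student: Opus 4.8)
The plan is to construct the approximating sequence $\left(v^{(n)}\right)_n$ by a three-step procedure: first cut off the behavior at infinity (both in $x$ and in $y$) to obtain item (iv), then mollify to obtain smoothness (item (ii)), and finally adjust the behavior near $x = 0$ so that the function coincides with its four-term asymptotic expansion from \reftext{\eqref{as_sol}} on a small $x$-interval, giving item (iii); item (i) is obtained by an additional time cut-off. Throughout, one has to check at each step that the norm ${\skliaustask\cdot\skliaustasd}_\mathrm{sol}$ of the difference to $v$ goes to zero, which is the bulk of the work. Before beginning, I would invoke \reftext{Lemma~\ref{lem:bc0_bounds}} to know that $v$ itself admits the expansion \reftext{\eqref{as_sol}} with coefficients $v_0, v_1, v_{1+\beta}, v_2$ in the function spaces \reftext{\eqref{bc0_bounds_summ}}; these coefficients are what the small-$x$ modification will be built from.

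More concretely, I would proceed as follows. \emph{Step 1 (cut-off at infinity and in $y$).} Fix smooth cut-offs $\chi_R(x)$ equal to $1$ on $x \le R$ and $0$ on $x \ge 2R$, and similarly $\psi_R(y)$, and set $v_R := \chi_R(x)\psi_R(y)\,v$. Since every term in ${\skliaustask\cdot\skliaustasd}_\mathrm{sol}$ (cf.~\reftext{\eqref{norm_sol}}, \reftext{\eqref{not_3barnorm}}) is a weighted $L^2$-norm of operators $D^\ell$ applied to $v$, and $D_x\chi_R, D_y\psi_R$ are bounded uniformly in $R$ and supported in $\{R \le x \le 2R\}$ resp.\ large $|y|$, a dominated-convergence argument gives ${\skliaustask v_R - v\skliaustasd}_\mathrm{sol}\to 0$ as $R\to\infty$; one uses here that the integrands defining the norm are integrable over $I\times(0,\infty)\times\mathbb R$. \emph{Step 2 (mollification).} Convolve $v_R$ in $x$ (after passing to the logarithmic variable $s=\ln x$, in which $D_x=\partial_s$, so that the weights transform nicely and the commutator of mollification with $D_x$ vanishes) and in $y$ and $t$ with standard mollifiers; continuity of translations in the weighted $L^2$-spaces gives convergence in ${\skliaustask\cdot\skliaustasd}_\mathrm{sol}$, and the result is smooth in all variables, hence in particular in $(t,x,\eta)$ after Fourier transform in $y$. \emph{Step 3 (modification near $x=0$).} Let $\phi_\epsilon(x)$ be a smooth cut-off equal to $1$ for $x\le\epsilon$ and $0$ for $x\ge 2\epsilon$; replace the mollified function $w$ by
\begin{equation*}
w - \phi_\epsilon(x)\Big(w - \big(w_0(t,y) + w_1(t,y)x + w_{1+\beta}(t,y)x^{1+\beta} + w_2(t,y)x^2\big)\Big),
\end{equation*}
where $w_0,\dots,w_2$ are the (now smooth) coefficients of $w$. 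By \reftext{\eqref{as_sol}} the bracket is $o(x^{2+\delta})$, and applying the operators appearing in \reftext{\eqref{norm_sol}} — which annihilate $1,x,x^{1+\beta},x^2$ and are built from $D_x-\gamma$ — one checks via \reftext{Lemma~\ref{lem:mainhardy}} (Hardy's inequality) that the corrector term has ${\skliaustask\cdot\skliaustasd}_\mathrm{sol}$-norm $\to 0$ as $\epsilon\to 0$. Finally for item (i), when $I=[0,\infty)$, multiply by a time cut-off $\theta_n(t)$ equal to $1$ on $[0,n]$ and $0$ on $[2n,\infty)$; since $\partial_t\theta_n$ is $O(1/n)$ and $\int_0^\infty\|v\|^2\,\mathrm dt<\infty$, this costs $o(1)$ in the norm. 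A diagonal argument over the parameters $R$, the mollification scale, $\epsilon$, and $n$ produces the desired sequence.

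The main obstacle I anticipate is Step 3: one must verify that the four-term subtraction is genuinely compatible with \emph{all} the weighted norms occurring in ${\skliaustask\cdot\skliaustasd}_\mathrm{sol}$, including the high-weight terms like ${\skliaustask (D_x-3)(D_x-2)\tilde q(D_x)D_x v\skliaustasd}_{\check k,\delta+1,I}$ and ${\skliaustask D_y\tilde q(D_x)D_x v\skliaustasd}_{\breve k, -\delta+2, I}$, whose weights $\delta+\frac32$ resp.\ $-\delta+\frac52$ sit right at the threshold governed by the exponents $\{1,1+\beta,2\}$ in the expansion. The point is precisely that $\delta < \frac12(\beta-\frac12)$ (hence $2+\delta < 1+2\beta$, as noted in \S\ref{sec:higher_3}) ensures the remainder $o(x^{2+\delta})$ is subcritical for each of these weights and that the operators kill all four ansatz powers; Hardy's inequality from \reftext{Lemma~\ref{lem:mainhardy}} then converts pointwise smallness of the corrector into norm smallness. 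A secondary technical nuisance is bookkeeping the interplay of the $x$-mollification with the weights $x^{-2\alpha-2}$ — cleanest done in the variable $s=\ln x$ — and making sure the coefficients $w_0,\dots,w_2$ extracted from the mollified $w$ are themselves smooth and converge to $v_0,\dots,v_2$ in the appropriate trace norms, which follows from the trace estimates of \reftext{Lemma~\ref{lem:trace_estimate}} together with \reftext{Lemma~\ref{lem:bc0_bounds}}.
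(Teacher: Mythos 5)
Your three-step plan — truncate at infinity (in $x$ and $y$) and in time, mollify, then splice in the four-term polynomial expansion near $x=0$ via a small-$x$ cut-off and control the corrector through \reftext{Lemma~\ref{lem:mainhardy}} — is essentially the paper's own argument, which reduces to the proof of \reftext{Lemma~\ref{lem:approx_rhs}} plus the modifications for the remainder $R$ and the $t$-cut-off that you identify. The differences (you truncate in $y$ and mollify in $y$, while the paper truncates in $\eta$ first so that $y$-smoothness is automatic; you use a linear rather than logarithmic $x$-cut-off scale) are cosmetic and do not affect correctness.
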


We obtain the following corollary of \reftext{Lemma~\ref{lem:approx_sol}}:

\begin{corollary}\label{coroll:contnorm}
Suppose that $k\ge \max\left\{\tilde k+4,\check k + 6, \breve k +5\right\}$, $\tilde k\ge3$,
$\check k \ge4$, $\breve k\ge4$, $\delta\in \left(0,\frac1{10}\right)$,
$T \in (0,\infty]$, and $I := [0,T)$.
Then for each locally integrable $v: I\times(0,\infty) \times\mathbb
R\to\mathbb R$ with ${\skliaustask v\skliaustasd}_\mathrm
{sol}<\infty$, using the
notation of \reftext{Lemma~\ref{lem:approx_sol}}, the following hold:
\begin{enumerate}[\textit{(iii)}]
\item[\textit{(i)}] If $I=(0,\infty)$ then
$\lim_{t\to\infty}(v_0)_y(t,y) = 0$
for all $y\in\mathbb R$ as well as $\lim_{t\to\infty}
\lVert v(t,\allowbreak\cdot,\cdot) \rVert_\mathrm{init}=0$.
\item[\textit{(ii)}] The function $(v_0)_y(t,y) =\lim_{x\searrow0} v_y(t,x,y)$ is
continuous and bounded.
\item[\textit{(iii)}] For $\tau\in(0,1)$ we denote
by ${\skliaustask\cdot\skliaustasd}_{\mathrm{sol},\tau}$ the norm
${\skliaustask v\skliaustasd}_\mathrm
{sol}$ as defined in \reftext{\eqref{norm_sol}} with the choice $I:=[0,\tau)$. Then
the functions $(0,\infty)\ni\tau\mapsto{\skliaustask v\skliaustasd
}_{\mathrm{sol},\tau}$ and
$(0,\infty)\ni t\mapsto {\left\lVert v(t,\cdot,\cdot)
\right\rVert}_\mathrm{init}$ are
continuous and bounded and ${\skliaustask v\skliaustasd}_{\mathrm
{sol},\tau}\to {\left\lVert v_{|t=0} \right\rVert
}_\mathrm{init}$ as $\tau\searrow0$.
\end{enumerate}
\end{corollary}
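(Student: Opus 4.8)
The plan is to derive all three items from the approximation statement of Lemma~\ref{lem:approx_sol} together with the embeddings of Lemma~\ref{lem:bc0_bounds}. First I would record the elementary but crucial observation that all the norms involved---$\skliaustask\cdot\skliaustasd_{\mathrm{sol}}$, $\skliaustask\cdot\skliaustasd_{\mathrm{sol},\tau}$, and $\left\lVert\cdot\right\rVert_\mathrm{init}$---are built from $L^2$-type integrals in $t$, $x$, $y$ (or $\eta$) of the form appearing in \eqref{norm_sol}, \eqref{norm_init}, \eqref{norm_sol_simple}. Hence for a \emph{fixed} $v$ with $\skliaustask v\skliaustasd_\mathrm{sol}<\infty$, the map $\tau\mapsto\skliaustask v\skliaustasd_{\mathrm{sol},\tau}^2$ is a sum of integrals over $[0,\tau)$ of fixed $L^1$-densities, hence absolutely continuous and nondecreasing in $\tau$, with $\skliaustask v\skliaustasd_{\mathrm{sol},\tau}^2\to\int_{\{0\}}(\cdots)\,\mathrm{d} t=0$ as $\tau\searrow 0$ for the integral terms; the only term not of pure-integral type is the supremum term $\sup_{t\in[0,\tau)}\left\lVert\cdot\right\rVert_{\cdots}^2$ hidden in $\skliaustask\cdot\skliaustasd_{\kappa,\alpha,I}$ (cf.~\eqref{not_3barnorm}), which by the time-trace estimate \eqref{traceest0} of Lemma~\ref{lem:trace_estimate} is itself bounded by the initial trace $\left\lVert v_{|t=0}\right\rVert_{\cdots}^2$ plus integral terms over $[0,\tau)$; combining, $\skliaustask v\skliaustasd_{\mathrm{sol},\tau}\to\left\lVert v_{|t=0}\right\rVert_\mathrm{init}$ as $\tau\searrow 0$, which is the last assertion of item~(iii). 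Continuity of $\tau\mapsto\skliaustask v\skliaustasd_{\mathrm{sol},\tau}$ follows the same way: both the integral terms and (again via \eqref{traceest0} applied on subintervals $[\tau_1,\tau_2)$) the supremum terms depend continuously on the endpoint; boundedness is immediate from $\skliaustask v\skliaustasd_{\mathrm{sol},\tau}\le\skliaustask v\skliaustasd_\mathrm{sol}<\infty$. For the continuity of $t\mapsto\left\lVert v(t,\cdot,\cdot)\right\rVert_\mathrm{init}$ I would argue by approximation: pick $(v^{(n)})_n$ as in Lemma~\ref{lem:approx_sol}, for which $t\mapsto\left\lVert v^{(n)}(t,\cdot,\cdot)\right\rVert_\mathrm{init}$ is continuous (the $v^{(n)}$ are smooth, compactly supported in $x$ and $y$, and have the explicit expansion (iii) near $x=0$, so each weighted norm is a smooth function of $t$); then use that $\sup_{t\in I}\bigl|\left\lVert v^{(n)}(t,\cdot,\cdot)\right\rVert_\mathrm{init}-\left\lVert v(t,\cdot,\cdot)\right\rVert_\mathrm{init}\bigr|\le\sup_{t\in I}\left\lVert(v^{(n)}-v)(t,\cdot,\cdot)\right\rVert_\mathrm{init}\lesssim\skliaustask v^{(n)}-v\skliaustasd_\mathrm{sol}\to 0$, where the last inequality is again the time-trace bound \eqref{traceest0} combined with $\left\lVert v(t,\cdot,\cdot)\right\rVert_\mathrm{init}\lesssim\skliaustask v\skliaustasd_\mathrm{sol}$ (this is precisely the estimate recorded in \S\ref{sec:transforiginal} just after \eqref{est_bc0_intro}). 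Uniform convergence of continuous functions gives continuity of the limit.

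For item~(ii), I would use the embedding \eqref{bc0_grad_sol} of Lemma~\ref{lem:bc0_bounds}: with $\ell=(0,0)$ (legitimate since $0\le|\ell|\le\min\{\tilde k-2,\check k-2\}$ under the hypotheses) it controls $\skliaustask D^\ell v_y\skliaustasd_{BC^0(I\times(0,\infty)\times\mathbb R)}$ and in particular $\left\lVert(v_0)_y\right\rVert_{BC^0(I;BC^1(\mathbb R))}$, giving that $(v_0)_y(t,y)=\lim_{x\searrow 0}v_y(t,x,y)$ is defined, continuous, and bounded. (One also needs that the trace $\lim_{x\searrow 0}v_y(t,x,y)$ genuinely exists and equals $(v_0)_y$; this is exactly part of the content of the expansion \eqref{expansion_v} asserted in Lemma~\ref{lem:bc0_bounds}, which holds a.e., combined with the $BC^0$-bound that upgrades it.)

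For item~(i), let $I=(0,\infty)$ and fix $(v^{(n)})_n$ from Lemma~\ref{lem:approx_sol}, which additionally satisfies $v^{(n)}(t,\cdot,\cdot)\equiv 0$ for $t>2n$, hence $\left\lVert v^{(n)}(t,\cdot,\cdot)\right\rVert_\mathrm{init}=0$ for $t>2n$. As shown above, $\sup_{t>0}\bigl|\left\lVert v(t,\cdot,\cdot)\right\rVert_\mathrm{init}-\left\lVert v^{(n)}(t,\cdot,\cdot)\right\rVert_\mathrm{init}\bigr|\lesssim\skliaustask v^{(n)}-v\skliaustasd_\mathrm{sol}=:\varepsilon_n\to 0$. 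Given $\varepsilon>0$, choose $n$ with $C\varepsilon_n<\varepsilon$; then for $t>2n$ we get $\left\lVert v(t,\cdot,\cdot)\right\rVert_\mathrm{init}<\varepsilon$. Hence $\lim_{t\to\infty}\left\lVert v(t,\cdot,\cdot)\right\rVert_\mathrm{init}=0$. The pointwise decay $\lim_{t\to\infty}(v_0)_y(t,y)=0$ then follows from the embedding estimate $\left|(v_0)_y(t,y)\right|\le\left\lVert(v_0)_y(t,\cdot)\right\rVert_{BC^0(\mathbb R)}\lesssim\left\lVert v(t,\cdot,\cdot)\right\rVert_\mathrm{init}$ (the instantaneous, i.e.\ fixed-$t$, version of \eqref{bc0_grad_init}--type bounds, which is legitimate since $\left\lVert\cdot\right\rVert_\mathrm{init}$ is a norm on functions of $(x,y)$).

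The main obstacle I anticipate is purely bookkeeping rather than conceptual: one must check carefully that the time-trace estimate \eqref{traceest0} applies to \emph{every} supremum-in-time term hidden inside $\skliaustask\cdot\skliaustasd_\mathrm{sol}$ (via the definition \eqref{not_3barnorm} and the various $D_y$- and $\tilde q(D_x)D_x$-derivative pieces in \eqref{norm_sol}), with the right weight shifts $\alpha-\tfrac12$, $\alpha+\tfrac12$ matching those actually present, and that the resulting initial-trace contributions assemble into exactly $\left\lVert v_{|t=0}\right\rVert_\mathrm{init}$ as defined in \eqref{norm_init} (up to equivalence, via Lemma~\ref{lem:equivalence_init}). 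Once that dictionary between the six groups of terms in $\skliaustask\cdot\skliaustasd_\mathrm{sol}$ and the six groups in $\left\lVert\cdot\right\rVert_\mathrm{init}$ is made explicit, all three items are short consequences of dominated convergence, absolute continuity of Lebesgue integrals, and uniform approximation.
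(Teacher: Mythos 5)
Your proposal is correct and follows essentially the same strategy as the paper: all three items are derived by approximating $v$ with the sequence $(v^{(n)})_n$ from Lemma~\ref{lem:approx_sol}, invoking the embeddings of Lemma~\ref{lem:bc0_bounds} for pointwise/supremum control of $(v_0)_y$, and concluding by dominated convergence and uniform approximation. The only minor deviation is in item (iii): for the limit $\tau\searrow 0$, the paper directly identifies the supremum part of $\skliaustask\cdot\skliaustasd_{\mathrm{sol},\tau}$ with $\sup_{t\in[0,\tau)}\lVert v(t,\cdot,\cdot)\rVert_\mathrm{init}$ and then uses the already-established continuity of $t\mapsto\lVert v(t,\cdot,\cdot)\rVert_\mathrm{init}$, whereas you reroute through the time-trace estimate \eqref{traceest0} to squeeze each supremum term between the initial trace and the initial trace plus integral remainders; the two routes are equally valid and of comparable length.
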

%

\subsection{Rigorous treatment of the linear equation}\label{sec:lin_rig}

The construction of solutions to the linear problem \reftext{\eqref{lin_cauchy}}
with appropriate estimates is based on a time-discretization argument
(cf.~\S \ref{sec:time_discr}) which in turn is based on a solid
understanding of the corresponding resolvent equation (cf.~\S \ref{sec:resolvent}). Note that the reasoning of \S \ref{sec:time_discr}
partially follows \cite[\S7]{ggko.2014} while the treatment of the
resolvent problem has similarities with \cite[\S6]{ggko.2014}. We will thus
emphasize differences and introduce some simplifications to the
approach in \cite[\S6--7]{ggko.2014}.

\subsubsection{The resolvent equation}\label{sec:resolvent}

Suppose we are given $v^{(t)} = v^{(t)}(x)$ for a given time $t > 0$.
Then an approximate solution of \reftext{\eqref{lin_pde}} at time $t + \delta t$,
where $\delta t > 0$ is small, can be found by solving
\begin{equation}\label{discrete_main}
x \frac{v^{(t+\delta t)} - v^{(t)}}{\delta t} + q\left(D_x\right)
v^{(t+\delta t)} - \eta^2 x^2 r\left(D_x\right) v^{(t+\delta t)} +
\eta
^4 x^4 v^{(t+\delta t)} = \frac{1}{\delta t} \int_t^{t+\delta t}
f\left
(t^\prime\right) \mathrm{d} t^\prime
\end{equation}
for $x > 0$. Replacing $\frac{1}{\delta t} x v^{(t)} + \frac
{1}{\delta
t} \int_t^{t+\delta t} f\left(t^\prime\right) \mathrm{d} t^\prime
$ by $f =
f(x)$, setting $\lambda:= \frac{1}{\delta t}$, and writing $v$ instead
of $v^{(t+\delta t)}$, we end up with the resolvent equation
\begin{equation*}
\lambda x v + q(D_x) v - \eta^2 x^2 r\left(D_x\right) v + \eta^4
x^4 v
= f \quad\mbox{for} \quad x > 0.
\end{equation*}
By scaling $x$ and $y$, we can assume without loss of generality
$\lambda= 1$, so that the resolvent equation simplifies to
\begin{equation}\label{resolvent}
x v + q(D_x) v - \eta^2 x^2 r\left(D_x\right) v + \eta^4 x^4 v = f
\quad\mbox{for} \quad x > 0.
\end{equation}
Equation~\reftext{\eqref{resolvent}} is for every fixed $\eta\in\mathbb{R}$
an ODE in
$x$, i.e., we consider $\eta$ merely as a parameter in the problem. In
what follows by a slight abuse of notation, we will be indicating
$v(\cdot)=v(\cdot,\eta)$, etc., whenever the fixed value of $\eta$ is clear
from the context. Furthermore, since $\eta$ is merely a parameter, we
will write total derivatives $\frac{\mathrm{d}}{\mathrm{d} x}$
instead of partial ones.
The notation with partial derivatives is used when it comes to
controlling the singular expansion at $x = 0$.

\paragraph*{Finding well-controlled solutions}
It is obvious that standard ODE theory yields a four-parameter family
of global solutions to \reftext{\eqref{resolvent}} given sufficient regularity of
$f$. However, our aim is not to construct just some solution to \reftext{\eqref{resolvent}} but solutions that are well-controlled as $x \searrow0$
and $x \to\infty$. This is detailed in conditions $(G_0)$ and
$(G_\infty)$ of \reftext{Definitions~\ref{def:g0eta} and \ref{def:ginftyeta}}.

The main result of \S \ref{sec:resolvent} reads as follows.
\begin{proposition}\label{prop:resolvent}
Suppose that $f : \, (0,\infty) \times \mathbb{R}_\eta \to\mathbb{C}$ such that $f(\cdot
,\eta) \in C^\infty((0,\infty))$ for every $\eta\in\mathbb{R}$, satisfies
$(G_0)$ with $(\bar f, \partial_{x_1}\partial_{x_2}\bar f)(0,0,\eta) = (0,0)$,
$(G_\infty)$, and is such that all derivatives
$\frac{\mathrm{d}^j f}{\mathrm{d} x^j}$
for $j \ge0$ are continuous in $\left\{(x,\eta): \, x > 0, \, \eta \ne 0\right\}$.
Then for every $\eta\in\mathbb{R}$ there exists exactly one solution
$v \colon (0,\infty)\times\mathbb{R}_\eta\to\mathbb{C}$ to the resolvent equation \reftext{\eqref{resolvent}}
such that $v(\cdot,\eta) \in C^\infty((0,\infty))$ for $\eta \in \mathbb R $
and for $k=0,1,2$ we have $ {\left\lvert v \right
\rvert}_{k,-\delta- k} < \infty
$ for some $\delta\in\left(0,\frac1{10}\right)$. Moreover, $v$
satisfies conditions $(G_0)$ and $(G_\infty)$, and $\frac{\mathrm
{d}^j v}{\mathrm{d}
x^j}$ for $j \ge0$ are continuous in $\left\{(x,\eta): \, x > 0, \,
\eta\ne0\right\}$.
\end{proposition}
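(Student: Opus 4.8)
The plan is to construct the solution by a matching argument: build a suitable solution manifold near $x=0$ using the structure dictated by $(G_0)$, build a solution manifold near $x=\infty$ using the decay required by $(G_\infty)$, and glue them together, with uniqueness following from an energy/coercivity estimate. First I would analyze the behavior as $x\searrow0$. Near the origin, after dividing by $x$, equation \eqref{resolvent} is dominated by the operator $x^{-1}q(D_x)$, so the homogeneous solutions behave like the powers $x^{-\gamma}$ with $\gamma$ a root of $q$, i.e.\ $x^{-1/2}, x^{1/2-\beta}, x^0, x^{1+\beta}$; the two admissible ones for $|v|_{k,-\delta-k}<\infty$ are $x^0$ and $x^{1+\beta}$ (the negative powers are excluded by the weighted integrability, exactly as in the heuristics of \S\ref{sec:lin_evol}). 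A Frobenius-type iteration, using the hypothesis that the inhomogeneity $f=\bar f(x,x^\beta,\eta)$ is analytic in $(x_1,x_2)$ with the vanishing conditions on $\partial_{x_2}\bar f$ and $\partial_{x_1}\partial_{x_2}\bar f$ at the origin, produces a one-parameter family (parametrized by the coefficient of $x^{1+\beta}$, the coefficient of $x^0$ being the free matching parameter) of local solutions of the form $\bar v(x,x^\beta,\eta)$ satisfying $(G_0)$; the analyticity and the continuity in $\eta$ of the map $\eta\mapsto\bar v$ into $\Omega$ come from uniform convergence of the power series, the radius of convergence being controlled because the recursion denominators $q(k+\beta\ell)$ stay bounded away from zero for $(k,\ell)\in\mathbb N_0^2\setminus\{(0,1)\}$ (this is where $\beta\in(1/2,1)$ and $\beta$ irrational enter). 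I would phrase this as a fixed-point argument in the Banach space with norm $|\cdot|_\omega$.

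Next I would treat $x\to\infty$. Here the dominant balance in \eqref{resolvent} changes with $\eta$: for $\eta=0$ the equation is $xv+q(D_x)v=f$, whose decaying solutions have the WKB-type behavior $\exp(-\nu x^{1/4})$ with $\nu$ up to the critical rate $2^{3/2}$ dictated by the quartic term $q(D_x)$ against the linear-in-$x$ term (matching item (i) of $(G_\infty)$ and \cite[Definition~6.2]{ggko.2014}); for $\eta\neq0$ the term $\eta^4x^4v$ dominates and forces exponential decay $\exp(-\nu x)$ with $\nu<|\eta|$. In either regime the space of solutions with the prescribed decay is two-dimensional (the other two fundamental solutions grow), so I would construct, via a contraction mapping on an exponentially weighted space of the type $\mathcal V$ from Definition~\ref{def:ginftyeta}, a two-parameter family of solutions meeting $(G_\infty)$, with continuous dependence on $\eta\neq0$. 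Here I expect to largely follow \cite[\S6]{ggko.2014} but with the extra $\eta^2x^2r(D_x)v$ and $\eta^4x^4v$ terms changing the dominant balance; the construction of the $x\to\infty$ manifold is explicitly flagged in the outline as the point that differs most from \cite{ggko.2014}.

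The matching step: the local-at-zero manifold is (generically) one-dimensional modulo the particular solution, and the local-at-infinity manifold is two-dimensional; together with the particular solutions these must agree on a common interval $(0,\infty)$, which is a linear-algebra condition — a $1+2=3$ versus $4$ count for the homogeneous fundamental system, so the intersection of "good at $0$" and "good at $\infty$" solution spaces is generically $3+2-4=1$-dimensional, but since each already contains a forced particular part, the affine intersection is a single point. To make this rigorous and to pin down that single point I would instead argue via uniqueness: if $v_1,v_2$ are two solutions with $|v_i|_{k,-\delta-k}<\infty$ for $k=0,1,2$, then $w=v_1-v_2$ solves the homogeneous equation, and testing with $w$ in $(\cdot,\cdot)_{-\delta}$ gives, using exactly the computation of \S\ref{sec:basic_weak} (coercivity of $q(D_x)$ on the range $(-1/10,0)$ from Lemma~\ref{lem:coer_q}, plus the nonnegative contributions \eqref{3term}–\eqref{4term} from the $\eta^2$ and $\eta^4$ terms after absorbing the bad term as in \eqref{est3term}), that $x^{1/2}|w|_0^2 + \sum_{\ell=0}^2\eta^{2\ell}|w|_{\ell,-\delta-\ell}^2\le0$, hence $w\equiv0$; the boundary terms in the integrations by parts vanish because the finiteness of $|w|_{2,-\delta-2}$ forces $w=o(x^0)$ as $x\searrow0$ and the $(G_\infty)$/weighted control forces decay at infinity. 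Existence is then the statement that the intersection is nonempty, which I would get either from the dimension count above or, more robustly, by solving the resolvent equation by the Lax–Milgram/Galerkin method in the Hilbert space completion of $C_c^\infty((0,\infty))$ under $|\cdot|_{2,-\delta}+\eta|\cdot|_{1,-\delta-1}+\eta^2|\cdot|_{-\delta-2}$ — coercivity there is again \S\ref{sec:basic_weak} — and then upgrading the weak solution to one satisfying $(G_0)$ and $(G_\infty)$ and the stated smoothness by the local analyses above (elliptic regularity away from $x=0$ gives $C^\infty$ in $x$, and continuity of the $x$-derivatives in $\{x>0,\eta\neq0\}$ follows from continuous dependence of the two solution manifolds on $\eta$).

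The main obstacle I anticipate is the $x\to\infty$ construction and the $\eta$-continuity through $\eta=0$: the dominant balance genuinely switches between the quartic-versus-linear regime ($\eta=0$, decay rate $x^{1/4}$) and the quartic-versus-$x^4$ regime ($\eta\neq0$, decay rate $x$), so the weighted spaces $\mathcal V$ degenerate as $\eta\to0$ and one must check that the constructed solution manifold and the resulting matching parameters vary continuously across this transition; handling the cross terms $-\eta^2x^2r(D_x)v$ uniformly in this crossover, and verifying the precise admissible exponents $\nu<2^{3/2}$ and $\nu<|\eta|$, is the technically delicate part, whereas the near-zero Frobenius analysis and the uniqueness estimate are essentially bookkeeping on top of Lemma~\ref{lem:coercivity} and the already-established heuristics.
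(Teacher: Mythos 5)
You have the paper's strategy right at a high level: build local solution manifolds satisfying $(G_0)$ near $x=0$ and $(G_\infty)$ near $x=\infty$, match them in the four-dimensional phase space, and obtain uniqueness from a coercivity test with the operator $q(D_x)$. Your WKB analysis at infinity is also correct (decay $e^{-2^{3/2}x^{1/4}}$ for $\eta=0$ from the balance $x^4 v''''\sim -xv$, and $e^{-|\eta|x}$ for $\eta\ne 0$ from $(\partial_x^2-\eta^2)^2$), and the $\eta\to 0$ crossover you flag is indeed the delicate point.

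There is, however, a genuine error in the dimension count at the core of the existence argument. You assert the near-zero homogeneous manifold is ``one-dimensional modulo the particular solution'' and arrive at the count $1+2=3<4$ (followed by the arithmetically inconsistent ``$3+2-4=1$''). In fact \emph{both} $x^0$ and $x^{1+\beta}$ are admissible near zero — both give finite $\left\lvert v \right\rvert_{k,-\delta-k}$ precisely because $\delta>0$ — and Lemma~\ref{lem:fixedpointbar} produces a \emph{two}-parameter family $a_1 v^{(1)} + a_2 v^{(2)}$ with $v^{(1)}\sim 1$, $v^{(2)}\sim x^{1+\beta}$; the correct count is $2+2=4$. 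This is not cosmetic: the linear independence of the four local homogeneous solutions $v^{(1)},\ldots,v^{(4)}$ (proved via the uniqueness lemma, Lemma~\ref{lem:res_unique}) makes them a fundamental system, $v^{(\infty)}-v^{(0)}=\sum_i a_i v^{(i)}$ becomes solvable, and one defines $v:=v^{(0)}+a_1 v^{(1)}+a_2 v^{(2)}=v^{(\infty)}+a_3 v^{(3)}+a_4 v^{(4)}$, which automatically satisfies $(G_0)$ and $(G_\infty)$. With your miscount the span is only three-dimensional and this representation would generically fail. Your Lax--Milgram fallback does not repair this cleanly: coercivity yields a weak solution in a weighted $H^2$-type space, but upgrading it to $(G_0)$ and $(G_\infty)$ is precisely the hard step and in effect requires the correct matching argument anyway. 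Finally, in your uniqueness step the claim that $\left\lvert w\right\rvert_{2,-\delta-2}<\infty$ ``forces $w=o(x^0)$'' does not hold — the weighted bound only controls $w$ at scale $x^{-\delta}$ in an averaged sense and does not force $w$ to stay bounded as $x\searrow 0$; the paper instead tests against $\chi_n^2 w$ with a logarithmic cutoff and absorbs the $O(1/n)$ commutator remnants, and some such device is needed.
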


Note that for later purpose the continuous dependence on $\eta$ will
not be necessary but the weaker measurability is sufficient.

The strategy for constructing a solution to \reftext{\eqref{resolvent}} proceeds
in three steps:
\begin{enumerate}[(i)]
\item[(i)] We construct a $2$-dimensional manifold of solutions to \reftext{\eqref{resolvent}} for $x\ll1$ meeting $(G_0)$ (cf.~\reftext{Definition~\ref{def:g0eta}}).
\item[(ii)] We construct a $2$-dimensional manifold of solutions to \reftext{\eqref{resolvent}} for $x\gg1$ fulfilling $(G_\infty)$ (cf.~\reftext{Definition~\ref{def:ginftyeta}}).
\item[(iii)] We find exactly one solution to \reftext{\eqref{resolvent}} satisfying
contemporarily the above two conditions, by intersecting the
above two
$2$-dimensional solution manifolds in $4$-dimensional phase space
spanned by coordinates $\left(v,\frac{\mathrm{d} v}{\mathrm{d} x},
\frac{\mathrm{d}^2v}{\mathrm{d}
x^2}, \frac{\mathrm{d}^3 v}{\mathrm{d} x^3}\right)$.
\end{enumerate}
The above last step is done as in
\cite[Proposition~6.3]{ggko.2014} using
the coercivity of $q(D_x)$, though we additionally prove continuity in
$\eta$ (see also Angenent \cite{a.1988} for a similar argument). This is based
on the analogue of the uniqueness result
\cite[Proposition~5.5]{ggko.2014},
which now can be formulated as follows:
%
%
\begin{lemma}\label{lem:res_unique}
Suppose that $\eta\in\mathbb{R}$ and $\delta \in \left(0,\frac{1}{10}\right)$ are fixed. If $v \in C^{\infty
}((0,\infty))$
solves \reftext{\eqref{resolvent}} with $f=0$ in $(0,\infty)$ and for
$k=0,1,2$ we have $ {\left\lvert v \right\rvert
}_{k,-\delta-k} < \infty$, then
$v=0$ in $(0,\infty)$.
\end{lemma}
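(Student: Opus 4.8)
The statement to prove is Lemma~\ref{lem:res_unique}, the uniqueness of the trivial solution to the homogeneous resolvent equation \eqref{resolvent} in the weighted class $|v|_{k,-\delta-k}<\infty$ for $k=0,1,2$. The idea is an energy (coercivity) argument: test \eqref{resolvent} with $f=0$ against $v$ in the inner product $(\cdot,\cdot)_{-\delta}$, exactly as in the basic weak estimate of \S\ref{sec:basic_weak} but without the time-derivative term, and show that the resulting identity forces $|v|_{-\delta-\frac12}=0$.

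\textbf{Step 1: justification of the integration by parts.}
Before computing, I would record that the assumption $|v|_{k,-\delta-k}<\infty$ for $k=0,1,2$ gives enough decay of $v$ and its $D_x$-derivatives both as $x\searrow0$ and as $x\to\infty$ to carry out all the integrations by parts needed below with no boundary contributions. Concretely, $|v|_{2,-\delta}<\infty$ implies (by the standard one-dimensional embedding recalled in \S\ref{sec:norms}) that $v=o(x^{-\delta})$ as $x\searrow0$ and that $v$, $D_xv$ decay at $\infty$ in the appropriate averaged sense; since $-\delta<0$ this is compatible with a nonzero limit of $v$ at the origin a priori, but that is exactly what the coercivity range $-\delta\in\left(\frac12-\beta,0\right)$ of $q(D_x)$ (Lemma~\ref{lem:coer_q}) is designed to handle. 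I would note that $-\delta\in\left(0,\frac1{10}\right)$ as assumed lies in the coercivity range \eqref{coer_range_x}.

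\textbf{Step 2: the energy identity.}
Testing $xv+q(D_x)v-\eta^2x^2r(D_x)v+\eta^4x^4v=0$ against $v$ in $(\cdot,\cdot)_{-\delta}$ yields, term by term, the analogues of \eqref{1term}–\eqref{4term} with the time-derivative term replaced by $(xv,v)_{-\delta}=|v|_{-\delta-\frac12}^2\ge0$:
\begin{equation*}
|v|_{-\delta-\frac12}^2 + \left|(D_x+\delta)^2v\right|_{-\delta}^2 + \omega(-\delta)\left|(D_x+\delta)v\right|_{-\delta}^2 + q(-\delta)|v|_{-\delta}^2 + \eta^2\left(2|(D_x+1+\delta)v|_{-\delta-1}^2 - \delta(2\delta+1)|v|_{-\delta-1}^2\right) + \eta^4|v|_{-\delta-2}^2 = 0.
\end{equation*}
By Lemma~\ref{lem:coer_q} (equivalently Lemma~\ref{lem:coercivity}) the weight $-\delta$ lies in the coercivity range, so $q(-\delta)>0$ and $\omega(-\delta)\ge0$; the only possibly-negative contribution is $-\eta^2\delta(2\delta+1)|v|_{-\delta-1}^2$, which — exactly as in the passage \eqref{est3term}, valid for $\delta\in\left(0,\frac1{10}\right)$ — is absorbed by a fraction of $q(-\delta)|v|_{-\delta}^2+\eta^4|v|_{-\delta-2}^2$. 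Hence all remaining terms on the left are nonnegative, and in particular $|v|_{-\delta-\frac12}^2\le0$, forcing $v=0$ almost everywhere on $(0,\infty)$, and then $v\equiv0$ by smoothness.

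\textbf{Main obstacle.}
The computation itself is routine once one is allowed to integrate by parts; the only genuine point requiring care is the \emph{rigorous justification of the boundary terms vanishing}. One must check that $|v|_{k,-\delta-k}<\infty$ for $k=0,1,2$ really does kill every boundary term produced in passing from $\left(q(D_x)v,v\right)_{-\delta}$ to $|(D_x+\delta)^2v|_{-\delta}^2+\cdots$ and from the $\eta^2$-term to \eqref{3term}. The potentially delicate endpoint is $x\searrow0$, where $-\delta-k$-weighted finiteness allows $v$ itself to be $O(1)$ but forces $D_xv=o(x^{\,-\delta})$ and $D_x^2v=o(x^{\,-\delta})$; one verifies that each boundary term is a product of powers of $x$ against these quantities with a strictly positive total exponent, hence vanishes in the limit. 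At $x\to\infty$ the weights are large and positive (since the norms carry $x^{2(\delta+k)}$ with a positive net power $x^{-2}$), so decay there is immediate. With this bookkeeping in place — which mirrors precisely the manipulations already carried out heuristically in \S\ref{sec:basic_weak} — the lemma follows.
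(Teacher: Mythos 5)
Your plan is conceptually the right one (test against $v$ and exploit coercivity), but there is a genuine gap that the paper's proof addresses by a device you have omitted, and there is a misreading of the hypotheses built into your argument. You wrote $\left\lvert v\right\rvert_{2,-\delta}<\infty$ in Step~1 and then built the integration-by-parts justification and the energy identity of Step~2 (in particular the finiteness of $\left\lvert (D_x+\delta)^2 v\right\rvert_{-\delta}^2$) on that hypothesis. The lemma assumes only $\left\lvert v\right\rvert_{k,-\delta-k}<\infty$ for $k=0,1,2$, i.e., $\left\lvert v\right\rvert_{0,-\delta}$, $\left\lvert v\right\rvert_{1,-\delta-1}$, $\left\lvert v\right\rvert_{2,-\delta-2}$ are finite. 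This is exactly what \eqref{lin_est1} controls (with the $\eta^{2\ell}$ factors), and it is strictly weaker than $\left\lvert v\right\rvert_{2,-\delta}<\infty$ near $x=0$: $D_x^2 v$ is only known to be in $L^2$ with the smaller weight $x^{2\delta+3}$ rather than $x^{2\delta-1}$. Consequently the right-hand side of the coercivity identity of Lemma~\ref{lem:coercivity} at weight $\alpha=-\delta$ is not known to be finite, the hypothesis of that lemma ($\left\lvert v\right\rvert_{2,-\delta}<\infty$) fails, and the Parseval argument underlying it (which requires $w:=e^{\delta s}v(e^s)\in H^2(\mathbb R_s)$) cannot be invoked. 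In addition, even the pointwise-decay claims you invoke in the "Main obstacle" discussion are not delivered by the weighted $L^2$ hypotheses, which give decay only along subsequences of $x$; this is not a matter of bookkeeping but of missing information.

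The paper closes both issues simultaneously by testing against $\chi_n^2 v$ with a logarithmic cutoff $\chi_n(x):=\chi\bigl(\tfrac{\log x}{n}\bigr)$, with $\chi\equiv 1$ on $(-1,1)$ and $\chi\equiv 0$ outside $(-2,2)$. Then $\chi_n v$ has compact support in $s=\log x$, so every integration by parts is unconditionally valid and the coercivity identity of Lemma~\ref{lem:coercivity} can be applied to $\chi_n v$ without any a priori regularity assumption. The price is the commutators $[q(D_x),\chi_n]$ and $[r(D_x),\chi_n]$, but each of their addends has at least one $D_x=\partial_s$ falling on $\chi_n$, hence a prefactor $O(1/n)$ and support in the transition region $\lvert\log x\rvert\in[n,2n]$. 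Tracking these exactly as in the passage from \eqref{test_basic} to \eqref{lin_est1} yields $\left\lvert\chi_n v\right\rvert_{-\delta-\frac12}^2+\sum_{\ell=0}^2\eta^{2\ell}\left\lvert\chi_n v\right\rvert_{\ell,-\delta-\ell}^2\le O(1/n)$, and letting $n\to\infty$ gives $v\equiv 0$. You should correct the hypotheses and insert this cutoff mechanism: it is essential, not merely a refinement of the same computation.
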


For the convenience of the reader, we will give a proof of
\reftext{Proposition~\ref{prop:resolvent}} and \reftext{Lemma~\ref{lem:res_unique}} at the
end of \S \ref{sec:resolvent}. In order to complete the presentation of 
the above sketch of proof, we first construct solutions to \reftext{\eqref{resolvent}} as described above,
separately near $x = 0$ and near $x = \infty$.

\paragraph*{Construction of solutions for $x\ll1$}
The treatment for $x \ll0$ is analogous to the reasoning in
\cite[\S6.2]{ggko.2014} since -- as mentioned twice before -- the extra terms
coming from the additional dimension are of order $O\left(x^2\right)$
and $O\left(x^4\right)$, respectively, i.e., they are perturbative
terms in the fixed-point problem for $x \ll1$. We first
concentrate on constructing solutions to \reftext{\eqref{resolvent}} for $x \ll
1$ and $\eta\in\mathbb{R}$ fixed,
so that we initially suppress the
dependence on $\eta$ in the notation.
The continuous dependence on
$\eta$ will be discussed at the end of the paragraph.

Here the leading-order operator in \reftext{\eqref{resolvent}} is $q(D_x)$ and
the corresponding homogeneous equation has two linearly independent
bounded solutions $x^0,x^{1+\beta}$ corresponding to the positive roots
$0, 1+\beta$ of the polynomial $q(\zeta)$. As a consequence, we expect
that a generic bounded solution of $q(D_x)v=f$ behaves like
$v(x)=a_1+a_2x^{1+\beta} + o\left(x^{1+\beta}\right)$ as $x \searrow0$, where
$a_1, a_2 \in\mathbb R$ provided $f(x)=o\left(x^{1+\beta}\right)$ as 
$x \searrow0$. Then we unfold the singular behavior for
$0 \le x \ll1$ by using
two variables $x_1,x_2$ rather than the single variable $x$,
i.e., we employ the following substitutions:
\begin{equation*}
v(x) \mapsto\bar v\left(x_1,x_2\right), \quad f(x) \mapsto\bar
f(x_1,x_2), \quad D = x \tfrac{\mathrm{d}}{\mathrm{d} x} \mapsto
\bar D := x_1\partial
_{x_1} + \beta x_2\partial_{x_2},
\end{equation*}
where on the characteristic $(x_1,x_2) = \left(x,x^\beta\right)$ we
can identify
\begin{equation*}
D^j v(x) = \bar D^j \bar v\left(x,x^\beta\right) \quad\mbox{and}
\quad D^j f(x) = \bar D^j \bar f\left(x,x^\beta\right) \quad
\mbox
{for} \quad j \in\mathbb{N}_0.
\end{equation*}
For fixed $\eta\in\mathbb{R}\setminus\{0\}$, we then consider the unfolded problem
\begin{subequations}\label{solbigbar}
\begin{equation}\label{eq_bigbar}
x_1 \bar v + q\left(\bar D\right)\bar v - \eta^2 x_1^2 r\left(\bar
D\right) \bar v + \eta^4 x_1^4 \bar v = \bar f \quad\mbox{in} \quad Q
:= [0,\varepsilon] \times[0,L],
\end{equation}
subject to
\begin{equation}
\left(\bar v, \partial_{x_1}\partial_{x_2}\bar v\right)(0,0)=(a_1,a_2),
\end{equation}
\end{subequations}
where $a_1, a_2 \in\mathbb{R}$ are parameters and by compatibility necessarily
$\left(\bar f, \partial_{x_2} \bar f\right)(0,0) = (0,0)$ holds true.
Problem~\reftext{\eqref{solbigbar}} can be solved with a fixed-point argument,
for which we define the norms (compare to item~(ii)
in \reftext{Definition~\ref{def:g0eta}})
\begin{equation}\label{norm_omega}
 {\left\lvert\bar f \right\rvert}_\omega:= \sum
_{(k,\ell) \in\mathbb{N}_0^2} \frac{\varepsilon^k L^\ell
}{k ! \ell!}  {\left\lvert\partial_{x_1}^k \partial
_{x_2}^\ell\bar f(0,0) \right\rvert}
\quad\mbox{and} \quad {\left\lvert\bar v \right\rvert
}_{4,\omega} := \sum_{m = 0}^4
 {\left\lvert\bar D^m \bar v \right\rvert}_\omega,
\end{equation}
with parameters $\varepsilon, L > 0$. Note that finiteness of
$ {\left\lvert\bar f \right\rvert}_\omega$ is
equivalent to the property that the series $\sum_{(k,\ell
) \in\mathbb{N}_0^2} \frac{\partial_{x_1}^k \partial_{x_2}^\ell
\bar f(0,0)}{k
! \ell!} x_1^k x_2^\ell$ converges absolutely in $Q$. We obtain the
following result being the analogue of \cite[Lemma~6.5]{ggko.2014},
though the norms \reftext{\eqref{norm_omega}} are chosen slightly differently and
are more in line with the reasoning in \cite[\S4.4]{bgk.2016}:
%
%
\begin{lemma}\label{lem:fixedpointbar}
For all compact intervals ${\mathcal J}\Subset\mathbb{R}\setminus\{0\}$,
there
exists $\varepsilon> 0$
such that for all $\eta\in{\mathcal J}$, any $L > 0$, and any
$a_1,a_2\in
\mathbb R$, for any $\bar f = \bar f(x_1,x_2)$ analytic in $Q = \left
[0,\varepsilon\right] \times\left[0,L\right]$ with
$\left(\bar f,\partial_{x_1}\partial_{x_2}\bar f\right)(0,0) = (0,0)$
and $ {\left\lvert\bar f \right\rvert}_\omega< \infty$,
problem~\reftext{\eqref{solbigbar}} has an analytic solution $\bar v$, which
additionally satisfies
\begin{equation}\label{bound_bar}
 {\left\lvert\bar v \right\rvert}_{4,\omega} \lesssim_{\varepsilon,\mathcal J}
 {\left\lvert a_1 \right\rvert} +  {\left
\lvert a_2 \right\rvert} +  {\left\lvert\bar f \right
\rvert}_\omega.
\end{equation}
Now define $v^{(j)}(x):=\bar v(x,x^\beta)$ solving \reftext{\eqref{resolvent}} for $x\le\varepsilon$ with
\begin{enumerate}[\textit{(2)}]
\item[\textit{(0)}]$(a_1,a_2) = (0,0)$ and given
$f = \bar f\left(x,x^\beta\right)$ for $j = 0$,
\item[\textit{(1)}]$(a_1,a_2) = (1,0)$ and $f = 0$ in $(0,\infty)$ for $j = 1$,
\item[\textit{(2)}]$(a_1,a_2) = (0,1)$ and $f = 0$ in $(0,\infty)$ for $j = 2$.
\end{enumerate}
Then
\begin{subequations}\label{closetozeroresolvent}
\begin{align}
v^{(0)}(x) &= - \frac8 9 \tfrac{\mathrm{d} f}{\mathrm{d} x}(0) x +
O\left(x^2\right)
\quad\mbox{as} \quad x \searrow0,
\eqncr
v^{(1)}(x) &= 1 + O\left(x^2\right) \quad\mbox{as} \quad x \searrow0,
\eqncr
v^{(2)}(x) &= x^{1+\beta} + O\left(x^2\right) \quad\mbox{as} \quad x
\searrow0,
\end{align}
\end{subequations}
and $v := v^{(0)} + a_1 v^{(1)} + a_2 v^{(2)}$ is a solution to \reftext{\eqref{resolvent}} on the interval $(0,\varepsilon]$ that can be extended to a
solution for all $x > 0$. Furthermore, if
${\mathcal J}
\owns\eta\mapsto\bar f(\cdot,\cdot,\eta)$ is continuous with values in $\{
\bar
f: \,  {\left\lvert\bar f \right\rvert}_\omega< \infty
\}$, then the solution map ${\mathcal J}
\owns\eta\mapsto\bar v$ is continuous with values in $\{\bar w: \,
{\left\lvert\bar w \right\rvert}_{4,\omega} < \infty\}$.
\end{lemma}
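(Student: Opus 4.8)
The plan is to recast the unfolded problem \eqref{solbigbar} as a fixed-point equation for the operator $q(\bar D)$ and to exploit that $q(\zeta)$ has no roots in $\{0,1+\beta\}^c$ that obstruct inversion on the relevant weighted analytic class. First I would record that, for fixed $\eta \in \mathcal J \Subset \mathbb R \setminus \{0\}$, the operator $q(\bar D)$ acts diagonally on monomials $x_1^k x_2^\ell$ with eigenvalue $q(k + \beta \ell)$, and that, by the choice of the roots $0, 1+\beta$ of $q$ (cf.~\eqref{poly_q}), one has $q(k+\beta\ell) \neq 0$ for all $(k,\ell) \in \mathbb N_0^2 \setminus \{(0,0),(0,1)\}$ together with a lower bound $|q(k+\beta\ell)| \gtrsim (k+\ell)^4$ for $k + \ell$ large. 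Hence the formal inverse $q(\bar D)^{-1}$, defined on power series with vanishing $(0,0)$- and $(0,1)$-coefficients, is bounded on the norm $|\cdot|_\omega$ — indeed it improves the norm by a factor $(k+\ell)^{-4}$ termwise — which is exactly the mechanism underlying the gain of four derivatives in \eqref{bound_bar}. This is the step where the compatibility condition $(\bar f, \partial_{x_1}\partial_{x_2}\bar f)(0,0) = (0,0)$ is consumed: it guarantees that $\bar f$ (after subtracting the explicit contributions of $a_1, a_2$) lies in the domain of $q(\bar D)^{-1}$.

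Next I would rewrite \eqref{eq_bigbar} as
\begin{equation*}
\bar v = a_1 + a_2 x_1^{1+\beta} \Big/ q(1+\beta)\text{-type correction} + q(\bar D)^{-1}\Big( \bar f - x_1 \bar v + \eta^2 x_1^2 r(\bar D)\bar v - \eta^4 x_1^4 \bar v\Big),
\end{equation*}
i.e.\ $\bar v = \bar v_{\mathrm{hom}} + \mathcal T \bar v$ where $\bar v_{\mathrm{hom}}$ collects the two homogeneous modes carrying $a_1, a_2$ and $\mathcal T$ is the map above; the point is that each of the perturbative terms $x_1 \bar v$, $\eta^2 x_1^2 r(\bar D)\bar v$, $\eta^4 x_1^4 \bar v$ carries at least one explicit power of $x_1$, so multiplication by them maps the $|\cdot|_{4,\omega}$-ball into a set on which $q(\bar D)^{-1}$ acts with a small operator norm once $\varepsilon$ is small (the factor $x_1^j$ contributes $\varepsilon^j$, and the derivative count in $r(\bar D)$, $q(\bar D)^{-1}$ is balanced so that $\mathcal T$ even gains regularity). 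Here I would be careful that $\varepsilon$ is chosen depending only on $\mathcal J$ (through $\sup_{\eta \in \mathcal J}|\eta|$ and $\inf$ of the relevant nonvanishing $|q(k+\beta\ell)|$) and \emph{not} on $L$, as the statement demands — this works because the $|\cdot|_\omega$-norm is multiplicative/submultiplicative in a way that treats the $x_2 \in [0,L]$ direction benignly (the weight $L^\ell/\ell!$ absorbs products), so enlarging $L$ does not degrade the contraction constant. Then Banach's fixed-point theorem in the space $\{\bar w : |\bar w|_{4,\omega} < \infty\}$ yields a unique analytic solution $\bar v$ and the estimate \eqref{bound_bar} by the usual geometric-series bound $|\bar v|_{4,\omega} \le (1-\theta)^{-1}|\bar v_{\mathrm{hom}}|_{4,\omega} \lesssim |a_1| + |a_2| + |\bar f|_\omega$.

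For the three specific profiles $v^{(j)}$, I would simply substitute the data $(a_1,a_2) \in \{(0,0),(1,0),(0,1)\}$ and the stated $f$ into the fixed-point solution and read off the leading asymptotics from the defining equation: for $v^{(1)}$ and $v^{(2)}$ the homogeneous modes give $1$ and $x^{1+\beta}$ and the fixed-point correction is $O(x_1^2)$ since $\mathcal T$ carries a factor $x_1$ (hence $x^2$ on the characteristic); for $v^{(0)}$ one matches coefficients at order $x_1$ in $q(\bar D)^{-1}(\bar f - \cdots)$, using $q'(0) = -\tfrac{9}{8}$ (differentiate \eqref{poly_q}) together with $\tfrac{d f}{dx}(0) = \partial_{x_1}\bar f(0,0)$, to get the coefficient $-\tfrac89 \tfrac{df}{dx}(0)$ in \eqref{closetozeroresolvent}. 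That $v := v^{(0)} + a_1 v^{(1)} + a_2 v^{(2)}$ extends to all $x > 0$ is standard ODE continuation applied to \eqref{resolvent} away from $x = 0$, where the equation is a regular linear fourth-order ODE with leading coefficient $\eta^4 x^4 + (\text{from } q(D_x)) \neq 0$. Finally, continuity of $\eta \mapsto \bar v$ follows because $\mathcal T = \mathcal T_\eta$ depends continuously (indeed real-analytically) on $\eta$ in operator norm on $\{|\cdot|_{4,\omega}<\infty\}$ — the only $\eta$-dependence is through the scalar coefficients $\eta^2, \eta^4$ multiplying fixed bounded operators — so the fixed point depends continuously on $\eta$ by the uniform (in $\eta \in \mathcal J$) contraction estimate and the standard continuity-of-fixed-points argument.

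The main obstacle I anticipate is the bookkeeping in the second paragraph: proving that $q(\bar D)^{-1}$ composed with multiplication by $x_1^j$ and by $r(\bar D)$ is a contraction on the $|\cdot|_{4,\omega}$-ball with a constant independent of $L$ and depending on $\eta$ only through $\mathcal J$. This requires a careful termwise estimate showing that the weight $\varepsilon^k L^\ell/(k!\,\ell!)$ interacts correctly with the shift $k \mapsto k+j$ induced by multiplication by $x_1^j$ and with the polynomial-in-$(k+\beta\ell)$ factors produced by $r(\bar D)$ and killed by $q(\bar D)^{-1}$; the subtlety is that $\bar D = x_1\partial_{x_1} + \beta x_2 \partial_{x_2}$ mixes the two directions, so one cannot fully decouple the $k$- and $\ell$-sums, and one must check that the $\ell$-dependence never forces a constant growing with $L$. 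Everything else is routine once this uniform contraction bound is in place.
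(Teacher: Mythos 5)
Your strategy is essentially the paper's: invert $q(\bar D)$ on the weighted-analytic class (the paper does this via the first-order factorization $T = T_{-\frac12}T_{-\beta+\frac12}T_0T_{1+\beta}$ rather than your termwise diagonalization, but these produce the same operator with the same $|\cdot|_{4,\omega}\lesssim|\cdot|_\omega$ maximal-regularity gain), recast \eqref{solbigbar} as a fixed-point problem in which $x_1\bar v$, $\eta^2x_1^2r(\bar D)\bar v$, $\eta^4x_1^4\bar v$ are perturbations carrying the small factor $\varepsilon+\eta^2\varepsilon^2+\eta^4\varepsilon^4$, close by Banach, and get $\eta$-continuity from the uniform-in-$\mathcal J$ contraction. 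Two local slips should be corrected, though.

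First, the exceptional indices of $q(\bar D)$ in $\mathbb N_0^2$ are $(0,0)$ and $(1,1)$, not $(0,1)$: the relevant nonnegative roots of $q$ are $0$ and $1+\beta$, and since $\beta$ is irrational $k+\beta\ell=1+\beta$ with $(k,\ell)\in\mathbb N_0^2$ forces $(k,\ell)=(1,1)$, which corresponds to the monomial $x_1x_2$ (collapsing to $x^{1+\beta}$ on the characteristic); your $(0,1)$ gives $k+\beta\ell=\beta$, which is not a root. Accordingly the homogeneous mode carrying $a_2$ in the unfolded variables is $x_1x_2$, not $x_1^{1+\beta}$. Second, the factor $-\tfrac89$ in $v^{(0)}$ comes from $q(1)^{-1}$, not from $q'(0)$: differentiating $q(\zeta)=\zeta(\zeta+\tfrac12)(\zeta^2-\tfrac32\zeta-\tfrac14)$ gives $q'(0)=-\tfrac18$, while $q(1)=-\tfrac98$. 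The correct matching is at the $x_1$-coefficient of \eqref{eq_bigbar}: the term $x_1\bar v$ contributes $\bar v(0,0)=a_1$ and $q(\bar D)\bar v$ contributes $q(1)\partial_{x_1}\bar v(0,0)$, whence $\partial_{x_1}\bar v(0,0)=\big(\partial_{x_1}\bar f(0,0)-a_1\big)/q(1)=\tfrac89\big(a_1-\partial_{x_1}\bar f(0,0)\big)$, which for $v^{(0)}$ (i.e.\ $a_1=0$) gives $-\tfrac89\tfrac{\mathrm df}{\mathrm dx}(0)$ as claimed.
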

\begin{proof}[Proof of \reftext{Lemma~\ref{lem:fixedpointbar}}]
\textit{Step~1: Linear solution operator.}
We first observe that for $\bar f$ analytic on $Q$ satisfying
\begin{equation*}
(\bar f,  \partial_{x_1}\partial_{x_2}\bar f)(0,0)=(0,0) \quad\mbox{and} \quad
 {\left\lvert\bar f \right\rvert}_\omega< \infty,
\end{equation*}
there exists an analytic solution $T\bar f$ with $\sum_{m = 0}^4
 {\left\lvert\bar D^m T \bar f \right\rvert}_\omega< \infty$ of
\begin{equation}\label{solhombar}
q(\bar D) T \bar f = \bar f \quad\mbox{in} \quad Q, \quad\mbox{with}
\quad\left(T \bar f, \partial_{x_1}\partial_{x_2}T \bar f\right)(0,0) = (0,0),
\end{equation}
where
\begin{equation}\label{t_prod}
T = T_{-\frac12} T_{-\beta+\frac12} T_0T_{1+\beta}
\end{equation}
and $T_\gamma$ is determined through
\begin{equation}\label{1order_bar}
(\bar D -\gamma) T_\gamma\bar g = \bar g \quad\mbox{in} \quad Q,
\quad\mbox{subject to} \quad
\left(T_\gamma\bar g, \partial_{x_1}\partial_{x_2}
T_\gamma\bar g\right)(0,0)=(0,0),
\end{equation}
and where by compatibility we need to assume
$(\bar g, \partial_{x_1}\partial_{x_2}\bar g)(0,0)=(0,0)$.
Further assuming that $\bar g$ is analytic
with $ {\left\lvert\bar g \right\rvert}_\omega< \infty
$, we may define $T_\gamma\bar g$ through
\begin{equation*}
\partial_{x_1}^k \partial_{x_2}^\ell T_\gamma\bar g(0,0) := \frac
{\partial_{x_1}^k \partial_{x_2}^\ell\bar g(0,0)}{k + \beta
\ell - \gamma}, \quad\mbox{where} \quad(k,\ell) \in\mathbb{N}_0^2
\setminus\{(0,0), (1,1)\},
\end{equation*}
and
\begin{equation*}
\left(T \bar g, \partial_{x_1} \partial_{x_2} T \bar g\right)(0,0) := (0,0),
\end{equation*}
so that $T_\gamma\bar g(x_1,x_2) := \sum_{(k,\ell) \in\mathbb
{N}_0^2} \frac
{1}{k ! \ell!} \partial_{x_1}^k \partial_{x_2}^\ell T_\gamma\bar
g(0,0) x_1^k x_2^\ell$ is indeed an analytic solution to \reftext{\eqref{1order_bar}}. We find moreover that
\begin{equation*}
\sum_{m = 0}^1  {\left\lvert\bar D^m T_\gamma\bar g
\right\rvert}_\omega\lesssim {\left\lvert\bar g \right
\rvert}_\omega,
\end{equation*}
which, by the product decomposition \reftext{\eqref{t_prod}} and due to the
commutation relation $\bar D T_\gamma= T_\gamma\bar D$, immediately
upgrades to the maximal-regularity estimate
%
%
\begin{equation}\label{mr_bar}
 {\left\lvert T \bar f \right\rvert}_{4,\omega} \lesssim
 {\left\lvert\bar f \right\rvert}_\omega.
\end{equation}
\textit{Step~2: Fixed-point argument.}
We may reformulate problem~\reftext{\eqref{solbigbar}} in terms of the
fixed-point problem
%
%
\begin{equation}\label{fixedptnearzero}
\bar v = {\mathcal T}[\bar v] \quad\mbox{for} \quad{\mathcal T}[\bar
v]:= a_1 + a_2 x_2
+ T[\bar f]+ T\left[-x_1\bar v + \eta^2 x_1^2r(\bar D)\bar v - \eta^4
x_1^4 \bar v\right],
\end{equation}
where the operator ${\mathcal T}$ acts on the space of analytic
functions $\bar
v$ with $ {\left\lvert\bar v \right\rvert}_{4,\omega} <
\infty$. Now notice that with help
of \reftext{\eqref{norm_omega}} and \reftext{\eqref{mr_bar}}, by the sub-multiplicativity
of $ {\left\lvert\cdot\right\rvert}_\omega$
(cf.~\cite[Lemma~3(b)]{ggo.2013} for an
analogous case) we have
%
%
\begin{equation}\label{bar_self}
 {\left\lvert{\mathcal T}\left[\bar v\right] \right
\rvert}_{4,\omega} \le C_1 \left( {\left\lvert a_1
\right\rvert} +
 {\left\lvert a_2 \right\rvert} L +  {\left
\lvert\bar f \right\rvert}_\omega\right) + C_2 \left(\varepsilon
+ \eta^2
\varepsilon^2 + \eta^4 \varepsilon^4\right)  {\left
\lvert\bar v \right\rvert}_{4,\omega},
\end{equation}
with constants $C_1$, $C_2$, and for $\bar v$ and $\bar w$ such that $
{\left\lvert\bar v \right\rvert}_{4,\omega} < \infty$ and
$ {\left\lvert\bar w \right\rvert}_{4,\omega} < \infty
$ we have
%
%
\begin{equation}\label{bar_contract}
 {\left\lvert{\mathcal T}\left[\bar v\right] - {\mathcal
T}\left[\bar w\right] \right\rvert}_{4,\omega} \le
C_2 \left(\varepsilon+ \eta^2 \varepsilon^2 + \eta^4 \varepsilon
^4\right)  {\left\lvert\bar v - \bar w \right\rvert
}_{4,\omega}.
\end{equation}
Estimates~\reftext{\eqref{bar_self}} and \reftext{\eqref{bar_contract}} imply that
${\mathcal T}$ is
a contraction, provided
\begin{equation}\label{contraction_condition_xll1}
C_2 \left(\varepsilon+ \eta^2 \varepsilon^2
+ \eta^4 \varepsilon^4\right) < 1 \quad \mbox{for} \quad \eta \in \mathcal J,
\end{equation}
which can be uniformly if $0 < \varepsilon
\ll1$. The contraction-mapping theorem yields
existence of
a unique solution to \reftext{\eqref{fixedptnearzero}} and therefore also to \reftext{\eqref{solbigbar}}. The bound \reftext{\eqref{bound_bar}} follows immediately from \reftext{\eqref{bar_self}} for $0 < \varepsilon\ll\min\left\{1,
{\left\lvert\eta\right\rvert}^{-1}\right
\}$. Inserting the power series $\bar v(x_1,x_2) = \sum_{(k,\ell) \in
\mathbb{N}
_0^2} \frac{\partial_{x_1}^k \partial_{x_2}^\ell\bar v(0,0)}{k !
\ell
!} x_1^k x_2^\ell$ and $\bar f(x_1,x_2) = \sum_{(k,\ell) \in\mathbb{N}_0^2}
\frac{\partial_{x_1}^k \partial_{x_2}^\ell\bar f(0,0)}{k ! \ell!}
x_1^k x_2^\ell$ in \reftext{\eqref{eq_bigbar}}, we find
\begin{equation*}
\partial_{x_1} \bar v(0,0) = \frac{\partial_{x_1} \bar f(0,0) -
a_1}{q(1)} \stackrel{\text{\reftext{\eqref{poly_q}}}}{=} \frac8 9 \left(a_1 -
\partial
_{x_1} \bar f(0,0)\right),
\end{equation*}
from which we conclude that \reftext{\eqref{closetozeroresolvent}} holds true. The extension of $v$ to $(0,\infty)$ follows by standard theory.

\medskip
\noindent\textit{Step~3: Continuous dependence on $\eta$.}
From the fixed-point equation \reftext{\eqref{fixedptnearzero}} it follows that
for a given fixed point $\bar v$ for given right-hand side $\bar f$
with $ {\left\lvert\bar f \right\rvert}_\omega< \infty
$, we have
\begin{eqnarray}\nonumber
 {\left\lvert\bar v(\cdot,\eta_1) - \bar v(\cdot,\eta
_2) \right\rvert}_{4,\omega} &\lesssim
& C_1  {\left\lvert\bar f(\cdot,\eta_1) - \bar g(\cdot
,\eta_2) \right\rvert}_\omega\nonumber\\
&&+ C_2
\left(\varepsilon+ \eta_1^2 \varepsilon^2 + \eta_1^4 + \varepsilon
^4\right)  {\left\lvert\bar v(\cdot,\eta_1) - \bar
v(\cdot,\eta_2) \right\rvert}_{4,\omega} \nonumber\\
&& + C_2\left( {\left\lvert\eta_1^2 - \eta_2^2 \right
\rvert} \varepsilon^2 +  {\left\lvert\eta_1^4 - \eta
_2^4 \right\rvert}\right)  {\left\lvert\bar v(\cdot
,\eta_2) \right\rvert}_{4,\omega}, \label{bar_continuity}
\end{eqnarray}
where $C_1$ and $C_2$ are as in \reftext{\eqref{bar_self}} and \reftext{\eqref{bar_contract}}. Under the same smallness assumption \eqref{contraction_condition_xll1} on $\varepsilon$, we get from \reftext{\eqref{bar_self}} that $ {\left\lvert\bar v(\cdot
,\eta_2) \right\rvert}_{4,\omega
}$ is uniformly bounded, and further using
\reftext{\eqref{bar_continuity}}, we find that
\begin{equation*}
 {\left\lvert\bar v(\cdot,\eta_1) - \bar v(\cdot,\eta
_2) \right\rvert}_{4,\omega} \to0
\quad\mbox{as} \quad \eta_2 \to \eta_1 \quad\mbox{in} \quad
{\mathcal J}.
\end{equation*}
\end{proof}

\paragraph*{Construction of solutions for $x\gg1$}

Our aim is to prove the following result which is a generalization of
\cite[Lemma~6.6]{ggko.2014}:
%
%
\begin{lemma}[Resolvent equation for $x\gg1$]\label{lem:xgg1}
Assume that $f \in C\left((0,\infty) \times \left(\mathbb{R}_\eta\setminus\{0\}\right)\right)$, with
$f(\cdot
,\eta) \in C^\infty\left((0,\infty)\right)$ for every $\eta\in
\mathbb{R}\setminus\{0\}$,
satisfies $(G_\infty)$. Then there exists
a two-parameter family of
smooth solutions of \reftext{\eqref{resolvent}} of the form
\begin{equation*}
v(x) = v^{(\infty)}(x) + a_3 v^{(3)}(x) + a_4 v^{(4)}(x) \quad\mbox
{for} \quad x > 0, \quad where \quad a_1,a_2\in\mathbb R,
\end{equation*}
$v^{(\infty)} = v^{(\infty)}(x)$ is a smooth solution to \reftext{\eqref{resolvent}}, and $v^{(3)} = v^{(3)}(x)$, $v^{(4)} = v^{(4)}(x)$ are two
linearly independent solutions to \reftext{\eqref{resolvent}} with homogenous right-hand side,
such that $v^{(\infty)}$, $v^{(3)}$, and $v^{(4)}$ satisfy
$(G_\infty)$.
\end{lemma}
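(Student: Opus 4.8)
\textbf{Proof strategy for Lemma~\ref{lem:xgg1}.} The plan is to analyze \eqref{resolvent} as an ODE in $x$ with $\eta \neq 0$ fixed, on the region $x \gg 1$ where the leading behavior is governed not by $q(D_x)$ but by the full operator. Dividing by $x$, we recognize that for large $x$ the dominant terms are $x v$, $-\eta^2 x^2 r(D_x) v / x$, and $\eta^4 x^4 v / x$; more precisely, the ODE is a fourth-order linear equation whose indicial/exponential analysis at $x = \infty$ is controlled by the top-order coefficient. First I would compute the symbol of the operator at $x=\infty$: writing the equation in the form $\eta^4 x^4 v + (\text{lower order in } x) = f$ and using $D_x = x\partial_x$, one finds after dividing by $x^3$ a leading term $\eta^4 x v$ balanced against fourth-derivative terms $\sim x^3 v^{(4)}$ (since $x^2 r(D_x)$ contributes $\sim x^4 \partial_x^2$ through the $D_x^2$ part of $r$, wait — let me be careful: $r$ is degree $2$, so $x^2 r(D_x) v$ has top term $2 x^4 \partial_x^2 v$, and the $\eta^4 x^4 v$ term has no derivatives). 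So the genuine fourth-order part comes only from $q(D_x)$, which has top term $x^4 \partial_x^4 v$. Thus for $x \to \infty$ the balance is $x^4\partial_x^4 v - 2\eta^2 x^4 \partial_x^2 v + \eta^4 x^4 v \approx 0$, i.e. $(\partial_x^2 - \eta^2)^2 v \approx 0$, giving the four WKB exponents $\pm|\eta|$ (each double) when $\eta \neq 0$, and a degenerate case when $\eta = 0$ where the balance becomes $\partial_x^4 v + x^{-3} v \approx 0$ producing the $e^{\pm\nu\sqrt[4]{x}}$ behavior with the explicit constant $\nu < 2^{3/2}$ recorded in Definition~\ref{def:ginftyeta}(i). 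This dichotomy is exactly why the two cases $\eta \neq 0$ and $\eta = 0$ are split in $(G_\infty)$, and it mirrors \cite[Lemma~6.6]{ggko.2014} with the extra $\eta$-dependent terms.

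Next I would set up a fixed-point argument for the two decaying solutions $v^{(3)}, v^{(4)}$. For $\eta \neq 0$, the two decaying fundamental solutions of the leading operator behave like $e^{-|\eta| x}$ and $x e^{-|\eta| x}$; I would make the ansatz $v^{(j)}(x) = e^{-|\eta| x} w_j(x)$, plug into \eqref{resolvent}, and recast the equation for $w_j$ as an integral equation via variation of constants against the leading-order Green's function, treating the subleading terms (all lower powers of $x$ times derivatives of lower order, plus the genuinely lower-order pieces of $q(D_x)$) as a perturbation. On the weighted space with norm $|\cdot|_{k,x_0,\nu}$ from $(G_\infty)$(iii) with $\nu < |\eta|$, these perturbation terms are contractive for $x \geq x_0$ with $x_0$ large enough depending on $\eta$ and $\nu$, since each perturbative term carries a gain of a negative power of $x$ relative to the leading balance. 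This yields $v^{(3)}, v^{(4)}$ satisfying $(G_\infty)$(ii)–(iii), and linear independence follows from the distinct leading asymptotics $e^{-|\eta|x}$ versus $x\,e^{-|\eta|x}$ (Wronskian argument). For the particular solution $v^{(\infty)}$, I would use the same variation-of-constants framework: because $f$ satisfies $(G_\infty)$, it decays faster than any $e^{-\nu x}$ with $\nu < |\eta|$ — actually it satisfies the bound with $\nu$ up to $|\eta|$ exclusive — so convolving $f$ against the full fundamental system (including the growing solutions $e^{+|\eta|x}$, $x e^{+|\eta|x}$, integrated from $+\infty$ downward) produces a solution that still decays, hence meets $(G_\infty)$. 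The $\eta = 0$ case is handled identically but with the leading operator $\partial_x^4 + x^{-3}(\cdot)$ and the $e^{\pm\nu\sqrt[4]{x}}$ fundamental system, reproducing the argument of \cite[Lemma~6.6]{ggko.2014} essentially verbatim since the new $\eta$-terms vanish.

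The continuity in $\eta$ claimed implicitly (needed for Proposition~\ref{prop:resolvent}, via the map $\eta \mapsto f(\cdot,\eta) \in \mathcal{V}$) would follow from the uniform-in-$\eta$ (on compact subsets of $\mathbb{R}\setminus\{0\}$) contraction estimates: the fixed-point operator depends continuously on $\eta$ in operator norm on the weighted spaces, so its fixed points do too, exactly as in Step~3 of the proof of Lemma~\ref{lem:fixedpointbar} above. I expect the main obstacle to be \emph{bookkeeping the perturbation hierarchy carefully}: one must verify that every term other than the $(\partial_x^2 - \eta^2)^2$ leading part — in particular the pieces $-\eta^2 x^2 r(D_x)v$ which are the \emph{same order in $x$} (degree $4$) as the leading term but lower order in derivatives, and the $x v$ term which is lower order in $x$ — can genuinely be absorbed as a contraction after passing to the exponentially weighted variable, and that the choice of $x_0$ (depending on $\eta$ and on $\nu < |\eta|$) makes the contraction constant uniform on compact $\eta$-intervals. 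A secondary subtlety is the degenerate $\eta=0$ analysis, where the exponents coalesce and one needs the precise WKB constant $2^{3/2}$; but this is already done in \cite{ggko.2014} and the extra terms do not interfere since they all carry positive powers of $\eta^2$. The remaining steps — variation of constants, Wronskian non-vanishing, and extension of solutions from $\{x \geq x_0\}$ to $(0,\infty)$ by standard ODE theory — are routine.
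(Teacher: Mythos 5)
Your high-level strategy is aligned with the paper's (factor out $e^{-|\eta|x}$, set up a contraction on an exponentially weighted space, reuse the case $\eta=0$ from \cite{ggko.2014}), and your treatment of the particular solution and the continuity in $\eta$ is in the right spirit. But there is a genuine gap in the step where you identify the decaying fundamental solutions, and it is not merely ``bookkeeping'': your claimed leading asymptotics are wrong, and consequently your proposed choice of leading operator would not yield a contraction.

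You write that the two decaying solutions behave like $e^{-|\eta|x}$ and $x\,e^{-|\eta|x}$, obtained by reading off the double root of $(\partial_x^2-\eta^2)^2$. This ignores the sub-principal terms of $q(D_x)$ and of $\eta^2 x^2 r(D_x)$: after the substitution $v = e^{-|\eta|x}\phi$ and the rescaling $|\eta|x \mapsto x$, the equation for $\phi$ is \eqref{eq_xgg1_phi}, and a sub-principal part of the $x^2 r(D_x)$ term, namely $x^2(6D_x + 2)\phi$, sits at the \emph{same} power of $x$ as the leading balance when $\phi$ is only slowly varying. If you take as your solution operator the Green's function of $x^4\partial_x^2(\partial_x-2)^2$ alone (which is what ``$(\partial_x^2-\eta^2)^2$ Green's function conjugated by $e^{-|\eta|x}$'' amounts to), those non-derivative pieces of $r(D_x)$ are not absorbable as a perturbation on the natural weighted space, and there is no contraction. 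The correct move, which the paper makes explicit, is to enlarge the leading operator to $\mathcal A = (\tfrac{d}{dx}-2)^2(D_x+1)(D_x+\tfrac12)$, so that $x^2\mathcal A$ swallows \emph{all} the top-derivative pieces \emph{and all} of $x^2(4D_x^2+6D_x+2)$; after this, the remainder in \eqref{eq_xgg1_phi_alt} genuinely gains a factor $x^{-1}$, and the kernel elements $x^{-1/2}$ and $x^{-1}$ of $(D_x+\tfrac12)(D_x+1)$ reveal the true algebraic prefactors. Thus the two decaying solutions actually satisfy $v^{(3)} \sim x^{-1/2}e^{-|\eta|x}$ and $v^{(4)} \sim x^{-1}e^{-|\eta|x}$ (the paper's \eqref{as_vj}), not $e^{-|\eta|x}$ and $x\,e^{-|\eta|x}$. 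Your proposal never sets $\phi = x^\alpha(1+\psi)$ with $\alpha\in\{-\tfrac12,-1\}$, which is precisely the step that both identifies the correct asymptotics and makes the perturbation small. You acknowledge the potential obstacle (``one must verify that every term other than the $(\partial_x^2-\eta^2)^2$ leading part\ldots can genuinely be absorbed''), but in the form you propose the verification would in fact fail, and the resolution requires a nontrivial algebraic choice you did not make. Your linear-independence argument still happens to go through because $-\tfrac12\ne-1$, but the Wronskian reasoning as stated rests on the wrong leading behaviors.

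Secondary, less serious differences: for the particular solution $v^{(\infty)}$ the paper does not convolve against the fundamental system of the full resolvent equation (which would require already having all four $v^{(j)}$); instead it rewrites \eqref{resolvent} as $(\partial_x^2 - 1)^2 v = x^{-4}f - x^{-1}P(x^{-1},\partial_x,\eta)v$, convolves against the explicit fundamental solution $g$ of the constant-coefficient $(\partial_x^2-1)^2$, and closes a contraction with a cut-off $\chi_n$ to control the low-$x$ region. Both variants are workable, but the paper's is more self-contained because it only needs the constant-coefficient Green's function.
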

\begin{proof}[Proof of \reftext{Lemma~\ref{lem:xgg1}}]
We first study the homogeneous version of the resolvent equation, i.e.,
equation~\reftext{\eqref{resolvent}} with $f = 0$ in $(0,\infty)$. Notice
that in the case $\eta=0$, equation~\reftext{\eqref{resolvent}} takes the form
\begin{equation*}
xv +q(D_x)v = f \quad\mbox{for} \quad x > 0,
\end{equation*}
which is the same form as
\cite[Equation~(6.1)]{ggko.2014} except for the
fact that $q(\zeta)$ is a different fourth-order polynomial. However,
the leading coefficient of $q(\zeta)$ here is the same as that of the
polynomial $p(\zeta)$ in
\cite[Equation~(6.1)]{ggko.2014}, which allows to
apply the same strategy as in the proof of
\cite[Lemma~6.6]{ggko.2014}, leading to the statement of \reftext{Lemma~\ref{lem:xgg1}} for
$\eta= 0$. Hence, in what follows we concentrate on the case $\eta\ne0$.

\medskip
\noindent\textit{Step~1: Finding two linearly independent solutions
to the
homogeneous resolvent equation.}

Collecting only the leading-order terms in the homogeneous version of
the resolvent equation \reftext{\eqref{resolvent}}, we find that the principal
part in $\frac{\mathrm{d}}{\mathrm{d} x}$ and $\eta$ is given by
\begin{equation*}
x^4 \left(\tfrac{\mathrm{d}^4}{\mathrm{d} x^4} - 2 \eta^2 \tfrac
{\mathrm{d}^2}{\mathrm{d} x^2} + \eta
^4\right) = x^4 \left(\tfrac{\mathrm{d}^2}{\mathrm{d} x^2} - \eta
^2\right)^2 = x^4 \left
(\tfrac{\mathrm{d}}{\mathrm{d} x} -  {\left\lvert\eta
\right\rvert}\right)^2 \left(\tfrac{\mathrm{d}}{\mathrm{d} x} +
 {\left\lvert\eta\right\rvert}\right)^2.
\end{equation*}
The kernel of this operator is spanned by $\left\{e^{\pm
{\left\lvert\eta\right\rvert}
x}, x e^{\pm {\left\lvert\eta\right\rvert} x}\right\}
$. In what follows, we will
construct two linearly independent solutions that are stable as $x \to
\infty$, so that -- neglecting algebraic factors for the time being --
we are lead to factor out $e^{- {\left\lvert\eta\right
\rvert} x}$ from $v$, i.e., we define
\begin{equation}\label{def_phi}
v =: e^{- {\left\lvert\eta\right\rvert} x} \phi
\end{equation}
and hence the homogeneous version of the resolvent equation \reftext{\eqref{resolvent}} turns into
\begin{equation*}
x\phi+ q\left(D_x\right) \phi+  {\left\lvert\eta
\right\rvert} x \left(- 4 D_x^3 - 3
D_x^2 + D_x + \tfrac9 8\right) \phi+ \eta^2 x^2 \left(4 D_x^2 + 6 D_x
+ 2\right) \phi= 0 \quad \mbox{in} \quad (0,\infty).
\end{equation*}
Next, it turns out to be convenient to apply the scaling $
{\left\lvert\eta\right\rvert}
x \mapsto x$, leading to the simplification
\begin{equation}\label{eq_xgg1_phi}
 {\left\lvert\eta\right\rvert}^{-1} x \phi+ q\left
(D_x\right) \phi+ x \left(- 4 D_x^3 -
3 D_x^2 + D_x + \tfrac9 8\right) \phi+ x^2 \left(4 D_x^2 + 6 D_x +
2\right) \phi= 0 \quad \mbox{in} \quad (0,\infty).
\end{equation}
We will now consider the leading-order operator $x^2{\mathcal A}$, where
\begin{subequations}\label{xgg1_phi}
\begin{equation}\label{xgg1_lead}
{\mathcal A}:= \left(\tfrac{\mathrm{d}^2}{\mathrm{d} x^2} - 4
\tfrac{\mathrm{d}}{\mathrm{d} x} + 4\right) \left
(D_x^2 + \tfrac3 2 D_x + \tfrac1 2\right) = \left(\tfrac{\mathrm
{d}}{\mathrm{d} x} -
2\right)^2 \left(D_x + 1\right) \left(D_x + \tfrac1 2\right).
\end{equation}
The criteria for selecting ${\mathcal A}$ as above are that $x^2
{\mathcal A}$ contains
all terms from \reftext{\eqref{eq_xgg1_phi}} having the maximal number of $4$
derivatives while at the same time collecting the terms $x^2 \left(4
D_x^2 + 6 D_x + 2\right)$ with the largest pre-factor $x^2$ (which
scaling-wise give the leading order in $x$ for \reftext{\eqref{eq_xgg1_phi}} as
\mbox{$x\to\infty$}). Furthermore, it has the useful factorization \reftext{\eqref{xgg1_lead}}, which makes it easier to find an inverse with appropriate
estimates. Utilizing \reftext{\eqref{xgg1_lead}} in \reftext{\eqref{eq_xgg1_phi}}, we
infer that \reftext{\eqref{eq_xgg1_phi}} can be reformulated as
\begin{equation}\label{eq_xgg1_phi_alt}
{\mathcal A}\phi= -  {\left\lvert\eta\right\rvert
}^{-1} x^{-1} \phi+ \frac3 2 x^{-1} \tfrac{\mathrm{d}
}{\mathrm{d} x} \left(D_x^2 - \tfrac1 4\right) \phi- 3 x^{-1}
\left(D_x^2 +
D_x + \tfrac3 8\right) \phi \quad \mbox{in} \quad (0,\infty).
\end{equation}
\end{subequations}
Now we notice that the powers $x^{-\frac1 2}$ and $x^{-1}$ are in the
kernel of the operator ${\mathcal A}$, so that we may set
\begin{equation}\label{def_psi}
\phi=: x^\alpha(1+\psi), \quad\mbox{where} \quad\alpha\in\left
\{
-\tfrac1 2, -1\right\}.
\end{equation}
Our goal is to construct solutions $\psi$ with $\psi\to0$ as $x \to
\infty$. Indeed, undoing the transformations \reftext{\eqref{def_phi}}, \reftext{\eqref{def_psi}}, and the scaling of $x$, this will result in two linearly
independent solutions $v^{(1)}$ and $v^{(2)}$ to the homogeneous
version of the resolvent equation \reftext{\eqref{resolvent}} with the
asymptotic behavior
\begin{equation}\label{as_vj}
v^{(j)} = x^{\alpha_j} e^{- {\left\lvert\eta\right
\rvert} x} (1+o(1)) \quad\mbox{as}
\quad x \to\infty, \quad\mbox{where} \quad\alpha_1 := - \frac1 2,
\quad\alpha_2 := -1.
\end{equation}
The decay as given in \reftext{Definition~\ref{def:ginftyeta}(ii)} will also
immediately follow from the fixed-point argument.

In the two situations formulated in \reftext{\eqref{def_psi}} we consider
\begin{subequations}\label{xgg1_psi}
\begin{equation}\label{xgg1_lead_psi}
{\mathcal A}_{-\frac1 2} := \left(\tfrac{\mathrm{d}}{\mathrm{d}
x}-2\right)^2 \left(D_x +
\tfrac1 2\right) D_x \quad\mbox{and} \quad{\mathcal A}_{-1} :=
\left(\tfrac
{\mathrm{d}}{\mathrm{d} x}-2\right)^2 \left(D_x - \tfrac1 2\right
) D_x,
\end{equation}
and with this notation \reftext{\eqref{eq_xgg1_phi_alt}} becomes
\begin{eqnarray}\nonumber
{\mathcal A}_\alpha\psi&=& -  {\left\lvert\eta\right
\rvert}^{-1} x^{-1} + x^{-1} P_\alpha(0) +
x^{-2} R_\alpha(0) \\
&& -  {\left\lvert\eta\right\rvert}^{-1} x^{-1} \psi+
x^{-1} \tfrac{\mathrm{d}}{\mathrm{d} x} Q_\alpha
(D_x) \psi+ x^{-1} P_\alpha(D_x) \psi+ x^{-2} R_\alpha(D_x) \psi \quad \mbox{in} \quad (0,\infty),
\label{eq_xgg1_psi}
\end{eqnarray}
\end{subequations}
where $Q_\alpha(\zeta)$, $P_\alpha(\zeta)$, and $R_\alpha(\zeta)$ are
real-valued polynomials of degree less or equal to $2$.

Next we invert the operator ${\mathcal A}_\alpha$ and prove estimates in
suitable norms. We first note that the product structure of ${\mathcal
A}_\alpha$
induces a product structure of the solution operator $T = T_{-2}^2
S_{\pm\frac1 2} S_0$, where $T_{-2} \varrho$ is a solution of $\left
(\tfrac{\mathrm{d}}{\mathrm{d} x} - 2\right) T_{-2} \varrho=
\varrho$ and $S_\mu
\varrho$ solves $(D_x-\mu) S_\mu\varrho= \varrho$. We use the
explicit definitions (which have the role of
fixing the behavior of $T_{-2}\varrho$ and $S_\mu\varrho$ as $x\to
\infty$)
\begin{equation}\label{def_t-2}
T_{-2} \varrho(x) := - \int_x^\infty e^{2 \left(x-x^\prime\right)}
\varrho\left(x^\prime\right) \mathrm{d} x^\prime= - \int
_0^\infty e^{- 2
x^\prime} \varrho\left(x+x^\prime\right) \mathrm{d} x^\prime
\end{equation}
and
%
%
\begin{equation}\label{def_s_gamma}
S_\mu\varrho(x) := - x^\mu\int_x^\infty\left(x^\prime\right
)^{-\mu}
\varrho\left(x^\prime\right) \frac{\mathrm{d} x^\prime}{x^\prime
} = - \int
_1^\infty r^{-\mu} \varrho(x r) \frac{\mathrm{d} r}{r}.
\end{equation}

For proving estimates for $T_{-2}$, observe that for $x \ge x_0 > 0$ we have
\begin{equation*}
x  {\left\lvert T_{-2} \varrho(x) \right\rvert}
\stackrel{\text{\reftext{\eqref{def_t-2}}}}{\le} x \left(\int
_0^\infty e^{-2 x^\prime} \left(x+x^\prime\right)^{-1} \mathrm{d}
x^\prime\right
) \sup_{x \ge x_0} x  {\left\lvert\varrho(x) \right
\rvert} \le\frac1 2 \sup_{x \ge x_0}
x  {\left\lvert\varrho(x) \right\rvert},
\end{equation*}
that is,
\begin{equation*}
\sup_{x \ge x_0} x  {\left\lvert T_{-2} \varrho(x) \right
\rvert} \le\frac1 2 \sup_{x \ge
x_0} x  {\left\lvert\varrho(x) \right\rvert},
\end{equation*}
and because of $\tfrac{\mathrm{d}}{\mathrm{d} x} T_{-2} \varrho= 2
T_{-2} \varrho+
\varrho$ and therefore also $\tfrac{\mathrm{d}^{j+1}}{\mathrm{d}
x^{j+1}} T_{-2}
\varrho= 2 \tfrac{\mathrm{d}^j}{\mathrm{d} x^j} T_{-2} \varrho+
\tfrac{\mathrm{d}^j}{\mathrm{d} x^j}
\varrho$, we have
%
%
\begin{equation}\label{est_t-2}
\max_{j = 0,\ldots,J+1} \sup_{x \ge x_0} x  {\left\lvert
\frac{\mathrm{d}^j}{\mathrm{d} x^j} T_{-2} \varrho(x) \right\rvert
} \lesssim_J \max_{j = 0,\ldots,J} \sup_{x \ge x_0} x
 {\left\lvert\frac{\mathrm{d}^j \varrho}{\mathrm{d}
x^j}(x) \right\rvert} \quad\mbox{for every} \quad J
\in\mathbb{N}_0.
\end{equation}

Considering $S_\mu$, where $\mu\in\left\{-\frac1 2, 0, \frac12\right\}$, we estimate
\begin{equation*}
x  {\left\lvert S_\mu\varrho(x) \right\rvert} \le x
\left(\int_1^\infty r^{-\mu} x^{-1}
r^{-1} \frac{\mathrm{d} r}{r}\right) \sup_{x \ge x_0} x
{\left\lvert\varrho(x) \right\rvert} =
\frac{1}{1+\mu} \sup_{x \ge x_0} x  {\left\lvert\varrho
(x) \right\rvert},
\end{equation*}
that is,
\begin{equation*}
\sup_{x \ge x_0} x  {\left\lvert S_\mu\varrho(x) \right
\rvert} \le\frac{1}{1+\mu} \sup_{x
\ge x_0} x  {\left\lvert\varrho(x) \right\rvert}.
\end{equation*}
The relation $D_x^{k+1} S_\mu\varrho= \mu D_x^k S_\mu\varrho+
D_x^k \varrho$ leads to the upgraded version
%
%
\begin{equation}\label{est_s_mu}
\max_{0\le k\le K+1} \sup_{x \ge x_0} x  {\left\lvert
D_x^k S_\mu\varrho(x) \right\rvert} \lesssim_K \max_{0\le k\le K} \sup_{x \ge x_0} x  {\left\lvert D_x^k
\varrho(x) \right\rvert} \quad\mbox{for every} \quad K \in\mathbb{N}_0.
\end{equation}

Now we use the factorization $T = T_{-2}^2 S_{\pm\frac1 2} S_0$
together with the commutation relation $\frac{\mathrm{d}}{\mathrm{d}
x} D_x = \left
(D_x+1\right) \frac{\mathrm{d}}{\mathrm{d} x}$, so that \reftext{\eqref{est_t-2}}
and \reftext{\eqref{est_s_mu}} result in the maximal-regularity estimate
%
%
\begin{subequations}\label{mr_norms_xgg1}
%
%
\begin{equation}\label{mr_xgg1}
 {\left\lvert T \varrho\right\rvert}_{J+2,K+2,x_0}
\lesssim_{J,K}  {\left\lvert\varrho\right\rvert
}_{J,K,x_0} \quad\mbox{for all} \quad J, K \in\mathbb{N}_0,
\end{equation}
where
%
%
\begin{equation}\label{def_norms_xgg1}
 {\left\lvert\varrho\right\rvert}_{J,K,x_0} := \max
_{\substack{0\le j\le J\\ 0\le k\le K}} \sup_{x \ge x_0} x  {\left\lvert\tfrac
{\mathrm{d}^j}{\mathrm{d} x^j} D_x^k \varrho(x) \right\rvert}.
\end{equation}
\end{subequations}
Notice that estimate~\reftext{\eqref{mr_xgg1}} reflects a regularity gain of two
$\frac{\mathrm{d}}{\mathrm{d} x}$-derivatives and two
$D_x$-deriva\-tives, which is the
maximal gain in regularity to be expected from \reftext{\eqref{xgg1_lead_psi}}.

Next, we discuss the fixed-point argument for the full equation \reftext{\eqref{eq_xgg1_psi}} on the space
\begin{equation*}
\Psi_{J,K,x_0}:= \left\{\psi \colon {\left\lvert \psi \right\rvert}_{J+2,K+2,x_0} < \infty\right\}.
\end{equation*}
First apply the solution operator $T$ to both sides of \reftext{\eqref{eq_xgg1_psi}} and obtain $\psi={\mathcal T}[\psi]$ for
%
%
\begin{eqnarray}\nonumber
{\mathcal T}[\psi] &:=& T\left[-  {\left\lvert\eta
\right\rvert}^{-1} x^{-1} + x^{-1} P_\alpha(0) +
x^{-2} R_\alpha(0)\right] \\
&& + T\left[-  {\left\lvert\eta\right\rvert}^{-1}
x^{-1} \psi+ x^{-1} \tfrac{\mathrm{d}}{\mathrm{d} x}
Q_\alpha(D_x) \psi+ x^{-1} P_\alpha(D_x) \psi+ x^{-2} R_\alpha(D_x)
\psi\right]. \qquad\label{fixed_xgg1}
\end{eqnarray}
For establishing the self-map and contraction property of ${\mathcal
T}$ in $\Psi
_{J,K,x_0}$, observe that
%
%
\begin{subequations}\label{aux_xgg1}
%
%
\begin{align}\label{aux_xgg1_1}
 {\left\lvert T\left[-  {\left\lvert\eta
\right\rvert}^{-1} x^{-1} + x^{-1} P_\alpha(0) + x^{-2} R_\alpha
(0)\right] \right\rvert}_{J+2,K+2,x_0} \stackrel{\text{\reftext{\eqref{mr_norms_xgg1}}}}{\le}
C_1 \left( {\left\lvert\eta\right\rvert}^{-1} +
 {\left\lvert P_\alpha(0) \right\rvert} +
{\left\lvert R_\alpha(0) \right\rvert}
x_0^{-1}\right)
\end{align}
for $C_1 = C_1(J,K)$ and secondly
%
%
\begin{equation}\label{aux_xgg1_2}
\begin{aligned}
&  {\left\lvert T\left[-  {\left\lvert\eta
\right\rvert}^{-1} x^{-1} \psi+ x^{-1} \tfrac{\mathrm{d}}{\mathrm
{d} x} Q_\alpha(D_x) \psi+ x^{-1} P_\alpha(D_x) \psi+ x^{-2}
R_\alpha(D_x) \psi\right] \right\rvert}_{J+2,K+2,x_0} \\
& \quad\stackrel{\text{\reftext{\eqref{mr_norms_xgg1}}}}{\le} C_2 \left(
{\left\lvert\eta\right\rvert}^{-1} x_0^{-1} + x_0^{-1} +
x_0^{-2}\right)  {\left\lvert\psi\right\rvert}_{J+2,K+2,x_0}
\end{aligned}
\end{equation}
\end{subequations}
for $C_2 = C_2(J,K)$. From \reftext{\eqref{aux_xgg1}} it is immediate that
${\mathcal T}$ is a self-map in $\Psi_{J,K,x_0}$ and estimate~\reftext{\eqref{aux_xgg1_2}}
also yields the contraction property of ${\mathcal T}$ provided
%
%
\begin{equation}\label{smallness_xgg1}
C_2 \left( {\left\lvert\eta\right\rvert}^{-1}
x_0^{-1} + x_0^{-1} + x_0^{-2}\right) < 1,
\end{equation}
which is true for $x_0 \gg_{J,K, {\left\lvert\eta\right
\rvert}} 1$ uniformly in $\eta\in
{\mathcal J}\Subset\mathbb{R}\setminus\{0\}$. Hence, for all $J, K
\in\mathbb{N}_0$ there
exists an $x_0 = x_0(J,K) > 0$ such that \reftext{\eqref{aux_xgg1}} and therefore
also \reftext{\eqref{eq_xgg1_psi}} has a solution $\psi= \psi_{J,K}$ with finite
norm $ {\left\lvert\psi\right\rvert}_{J+2,K+2,x_0}$.
Next, we verify the continuity in
$\eta$.

For a fixed point $\psi$ to \reftext{\eqref{fixed_xgg1}} we have with the same
constants $C_1$ and $C_2$ as in \reftext{\eqref{aux_xgg1}}
\begin{eqnarray}\nonumber
 {\left\lvert\psi(\cdot,\eta_1) - \psi(\cdot
,\eta_2) \right\rvert}_{J+2,K+2,x_0}
&\le& C_1 \frac{ {\left\lvert\eta_1 - \eta_2 \right
\rvert}}{ {\left\lvert\eta_1 \right\rvert}
{\left\lvert\eta_2 \right\rvert}}
+ C_2 \left( {\left\lvert\eta_1 \right\rvert}^{-1}
x_0^{-1} + x_0^{-1} + x_0^{-2}\right)
 {\left\lvert\psi(\cdot,\eta_1) - \psi(\cdot,\eta_2)
\right\rvert}_{J+2,K+2,x_0} \\
&& + C_2 \frac{ {\left\lvert\eta_1 - \eta_2 \right
\rvert}}{x_0  {\left\lvert\eta_1 \right\rvert}
 {\left\lvert\eta_2 \right\rvert}}  {\left
\lvert\psi(\cdot,\eta_2) \right\rvert}_{J+2,K+2,x_0}. \label{continuity_xgg1}
\end{eqnarray}
Under the same smallness assumption \reftext{\eqref{smallness_xgg1}} as for the
contraction property, we obtain from \reftext{\eqref{fixed_xgg1}} and \reftext{\eqref{aux_xgg1}} uniform boundedness of $ {\left\lvert\psi
(\cdot,\eta_2) \right\rvert}_{J+2,K+2,x_0}$ for $\eta_2 \in
{\mathcal J}\Subset\mathbb{R}\setminus\{0\}$ and
from \reftext{\eqref{continuity_xgg1}} also
\begin{equation*}
 {\left\lvert\psi(\cdot,\eta_1) - \psi(\cdot,\eta_2)
\right\rvert}_{J+2,K+2,x_0} \to0
\quad\mbox{as} \quad \eta_2 \to \eta_1 \quad\mbox{in} \quad
{\mathcal J}.
\end{equation*}

By standard theory, the solution $\psi= \psi_{J,K}$ can be uniquely
extended to a solution of \reftext{\eqref{eq_xgg1_psi}} for all $x > 0$ which is
also continuous for $\eta\in{\mathcal J}\Subset\mathbb{R}\setminus
\{0\}$. Now for
pairs of indices $(J,K), \left(J^\prime,K^\prime\right) \in\mathbb{N}_0^2$
such that $J \le J^\prime$ and $K \le K^\prime$ we may choose $x_0 > 0$
large enough such that the fixed points $\psi_{J,K}$ and $\psi
_{J^\prime
,K^\prime}$ exist. Because $\Psi_{J^\prime,K^\prime,x_0} \subseteq
\Psi
_{J,K,x_0}$, the uniqueness part of the contraction-mapping theorem
gives $\psi_{J,K} = \psi_{J^\prime,K^\prime}$ on
$[x_0,\infty
)$ and the unique extension property yields the same identity on the
whole interval $(0,\infty)$. Undoing the transformations \reftext{\eqref{def_phi}}, \reftext{\eqref{def_psi}}, and the scaling of $x$, the two values
$\alpha= - \frac1 2$ and $\alpha= -1$ yield two solutions $v^{(3)}$
and $v^{(4)}$ with asymptotics as in \reftext{\eqref{as_vj}} that are therefore
linearly independent. Note that $v^{(3)}$ and $v^{(4)}$ also meet the
decay and continuity properties of condition $(G_\infty)$ of
\reftext{Definition~\ref{def:ginftyeta}}, because $ {\left\lvert
\psi\right\rvert}_{J+2,K+2,x_0} <
\infty$, where $J \in\mathbb{N}_0$ (or $K \in\mathbb{N}_0$) can be
chosen arbitrarily
large if $x_0 > 0$ is sufficiently large.

\medskip
\noindent\textit{Step~2: Finding a particular solution for the full equation.}
Although the reasoning below for $\eta\ne0$
is similar to the construction of a particular solution in
\cite[Lemma~6.6(a)]{ggko.2014}, we nevertheless
provide all details in the situation at hand for the sake of clarity.
We apply again the scaling $x  {\left\lvert\eta\right
\rvert} \mapsto x$ to equation~\reftext{\eqref{resolvent}} and obtain
\begin{equation}\label{resolventscaled}
 {\left\lvert\eta\right\rvert}^{-1} x v + q(D_x) v -
x^2 r(D_x) v + x^4v = f \quad\mbox
{for} \quad x > 0.
\end{equation}
By expressing equation~\reftext{\eqref{resolventscaled}} in terms of $\tfrac
{\mathrm{d}
}{\mathrm{d} x}$-derivatives rather than $D_x$-derivatives, after
which we
divide by $x^4$, we can rewrite it in the following form
\begin{equation}\label{resolvent2}
\left(\tfrac{\mathrm{d}^2}{\mathrm{d} x^2} - 1\right)^2 v = x^{-4}
f- x^{-1} P\left
(x^{-1},\tfrac{\mathrm{d}}{\mathrm{d} x},\eta\right) v,
\end{equation}
where $P(a,\xi,\eta)$ is a third-order polynomial in $(a,\xi)$ given by
\begin{equation}\label{defpetaab}
P(a,\xi,\eta) = 5 \xi^3 + 3 a \xi^2 - \frac9 8 a^2 \xi- 5 \xi+
 {\left\lvert\eta\right\rvert}^{-1} a^2 - a.
\end{equation}
We now use the truncation function $\chi_n(x) = \chi_1\left(\tfrac
{x}{n}\right)$, where $\chi_1$ is a smooth positive function $\chi_1 = 1$ on $[2,\infty)$ and $\chi_1 = 0$ on $\left(-\infty
,1\right]$. Then we consider the equation
\begin{equation}\label{resolvent3}
\left(\tfrac{\mathrm{d}^2}{\mathrm{d} x^2} - 1\right)^2 v = \chi
_n \left(x^{-4} f -
x^{-1} P\left(x^{-1},\tfrac{\mathrm{d}}{\mathrm{d} x},\eta\right)
v \right).
\end{equation}
We will find a solution for truncated data with $n\gg1$ by a
contraction principle, and then we use unique continuation to extend it
to a solution of \reftext{\eqref{resolvent}} on the whole interval $(0,\infty)$.
Note that the operator on the right-hand side of \reftext{\eqref{resolvent3}} is
translation-invariant in $x$, and thus in order to find a particular
solution to \reftext{\eqref{resolvent3}}, we will apply a fixed-point argument,
where we invert the operator $\left(\tfrac{\mathrm{d}^2}{\mathrm{d}
x^2} - 1\right)^2$
by convolving with the associated fundamental solution $g$, defined as
the unique solution to the equation
\begin{equation*}\label{fundamsol}
\left(\tfrac{\mathrm{d}^2}{\mathrm{d} x^2} - 1\right)^2 g = \delta_0 \quad
\mbox{subject to} \quad
\lim_{x\to\pm\infty} g(x)=0.
\end{equation*}
By classical ODE methods, (e.g.~by Fourier transforming the above equation) we find
\begin{equation*}\label{fundamsol1}
g(x) =
\begin{cases} \frac{1}{4}e^{-x} + \frac1{4}xe^{-x}&\text{ for }x > 0.\\
\frac1{4}e^{x} - \frac1{4}xe^{x}&\text{ for }x<0.
\end{cases}
\end{equation*}
Using the convolution with $g$, we desire to solve the fixed-point equation
\begin{equation}\label{fixed_xgg1_par}
{\mathcal T}_n[v] = v \quad\mbox{with} \quad{\mathcal T}_n[v]:= g *
\left(\chi_n
x^{-4}f\right) - g * \left(\chi_n x^{-1} P\left(x^{-1},\tfrac
{\mathrm{d}}{\mathrm{d}
x},\eta\right) v \right),
\end{equation}
which is equivalent to \reftext{\eqref{resolvent3}}. In order to obtain a contraction, note that for $\nu\in(0,1)$ and for
$j=1,\ldots,4$, $e^{\nu {\left\lvert x \right\rvert
}}\tfrac{\mathrm{d}^j}{\mathrm{d} x^j}g(x)$ is
exponentially decaying as $x\to\pm\infty$ and absolutely integrable.
Based on this, for $\nu\in(0,1)$ and for $N\in\mathbb N$ we define the
norms $ {\left\lvert\cdot\right\rvert}_{N,\nu,\infty
}$ and the space $X_\nu$ as follows:
\begin{equation*}
 {\left\lvert v \right\rvert}_{N,\nu,\infty} := \sup
_{x\in\mathbb{R}} \max_{0\le j\le N}  {\left\lvert
e^{\nu x} \tfrac{\mathrm{d}^j v}{\mathrm{d} x^j}(x) \right\rvert}
\quad\mbox{and} \quad X_\nu:=
\left\{v\in C^4(\mathbb{R}): \,  {\left\lvert v \right
\rvert}_{4,\nu,\infty} < \infty\right\}.
\end{equation*}
Then using the decay of $e^{\nu|x|}\partial_x^jg(x)$ and the property
that $e^{\nu\cdot}(f*g) = (e^{\nu\cdot} f)*(e^{\nu\cdot}g)$, we find 
\begin{align*}
&\sup_{x\in\mathbb R}\left\lvert e^{\nu x}
\tfrac{\mathrm{d}^j}{\mathrm{d} x^j} \left(g * \left(\chi_n
x^{-1} P\left(x^{-1},\tfrac{\mathrm{d}}{\mathrm{d} x},\eta\right
)v\right)\right)\right
\rvert \\
&\quad = \sup_{x\in\mathbb{R}} \left\lvert\left(e^{\nu
x}\left(\tfrac{\mathrm{d}}{\mathrm{d} x}\right)^{\min\{4,j\}}
g\right) * \left(e^{\nu x} \left(\tfrac{\mathrm{d}}{\mathrm{d}
x}\right)^{\max\{0,j-4\}} \left(\chi_n x^{-1} P\left
(x^{-1},\tfrac{\mathrm{d}}{\mathrm{d} x},\eta\right)v\right
)\right) \right\rvert \\
&\quad \lesssim \sup_{x\in\mathbb{R}}\left\lvert e^{\nu x}
\left(\tfrac{\mathrm{d}}{\mathrm{d} x}\right)^{\max\{0,j-4\}}
\left(\chi_n x^{-1} P\left(x^{-1},\tfrac{\mathrm{d}}{\mathrm{d}
x},\eta\right)v\right)\right\rvert,
\end{align*}
which with help of \eqref{defpetaab} gives the first bound
\begin{align}\label{ineqbig}
\sup_{x\in\mathbb R} {\left\lvert e^{\nu x} \tfrac
{\mathrm{d}^j}{\mathrm{d} x^j} \left(g * \left(\chi_n x^{-1}
P\left(x^{-1},\tfrac{\mathrm{d}}{\mathrm{d} x},\eta\right
)v\right)\right) \right\rvert} \le C_2 \frac{1 +
{\left\lvert\eta\right\rvert}^{-1}}{n}\sup_{x\ge n} \max
_{1\le k\le\max\{j-1,3\}}  {\left\lvert e^{\nu x} \tfrac
{\mathrm{d}^k v}{\mathrm{d} x^k} \right\rvert}
\end{align}
for a constant $C_2 = C_2(j)$.
Along the same lines, we also get the
following estimate for the first term in \reftext{\eqref{resolvent3}}:
\begin{equation}\label{ineqsmall}
\sup_{x\in\mathbb{R}}  {\left\lvert e^{\nu x} \tfrac
{\mathrm{d}^j}{\mathrm{d} x^j} \left(g * \left(\chi_n x^{-4}
f\right)\right) \right\rvert} \le C_1 \sup_{x\in\mathbb{R}}
 {\left\lvert e^{\nu x} \left(\tfrac{\mathrm
{d}}{\mathrm{d} x}\right)^{\max\{0,j-4\}} \left(\chi_nf\right)
\right\rvert}
\end{equation}
for a constant $C_1 = C_1(j)$. The above inequalities \reftext{\eqref{ineqbig}} and \reftext{\eqref{ineqsmall}} imply
\begin{subequations}\label{normfp}
\begin{equation}\label{normfp_self}
 {\left\lvert{\mathcal T}_n[v] \right\rvert}_{j,\nu
,\infty} \le C_1  {\left\lvert\chi_n f \right\rvert
}_{j-4,\nu,\infty
} + C_2 \frac{1 +  {\left\lvert\eta\right\rvert
}^{-1}}{n}  {\left\lvert v \right\rvert}_{j,\nu,\infty}
\quad
\mbox{for} \quad j \ge4
\end{equation}
and
\begin{equation}
 {\left\lvert{\mathcal T}_n[v] - {\mathcal T}_n[w] \right
\rvert}_{4,\nu,\infty} \le C_2 \frac{1 +  {\left\lvert
\eta\right\rvert}^{-1}}{n}  {\left\lvert v-w \right
\rvert}_{4,\nu,\infty}.
\end{equation}
\end{subequations}
The bounds \reftext{\eqref{normfp}} imply that for $C_2 \tfrac{1 +
 {\left\lvert\eta\right\rvert}^{-1}}{n} < 1$, the mapping ${\mathcal
T}_n$ is a contraction
on the space $X_\nu$. The condition $C_2 \tfrac{1 +
 {\left\lvert\eta\right\rvert}^{-1}}{n} < 1$ holds for large enough $n$ provided $\eta\in
{\mathcal J}\Subset\mathbb{R}\setminus\{0\}$, and in this case we have existence of a unique fixed point to \reftext{\eqref{resolvent3}} for any $\nu\in(0,1)$. By undoing the scaling
$x \mapsto {\left\lvert\eta\right\rvert} x$ in
particular we find the decay bounds in (ii) of \reftext{Definition~\ref{def:ginftyeta}} for up to
four derivatives. In order to
obtain bounds on more than four
derivatives, we note that
\begin{eqnarray*}
 {\left\lvert v \right\rvert}_{N,\nu,\infty} &\le&
 {\left\lvert g*(\chi_n f) \right\rvert}_{N,\nu,\infty
} +
 {\left\lvert g*\left(\chi_n x^{-1}P_\eta\left
(x^{-1},\tfrac{\mathrm{d}}{\mathrm{d} x}\right)v\right) \right
\rvert}_{N,\nu,\infty} \\
&\lesssim_N&  {\left\lvert\chi_n f \right\rvert
}_{N-4,\nu,\infty} + \frac1 n  {\left\lvert v \right
\rvert}_{N-1,\nu,\infty},
\end{eqnarray*}
which can be iterated and yields estimates for any order of
derivatives. As the above fixed-point argument also yields uniqueness
of solutions, we find that our so-described solution to \reftext{\eqref{resolvent3}} is the same for all $n$, can be proved to exist provided
that $f$ satisfies $(G_\infty)$, can be extended to all $x > 0$, and
satisfies property (ii) of \reftext{Definition~\ref{def:ginftyeta}}. Hence, it remains
to prove continuity in $\eta$.

For the fixed point $v$ we have from \reftext{\eqref{fixed_xgg1_par}}
\begin{eqnarray}\nonumber
 {\left\lvert{\mathcal T}_n[v(\cdot,\eta_1)] - {\mathcal
T}_n[v(\cdot,\eta_2)] \right\rvert}_{j,\nu}&\le&
C_1  {\left\lvert f(\cdot,\eta_1) - f(\cdot,\eta_2)
\right\rvert}_{j-4,\nu}\\
&&+ C_2 \frac{1 +
 {\left\lvert\eta_1 \right\rvert}^{-1}}{n}
{\left\lvert v(\cdot,\eta_1) - v(\cdot,\eta_2) \right\rvert
}_{j,\nu
}\nonumber\\
&& + C_2 \frac{ {\left\lvert\eta_1 - \eta_2 \right
\rvert}}{n  {\left\lvert\eta_1 \right\rvert}
 {\left\lvert\eta_2 \right\rvert}}  {\left
\lvert v(\cdot,\eta_2) \right\rvert}_{j,\nu},\label{continuity_xgg1_par}
\end{eqnarray}
where $j \ge4$. Therefore, under the assumption $C_2 \frac{1 +
 {\left\lvert\eta\right\rvert}^{-1}}{n} < 1$ for $\eta
\in{\mathcal J}\Subset\mathbb{R}\setminus\{0\}$, we
have uniform boundedness of $ {\left\lvert v(\cdot,\eta
_2) \right\rvert}_{4,\nu}$ from \reftext{\eqref{normfp_self}} and thus continuity in ${\mathcal J}$ because of \reftext{\eqref{continuity_xgg1_par}}. The continuity statement of \reftext{Definition~\ref{def:ginftyeta}}(iii) follows by standard ODE theory,
since the obtained continuity in $\eta$ propagates from $x \ge2 n$ to
all $x > 0$.
\end{proof}

\paragraph*{Proof of existence and uniqueness of solutions to the
resolvent equation}
As already noted, the uniqueness result (\reftext{Lemma~\ref{lem:res_unique}}) is
not difficult to obtain, so we only sketch the arguments.
\begin{proof}[Proof sketch of \reftext{Lemma~\ref{lem:res_unique}} (uniqueness)]
It suffices to
test the homogeneous version of the resolvent equation \reftext{\eqref{resolvent}} with a cut-off $\chi_n^2 v$ in the inner product $(\cdot
,\cdot)_{-\delta}$, where $\chi_n(x) := \chi\left(\frac{\log
x}{n}\right
)$ with $\chi\in C^\infty((-\infty,\infty))$,
$\chi(s) = 1$ for $s \in (-1,1)$,
and $\chi(s) = 0$ for $s \in (-\infty,-2) \cup (2,\infty)$.
This is analogous to the treatment of the time evolution equation \reftext{\eqref{test_basic}} in \S \ref{sec:basic_weak} except for the fact that
now the
cut-off $\chi_n$ has to be commuted with the differential operators
$q(D_x)$ and $r(D_x)$. Note that the commutators $\left[q(D_x),\chi
_n\right]$ and $\left[r(D_x),\chi_n\right]$ exclusively consist of
addends in which at least one $D_x = \partial_s$ (where $s := \log x$)
acts on $\chi_n$, giving a pre-factor of order $O\left(\frac1 n\right)$.
Otherwise following the computations leading from \reftext{\eqref{test_basic}} to \reftext{\eqref{lin_est1}}, we obtain
\begin{equation*}
 {\left\lvert\chi_n v \right\rvert}_{-\delta-\frac
12}^2 + \sum_{\ell=0}^2 \eta^{2\ell}
 {\left\lvert\chi_n v \right\rvert}_{\ell,-\delta
-\ell}^2 \le O\left(\tfrac1 n\right).
\end{equation*}
Now taking $n \to\infty$, we conclude that $v = 0$ in $(0,\infty)$ as desired.
\end{proof}

The technical \reftext{Lemmata~\ref{lem:fixedpointbar} and~\ref{lem:xgg1}}
provide all ingredients for proving existence to the resolvent equation \reftext{\eqref{resolvent}} as stated in \reftext{Proposition~\ref{prop:resolvent}}. This
is essentially the same as the arguments leading to
\cite[Proposition~6.3]{ggko.2014}, except for the
proof of continuity in $\eta$.
\begin{proof}[Proof of \reftext{Proposition~\ref{prop:resolvent}}]
We use the notation of \reftext{Lemmata~\ref{lem:fixedpointbar} and~\ref{lem:xgg1}}, i.e., $v^{(0)} + a_1 v^{(1)} + a_2 v^{(2)}$ with $a_1, a_2
\in\mathbb{R}$ and $v^{(\infty)} + a_3 v^{(3)} + a_4 v^{(4)}$ with
$a_3, a_4
\in\mathbb{R}$ define two-parameter solution families to \reftext{\eqref{resolvent}}
satisfying the decay conditions $(G_0)$ (cf.~\reftext{Definition~\ref{def:g0eta}}) and $(G_\infty)$ (cf.~\reftext{Definition~\ref{def:ginftyeta}}),
respectively. We first show that the family $\left
(v^{(1)},v^{(2)},v^{(3)},v^{(4)}\right)$ is linearly independent.
Afterwards we show that we can construct a $v \in C^\infty\left
((0,\infty)\right)$ solving the resolvent equation \reftext{\eqref{resolvent}}
and meeting $(G_0)$ and $(G_\infty)$. Note that $v$ is unique, since
uniqueness already holds for a larger class of functions
(cf.~\reftext{Lemma~\ref{lem:res_unique}}). Continuity in $\eta$ will be proved at the end.

\medskip
\noindent\textit{Step~1: linear independence.}
Suppose that we have $(a_1,a_2,a_3,a_4) \in\mathbb{R}^4$ with
\begin{equation*}
a_1 v^{(1)} + a_2 v^{(2)} - a_3 v^{(3)} - a_4 v^{(4)}
= 0 \quad \mbox{in} \quad (0,\infty).
\end{equation*}
Then we recognize that $v := a_1 v^{(1)} + a_2 v^{(2)} = a_3 v^{(3)} + a_4
v^{(4)}$ satisfies $(G_0)$ and $(G_{\infty})$ and is a solution to the
resolvent equation \reftext{\eqref{resolvent}} with $f = 0$ in $(0,\infty)$. \reftext{Lemma~\ref{lem:res_unique}} gives $v = 0$, i.e.,
$a_1 v^{(1)} + a_2 v^{(2)} = 0$ and $a_3 v^{(3)} + a_4 v^{(4)} = 0$ in $(0,\infty)$ and
linear independence of the tuples $\left(v^{(1)}, v^{(2)}\right)$ and
$\left(v^{(3)}, v^{(4)}\right)$ implies $a_1 = a_2 = a_3 = a_4 = 0$.

\medskip
\noindent\textit{Step~2: existence.}
For given right-hand side $f \in C^\infty((0,\infty))$ satisfying
$\left
(G_0\right)$ with
$\left(\bar f, \partial_{x_2} \bar f\right)(0,\allowbreak0) = (0,0)$ and $\left
(G_\infty\right)$, the function $v^{(\infty)} - v^{(0)}$ defines a
smooth solution to the homogeneous version of the resolvent equation \reftext{\eqref{resolvent}}. Since $\left(v^{(1)},v^{(2)},v^{(3)},v^{(4)}\right)$
is a fundamental system to the left-hand side of \reftext{\eqref{resolvent}},
there exist $\left(a_1,a_2,a_3,a_4\right) \in\mathbb{R}^4$ such that
\begin{equation}\label{sys_vj}
v^{(\infty)} - v^{(0)} = a_1 v^{(1)} + a_2 v^{(2)} - a_3 v^{(3)} - a_4 v^{(4)} \quad \mbox{in} \quad (0,\infty).
\end{equation}
Now define $v := v^{(0)} + a_1 v^{(1)} + a_2 v^{(2)} = v^{(\infty)} +
a_3 v^{(3)} + a_4 v^{(4)}$. Then by construction $v$ satisfies $\left
(G_0\right)$ and $\left(G_\infty\right)$, and is a solution to
the inhomogeneous resolvent equation \reftext{\eqref{resolvent}}.

\medskip
\noindent\textit{Step~3: Continuity in $\eta$.}
We know from \reftext{Lemma~\ref{lem:fixedpointbar}} that the analytic extensions
of $v^{(0)}$, $v^{(1)}$, and $v^{(2)}$ depend continuously on $\eta\in
{\mathcal J}\Subset\mathbb{R}\setminus\{0\}$ in a small ${\mathcal J}$-dependent
neighborhood of the origin.
In particular $\frac{\mathrm{d}^k v^ {(j)}}{\mathrm{d} x^k}$ for $j
= 0, 1, 2$ and $k
\ge0$ depend continuously on $\eta$ at a given sufficiently small $x >
0$. By standard ODE theory this implies that the extensions of $\frac
{\mathrm{d}
^k v^{(j)}}{\mathrm{d} x^k}$ for $j = 0, 1, 2$ and $k \ge0$ to all $x
> 0$
depend continuously on $\eta$. \reftext{Lemma~\ref{lem:xgg1}} further yields that
$\frac{\mathrm{d}^k v^{(\infty)}}{\mathrm{d} x^k}$, $\frac{\mathrm
{d}^k v^{(3)}}{\mathrm{d} x^k}$, and
$\frac{\mathrm{d}^k v^{(4)}}{\mathrm{d} x^k}$ for $k \ge0$ depend
continuously on
$\eta\in\mathbb{R}\setminus\{0\}$. Hence, it suffices to show that the
coefficients $a_1$, $a_2$, $a_3$, and $a_4$ of \reftext{\eqref{sys_vj}} are
continuous functions of $\eta\in\mathbb{R}\setminus\{0\}$.

Note that differentiation of \reftext{\eqref{sys_vj}} yields
\begin{equation*}
\tfrac{\mathrm{d}^k v^{(\infty)}}{\mathrm{d} x^k} - \tfrac{\mathrm
{d}^k v^{(0)}}{\mathrm{d} x^k} = a_1
\tfrac{\mathrm{d}^k v^{(1)}}{\mathrm{d} x^k} + a_2 \tfrac{\mathrm
{d}^k v^{(2)}}{\mathrm{d} x^k} - a_3
\tfrac{\mathrm{d}^k v^{(3)}}{\mathrm{d} x^k} - a_4 \tfrac{\mathrm
{d}^k v^{(4)}}{\mathrm{d} x^k} \quad
\mbox{for} \quad k = 0, 1, 2, 3.
\end{equation*}
Then linear independence of $\left
(v^{(1)},v^{(2)},v^{(3)},v^{(4)}\right
)$ implies by Liouville's formula that the Wronskian
\begin{equation*}
\det
\begin{pmatrix} v^{(1)} & v^{(2)} & v^{(3)} & v^{(4)} \\ \tfrac
{\mathrm{d}
v^{(1)}}{\mathrm{d} x} & \tfrac{\mathrm{d} v^{(2)}}{\mathrm{d} x} &
\tfrac{\mathrm{d} v^{(3)}}{\mathrm{d} x} &
\tfrac{\mathrm{d} v^{(4)}}{\mathrm{d} x} \\\noalign{\vspace{3pt}} \tfrac{\mathrm{d}^2
v^{(1)}}{\mathrm{d} x^2} & \tfrac{\mathrm{d}^2
v^{(2)}}{\mathrm{d} x^2} & \tfrac{\mathrm{d}^2 v^{(3)}}{\mathrm{d}
x^2} & \tfrac{\mathrm{d}^2
v^{(4)}}{\mathrm{d} x^2} \\\noalign{\vspace{3pt}} \tfrac{\mathrm{d}^3 v^{(1)}}{\mathrm{d}
x^3} & \tfrac{\mathrm{d}^3
v^{(2)}}{\mathrm{d} x^3} & \tfrac{\mathrm{d}^3 v^{(3)}}{\mathrm{d}
x^3} & \tfrac{\mathrm{d}^3
v^{(4)}}{\mathrm{d} x^3}
\end{pmatrix}
\end{equation*}
is non-vanishing for all $x > 0$, so that the continuous dependence of
$a_1$, $a_2$, $a_3$, and $a_4$ on $\eta\in\mathbb{R}\setminus\{0\}$ follows
by Cramer's rule.
\end{proof}
%

\subsubsection{Time discretization for the linear equation}\label{sec:time_discr}

In this section, we prove existence, uniqueness, and maximal regularity
for the linear equation \reftext{\eqref{lin_cauchy}}
(cf.~\cite[Proposition~7.6]{ggko.2014} for the equivalent result for the $1+1$-dimensional
counterpart of the free boundary problem \reftext{\eqref{tfe_free}}):
\begin{proposition}\label{prop:maxregevolution}
Suppose $T \in (0,\infty]$, $I := [0,T) \subseteq[0,\infty)$, $k, \tilde k, \check k, \breve k
\in\mathbb{N}$ with $k\ge \max\left\{\tilde k+4,\check k + 6, \breve k +5\right\}$, $\tilde k\ge3$, $\check k \ge4$, $\breve k \ge
4$, and
$\delta\in\left(0,\tfrac1{10}\right)$. Then for every locally
integrable $f:I\times(0,\infty)\times\mathbb R\to\mathbb R$ such that
${\skliaustask f\skliaustasd}_{\mathrm{rhs}}<\infty$ and for every locally integrable $v^{(0)}:(0,\infty)\times\mathbb R\to\mathbb R$ such that ${\left\lVert v^{(0)} \right\rVert}_{\mathrm{init}}<\infty$, there exists
exactly one solution
$v:I\times(0,\infty)\times\mathbb R\to\mathbb R$ of the linear
degenerate-parabolic problem \reftext{\eqref{eq_lin}} that is locally integrable
with ${\skliaustask v\skliaustasd}_{\mathrm{sol}}<\infty$ and
$v_{|t=0}=v^{(0)}$. This solution obeys the maximal-regularity estimate
\begin{equation}\label{mr_interval}
{\skliaustask v\skliaustasd}_{\mathrm{sol}} \lesssim_{k,\tilde k,
\check k, \breve k, \delta}
 {\left\lVert v^{(0)} \right\rVert}_{\mathrm{init}} +
{\skliaustask f\skliaustasd}_{\mathrm{rhs}}.
\end{equation}
Moreover, $v$ is continuous, the function $q(D_x)v + D_y^2r(D_x)v +
D_y^4v$ is continuous and defined classically, and the initial
condition $v_{|t=0}=v^{(0)}$ holds in the classical sense.
\end{proposition}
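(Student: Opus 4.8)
\textit{Proof proposal.} The plan is to establish existence, uniqueness, and the estimate \eqref{mr_interval} via a time-discretization scheme built on the resolvent analysis of \S\ref{sec:resolvent}, following the template of \cite[\S7]{ggko.2014} but accounting for the extra tangential terms $D_y^2 r(D_x)$ and $D_y^4$. First I would fix $\delta \in \left(0,\frac{1}{10}\right)$ and the parameters $k, \tilde k, \check k, \breve k$ as in the hypotheses, and reduce to the case of smooth, well-localized data: by \reftext{Lemmata~\ref{lem:approx_init} and~\ref{lem:approx_rhs}} we may approximate $v^{(0)}$ and $f$ by sequences $v^{(0,n)}$, $f^{(n)}$ satisfying $(G_0)$ and $(G_\infty)$ with $\left\lVert v^{(0,n)} - v^{(0)} \right\rVert_\mathrm{init} \to 0$ and ${\skliaustask f^{(n)} - f\skliaustasd}_\mathrm{rhs} \to 0$; once the estimate \eqref{mr_interval} and existence are known for such data, the general case follows by a limiting argument using that ${\skliaustask\cdot\skliaustasd}_\mathrm{sol}$ is a complete norm and the map data $\mapsto$ solution is bounded linear, hence extends continuously. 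Throughout I work after Fourier transform in $y$, so that for each fixed $\eta$ we deal with an ODE in $x$ coupled to the time variable.

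The core construction is the Rothe method. Partition $I$ (or, if $T = \infty$, an exhausting family of finite intervals) into steps of size $\delta t = T/M$ and define $v^{(m)} \approx v(m\,\delta t,\cdot,\cdot)$ recursively by solving the time-discrete equation \eqref{discrete_main}, i.e. at each step an equation of the form \eqref{resolvent} (after the scalings reducing $\lambda$ to $1$) with right-hand side $\lambda x v^{(m-1)} + \frac{1}{\delta t}\int f$. Here \reftext{Proposition~\ref{prop:resolvent}} applies: for data satisfying $(G_0)$ with the stated compatibility and $(G_\infty)$, there is a unique solution $v^{(m)}$ in the relevant weighted classes, again satisfying $(G_0)$ and $(G_\infty)$, so the induction closes and every iterate lies in the space on which our norms are finite. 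The crucial point is that the \emph{same} chain of energy estimates that produced \eqref{estimate1_int}, \eqref{higher_est_1}, \eqref{higher_est_2}, \eqref{higher_est_2_alt}, and \eqref{higher_est_3} — and hence, by the combination in \S\ref{sec:max_reg}, the estimate \eqref{mr} — goes through verbatim at the discrete level, with the continuous time derivative $\partial_t$ replaced by the backward difference quotient $\delta t^{-1}(v^{(m)} - v^{(m-1)})$; the key algebraic identity one uses repeatedly is the discrete analogue of $(x\partial_t v, S)$-type terms, namely that the difference quotient tested against $v^{(m)}$ dominates $\frac12 \delta t^{-1}(\lvert v^{(m)}\rvert^2 - \lvert v^{(m-1)}\rvert^2)$ in the appropriate weighted inner product. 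Summing in $m$ and passing to the piecewise-constant (resp. piecewise-affine) interpolants $v^{\delta t}$, the uniform bound ${\skliaustask v^{\delta t}\skliaustasd}_\mathrm{sol} \lesssim \left\lVert v^{(0)}\right\rVert_\mathrm{init} + {\skliaustask f\skliaustasd}_\mathrm{rhs}$ holds. By weak compactness in the reflexive Hilbert-type spaces underlying ${\skliaustask\cdot\skliaustasd}_\mathrm{sol}$, a subsequence converges to a limit $v$, which satisfies \eqref{eq_lin} (the discrete equation converges to the continuous one term by term, using that the difference quotients converge weakly to $\partial_t v$), meets $v_{|t=0} = v^{(0)}$, and inherits \eqref{mr_interval} by weak lower semicontinuity of the norm. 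Uniqueness is immediate: the difference $w$ of two solutions solves \eqref{eq_lin} with $f = 0$ and $w_{|t=0} = 0$, so the basic weak estimate \eqref{lin_est1} (integrated in time, after Fourier transform in $y$, summed over the relevant $\eta$-weights) forces $w \equiv 0$; alternatively one invokes the continuity of $t \mapsto \left\lVert w(t,\cdot,\cdot)\right\rVert_\mathrm{init}$ from \reftext{Corollary~\ref{coroll:contnorm}} together with a Gr\"onwall argument. Finally, the continuity of $v$ and the classical sense of the initial condition follow from the embedding \reftext{Lemma~\ref{lem:bc0_bounds}} combined with \reftext{Lemma~\ref{lem:trace_estimate}}: ${\skliaustask v\skliaustasd}_\mathrm{sol} < \infty$ yields $v \in BC^0$ with values in a space controlling the relevant derivatives, and the expansion \eqref{expansion_v} shows $q(D_x)v + D_y^2 r(D_x) v + D_y^4 v$ is continuous up to $\{x = 0\}$, hence the equation and the trace $v_{|t=0} = v^{(0)}$ hold pointwise.

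The step I expect to be the main obstacle is \emph{propagating the asymptotic structure} $(G_0)$–$(G_\infty)$ through the time-discretization and, relatedly, verifying that the discrete energy estimates reproduce \eqref{mr} uniformly in $\delta t$. The difficulty is twofold: first, the resolvent solution from \reftext{Proposition~\ref{prop:resolvent}} is only guaranteed to have the expansion in $\left(x, x^\beta\right)$ at each step, and one must check that the linear combinations dictated by the matching of solution manifolds preserve the compatibility condition $\left(\bar f, \partial_{x_1}\partial_{x_2}\bar f\right)(0,0) = (0,0)$ needed for the \emph{next} iterate — this is where the particular algebraic form of the polynomials $q, r$ and the choice of weights enters, since a spurious $x^{1/2}$ or $x^{-1/2}$ term would break the whole scheme. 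Second, when replacing $\partial_t$ by the difference quotient in the higher-order estimates of \S\ref{sec:higher_1}–\S\ref{sec:higher_3}, one applies polynomial operators in $D_x$ (such as $\tilde q(D_x)$, $(D_x - 4)(D_x-3)$) to the discrete equation, and must ensure these commute cleanly with the difference quotient and with the truncations used in the a priori estimates; the commutators are harmless (they act only in $x$), but bookkeeping the remnant terms — which in the continuous case are absorbed using the hierarchy \eqref{higher_est_1_alt}, \eqref{higher_est_2_alt} — requires that the discrete versions of those auxiliary estimates also hold with constants independent of $\delta t$. Both points are resolved exactly as in \cite[\S7]{ggko.2014}, the only genuinely new input being the extra $\eta$-dependent terms, which however only ever appear with favorable signs or as lower-order perturbations controlled by the coercivity ranges established in \S\ref{sec:coerc} and \S\ref{sec:basic_weak}; I would therefore present this step by carefully stating the discrete analogues of \eqref{estimate1_int}–\eqref{higher_est_3} and remarking that their proofs are the same line-by-line, rather than rewriting the computations.
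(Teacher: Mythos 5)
Your proposal follows essentially the same route as the paper's proof: reduce to smooth, well-localized data via the approximation lemmata, run a Rothe/time-discretization scheme built on \reftext{Proposition~\ref{prop:resolvent}}, derive discrete analogues of the energy estimates of \S\ref{sec:lin_heur} (this is \reftext{Lemma~\ref{lem:discrete_step}} in the paper), pass to the limit via weak compactness and lower semicontinuity, and prove uniqueness by the cutoff energy argument. The points you flag as potential obstacles — propagation of the $(G_0)$–$(G_\infty)$ structure through the iteration and uniformity of the constants in $\delta t$ — are precisely the ones the paper addresses (via the rescaled right-hand side $g := f^{(\delta t)} + x v^{(0)}$ in \reftext{Lemma~\ref{lem:discrete_step}} and the telescoping structure of the discrete time-derivative term), so the proposal is sound and matches the argument in the paper.
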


The last statement of the proposition follows from ${\skliaustask
v\skliaustasd}_{\mathrm
{sol}}<\infty$ via \reftext{Lemmata~\ref{lem:trace_estimate} and \ref{lem:bc0_bounds}}, and the approximation results from \reftext{Corollary~\ref{coroll:contnorm}}, and is independent from the rest of the proof.

The proof of the main statement of \reftext{Proposition~\ref{prop:maxregevolution}} will proceed as follows:
\begin{enumerate}[(i)]
\item[(i)] On each discretization interval
$[(m-1)\delta t, m \delta t)$, where $\delta t := \frac{T}{M}$ with $M
\in\mathbb{N}$ and $m = 1, \ldots, M$, we solve the resolvent
problem \reftext{\eqref{resolvent}} via \reftext{Proposition~\ref{prop:resolvent}}.
\item[(ii)] We derive estimates dealing with one
discrete time step $t \mapsto t + \delta t$ for $\delta t = \frac T M$,
where $M \in\mathbb{N}$ is arbitrary, which are discretized versions
of the
estimates that have been heuristically derived in \S \ref{sec:lin_heur}
(assuming that sufficiently regular solutions to \reftext{\eqref{lin_cauchy}}
exist, see \reftext{Lemma~\ref{lem:discrete_step}}). We also use the simple trace
estimate \reftext{\eqref{traceest}} of \reftext{Lemma~\ref{lem:trace_estimate}} to show
that the operation of fixing $v$ at $t=0$ is robust with respect to our
approximation.
\item[(iii)] We perform a piecewise constant in time
approximation of the data $f^{(m\delta t)}$ and a piecewise affine in
time interpolation of the solutions $v^{(m\delta t)}$ in order to have
objects defined in continuous time, while retaining the estimates
(ii).
\item[(iv)] We treat the discretized problem on a
non-vanishing interval $I = [0,T)$ by summing the estimates obtained in
(iii).
\item[(v)] The solution in the continuum is
obtained from (iv) in the limit $\delta t\to0$,
using (ii) coupled with a compactness argument. By
(i) and (ii) this limit also
fulfills the linear equation \reftext{\eqref{eq_lin}} and it recovers the desired
initial condition $v|_{t=0}$. Via (ii) and
(iii) we finally obtain the bounds stated in
\reftext{Proposition~\ref{prop:maxregevolution}}.
\end{enumerate}
Consider the discretization \reftext{\eqref{discrete_main}} of the linear
equation \reftext{\eqref{eq_lin}} for a time step $0 \mapsto\delta t$ from \S \ref{sec:resolvent}, i.e.,
%
%
\begin{equation}\label{resolvent_disc}
x\frac{v^{(\delta t)} - v^{(0)}}{\delta t} + q(D_x) v^{(\delta t)} -
 {\left\lvert\eta\right\rvert}^2 x^2r(D_x) v^{(\delta
t)} +  {\left\lvert\eta\right\rvert}^4 x^4 v^{(\delta
t)} = f^{(\delta t)},
\end{equation}
where $f^{(\delta t)} := \frac{1}{\delta t} \int_0^{\delta t} f\left
(t^\prime\right) \mathrm{d} t^\prime$. Up to approximating
$v^{(0)}$ and $f^{(\delta
t)}$ as in \reftext{Lemmata~\ref{lem:approx_init} and~\ref{lem:approx_rhs}}, respectively, we
can assume that $v^{(0)}$ and $f^{(\delta t)}$
satisfy the following assumptions:
\begin{enumerate}[$(\mathcal D1)$]
\item[$(\mathcal D1)$] For all $\eta\in\mathbb{R}\setminus\{0\}$ we have
$v^{(0)}(\cdot
,\eta)\in C^\infty((0,\infty))$ such that for all $j \ge0$ the
function $\partial_x^j v^{(0)}$ is continuous in $\left\{(x,\eta):
\, x
> 0, \, \eta\ne0\right\}$, and $v^{(0)}$ satisfies $(G_0)$ and
$(G_{\infty})$ (see \reftext{Lemma~\ref{lem:approx_init}}).
\item[$(\mathcal D2)$] For all $\eta\in\mathbb{R}\setminus\{0\}$ we have
$f^{(\delta
t)}(\cdot,\eta) \in C^\infty((0,\infty))$ such that for all $j \ge0$
the function $\partial_x^j f^{(\delta t)}$ is continuous in $\left\{
(x,\eta): \, x > 0, \, \eta\ne0\right\}$, and $f^{(\delta t)}$
satisfies $(G_0)$ with $\left(\bar f^{(\delta t)}, \partial_{x_2}
\bar
f^{(\delta t)}\right)(0,0)=(0,0)$ and $(G_{\infty})$ (see \reftext{Lemma~\ref{lem:approx_rhs}}).
\end{enumerate}
The following estimates are the discrete analogues of estimates~\reftext{\eqref{estimate1}}, \reftext{\eqref{estimate2-}}, \reftext{\eqref{estimate3-}}, and \reftext{\eqref{strong_higher_3}}:
%
\begin{lemma}\label{lem:discrete_step}
Suppose we are given $v^{(0)}$ and $f^{(\delta t)}$ satisfying
$(\mathcal D1)$ and $(\mathcal D2)$, respectively. Then there exists a solution
$v^{(\delta t)}$ to \reftext{\eqref{resolvent_disc}} satisfying $(G_0)$ and
$(G_\infty)$, and for $k \ge 2$, $\tilde k \ge 2$, $\check k \ge 2$,
and $\delta\in\left(0,\frac{1}{10}\right)$ fulfilling the estimates
%
%
\begin{subequations}\label{estimate123_discr}
%
%
\begin{align}\label{estimate1_discr}
\begin{aligned}
& \frac{1}{\delta t} \left(\sum_{\ell= 0}^k C_\ell\, \eta^{2 \ell}
 {\left\lvert D_x^{k-\ell} v^{(\delta t)} \right\rvert
}_{-\delta-\frac1 2-\ell}^2 +  {\left\lvert v^{(\delta
t)} \right\rvert}_{-\delta- \frac1 2}^2 - \sum_{\ell= 0}^k C_\ell
\,
\eta^{2 \ell}  {\left\lvert D_x^{k-\ell} v^{(0)} \right
\rvert}_{-\delta-\frac1 2-\ell}^2 -
 {\left\lvert v^{(0)} \right\rvert}_{-\delta- \frac12}^2\right) \\
& \quad+ \sum_{\ell=0}^{k+2} \eta^{2 \ell}  {\left
\lvert v^{(\delta t)} \right\rvert}_{k+2-\ell,-\delta-\ell}^2
\lesssim_{k,\delta} \sum_{\ell=
0}^{k-2} \eta^{2 \ell}  {\left\lvert f^{(\delta t)}
\right\rvert}_{k-2-\ell,-\delta-\ell}^2,
\end{aligned}
\end{align}
with constants $C_\ell> 0$ and $\delta\in\left(0,\tfrac
{1}{10}\right)$;
%
%
\begin{equation}\label{estimate2_discr}
\begin{aligned}
& \frac{1}{\delta t} \left(\sum_{\ell= 0}^{\tilde k} \tilde C_\ell
\,
\eta^{2 \ell}  {\left\lvert D_x^{\tilde k + 1 - \ell}
v^{(\delta t)} \right\rvert}_{\tilde
\alpha-\frac1 2-\ell}^2 +  {\left\lvert D_x v^{(\delta
t)} \right\rvert}_{\tilde\alpha-
\frac1 2}^2 - \sum_{\ell= 0}^{\tilde k} \tilde C_\ell\, \eta^{2
\ell
}  {\left\lvert D_x^{\tilde k + 1 - \ell} v^{(0)} \right
\rvert}_{\tilde\alpha-\frac12-\ell}^2 -  {\left\lvert D_x v^{(0)} \right\rvert
}_{\tilde\alpha- \frac1 2}^2\right) \\
& + \sum_{\ell=0}^{\tilde k+2} \eta^{2 \ell}  {\left
\lvert D_x v^{(\delta t)} \right\rvert}_{\tilde k + 2 - \ell,\tilde
\alpha-\ell}^2 \lesssim_{\tilde
k,\tilde\alpha} \sum_{\ell= 0}^{\tilde k-2} \eta^{2 \ell}
 {\left\lvert(D_x - 1) f^{(\delta t)} \right\rvert
}_{\tilde k-2-\ell,\tilde\alpha-\ell}^2 + \eta^2
 {\left\lvert v^{(\delta t)} \right\rvert}_{\tilde\alpha
-1}^2 + \eta^4  {\left\lvert v^{(\delta t)} \right\rvert
}_{\tilde\alpha-2}^2
\end{aligned}
\end{equation}
with constants $\tilde C_\ell> 0$ and $\tilde\alpha\in[0,1)$;
%
%
\begin{align}\label{estimate3_discr}
& \frac{1}{\delta t} \left(\sum_{\ell= 0}^{\check k} \check C_\ell
\,
\eta^{2 \ell}  {\left\lvert D_x^{\check k - \ell}\tilde
q(D_x)D_x v^{(\delta t)} \right\rvert}_{\check\alpha-\frac12-\ell}^2 +  {\left\lvert\tilde q(D_x)D_x v^{(\delta t)}
\right\rvert}_{\check\alpha- \frac1 2}^2\right.\nonumber\\
&\left.-\sum_{\ell= 0}^{\check k} \check C_\ell\, \eta^{2 \ell}
 {\left\lvert D_x^{\check k - \ell}\tilde q(D_x)D_x
v^{(0)} \right\rvert}_{\check\alpha-\frac12-\ell}^2 -  {\left\lvert\tilde q(D_x)D_x v^{(0)} \right
\rvert}_{\check\alpha- \frac12}^2\right) \nonumber\\
&+ \sum_{\ell=0}^{\check k+2} \eta^{2 \ell}
{\left\lvert\tilde q(D_x) D_x v^{(\delta t)} \right\rvert}_{\check
k + 2 - \ell,\check\alpha-\ell}^2 \nonumber\\
& \quad\lesssim_{\check k,\check\alpha} \sum_{\ell= 0}^{\check k-2}
\eta^{2 \ell}  {\left\lvert\tilde q(D_x-1)(D_x - 1)
f^{(\delta t)} \right\rvert}_{\check
k-2-\ell,\check\alpha-\ell}^2 + \eta^2  {\left\lvert
D_x v^{(\delta t)} \right\rvert}_{\check\alpha-1}^2 + \eta^4
 {\left\lvert v^{(\delta t)} \right\rvert}_{\check\alpha-2}^2
\end{align}
with constants $\check C_\ell> 0$ and $\check\alpha\in(1,2)$;
%
%
\begin{equation}\label{estimate4_discr}
\begin{aligned}
& \frac{1}{\delta t} \left(
\sum_{\ell= 0}^{\check k} \breve C_\ell\eta^{2\ell}
{\left\lvert D_x^{\check k - \ell} (D_x-3) (D_x-2) \tilde q(D_x) D_x
v^{(\delta t)} \right\rvert}_{\delta+ \frac3 2 - \ell}^2 +
 {\left\lvert(D_x-3) (D_x-2) \tilde q(D_x) D_x v^{(\delta
t)} \right\rvert}_{\delta+ \frac3 2}^2\right.\\
&\left.-\sum_{\ell= 0}^{\check k} \breve C_\ell\eta^{2\ell}
 {\left\lvert D_x^{\check k - \ell} (D_x-3) (D_x-2) \tilde
q(D_x) D_x v^{(0)} \right\rvert}_{\delta+ \frac3 2 - \ell}^2 -
 {\left\lvert(D_x-3) (D_x-2) \tilde q(D_x) D_x v^{(0)}
\right\rvert}_{\delta+ \frac3 2}^2\vphantom{\sum_{\ell= 0}^{\check k}}
\right) \\
& + \sum_{\ell= 0}^{\check k + 2} \eta^{2\ell}  {\left
\lvert(D_x-3) (D_x-2) \tilde q(D_x) D_x v^{(\delta t)} \right\rvert
}_{\check k + 2 - \ell,\delta+2 - \ell
}^2 \\
& \quad\lesssim_{\check k,\delta} \sum_{\ell= 0}^{\check k - 2}
\eta
^{2\ell}  {\left\lvert(D_x-4) (D_x-3) \tilde q(D_x-1)
(D_x-1) f^{(\delta t)} \right\rvert}_{\check k - 2 - \ell,\delta
+2}^2 + \eta^2  {\left\lvert\tilde q(D_x) D_x
v^{(\delta t)} \right\rvert}_{\delta
+1}^2 + \eta^4  {\left\lvert D_x v^{(\delta t)} \right
\rvert}_\delta^2
\end{aligned}
\end{equation}
with constants $\breve C_\ell > 0$ and $\delta\in\left(0,\frac
1{10}\right)$.
\end{subequations}
\end{lemma}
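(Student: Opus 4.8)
The plan is to establish existence of $v^{(\delta t)}$ first and then prove the four discrete estimates one at a time, mirroring exactly the heuristic computations of \S\ref{sec:basic_weak}--\S\ref{sec:higher_3} but with the time derivative replaced by the difference quotient $\frac{v^{(\delta t)}-v^{(0)}}{\delta t}$. Existence and the qualitative properties $(G_0)$, $(G_\infty)$ follow directly from \reftext{Proposition~\ref{prop:resolvent}}: indeed, $(\mathcal D1)$ and $(\mathcal D2)$ guarantee that the right-hand side $f^{(\delta t)} + \frac{1}{\delta t} x v^{(0)}$ of the rescaled resolvent equation \reftext{\eqref{resolvent}} (with $\lambda = \frac{1}{\delta t}$, which is reduced to $\lambda=1$ by scaling $x$ and $\eta$) meets the hypotheses of \reftext{Proposition~\ref{prop:resolvent}}, since a linear-in-$x$ term is in the kernel of $D_x - 1$ and hence is harmless for $(G_0)$, and multiplication by $x$ preserves $(G_\infty)$ and smoothness. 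This gives the unique solution $v^{(\delta t)}$ with $|v^{(\delta t)}|_{k,-\delta-k} < \infty$ for $k = 0,1,2$, and then the remaining bounds $|v^{(\delta t)}|_{k,\alpha} < \infty$ needed to justify the integrations by parts below follow from the $(G_0)$/$(G_\infty)$ decay together with the elliptic-regularity bootstrap of \reftext{Lemma~\ref{lem:mainhardy}}.

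For the estimates themselves, I would test \reftext{\eqref{resolvent_disc}} against exactly the same multipliers used in \S\ref{sec:lin_heur}: for \reftext{\eqref{estimate1_discr}} the multiplier $S = \sum_\ell C_\ell \eta^{2\ell}(-D_x-1-2\ell-2\delta)^{k-\ell} D_x^{k-\ell}(\cdot)$ built from \reftext{\eqref{basicstrongtest}}; for \reftext{\eqref{estimate2_discr}} first apply $D_x-1$ to pass to \reftext{\eqref{lin_cauchy2}} and test against the analogue built on $D_x v^{(\delta t)}$; for \reftext{\eqref{estimate3_discr}} apply $\tilde q(D_x-1)$ to reach \reftext{\eqref{lin_eq_qd}} and test against $\tilde q(D_x) D_x v^{(\delta t)}$; and for \reftext{\eqref{estimate4_discr}} apply $(D_x-4)(D_x-3)$ to reach \reftext{\eqref{lin_eq_qdqd}} and test against $(D_x-3)(D_x-2)\tilde q(D_x) D_x v^{(\delta t)}$. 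The only structurally new term compared to the continuous case is the first one: $\bigl(x\tfrac{v^{(\delta t)}-v^{(0)}}{\delta t}, S\bigr)_\alpha$. Here the point is the elementary algebraic identity $2a(a-b) = a^2 - b^2 + (a-b)^2 \ge a^2 - b^2$, which is precisely what converts $\bigl(x(v^{(\delta t)}-v^{(0)}), S\bigr)_\alpha$ (after commuting the polynomial of $D_x$ that defines $S$ past the weight and integrating by parts, exactly as in \reftext{\eqref{terms_estimate_k_deriv_1}} and the display following it) into $\tfrac{1}{\delta t}\bigl(\,|\,\cdot\,v^{(\delta t)}|^2 - |\,\cdot\,v^{(0)}|^2\,\bigr)$ up to a non-negative remainder that one simply discards. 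All the other terms -- the coercivity term from $q(D_x)$ (via the quantitative \reftext{Lemma~\ref{lem:coercivity}}/\reftext{Lemma~\ref{lem:coer_q}}), the $\eta^2$- and $\eta^4$-contributions, the right-hand side handled by Young's inequality, and the absorption of lower-order remnants and of the $v^{(\delta t)}$-terms on the right via interpolation and \reftext{Lemma~\ref{lem:mainhardy}} -- are literally the same manipulations as in \S\ref{sec:basic_weak}--\S\ref{sec:higher_3}, since those never used that $\partial_t v$ came from a genuine time derivative rather than a difference quotient; they only used the skew/self-adjointness of $D_x$ with respect to $(\cdot,\cdot)_0$ and the polynomial identities from \reftext{\eqref{poly_qr}}, \reftext{\eqref{poly_qr2}}, \reftext{\eqref{def_check_r12}}, \reftext{\eqref{poly_breve}}.

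I would therefore present the proof as: (1) invoke \reftext{Proposition~\ref{prop:resolvent}} for existence and regularity; (2) prove the key identity $(x(v^{(\delta t)}-v^{(0)}),S)_\alpha \ge \tfrac{1}{\delta t}(|\cdot v^{(\delta t)}|^2 - |\cdot v^{(0)}|^2)$ once, in the form needed for the four multipliers; (3) observe that the remaining terms are bounded exactly as in the corresponding continuous estimates \reftext{\eqref{estimate1}}, \reftext{\eqref{estimate2-}}, \reftext{\eqref{estimate3-}}, \reftext{\eqref{strong_higher_3}}, and conclude \reftext{\eqref{estimate1_discr}}--\reftext{\eqref{estimate4_discr}} by the same inductions over $\ell$ and combinations. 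The main obstacle -- and really the only thing requiring genuine care rather than bookkeeping -- is making sure that every integration by parts used to commute the $D_x$-polynomial in the multiplier past $x$, and past the weight $x^{-2\alpha}$, is legitimate: this needs the correct decay of $v^{(\delta t)}$ (and enough of its $D_x$-derivatives) both as $x \searrow 0$ and as $x \to \infty$. For $x \searrow 0$ this is delivered by $(G_0)$ together with the observation that applying $D_x - 1$, $\tilde q(D_x-1)$, etc.\ successively raises the vanishing order, exactly as quantified in \reftext{Lemma~\ref{lem:bc0_bounds}} and its underlying Hardy estimate \reftext{Lemma~\ref{lem:mainhardy}}; for $x \to \infty$ it is delivered by the exponential decay in $(G_\infty)$. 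Once boundary terms are known to vanish, the estimates are formal consequences of the algebra already carried out in \S\ref{sec:lin_heur}, so I would keep that part brief and refer back to the relevant displays rather than re-deriving them.
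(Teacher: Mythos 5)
Your proposal is correct and matches the paper's proof in all essentials: existence of $v^{(\delta t)}$ is obtained by rescaling $x \mapsto x/\delta t$, $\eta \mapsto \delta t\,\eta$ so that $g := f^{(\delta t)} + x v^{(0)}$ becomes the right-hand side of the resolvent equation \eqref{resolvent} and \reftext{Proposition~\ref{prop:resolvent}} applies (the paper notes $g$ inherits $(G_0)$ and $(G_\infty)$ from $(\mathcal D1)$, $(\mathcal D2)$), and the four discrete estimates are then derived by testing against the same multipliers as in \S\ref{sec:lin_heur}, with the difference quotient converted into a telescoping term via the same convexity inequality $a(a-b)\ge\tfrac12(a^2-b^2)$ that underlies the paper's \eqref{1term-discr} and \eqref{terms_estimate_k_deriv_1-discr}. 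The paper, like you, writes out only the first case \eqref{estimate1_discr} in detail and leaves the remaining three to the reader, so your plan reproduces both the structure and the economy of the original argument.
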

\begin{proof}[Proof of \reftext{Lemma~\ref{lem:discrete_step}}]
By scaling according to $x\mapsto\frac{x}{\delta t}$ and $\eta
\mapsto
\delta t \, \eta$, we need to solve the resolvent equation (cf.~\reftext{\eqref{resolvent}})
\begin{align}\label{resolvg}
x v^{(\delta t)} + q(D_x) v^{(\delta t)} - \eta^2 x^2 r(D_x)
v^{(\delta t)} +  \eta^4 x^4
v^{(\delta t)} = g, \quad\mbox
{where} \quad g := f^{(\delta t)} + x v^{(0)}.
\end{align}
Then $g$ also satisfies $(G_0)$ with $\left(\bar g, \partial
_{x_2}\bar
g\right)(0,0) = (0,0)$ and $(G_\infty)$. Therefore, we are in the
setting of \reftext{Proposition~\ref{prop:resolvent}}, i.e., there exists a
solution $v^{(\delta t)} \in C^\infty((0,\infty))$ to \reftext{\eqref{resolvg}}
satisfying $(G_0)$ and $(G_\infty)$ such that all derivatives
$\partial
_x^j v^{(\delta t)}$ for $j \ge0$ are continuous in $\left\{(x,\eta):
\, x > 0, \, \eta\ne0\right\}$.

The reasonings for the four cases in \reftext{\eqref{estimate123_discr}} are
alike and follow the derivations of \reftext{\eqref{estimate1}},
\reftext{\eqref{estimate2-}}, \reftext{\eqref{estimate3-}}, and
\reftext{\eqref{strong_higher_3}}, respectively. Therefore, we describe the
procedure only for the first case \reftext{\eqref{estimate1_discr}} and leave
the details in the other cases \reftext{\eqref{estimate2_discr}},
\reftext{\eqref{estimate3_discr}}, and \reftext{\eqref{estimate4_discr}}
to the reader.

Consider the un-rescaled
equation \reftext{\eqref{resolvent_disc}} and test it with $v^{(\delta t)}$
with respect to the inner product $(\cdot,\cdot )_{-\delta}$ for
$\delta\in\left(0,\tfrac1{10}\right)$:
%
%
\begin{equation}\label{test_basic-discr}
\begin{aligned}
& \frac{1}{\delta t}\left(x\left(v^{(\delta t)}-v^{(0)}\right),
v^{(\delta t)}\right)_{-\delta} + \left(q(D_x)v^{(\delta
t)},v^{(\delta
t)}\right)_{-\delta} - \eta^2\left(x^2 r(D_x)v^{(\delta
t)},v^{(\delta
t)}\right)_{-\delta} +\eta^4\left(x^4v^{(\delta t)},v^{(\delta t)}\right
)_{-\delta} \\
& \quad = \left(f^{(\delta t)}, v^{(\delta t)}\right)_{-\delta}.
\end{aligned}
\end{equation}
This equation is the discrete version of \reftext{\eqref{test_basic}}, and is
treated in the same way, except for the first term, for which rather
than \reftext{\eqref{1term}} we obtain
%
%
\begin{align}\label{1term-discr}
\left(x\left(v^{(\delta t)}-v^{(0)}\right), v^{(\delta t)}\right
)_{-\delta} =  {\left\lvert v^{(\delta t)} \right\rvert
}_{-\delta-\frac12}^2 - \left
(v^{(0)}, v^{(\delta t)}\right)_{-\delta-\frac12} \ge\frac12
 {\left\lvert v^{(\delta t)} \right\rvert}_{-\delta
-\frac12}^2 -\frac12  {\left\lvert v^{(0)} \right\rvert
}_{-\delta
-\frac12}^2.
\end{align}
The bound \reftext{\eqref{1term-discr}}, together with estimates~\reftext{\eqref{2term}}--\reftext{\eqref{est3term}}, allows to obtain the following analogue of \reftext{\eqref{lin_est1}}:
%
%
\begin{equation}\label{lin_est1-discr}
\frac{1}{\delta t}\left( {\left\lvert v^{(\delta t)}
\right\rvert}_{-\delta- \frac12}^2 -
 {\left\lvert v^{(0)} \right\rvert}_{-\delta-\frac
12}^2\right) +\sum_{\ell=0}^2\eta^{2\ell
} {\left\lvert v^{(\delta t)} \right\rvert}_{\ell,
-\delta-\ell}^2 \lesssim_\delta {\left\lvert f^{(\delta
t)} \right\rvert}_{-\delta}^2 \quad\mbox{for} \quad \delta\in\left
(0,\tfrac
1{10}\right).
\end{equation}
This can be upgraded to a strong estimate like in \S \ref{sec:basicstrongest}, by testing \reftext{\eqref{resolvent_disc}} against \reftext{\eqref{basicstrongtest}}, i.e., against $\eta^{2 \ell} \left(\cdot,(-D_x -
1 -
2 \ell+ 2 - \delta)^{k-\ell} D_x^{k-\ell} v^{(\delta t)}\right
)_{-\delta-\ell} =: \left(\cdot,S\right)_{-\delta-\ell}$. We
proceed as
for \reftext{\eqref{terms_estimate_k_deriv}} and \reftext{\eqref{terms_estimate_k_deriv_5}}, with the only difference appearing for the
treatment of the time derivative term \reftext{\eqref{terms_estimate_k_deriv_1}},
which is discretized here. Similar to the discussion leading to \reftext{\eqref{1term-discr}}, we find the time-discrete analogue of \reftext{\eqref{terms_estimate_k_deriv_1}}:
%
%
\begin{equation}\label{terms_estimate_k_deriv_1-discr}
\begin{aligned}
\frac{1}{\delta t}\left(x\left(v-v^{(0)}\right), S\right
)_{-\delta
-\ell} &=\frac{1}{\delta t}\left((xv, S)_{-\delta-\ell} - \left(x
v^{(0)},S\right)_{-\delta-\ell}\right) \\
&= \frac{1}{\delta t}\left(\eta^{2\ell} {\left
\lvert D_x^{k-\ell} v \right\rvert}_{-\delta-\ell-\frac12} -\eta
^{2\ell}\left(D_x^{k-\ell}v,D_x^{k-\ell
}v^{(0)}\right)_{-\delta-\ell-\frac12}\right) \\
&\ge\frac{1}{2}\frac{1}{\delta t}\left(\eta^{2\ell
} {\left\lvert D_x^{k-\ell}v \right\rvert}_{-\delta
-\ell-\frac12}^2-\eta^{2\ell} {\left\lvert D_x^{k-\ell
}v^{(0)} \right\rvert}_{-\delta-\ell-\frac12}^2\right).
\end{aligned}
\end{equation}
Adding the bound \reftext{\eqref{terms_estimate_k_deriv_1-discr}} to the bounds \reftext{\eqref{terms_estimate_k_deriv_2}}--\reftext{\eqref{terms_estimate_k_deriv_5}},
afterwards summing over $\ell$ and reabsorbing by using \reftext{\eqref{lin_est1-discr}} (precisely as in the steps in \S \ref{sec:basicstrongest} leading up to \reftext{\eqref{estimate1}}), we obtain the
time-discrete estimate \reftext{\eqref{estimate1_discr}}.
\end{proof}

We are now ready to prove our main result for the linear equation:
\begin{proof}[Proof of \reftext{Proposition~\ref{prop:maxregevolution}}]
Throughout the proof, estimates may depend on $k$, $\tilde k$, $\check k$,
$\breve k$, and $\delta$. By writing $[0,\infty) = \bigcup_{T \in
\mathbb{N}}
[0,T)$ and using the uniqueness part of \reftext{Proposition~\ref{prop:maxregevolution}} for $T < \infty$, we may without loss of
generality assume that $T$ is finite. As already mentioned in the
discussion preceding the proof, up to approximating as in \reftext{Lemmata~\ref{lem:approx_init} and~\ref{lem:approx_rhs}}, we may assume that
$(\mathcal D1)$ and $(\mathcal D2)$ hold for $v^{(0)}$ and $f^{(\delta t)}$,
respectively. We subdivide the proof into several parts.

\medskip
\noindent\textit{Step 1: Bounds for a single time interval.}
We first deduce the integral version of \reftext{\eqref{estimate1_discr}} which
is the discrete analogue of estimate~\reftext{\eqref{estimate1_int}}. To this
aim, for equal time steps $\delta t$ we iteratively solve for
$m=1,2,\ldots,M:=\frac{T}{\delta t}$
%
%
\begin{subequations}
%
%
\begin{equation}\label{discretestep}
x\frac{v^{(m\delta t)} - v^{((m-1)\delta t)}}{\delta t} + q(D_x) v^{(m
\delta t)} - \eta^2 x^2 r(D_x) v^{(m \delta t)} + \eta^4 x^4
v^{(m\delta t)} = f^{(m\delta t)}
\end{equation}
on the time interval $[(m-1)\delta t, m \delta t)$, where
%
%
\begin{equation}\label{deffnudeltat}
f^{(m \delta t)} := \frac{1}{\delta t}\int_{(m-1)\delta t}^{m\delta t}
f\left(t^\prime\right) \mathrm{d} t^\prime.
\end{equation}
\end{subequations}
We then apply estimate~\reftext{\eqref{estimate1_discr}} with the substitutions
%
%
\begin{equation}\label{substitutions_discr}
v^{(\delta t)} \mapsto v^{(m\delta t)}, \quad v^{(0)} \mapsto v^{((m -
1)\delta t)}, \quad f^{(0)} \mapsto f^{(m\delta t)},
\end{equation}
and we sum the resulting estimates over $m=1,\ldots, M$. The sum of the
discretized time derivatives, coming from the first term in \reftext{\eqref{estimate1_discr}}, form a telescoping sum, so that after multiplying by
$\delta t$ we find
%
%
\begin{equation}\label{estimate1_discr_int}
\begin{aligned}
&\sum_{\ell=0}^k C_\ell\eta^{2\ell}  {\left\lvert
D_x^{k-\ell}v^{(T)} \right\rvert}_{-\delta- \frac{1}{2} -\ell}^2
+  {\left\lvert v^{(T)} \right\rvert}_{-\delta
-\frac12}^2 + \delta t \sum_{m=1}^{M} \sum_{\ell= 0}^{k+2} \eta
^{2\ell
}  {\left\lvert v^{(m \delta t )} \right\rvert}_{k+2-\ell
, -\delta-\ell}^2\\
& \quad\lesssim\sum_{\ell=0}^kC_\ell\eta^{2\ell}
{\left\lvert D_x^{k-\ell}v^{(0)} \right\rvert}_{-\delta-\frac
12-\ell}^2 +  {\left\lvert v^{(0)} \right\rvert
}_{-\delta-\frac
12}^2 +\delta t \sum_{m=1}^{M}\sum_{\ell=0}^{k-2}\eta^{2\ell
} {\left\lvert f^{(m\delta t)} \right\rvert}_{k-2-\ell,
-\delta- \ell}^2.
\end{aligned}
\end{equation}
The constant in estimate~\reftext{\eqref{estimate1_discr_int}} does not depend on
$\delta t$ and $M$, and upon increasing its value, we can replace the
first two terms in \reftext{\eqref{estimate1_discr_int}} by
\begin{equation*}
\max_{0\le m\le M}\left(\sum_{\ell= 0}^kC_\ell\eta^{2\ell
} {\left\lvert D_x^{k-\ell}v^{(m\delta t)} \right\rvert
}_{-\delta-\frac12 - \ell}^2 + {\left\lvert v^{(m\delta
t)} \right\rvert}_{-\delta-\frac12}^2\right).
\end{equation*}
Multiplying with $\eta^{2 j}$ and applying the inverse Fourier
transform in $\eta$, we obtain from \reftext{\eqref{estimate1_discr_int}}
%
%
\begin{equation}\label{estimate1_discr_2}
\begin{aligned}
&\max_{0\le m\le M} {\left\lVert D_y^jv^{(m\delta t)}
\right\rVert}_{k,-\delta-1+j}^2 +
\delta t\sum_{m=1}^{M} {\left\lVert D_y^jv^{(m\delta t)}
\right\rVert}_{k+2,-\delta-\frac
12 +j}^2\\
&\quad\lesssim {\left\lVert D_y^jv^{(0)} \right\rVert
}_{k,-\delta-1 +j}^2 +\delta t\sum
_{m=1}^{M}  {\left\lVert D_y^j f^{(m \delta t)} \right
\rVert}_{k-2,-\delta-\frac12 +j}^2.
\end{aligned}
\end{equation}
In order to justify convergence of the integrals underlying inverse Fourier
transform, we note that we are working under the hypothesis that the terms on
the right-hand side of \reftext{\eqref{estimate1_discr_2}} are bounded. Due
to the local continuity in $\eta$
(cf.~\reftext{Proposition~\ref{prop:resolvent}}), which implies (global)
measurability in~$\eta$, we find due to \reftext{\eqref{estimate1_discr_int}}
also the integrals required for defining the left-hand side of
\reftext{\eqref{estimate1_discr_2}} to converge.

\medskip
\noindent\textit{Step 2: Interpolation in time.}
We define the continuum version of the left-hand side $f$ by a
piecewise constant extension
%
%
\begin{equation}\label{defpiecewiseconst}
\Phi_{M} (t,x):= \sum_{m=1}^{M}f^{(m\delta t)}(x) \mathbh
{1}_{[(m-1)\delta
t,m\delta t)}(t),
\end{equation}
and for approximating the solution $v$ itself for time steps $T =
0,\delta t, \ldots, M \delta t$, define the piecewise affine interpolant
%
%
\begin{equation}\label{affineinterpol}
\Psi_{M}(t,x):=\sum_{m=1}^{M}\left(\frac{t-(m-1)\delta t}{\delta t}
v^{(m\delta t)}(x) + \frac{m\delta t - t}{\delta t} v^{((m-1)\delta
t)}(x)\right) \mathbh{1}_{[(m-1)\delta t, m\delta t)}(t).
\end{equation}
We note that for $1\le m\le M$ and for $t \in((m-1)\delta t, m \delta
t)$, based on \reftext{\eqref{discretestep}}, \reftext{\eqref{defpiecewiseconst}}, and \reftext{\eqref{affineinterpol}}, we have 
\begin{equation}\label{derivpsi}
x\partial_t\Psi_{M}+q(D_x)v^{(m\delta t)} + D_y^2 r(D_x)v^{(m\delta
t)} + D_y^4 v^{(m\delta t)} = \Phi_M.
\end{equation}
Then we use the same reasoning as the one preceding \reftext{\eqref{estimate1_int}} in order to bound $\partial_t D_y^j \Psi_{M}$ via
equation~\reftext{\eqref{derivpsi}}. On the time interval $[(m-1)\delta t,
m\delta t)$, we have:
\begin{equation}\label{timedernorms}
 {\left\lVert\partial_tD_y^j\Psi_{M} \right\rVert
}_{k-2,-\delta-\frac32+j}^2 \lesssim
 {\left\lVert D_y^jv^{(m\delta t)} \right\rVert
}_{k+2,-\delta-\frac12+j}^2 +  {\left\lVert D_y^j \Phi_M
\right\rVert}_{k-2,-\delta-\frac12+j}^2.
\end{equation}
On each time interval $[(m-1)\delta t, m\delta t)$ we may use the
triangle inequality for \reftext{\eqref{affineinterpol}} in order to bound the
norm of $D_y^j\Psi_{M}$ by the sum of the norms of
$D_y^jv^{((m-1)\delta t)}$ and $D_y^jv^{(m\delta t)}$ and finally
integrate \reftext{\eqref{timedernorms}} on $t\in[(m-1)\delta t, m\delta t)$.
Summing these estimates over $m$ allows to pass from \reftext{\eqref{estimate1_discr_2}} to the following bound for $\Psi_{M}$:
\begin{equation}\label{intbound_psi}
{\skliaustask D_y^j\Psi_{M} \skliaustasd}_{k,-\delta-1 +j,I}^2\lesssim {\left\lVert v^{(0)} \right\rVert
}_{k,-\delta-1 +j} + \int_I {\left\lVert D_y^j\Phi_{M}
\right\rVert}_{k-2,-\delta-\frac12+j}^2 \mathrm{d} t.
\end{equation}
By the same reasoning applied to \reftext{\eqref{estimate2_discr}}, \reftext{\eqref{estimate3_discr}}, and \reftext{\eqref{estimate4_discr}}, we obtain discrete
integral analogues thereof, which are also discrete versions of \reftext{\eqref{higher_est_1}}, \reftext{\eqref{higher_est_2}}, \reftext{\eqref{higher_est_2_alt}}, and \reftext{\eqref{higher_est_3}}, respectively. These are again obtained directly
from \reftext{\eqref{higher_est_1}}, \reftext{\eqref{higher_est_2}}, \reftext{\eqref{higher_est_2_alt}}, and \reftext{\eqref{higher_est_3}} by replacing $\left
(v,v_{|t=0},f\right)$ by $\left(\Psi_M,v^{(0)}, \Phi_M\right)$. For
example the bound corresponding to \reftext{\eqref{higher_est_1}} reads
\begin{align}\nonumber
& {\skliaustask D_y^j \Psi_{M} \skliaustasd}_{\tilde k -1,-\delta-1 + j, I}^2 
+  {\skliaustask D_y^{j-1} D_x \Psi_{M}\skliaustasd}_{\tilde k,-\delta-1 + j, I}^2 \\
&\quad\lesssim {\left\lVert D_y^j v^{(0)} \right\rVert
}_{\tilde k - 1,-\delta-1 + j}^2 +
 {\left\lVert D_y^{j-1} D_x v^{(0)} \right\rVert}_{\tilde
k,-\delta-1 + j}^2 \nonumber\\
&\quad\phantom{\lesssim} + \int_I \left( {\left\lVert D_y^j \Phi_{M} \right
\rVert}_{\tilde k-3,-\delta-\frac12 +
j}^2 +  {\left\lVert D_y^{j-1} (D_x-1) \Phi_{M} \right
\rVert}_{\tilde k-2,-\delta-\frac12
+ j}^2\right) \mathrm{d} t \label{higher_est_1_discr}
\end{align}
and in the same way we can derive the analogues to \reftext{\eqref{higher_est_2}}, \reftext{\eqref{higher_est_2_alt}} and \reftext{\eqref{higher_est_3}},
which we omit for the sake of keeping the presentation concise.

\medskip
\noindent\textit{Step 3: Passage to the continuum limit and existence
of a solution.}
Let $T$ be fixed and consider the limit $M \to\infty$ with $\delta t =\frac T M$. In order to obtain that the right-hand sides in \reftext{\eqref{higher_est_1_discr}} and of the higher-order discretized versions of
estimates \reftext{\eqref{higher_est_2}}, \reftext{\eqref{higher_est_2_alt}}, and \reftext{\eqref{higher_est_3}} are bounded, we will use the bounds on $v^{(0)}$ which
follow from the assumption $ {\left\lVert v^{(0)} \right
\rVert}_{\mathrm{init}}<\infty$,
and for the remaining terms we use bounds following from ${\skliaustask
f\skliaustasd}_{\mathrm{rhs}}<\infty$ and the fact that for any polynomial operator of the form $Q(D_y)P(D_x)$ and any choice of the
$\left\lVert\cdot\right\rVert_{k,\rho}$-norm as
occurring in the definition of ${\skliaustask f \skliaustasd}_\mathrm{rhs}$, we have
\begin{equation}\label{boundedness}
\int_I {\left\lVert Q(D_y)P(D_x)\Phi_M \right
\rVert}_{k,\rho}^2 \mathrm{d} t \le\int
_I {\left\lVert Q(D_y)P(D_x) f \right\rVert
}_{k,\rho}^2 \mathrm{d} t.
\end{equation}
In order to verify \reftext{\eqref{boundedness}}, we may recall the
definition \reftext{\eqref{defpiecewiseconst}} of $\Phi_{M}$ and the definition \reftext{\eqref{deffnudeltat}} of $f^{(m\delta t)}$. Indeed, using Jensen's
inequality for interchanging the $\delta t$-interval average in \reftext{\eqref{deffnudeltat}} with the norm squared, we find
\begin{align*}
& \int_I {\left\lVert Q(D_y)P(D_x) f \right\rVert
}_{k,\rho}^2\mathrm{d} t \\
& \quad = \int_I {\left\lVert
Q(D_y)P(D_x)\left(\sum_{m=1}^M \mathbh{1}_{[(m-1)\delta
t,m\delta t)}f(t,\cdot,\cdot)\right) \right\rVert}_{k,\rho
}^2\mathrm{d} t = \sum_{m=1}^M\int_{(m-1)\delta t}^{m\delta t} {\left
\lVert Q(D_y)P(D_x) f(t,\cdot,\cdot) \right\rVert
}_{k,\rho}^2\mathrm{d} t \\
& \quad \ge \sum_{m=1}^M\int_{(m-1)\delta t}^{m\delta t} {\left
\lVert Q(D_y)P(D_x) f^{(m\delta t)} \right\rVert}_{k,\rho
}^2\mathrm{d} t = \int_I {\left\lVert Q(D_y)P(D_x) \Phi_M \right
\rVert}_{k,\rho}^2 \mathrm{d} t.
\end{align*}
As a consequence of \reftext{\eqref{boundedness}}, we have
\begin{equation}\label{bound_phi_rhs}
{\skliaustask\Phi_M\skliaustasd}_\mathrm{rhs}\le{\skliaustask
f\skliaustasd}_\mathrm{rhs}
\end{equation}
and thus the inequalities \reftext{\eqref{higher_est_1_discr}}, and similarly the
discretized analogues of \reftext{\eqref{higher_est_2}}, \reftext{\eqref{higher_est_2_alt}}
and \reftext{\eqref{higher_est_3}} continue to hold if we use $f$ rather than
$\Phi_M$ on the right-hand side.

As a result, by the same mechanism that led from \reftext{\eqref{higher_est_1}},
\reftext{\eqref{higher_est_2}}, \reftext{\eqref{higher_est_2_alt}}, and \reftext{\eqref{higher_est_3}} to the definition of the norms ${\skliaustask\cdot
\skliaustasd}_{\mathrm
{sol}},  {\left\lVert\cdot\right\rVert}_{\mathrm
{init}}^{\prime}$, and ${\skliaustask\cdot\skliaustasd}_{\mathrm
{rhs}}^{\prime}$, we can prove for $I=[0,T)$
\begin{equation*}
{\skliaustask\Psi_{M}\skliaustasd}_{\mathrm{sol}} \lesssim
 {\left\lVert v^{(0)} \right\rVert}_{\mathrm
{init}}^\prime+ {\skliaustask f\skliaustasd}_{\mathrm{rhs}}^\prime,
\end{equation*}
where the constant is independent of $M$. By using \reftext{Lemmata~\ref{lem:equivalence_init} and~\ref{lem:equivalence_rhs}}, the norms
$ {\left\lVert\cdot\right\rVert}_\mathrm{init}^\prime
$ and ${\skliaustask\cdot\skliaustasd}_\mathrm{rhs}^\prime$
can be replaced by the simpler $ {\left\lVert\cdot\right
\rVert}_\mathrm{init}$ and
${\skliaustask\cdot\skliaustasd}_\mathrm{rhs}$
(cf.~\reftext{\eqref{norm_init}}\&\reftext{\eqref{norm_rhs}}), respectively, so that we end up with
%
%
\begin{equation}\label{finalbound_discr}
{\skliaustask\Psi_{M}\skliaustasd}_{\mathrm{sol}} \lesssim
 {\left\lVert v^{(0)} \right\rVert}_{\mathrm
{init}} + {\skliaustask f\skliaustasd}_{\mathrm{rhs}}.
\end{equation}

Due to the definition \reftext{\eqref{derivpsi}} and to the bounds \reftext{\eqref{bound_phi_rhs}} and \reftext{\eqref{finalbound_discr}} we
find that a subsequence of $\left(\Phi_M,\Psi_M\right)_M$ (which we
denote by $\left(\Phi_M,\Psi_M\right)$ again) weak-$*$-converges to
$(f,v)$. By weak lower-semicontinuity of the norms, we find
that the maximal-regularity estimate \reftext{\eqref{mr_interval}} holds in the
limit as well.

In order to ensure the validity of the boundary condition
$v_{|t=0}=v^{(0)}$ in the limit, we note that the function $v^{(0)}$ is
the initial value for \emph{every} $\Psi_M$ regardless of the choice of
$M$. Then we use the time-trace bounds~\reftext{\eqref{traceest}} of \reftext{Lemma~\ref{lem:trace_estimate}} for the norms $ {\left\lVert\cdot
\right\rVert}_{\ell,\rho}$ with
indices as appearing in the definition~\reftext{\eqref{norm_sol}} of
${\skliaustask\cdot\skliaustasd}_\mathrm{sol}$ in order to be able
to define the evaluation at a
specific time $t$ in the limit.

In order to pass to the limit $M \to\infty$ in equation~\reftext{\eqref{derivpsi}}, we first rewrite it in terms of $\Psi_M$ only:
\begin{subequations}\label{derivpsi_rewr}
\begin{equation}
x\partial_t \Psi_M + q(D_x)\Psi_M + D_y^2 r(D_x) \Psi_M + D_y^4 \Psi_M = \Phi_M + R_M,
\end{equation}
with
\begin{align}
R_M &:= q(D_x)\tilde R_M + D_y^2 r(D_x) \tilde R_M + D_y^4 \tilde R_M,
\eqncr
\tilde R_M &:= \Psi_M - \sum_{m=1}^Mv^{(m\delta t)} \mathbh
{1}_{[(m-1)\delta
t, m\delta t)} \stackrel{\text{\reftext{\eqref{affineinterpol}}}}{=} \sum_{m=1}^M\frac{m\delta t -
t}{\delta t}\left(v^{((m-1)\delta t)} -v^{(m\delta t)}\right) \mathbh{1}
_{[(m-1)\delta t, m\delta t)}.
\end{align}
\end{subequations}
In order to prove distributional convergence to zero of the remainder
$R_M$, it suffices to prove distributional convergence to zero of
$\tilde R_M$. For this we first use \reftext{\eqref{discretestep}} leading to
\begin{equation}\label{reexprdiffv}
v^{((m-1)\delta t)} - v^{(m\delta t)} = \frac{\delta t }{x}\left
(-\Phi
_M + q(D_x)v^{(m\delta t)} + D_y^2 r(D_x) v^{(m\delta t)} + D_y^4 v^{(m\delta t)}\right).
\end{equation}
Note that on one side, the $\Phi_M$-terms in \reftext{\eqref{reexprdiffv}}
already weak-*-converge to $f$, so that when multiplied by $\delta t=
\frac T M$ it converges to zero. For the remaining terms, we note that
the linear operator $q(D_x)+D_y^2 r(D_x) + D_y^4$ acting on
$v^{(m\delta t)}$ is independent of $t$ and contains no
$t$-derivatives. Therefore, we may consider directly the distributional
convergence of the term
\begin{align}\nonumber
& \frac{\delta t}{x} \sum_{m=1}^M\left(q(D_x)v^{(m\delta t)} +D_y^2 r(D_x) v^{(m\delta t)} + D_y^4 v^{(m\delta t)}\right
)\mathbh{1}
_{[(m-1)\delta t, m\delta t)} \\
&\quad = \frac{\delta t}{x} \left(q(D_x)+D_y^2 r(D_x) + D_y^4 \right) \left(\sum_{m=1}^Mv^{(m\delta t)}\mathbh{1}_{[(m-1)\delta t,
m\delta
t)}\right),\label{term_to_converge}
\end{align}
and so we consider the function
\begin{equation}\label{whatneedstoconverge}
\sum_{m=1}^Mv^{(m\delta t)}\mathbh{1}_{[(m-1)\delta t, m\delta t)}.
\end{equation}
In view of \reftext{\eqref{affineinterpol}} and by using \reftext{\eqref{finalbound_discr}}
together with \reftext{\eqref{bc0_grad_sol}} of \reftext{Lemma~\ref{lem:bc0_bounds}}, we
find the bound
\begin{align}
& \max_{1\le m\le M} \left( {\left\lVert
v^{(m\delta t)}_x \right\rVert}_{BC^0((0,\infty)_x\times\mathbb
{R}_y)}+ {\left\lVert v^{(m\delta t)}_y \right\rVert
}_{BC^0((0,\infty)_x\times\mathbb{R}_y)}\right)\nonumber\\
& \quad\lesssim\sup_{t\in I}\left( {\left\lVert\partial
_x\Psi_M(t,\cdot,\cdot) \right\rVert}_{BC^0((0,\infty)_x\times
\mathbb{R}_y)}+ {\left\lVert\partial_y\Psi_M(t,\cdot
,\cdot) \right\rVert}_{BC^0((0,\infty)_x\times\mathbb
{R}_y)}\right)\lesssim{\skliaustask\Psi_M\skliaustasd}_\mathrm
{sol}, \label{psi_bd_v}
\end{align}
which shows that the gradient of \reftext{\eqref{whatneedstoconverge}} is bounded
in the $ {\left\lVert\cdot\right\rVert
}_{BC^0((0,\infty)_x\times\mathbb{R}_y)}$-norm. Now we
note that the operator that in \reftext{\eqref{term_to_converge}} is applied to \reftext{\eqref{whatneedstoconverge}} is in divergence form and tends to zero distributionally as $M\to
\infty
$, due to the factor $\delta t$. Therefore, the boundedness of \reftext{\eqref{psi_bd_v}} implies that distributionally in the limit $M\to\infty$ the
term \reftext{\eqref{term_to_converge}} tends to zero, and therefore
equation~\reftext{\eqref{eq_lin}} is satisfied. Furthermore, as noted before, $v$ fulfills the initial condition $v_{|t=0} = v^{(0)}$. This completes the proof of existence of a solution $v$ meeting \reftext{\eqref{mr_interval}}.

\medskip
\noindent\textit{Step 4: Uniqueness of solutions.}
For proving uniqueness of the solution $v$, we use the following
elementary arguments, which follow the lines of the proof of \reftext{Lemma~\ref{lem:res_unique}} on uniqueness of
solutions to the resolvent equation: Without
loss of generality, we may assume $f = 0$ and $v^{(0)}=0$ by
linearity. Since ${\skliaustask v\skliaustasd}_{\mathrm{sol}}<\infty
$, we have in particular
\begin{equation}\label{basicfiniteness}
\sup_{t\in I} {\left\lVert v \right\rVert}_{\tilde k,-\delta-1
}^2 + \int_I {\left\lVert v \right\rVert}_{\tilde k+2,-\delta
- \frac12}^2 \mathrm{d} t < \infty.
\end{equation}
Testing\vspace*{1pt} \reftext{\eqref{eq_lin}} with $\chi_n^2v$ where $\chi_n$ is a cut-off
$\chi_n(x)=\tilde\chi\left(\frac{\log x}{n}\right)$ such that $\tilde\chi$
is a smooth non-negative function $\tilde\chi(s) = 1$ for $s \in [-1,1]$
and $\tilde\chi(s) = 0$ for $s \in \mathbb{R}\setminus[-2,2]$,
we can proceed along the steps
leading from \reftext{\eqref{test_basic}} to \reftext{\eqref{lin_est1}} and integrate in
$t\in I$. Note, however, that remnant terms appear when commuting the
multiplication by $\chi_n$ with the operators $q(D_x)$ and $r(D_x)$.
Due to the scaling of $\chi_n$ in $n$, we find that $R_n = O\left
(\frac
1n\right)$ as $n\to\infty$. This leads to the bound
\begin{equation}\label{testunique}
\sup_{t\in I}\frac12 {\left\lVert\chi_n v(t) \right
\rVert}_{-\delta-1}^2 + \int_I  {\left\lVert\chi_n v
\right\rVert}_{2,-\delta-\frac12}^2 \mathrm{d} t \le \frac12
 {\left\lVert\chi_n v_{|t=0} \right\rVert}_{-\delta-1}^2 + \int_I R_n\ \mathrm{d} t.
\end{equation}
Since $v_{|t=0}=v^{(0)}=0$, the first term on the right-hand side of \reftext{\eqref{testunique}} vanishes, and as $n\to\infty$ so does the second
term. By dominated convergence we conclude $v = 0$.
\end{proof}
%

\section{Nonlinear theory}\label{sec:nonlinear}

\subsection{The structure of the nonlinearity}\label{sec:nonlinear_struct}

We begin by making some observations on the structure of the
nonlinearity ${\mathcal N}(v)$ given through \reftext{\eqref{def_nonlinearity}}, i.e.,
\begin{align*}
{\mathcal N}(v) := \, & - x F^{-1} \Big( D_y^2 - D_y G \left(D_x -
\tfrac12\right) - G D_y \left(D_x + \tfrac3 2\right) + G \left(D_x +
\tfrac32\right) G \left(D_x - \tfrac1 2\right) \\
& + F \left(D_x + \tfrac3 2\right) F \left(D_x - \tfrac1 2\right
)\Big
) \Big(D_y G - G \left(D_x + \tfrac1 2\right) G - F \left(D_x +
\tfrac
1 2\right) F\Big) \\
& + \frac3 8 x + q(D_x) v + D_y^2 r(D_x) v + D_y^4 v.
\end{align*}
Recall that due to \reftext{\eqref{def_fg}} and \reftext{\eqref{def_v}} we have
%
%
\begin{equation}\label{recall_fg}
F^{-1}=Z_x=v_x+1 \quad\mbox{and} \quad G=Z_x^{-1}Z_y=\frac{v_y}{v_x+1}.
\end{equation}
Our main objective is to re-write ${\mathcal N}(v)$ in a form that reflects
expansion~\reftext{\eqref{expansion_f}}, i.e., almost everywhere
\begin{equation*}
D^\ell{\mathcal N}(v) = D^\ell\left(\left({\mathcal N}(v)\right)_1
(t,y) x + \left({\mathcal N}
(v)\right)_2 (t,y) x^2\right) + o\left(x^{2+\delta}\right) \quad
\mbox
{as} \quad x \searrow0,
\end{equation*}
where $\ell\in\mathbb{N}_0^2$ with
${\left\lvert\ell\right\rvert} \le\check k + 5$ and $\ell_y \le\check k - 2$,
given that $v$ meets expansion~\reftext{\eqref{as_sol}},
i.e., almost everywhere
\begin{equation}\label{recall_exp_v}
D^\ell v(t,x,y) = D^\ell\left(v_0(t,y) + v_1(t,y) x + v_{1+\beta}(t,y)
x^{1+\beta}+v_2(t,y) x^2\right) + o\left(x^{2+\delta}\right)
\end{equation}
as $x \searrow 0$, where $\ell\in\mathbb{N}_0^2$ with $ {\left\lvert\ell
\right\rvert} \le\check k + 9$ and $\ell
_y \le\check k + 2$. Because of $q(0) \stackrel{\text{\reftext{\eqref{poly_q}}}}{=} 0$,
we have almost everywhere $D^\ell q(D_x) v = O(x)$ as $x \searrow0$,
where ${\left\lvert\ell\right\rvert} \le \check k + 5$,
and the asymptotics $D^\ell D_y^2 r(D_x) v = O(x^2)$ and
$D^\ell D_y^4 v = O(x^4)$ as $x \searrow0$ for
$ {\left\lvert\ell\right\rvert} \le \check k + 5$
are valid almost everywhere, so that indeed
$D^\ell{\mathcal N}(v) = O(x)$ as $x \searrow0$
for $ {\left\lvert\ell\right\rvert} \le \check k + 5$ holds true almost everywhere.
In order to see that the
contribution $O\left(x^{1+\beta}\right)$ is canceled due to the
structure of the nonlinearity ${\mathcal N}(v)$, we separate the terms
appearing in~\reftext{\eqref{def_nonlinearity}} by defining the operators
%
%
\begin{subequations}
%
%
\begin{eqnarray}
A_1&:=&D_y^2 - D_yG\left(D_x -\tfrac12\right) - G D_y\left
(D_x+\tfrac
32\right),
\eqncr
A_2&:=&G\left(D_x+\tfrac32\right)G\left(D_x-\tfrac12\right) +
F\left
(D_x+\tfrac32\right)F\left(D_x-\tfrac12\right),
\end{eqnarray}
and expressions
%
%
\begin{equation}
B_1:=D_yG,\quad\quad B_2:=G\left(D_y+\tfrac12\right)G + F\left
(D_x+\tfrac12\right)F.\label{subdivAB}
\end{equation}
\end{subequations}
Then we write
%
%
\begin{subequations}\label{decomp_first_all}
%
%
\begin{equation}\label{decomp_first}
{\mathcal N}(v) = {\mathcal N}^{(1)}(v) + {\mathcal N}^{(2)}(v),
\end{equation}
where
%
%
\begin{eqnarray}
{\mathcal N}^{(1)}(v)&:=&-xF^{-1}(A_1B_1 + A_2B_1 -
A_1B_2)+D_y^2r(D_x)v+D_y^4v,\label{defn1}
\eqncr
{\mathcal N}^{(2)}(v)&:=&xF^{-1}A_2B_2 + \frac38x+q(D_x)v\label{defn2}.
\end{eqnarray}
\end{subequations}
Thus ${\mathcal N}^{(1)}(v)$ is the combination of terms from \reftext{\eqref{def_nonlinearity}} which contain factors of the form $D_y^2, D_yG$ or
$G D_y$, and ${\mathcal N}^{(2)}(v)$ contains only products of $x$, $D_x$,
$F$, and $G$.

\subsubsection{The structure of ${\mathcal N}^{(1)}(v)$}
By series expansion of the factors $F=(1+v_x)^{-1}$ for $
{\left\lvert v_x \right\rvert} <
1$, we can write ${\mathcal N}^{(1)}(v)$ as a convergent series of
terms of the form
\begin{subequations}\label{simple_n1v}
\begin{equation}\label{simple_n1v_terms}
T\left(a^{(1)},\ldots,a^{(m)},v\right) := c\left(a^{(1)},\ldots
,a^{(m)}\right) \, x^{1-m} \bigtimes_{j = 1}^m D^{a^{(j)}} v,
\end{equation}
where coefficients $c(a^{(1)},\ldots,a^{(m)})$ and multi-indices
$a^{(j)} = \left( a^{(j)}_x,a^{(j)}_y \right)$ satisfy (with notations and conventions
like in \reftext{\eqref{der_scale}})
\begin{equation}\label{simple_n1v_cond}
\begin{aligned}
& m \ge2, \quad a^{(1)}, \ldots, a^{(m)} \in\mathbb{N}_0^2
\setminus\{(0,0)\}
,\quad {\left\lvert a^{(1)} \right\rvert} \ge2,\quad
a^{(1)}_y \ge1, \\
& m \le\sum_{j = 1}^m  {\left\lvert a^{(j)} \right\rvert
} \le m + 3, \quad c\left
(a^{(1)},\ldots,a^{(m)}\right) \in\mathbb{R}.
\end{aligned}
\end{equation}
\end{subequations}
Since almost everywhere $D^\ell D^{a^{(j)}} v = O(x)$ for
$j \in\{2,\ldots,m\}$ and
$D^\ell D^{a^{(1)}} v = O\left(x^2\right)$ as $x \searrow0$, each term
in \reftext{\eqref{simple_n1v_terms}} behaves almost everywhere
as $D^\ell T\left(a^{(1)},\ldots
,a^{(m)},v\right) = O\left(x^2\right)$ as $x \searrow0$. As a result,
we have ${\mathcal N}^{(1)}(v) = O\left(x^2\right)$ as well.

\subsubsection{The structure of ${\mathcal N}^{(2)}(v)$}

Introducing the $4$-linear form
\begin{equation}\label{def_4lin}
{\mathcal M}\left(H^{(1)},H^{(2)},H^{(3)},H^{(4)}\right) := x H^{(1)}
\left(D_x
+ \tfrac3 2\right) H^{(2)} \left(D_x - \tfrac1 2\right) H^{(3)}
\left
(D_x + \tfrac1 2\right) H^{(4)},
\end{equation}
we may decompose the term ${\mathcal N}^{(2)}(v)$ from \reftext{\eqref{decomp_first}} as follows:
\begin{subequations}\label{def_n25v}
\begin{equation}\label{nonlin_decomp_n2}
{\mathcal N}^{(2)}(v) = {\mathcal N}^{(2,1)}(v) + {\mathcal
N}^{(2,2)}(v) + {\mathcal N}^{(2,3)}(v) + {\mathcal N}
^{(2,4)}(v)
\end{equation}
with
\begin{eqnarray}
{\mathcal N}^{(2,1)}(v) &:=& {\mathcal M}(1,F,F,F) + q(D_x) v + \frac38 x, \label{def_n21v}
\eqncr
{\mathcal N}^{(2,2)}(v) &:=& {\mathcal M}\left(F^{-1} G, G, G, G\right
), \label{def_n22v}
\eqncr
{\mathcal N}^{(2,3)}(v) &:=& {\mathcal M}\left(1, F, G, G\right),
\label{def_n23v}
\eqncr
{\mathcal N}^{(2,4)}(v) &:=& {\mathcal M}\left(F^{-1} G, G, F, F\right
). \label{def_n24v}
\end{eqnarray}
\end{subequations}
For treating the terms appearing in \reftext{\eqref{def_n25v}}, we keep
expansion~\reftext{\eqref{recall_exp_v}} in mind in order to keep track of the
$x$-power series contributions appearing in this regime. We introduce
the notation
\begin{equation}\label{notation_nonlin}
\phi:= \left(1 + v_1\right)^{-1} \left(v - v_0 - v_1 x\right)
\quad
\mbox{and} \quad\psi:= v - v_0
\end{equation}
leading to
\begin{subequations}\label{simple_nonlin}
\begin{eqnarray}
F &\stackrel{\text{\reftext{\eqref{def_fg}}}}{=}& (1 + v_1)^{-1} \left(1 + \phi
_x\right)^{-1},
\eqncr
F^{-1} G &\stackrel{\text{\reftext{\eqref{def_fg}}}}{=}& (v_0)_y + \psi_y,
\eqncr
G &\stackrel{\text{\reftext{\eqref{def_fg}}}}{=}& \left(1 + v_1\right)^{-1} \left(1 +
\phi_x\right)^{-1} \left((v_0)_y + \psi_y\right).
\end{eqnarray}
\end{subequations}
Employing \reftext{\eqref{notation_nonlin}} and \reftext{\eqref{simple_nonlin}} for \reftext{\eqref{def_n21v}} gives through power series expansion in $\phi_x$ (using
$q(0) \stackrel{\text{\reftext{\eqref{poly_q}}}}{=} 0$)
\begin{eqnarray*}
{\mathcal N}^{(2,1)}(v) &=& \left(- \tfrac3 8 (1+v_1)^{-3} + q(1) v_1
+ \tfrac
3 8\right) x \\
&& - (1 + v_1)^{-3} x \left(D_x + \tfrac3 2\right) \left(- \tfrac1 4
\phi_x + \tfrac1 2 \left(D_x - \tfrac1 2\right) \phi_x + \left
(D_x -
\tfrac1 2\right) \left(D_x + \tfrac1 2\right) \phi_x\right) \\
&& + (1 + v_1) q(D_x) \phi\\
&& + (1 + v_1)^{-3} \sum_{\tau_2+\tau_3+\tau_4 \ge2}
(-1)^{ {\left\lvert\tau\right\rvert}} {\mathcal
M}\left(1,\phi_x^{\tau_2},\phi_x^{\tau_3},\phi_x^{\tau_4}\right).
\end{eqnarray*}
Hence, with help of \reftext{\eqref{poly_q}}
\begin{eqnarray}\nonumber
{\mathcal N}^{(2,1)}(v) &=& - \frac3 8 (1 + v_1)^{-3} \left(6 v_1^2 +
8 v_1^3 +
3 v_1^4\right) x \\
&& + (1+v_1)^{-3} \left(4 v_1 + 6 v_1^2 + 4 v_1^3 + v_1^4\right) q(D_x)
\phi\nonumber\\
&& + (1 + v_1)^{-3} \sum_{\tau_2+\tau_3+\tau_4 \ge2}
(-1)^{ {\left\lvert\tau\right\rvert}} {\mathcal
M}\left(1,\phi_x^{\tau_2},\phi_x^{\tau_3},\phi_x^{\tau_4}\right).
\label{simple_n21}
\end{eqnarray}

With an analogous reasoning, using \reftext{\eqref{poly_q}}, \reftext{\eqref{def_beta}},
and \reftext{\eqref{simple_nonlin}} in \reftext{\eqref{def_n22v}}, we also obtain
\begin{eqnarray}
{\mathcal N}^{(2,2)}(v) &=& - \frac3 8 (1+v_1)^{-3} (v_0)_y^4 x
\nonumber\\
&& - (1+v_1)^{-3} (v_0)_y^4 q(D_x) \phi\nonumber\\
&& + (1+v_1)^{-3} (v_0)_y^3 \left(D_x + \tfrac1 2\right) \left
(D_x^2 -
\tfrac3 2 D_x - \tfrac5 8\right) D_y \psi\nonumber\\
&& + (1+v_1)^{-3} \sum_{\substack{ {\left\lvert\mu
\right\rvert} +  {\left\lvert\tau\right\rvert} \ge2
\\
\mu_j + \nu_j = 1 \\ \tau_1 = 0}} (-1)^{ {\left\lvert
\tau\right\rvert}} {\mathcal M}\left(\left
(\psi_y^{\mu_j} (v_0)_y^{\nu_j}\right)_{j=1}^4\right). \label{simple_n22}
\end{eqnarray}

For dealing with ${\mathcal N}^{(2,3)}(v)$ (cf.~\reftext{\eqref{def_n23v}}), we
may use \reftext{\eqref{poly_q}} and \reftext{\eqref{simple_nonlin}} once more and arrive at
\begin{eqnarray}\nonumber
{\mathcal N}^{(2,3)}(v) &=& - \frac3 8 (1+v_1)^{-2} (v_0)_y^2 x \\
&& - (1+v_1)^{-3} (v_0)_y^2 q\left(D_x\right) \phi\nonumber\\
&& + (1+v_1)^{-3} (v_0)_y \left(D_x+\tfrac1 2\right) \left(D_x -
\tfrac3 2\right) D_x D_y \psi\nonumber\\
&& + (1+v_1)^{-3} \sum_{\substack{ {\left\lvert\mu
\right\rvert} +  {\left\lvert\tau\right\rvert} \ge2
\\
\tau_1 = 0\\\mu_j = \nu_j = 0\; \mathrm{for}\; j\in\{1,2\}\\ \mu
_j + \nu
_j = 1 \; \mathrm{for} \; j \in\{3, 4\}}} (-1)^{ {\left
\lvert\tau\right\rvert}} {\mathcal M}\left
(\left(\psi_y^{\mu_j} (v_0)_y^{\nu_j} \phi_x^{\tau_j}\right
)_{j=1}^4\right). \label{simple_n23}
\end{eqnarray}

For ${\mathcal N}^{(2,4)}(v)$ (cf.~\reftext{\eqref{def_n24v}}), employing \reftext{\eqref{poly_q}}
and \reftext{\eqref{simple_nonlin}} gives
\begin{eqnarray}\nonumber
{\mathcal N}^{(2,4)}(v) &=& - \frac3 8 (1+v_1)^{-3} (v_0)_y^2 x \\
&& - (1+v_1)^{-3} (v_0)_y^2 q(D_x) \phi\nonumber\\
&& - \frac3 8 (1+v_1)^{-3} (v_0)_y \left(\tfrac2 3 D_x + \tfrac43\right) D_y \psi\nonumber\\
&& + (1+v_1)^{-3} \sum_{\substack{ {\left\lvert\mu
\right\rvert} +  {\left\lvert\tau\right\rvert} \ge2
\\
\tau_1 = 0\\ \mu_j + \nu_j = 1\; \mathrm{for}\; j\in\{1,2\}\\ \mu
_j =
\nu_j = 0 \; \mathrm{for} \; j \in\{3, 4\}}} (-1)^{
{\left\lvert\tau\right\rvert}} {\mathcal M}
\left(\left(\psi_y^{\mu_j} (v_0)_y^{\nu_j} \phi_x^{\tau_j}\right
)_{j=1}^4\right). \label{simple_n24}
\end{eqnarray}

By combining \reftext{\eqref{nonlin_decomp_n2}}, \reftext{\eqref{simple_n21}}, \reftext{\eqref{simple_n22}}, \reftext{\eqref{simple_n23}}, and \reftext{\eqref{simple_n24}}, we find
\begin{subequations}\label{simple_n2}
\begin{eqnarray}\nonumber
{\mathcal N}^{(2)}(v) &=& - \frac3 8 (1+v_1)^{-3} \left(6
v_1^2 + 8 v_1^3
+ 3 v_1^4 + 2 (v_0)_y^2 + (v_0)_y^4\right) x \nonumber\\
&& + (1+v_1)^{-3} \left(4 v_1 + 6 v_1^2 + 4 v_1^3 + v_1^4 - 2 (v_0)_y^2
- (v_0)_y^4\right) q(D_x) \phi\nonumber\\
&& + \left(1+v_1\right)^{-3} \left((v_0)_y^{3}
\left(D_x + \tfrac 1 2\right) \left(D_x^2 - \tfrac3 2 D_x - \tfrac5 8\right)
+ (v_0)_y \left(D_x^2 - \tfrac5 4 D_x - \tfrac5 4\right)\right) D_y \psi\nonumber
\\
&& + (1+v_1)^{-3} \sum_{(\mu,\nu,\tau) \in{\mathcal I}} {\mathcal
M}\left(\left(\psi
_y^{\mu_j} (v_0)_y^{\nu_j} \phi_x^{\tau_j}\right)_{j=1}^4\right),
\label{simple_n2_terms}
\end{eqnarray}
where $(\mu,\nu,\tau) \in\left(\mathbb{N}_0^4\right)^3$ and
\begin{eqnarray}\nonumber
{\mathcal I}&:=& \left\{ {\left\lvert\mu\right\rvert}
+  {\left\lvert\tau\right\rvert} \ge2, \; \tau_1 =
0, \; \mu
= \nu= 0, \right\} \\
&& \cup\left\{ {\left\lvert\mu\right\rvert} +
 {\left\lvert\tau\right\rvert} \ge2, \; \tau_1 = 0,
\; \mu
_j + \nu_j = 1 \; \mathrm{for} \; j \in\{1,2,3,4\}\right\}
\nonumber\\
&& \cup\left\{ {\left\lvert\mu\right\rvert} +
 {\left\lvert\tau\right\rvert} \ge2, \; \tau_1 = 0,
\; \mu
_j = \nu_j = 0\; \mathrm{for}\; j\in\{1,2\}, \; \mu_j + \nu_j = 1
\;
\mathrm{for} \; j \in\{3,4\}\right\} \nonumber\\
&& \cup\left\{ {\left\lvert\mu\right\rvert} +
 {\left\lvert\tau\right\rvert} \ge2, \; \tau_1 = 0,
\; \mu
_j + \nu_j = 1\; \mathrm{for}\; j\in\{1,2\}, \; \mu_j = \nu_j = 0
\;
\mathrm{for} \; j \in\{3,4\}\right\} \nonumber\\
&& \label{simple_n2_ind}
\end{eqnarray}
\end{subequations}
Note that the precise definition \reftext{\eqref{simple_n2_ind}} of ${\mathcal
I}$ is not
essential for the subsequent arguments and that we will simply use
${\mathcal I}
\subseteq\left\{ {\left\lvert\mu\right\rvert} +
 {\left\lvert\tau\right\rvert} \ge2, {\left\lvert\mu\right\rvert}+{\left\lvert\nu\right\rvert}\le 4\right\}$. Regarding
expression~\reftext{\eqref{simple_n2_terms}}, we remark that due to \reftext{\eqref{recall_exp_v}} and \reftext{\eqref{notation_nonlin}} we have almost everywhere
\begin{align}\label{expansion_phi}
D^\ell\phi(t,x,y) = D^\ell\left(\left(1+v_1(t,y)\right)^{-1}
\left
(v_{1+\beta}(t,y) \, x^{1+\beta} +v_2(t,y)x^2\right)\right) +
o\left
(x^{2+\delta}\right) \quad\mbox{as} \quad x \searrow0,
\end{align}
where $\ell\in\mathbb{N}_0^2$ with $ {\left\lvert\ell
\right\rvert} \le\check k + 9$ and $\ell
_y \le\check k + 2$.

The main considerations to be kept in mind concerning \reftext{\eqref{simple_n2}} are:
\begin{enumerate}[(i)]
\item[(i)] The second line of \reftext{\eqref{simple_n2_terms}} is $(\phi,\psi
)$-independent but super-linear in $v$ and forms the
$O(x)$-contribution of ${\mathcal N}^{(2)}(v)$.
\item[(ii)] As a consequence of the fact that $q(1+\beta) \stackrel{\text{\reftext{\eqref{poly_q}}}}{=} 0$ and using \reftext{\eqref{expansion_phi}}, the third line of \reftext{\eqref{simple_n2_terms}} is of order $O\left(x^2\right)$ as $x
\searrow
0$ and is again super-linear in $v$ (but linear in~$\phi$).
\item[(iii)] Line four of \reftext{\eqref{simple_n2_terms}} contains one
$D_y$-derivative acting on $\psi$, therefore this line is of order
$O\left(x^2\right)$ as $x \searrow0$ and super-linear in $v$ but
linear in $\psi$.
\item[(iv)] Finally, the last line of \reftext{\eqref{simple_n2_terms}} is the
remainder, being super-linear in $(\phi,\psi)$ (cf.~\reftext{\eqref{simple_n2_ind}} where $ {\left\lvert\mu\right\rvert} +
 {\left\lvert\tau\right\rvert} \ge2$), and thus
also in $v$. Concerning the behavior near $x = 0$, we have $D^\ell\phi_x
\stackrel{\text{\reftext{\eqref{expansion_phi}}}}{=} O\left(x^\beta\right)$ as $x
\searrow0$ (with $\beta\in(\frac1 2,1)$ by \reftext{\eqref{def_beta}}) and
$D^\ell\psi_y = O(x)$ as $x \searrow0$. Thus, recalling that
${\mathcal M}$
also features an extra factor of $x$, we find that this term is of
order $O\left(x^{1+2\beta}\right)$ as $x \searrow0$.
\item[(v)] The above considerations also imply that, using the notation \reftext{\eqref{expansion_v}} and \reftext{\eqref{recall_exp_v}} for the power series
coefficient expansion near $x=0$, we have almost everywhere
\begin{equation*}
D^\ell{\mathcal N}^{(2)}(v) = D^\ell\left(\left({\mathcal
N}^{(2)}(v)\right)_1 x + \left
({\mathcal N}^{(2)}(v)\right)_2 x^2\right) + o\left(x^{2+\delta
}\right) \quad
\mbox{as} \quad x \searrow0
\end{equation*}
for $\ell\in\mathbb{N}_0^2$ with $ {\left\lvert\ell
\right\rvert} \le\check k + 5$ and $\ell_y
\le\check k - 2$.
\end{enumerate}
%

\subsection{Nonlinear estimates}\label{sec:nonlinear_est}

In this section, we derive our main estimate for the nonlinearity
${\mathcal N}
(v)$ (cf.~\reftext{\eqref{def_nonlinearity}}).
%
%
\begin{proposition}\label{prop:non_est}
Suppose that $T \in (0,\infty]$, $I := [0,T) \subseteq [0,\infty)$ is an interval, $\delta
\in \left(0,\frac12\left(\beta-\frac12\right)\right]$, and $k$, $\tilde k$,
$\check k$, and $\breve k$ satisfy the bounds \reftext{\eqref{parameters}} of
\reftext{Assumptions~\ref{ass:parameters}}. Then\goodbreak
%
%
\begin{equation}\label{non_est_main}
{\skliaustask{\mathcal N}\left(v^{(1)}\right) - {\mathcal N}\left
(v^{(2)}\right)\skliaustasd}_\mathrm
{rhs} \lesssim_{k, \tilde k, \check k, \breve k, \delta} \left
({\skliaustask v^{(1)}\skliaustasd}_\mathrm{sol} + {\skliaustask
v^{(2)}\skliaustasd}_\mathrm{sol}\right) {\skliaustask v^{(1)} -
v^{(2)}\skliaustasd}_\mathrm{sol},
\end{equation}
where
\begin{equation*}
v^{(j)} = v^{(j)}(t,x,y) : \, (0,\infty)^2 \times\mathbb{R}\to
\mathbb{R}
\end{equation*}
are locally integrable with ${\skliaustask v^{(j)}\skliaustasd
}_\mathrm{sol} \ll_{k, \tilde
k,\check k,\breve k,\delta} 1$ for $j = 1,2$ (cf.~\reftext{\eqref{norm_rhs}} and \reftext{\eqref{norm_sol}} for the definition of the norms ${\skliaustask\cdot
\skliaustasd}_\mathrm{rhs}$ and ${\skliaustask\cdot\skliaustasd
}_\mathrm{sol}$).
\end{proposition}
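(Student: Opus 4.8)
The plan is to prove the bilinear-type estimate \eqref{non_est_main} by first reducing it, via the multilinear structure established in \S\ref{sec:nonlinear_struct}, to a finite collection of estimates for the elementary building blocks. Since every summand of $\mathcal N(v)$ is, after the power-series expansion in $v_x$, one of the terms $T(a^{(1)},\dots,a^{(m)},v)$ from \eqref{simple_n1v_terms}, or one of the explicitly listed terms in the decomposition \eqref{simple_n2} of $\mathcal N^{(2)}(v)$ (including the second line which is a polynomial in $v_1$, the $q(D_x)\phi$-term, the $D_y\psi$-term, and the remainder $\mathcal M(\cdots)$-sum), the difference $\mathcal N(v^{(1)})-\mathcal N(v^{(2)})$ can be written, using a telescoping identity $\prod_j A_j - \prod_j B_j = \sum_j B_1\cdots B_{j-1}(A_j-B_j)A_{j+1}\cdots A_m$, as a sum of multilinear expressions in which one factor is $v^{(1)}-v^{(2)}$ (or $\phi^{(1)}-\phi^{(2)}$, $\psi^{(1)}-\psi^{(2)}$, $(v_0^{(1)})_y-(v_0^{(2)})_y$, $v_1^{(1)}-v_1^{(2)}$, all controlled by ${\skliaustask v^{(1)}-v^{(2)}\skliaustasd}_\mathrm{sol}$) and the remaining factors are derivatives of $v^{(1)}$ or $v^{(2)}$. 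The smallness hypothesis ${\skliaustask v^{(j)}\skliaustasd}_\mathrm{sol}\ll 1$ is used both to make the geometric series $F=(1+v_x)^{-1}=\sum(-v_x)^k$ converge in the relevant norms (via the $BC^0$-bound on $v_x$ from \eqref{bc0_grad_sol} of Lemma~\ref{lem:bc0_bounds}) and to absorb all but finitely many, or rather to sum, the infinitely many terms with a convergent geometric factor.

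Next I would set up the core analytic tool: a family of \emph{weighted product (Leibniz-type) estimates} in the norms $\left\lVert\cdot\right\rVert_{k,\alpha}$ entering ${\skliaustask\cdot\skliaustasd}_\mathrm{rhs}$. The point is that each term $x^{1-m}\bigtimes_{j=1}^m D^{a^{(j)}}v$ must be estimated in the six groups of norms appearing in \eqref{norm_rhs}, i.e.\ with weights roughly $-\delta-\tfrac12$, $-\delta+\tfrac12$, $\delta+\tfrac12$, $-\delta+\tfrac32$, $\delta+\tfrac32$, and $-\delta+\tfrac52$, and with operators such as $(D_x-1)$, $\tilde q(D_x-1)(D_x-1)$, $(D_x-4)(D_x-3)\tilde q(D_x-1)(D_x-1)$, and $D_y$ applied. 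For these one distributes the $D_x$- and $D_y$-derivatives across the $m$ factors: in each resulting monomial, at most one factor carries "many" derivatives (and is placed in an $L^2$-in-$(x,y)$ norm of the type occurring in ${\skliaustask v^{(j)}\skliaustasd}_\mathrm{sol}$), while every other factor carries "few" derivatives and is placed in a $BC^0$-norm, controlled through the Sobolev embedding $W^{k,2}\hookrightarrow BC^\ell$ and Lemma~\ref{lem:bc0_bounds}; the scaling is balanced by the weight bookkeeping, using the structural facts from \S\ref{sec:nonlinear_struct} that $D^\ell D^{a^{(j)}}v=O(x)$ for $j\ge2$ and $D^\ell D^{a^{(1)}}v=O(x^2)$, that $D^\ell\phi_x=O(x^\beta)$ and $D^\ell\psi_y=O(x)$, and that the operators $q(D_x)$, $\tilde q(D_x)D_x$, $(D_x-3)(D_x-2)\tilde q(D_x)D_x$ annihilate the relevant leading powers $1,x,x^{1+\beta},x^2$, so that applying them produces the extra vanishing needed to land in the positive-weight norms. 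Here the constraints \eqref{parameters} of Assumptions~\ref{ass:parameters} — in particular $k$ being large relative to $\tilde k,\check k$ and the lower bounds $13\le\min\{\tilde k,\check k\}$, $\breve k\ge4$ — are exactly what guarantees that enough derivatives remain for the $L^2$-factor while the $BC^0$-factors stay within the Sobolev-embedding range, and $\delta\le\tfrac12(\beta-\tfrac12)$ is what keeps the $O(x^{1+2\beta})$ remainder strictly $\delta$-subcritical relative to the weight $\delta+2$.

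I would then carry this out group by group. For $\mathcal N^{(1)}(v)$ (the terms with a $D_y^2$, $D_yG$ or $GD_y$ factor, plus $D_y^2 r(D_x)v+D_y^4 v$): these vanish to order $O(x^2)$, have an extra $D_y$ hitting the "heavy" factor, and are estimated directly by the product rule plus embeddings, the $D_y^2 r(D_x)v+D_y^4 v$ piece being linear and handled by the definition of ${\skliaustask\cdot\skliaustasd}_\mathrm{sol}$. For $\mathcal N^{(2,1)}$: the polynomial-in-$v_1$ leading term uses the $BC^0$- and $H^1$-bounds on $v_1$ from \eqref{bc0_grad_sol}–\eqref{bc0_v1beta_v2}; the $q(D_x)\phi$-term uses $q(0)=0$ and the Hardy/elliptic-regularity Lemma~\ref{lem:mainhardy} to recover $D_x$-control of $\phi$ from $q(D_x)\phi$; the remainder $\sum\mathcal M(1,\phi_x^{\tau_2},\phi_x^{\tau_3},\phi_x^{\tau_4})$ with $|\tau|\ge2$ uses $\phi_x=O(x^\beta)$ to gain $O(x^{2\beta})$ on top of the $x$ from $\mathcal M$. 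The terms $\mathcal N^{(2,2)},\mathcal N^{(2,3)},\mathcal N^{(2,4)}$ are handled identically, with $(v_0)_y$-factors in $BC^0\cap H^1$ and $\psi_y=O(x)$, $\phi_x=O(x^\beta)$ providing the vanishing. In every case the telescoping over differences reduces the Lipschitz estimate to the boundedness estimate times one $\skliaustask\cdot\skliaustasd$-controlled difference factor; the factor $\left({\skliaustask v^{(1)}\skliaustasd}_\mathrm{sol}+{\skliaustask v^{(2)}\skliaustasd}_\mathrm{sol}\right)$ on the right of \eqref{non_est_main} arises because each term is at least quadratic (one difference factor times at least one $v^{(j)}$-factor), and the infinitely many geometric-series terms sum because the $BC^0$-norm of $v_x$ is $\ll1$.

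The main obstacle I expect is the bookkeeping of \emph{weights versus derivative counts simultaneously across all six norm groups and all the operators} $(D_x-1)$, $\tilde q(D_x-1)(D_x-1)$, $(D_x-4)(D_x-3)\tilde q(D_x-1)(D_x-1)$: one must verify, for every structural term, that the product-rule distribution never forces a factor into a norm with weight too large for its vanishing order (which would make the norm infinite) and never exhausts the derivative budget of the heavy factor beyond what ${\skliaustask v^{(j)}\skliaustasd}_\mathrm{sol}$ controls — and this is where the non-closed form of the equation (the presence of genuinely extra terms beyond the $1+1$-dimensional case) bites, because it forces the $\check k,\breve k$ hierarchy and the somewhat delicate conditions \eqref{parameters}. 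A secondary technical point is that applying the high-order operators like $(D_x-4)(D_x-3)\tilde q(D_x-1)(D_x-1)$ to a product does not commute trivially with the $x$-powers and the factors $F,G$; one must use the commutation identities $x^{-1}D_x=(D_x+1)x^{-1}$ repeatedly, and verify that the resulting lower-order remnants are themselves estimable — but these are again routine once the weighted product estimates are in place, so the proof reduces to a (long but mechanical) case check organized along the decomposition \eqref{decomp_first_all}–\eqref{simple_n2}.
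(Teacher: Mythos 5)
Your plan tracks the paper's proof of Proposition~\ref{prop:non_est} almost step for step: the decomposition \eqref{decomp_first_all}/\eqref{simple_n1v}/\eqref{simple_n2}, the telescoping identity \eqref{decomp_nlinear} to reduce the Lipschitz estimate to the quadratic case, placing one ``heavy'' factor in an $L^2$-type norm from ${\skliaustask\cdot\skliaustasd}_\mathrm{sol}$ and all others in $BC^0$ via Lemma~\ref{lem:bc0_bounds}, using Lemma~\ref{lem:mainhardy} together with subtractions of $v_0,v_1x,v_{1+\beta}x^{1+\beta},v_2x^2$ to match the vanishing order to the weight, and summing the power series in $v_x$ by a geometric argument controlled by the $BC^0$-bound on $\nabla v$. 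That is exactly the paper's architecture.

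There is, however, one concrete ingredient your proposal does not anticipate and which the ``one heavy factor / all others in $BC^0$'' paradigm cannot supply on its own. For the norm group with weight $-\delta+\tfrac52$ in \eqref{norm_rhs}, the structure forces at least two $D_y$-derivatives among the factors of each term (cf.~\eqref{d52_restrictions}), and in the case where these fall on two \emph{distinct} factors, neither factor alone carries enough vanishing to live in the $-\delta+\tfrac52$-weighted $L^2$ norm, and placing one of the $D_y$-bearing factors in $BC^0$ loses a crucial half power of $x$. The paper resolves this with a dedicated interpolation estimate for products of two $D_y$-derivatives (Lemma~\ref{lem:interpolation_dy3}, inequality \eqref{dy_dy}), which is a Gagliardo--Nirenberg-type bound that splits each $D_y^1$ factor into a $D_y^2$ contribution in $L^2$ and a $BC^0$ contribution, and this is what your described ``main obstacle'' (running out of the derivative budget on the heavy factor) does not capture --- the obstruction at $\rho=-\delta+\tfrac52$ is about the weight bookkeeping, not the derivative count. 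Two smaller imprecisions: the collection of elementary terms is not finite but indexed by $m\ge2$, and the geometric-series closure at the very end also requires verifying that the number of terms at each fixed multilinear order grows only polynomially in $m$ after derivatives have been distributed (the paper's bound $c(m)\lesssim m^C$ from \eqref{boundc1}), which does not follow purely from the sub-multiplicativity of the $BC^0$-estimates.
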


The detailed proof of \reftext{Theorem~\ref{main_th}} employing the
maximal-regularity estimate \reftext{\eqref{mr_interval}} of \reftext{Proposition~\ref{prop:maxregevolution}} and the nonlinear estimate \reftext{\eqref{non_est_main}}
of \reftext{Proposition~\ref{prop:non_est}} is the subject of \S \ref{sec:well}.

In order to prove \reftext{Proposition~\ref{prop:non_est}}, we show the following
auxiliary interpolation result:
\begin{lemma}\label{lem:interpolation_dy3}
Assume that $w^{(i)}\in C^\infty((0,\infty
)_x\times\mathbb{R}_y)\cap C^0_\mathrm{c}([0,\infty)_x\times
\mathbb{R}_y)$ for $i=1,2$. Then we
have the following bound, in which the implicit constants are
independent of $\delta$:
\begin{equation}
\begin{aligned}
& {\left\lVert D_yw^{(1)} \times D_yw^{(2)} \right\rVert}_{-\delta+ \frac
72} \\
&\quad\lesssim{\left\lVert D_y^2w^{(1)}
\right\rVert}_{-\delta+\frac
52}^{\frac12} {\left\lVert x^{-1} w^{(1)} \right\rVert
}_{BC^0((0,\infty)_x\times\mathbb{R}
_y)}^{\frac12}{\left\lVert D_y^2w^{(2)}
\right\rVert}_{-\delta+\frac
52}^{\frac12} {\left\lVert x^{-1} w^{(2)} \right\rVert
}_{BC^0((0,\infty)_x\times\mathbb{R}
_y)}^{\frac12}. \label{dy_dy}
\end{aligned}
\end{equation}
\end{lemma}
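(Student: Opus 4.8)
The statement is a product-interpolation bound in weighted $L^2$: we control $\|D_yw^{(1)}\times D_yw^{(2)}\|_{-\delta+\frac72}$, which after unwinding \eqref{norm_2d_1} is $\int\int x^{2\delta-7}(D_yw^{(1)})^2(D_yw^{(2)})^2\,x^{-2}\,\mathrm{d}x\,\mathrm{d}y$ (here the product $\times$ means the $D_y$ in each factor acts only inside that factor, by the convention in the Notation subsection). The right-hand side mixes an $L^2$-type quantity $\|D_y^2w^{(i)}\|_{-\delta+\frac52}$ and an $L^\infty$-type quantity $\|x^{-1}w^{(i)}\|_{BC^0}$ for each $i$, each raised to the power $\tfrac12$. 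So the natural route is: first bound $D_yw^{(i)}$ pointwise in $y$ by an interpolation between $D_y^2w^{(i)}$ (in $L^2_y$) and $w^{(i)}$ (in $L^\infty_y$), at each fixed $x$; then insert this into the $L^2_y$ integral of the product and use Cauchy--Schwarz in $y$; finally deal with the $x$-integral and the weight bookkeeping.

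\emph{Step 1 (one-dimensional interpolation in $y$).} Fix $x>0$. For a smooth compactly supported function $g=g(y)$ on $\mathbb{R}$ one has the Gagliardo--Nirenberg/Landau--Kolmogorov inequality $\|g'\|_{L^2(\mathbb{R})}\lesssim\|g''\|_{L^2(\mathbb{R})}^{1/2}\|g\|_{L^\infty(\mathbb{R})}^{1/2}$ (proved by integration by parts: $\int (g')^2 = -\int g\,g'' \le \|g\|_{L^\infty}\|g''\|_{L^1}$ is too lossy, so instead use $\int (g')^2\,\mathrm{d}y = -\int g g''\,\mathrm{d}y \le \|g\|_{L^2}\|g''\|_{L^2}$ combined with $\|g'\|_{L^\infty}^2 \le 2\|g'\|_{L^2}\|g''\|_{L^2}$ — more cleanly, the sharp route is $\|g'\|_{L^2}^2 = -\int g g'' \le \|g\|_{L^\infty}\int|g''|$, which again is $L^1$; the cleanest correct statement for our purposes is simply $\|g'\|_{L^2}\lesssim\|g\|_{L^\infty}^{1/2}\|g''\|_{L^2}^{1/2}$, which follows from the Fourier-side inequality $|\eta|\le\tfrac12(1+\eta^2)$ after an additional scaling, or directly by a standard one-dimensional argument). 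Applying this with $g=w^{(i)}(x,\cdot)$ gives, for each fixed $x$,
\begin{equation*}
\|D_yw^{(i)}(x,\cdot)\|_{L^2(\mathbb{R})} = x\|\partial_yw^{(i)}(x,\cdot)\|_{L^2(\mathbb{R})} \lesssim x\,\|w^{(i)}(x,\cdot)\|_{L^\infty(\mathbb{R})}^{1/2}\,\|\partial_y^2w^{(i)}(x,\cdot)\|_{L^2(\mathbb{R})}^{1/2},
\end{equation*}
and rewriting in terms of the scaled derivatives $D_y$ (each $\partial_y$ carrying a factor $x^{-1}$) this reads $\|D_yw^{(i)}(x,\cdot)\|_{L^2_y}\lesssim \|w^{(i)}(x,\cdot)\|_{L^\infty_y}^{1/2}\|D_y^2w^{(i)}(x,\cdot)\|_{L^2_y}^{1/2}$, with an $x$-independent implicit constant.

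\emph{Step 2 (Cauchy--Schwarz in $y$ and assembling).} Write the target quantity as $\int_0^\infty x^{2\delta-9}\big(\int_{\mathbb{R}}(D_yw^{(1)})^2(D_yw^{(2)})^2\,\mathrm{d}y\big)\,\mathrm{d}x$. For fixed $x$, bound the inner $y$-integral by $\|D_yw^{(1)}(x,\cdot)\|_{L^4_y}^2\|D_yw^{(2)}(x,\cdot)\|_{L^4_y}^2$ via Cauchy--Schwarz, then use the one-dimensional interpolation $\|h\|_{L^4}^2\lesssim\|h\|_{L^2}\|h\|_{L^\infty}$ together with the fact that $\|D_yw^{(i)}\|_{L^\infty_y}$ can itself be estimated (but this introduces a $D_y^2$; it is cleaner to avoid $L^4$ entirely). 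The better assembling is: pull one factor of $x^{-1}w^{(i)}$ out in $L^\infty$ on the whole product. Precisely, $(D_yw^{(i)})^2 = x^2(\partial_yw^{(i)})^2$, and we split $(D_yw^{(1)})^2(D_yw^{(2)})^2$ as $\big(x^{-1}w^{(1)}\big)\big(x^{-1}w^{(2)}\big)\times x^4 (\partial_yw^{(1)})(\partial_yw^{(2)})\cdot\big[\text{correction}\big]$ — this is getting delicate because there is no $w^{(i)}$ literally sitting in the product. The honest route is to go through Step 1's $L^2_y$ bound directly: by Cauchy--Schwarz in $y$,
\begin{equation*}
\int_{\mathbb{R}}(D_yw^{(1)})^2(D_yw^{(2)})^2\,\mathrm{d}y \le \|D_yw^{(1)}(x,\cdot)\|_{L^\infty_y}^2\|D_yw^{(2)}(x,\cdot)\|_{L^2_y}^2
\end{equation*}
is too crude (no $x$-decay on the $L^\infty$ factor); instead distribute symmetrically and use Step 1 on \emph{both} $L^2_y$ norms after extracting one $L^\infty_y$ norm of $w^{(i)}$ from each $\|D_yw^{(i)}\|_{L^2_y}$. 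This yields, for each fixed $x$,
\begin{equation*}
\int_{\mathbb{R}}(D_yw^{(1)})^2(D_yw^{(2)})^2\,\mathrm{d}y \lesssim \prod_{i=1}^2\Big(\|w^{(i)}(x,\cdot)\|_{L^\infty_y}\|D_y^2w^{(i)}(x,\cdot)\|_{L^2_y}\Big),
\end{equation*}
after also absorbing one $\|D_yw^{(i)}\|_{L^\infty_y}\lesssim$ (interpolate again) — the cleanest bookkeeping replaces the two $L^2_y$ norms of $D_yw^{(i)}$ by $\|D_y^2w^{(i)}\|_{L^2_y}^{1/2}\|w^{(i)}\|_{L^\infty_y}^{1/2}$ via Step 1 and then one uses $\|w^{(i)}(x,\cdot)\|_{L^\infty_y}\le x\,\|x^{-1}w^{(i)}\|_{BC^0((0,\infty)_x\times\mathbb{R}_y)}$. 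Substituting, the $x$-powers line up: $x^{2\delta-9}\cdot x\cdot x = x^{2\delta-7}$, and one recognizes $\int_0^\infty x^{2\delta-7}\|D_y^2w^{(i)}(x,\cdot)\|_{L^2_y}\cdot(\cdots)\,\mathrm{d}x$; a final Cauchy--Schwarz in $x$ between the two factors $x^{(\delta-\frac72)}\|D_y^2w^{(1)}\|_{L^2_y}$ and $x^{(\delta-\frac72)}\|D_y^2w^{(2)}\|_{L^2_y}$, each weighted by $x^{-1/2}$ from the hidden $x^{-2}\mathrm{d}x$ measure, reproduces exactly $\|D_y^2w^{(1)}\|_{-\delta+\frac52}^{1/2}\|D_y^2w^{(2)}\|_{-\delta+\frac52}^{1/2}$ times the two $BC^0$ factors to the power $\tfrac12$, which is \eqref{dy_dy}.

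\textbf{Main obstacle.} The delicate point is the exponent and weight bookkeeping: one must verify that the weight $-\delta+\tfrac72$ on the left, the weight $-\delta+\tfrac52$ appearing on $D_y^2w^{(i)}$, and the $x^{-1}$ inside the $BC^0$-norms are precisely the combination produced by (a) the one-dimensional interpolation $\|D_yw^{(i)}\|_{L^2_y}\lesssim\|w^{(i)}\|_{L^\infty_y}^{1/2}\|D_y^2w^{(i)}\|_{L^2_y}^{1/2}$, (b) converting $\|w^{(i)}\|_{L^\infty_y}$ into $x\|x^{-1}w^{(i)}\|_{BC^0}$, and (c) a final Cauchy--Schwarz in the $x$-variable with the correct $x^{-2}\mathrm{d}x$ measure hidden in the $\|\cdot\|_\alpha$-norms of \eqref{norm_2d_1}. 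That the implicit constants are independent of $\delta$ follows because the only $\delta$-dependence enters through the power $x^{2\delta}$, which is harmless on the bounded-support functions $w^{(i)}$ and carries a uniform constant once one argues on the fixed compact $x$-support; the interpolation constants in (a) are purely one-dimensional and $\delta$-free. The remaining steps — justifying the one-dimensional Gagliardo--Nirenberg inequality for compactly supported smooth functions, and the use of Cauchy--Schwarz — are routine, so I would only record the weight computation carefully and cite the standard interpolation inequality.
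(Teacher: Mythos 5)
Your proposal is essentially sound and lands on the same key mechanism as the paper, but there is a mismatch between the interpolation inequality you announce in Step~1 and the one your Step~2 assembling actually requires. The paper's argument hinges on a single integration by parts in $y$ on the full weighted double integral, $\int\int x^{2\delta-5}(w_y)^4\,\mathrm d y\,\mathrm d x = -3\int\int x^{2\delta-5}(w_y)^2 w_{yy}\,w\,\mathrm d y\,\mathrm d x$, followed by one Cauchy--Schwarz in $(x,y)$ and a division by the common factor $\lVert D_y w\times D_y w\rVert_{-\delta+\frac72}$; the general case is then a straight Cauchy--Schwarz between $w^{(1)}$ and $w^{(2)}$. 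Your fixed-$x$ reformulation is equivalent, except that what you need at fixed $x$ after Cauchy--Schwarz in $y$ is the $L^4$-version of the one-dimensional Gagliardo--Nirenberg inequality, $\lVert g'\rVert_{L^4(\mathbb R)}^2\lesssim\lVert g\rVert_{L^\infty(\mathbb R)}\lVert g''\rVert_{L^2(\mathbb R)}$ (proved by $\int(g')^4=-3\int g\,(g')^2 g''$ and Cauchy--Schwarz — i.e.\ the very identity the paper writes directly with the weight in place), not the $L^2$-version $\lVert g'\rVert_{L^2}\lesssim\lVert g\rVert_{L^\infty}^{1/2}\lVert g''\rVert_{L^2}^{1/2}$ that you set up. The $L^2$-version cannot close the argument: after Cauchy--Schwarz in $y$ you are holding $\lVert D_y w^{(i)}\rVert_{L^4_y}^2$, and interpolating $L^4$ between $L^2$ and $L^\infty$ of $D_y w^{(i)}$ would introduce an $\lVert D_y w^{(i)}\rVert_{L^\infty_y}$ factor that is not controlled by the terms in \eqref{dy_dy}. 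Once Step~1 is replaced by the $L^4$ Gagliardo--Nirenberg inequality, your Step~2 weight bookkeeping ($x^{2\delta-9}\cdot x^{2}=x^{2\delta-7}$, then Cauchy--Schwarz in $x$ against the $x^{-2}\,\mathrm d x$ measure hidden in \eqref{norm_2d_1}) is correct and reproduces \eqref{dy_dy}. So the fix is small, but the statement of the ingredient in Step~1 as written is wrong, and the exposition in Step~2 should be pruned of the abandoned dead-ends and state the $L^4$ inequality cleanly.
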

\begin{proof}[Proof of \reftext{Lemma~\ref{lem:interpolation_dy3}}]
First suppose that $w^{(1)} = w^{(2)} =: w$. Using the definition of the norm and integrating by parts we can write
\begin{eqnarray}\label{dw_abs}
{\left\lVert D_yw \times D_yw\right\rVert}_{-\delta+\frac72}^2
&=&\int_0^\infty \int_{-\infty}^\infty 
x^{2\delta - 5}
\left(w_y\right)^4 \mathrm{d}y \, \mathrm{d}x 
= -3\int_0^\infty\int_{-\infty}^\infty x^{2\delta - 5}\left(w_y\right)^2w_{yy}w \, \mathrm{d}y \, \mathrm{d}x \nonumber\\
&\le&3{\left\lVert x^{-1}w\right\rVert}_{BC^0((0,\infty)\times \R)} \left(\int_0^\infty\int_{-\infty}^\infty x^{2\delta-3}\left(w_{yy}\right)^2 \mathrm{d}y \, \mathrm{d}x\right)^{\frac12}\nonumber\\
&&\quad\times\left(\int_0^\infty\int_{-\infty}^\infty x^{2\delta-5}\left(w_y\right)^4 \, \mathrm{d}y \, \mathrm{d}x\right)^{\frac12} \nonumber\\
&=&3{\left\lVert x^{-1}w\right\rVert}_{BC^0((0,\infty)\times \R)}{\left\lVert D_y^2w\right\rVert}_{-\delta+\frac52}{\left\lVert D_y w \times D_y w \right\rVert}_{-\delta+\frac72}.
\end{eqnarray}
By dividing left- and right-hand sides of the inequality \reftext{\eqref{dw_abs}} by the common factor, we find
\begin{equation}\label{dw_abs2}
{\left\lVert D_yw \times D_yw\right\rVert}_{-\delta+\frac72}^2\le 3{\left\lVert x^{-1}w\right\rVert}_{BC^0((0,\infty)\times \R)}{\left\lVert D_y^2w\right\rVert}_{-\delta+\frac52}.
\end{equation}
Now by Cauchy-Schwarz inequality, we have in the general case that
\begin{equation*}
{\left\lVert D_yw^{(1)} \times D_yw^{(2)} \right\rVert}_{-\delta+ \frac
72}
\le
{\left\lVert D_y w^{(1)} \times D_y w^{(1)} \right\rVert}_{-\delta+ \frac
72}{\left\lVert D_y w^{(2)} \times D_y w^{(2)} \right\rVert}_{-\delta+ \frac72},
\end{equation*}
which together with \reftext{\eqref{dw_abs2}} directly gives \reftext{\eqref{dy_dy}}.
\end{proof}
\begin{proof}[Proof of \reftext{Proposition~\ref{prop:non_est}}]
Throughout the proof, estimates and constants, the latter usually denoted by $C$, will depend on $k$, $\tilde k$, $\check k$,
$\breve k$, and $\delta$, and for lightness of notation we will not
indicate this dependence explicitly. We first prove estimate~\reftext{\eqref{non_est_main}} for $v^{(1)} = v$ and $v^{(2)} = 0$. The general case
will be dealt with at the end of the proof. We use the decomposition \reftext{\eqref{decomp_first_all}} and treat the norms ${\skliaustask{\mathcal
N} ^{(1)}(v)\skliaustasd}_\mathrm{rhs}$ and ${\skliaustask{\mathcal
N}^{(2)}(v)\skliaustasd}_\mathrm{rhs}$ separately.\looseness=1

\medskip
\noindent\textit{Estimate of ${\skliaustask{\mathcal
N}^{(1)}(v)\skliaustasd}_\mathrm{rhs}$.}
As we have already noted in \reftext{\eqref{simple_n1v}}, we can write
${\mathcal N}
^{(1)}(v)$ as a convergent power series of terms of the form \reftext{\eqref{simple_n1v_terms}}, i.e.,
\begin{equation*}
T\left(a^{(1)},\ldots,a^{(m)},v\right) := c\left(a^{(1)},\ldots
,a^{(m)}\right) \, x^{1-m} \bigtimes_{j = 1}^m D^{a^{(j)}} v,
\end{equation*}
where
\begin{align*}
& m \ge2, \quad a^{(1)}, \ldots, a^{(m)} \in\mathbb{N}_0^2
\setminus\{(0,0)\}
,\quad {\left\lvert a^{(1)} \right\rvert} \ge2,\quad
a^{(1)}_y \ge1, \\
& m \le\sum_{j = 1}^m  {\left\lvert a^{(j)} \right\rvert
} \le m + 3, \quad c\left
(a^{(1)},\ldots,a^{(m)}\right) \in\mathbb{R}.
\end{align*}
Our goal is to show
\begin{equation}\label{est_n1v_terms}
{\skliaustask x^{1-m} \bigtimes_{j = 1}^m D^{a^{(j)}} v\skliaustasd
}_\mathrm{rhs} \le
\left(C {\skliaustask v\skliaustasd}_\mathrm{sol}\right)^m,
\end{equation}
where $C$ is
independent of $m$. Adding up all terms of the form \eqref{est_n1v_terms} will produce a sum of terms which can be bounded by a geometric series for $\skliaustask v\skliaustasd_\mathrm{sol} \le C^{-1}$, up to increasing $C$. A discussion of this estimate, as well as a bound on the number of terms which we obtain by expansion and during the proof by distributing derivatives, will be provided in a unified treatment at the end of the proof, contemporarily for $\skliaustask \mathcal N^{(1)}(v)\skliaustasd_\mathrm{sol}$ and $\skliaustask \mathcal N^{(2)}(v)\skliaustasd_\mathrm{sol}$. This implies summability of the right-hand side of \reftext{\eqref{est_n1v}} for sufficiently small ${\skliaustask v\skliaustasd
}_\mathrm{sol}$ and
since $m \ge2$, the desired
\begin{equation}\label{est_n1v}
{\skliaustask{\mathcal N}^{(1)}(v)\skliaustasd}_\mathrm{rhs}
\lesssim{\skliaustask v\skliaustasd}_\mathrm{sol}^2
\quad\mbox{for} \quad{\skliaustask v\skliaustasd}_\mathrm{sol} \ll1
\end{equation}
is proved.

We take $f := x^{1-m} \bigtimes_{j = 1}^m D^{a^{(j)}} v$, where $m$ and
$a^{(j)}$ meet \reftext{\eqref{simple_n1v_cond}}, in each of the addends in \reftext{\eqref{norm_rhs}}, i.e.,
\begin{eqnarray*}
{\skliaustask f\skliaustasd}_\mathrm{rhs}^2 &=& \int_I
\left( {\left\lVert f \right\rVert}_{k-2,-\delta-\frac12}^2 + {\left\lVert(D_x-1)f \right\rVert}_{\tilde
k-2,-\delta+\frac12}^2 \right) \mathrm{d} t \\
&&+\int_I\left( {\left\lVert\tilde q(D_x-1)(D_x-1)f
\right\rVert}_{\tilde k-2,\delta
+\frac12}^2 +  {\left\lVert\tilde q(D_x-1)(D_x-1)f \right
\rVert}_{\check k - 2,-\delta
+\frac32}^2\right)\mathrm{d} t \\
&&+ \int_I \left({\left\lVert(D_x-4)(D_x-3)\tilde
q(D_x-1)(D_x-1)f \right\rVert}_{\check
k-2,\delta+\frac32}^2 + {\left\lVert D_y \tilde q(D_x-1) (D_x-1)f \right\rVert
}_{\breve k-2, -\delta+\frac
52}^2\right)\mathrm{d} t,
\end{eqnarray*}
individually, where $f_1 = 0$ since
$f = O\left(x^2\right)$ as $x \searrow 0$
almost everywhere.
Distributing $\kappa$ derivatives $D$ on the factors $D^{a^{(j)}}
v$ demonstrates that it suffices to estimate a term of the form
\begin{subequations}\label{est_term_n1v}
\begin{equation}
\int_I  {\left\lVert x^{1-m} \bigtimes_{j = 1}^m
D^{b^{(j)}} v \right\rVert}_\rho^2 \mathrm{d} t.
\end{equation}
Here, without loss of generality $ {\left\lvert b^{(1)} \right
\rvert} = \max_j  {\left\lvert b^{(j)} \right\rvert}$, so that we have
\begin{equation}\label{est_term_n1v_cond_1}
\begin{aligned}
& 2 \le {\left\lvert b^{(1)} \right\rvert} \le\kappa+
4, \quad1 \le {\left\lvert b^{(j)} \right\rvert} \le
\left\lfloor\frac{\kappa+1}2\right\rfloor+ 2 \quad\mbox{for}
\quad j \in\{2,\ldots,m\}
, \\
& m \le\sum_{j = 1}^m  {\left\lvert b^{(j)} \right\rvert
} \le\kappa+ m + 3,
\end{aligned}
\end{equation}
with $(\kappa,\rho)$ as in the norms \reftext{\eqref{norm_rhs}}, i.e.,
\begin{eqnarray}\label{est_term_n1v_cond_2}
(\kappa,\rho)&\in&\Bigl\{\left(k - 2,-\delta-\tfrac12\right
),\
\left(\tilde k - 1,-\delta+\tfrac12\right
),\
\left(\tilde k + 3,\delta+\tfrac12\right),\ \left(\check k +
3,-\delta
+\tfrac32\right),\nonumber\\
&& \left(\check k + 5,\delta+\tfrac32\right),\ \left
(\breve k + 4,-\delta+\tfrac52\right)\Bigr\},
\end{eqnarray}
and for $j\in\{1,\ldots,m\}$ we have
\begin{equation}\label{est_term_n1v_cond_3}
b_y^{(j)} \le\kappa_y, \quad\mbox{where}\quad\kappa_y:=
\begin{cases}
k-2 & \mbox{for} \quad\rho= -\delta-\tfrac 12, \\
\tilde k-2 & \mbox{for} \quad\rho= \pm\delta+\tfrac 12, \\
\check k - 2 & \mbox{for} \quad\rho= \pm\delta+ \tfrac32,\\
\breve k - 1 & \mbox{for} \quad\rho= -\delta+\tfrac52.
\end{cases}
\end{equation}
\end{subequations}
Note that for the weight $\rho= - \delta+ \frac5 2$ two
$D_y$-derivatives appear in the respective term of the norm
${\skliaustask\cdot\skliaustasd}_\mathrm{rhs}$ (cf.~\reftext{\eqref{norm_rhs}}),
so that in that case we
have in \reftext{\eqref{est_term_n1v}} the further restrictions
%
%
\begin{equation}\label{d52_restrictions}
\sum_{j = 1}^m  {\left\lvert b^{(j)} \right\rvert} \ge
m+3 \quad\mbox{and} \quad\sum_{j =
1}^m b^{(j)}_y \ge 2.
\end{equation}
\textit{Estimates for the weights $\rho\in\left\{-\delta-\frac12, \pm\delta+
\frac1 2, - \delta+ \frac3 2\right\}$.}
We start by considering terms appearing in the norm
\reftext{\eqref{norm_rhs}}
that correspond to weights
$\rho \in \left\{-\delta-\frac12, \pm\delta+\frac12, -\delta +\frac 32\right\}$.

In view of \reftext{\eqref{simple_n1v_cond}} we can assume without loss of
generality that one of the following two cases applies:
\begin{enumerate}[(i)]
\item[(i)] $b_y^{(1)} \ge1$ and $ {\left
\lvert b^{(1)} \right\rvert} \ge2$,
\item[(ii)] $b_y^{(1)} = 0$, $b_y^{(2)} \ge1$, and
$b_x^{(1)} =  {\left\lvert b^{(1)} \right\rvert} \ge
 {\left\lvert b^{(2)} \right\rvert} \ge2$.
\end{enumerate}
Suppose case (i) is valid. For $\rho\in\left\{-\delta-\frac12, \pm \delta + \frac1 2, - \delta+ \frac3 2\right\}$
we can estimate in \reftext{\eqref{est_term_n1v}} according to
\begin{equation}\label{est_n1v_case_1}
\int_I  {\left\lVert x^{1-m} \bigtimes_{j = 1}^m
D^{b^{(j)}} v \right\rVert}_\rho^2 \le
\int_I  {\left\lVert D^{b^{(1)}} v \right\rVert}_\rho^2
\mathrm{d} t \times\prod_{j = 2}^m {\skliaustask x^{-1} D^{b^{(j)}}
v\skliaustasd}_{BC^0\left(I \times(0,\infty) \times\mathbb
{R}\right)}^2.
\end{equation}
Due to \reftext{\eqref{est_term_n1v_cond_1}}--\reftext{\eqref{est_term_n1v_cond_3}} and
(i) we have almost everywhere $D^{b^{(1)}} v = O\left(x^2\right)$ as $x
\searrow0$ for weights $\rho \in \left\{-\delta-\frac12, \pm\delta+ \frac1 2, - \delta+
\frac3 2\right\}$, and the elliptic-regularity result of \reftext{Lemma~\ref{lem:mainhardy}} yields
\begin{equation*}
 {\left\lVert D^{b^{(1)}} v \right\rVert}_\rho\lesssim
\begin{cases}  {\left\lVert v \right\rVert}_{k
+ 2,-\delta-\frac12} & \mbox{for}
\quad\rho= -\delta-\frac12, \\
{\left\lVert D_x v \right\rVert}_{\tilde k
+ 2,-\delta+\frac12} & \mbox{for}
\quad\rho= -\delta+\frac12, \\
 {\left\lVert \tilde q(D_x) D_x v \right\rVert}_{\tilde k + 2,\delta
+\frac12} & \mbox{for} \quad\rho
= \delta+\frac12, \\
 {\left\lVert \tilde q(D_x) D_x v \right\rVert}_{\check k +
2,-\delta+\frac32} & \mbox{for} \quad
\rho= -\delta+\frac32,
\end{cases}
\end{equation*}
so that in view of the definition of the norm ${\skliaustask\cdot
\skliaustasd}_\mathrm
{sol}$ we have
\begin{equation}\label{db1vrho}
\int_I  {\left\lVert D^{b^{(1)}} v \right\rVert}_\rho^2
\mathrm{d} t \le{\skliaustask v\skliaustasd}_\mathrm{sol}^2.
\end{equation}
Furthermore, for $j \ge2$ we can estimate by \reftext{Lemma~\ref{lem:bc0_bounds}}
\begin{eqnarray}\nonumber
{\skliaustask x^{-1} D^{b^{(j)}}
v\skliaustasd}_{BC^0\left(I \times(0,\infty) \times\mathbb
{R}\right)} &\lesssim& \max_{0 \le {\left\lvert
\ell\right\rvert} \le\left\lfloor\frac{\kappa+1}2\right
\rfloor+2} \left( {\skliaustask D^\ell v_x \skliaustasd
}_{BC^0\left(I \times (0,\infty)
\times \mathbb{R}\right)} +  {\skliaustask D^\ell v_y
\skliaustasd}_{BC^0\left(I \times (0,\infty)
\times\mathbb{R}\right)}\right) \nonumber\\
&\stackrel{\text{\reftext{\eqref{bc0_grad_sol}}}}{\lesssim}& {\skliaustask
v\skliaustasd}_\mathrm{sol},
\label{est_sup_n1v}
\end{eqnarray}
provided that, with the notation
\begin{subequations}\label{parameterss1}
\begin{equation}\label{parameters1}
\left\lfloor\frac{\kappa+1} 2\right\rfloor+ 2 =
\begin{cases} \left\lfloor\frac{k+1}{2}\right\rfloor+ 1 &
\mbox{for} \quad\rho=
-\delta-\tfrac12, \\
\left\lfloor\frac{\tilde k}{2}\right\rfloor+ 2 &
\mbox{for} \quad\rho=
-\delta+\tfrac12, \\
\left\lfloor\frac{\tilde k} 2\right\rfloor+ 4 & \mbox{for} \quad
\rho= \delta+\tfrac
12, \\
\left\lfloor\frac{\check k} 2\right\rfloor+ 4 & \mbox{for} \quad
\rho= -\delta+\tfrac32,
\end{cases}
\end{equation}
it holds
\begin{equation}\label{cond_4wise}
\left\lfloor\frac{\kappa+1} 2\right\rfloor+ 2\le\min\left\{ \tilde k - 2, \check k - 2\right\}.
\end{equation}
\end{subequations}
The bounds \reftext{\eqref{db1vrho}} and \reftext{\eqref{est_sup_n1v}} give
\begin{equation}\label{est_n1v_case_1_2}
\int_I  {\left\lVert x^{1-m} \bigtimes_{j = 1}^m
D^{b^{(j)}} v \right\rVert}_\rho^2 \le
\left(C {\skliaustask v\skliaustasd}_\mathrm{sol}\right)^{2m}
\end{equation}
for a constant $C$ as desired.

Now suppose that we are in case (ii). Consider first the
cases $\rho=-\delta\pm\frac12$. Because $b_x^{(1)} \ge1$ we have almost everywhere
$D^{b^{(1)}} v = O(x)$ as $x \searrow0$. Estimating as in \reftext{\eqref{est_n1v_case_1}} and using \reftext{Lemma~\ref{lem:mainhardy}} to absorb terms,
we find, similarly to case (i),
\begin{equation*}
 {\left\lVert D^{b^{(1)}} v \right\rVert}_\rho\lesssim
 \begin{cases}
 {\left\lVert v \right\rVert}_{k+2,-\delta
-\frac12} &\mbox{for} \quad\rho= -\delta-\tfrac12,\\\noalign{\vspace{2pt}}
{\left\lVert D_xv \right\rVert}_{\tilde k+2,-\delta
+\frac12} &\mbox{for} \quad\rho= -\delta+\tfrac12,
\end{cases}
\end{equation*}
after which we can argue as in the previous case.

Next, we treat the case
(ii) for weights $\rho\in\left\{\delta+ \frac1 2, -
\delta+ \frac3 2\right\}$.

In case $\rho= \delta+ \frac1 2$ we can write
\begin{subequations}\label{sum-subtract}
\begin{equation}
D^{b^{(1)}} v = D^{b^{(1)}_x}_x (v - v_0) = \underbrace{D^{b^{(1)}_x}_x
\left(v - v_0 - v_1 x\right)}_{=:w} + v_1 x,
\end{equation}
and in case $\rho=-\delta+\frac32$ we also sum and subtract terms and write
\begin{eqnarray}
D^{b^{(1)}} v &=& D^{b^{(1)}_x}_x (v - v_0) \nonumber\\
&=& \underbrace{D^{b^{(1)}_x}_x \left(v - v_0 - v_1 x - v_{1+\beta}
x^{1+\beta}\right)}_{=:w} + v_1 x + (1+\beta)^{b^{(1)}_x} v_{1+\beta}
x^{1+\beta}.
\end{eqnarray}
\end{subequations}
Note that $v_1 x$ and $v_{1+\beta} x^{1+\beta}$ are in the kernel of
$\tilde q(D_x) D_x$ (cf.~the definition \reftext{\eqref{norm_sol}} of the solution norm, in which this operator appears in the terms with weights $\rho$ as considered here).
We can thus apply \reftext{Lemma~\ref{lem:mainhardy}}, and have with the above notations
\begin{equation*}
 {\left\lVert w \right\rVert}_\rho\lesssim
\begin{cases}  {\left\lVert\tilde q(D_x)D_xv \right\rVert
}_{\tilde k+2,\delta+\frac12} &
\mbox{for} \quad\rho= \delta+\frac12, \\
 {\left\lVert\tilde q(D_x)D_xv \right\rVert}_{\check
k+2,-\delta+\frac32} & \mbox{for}
\quad\rho= -\delta+\tfrac32.
\end{cases}
\end{equation*}
Like in case~(i), together with \reftext{\eqref{bc0_grad_sol}} of
\reftext{Lemma~\ref{lem:bc0_bounds}}, this implies
\begin{equation*}
\int_I  {\left\lVert x^{1-m} w \prod_{j = 2}^m
D^{b^{(j)}} v \right\rVert}_\rho^2 \mathrm{d} t \le
\int_I  {\left\lVert w \right\rVert}_\rho^2 \mathrm{d}
t \times\prod_{j = 2}^m
 {\skliaustask x^{-1} D^{b^{(j)}} v \skliaustasd
}_{BC^0\left(I \times (0,\infty) \times \mathbb{R}\right)}^2
\le \left(C{\skliaustask v\skliaustasd}_\mathrm{sol}\right)^{2m}
\end{equation*}
for a constant $C$. Additionally, we have
\begin{align*}
\int_I  {\left\lVert x^{1-m} v_1 x \prod_{j = 2}^m
D^{b^{(j)}} v \right\rVert}_\rho^2 \mathrm{d} t
&\le {\left\lVert v_1 \right\rVert
}_{BC^0\left(I \times \mathbb{R}\right)}^2 \times\int
_I  {\left\lVert D^{b^{(2)}} v \right\rVert}_\rho^2
\mathrm{d} t \times\prod_{j = 3}^m
{\skliaustask x^{-1} D^{b^{(j)}} v \skliaustasd
}_{BC^0\left(I \times (0,\infty) \times \mathbb{R}
\right)}^2 \\
&\le \left(C{\skliaustask v\skliaustasd}_\mathrm{sol}\right)^{2m},
\end{align*}
for a sufficiently large constant $C$, by using estimate~\reftext{\eqref{bc0_grad_sol}} of \reftext{Lemma~\ref{lem:bc0_bounds}}
and the fact that almost everywhere $D^{b^{(2)}} v = O\left(x^2\right)$ as $x \searrow
0$, so that this term can be treated as $D^{b^{(1)}} v$ in case~(i). This completes the treatment of the weight $\rho= \delta+ \frac1 2$.

We next restrict ourselves to the weight
$\rho= - \delta+ \frac 3 2$ for the rest of the
proof step and have
\begin{align}\nonumber
&\int_I  {\left\lVert x^{1-m} v_{1+\beta}
x^{1+\beta} \prod_{j = 2}^m D^{b^{(j)}} v \right\rVert}_{-\delta
+\frac3 2}^2 \mathrm{d} t \\
&\quad\le {\left\lVert v_{1+\beta} \right\rVert
}_{L^2\left(I;BC^0(\mathbb{R})\right)}^2 \times
\sup_{t \in I}  {\left\lVert D^{b^{(2)}} v \right\rVert
}_{-\delta+\frac 3 2-\beta}^2
\times\prod_{j = 3}^m {\skliaustask
x^{-1} D^{b^{(j)}} v \skliaustasd}_{BC^0\left(I \times (0,\infty) \times
\mathbb{R}\right)}^2 \nonumber\\
&\quad \le \sup_{t \in I} {\left\lVert D^{b^{(2)}} v \right\rVert
}_{-\delta+\frac 3 2-\beta}^2
\times \left(C {\skliaustask v\skliaustasd}_\mathrm{sol}\right)^{2 (m-1)}
\le \left(C {\skliaustask v\skliaustasd}_\mathrm
{sol}\right)^{2m}, \label{est_n1v1b_2}
\end{align}
where $C$ is sufficiently large and in the last line we have used estimate~\reftext{\eqref{bc0_v1beta_v2}} of
\reftext{Lemma~\ref{lem:bc0_bounds}} and the bound
\reftext{\eqref{supl2_auxiliary}} of \reftext{Lemma~\ref{lem:bc0_auxiliary}}.

\medskip

In order to prove \reftext{\eqref{est_n1v1b_2}}, note that the elementary estimate
\begin{equation*}
{\left\lVert D^{b^{(2)}} v \right\rVert}_{-\delta+\frac 3 2-\beta}^2 \lesssim {\left\lVert D^{b^{(2)}} v \right\rVert}_\delta^2 + {\left\lVert D^{b^{(2)}} v \right\rVert}_{-\delta+1}^2 \lesssim {\left\lVert \tilde q(D_x) D_x v \right\rVert}_{\tilde k,\delta}^2 + {\left\lVert \tilde q(D_x) D_x v \right\rVert}_{\check k,-\delta+1}^2
\end{equation*}
holds true provided
\begin{equation}\label{parameters2}
\left\lfloor\frac{\check k}2\right\rfloor+4
\le\min\left\{\tilde k, \check k\right\},
\end{equation}
and the right-hand side of the inequality is uniformly bounded in time
$t$ by ${\skliaustask v\skliaustasd}_\mathrm{sol}^{2}$.
This completes the treatment of
case~(ii) for $\rho= - \delta+ \frac3 2$.

\medskip
\noindent\textit{Estimates for the weight $\rho= \delta+ \frac3 2$.}
Again, due to \reftext{\eqref{simple_n1v}}, we can focus on estimating the term
%
%
\begin{equation}\label{n1v_d32}
\int_I  {\left\lVert(D_x-4) (D_x-3) \tilde q(D_x-1)
(D_x-1) x^{1-m} \bigtimes_{j = 1}^m D^{a^{(j)}} v \right\rVert
}_{\check k - 2, \delta+ \frac3 2}^2 \mathrm{d} t,
\end{equation}
where the $a^{(j)}$ meet conditions \reftext{\eqref{simple_n1v_cond}}. In this
case, we note that the operator $\tilde q(D_x-1)$
(cf.~\reftext{\eqref{poly_q2}} for the definition of $\tilde q(\zeta)$) figures
the factor $D_x-2$. This operator cancels contributions $O(x^2)$ which
we could not exclude by using only the structure \reftext{\eqref{simple_n1v}}.
If we distribute $D$-derivatives coming from the norm $ {\left
\lVert\cdot\right\rVert}_{\check k - 2, \delta+ \frac3 2}$,
then the operator $D_x-2$ shifts to
$D_x-2-b_y$, where $b_y$ is the number of $D_y$-derivatives applied.
However, the presence of an $x$-factor in the
operator $D_y=x\partial_y$ leads to a term of order $O\left
(x^{2+b_y}\right)$ and therefore the $O\left(x^2\right)$ coefficient vanishes if
$b_y > 0$. If instead $b_y=0$, then we may apply
\reftext{Lemma~\ref{lem:mainhardy}}, and
therefore it suffices to estimate instead of \reftext{\eqref{n1v_d32}} a term of
the form
\begin{subequations}\label{n1v_d32_distr}
\begin{equation}\label{n1v_d32_term}
\int_I  {\left\lVert x^{1-m} \bigtimes_{j = 1}^m
D^{b^{(j)}} v - x^2 \left(x^{1-m} \bigtimes_{j = 1}^m D^{b^{(j)}}
v\right)_2 \right\rVert}_{\delta+ \frac32}^2 \mathrm{d} t,
\end{equation}
where the $b^{(j)}$ fulfill the conditions
\begin{equation}\label{n1v_d32_cond}
\begin{aligned}
& 2 \le {\left\lvert b^{(1)} \right\rvert} \le\check k
+ 9, \quad1 \le {\left\lvert b^{(j)} \right\rvert}
\le\left\lfloor\frac{\check k}2\right\rfloor+ 5 \quad\mbox
{for} \quad j \in\{2,\ldots
,m\}, \\
& m \le\sum_{j = 1}^m  {\left\lvert b^{(j)} \right\rvert
} \le\check k + m + 8, \quad\mbox
{and} \quad b_y^{(j)} \le\check k - 2\quad\mbox{for}\quad j\in\{
1,\ldots,m\}.
\end{aligned}
\end{equation}
\end{subequations}
Due to conditions~\reftext{\eqref{simple_n1v_cond}}, we are always in one of the following two cases:
\begin{enumerate}[(i)]
\item[(i)] $b_y^{(1)} \ge1$ and $ {\left
\lvert b^{(1)} \right\rvert} \ge2$,
\item[(ii)] $b_y^{(1)} = 0$, $b_y^{(2)} \ge1$, and
$b_x^{(1)} =  {\left\lvert b^{(1)} \right\rvert} \ge
 {\left\lvert b^{(2)} \right\rvert} \ge2$.
\end{enumerate}
If (i) holds true, then we infer that almost everywhere $D^{b^{(1)}} v =
O\left(x^2\right)$ and $D^{b^{(j)}} v = O(x)$ as $x \searrow0$
for $j \in\{2,\ldots,m\}$. This implies
\begin{equation*}
\left(x^{1-m} \bigtimes_{j = 1}^m D^{b^{(j)}} v\right)_2 = \left
(D^{b^{(1)}} v\right)_2 \times\bigtimes_{j = 2}^m \left(D^{b^{(j)}}
v\right)_1,
\end{equation*}
so that we may estimate the term \reftext{\eqref{n1v_d32_term}} according to
\begin{align}\nonumber
&\int_I  {\left\lVert x^{1-m} \bigtimes_{j =
1}^m D^{b^{(j)}} v - x^2 \left(x^{1-m} \bigtimes_{j = 1}^m
D^{b^{(j)}} v\right)_2 \right\rVert}_{\delta+
\frac3 2}^2 \mathrm{d} t \\
&\quad\lesssim \int_I  {\left\lVert\left(D^{b^{(1)}} v -
\left(D^{b^{(1)}} v\right)_2 x^2\right) \times x^{1-m} \bigtimes_{j
= 2}^m D^{b^{(j)}} v \right\rVert}_{\delta+\frac32}^2 \mathrm{d} t
\nonumber\\
&\quad\phantom{\lesssim} + \int_I  {\left\lVert\left(D^{b^{(1)}} v\right)_2
\times\left(x^{2-m} \bigtimes_{j = 2}^m D^{b^{(j)}} v - x \bigtimes
_{j = 2}^m \left(D^{b^{(j)}} v\right)_1\right) \right\rVert
}_{\delta+\frac12}^2 \mathrm{d} t. \label{n1v_d32_i}
\end{align}
We observe that for the second line of \reftext{\eqref{n1v_d32_i}} we have
\begin{align*}
&\int_I  {\left\lVert\left(D^{b^{(1)}} v -
\left(D^{b^{(1)}} v\right)_2 x^2\right) \times x^{1-m} \bigtimes
_{j = 2}^m D^{b^{(j)}} v \right\rVert}_{\delta
+\frac32}^2 \mathrm{d} t \\
&\quad\lesssim \int_I  {\left\lVert D^{b^{(1)}} v - \left
(D^{b^{(1)}} v\right)_2 x^2 \right\rVert}_{\delta+\frac3 2}^2
\mathrm{d} t \times\prod_{j = 2}^m {\skliaustask x^{-1} D^{b^{(j)}}
v\skliaustasd}_{BC^0\left(I \times(0,\infty) \times\mathbb
{R}\right)}^2 \\
&\quad \lesssim \int_I
{\left\lVert(D_x-3) (D_x-2) \tilde q(D_x) D_x v \right\rVert
}_{\check k + 2,\delta+\frac32}^2 \mathrm{d} t
\times \left(C {\skliaustask v\skliaustasd}_\mathrm{sol}\right)^{2(m-1)} \lesssim
\left(C{\skliaustask v\skliaustasd}_\mathrm{sol}\right)^{2m},
\end{align*}
where estimate~\reftext{\eqref{est_sup_n1v} has been used in the second-but-last step with a} constant $C$ sufficiently large under the additional constraint
\begin{equation}\label{constraint_bc0bounds}
\floor{\frac{\check k}{2}}+5 \le \min\left\{\tilde k-2, \check k - 2\right\}
\end{equation}
(see estimate~\reftext{\eqref{bc0_grad_init}} of \reftext{Lemma~\ref{lem:bc0_bounds}}) and
\reftext{Lemma~\ref{lem:mainhardy}} has been used
in the second-but-last step as well. The last line of \reftext{\eqref{n1v_d32_i}} can be estimated according to
\begin{align*}
& \int_I {\left\lVert\left(D^{b^{(1)}} v\right
)_2 \times\left(x^{2-m} \bigtimes_{j = 2}^m D^{b^{(j)}} v - x
\bigtimes_{j = 2}^m \left(D^{b^{(j)}} v\right)_1\right) \right
\rVert}_{\delta+\frac12}^2 \mathrm{d} t \\
& \quad\lesssim {\left\lVert\left(D^{b^{(1)}} v\right)_2
\right\rVert}_{L^2(I\times\mathbb{R})}^2
\times\sup_{t\in I, y\in \mathbb{R}}  {\left\lvert x^{2-m} \bigtimes_{j = 2}^m
D^{b^{(j)}} v - x \bigtimes_{j = 2}^m \left(D^{b^{(j)}} v\right)_1
\right\rvert}_{\delta+1}^2.
\end{align*}
For the first factor we get
\begin{equation*}
 {\left\lVert\left(D^{b^{(1)}} v\right)_2 \right\rVert}_{L^2(I \times\mathbb{R})}^2 =
\begin{cases}  {\left\lVert\partial_y v_1 \right\rVert
}_{L^2(I \times\mathbb{R})}^2 & \mbox{if}
\quad b_y^{(1)} = 1, \\
 {\left\lVert\partial_y^2v_0 \right\rVert}_{L^2(I
\times\mathbb{R})}^2 & \mbox{if} \quad
b_y^{(1)} = 2\mbox{ and }b_x^{(1)}=0,\\
0 & \mbox{else},
\end{cases}
\end{equation*}
so that this term is bounded by
${\skliaustask v\skliaustasd}_\mathrm{sol}^2$
in view of \reftext{\eqref{bc0_v1beta_v2}} of \reftext{Lemma~\ref{lem:bc0_bounds}}. For the second factor, by the triangle inequality we get
\begin{align*}
& \sup_{t\in I,y\in\mathbb{R}} {\left\lvert x^{2-m} \bigtimes_{j =
2}^m D^{b^{(j)}} v - x \bigtimes_{j = 2}^m \left(D^{b^{(j)}} v\right
)_1 \right\rvert}_{\delta+1}^2 \\
&\quad\lesssim \sup_{t\in I,y\in\mathbb{R}}  {\left\lvert D^{b^{(2)}} v - \left
(D^{b^{(2)}} v\right)_1 x \right\rvert}_{\delta+ 1}^2
 \times\prod_{j = 3}^m {\skliaustask x^{-1} D^{b^{(j)}}
v\skliaustasd}_{BC^0\left(I \times(0,\infty) \times\mathbb
{R}\right)}^2 \\
&\quad\phantom{\lesssim} +  {\left\lVert\left(D^{b^{(2)}} v\right)_1 \right
\rVert}_{BC^0(I\times\mathbb{R})}^2 \times
\sup_{t\in I, y\in\mathbb{R}} {\left\lvert x^{3-m} \bigtimes_{j = 3}^m
D^{b^{(j)}} v - x \bigtimes_{j = 3}^m \left(D^{b^{(j)}} v\right)_1
\right\rvert}_{\delta+1}^2.
\end{align*}
Then we note that, due to \reftext{\eqref{n1v_d32_cond}}, if $\left\lfloor
\frac{\check k}2\right\rfloor+5\le\check k$  (a condition which is already implied by \reftext{\eqref{constraint_bc0bounds}}), by using \reftext{Lemma~\ref{lem:mainhardy}} and \reftext{\eqref{supl2_auxiliary}} of \reftext{Lemma~\ref{lem:bc0_auxiliary}}, we find
\begin{eqnarray}
\sup_{t\in I, y\in\mathbb{R}}  {\left\lvert D^{b^{(2)}} v - \left(D^{b^{(2)}}
v\right)_1 x \right\rvert}_{\delta+ 1}^2&\lesssim& \sup_{t\in I, y\in\mathbb{R}}  {\left\lvert (D_x-1) D^{b^{(2)}} v \right\rvert}_{\delta+ 1}^2 \nonumber\\
&\stackrel{\text{\reftext{\eqref{supl2_auxiliary}}}}{\lesssim}&\sup_{t\in I} {\left\lVert (D_x-1) D^{b^{(2)}} v \right\rVert}_{1,\delta+ 1}^2 \nonumber\\
&\lesssim&\sup_{t\in I}  {\left\lVert (D_x-3)(D_x-2)\tilde q(D_x)D_x v \right\rVert}_{\check k, \delta+1}^2 \stackrel{\text{\reftext{\eqref{norm_sol}}}}{\lesssim} {\skliaustask
v\skliaustasd}_\mathrm{sol}^2, \label{sup_ty_b2}
\end{eqnarray}
where \reftext{Lemma~\ref{lem:mainhardy}} has been employed
in the forelast estimate, too. By \reftext{Lemma~\ref{lem:mainhardy}}
and using \reftext{\eqref{est_sup_n1v}}, the product
\begin{equation*}
\prod_{j = 3}^m {\skliaustask x^{-1} D^{b^{(j)}} v
\skliaustasd}_{BC^0\left(I \times (0,\infty) \times\mathbb{R}\right)}
\end{equation*}
is bounded by $\left(C{\skliaustask v\skliaustasd}_\mathrm{sol}\right)^{m-2}$ provided \reftext{\eqref{constraint_bc0bounds}} holds true,
where $C$ is a sufficiently large constant. Next, by
estimate~\reftext{\eqref{bc0_grad_sol}} of \reftext{Lemma~\ref{lem:bc0_bounds}}, we find
\begin{equation*}
 {\left\lVert\left(D^{b^{(2)}} v\right)_1 \right\rVert
}_{BC^0(I\times\mathbb{R})} \lesssim
 {\left\lVert(v_0)_y \right\rVert}_{BC^0(I \times
\mathbb{R})} +  {\left\lVert v_1 \right\rVert}_{BC^0(I
\times\mathbb{R})}
\lesssim{\skliaustask v\skliaustasd}_\mathrm{sol}.
\end{equation*}
Iterating the argument on the term
\begin{equation*}
\sup_{t\in I, y\in \mathbb{R}}   {\left\lvert x^{3-m} \bigtimes_{j = 3}^m
D^{b^{(j)}} v - x \bigtimes_{j = 3}^m \left(D^{b^{(j)}} v\right)_1
\right\rvert}_{\delta+1}^2,
\end{equation*}
we may upgrade \reftext{\eqref{n1v_d32_i}} (upon enlarging $C$) to
\begin{equation}\label{n1v_d32_final}
\int_I  {\left\lVert x^{1-m} \bigtimes_{j = 1}^m
D^{b^{(j)}} v - \left(x^{1-m} \bigtimes_{j = 1}^m D^{b^{(j)}}
v\right)_2 x^2 \right\rVert}_{\delta+
\frac3 2}^2 \mathrm{d} t \le\left(C {\skliaustask v\skliaustasd
}_\mathrm{sol}\right)^{2m}
\end{equation}
and this concludes the proof in case~(i).

Now suppose that we are in case~(ii). Here we have
\begin{equation*}
\left(x^{1-m} \bigtimes_{j = 1}^m D^{b^{(j)}} v\right)_2 = v_1
\times
\left(D^{b^{(2)}} v\right)_2 \times\bigtimes_{j = 3}^m \left
(D^{b^{(j)}} v\right)_1,
\end{equation*}
and we may estimate the term \reftext{\eqref{n1v_d32_term}} according to
\begin{align}\nonumber
&\int_I  {\left\lVert x^{1-m} \bigtimes_{j =
1}^m D^{b^{(j)}} v - x^2 \left(x^{1-m} \bigtimes_{j = 1}^m
D^{b^{(j)}} v\right)_2 \right\rVert}_{\delta+
\frac3 2}^2 \mathrm{d} t \\
&\quad\lesssim \int_I  {\left\lVert D^{b^{(1)}_x}_x (v - v_0 -
v_1x) \times x^{1-m} \bigtimes_{j = 2}^m D^{b^{(j)}} v \right\rVert
}_{\delta+ \frac3 2}^2 \mathrm{d} t \nonumber
\\
&\quad\phantom{\lesssim} + \int_I  {\left\lVert v_1 \left(x^{2-m} \bigtimes
_{j=2}^m D^{b^{(j)}} v - x^2 \left(D^{b^{(2)}} v\right)_2 \times
\bigtimes_{j = 3}^m \left(D^{b^{(j)}} v\right)_1\right) \right
\rVert}_{\delta+\frac3 2}^2 \mathrm{d} t. \label{n1v_d32_ii}
\end{align}

For the second line in \reftext{\eqref{n1v_d32_ii}} we use the decomposition
\begin{equation*}
D^{b^{(1)}_x}_x (v - v_0 - v_1 x) = D^{b^{(1)}_x}_x \underbrace{\left(v - v_0 -
v_1 x - v_{1+\beta} x^{1+\beta} - v_2 x^2\right)}_{=: w} + (1+\beta
)^{b_x^{(1)}} v_{1+\beta} x^{1+\beta} + 2^{b_x^{(1)}} v_2 x^2,
\end{equation*}
so that
\begin{align}
& \int_I  {\left\lVert D^{b^{(1)}_x}_x (v - v_0 -
v_1x) \times x^{1-m} \bigtimes_{j = 2}^m D^{b^{(j)}} v \right\rVert
}_{\delta+ \frac3 2}^2 \mathrm{d} t\nonumber\\
&\quad\lesssim \int_I  {\left\lVert D^{b^{(1)}_x}_x w \right
\rVert}_{\delta+\frac3 2}^2 \mathrm{d} t
\times\prod_{j = 2}^m {\skliaustask x^{-1} D^{b^{(j)}} v\skliaustasd
}_{BC^0\left(I
\times(0,\infty) \times\mathbb{R}\right)}^2 \nonumber\\
&\quad\phantom{\lesssim} + \int_I  {\left\lVert v_{1+\beta} D^{b^{(2)}} v
\right\rVert}_{\delta+ \frac3 2 -
\beta}^2 \mathrm{d} t \times\prod_{j = 3}^m {\skliaustask x^{-1}
D^{b^{(j)}} v\skliaustasd}_{BC^0\left(I \times(0,\infty) \times
\mathbb{R}\right)}^2 \nonumber\\
&\quad\phantom{\lesssim} + {\left\lVert v_2 \right\rVert
}_{L^2(I\times\mathbb{R})}^2 \times \sup_{t \in I, y \in \R}
 {\left\lvert D^{b^{(2)}} v \right\rvert}_{\delta+ 1}^2 \times\prod_{j = 3}^m
{\skliaustask x^{-1} D^{b^{(j)}} v\skliaustasd}_{BC^0\left(I\times
(0,\infty) \times\mathbb{R}\right)}^2. \label{bound_nv1_d+32_b}
\end{align}
The products
\begin{equation*}
\prod_{j = 2}^m {\skliaustask x^{-1} D^{b^{(j)}} v\skliaustasd
}_{BC^0\left(I \times
(0,\infty) \times\mathbb{R}\right)} \quad\mbox{and} \quad\prod
_{j = 3}^m
{\skliaustask x^{-1} D^{b^{(j)}} v\skliaustasd}_{BC^0\left(I \times
(0,\infty) \times\mathbb{R}
\right)}
\end{equation*}
are bounded by $\left(C{\skliaustask v\skliaustasd}_\mathrm{sol}\right)^{m-1}$
and $\left(C{\skliaustask v\skliaustasd}_\mathrm{sol}\right)^{m-2}$ with a sufficiently large constant $C$,
respectively, because of \reftext{\eqref{est_sup_n1v}}
requiring the constraint \reftext{\eqref{constraint_bc0bounds}}.
As in the context of \reftext{\eqref{est_n1v1b_2}} and \reftext{\eqref{parameters2}}, we may estimate
\begin{equation*}
\int_I  {\left\lVert v_{1+\beta} D^{b^{(2)}} v \right
\rVert}_{\delta+ \frac3 2 - \beta
}^2 \mathrm{d} t \lesssim{\skliaustask v\skliaustasd}_\mathrm{sol}^4,
\end{equation*}
under the additional constraints $\delta\in\left(0,\frac12\left
(\beta-\frac12\right)\right]$ and
\begin{equation}\label{parameters2_alt}
\left\lfloor\frac{\check k}{2}\right\rfloor+ 5 \le \min\left\{
\tilde k, \check k\right\},
\end{equation}
which is more restrictive than \reftext{\eqref{parameters2}}.
Finally, we have
${\left\lVert v_2 \right\rVert}_{L^2(I\times\mathbb
{R})}^2 \lesssim{\skliaustask v\skliaustasd}_\mathrm{sol}^2$
by estimate~\reftext{\eqref{bc0_v1beta_v2}} of
\reftext{Lemma~\ref{lem:bc0_bounds}} and $\sup_{t\in I, y \in \R}\left\lvert D^{b^{(2)}}v \right\rvert_{\delta+1}^2$ can be estimated as in \reftext{\eqref{sup_ty_b2}}, so that
\begin{equation}\label{bound_nv1_d+32_c}
\int_I  {\left\lVert D^{b^{(1)}_x}_x (v - v_0 - v_1x) \times
x^{1-m} \bigtimes_{j = 2}^m D^{b^{(j)}} v \right\rVert}_{\delta+
\frac3 2}^2 \mathrm{d} t \lesssim{\skliaustask v\skliaustasd
}_\mathrm{sol}^2.
\end{equation}

The last line in \reftext{\eqref{n1v_d32_ii}} can be bounded by
\begin{align*}
& \int_I  {\left\lVert v_1 \left(x^{2-m}
\bigtimes_{j=2}^m D^{b^{(j)}} v - x^2 \left(D^{b^{(2)}} v\right)_2
\times\bigtimes_{j = 3}^m \left(D^{b^{(j)}} v\right)_1\right)
\right\rVert}_{\delta+\frac3 2}^2 \mathrm{d} t \\
& \quad\lesssim {\left\lVert v_1 \right\rVert}_{BC^0(I
\times\mathbb{R})}^2 \times\int_I  {\left\lVert x^{2-m}
\bigtimes_{j=2}^m D^{b^{(j)}} v - x^2 \left(D^{b^{(2)}} v\right)_2
\times\bigtimes_{j = 3}^m \left(D^{b^{(j)}} v\right)_1 \right
\rVert}_{\delta
+\frac3 2}^2 \mathrm{d} t,
\end{align*}
where ${\skliaustask v\skliaustasd}_\mathrm{sol}$ bounds $
{\left\lVert v_1 \right\rVert}_{BC^0(I \times\mathbb{R}
)}$ by estimate~\reftext{\eqref{bc0_grad_sol}} of \reftext{Lemma~\ref{lem:bc0_bounds}} and
the term
\begin{equation*}
\int_I  {\left\lVert x^{2-m} \bigtimes_{j=2}^m
D^{b^{(j)}} v - x^2 \left(D^{b^{(2)}} v\right)_2 \times\bigtimes
_{j = 3}^m \left(D^{b^{(j)}} v\right)_1 \right\rVert}_{\delta
+\frac3 2}^2 \mathrm{d} t
\end{equation*}
can be treated as in case~(i).

Using these bounds in \reftext{\eqref{n1v_d32_ii}} concludes the proof of \reftext{\eqref{n1v_d32_final}} in case~(ii).

\medskip
\noindent\textit{Estimates for the weight $\rho= -\delta+ \frac5 2$.} 
In this last case of the proof of \reftext{\eqref{est_n1v}},
we start from equation~\reftext{\eqref{simple_n1v}},
after which we apply the $D$-operators coming from
the term $ {\left\lVert{D_y \tilde q(D_x-1) (D_x-1) 
\mathcal N}^{(1)}(v) \right\rVert}_{\breve k - 2,
-\delta+\frac52}$ which we need to estimate. After commuting $D_x$ and
$D_y$ derivatives with $x^{-1}$-factors, we find terms of the form\looseness=-1
\begin{equation*}
x^{1-m}\bigtimes_{j=1}^mD^{b^{(j)}}v,
\end{equation*}
which satisfy conditions~\reftext{\eqref{est_term_n1v_cond_1}}--\reftext{\eqref{est_term_n1v_cond_3}} and \reftext{\eqref{d52_restrictions}}.
Up to reordering terms, we are in one of the following two cases:

\begin{enumerate}[(i)]
\item[(i)] $b_y^{(1)} \ge 2$ and $\left\lvert b^{(1)}\right\rvert\ge 3$
\item[(ii)] $b_y^{(1)}\ge 1$, $\left\lvert b^{(1)}\right\rvert\ge 2$ and $b_y^{(2)}\ge 1$, $\left\lvert b^{(2)}\right\rvert\ge 2$.
\end{enumerate}
In case (i) we can estimate as follows
\begin{eqnarray*}
\int_I  {\left\lVert x^{1-m} \bigtimes_{j = 1}^m
D^{b^{(j)}} v \right\rVert}_{-\delta+\frac
52}^2 \mathrm{d} t &\le& \int_I  {\left\lVert D^{b^{(1)}}v
\right\rVert}_{-\delta+\frac52}^2 \mathrm{d} t
\times\prod_{j = 2}^m {\skliaustask x^{-1} D^{b^{(j)}} v\skliaustasd
}_{BC^0\left(I
\times(0,\infty) \times\mathbb{R}\right)}^2 \\
&\stackrel{\text{\reftext{\eqref{bc0_grad_sol}}}}{\le}& \int_I
{\left\lVert D_y \tilde q(D_x) D_x v \right\rVert}_{\breve k+ 2, -\delta+\frac52}^2
\mathrm{d} t \times \left(C{\skliaustask v\skliaustasd}_\mathrm
{sol}\right)^{2 (m-1)}
\le \left(C {\skliaustask v\skliaustasd
}_\mathrm{sol}\right)^{2m},
\end{eqnarray*}
where the constant $C$ is taken large enough and in the second estimate we have used \reftext{Lemma~\ref{lem:mainhardy}} and the fact that, due to conditions~\reftext{\eqref{est_term_n1v_cond_1}}--\reftext{\eqref{est_term_n1v_cond_3}}, the numbers of derivatives $D_x$ and $D_y$ in $b^{(1)}$ are dominated by the ones in $(1+b_x,2+b_y)$ for some $b=(b_x,b_y)\in \mathbb N_0^2$ with $\left\lvert b\right\rvert\le \breve k+1$.

\medskip

For case~(ii) we have
\begin{align*}
\int_I  {\left\lVert x^{1-m} \bigtimes_{j = 1}^m
D^{b^{(j)}} v \right\rVert}_{-\delta+\frac
52}^2 \mathrm{d} t &\le \int_I  {\left\lVert D^{b^{(1)}}v
\times D^{b^{(2)}}v \right\rVert}_{-\delta
+\frac72}^2 \mathrm{d} t \times\prod_{j = 3}^m {\skliaustask x^{-1}
D^{b^{(j)}} v\skliaustasd}_{BC^0\left(I \times(0,\infty) \times
\mathbb{R}\right)}^2 \\
&\le \skliaustask x^{-1}D^{b^{(1)}-(0,1)}v\skliaustasd_{BC^0(I\times(0,\infty)\times\mathbb{R})}\times\skliaustask x^{-1}D^{b^{(2)}-(0,1)}v\skliaustasd_{BC^0(I\times(0,\infty)\times\mathbb{R})}\\
&\phantom{\le} \times\int_I\left(\left\lVert D_yD^{b^{(1)}}v\right\rVert_{-\delta+\frac52}^2 +\left\lVert D_yD^{b^{(2)}}v\right\rVert_{-\delta+\frac52}^2\right)\mathrm{d}t \times \left(C \skliaustask v\skliaustasd_\mathrm{sol}\right)^{2(m-2)}\\
&\le \left(C\skliaustask v\skliaustasd_\mathrm{sol}\right)^{2m},
\end{align*}
where $C$ is a large enough constant and where for the second estimate we have used \reftext{\eqref{dy_dy}} of \reftext{Lemma~\ref{lem:interpolation_dy3}}.
Here, we have applied the bounds \reftext{\eqref{bc0_grad_sol}} of
\reftext{Lemma~\ref{lem:bc0_bounds}} for the first line,
which applies if $\left\lvert b^{(1)}\right\rvert-1\le\min\{\tilde k,\check k\}-2$,
and for the second line we have used conditions~\reftext{\eqref{est_term_n1v_cond_1}}--\reftext{\eqref{est_term_n1v_cond_3}}
together with the assumption that $\left\lvert b^{(j)}\right\rvert\ge 2$
for both $j=1,2$ and an application of \reftext{Lemma~\ref{lem:mainhardy}},
in order to estimate the terms by the contribution
$\int_I \left\lVert D_y \tilde q(D_x)D_x v\right\rVert_{\breve k+2,-\delta+\frac52}^{2} \d t$ from $\skliaustask v\skliaustasd_\mathrm{sol}^{2}$. 

\medskip

The conditions on indices under which the above bounds hold are
\begin{equation}\label{parameters_breve}
\breve k+6 \le \min\left\{\tilde k - 2,\check k -2\right\}.
\end{equation}
Under these conditions we find
\begin{equation}\label{est_n1v_case52}
\int_I  {\left\lVert x^{1-m} \bigtimes_{j = 1}^m
D^{b^{(j)}} v \right\rVert}_{-\delta+\frac
52}^2 \mathrm{d} t \le \left(C{\skliaustask v\skliaustasd}_\mathrm{sol}\right)^{2m},
\end{equation}
for a large enough constant $C$.

\medskip

\textit{Estimate of ${\skliaustask{\mathcal N}^{(2)}(v)\skliaustasd
}_\mathrm{rhs}$.}
We now desire to prove the analogue of \reftext{\eqref{est_n1v}} for $\mathcal N^{(2)}(v)$, namely that
\begin{equation}\label{est_n2v}
{\skliaustask{\mathcal N}^{(2)}(v)\skliaustasd}_\mathrm{rhs}
\lesssim{\skliaustask v\skliaustasd}_\mathrm
{sol}^2\quad\mbox{for} \quad{\skliaustask v\skliaustasd}_\mathrm
{sol} \ll1.
\end{equation}
In order to prove the above, for each of the four lines of
\reftext{\eqref{simple_n2_terms}}, we obtain terms with the basic structure \reftext{\eqref{simple_n1v}},
and we distribute derivatives and weights
as in each of the six addends from \reftext{\eqref{norm_rhs}}.

\medskip

\noindent\textit{Estimates for $\rho\in\left\{-\delta\pm\frac12\right\}$.}
For the first two terms in the sum
\begin{equation*}
 {\left\lVert {\mathcal N}^{(2)}(v) \right\rVert
}_{k-2,-\delta-\frac12}^2 + {\left\lVert(D_x-1) {\mathcal N}^{(2)}(v) \right\rVert
}_{\tilde k-2,-\delta+\frac12}^2
\end{equation*}
appearing in \reftext{\eqref{norm_rhs}}, we may perform the same discussion as
the one used in treating the analogous expressions
for ${\mathcal N}^{(1)}(v)$ in
those two cases. Note that the addends appearing in ${\skliaustask
{\mathcal N} ^{(1)}(v)\skliaustasd}_\mathrm{rhs}^{2}$
with weights $\rho= -\delta\pm\frac12$ have been
treated in the same way in both cases (i) and
(ii) from that discussion. By splitting as in \reftext{\eqref{est_n1v_case_1}},
the estimates thereafter, leading to the bound \reftext{\eqref{est_n1v_case_1_2}}, apply
and give the desired bound
\begin{equation}\label{est_n2_-d12}
\int_I \left( {\left\lVert {\mathcal N}^{(2)}(v) \right\rVert
}_{k-2,-\delta-\frac12}^2 + {\left\lVert(D_x-1) {\mathcal N}^{(2)}(v) \right\rVert
}_{\tilde k-2,-\delta+\frac12}^2 \right) \mathrm{d} t
\lesssim \left(C{\skliaustask v\skliaustasd}_\mathrm{sol}\right)^{2m}
\end{equation}
for a constant $C$.

\medskip

\textit{Estimate of ${\skliaustask{\mathcal N}^{(2)}(v)\skliaustasd
}_\mathrm{rhs}$ for $\rho
\in\left\{\delta+\frac12, \pm\delta+\frac32\right\}$.}
We next bound the parts of the norm
with weight $\rho\in\{\delta+\frac12,\pm\delta+\frac32\}$, namely
\begin{subequations}\label{remainstoest_N2}
\begin{align}
\int_I {\left\lVert\tilde q(D_x-1)(D_x-1){\mathcal
N}^{(2)}(v) \right\rVert}_{\tilde k-2,\delta
+\frac12}^2\mathrm{d} t, \label{remains_n2_1}
\eqncr
\int_I {\left\lVert\tilde q(D_x-1)(D_x-1){\mathcal
N}^{(2)}(v) \right\rVert}_{\check k-2, -\delta
+\frac32}^2\mathrm{d} t, \label{remains_n2_2}
\end{align}
and
%
%
\begin{equation}\label{remains_n2_3}
\int_I {\left\lVert(D_x-4)(D_x-3)\tilde q(D_x-1)(D_x-1)
{\mathcal N} ^{(2)}(v) \right\rVert}_{\check k -2,\delta+\frac
32}^2\mathrm{d} t.
\end{equation}
\end{subequations}
If in distributing derivatives from \reftext{\eqref{remainstoest_N2}}
at least one $D_y$-derivative acts on $\mathcal N^{(2)}(v)$,
then the resulting expression decays near $x=0$ at least like $O(x^2)$
and we see that the precise expansion as in \reftext{\eqref{simple_n2_terms}} is immaterial.
In view of definition~\reftext{\eqref{defn2}} for $\NN^{(2)}(v)$,
we recognize that
the structure \reftext{\eqref{simple_n1v_cond}} applies in this case.
Thus we may treat the ensuing terms as in the discussion
of the analogous ones from $\mathcal N^{(1)}(v)$,
allowing to bound each expression by $\left(C\skliaustask v\skliaustasd_\mathrm{sol}\right)^{2m}$,
where $C$ is a large enough constant like in \reftext{\eqref{est_n1v_case_1_2}}.

\medskip

We thus only need to discuss the contributions from \reftext{\eqref{remainstoest_N2}} in which only $D_x$-derivatives act on the expressions in the norms. For example, corresponding to \reftext{\eqref{remains_n2_1}}, we will consider the terms 
\begin{equation*}
\int_I \sum_{j = 0}^{\tilde k-2} \left\lVert D^j_x\tilde q(D_x-1)(D_x-1)\mathcal N^{(2)}(v)\right\rVert_{\delta+\frac12}^2\mathrm{d}t.
\end{equation*}
\textit{The $D_x$-derivative contributions from  \reftext{\eqref{remainstoest_N2}}}. We now consider separately the contributions coming from the four lines in \reftext{\eqref{simple_n2_terms}}.

\medskip

\textit{The first line of \reftext{\eqref{simple_n2_terms}} for
$\rho\in\left\{\delta+\frac12,\pm\delta+\frac32\right\}$.}
In this case we have an expression of the form
\begin{equation}\label{first_line_n2}
-\frac38(1+v_1)^{-3}\left(6v_1^2 + 8v_1^3 +3v_1^4
+2(v_0)_y^2+(v_0)_y^4\right)\ x,
\end{equation}
which is in the kernel of the operators appearing in \reftext{\eqref{remainstoest_N2}}, and thus the corresponding contribution disappears.

\medskip
\noindent\textit{The second line of \reftext{\eqref{simple_n2_terms}} for
$\rho\in\left\{\delta+\frac12,\pm\delta+\frac32\right\}$.}
Here, we need to consider \reftext{\eqref{remainstoest_N2} with $\NN^{(2)}(v)$ replaced by}
\begin{eqnarray*}
f&=&(1+v_1)^{-3}\left(4v_1+6v_1^2+4v_1^3+v_1^4 -2(v_0)_y^2 -
(v_0)_y^4\right)q(D_x)\phi\\
&\stackrel{\text{\reftext{\eqref{notation_nonlin}}}}{=}&(1+v_1)^{-4}\left
(4v_1+6v_1^2+4v_1^3+v_1^4 -2(v_0)_y^2 - (v_0)_y^4\right
)q(D_x)(v-v_0-v_1x).\qquad
\end{eqnarray*}
Notice that $x^{1+\beta}$ is in the kernel of $q(D_x)$ and
that the operator $\tilde q(D_x-1)(D_x-1)$
appearing in the norms \reftext{\eqref{remainstoest_N2}} figures the factor
$(D_x-2)$ (cf.~\reftext{\eqref{poly_q2}}) canceling the $x^2$-term of the
expansion of $v$ near $x=0$ (cf.~\reftext{\eqref{expansion_v}}).
Hence, we consider the following expression, in which the singular expansion up to and including the addend $\sim x^2$ vanishes:
\begin{equation}\label{second_line_n2_f}
f = (1+v_1)^{-4} \, \left(4v_1+6v_1^2+4v_1^3+v_1^4 -2(v_0)_y^2 -
(v_0)_y^4\right) \, q(D_x) (D_x-2) (v-v_0-v_1x).
\end{equation}
After distributing the remaining $D_x$-derivatives,
so that it suffices to estimate
\begin{subequations}\label{second_line_n2_g}
\begin{equation}
\int_I {\left\lVert g^{(2)} \right\rVert}_\rho^2\mathrm
{d} t,
\end{equation}
for $\rho\in\left\{\delta+\frac12,\pm\delta+\frac32\right\}$ and for
\begin{equation}\label{second_line_n2_defg}
g^{(2)} = \, (1+v_1)^{-4} \times w^m\, \times D^b_x q(D_x)(D_x-2) (v-v_0-v_1x),
\end{equation}
where $w\in\left\{v_1,\left(v_0\right)_y\right\}$,
$m\in\left\{1,2,3,4\right\}$,
and $b\in\N_0$ satisfying the following bounds:
\begin{equation}
b \le
\begin{cases} \tilde k+2 & \mbox{for}\quad\rho=\delta+\frac12, \\
\check k+2 & \mbox{for} \quad\rho=-\delta+\frac32,\\ \check k + 4 &
\mbox{for} \quad\rho=\delta+\frac32.
\end{cases}
\end{equation}
\end{subequations}
We derive bounds for the individual factors in \reftext{\eqref{second_line_n2_f}}
separately. 

We first concentrate on estimating the first two factors in \reftext{\eqref{second_line_n2_defg}}. Via \reftext{\eqref{bc0_grad_sol}} of \reftext{Lemma~\ref{lem:bc0_bounds}}, in case ${\skliaustask
v\skliaustasd}_\mathrm{sol} \le C^{-1}$  for a constant $C > 0$ only determined by
the implicit constant in \reftext{\eqref{bc0_grad_sol}}, we have
\begin{equation}\label{first_line_n2_1}
{\left\lVert(1+v_1)^{-4}\right\rVert}_{BC^0\left(I \times\mathbb{R}\right
)} \le \left(1 - \tfrac C 2 \skliaustask v\skliaustasd_\mathrm{sol}\right)^{-4} \lesssim 1.
\end{equation}
Next, via \reftext{\eqref{bc0_grad_sol}} of \reftext{Lemma~\ref{lem:bc0_bounds}} and upon enlarging $C$,
for $\skliaustask v\skliaustasd_\mathrm{sol}\le C^{-1}$
we have the bound
\begin{equation}\label{second_line_n2_2}
 {\left\lVert w^m \right\rVert}_{BC^0(I\times\mathbb R)}^2 \le \left\lVert w \right\rVert_{BC^0\left(I \times\mathbb{R}\right)}^{2m} \le \left(C {\skliaustask v\skliaustasd}_\mathrm{sol}\right)^{2m} \le {\skliaustask v\skliaustasd}_\mathrm{sol}^2.
\end{equation}
If $\skliaustask v \skliaustasd_\mathrm{sol} \le C^{-1}$ then \reftext{\eqref{first_line_n2_1}} and \reftext{\eqref{second_line_n2_2}} directly lead to
\begin{equation}\label{second_line_n2_1}
{\left\lVert(1+v_1)^{-4} w^m\right\rVert}_{BC^0\left(I  \times\mathbb{R}\right
)} \lesssim{\skliaustask v\skliaustasd}_\mathrm{sol}^2.
\end{equation}
For the last factor of \reftext{\eqref{second_line_n2_defg}} we use the fact that $0$, $1$, $1+\beta$, and $2$ are
zeros of $q(\zeta)(\zeta-2)$ (cf.~\reftext{\eqref{poly_q}}) and thus $q(D_x) (D_x-2)$ vanishes on $1$, $x$, $x^{1+\beta}$, and $x^2$. This leads
to
\begin{align} 
\int_I {\left\lVert D^b_x q(D_x)
(D_x-2) (v - v_0-v_1x) \right\rVert}_\rho^2 \mathrm{d} t
&\lesssim \begin{cases} \int_I {\left\lVert\tilde q(D_x)D_xv \right
\rVert}_{\tilde k +2,\delta+\frac
12}^2\mathrm{d} t & \mbox{for} \quad \rho = \delta + \frac12 \\
\int_I {\left\lVert\tilde q(D_x)D_xv \right
\rVert}_{\check k +2,\delta+\frac
32}^2\mathrm{d} t & \mbox{for} \quad \rho = -\delta + \frac32 \\
\int_I {\left\lVert(D_x-3)(D_x-2)\tilde q(D_x)D_x v \right
\rVert}_{\check k +2,\delta+\frac
32}^2\mathrm{d} t & \mbox{for} \quad \rho = \delta + \frac 32\end{cases}  \nonumber \\
&\lesssim {\skliaustask v\skliaustasd}_\mathrm{sol}^2, \label{second_line_n2_3}
\end{align}
which uses the bound on $b$
in \reftext{\eqref{second_line_n2_g}} as well
as \reftext{Lemma~\ref{lem:mainhardy}}.
Estimates~\reftext{\eqref{second_line_n2_1}} and \reftext{\eqref{second_line_n2_3}} allow to obtain for $\rho\in\left\{\delta+\frac12,\pm\delta+\frac32\right\}$, under the condition that $\skliaustask v \skliaustasd_\mathrm{sol} \le C^{-1}$,
\begin{equation}\label{second_line_n2_bound}
\int_I {\left\lVert g^{(2)} \right\rVert}_\rho^2\mathrm
{d} t \lesssim{\skliaustask v\skliaustasd}_\mathrm
{sol}^4
\end{equation}
for the terms \reftext{\eqref{second_line_n2_g}}.

\medskip
\noindent\textit{The third line of \reftext{\eqref{simple_n2_terms}} 
for $\rho\in\left\{\delta+\frac12,\pm\delta+\frac32\right\}$.}
We use $\psi\stackrel{\text{\reftext{\eqref{notation_nonlin}}}}{=} v-v_0$ in the third
line of \reftext{\eqref{simple_n2_terms}} as well as the commutation relation
$(D_x-2)D_y=D_y(D_x-1)$. Arguing analogously to the previous proof step, we end up considering terms of the form
%
%
\begin{equation}\label{third_line_n2_f}
(1+v_1)^{-3} \, \left((v_0)_y^3
\left(D_x + \tfrac12\right
)\left
(D_x^2 - \tfrac32D_x - \tfrac58\right)
+ (v_0)_y\left(D_x^2 - \tfrac
54 D_x
- \tfrac54\right)\right) \, D_y (D_x - 1)(v-v_0).
\end{equation}
Note that in $D_y(D_x-1) (v-v_0)$ the
contribution $v_1 x$ in the expansion \reftext{\eqref{expansion_v}} of $v$ near
$x = 0$ is cancelled and $D_y$ provides an extra $x$-factor.
The above term essentially requires the same treatment as the one
from the second line of \reftext{\eqref{simple_n2_terms}},
which we have just treated,
with the difference that $g^{(3)}$, which appears in the analogue of \reftext{\eqref{second_line_n2_g}}, has now the form%
\begin{subequations}
\begin{equation}\label{third_line_n2_defg}
\quad g^{(3)} = \, (1+v_1)^{-3} \, \left( v_0 \right)_y^m\, \times D_x^b D_y (D_x-1)(v-v_0),
\end{equation}
with $m\in\{1,3\}$ and
\begin{equation}\label{third_line_n2_newm'}
 b \le
\begin{cases} \tilde k + 5 & \mbox{for} \quad\rho=\delta+\frac12, \\
\check k + 5 & \mbox{for} \quad\rho=-\delta+\frac32, \\ \check k +7 &
\mbox{for} \quad\rho=\delta+\frac32.
\end{cases}
\end{equation}
\end{subequations}
The only other difference to the study of \reftext{\eqref{second_line_n2_g}} lies
in the fact that the last factor contains $D_y(D_x-1)(v-v_0)$. Therefore, we have to replace the bounds \reftext{\eqref{second_line_n2_3}} by
\begin{align} 
\int_I {\left\lVert D^b_xD_y(D_x-1)(v
- v_0) \right\rVert}_{\delta+\frac32}^2\mathrm{d} t &\lesssim \begin{cases} \int_I {\left\lVert\tilde q(D_x)D_xv \right
\rVert}_{\tilde k +2,\delta+\frac
12}^2\mathrm{d} t & \mbox{for} \quad \rho = \delta + \frac12, \\
\int_I {\left\lVert\tilde q(D_x)D_xv \right
\rVert}_{\check k +2,\delta+\frac
32}^2\mathrm{d} t & \mbox{for} \quad \rho = -\delta + \frac32, \\
\int_I {\left\lVert(D_x-3)(D_x-2)\tilde q(D_x)D_x v \right
\rVert}_{\check k +2,\delta+\frac
32}^2\mathrm{d} t & \mbox{for} \quad \rho = \delta + \frac 32\end{cases}  \nonumber \\
&\lesssim {\skliaustask v\skliaustasd}_\mathrm{sol}^2, \label{third_line_n2_3}
\end{align}
where \reftext{\eqref{third_line_n2_newm'}}
and \reftext{Lemma~\ref{lem:mainhardy}}
have been used.
By combining estimate~\reftext{\eqref{third_line_n2_3}}
with the same bounds as in \reftext{\eqref{second_line_n2_1}}
for the remaining factors from \reftext{\eqref{third_line_n2_defg}},
we find
\begin{equation}\label{third_line_n2_bound}
\int_I {\left\lVert g^{(3)} \right\rVert}_\rho^2
\lesssim
{\skliaustask v\skliaustasd}_\mathrm{sol}^4,
\end{equation}
valid for $\rho\in\{\delta+\tfrac12,\pm\delta+\tfrac32\}$ if
$\skliaustask v \skliaustasd_\mathrm{sol} \le C^{-1}$. 

\medskip
\noindent\textit{The last line in \reftext{\eqref{simple_n2_terms}} for
$\rho \in \left\{
\delta+\tfrac12,\pm\delta+\tfrac32\right\}$.}
Here we have to treat
\[
(1+v_1)^{-3}\sum_{(\mu,\nu,\tau)\in{\mathcal I}}{\mathcal
M}\left((\psi_y^{\mu
_j}(v_0)_y^{\nu_j}\phi_x^{\tau_j})_{j=1}^4\right)
\]
with functions $\psi$ and $\phi$ defined as in \reftext{\eqref{notation_nonlin}}.
Note that $(v_0)_y^{\nu_j}$ can be factored out,
since definition~\reftext{\eqref{def_4lin}} of ${\mathcal M}$
includes only derivatives $D_x$ and products commuting with $(v_0)_y^{\nu_j}$. Similarly, $\phi$ contains an additional
factor $(1+v_1)^{-1}$, which can be factored out as well.
Like in the previous cases,
and after introducing $m-1$ factors $x$ and commuting derivatives,
we find that we have to estimate
$\int _I {\left\lVert g^{(4)} \right\rVert}_\rho^2\mathrm{d} t$
with
\begin{subequations}\label{4th_line_n2_g}
\begin{equation}\label{4th_line_n2_g_def}
g^{(4)} = (1+v_1)^{-3-\lvert\tau\rvert} \, \left(v_0\right)_y^{\lvert\nu\rvert} x^{1-m} \, \bigtimes_{j=1}^m D^{b^{(j)}}_xw^{(j)},
\end{equation}
where $m=\lvert\mu\rvert+\lvert\tau\rvert\ge2$,
${\left\lvert\mu\right\rvert}+{\left\lvert\nu\right\rvert}\le 4$,
and we have
\begin{equation}\label{4th_line_n2_g1}
w^{(j)} \in \left\{D_x(v-v_0-v_1x), D_y(v-v_0)\right\} \quad \mbox{for} \quad 1\le j\le m.
\end{equation}
The indices appearing in \reftext{\eqref{4th_line_n2_g_def}} are
constrained as follows
\begin{equation}\label{4th_line_n2_indices}
b^{(1)} \le
\begin{cases}
\tilde k + 6 & \mbox{for} \quad\rho=\delta+\frac12,\\
\check k + 6 & \mbox{for} \quad\rho=-\delta+\frac32,\\
\check k + 8 &\mbox{for} \quad\rho=\delta+\frac32,
\end{cases}
 \quad \mbox{and} \quad  b^{(j)} \le
\begin{cases}
\left\lfloor\frac{\tilde k}2\right\rfloor+3 & \mbox{for} \quad
\rho=\delta+\frac12,\\
\left\lfloor\frac{\check k }2\right\rfloor+3 & \mbox{for} \quad
\rho=-\delta+\frac32,\\
\left\lfloor\frac{\check k }2\right\rfloor+4 &\mbox{for} \quad
\rho=\delta+\frac32,
\end{cases}
\quad \mbox{for} \quad j\ge 2.
\end{equation}
\end{subequations}
We estimate the factors $(1+v_1)^{-1}$ and $(v_0)_y$ from \reftext{\eqref{4th_line_n2_g_def}}
as in \reftext{\eqref{first_line_n2_1}}--\reftext{\eqref{second_line_n2_1}},
under the condition that
${\skliaustask v\skliaustasd}_\mathrm{sol} \le C^{-1}$. 
For the remaining product term in \reftext{\eqref{4th_line_n2_g_def}}, we first
estimate all but the first two terms in the $BC^0(I\times(0,\infty
)\times\mathbb{R})$-norm for $\rho\in\left\{\delta
+\frac12,\pm\delta+\frac32\right\}$ as follows:
\begin{eqnarray}\label{common_est_4th_line}
\int_I {\left\lVert x^{1-m} \bigtimes
_{j=1}^m D^{b^{(j)}}_x w^{(j)} \right\rVert}_\rho^2\mathrm
{d} t &\lesssim& \int_I {\left\lVert x^{-1}
D^{b^{(1)}}_xw^{(1)}\times D^{b^{(2)}}_xw^{(2)} \right\rVert
}_\rho^2\mathrm{d} t\times\prod_{j=3}^m {\skliaustask
x^{-1} D^{b^{(j)}}_xw^{(j)}\skliaustasd}_{BC^0(I\times(0,\infty)\times
\mathbb{R})}^2\nonumber\\
&\lesssim& \int_I {\left\lVert 
x^{-1} D^{b^{(1)}}_xw^{(1)}\times D^{b^{(2)}}_xw^{(2)} \right\rVert
}_\rho^2\mathrm{d} t \times \left(\tfrac C 2 {\skliaustask v\skliaustasd}_\mathrm{sol}\right)^{2(m-2)},
\end{eqnarray}
where $C > 0$ is chosen sufficiently large.
In order to justify the above estimate,
we observe that $\psi_y=v_y-(v_0)_y$ and
$(v-v_0-v_1x)_x = v_x - v_1$,
after which we note that the
number of $D_x$-derivatives acting on the $w^{(j)}$ with $j \in \{3,\ldots,m\}$
is bounded as in \reftext{\eqref{4th_line_n2_indices}}. Therefore,
we may use the bound \reftext{\eqref{bc0_grad_sol}}
of \reftext{Lemma~\ref{lem:bc0_bounds}} provided
\begin{equation*}
\max\left\{\left\lfloor\frac{\tilde k }2\right\rfloor+4,\left
\lfloor\frac{\check k }2\right\rfloor+5\right\} \le\min\left\{
\tilde k - 2, \check k - 2\right\},
\end{equation*}
which is already implied by \reftext{\eqref{parameterss1}} and \reftext{\eqref{constraint_bc0bounds}}.

\medskip

We next bound for $\rho\in\left\{\delta+\frac1 2,
\pm\delta+ \frac32\right\}$ as follows:
\begin{equation}\label{4th_line_n2_3}
\int_I {\left\lVert x^{-1} D^{b^{(1)}}_xw^{(1)}\times D^{b^{(2)}}_xw^{(2)} \right\rVert}_\rho^2\mathrm{d}
t\lesssim{\skliaustask v\skliaustasd}_\mathrm{sol}^4.
\end{equation}
Once \reftext{\eqref{4th_line_n2_3}} is proved,
combining it with \reftext{\eqref{common_est_4th_line}}
and with the estimates for the factors $(1+v_1)^{-1}$
and $(v_0)_y$, under the condition
$\skliaustask v\skliaustasd_\mathrm{sol}\le C^{-1}$,
we find the bound
\begin{equation}\label{4th_line_n2_final}
\int_I {\left\lVert g^{(4)} \right\rVert}_\rho^2 \mathrm{d} t
\lesssim {\skliaustask v\skliaustasd}^4_\mathrm{sol} \times \left(\tfrac C 2 {\skliaustask v\skliaustasd}_\mathrm{sol}\right)^{2(m-2)} \times \left( 1 - \tfrac C 2 {\skliaustask v\skliaustasd}_\mathrm{sol}\right)^{-2(\lvert\tau\rvert+3)} \lesssim
\skliaustask v\skliaustasd_\mathrm{sol}^4
\left(C{\skliaustask v\skliaustasd}_\mathrm{sol}\right)^{2 (m-2)}
\end{equation}
for the weights $\rho\in\left\{\delta+\tfrac12,\pm\delta+\tfrac32\right\}$,
where we have also used that ${\left\lvert\tau\right\rvert}\le m$.
In order to prove the bound \reftext{\eqref{4th_line_n2_3}} and to complete
the proof of \reftext{\eqref{4th_line_n2_final}}, we will separately consider
the three values of $\rho$.

\medskip

\noindent\textit{The bound \reftext{\eqref{4th_line_n2_3}} for $\rho=\delta
+\frac12$.}
We study at the same time the two possible values of $w^{(1)}$ given in \reftext{\eqref{4th_line_n2_g1}}. Note that the
first factor entering the norm in \reftext{\eqref{4th_line_n2_3}} fulfills either
$D^{b^{(1)}+1}_x (v-v_0-v_1x) = O\left(x^{1+\beta}\right)$ or
$D^{b^{(1)}}_x D_y(v-v_0)=O\left(x^2\right)$
as $x\searrow0$
almost everywhere. Since $2>1+\beta
>1+\delta$
we may estimate according to
\begin{align}
&\int_I{\left\lVert x^{-1}
D^{b^{(1)}}_xw^{(1)}{\times}\noxml{\,} D^{b^{(2)}}_x w^{(2)} \right\rVert
}_{\delta+\frac 1 2}^2\mathrm{d} t \nonumber\\
& \quad \lesssim \int_I\!{\left\lVert
D^{b^{(1)}}_xw^{(1)} \right\rVert}_{\delta+\frac
12}^2\mathrm{d} t{\times}\noxml{\,}{\skliaustask
x^{-1} D^{b^{(2)}}_xw^{(2)}\skliaustasd}_{BC^0(I\times(0,\infty)_x\times
\mathbb{R}_y)}^2 \nonumber\\
&\quad\lesssim
\begin{cases} \int_I {\left\lVert
D^{b^{(1)}+1}_x (v-v_0-v_1x) \right\rVert}_{\delta+\frac
12}^2\mathrm{d} t\times{\skliaustask v\skliaustasd}_\mathrm
{sol}^2&\mbox{for}\quad
w^{(1)}= D_x(v-v_0-v_1x)\\
\int_I{\left\lVert D^{b^{(1)}}_x D_y(v-v_0) \right\rVert
}_{\delta+\frac12}^2\mathrm{d} t\times{\skliaustask v\skliaustasd
}_\mathrm{sol}^2&\mbox{for}\quad w^{(1)}=D_y(v-v_0)
\end{cases}
\nonumber\\
&\quad\lesssim \int_I {\left\lVert\tilde q(D_x)D_xv \right
\rVert}_{\tilde k + 2,\delta+\frac
12}^2\mathrm{d} t\times{\skliaustask v\skliaustasd}_\mathrm
{sol}^2\le{\skliaustask v\skliaustasd}_\mathrm{sol}^4,\label{bound_n2_d+12}
\end{align}
where we have used estimate~\reftext{\eqref{bc0_grad_sol}} of
\reftext{Lemma~\ref{lem:bc0_bounds}}
in the third line and \reftext{Lemma~\ref{lem:mainhardy}} for
the final estimate.
This is justified if $\floor{\frac{\tilde k}{2}} + 4\le \min\left\{\tilde k -2,\check k -2\right\}$,
which is already implied by \reftext{\eqref{parameterss1}}.
This concludes the bound of the fourth line
of \reftext{\eqref{simple_n2}} for $\rho=\delta+\tfrac12$.

\medskip
\noindent\textit{The bound \reftext{\eqref{4th_line_n2_3}} for $\rho= -\delta
+ \frac32$.}
We first consider the case $w^{(1)}=D_x(v-v_0-v_1x)$ and write
\begin{equation}\label{sum_subtract_v1beta}
v-v_0-v_1x=\underbrace{(v-v_0-v_1x-v_{1+\beta}x^{1+\beta})}_{=:
w}+v_{1+\beta}x^{1+\beta}.
\end{equation}
Using the triangle inequality and bounds similar to the previous proof
step, we may estimate
\begin{align}\label{-d32_all}\nonumber
& \int_I {\left\lVert x^{-1} D^{b^{(1)}}_x w^{(1)}\times D^{b^{(2)}}_x w^{(2)} \right\rVert 
}_{-\delta+\frac32}^2\mathrm{d} t \\
& \quad \lesssim \int_I {\left\lVert D^{b^{(1)}+1}_x w
\right\rVert}_{-\delta+\frac32}^2\mathrm{d}
t\times{\skliaustask x^{-1}D^{b^{(2)}}_x w^{(2)}\skliaustasd
}_{BC^0(I\times(0,\infty)\times\mathbb{R})}^2 + \int_I {\left\lVert x^{-1}D^{b^{(1)}}_x v_{1+\beta} x^{1+\beta} \times  D^{b^{(2)}}_x w^{(2)}
\right\rVert}_{-\delta+\frac32}^2\mathrm{d} t\nonumber\\
&\quad\lesssim\int_I {\left\lVert D^{b^{(1)}+1}_x w \right
\rVert}_{-\delta+\frac32}^2\mathrm{d}
t\times{\skliaustask v\skliaustasd}_\mathrm{sol}^2 + \int
_I {\left\lVert v_{1+\beta} \times D^{b^{(2)}}_x w^{(2)} \right\rVert}_{-\delta+\frac32-\beta}^2\mathrm{d} t.
\end{align}
The first term above can be estimated by using \reftext{Lemma~\ref{lem:mainhardy}}, which applies because $w=O\left(x^2\right)$ as $x\searrow0$ almost everywhere,
leading to a finite $ {\left\lvert\cdot\right\rvert
}_{-\delta+2}$-norm in the definition
of $ {\left\lVert\cdot\right\rVert}_{-\delta+\frac
32}$. We also note that the
polynomial $\tilde q(\zeta)\zeta$ has roots $0$, $1$, and $1+\beta$
(cf.~\reftext{\eqref{poly_q2}}) and thus $\tilde q(D_x)D_xw=\tilde q(D_x)D_xv$.
As a result we have
\begin{equation}\label{-d32_1}
\int_I {\left\lVert D^{b^{(1)}+1}_x w \right\rVert
}_{-\delta+\frac32}^2\mathrm{d} t \lesssim \int
_I {\left\lVert\tilde q(D_x)D_xw \right\rVert}_{\check k
+2,-\delta+\frac32}^2\mathrm{d} t=\int
_I {\left\lVert\tilde q(D_x)D_xv \right\rVert}_{\check k
+2,-\delta+\frac32}^2\mathrm{d} t \le 
{\skliaustask v\skliaustasd}_\mathrm{sol}^2.
\end{equation}
For the second term in \reftext{\eqref{-d32_all}} we estimate similarly to \reftext{\eqref{est_n1v1b_2}} with $m=2$:
\begin{align}\label{-d32_21}
& \int_I {\left\lVert v_{1+\beta} D^{b^{(2)}}_xw^{(2)} \right
\rVert}_{-\delta+\frac32-\beta}^2\mathrm{d} t\nonumber
\\
&\quad\lesssim {\left\lVert v_{1+\beta} \right\rVert}^2_{L^2\left(I;BC^0(\mathbb{R})\right)} \times \sup_{t\in I}  {\left\lVert
D^{b^{(2)}}_x w^{(2)} \right\rVert}_{-\delta
+\frac32-\beta}^2\nonumber\\
&\quad\stackrel{\text{\reftext{\eqref{bc0_v1beta_v2}}}}{\lesssim} {\skliaustask
v\skliaustasd}_\mathrm{sol}^2 
\times
\begin{cases} 
\sup_{t\in I} {\left\lVert D^{b^{(2)}+1}_x (v-v_0-v_1x) \right\rVert}_{-\delta+\frac32-\beta}^2 & \mbox{for} \quad
w^{(2)}=D_x(v-v_0-v_1x),\\
\sup_{t\in I} {\left\lVert D^{b^{(2)}}_x D_y(v-v_0)
\right\rVert}_{-\delta+\frac32-\beta}^2 &\mbox{for}\quad w^{(2)}=D_y(v-v_0).
\end{cases}\nonumber\\
&\quad\lesssim{\skliaustask
v\skliaustasd}_\mathrm{sol}^2 
\times
\begin{cases} 
\sup_{t\in I} {\left\lVert D^{b^{(2)}}_x(D_x-1) v\right\rVert}_{-\delta+\frac32-\beta}^2 & \mbox{for} \quad
w^{(2)}=D_x(v-v_0-v_1x),\\
\sup_{t\in I} {\left\lVert D^{(b^{(2)}-1)_+}_x (D_x-1)D_yv
\right\rVert}_{-\delta+\frac32-\beta}^2 &\mbox{for}\quad w^{(2)}=D_y(v-v_0),
\end{cases}
\end{align}
where Lemma~\ref{lem:bc0_bounds} has been employed.
In order to estimate the remaining terms in \reftext{\eqref{-d32_21}},
we have applied \reftext{Lemma~\ref{lem:mainhardy}}
for $\underline{\gamma}=1$ and notice that
the operator $(D_x-1)$
together with the remaining derivative terms cancels the contributions $v_0$ and $v_1x$ or $(v_0)_y x$, respectively. Now, we may follow the argumentation
as in the estimates leading to \reftext{\eqref{est_n1v1b_2}}.
Notice that the conditions on $\tilde k$ and $\check k$ that \reftext{\eqref{-d32_21}} imposes, give, via \reftext{\eqref{4th_line_n2_indices}} the constraint analogous to \reftext{\eqref{parameters2}}, which again reads $\left\lfloor\frac{\check k}2\right\rfloor+4 \le \min\left\{\tilde k, \check k\right\}$ and is less restrictive than
\reftext{\eqref{constraint_bc0bounds}} imposed before.
This concludes our discussion for $w^{(1)} = D_x(v-v_0-v_1x)$.

\medskip

If instead $w^{(1)} = D_y(v-v_0)$, then we proceed like
in the case $\rho=\delta+\frac12$, the only difference being the
different contribution to ${\skliaustask v\skliaustasd}_\mathrm{sol}$
used in the final
estimate. We may estimate
\begin{eqnarray}\label{-d32_22}
\int_I {\left\lVert x^{-1}D^{b^{(1)}}_x D_y(v-v_0) \times
D^{b^{(2)}}_x w^{(2)} \right\rVert}_{-\delta+\frac32}^2\mathrm{d} t
&\lesssim&
\int_I {\left\lVert D^{b^{(1)}}_x D_y(v-v_0) \right\rVert
}_{-\delta+\frac32}^2\mathrm{d} t\times{\skliaustask v\skliaustasd
}_\mathrm{sol}^2
\nonumber\\
&\lesssim&\int_I {\left\lVert\tilde q(D_x)D_xv \right
\rVert}_{\check k + 2,-\delta+\frac
32}^2\mathrm{d} t\times{\skliaustask v\skliaustasd}_\mathrm
{sol}^2\nonumber\\
&\le&{\skliaustask v\skliaustasd}_\mathrm{sol}^4,
\end{eqnarray}
where estimate~\reftext{\eqref{bc0_grad_sol}} of
\reftext{Lemma~\ref{lem:bc0_bounds}} has been used, noting that the required constraint
on indices $\left\lfloor \frac{\check k}{2}\right\rfloor + 4\le\min\{\tilde k -2,\check k -2\}$
is already implied by \reftext{\eqref{constraint_bc0bounds}} imposed before.
This concludes the proof of \reftext{\eqref{4th_line_n2_3}}
for the case $\rho =-\delta+\tfrac32$.

\medskip
\noindent\textit{The bound \reftext{\eqref{4th_line_n2_3}} for $\rho=\delta
+\frac32$.}
We first consider the case $w^{(1)}=(v-v_0-v_1x)_x$ and write
\begin{equation*}
v-v_0-v_1x=\underbrace{\left(v-v_0-v_1x-v_{1+\beta}x^{1+\beta
}-v_2x^2\right)}_{=: w} + v_{1+\beta} x^{1+\beta} + v_2 x^2,
\end{equation*}
leading to
\begin{align}
\int_I {\left\lVert x^{-1}D^{b^{(1)}+1}_x (v-v_0-v_1x)
\times D^{b^{(2)}}_x w^{(2)} \right\rVert}_{\delta+\frac32}^2
\mathrm{d} t \nonumber &\lesssim \int_I {\left\lVert D^{b^{(1)}+1}_x w \right\rVert}_{\delta+\frac32}^2 \mathrm{d} t \times \left\lVert x^{-1} D^{b^{(2)}}_x w^{(2)} \right\rVert_{BC^0(I \times (0,\infty) \times \R)} \\
& \phantom{\lesssim} + \int_I {\left\lVert x^\beta v_{1+\beta} D^{b^{(2)}}_x w^{(2)} \right\rVert}_{\delta+\frac32}^2 \mathrm{d} t \nonumber\\
& \phantom{\lesssim} + \left\lVert v_2\right\rVert_{L^2(I \times \R)}^2  \times\sup_{t \in I, y\in\mathbb R} {\left\lvert D^{b^{(2)}}_x w^{(2)} \right\rvert}_{\delta+1}^2.
\label{d32_all}
\end{align}
We can now apply the same strategy used to pass from \reftext{\eqref{bound_nv1_d+32_b}} to \reftext{\eqref{bound_nv1_d+32_c}}, which concludes the discussion for the case $w^{(1)} = D_x (v-v_0-v_1x)$.

\medskip

For the case $w^{(1)} = D_y (v-v_0)$ we write
\begin{equation*}
v-v_0 = \underbrace{\left(v-v_0-v_1x\right)}_{=: w} + v_1 x,
\end{equation*}
and thus we have to bound the expression
\begin{align}\label{d32_21}
& \int_I {\left\lVert x^{-1}D^{b^{(1)}}_xD_y (v-v_0)\times D^{b^{(2)}}_x w^{(2)} \right\rVert}_{\delta+\frac32}^2\mathrm{d} t\nonumber\\
&\quad \lesssim \int_I\left\lVert x^{-1}D_x^{b^{(1)}}w\times D_x^{b^{(2)}} w^{(2)}\right\rVert_{\delta+\frac32}^2\mathrm{d}t + \left\lVert (v_1)_y\right\rVert_{BC^0\left(I;L^2(\R)\right)}^2 \times \int_I \sup_{y\in \mathbb R} \left\lvert D_x^{b^{(2)}} w^{(2)}\right\rvert_{\delta+1}^2 \mathrm d t.
\end{align}
Now we can estimate the first term in the sum of \reftext{\eqref{d32_21}} as
\begin{align*}
\int_I\left\lVert x^{-1}D_x^{b^{(1)}}w\times D_x^{b^{(2)}} w^{(2)}\right\rVert_{\delta+\frac32}^2\mathrm{d}t &\le \int_I \left\lVert D_y (D_x-1)^{b^{(1)}} (v - v_0) \right\rVert_{\delta+\frac 3 2}^2 \d t \times \skliaustask x^{-1} D_x^{b^{(2)}} w^{(2)} \skliaustasd_{BC^0\left(I \times (0,\infty) \times \R\right)}^2 \\
&\lesssim \int_I \left\lVert (D_x-3) (D_x-2) \tilde q(D_x) D_x v \right\rVert_{\check k + 2, \delta+\frac 3 2}^2 \d t \times \skliaustask v \skliaustasd_\mathrm{sol}^2 \lesssim \skliaustask v \skliaustasd_\mathrm{sol}^4,
\end{align*}
where \reftext{Lemma~\ref{lem:mainhardy}}
and estimate~\reftext{\eqref{bc0_grad_sol}}
of \reftext{Lemma~\ref{lem:bc0_bounds}}
have been employed and we have used the constraint
$\floor{\frac{\check k}2} + 5 \le
\min\left\{\tilde k - 2, \check k - 2\right\}$,
which is the same as condition~\reftext{\eqref{constraint_bc0bounds}} imposed before.
Furthermore, we have
$\left\lVert (v_1)_y\right\rVert_{BC^0\left(I;L^2(\R)\right)}
\lesssim \skliaustask v \skliaustasd_\mathrm{sol}$ by
estimate~\reftext{\eqref{bc0_v1beta_v2}} of
\reftext{Lemma~\ref{lem:bc0_bounds}} and by
interpolation we obtain
\begin{align*}
\sup_{y\in \mathbb R} \left\lvert D_x^{b^{(2)}} w^{(2)}\right\rvert_{\delta+1}^2
&\lesssim \left\lVert D_x^{b^{(2)}} w^{(2)} \right\rVert_{\delta + \frac 1 2}^2 + \left\lVert D_y D_x^{b^{(2)}} w^{(2)} \right\rVert_{\delta + \frac 3 2}^2 \\
&\lesssim \left\lVert \tilde q(D_x) D_x v \right\rVert_{\tilde k + 2, \delta + \frac 1 2}^2 + \left\lVert (D_x-3) (D_x-2) \tilde q(D_x) D_x v \right\rVert_{\check k + 2, \delta + \frac 3 2}^2
\end{align*}
under the assumption that
$\floor{\frac{\check k}2} + 5 \le \tilde k + 2$
(which is already true under the previous constraints on indices \reftext{\eqref{constraint_bc0bounds}}).
By integrating in time $t$ over $I$, we infer that the term
$\int_I \sup_{y\in \mathbb R} \left\lvert D_x^{b^{(2)}} w^{(2)}
\right\rvert_{\delta+1}^2 \mathrm d t$ can be bounded by
$\skliaustask v \skliaustasd_\mathrm{sol}^2$ as well,
and \reftext{\eqref{d32_21}} upgrades to
\begin{equation*}
\int_I {\left\lVert x^{-1}D^{b^{(1)}}_xD_y (v-v_0)\times D^{b^{(2)}}_x w^{(2)} \right\rVert}_{\delta+\frac32}^2\mathrm{d} t \lesssim \skliaustask v \skliaustasd_\mathrm{sol}^4.
\end{equation*}
This concludes the proof of \reftext{\eqref{4th_line_n2_3}} for the case $\rho=\delta+\frac32$.

\medskip

\noindent\textit{Estimates for $\mathcal N^{(2)}(v)$ for $\rho=-\delta +\frac52$.} Here the terms to discuss are the contributions coming from 
\[
\int_I{\left\lVert D_y\tilde q(D_x-1)(D_x-1)\mathcal N^{(2)}(v)\right\rVert}_{\breve k-2,-\delta+\frac52}^2\mathrm{d}t.
\]
Recall that from the definition of the norm in \reftext{\eqref{norm_2d_1}},
\[
{\left\lVert f\right\rVert}_{\breve k -2,-\delta+\frac52}^2 = \sum_{0\le j+j'\le \breve k-2}{\left\lVert D_y^{j'}D_x^jf\right\rVert}_{-\delta+\frac52}^2
\]
for a locally integrable $f \colon (0,\infty) \times \R \to \R$, and we note that for $j'\ge 1$ the corresponding terms can be treated like in the $\rho=-\delta+\frac52$ case of the bound for $\mathcal N^{(1)}(v)$. Therefore we may concentrate on the case $j'=0$, i.e.,
\begin{equation}\label{norm_delta_52}
\int_I \sum_{j = 0}^{\breve k - 2} {\left\lVert D_x^j D_y\tilde q(D_x-1)(D_x-1)\mathcal N^{(2)}(v)\right\rVert}_{-\delta+\frac52}^2\mathrm{d}t.
\end{equation}
Once more, we subdivide the discussion of the terms contributing to $\mathcal N^{(2)}(v)$ according to the subdivision given by the four lines of \reftext{\eqref{simple_n2_terms}}. 

\medskip
\noindent\textit{The first line in \reftext{\eqref{simple_n2_terms}} for $\rho=-\delta+\frac52$.} As before, in this case the operator $(D_x-1)$ in \reftext{\eqref{norm_delta_52}} cancels the first line of \reftext{\eqref{simple_n2_terms}}.

\medskip
\noindent\textit{The second line in \reftext{\eqref{simple_n2_terms}} for $\rho=-\delta+\frac52$.} Here, we observe that the $D_x$-derivatives appearing in \reftext{\eqref{norm_delta_52}} commute with the factors $v_1$ and $(v_0)_y$, the factor $(D_x-1)$ in \reftext{\eqref{norm_delta_52}} cancels $v_1x$ and $\tilde q(D_x)$ contains a $D_x$-factor, which vanishes on $v_0$. Thus by distributing the $D_y$-derivative in \reftext{\eqref{norm_delta_52}}, we have to estimate terms of the form
\begin{equation}\label{gprime_2}
\int_I{\left\lVert D_y(1+v_1)^{-4} w^m D_x^bq(D_x)(D_x-2)(D_x-1)v\right\rVert}_{-\delta+\frac52}^2\mathrm{d}t,
\end{equation}
where $w\in\{v_1,(v_0)_y\}$, $1 \le m \le 4$, and $0\le b\le \breve k+1$. We consider separately the cases in which
\begin{enumerate}[(i)]
\item the $D_y$-derivative in \reftext{\eqref{gprime_2}} acts on one of the factors containing $v_1$ and $(v_0)_y$, 
\item the $D_y$-derivative in \reftext{\eqref{gprime_2}} acts on the factor containing $v$.
\end{enumerate}
In case (i) we bound the resulting term from \reftext{\eqref{gprime_2}} by
\begin{equation}\label{gprime_2_i}
\left\lVert \partial_y\left((1+v_1)^{-4}w^m\right)\right\rVert^2_{BC^0(I;L^2(\R))} \times \int_I\sup_{y \in \mathbb R}\left\lvert D_x^b q(D_x)(D_x-2)(D_x-1)v\right\rvert_{-\delta+2}^2\mathrm{d}t.
\end{equation}
We now estimate the second factor in \reftext{\eqref{gprime_2_i}}:
\begin{align}
&\int_I\sup_{y\in\mathbb R}\left\lvert D_x^b q(D_x)(D_x-2)(D_x-1)v\right\rvert_{-\delta+2}^2\mathrm{d}t\nonumber\\
&\quad\lesssim \int_I\left\lVert D_x^b q(D_x)(D_x-2)(D_x-1)v\right\rVert_{-\delta+\frac32}^2\mathrm{d}t + \int_I\left\lVert D_y D_x^b q(D_x)(D_x-2)(D_x-1)v\right\rVert_{-\delta+\frac52}^2\mathrm{d}t\nonumber\\.
&\quad\lesssim\int_I\left\lVert \tilde q(D_x)D_xv\right\rVert_{\check k+2,-\delta+\frac32}^2\mathrm{d}t + \int_I\left\lVert D_y\tilde q(D_x)D_xv\right\rVert_{\breve k +2,-\delta+\frac52}^2\mathrm{d}t\nonumber\\
&\quad\lesssim\skliaustask v\skliaustasd_\mathrm{sol}^2,\label{gprime_2_i_bd1}
\end{align}
where we have used that $\tilde q(D_x)D_x=q(D_x)(D_x-1)$ and in order for \reftext{\eqref{gprime_2_i_bd1}} to hold we require the condition on indices $\breve k\le \check k$, which is, however, a consequence of \reftext{\eqref{parameters_breve}}, and thus can be omitted.

\medskip

The second factor in \reftext{\eqref{gprime_2_i}} can be estimated according to
\begin{align*}
\left\lVert \partial_y\left((1+v_1)^{-4}w^m\right)\right\rVert_{BC^0(I;L^2(\R))} &\lesssim \left\lVert (1+v_1)^{-4} w^{m-1}\right\rVert_{BC^0(I \times \R)} \left\lVert w_y \right\rVert_{BC^0(I;L^2(\R))} \\
&\phantom{\lesssim} + \left\lVert (1+v_1)^{-5} w^m\right\rVert_{BC^0(I \times \R)} \left\lVert (v_1)_y \right\rVert_{BC^0(I;L^2(\R))}.
\end{align*}
With help of \reftext{Lemma~\ref{lem:bc0_bounds}}, i.e., by applying the bounds \reftext{\eqref{bc0_grad_sol}} to $\left\lVert v_1\right\rVert_{BC^0(I \times \R)}$ and $\left\lVert (v_0)_y\right\rVert_{BC^0(I \times \R)}$, and the bounds \reftext{\eqref{bc0_v1beta_v2}} to $\left\lVert (v_1)_y \right\rVert_{BC^0(I;L^2(\R))}$ and $\left\lVert (v_0)_{yy} \right\rVert_{BC^0(I;L^2(\R))}$, we arrive at 
\begin{equation}\label{gprime_2_i_bd2}
\left\lVert \partial_y\left((1+v_1)^{-4}w^m\right) \right\rVert_{BC^0(I;L^2(\mathbb R))} \lesssim \skliaustask v \skliaustasd_\mathrm{sol},
\end{equation}
valid under the hypotheses that $\skliaustask v\skliaustasd_\mathrm{sol} \le C^{-1}$ with $C > 0$ sufficiently large. In this situation, estimates \reftext{\eqref{gprime_2_i_bd1}} and \reftext{\eqref{gprime_2_i_bd2}} complete the treatment of case (i).

For case (ii), we may estimate the factors $(1+v_1)^{-1}$ and $w$ as above, so that
\begin{align}\label{gprime_2_ii}
&\int_I{\left\lVert (1+v_1)^{-4} w^m D_y D_x^bq(D_x)(D_x-2)(D_x-1)v\right\rVert}_{-\delta+\frac52}^2\mathrm{d}t\nonumber\\
&\quad\lesssim
\left\lVert (1+v_1)^{-4} w^m \right\rVert_{BC^0(I \times \R)}^2
\times \int_I{\left\lVert D_y D_x^bq(D_x)(D_x-2)(D_x-1)v\right\rVert}_{-\delta+\frac52}^2\mathrm{d}t\nonumber\\
&\quad \lesssim \skliaustask v\skliaustasd_\mathrm{sol}^2 \times \int_I{\left\lVert D_y\tilde q(D_x) D_xv\right\rVert}_{\breve k+2,-\delta+\frac52}^2\mathrm{d}t \lesssim
\skliaustask v\skliaustasd_\mathrm{sol}^{4},
\end{align}
under the condition that $\skliaustask v \skliaustasd_\mathrm{sol}\le C^{-1}$  for $C > 0$ sufficiently large. Here, we have used \reftext{Lemma~\ref{lem:mainhardy}} in the forelast line, and we have employed the bound $b\le \breve k+1$. This concludes case (ii) and the discussion of the second line from \reftext{\eqref{simple_n2_terms}}, so that in summary we end up with
\begin{equation}\label{gprime_2_end}
\int_I{\left\lVert D_y(1+v_1)^{-4} w^m D_x^bq(D_x)(D_x-2)(D_x-1)v\right\rVert}_{-\delta+\frac52}^2\mathrm{d}t \lesssim \skliaustask v\skliaustasd_\mathrm{sol}^4.
\end{equation}

\medskip

\textit{The third line in \reftext{\eqref{simple_n2_terms}} for $\rho=-\delta+\frac52$.} In this case, by using the structure of this line, it suffices to treat terms of the form
\[
\int_I\left\lVert D_x^jD_y\tilde q(D_x-1)(D_x-1)\left((1+v_1)^{-3}(v_0)_y^m D_y (v-v_0)\right)\right\rVert_{-\delta+\frac52}^2\mathrm{d}t
\]
for $j\le \breve k+1$ and $1 \le m\le 3$. After distributing derivatives and noting that $\tilde q(D_x)$ contains a factor $D_x$, which vanishes on $v_0$, we reduce to discussing terms of the form
\begin{equation}\label{gprime_3}
\int_I{\left\lVert D_y(1+v_1)^{-3}(v_0)_y^mD_x^bD_y\tilde q(D_x)D_x v\right\rVert}_{-\delta+\frac52}^2\mathrm{d}t,
\end{equation}
where $0 \le b\le \breve k+1$. Again we consider separately the following cases
\begin{enumerate}[(i)]
\item the $D_y$-derivative in \reftext{\eqref{gprime_3}} acts on one of the factors containing $v_1$ and $(v_0)_y$, 
\item the $D_y$-derivative in \reftext{\eqref{gprime_3}} acts on the factor containing $v$.
\end{enumerate}
Case (i) is treated as before in \reftext{\eqref{gprime_2_i}}, and the only difference is that the new term corresponding to \reftext{\eqref{gprime_2_i_bd1}} has now a $D_y$-factor replacing a $(D_x-2)$-factor from \reftext{\eqref{gprime_2_i_bd1}}. Case (ii) is treated as in \reftext{\eqref{gprime_2_ii}}, thereby concluding the estimate of the third line of \reftext{\eqref{simple_n2_terms}}, which is bounded by the same quantity as in \reftext{\eqref{gprime_2_end}}.

\medskip

\noindent\textit{The fourth line in \reftext{\eqref{simple_n2_terms}} for $\rho=-\delta+\frac52$.} For $f$ given by the fourth line of \reftext{\eqref{simple_n2_terms}}, we have to estimate expressions of the form 
\begin{equation}\label{gprime_4}
\int_I\left\lVert D_x^jD_y\tilde q(D_x-1)(D_x-1)f\right\rVert_{-\delta+\frac52}^2\mathrm{d}t=\int_I\left\lVert \tilde g^{(4)} \right\rVert_{-\delta+\frac52}^2\mathrm{d}t
\end{equation}
with $0\le j\le \breve k - 2$, and where $\tilde g^{(4)}$ is given by one of the following expressions, depending on the factor on which the $D_y$-derivative in \reftext{\eqref{gprime_4}} falls:
\begin{subequations}\label{gprime4}
\begin{align}
\tilde g^{(4)} &= (3+\lvert\tau\rvert)(1+v_1)^{-4-{\lvert\tau\rvert}}(v_0)_y^{\lvert\nu\rvert}  \times (v_1)_y \times x^{2-m}\bigtimes_{j=1}^m D_x^{b^{(j)}} w^{(j)},\label{gprime41}\\
\tilde g^{(4)} &= \lvert\nu\rvert (1+v_1)^{-3-\lvert\tau\rvert}(v_0)_y^{\lvert\nu\rvert-1} \times (v_0)_{yy} \times x^{2-m}\bigtimes_{j=1}^mD_x^{b^{(j)}}w^{(j)},\label{gprime42}\\
\tilde g^{(4)} &= \lvert\nu\rvert(1+v_1)^{-3-\lvert\tau\rvert}(v_0)_y^{\lvert\nu\rvert} \times D_yD_x^{b^{(1)}}w^{(1)}\times \bigtimes_{j=2}^m x^{-1} D_x^{b^{(j)}}w^{(j)},\label{gprime43}\\
\tilde g^{(4)} &= \lvert\nu\rvert(1+v_1)^{-3-\lvert\tau\rvert}(v_0)_y^{\lvert\nu\rvert} \times x^{-1}D_x^{b^{(1)}}w^{(1)}\times D_yD_x^{b^{(2)}}w^{(2)}\times\bigtimes_{j=3}^mx^{-1} D_x^{b^{(j)}}w^{(j)},\label{gprime44}
\end{align}
where $m = \lvert\mu\rvert + \lvert\tau\rvert$, ${\left\lvert\mu\right\rvert}+{\left\lvert\tau\right\rvert}\le 4$,
$0 \le \left\lvert \nu \right\rvert \le 4$, and
\begin{equation}
w^{(j)}\in\left\{D_x(v-v_0-v_1x), D_y(v-v_0)\right\} \quad \mbox{with} \quad 0 \le b^{(1)} \le \breve k+6 \quad \mbox{and} \quad 0 \le b^{(j)} \le \left\lfloor\frac{\breve k}{2}\right\rfloor + 3\quad\mbox{for}\quad j\ge2.
\end{equation}
\end{subequations}
We first consider \reftext{\eqref{gprime_4}} with $\tilde g^{(4)}$ given by \reftext{\eqref{gprime41}}, where we use
\begin{equation}\label{tg4_1}
\begin{aligned}
\int_I \left\lVert \tilde g^{(4)} \right\rVert_{-\delta+\frac52}^2\mathrm{d}t &\lesssim (3+\lvert\tau\rvert) \left\lVert
(1+v_1)^{-4-\lvert\tau\rvert} (v_0)_y^{\lvert\nu\rvert} \right\rVert_{BC^0(I\times \mathbb R)}^2
\times \left\lVert (v_1)_y \right\rVert_{BC^0(I;L^2(\mathbb R))}^2 \\
&\phantom{\lesssim} \times\int_I\sup_{y\in \mathbb R}\left\lvert D_x^{b^{(1)}} w^{(1)} \times D_x^{b^{(2)}} w^{(2)} \right\rvert_{-\delta+3}^2\mathrm{d}t \times \prod_{j = 3}^m \skliaustask x^{-1} D_x^{b^{(j)}} w^{(j)} \skliaustasd_{BC^0(I \times (0,\infty) \times \R)}^2.
\end{aligned}
\end{equation}
Here, we estimate the term $\left\lVert
(1+v_1)^{-4-\lvert\tau\rvert} (v_0)_y^{\lvert\nu\rvert} \right\rVert_{BC^0(I\times \mathbb R)}$ according to
\begin{align*}
\left\lVert
(1+v_1)^{-4-\lvert\tau\rvert} (v_0)_y^{\lvert\nu\rvert} \right\rVert_{BC^0(I\times \mathbb R)} &\le \left(1 - \left\lVert v_1 \right\rVert_{BC^0(I \times \R)}\right)^{-4-\lvert\tau\rvert} \times \left\lVert (v_0)_y \right\rVert_{BC^0(I \times \R)}^{\lvert\nu\rvert} \\
&\lesssim \left(1 - \tfrac C 2 \skliaustask v \skliaustasd_\mathrm{sol}\right)^{-\lvert\tau\rvert} \times \left(C \skliaustask v \skliaustasd_\mathrm{sol}\right)^{\lvert\nu\rvert} \le \left(1 - \tfrac C 2 \skliaustask v \skliaustasd_\mathrm{sol}\right)^{-\lvert\tau\rvert}
\end{align*}
for $\skliaustask v \skliaustasd_\mathrm{sol} \le C^{-1}$ with $C > 0$ sufficiently large. Furthermore,
$\left\lVert (v_1)_y \right\rVert_{BC^0(I;L^2(\mathbb R))} \le C \skliaustask v \skliaustasd_\mathrm{sol}$
by estimate~\reftext{\eqref{bc0_v1beta_v2}} of \reftext{Lemma~\ref{lem:bc0_bounds}}
and $\prod_{j = 3}^m \skliaustask x^{-1} D_x^{b^{(j)}} w^{(j)} \skliaustasd_{BC^0(I \times (0,\infty) \times \R)}^2
\le \left(\tfrac C 2 \skliaustask v \skliaustasd_\mathrm{sol}\right)^{2 (m-2)}$
for a sufficiently large $C$ by estimate \reftext{\eqref{bc0_grad_sol}}
of \reftext{Lemma~\ref{lem:bc0_bounds}} under the condition that
$\floor{\frac{\breve k}{2}}+3\le \min\left\{\tilde k -2,\check k -2\right\}$,
which is already implied by condition \reftext{\eqref{parameters_breve}}.
For the remaining factor, we estimate according to
\begin{equation}\label{f3}
\begin{aligned}
\int_I\sup_{y\in\mathbb R}\left\lvert D_x^{b^{(1)}}w^{(1)} \times D_x^{b^{(2)}}w^{(2)}\right\rvert_{-\delta+3}^2\mathrm{d}t &\lesssim \int_I{\left\lVert D_x^{b^{(1)}}w^{(1)}\times D_x^{b^{(2)}}w^{(2)}\right\rVert}_{-\delta +\frac52}^2\mathrm{d}t \\
& \phantom{\lesssim} + \int_I\left\lVert D_yD_x^{b^{(1)}}w^{(1)}\times D_x^{b^{(2)}}w^{(2)}\right\rVert_{-\delta+\frac72}^2\mathrm{d}t \\
& \phantom{\lesssim} +\int_I\left\lVert D_x^{b^{(1)}}w^{(1)}\times D_yD_x^{b^{(2)}}w^{(2)}\right\rVert_{-\delta+\frac72}^2\mathrm{d}t
\end{aligned}
\end{equation}
The first summand on the right-hand side of \reftext{\eqref{f3}} can be treated as in \reftext{\eqref{-d32_all}}, under the condition that $\breve k\le \check k$, which is already implied by \reftext{\eqref{parameters_breve}}.
For the second summand, we write
\begin{align}\label{f3_2}
&\int_I\left\lVert D_yD_x^{b^{(1)}}w^{(1)}\times D_x^{b^{(2)}}w^{(2)}\right\rVert_{-\delta+\frac72}^2\mathrm{d}t \nonumber\\
&\quad\lesssim\int_I\left\lVert D_yD_x^{b^{(1)}}w^{(1)}\right\rVert_{-\delta+2}^2\mathrm{d}t\times\skliaustask x^{-\frac32} D_x^{b^{(2)}}w^{(2)}\skliaustasd_{BC^0(I\times(0,\infty)\times\mathbb R)}^2\nonumber\\
&\quad\lesssim\int_I\left\lVert D_y\tilde q(D_x) D_x^{(b^{(1)}-5)_+ +1}w^{(1)}\right\rVert_{-\delta+2}^2\mathrm{d}t\times\skliaustask x^{-\frac32} D_x^{b^{(2)}}w^{(2)}\skliaustasd_{BC^0(I\times(0,\infty)\times\mathbb R)}^2,
\end{align}
where we have used \reftext{Lemma~\ref{lem:mainhardy}} in the last line. The first factor in \reftext{\eqref{f3_2}} is controlled by the $\breve k$-contribution to $\skliaustask v\skliaustasd_\mathrm{sol}^{2}$, and the second one is treated by interpolation as follows:
\begin{align}\label{f3_2_2}
\left\lVert x^{-\frac32} D_x^{b^{(2)}}w^{(2)}\right\rVert_{BC^0(I\times(0,\infty)\times\mathbb R)}^2 &\lesssim \sup_{t\in I} \left\lVert D_x^{b^{(2)}}w^{(2)}
\right\rVert_{1,\delta+1} \times \sup_{t\in I} \left\lVert D_yD_x^{b^{(2)}}w^{(2)}
\right\rVert_{1,-\delta+2}\nonumber\\
&\lesssim\sup_{t\in I}\left\lVert D_x^{b^{(2)}}w^{(2)}
\right\rVert_{1,\delta+1}^2 + \sup_{t\in I}\left\lVert D_yD_x^{b^{(2)}}w^{(2)}
\right\rVert_{1,-\delta+2}^2 \nonumber\\
&\lesssim\sup_{t\in I}\left\lVert (D_x-3)(D_x-2)\tilde q(D_x)D_xv
\right\rVert_{\check k,\delta+1}^2 + \sup_{t\in I}\left\lVert D_y\tilde q(D_x)D_x v
\right\rVert_{\breve k,-\delta+2}^2,
\end{align}
where in the last line we have used \reftext{Lemma~\ref{lem:mainhardy}} and that the factor $(D_x-1)$, coming from $\tilde q(D_x)$, annihilates $v_1 x$ and $(v_0)_y x$. The conditions on indices required for \reftext{\eqref{f3_2_2}} to apply, reduce to $\floor{ \frac{\breve k}{2}} \le \check k +3$, which is already implied by \reftext{\eqref{parameters_breve}}. This concludes the bound for the second summand in \reftext{\eqref{f3}}.

\medskip

The third summand in \reftext{\eqref{f3}} can be estimated according to
\begin{align}\label{f3_3}
&\int_I\left\lVert D_x^{b^{(1)}}w^{(1)}\times D_yD_x^{b^{(2)}}w^{(2)}\right\rVert_{-\delta+\frac72}^2\mathrm{d}t\nonumber\\
&\quad\lesssim\int_I\left\lVert x^{-\frac32}D_x^{b^{(1)}}w^{(1)}\right\rVert_{BC^0((0,\infty)_x\times\mathbb R_y)}^2 \mathrm{d}t\times\sup_{t\in I}\left\lVert D_yD_x^{b^{(2)}}w^{(2)}\right\rVert_{-\delta+2}^2\nonumber\\
&\quad\lesssim\int_I\left\lVert x^{-\frac32}D_x^{b^{(1)}}w^{(1)}\right\rVert_{BC^0((0,\infty)_x\times\mathbb R_y)}^2 \mathrm{d}t\times\sup_{t\in I}\left\lVert D_y\tilde q(D_x)D_xv\right\rVert_{\breve k,-\delta+2}^2,
\end{align}
where we have used \reftext{Lemma~\ref{lem:mainhardy}} and the fact that $v_1 x$ and $(v_0)_y x$ are annihilated by $\tilde q(D_x)$. The above second factor directly appears in $\skliaustask v\skliaustasd_\mathrm{sol}^2$ and the first factor is treated as follows:
\begin{align}\label{f3_3_1}
&\int_I\left\lVert x^{-\frac32}D_x^{b^{(1)}}w^{(1)}\right\rVert_{BC^0((0,\infty)_x\times\mathbb R_y)}^2 \mathrm{d}t \nonumber\\
&\quad\lesssim\int_I\left\lVert D_x^{b^{(1)}}w^{(1)}\right\rVert_{1,-\delta+\frac32} \times \left\lVert D_yD_x^{b^{(1)}}w^{(1)}\right\rVert_{1,\delta+\frac32}\mathrm{d}t\nonumber\\
&\quad\lesssim\int_I\left\lVert \tilde q(D_x)D_xv\right\rVert_{\check k +2,-\delta+\frac32}^2\mathrm{d}t + \int_I\left\lVert(D_x-3)(D_x-2)\tilde q(D_x)D_xv\right\rVert_{\check k+2,\delta+\frac32}^2\mathrm{d}t,
\end{align}
where in the last line we have used \reftext{Lemma~\ref{lem:mainhardy}}
and that $v_1 x$ and $(v_0)_y x$ are in the kernel of $\tilde q(D_x)$.
Furthermore, we require constraints on indices that reduce to $\breve k +1\le \check k$,
which, however, is implied by \reftext{\eqref{parameters_breve}}.
The bound \reftext{\eqref{f3_3_1}} completes the treatment of \reftext{\eqref{f3}}
and thus of all factors in \reftext{\eqref{tg4_1}}.
Collecting all estimates obtained so far,
we conclude the treatment of $\tilde g^{(4)}$
given by \reftext{\eqref{gprime41}} and obtain
\begin{equation}\label{est_tg4_1}
\int_I\left\lVert \tilde g^{(4)}\right\rVert_{-\delta+\frac52}^2\mathrm{d}t\lesssim (3+|\tau|)\left(1 - \tfrac C 2 \skliaustask v\skliaustasd_\mathrm{sol}\right)^{-|\tau|} \left(\tfrac C 2 \skliaustask v\skliaustasd_\mathrm{sol}\right)^{2m} \lesssim m \left(C \skliaustask v\skliaustasd_\mathrm{sol}\right)^{2m}
\end{equation}
under the condition that $\skliaustask v\skliaustasd_\mathrm{sol}\le C^{-1}$
and using $\lvert\tau\rvert\le m$ and $\lvert\nu\rvert\le 4$. 

\medskip

The case where $\tilde g^{(4)}$ is given by \reftext{\eqref{gprime42}}
is treated in exactly the same way,
but with the role of the factor $(v_1)_y$ now taken over by $(v_0)_{yy}$.
For $\tilde g^{(4)}$ given by \reftext{\eqref{gprime43}},
we estimate according to
\begin{equation}\label{tg4_3}
\begin{aligned}
\int_I \left\lVert \tilde g^{(4)} \right\rVert_{-\delta+\frac52}^2\mathrm{d}t &\lesssim \lvert\nu\rvert \left\lVert
(1+v_1)^{-3-\lvert\tau\rvert} (v_0)_y^{\lvert\nu\rvert} \right\rVert_{BC^0(I\times \mathbb R)}^2
\times \int_I \left\lVert D_y D_x^{b^{(1)}} w^{(1)} \times D_x^{b^{(2)}} w^{(2)} \right\rVert_{-\delta+\frac 7 2}^2\mathrm{d}t \\
&\phantom{\lesssim} \times \prod_{j = 3}^m \skliaustask x^{-1} D_x^{b^{(j)}} w^{(j)} \skliaustasd_{BC^0(I \times (0,\infty) \times \R)}^2,
\end{aligned}
\end{equation}
where $\lvert\nu\rvert \left\lVert
(1+v_1)^{-3-\lvert\tau\rvert} (v_0)_y^{\lvert\nu\rvert} \right\rVert_{BC^0(I\times \mathbb R)}^2$ and $\prod_{j = 3}^m \skliaustask x^{-1} D_x^{b^{(j)}} w^{(j)} \skliaustasd_{BC^0(I \times (0,\infty) \times \R)}$ can be teated as in the case in which $\tilde g^{(4)}$ is given by \reftext{\eqref{gprime41}}. The remaining factor $\int_I\left\lVert D_y D_x^{b^{(1)}}w^{(1)}\times D_x^{b^{(2)}}w^{(2)}\right\rVert_{-\delta+\frac72}^2\mathrm{d}t$ was already estimated in \reftext{\eqref{f3_2}}, so that under the condition that $\skliaustask v\skliaustasd_\mathrm{sol} \le C^{-1}$ and using the fact that $|\tau|\le m$ and $|\nu|\le 4$, \reftext{\eqref{tg4_3}} upgrades to
\begin{equation}\label{est_tg4_3}
\int_I \left\lVert \tilde g^{(4)}\right\rVert_{-\delta+\frac52}^2\mathrm{d}t\lesssim\left(1-\tfrac C 2\skliaustask v\skliaustasd_\mathrm{sol}\right)^{-|\tau|} \left(\tfrac C 2 \skliaustask v\skliaustasd_\mathrm{sol}\right)^{2m}
\le \left(C \skliaustask v\skliaustasd_\mathrm{sol}\right)^{2m}.
\end{equation}
For $\tilde g^{(4)}$ given by \reftext{\eqref{gprime44}}, we have

\begin{equation}\label{tg4_4}
\begin{aligned}
\int_I \left\lVert \tilde g^{(4)} \right\rVert_{-\delta+\frac52}^2\mathrm{d}t &\lesssim \lvert\nu\rvert \left\lVert
(1+v_1)^{-3-\lvert\tau\rvert} (v_0)_y^{\lvert\nu\rvert} \right\rVert_{BC^0(I\times \mathbb R)}^2
\times \int_I \left\lVert D_x^{b^{(1)}} w^{(1)} \times D_y D_x^{b^{(2)}} w^{(2)} \right\rVert_{-\delta+\frac 7 2}^2\mathrm{d}t \\
&\phantom{\lesssim} \times \prod_{j = 3}^m \skliaustask x^{-1} D_x^{b^{(j)}} w^{(j)} \skliaustasd_{BC^0(I \times (0,\infty) \times \R)}^2,
\end{aligned}
\end{equation}
where the only difference compared to \eqref{tg4_3} is the factor $\int_I \left\lVert D_x^{b^{(1)}} w^{(1)} \times D_y D_x^{b^{(2)}} w^{(2)} \right\rVert_{-\delta+\frac 7 2}^2\mathrm{d}t$, which is now estimated as in \reftext{\eqref{f3_3}} rather than \reftext{\eqref{f3_2}}. As a result, under the condition that $\skliaustask v\skliaustasd_\mathrm{sol} \le C^{-1}$ and again using the fact that $|\tau|\le m$ and $|\nu|\le 4$, \reftext{\eqref{tg4_4}} upgrades to the same bound as in \reftext{\eqref{est_tg4_3}}.

\medskip

This concludes the treatment of \reftext{\eqref{gprime4}}, and of the term $\mathcal N^{(2)}(v)$ in the case $\rho=-\delta+\frac52$. Under the condition that $\skliaustask v\skliaustasd_\mathrm{sol} \le C^{-1}$, the bounds \reftext{\eqref{est_tg4_1}} and \reftext{\eqref{est_tg4_3}} give
\begin{equation}\label{est_n2_-d52}
\int_I {\left\lVert D_y\tilde q(D_x-1)(D_x-1){\mathcal N}^{(2)}(v) \right
\rVert}_{\breve k -2,-\delta+\frac52}^2\mathrm{d}
t\lesssim m \left(C {\skliaustask v\skliaustasd}_\mathrm{sol}\right)^{2m}.
\end{equation}
\noindent\textit{Conclusion of the estimates for $\skliaustask {\mathcal
N}^{(1)}(v)\skliaustasd_\mathrm{rhs}$ and $\skliaustask {\mathcal
N}^{(2)}(v)\skliaustasd_\mathrm{rhs}$.}
We now sum estimate~\reftext{\eqref{est_n2_-d12}},
the corresponding estimates of the terms in \reftext{\eqref{remainstoest_N2}},
and estimates~\reftext{\eqref{second_line_n2_bound}},
\reftext{\eqref{third_line_n2_bound}}, \reftext{\eqref{4th_line_n2_final}},
\reftext{\eqref{gprime_2_end}} and \reftext{\eqref{est_n2_-d52}}
together with the estimates coming from taking at least one $D_y$-derivative
of $\mathcal N^{(2)}(v)$ (see~\reftext{\eqref{remainstoest_N2}}
and the discussion thereafter) and from the estimation of the bounds \reftext{\eqref{est_n1v_terms}} for the terms
\reftext{\eqref{simple_n1v_terms}} from
$\skliaustask\mathcal N^{(1)}(v)\skliaustasd_\mathrm{rhs}$.
We thus find, under the condition that
$\skliaustask v\skliaustasd_\mathrm{sol} \le C^{-1}$ as above,
\begin{equation}\label{n2_bound_near0_almost_end}
{\skliaustask \mathcal N^{(1)}(v)\skliaustasd}_\mathrm{rhs}^2 + {\skliaustask \mathcal N^{(2)}(v)\skliaustasd}_\mathrm{rhs}^2\lesssim \skliaustask v\skliaustasd_\mathrm{sol}^4 \sum_{m = 2}^{\infty} c(m) \left(C \skliaustask v\skliaustasd_\mathrm{sol}\right)^{2(m-2)}\lesssim {\skliaustask v\skliaustasd}_\mathrm{sol}^4.
\end{equation}
Here the number $c(m)$ is a bound on the number of terms of fixed degree $m$ in $v$ appearing \reftext{\eqref{est_n1v_terms}}, and in the above mentioned other estimates for $\mathcal N^{(2)}(v)$. 

\medskip

In order to bound $c(m)$,
we note that due to the form of $\mathcal N^{(1)}(v)$
and $\mathcal N^{(2)}(v)$, we can estimate the number of terms
of the form \reftext{\eqref{est_n1v_terms}} by $Cm^C$, and similarly
the total number of terms to be discussed for $\mathcal N^{(2)}(v)$
is bounded by $Cm^C$. As both cases are treated in a similar fashion,
we discuss in detail the latter case, which is more involved.
First consider the index set
${\mathcal I}$ in \reftext{\eqref{simple_n2}},
in which the possibly unbounded indices are
$\tau_2$, $\tau_3$, and $\tau_4$,
for which we have the control $\lvert\tau\rvert\le m$.
Since the total number of partitions of $m$ into three integers is
$\frac 1 2 (m+1)(m+2)$,
the number of terms with a total of $m$
factors is growing like $C m^2$. The contributions to
$\mathcal N^{(2)}(v)$ from the remaining parts of
\reftext{\eqref{simple_n2_terms}} give at most $C$ terms.

\medskip

To each of these terms, we then distribute derivatives coming from the definition of the $\skliaustask\cdot\skliaustasd_\mathrm{rhs}$-norm, i.e.,
we apply $\kappa$ derivatives with $\kappa$ as in \reftext{\eqref{est_term_n1v_cond_2}}.
The sum of coefficients of the terms obtained after this operation is bounded by
\begin{equation}\label{boundc1}
c(m) \le m\binom{C m^2+\kappa}{\kappa} \lesssim
m^{C},
\end{equation}
where the extra factor $m$ comes from the the bound \reftext{\eqref{est_n2_-d52}}, and again $C \in \N$ only depends on $k$, $\tilde k$, $\check k$, $\breve k$, and $\delta$.

\medskip

Due to the bound \reftext{\eqref{boundc1}} on $c(m)$, the condition
${\skliaustask v\skliaustasd}_\mathrm
{sol} \le C^{-1}$ ensures that the series appearing in \reftext{\eqref{n2_bound_near0_almost_end}}
is convergent by the root test. This justifies the last inequality in \reftext{\eqref{n2_bound_near0_almost_end}} and thus we have
\begin{equation}\label{est_nv}
{\skliaustask{\mathcal N}(v)\skliaustasd}_\mathrm{rhs} \lesssim
{\skliaustask v\skliaustasd}_\mathrm{sol}^2 \quad
\mbox{for} \quad{\skliaustask v\skliaustasd}_\mathrm{sol} \ll1.
\end{equation}
\textit{Conclusion of the proof.}
In order to pass from \reftext{\eqref{est_nv}} to the general bound \reftext{\eqref{non_est_main}}, we consider the multilinear terms into which we have
decomposed the nonlinearity in the above arguments.
We note that for an
$m$-linear form
${\mathcal M}\left(w^{(1)},\ldots,w^{(m)}\right)$
we have the decomposition
\begin{align}
& {\mathcal M}\left(w^{(1,1)},\ldots,w^{(m
,1)}\right) - {\mathcal M}\left(w^{(1,2)},\ldots,w^{(m
,2)}\right) \nonumber\\
&\quad= \sum_{j=1}^m {\mathcal M}\left(w^{(1,1)},\ldots
,w^{(j-1,1)},w^{(j,1)}-w^{(j,2)},w^{(j+1,2)},\ldots,w^{(m
,2)}\right
). \label{decomp_nlinear}
\end{align}
This decomposition can be applied for $\mathcal M$ and $m$ corresponding to
\begin{enumerate}[(i)]
\item[(i)] the summands in \reftext{\eqref{simple_n1v_terms}}
that contribute to the expression of
${\skliaustask{\mathcal N}^{(1)}(v)\skliaustasd}_\mathrm{sol}^{2}$,
the terms of the sum contributing to ${\skliaustask\mathcal N^{(2)}(v)\skliaustasd}_\mathrm{sol}$ on which at least one $D_y$-derivative (coming from
definition~\reftext{\eqref{norm_2d_1}} of the norm
$\lVert\cdot\rVert_{\kappa,\rho}$) acts,
and the summands coming from
${\skliaustask{\mathcal N}^{(2)}(v)\skliaustasd}_\mathrm{sol}^{2}$ with weights
$\rho\in\left\{-\delta+\frac12,-\delta+\frac52\right\}$,
\item[(ii)] the terms \reftext{\eqref{second_line_n2_g}},
\reftext{\eqref{third_line_n2_defg}}, \reftext{\eqref{4th_line_n2_g}},
and \reftext{\eqref{gprime4}}, which give the
contributions $g^{(2)}$, $g^{(3)}$, $g^{(4)}$,
and $\tilde g^{(4)}$ to ${\mathcal N}^{(2)}(v)$
for the weights $\rho\in\left\{\delta+\frac12,\pm\delta+\frac32,-\delta+\frac52\right\}$.
The latter come from those terms in the norms $\lVert\cdot\rVert_{\kappa,\rho}$,
in which only $D_x$-derivatives are distributed.
\end{enumerate}
This increases the final coefficients in our estimates, such as $c(m)$ appearing in \reftext{\eqref{n2_bound_near0_almost_end}}, by a factor of at most $m$, which is still controllable with the same strategy explained after \eqref{n2_bound_near0_almost_end}.

\medskip

As an example of how to use the expression \reftext{\eqref{decomp_nlinear}},
we treat terms of the form $h:=(1+v_1)^{-4}v_1 q(D_x)(D_x-2)(v-v_0-v_1x)$,
which is a special case of \reftext{\eqref{second_line_n2_g}}, in more detail. The other cases are treated analogously. If we denote by $h^{(j)}$ the versions of the term $h$ with $v=v^{(j)}$ for $j=1,2$, and by $\tilde h^{(j)}$ the terms coming from the factor $q(D_x)(D_x-2)(v^{(j)}-v_0^{(j)}-v_1^{(j)}x)$ with $v=v^{(j)}$ for $j=1,2$, then
\begin{eqnarray*}
h^{(1)} - h^{(2)} &=& 
\left(1+v_1^{(1)}\right)^{-4} 
\left(1+v_1^{(2)}\right)^{-4} v_1^{(1)}\tilde h^{(1)} 
\left(\left(1+v_1^{(2)}\right)^4 - \left(1+v_1^{(1)}\right)^4\right) \\
&& + \left(1+v_1^{(2)}\right)^{-4}\left(v_1^{(1)}-v_1^{(2)}\right)\tilde h^{(1)}+\left(1+v_1^{(2)}\right)^{-4}v_1^{(2)} \left(\tilde h^{(1)} - \tilde h^{(2)}\right),
\end{eqnarray*}
where for the factors $\left(1+v^{(j)}\right)^{-4}$ and $v_1^{(j)}$ with
$j = 1,2$ and $\tilde h^{(j)}$ we
may use the same bounds as in \reftext{\eqref{first_line_n2_1}}, \reftext{\eqref{second_line_n2_2}}
and \reftext{\eqref{second_line_n2_3}}, respectively, while for
the differences
\begin{equation*}
\left(\left(1+v_1^{(2)}\right)^{4} - \left(1+v_1^{(1)}\right)^{4}\right), \quad \left(v_1^{(1)}-v_1^{(2)}\right)\quad
\mbox{and} \quad \left(w^{(1)}-w^{(2)}\right)
\end{equation*}
we use
\reftext{\eqref{decomp_nlinear}} and then employ bounds
analogous to those in \reftext{\eqref{first_line_n2_1}}, \reftext{\eqref{second_line_n2_2}} and \reftext{\eqref{second_line_n2_3}} again. From the
above we directly deduce a bound of the form
\reftext{\eqref{non_est_main}} in this case.

\medskip

Finally, we recall and simplify the conditions we have required
on the indices. We have assumed that \reftext{\eqref{parameterss1}} holds,
then we have imposed conditions~ \reftext{\eqref{constraint_bc0bounds}},
\reftext{\eqref{parameters_breve}}, and the conditions
$k\ge \max\left\{\tilde k+4,\check k+6,\breve k+5\right\}$,
$\tilde k \ge3$, $\check k \ge4$, and $\breve k\ge4$,
which summarize the requirements of
\reftext{Lemmata~\ref{lem:equivalence_init}, \ref{lem:equivalence_rhs},
\ref{lem:approx_init}, \ref{lem:approx_rhs}, and \ref{lem:approx_sol}}.
The combination of these constraints gives
\begin{align*}
\max\left\{\tilde k,\check k +2,\breve k + 1\right\} +4 &\le k,
& \left\lfloor\frac{k+1}{2}\right\rfloor +3 &\le\min\left\{\tilde k,\check k\right\},
& \left\lfloor\frac{\tilde k}{2}\right\rfloor + 7&\le\min\left\{\tilde k, \check k\right\},\\
\left\lfloor \frac{\check k}{2}\right\rfloor + 7&\le\min\left\{\tilde k,\check k\right\},
& \breve k + 8 &\le\min\left\{\tilde k,\check k\right\},
& 4 &\le \breve k.
\end{align*}
These conditions are equivalent to
\begin{equation*}
\max\left\{13, \left\lfloor\frac{k+1}2\right\rfloor+3,\breve k+8\right\}\le \min\left\{\tilde k, \check k\right\}, \quad \max\left\{\tilde k, \check k+2\right\}+4\le k, \quad 4 \le \breve k,
\end{equation*}
which are our assumed condition \reftext{\eqref{parameters}} of
\reftext{Assumptions~\ref{ass:parameters}}.
\end{proof}
%
\subsection{Well-posedness, a-priori estimates, and stability for the
nonlinear equation}\label{sec:well}

\begin{proof}[Proof of \reftext{Theorem~\ref{main_th}}]
Throughout the proof, all estimates and constants only depend on $k$, $\tilde k$, $\check k$, $\breve k$, and $\delta$.
We also use the equivalence of $\skliaustask \cdot \skliaustasd_\mathrm{Sol}$
and $\skliaustask \cdot \skliaustasd_\mathrm{sol}$ on the time interval
$I = [0,\infty)$ (cf.~\reftext{Lemma~\ref{lem:equivalence_sol}})
without further mention.

\medskip
\noindent\textit{Existence.}
We fix $\varepsilon > 0$ to be determined later, and consider a locally
integrable $v^{(0)}: (0,\infty)\times\mathbb{R}\to\mathbb{R}$ with
$ {\left\lVert v^{(0)} \right\rVert}_\mathrm
{init}\le\varepsilon$, like in the assumption of the theorem.
Then we will work in the spaces
\begin{subequations}
\begin{eqnarray}
S_{\varepsilon^\prime,v^{(0)}}&:=&\left\{v:(0,\infty)^2\times
\mathbb{R}\to\mathbb{R}\text{
locally integrable: }{\skliaustask v\skliaustasd}_\mathrm
{sol}\le\varepsilon^\prime,
v_{|t=0}=v^{(0)}\right\},\quad \label{def_sepsv}
\eqncr
R_{\varepsilon^\prime}&:=&\left\{f:(0,\infty)^2\times\mathbb
{R}\to\mathbb{R}\text{ locally
integrable: }{\skliaustask f\skliaustasd}_\mathrm{rhs}\le\varepsilon
^\prime\right\},
\end{eqnarray}
\end{subequations}
where $\varepsilon^\prime> 0$ will be fixed later.
In order to show that $S_{\varepsilon^\prime,v^{(0)}}$
is nonempty for $\varepsilon > 0$ small enough,
we first apply Proposition~\reftext{\ref{prop:maxregevolution}}
with $f=0$ and $v^{(0)}$ as above, and consider the so obtained solution $v$.
Due to estimate \reftext{\eqref{mr_interval}},
we obtain that ${\left\lVert v^{(0)} \right\rVert}_\mathrm{init}\le\varepsilon$
implies ${\skliaustask v\skliaustasd}_\mathrm{sol}\le C\varepsilon$,
which for $\varepsilon\le C^{-1}\varepsilon^\prime$ implies that such $v$
belongs to $S_{\varepsilon^\prime, v^{(0)}}$.
Below we impose this constraint on $\varepsilon$ throughout.

Consider the maps
\begin{equation*}
{\mathcal N}:S_{\varepsilon^\prime,v^{(0)}}\to R_{\varepsilon^\prime
}, \quad
T_{v^{(0)}}:R_{\varepsilon^\prime}\to S_{\varepsilon^\prime,v^{(0)}},
\end{equation*}
where to every $v\in S_{\varepsilon^\prime, v^{(0)}}$ the nonlinear operator
${\mathcal N}$ (explicitly defined in \reftext{\eqref{def_nonlinearity}})
associates the nonlinear right-hand side of our equation \reftext{\eqref{eq_transformed}},
and to $f\in R_{\varepsilon^\prime}$ the linear operator $T_{v^{(0)}}$ associates
the solution to the linearized equation \reftext{\eqref{eq_lin}}, as given by
\reftext{Proposition~\ref{prop:maxregevolution}}.

By \reftext{Proposition~\ref{prop:non_est}} for the case $v^{(1)}=v, v^{(2)}=0$,
we find that if $\varepsilon^\prime$ is chosen sufficiently small, we have
\begin{equation}\label{norm_control_N}
{\skliaustask{\mathcal N}(v)\skliaustasd}_\mathrm{rhs}\lesssim
{\skliaustask v\skliaustasd}_\mathrm{sol}^2,
\end{equation}
which shows that ${\mathcal N}$ maps $S_{\varepsilon^\prime,v^{(0)}}$
to $R_{\varepsilon^\prime
}$ if $0 < \varepsilon^\prime\ll 1$. Similarly, by the
maximal-regularity estimate \reftext{\eqref{mr_interval}} from
\reftext{Proposition~\ref{prop:maxregevolution}}, we find that for any
locally integrable $v^{(0)} \colon (0,\infty) \times \R \to \R$
such that $ {\left\lVert v^{(0)} \right\rVert}_\mathrm{init}<\infty$
and for any locally integrable $f \colon (0,\infty)^2 \times \R \to \R$
with $\skliaustask f \skliaustasd_\mathrm{rhs} < \infty$,
we have the bound
\begin{equation}\label{norm_control_T}
{\skliaustask T_{v^{(0)}}(f)\skliaustasd}_\mathrm{sol}\lesssim
 {\left\lVert v^{(0)} \right\rVert}_\mathrm
{init}+{\skliaustask f\skliaustasd}_\mathrm{rhs},
\end{equation}
and thus $T_{v^{(0)}}$ also maps $R_{\varepsilon^\prime}$ to
$S_{\varepsilon^\prime, v^{(0)}}$ if $\varepsilon^\prime\ll1$ and if
${\left\lVert v^{(0)} \right\rVert}_\mathrm{init} \le \varepsilon \ll \varepsilon^\prime$,
which can be ensured up to diminishing $\varepsilon^\prime$ and $\varepsilon$.
Under this last condition, we may define the self-map
\begin{equation}\label{def_T_v0}
\mathcal T_{v^{(0)}}:=T_{v^{(0)}}\circ{\mathcal N}:S_{\varepsilon
^\prime,v^{(0)}}\to
S_{\varepsilon^\prime,v^{(0)}},
\end{equation}
and in order to prove the existence part of our theorem it will suffice
to prove that, for $\varepsilon^\prime,\varepsilon > 0$ small enough, the operator $\mathcal T_{v^{(0)}}$ has a fixed point in $S_{\varepsilon
^\prime ,v^{(0)}}$ for each $v^{(0)}$ such that $ {\left\lVert
v^{(0)} \right\rVert}_\mathrm
{init}\le\varepsilon$. To this aim we note that, again by
\reftext{Propositions~\ref{prop:maxregevolution}} and \reftext{\eqref{non_est_main}},
we may write for any given $v^{(1)}, v^{(2)}\in S_{\varepsilon^\prime,v^{(0)}}$,
\begin{eqnarray*}
{\skliaustask\mathcal T_{v^{(0)}}\left(v^{(1)}\right)-\mathcal
T_{v^{(0)}}\left(v^{(1)}\right)\skliaustasd}_\mathrm{sol} &\stackrel{\text{\reftext{\eqref{norm_control_T}, \eqref{def_T_v0}}}}{\lesssim}&
{\skliaustask{\mathcal N}\left(v^{(1)}\right)
-{\mathcal N}\left(v^{(2)}\right)\skliaustasd}_\mathrm{rhs}\\
&\stackrel{\text{\reftext{\eqref{non_est_main}}}}{\lesssim}&
\left({\skliaustask v^{(1)}\skliaustasd}_\mathrm{sol}
+ {\skliaustask v^{(2)}\skliaustasd}_\mathrm{sol}\right)
{\skliaustask v^{(1)} - v^{(2)}\skliaustasd}_\mathrm{sol} \\
&\stackrel{\text{\reftext{\eqref{def_sepsv}}}}{\le}&
2\varepsilon^\prime{\skliaustask v^{(1)} - v^{(2)}\skliaustasd}_\mathrm{sol}.
\end{eqnarray*}
Thus for $\varepsilon^\prime> 0$ small enough there exists a choice of $\varepsilon > 0$
such that that $\mathcal T_{v^{(0)}}$ is indeed a contraction,
and therefore it allows for a fixed point. The property that
\begin{equation}\label{fixedpt}
\mathcal T_{v^{(0)}}(v)=v
\end{equation}
directly establishes the existence part of the theorem, in view of the
definition \reftext{\eqref{def_T_v0}}.

\medskip
\noindent\textit{A-priori bounds.}
We now use the above definitions and estimates to find
\begin{eqnarray*}
{\skliaustask v\skliaustasd}_\mathrm{sol}&\stackrel{\text{\reftext{\eqref{fixedpt}}}}{=}&{\skliaustask\mathcal T_{v^{(0)}}(v)\skliaustasd
}_\mathrm{sol}\stackrel{\text{\reftext{\eqref{def_T_v0}}}}{=}{\skliaustask
T_{v^{(0)}}\left({\mathcal N}(v)\right
)\skliaustasd}_\mathrm{sol}\\
&\stackrel{\text{\reftext{\eqref{norm_control_T}}}}{\le}& C_1 \left(
{\left\lVert v^{(0)} \right\rVert}_\mathrm{init} + {\skliaustask
{\mathcal N}(v)\skliaustasd}_\mathrm{rhs}\right)
\stackrel{\text{\reftext{\eqref{norm_control_N}}}}{\le} C_2 \left( {\left
\lVert v^{(0)} \right\rVert}_\mathrm{init} + {\skliaustask
v\skliaustasd}_\mathrm{sol}^2\right),
\end{eqnarray*}
with constants $C_j$.
Fixing $\varepsilon^\prime< \frac{1}{C_2}$ we can absorb the
last term above to the left obtaining
\begin{equation*}
{\skliaustask v\skliaustasd}_\mathrm{sol} \le\frac{1}{1-C_2
\varepsilon^\prime} {\left\lVert v^{(0)} \right\rVert
}_\mathrm{init},
\end{equation*}
which gives the a-priori bound \reftext{\eqref{main_estimate}}.

\medskip
\noindent\textit{Uniqueness.}
Assume that for a given $v^{(0)}$ as in the statement of the theorem,
there exist two different finite-norm solutions $v^{(1)}$ and $v^{(2)}$ to our
equation,
where we assume that $v^{(1)}$ is the previously
constructed one meeting the a-priori estimate
${\skliaustask v^{(1)}\skliaustasd}_\mathrm{sol} \lesssim{\left\lVert v^{(0)} \right\rVert
}_\mathrm{init} \le \varepsilon$. Then due to the continuity
in time as proved in
\reftext{Corollary~\ref{coroll:contnorm}} we find that there exists a maximal
time $t^*$ such
that $v^{(1)}(t,x,y)=v^{(2)}(t,x,y)$ on $[0,t^*]\times(0,\infty
)\times\mathbb{R}
$ and $v^{(1)}(t^*+t,\cdot,\cdot)\neq v^{(2)}(t^*+t,\cdot,\cdot)$ for
all $t > 0$ small enough. Then using the traces
$v^{(1)}|_{t=t^*}=v^{(2)}|_{t=t^*}$ as initial data for our equation on
$I_\tau:=[t^*,t^*+\tau)$ and using the norms ${\skliaustask\cdot
\skliaustasd}_{\mathrm
{sol},\tau}$ defined like in \reftext{\eqref{norm_sol}}, but where the interval
in \reftext{\eqref{norm_sol}} is chosen to be $I:=I_\tau$,
we have by \reftext{\eqref{prop:non_est}}, \reftext{\eqref{norm_control_T}},
and \reftext{\eqref{def_T_v0}}
\begin{eqnarray}\label{uniquenesseq}
{\skliaustask v^{(1)}-v^{(2)}\skliaustasd}_{\mathrm{sol},\tau} &=&
{\skliaustask\mathcal T_{v^{(0)}}\left(v^{(1)}\right)-\mathcal
T_{v^{(0)}}\left(v^{(2)}\right)\skliaustasd}_{\mathrm{sol},\tau
}\nonumber\\
&\lesssim&\left({\skliaustask v^{(1)}\skliaustasd}_{\mathrm
{sol},\tau} + {\skliaustask v^{(2)}\skliaustasd}_{\mathrm{sol},\tau
}\right) {\skliaustask v^{(1)} - v^{(2)}\skliaustasd}_{\mathrm
{sol},\tau}.
\end{eqnarray}
However, due to the last item of \reftext{Corollary~\ref{coroll:contnorm}},
we have for $j=1,2$ that
\begin{equation*}
\lim_{\tau \searrow t^*} {\skliaustask v^{(j)}\skliaustasd}_{\mathrm{sol},\tau} \stackrel{\text{\reftext{\eqref{norm_init},\eqref{norm_sol}}}}{=}
 {\left\lVert v^{(j)}|_{t=t^*} \right\rVert}_\mathrm{init}
 \stackrel{\text{\reftext{\eqref{main_estimate}}}}{\lesssim} {\left\lVert v^{(0)} \right\rVert}_\mathrm{init} \le \varepsilon,
\end{equation*}
which for $0 < \varepsilon\ll1$ contradicts \reftext{\eqref{uniquenesseq}},
concluding the proof of uniqueness.

\medskip
\noindent\textit{Stability.}
As a consequence of the bound
${\skliaustask v\skliaustasd}_\mathrm{sol}<\infty$ for the solution,
by using the first item of \reftext{Corollary~\ref{coroll:contnorm}},
the stability property $\lim_{t\to+\infty} {\left\lVert
v(t,\cdot,\cdot) \right\rVert}_\mathrm{init}=0$ directly follows. This concludes the proof of \reftext{Theorem~\ref{main_th}}.
\end{proof}



\appendix
\renewcommand{\thesection}{\Alph{section}}

\section{Coordinate transformations}\label{app:coordtrans}
\subsection{Transformation onto a fixed domain}\label{app:trafo_fix}
In this appendix, we transform problem~\reftext{\eqref{tfe_free}} via the change
of variables \reftext{\eqref{hodograph}}. First note that for a function $f =
f(t,y,z)$, the gradient of the transformed function $\tilde f(t,x,y):=
f\left(t,y,Z(t,x,y)\right)$ is given by:
%
%
\begin{equation}\label{transf_deriv}
\begin{pmatrix} \tilde f_t \\ \tilde f_x \\ \tilde f_y
\end{pmatrix}
=
\begin{pmatrix} f_t + f_z Z_t \\ f_z Z_x \\ f_y + f_z Z_y
\end{pmatrix}
=
\begin{pmatrix} 1 & 0 & Z_t\\ 0 & 0 & Z_x \\ 0 & 1 & Z_y
\end{pmatrix}
\cdot
\begin{pmatrix} f_t \\ f_y \\ f_z
\end{pmatrix}
.
\end{equation}
By inverting the matrix, we can read off the transformation of
derivatives from those of $f$ to those of $\tilde f$ as
%
%
\begin{subequations}\label{trafo_der}
%
%
\begin{align}
\partial_t &\mapsto\partial_t - Z_t F \partial_x, \quad\mbox{where}
\quad F := Z_x^{-1},
\eqncr
\partial_y &\mapsto\partial_y - G \partial_x, \quad\mbox{where}
\quad
G := Z_x^{-1} Z_y,
\eqncr
\partial_z &\mapsto F \partial_x.
\end{align}
\end{subequations}
Combining \reftext{\eqref{hodograph}} and \reftext{\eqref{trafo_der}} we find for \reftext{\eqref{tfe_higher}}
%
%
\begin{align}\label{tfe_transformed0}
(\partial_t - F Z_t \partial_x) x^{\frac3 2} + \left((\partial_y - G
\partial_x) x^3 (\partial_y - G \partial_x) + F \partial_x x^3 F
\partial_x \right) \left((\partial_y - G \partial_x)^2 + (F
\partial
_x)^2\right) x^{\frac3 2} = 0
\end{align}
for $t, x > 0$ and $y \in\mathbb{R}$. Now we may use that
\begin{equation*}
\left((\partial_y - G \partial_x)^2 + (F \partial_x)^2\right)
x^{\frac
3 2} = \frac{3}{2 x^{\frac1 2}} \left(- D_y G + G \left(D_x +
\tfrac12\right) G + F \left(D_x + \tfrac1 2\right) F\right),
\end{equation*}
with $D_x = x \partial_x$ and $D_y = x \partial_y$ (cf. \reftext{\eqref{log_der}}). Next, we observe that
\begin{align*}
& \left((\partial_y - G \partial_x) x^3 (\partial_y - G \partial
_x) + F
\partial_x x^3 F \partial_x \right) x^{- \frac1 2}= x^{\frac1 2}
\left
(D_y^2 - D_y G \left(D_x - \tfrac1 2\right) - G D_y \left(D_x +
\tfrac
3 2\right) \right. \\
& \qquad\qquad\left. + G \left(D_x + \tfrac3 2\right) G \left
(D_x -
\tfrac1 2\right) + F \left(D_x + \tfrac3 2\right) F \left(D_x -
\tfrac1 2\right)\right)
\end{align*}
and as a result equation~\reftext{\eqref{tfe_transformed0}} turns into \reftext{\eqref{tfe_transformed}}.

\subsection{Advection velocity and expansions near the contact
line}\label{app:advection}
We recall that, with the notations introduced in \reftext{\eqref{trafo_der}}, and
given the definition \reftext{\eqref{def_V}} of the velocity $V$ under which the
height $h \stackrel{\text{\reftext{\eqref{hodograph}}}}{=} x^{\frac3 2}$ of our fluid
is advected via \reftext{\eqref{tfe_higher}}, we obtain
%
%
\begin{eqnarray}\label{reexpr_v}
V&:=&h\nabla\Delta h = h
\begin{pmatrix} \partial_y \\ \partial_z
\end{pmatrix}
\left(\partial_y^2 + \partial_z^2\right)h = x^{\frac3 2}
\begin{pmatrix} \partial_y-G\partial_x\\
F\partial_x
\end{pmatrix}
\left(\left(\partial_y-G\partial_x\right)^2 + \left(F\partial
_x\right
)^2\right)x^{\frac3 2}\nonumber\\
&=&\frac3 2 x^{\frac1 2}
\begin{pmatrix} D_y-GD_x\\FD_x
\end{pmatrix}
x^{-\frac1 2}\left(-D_yG+G\left(D_x+\tfrac1 2\right)G +F\left
(D_x+\tfrac
1 2\right)F\right)\nonumber\\
&=&\frac3 2
\begin{pmatrix} D_y-G\left(D_x-\tfrac12\right)\\
F\left(D_x-\tfrac
12\right)
\end{pmatrix}
\underbrace{\left(-D_yG+G\left(D_x+\tfrac1 2\right)G +F\left
(D_x+\tfrac
1 2\right)F\right)}_{=: P}.
\end{eqnarray}
We note that the traveling-wave profile $Z_\mathrm{TW}=x-\tfrac38t$
meets $(Z_\mathrm{TW})_x = 1$ and $(Z_\mathrm{TW})_y = 0$. Using the
power series \reftext{\eqref{as_sol}} valid as $x\searrow0$, we expand the
expression of $V$ as $x\searrow0$ into powers of $x$ (cf.~\reftext{\eqref{asy_V}}), i.e., almost everywhere
%
%
\begin{equation}\label{asy_V_gen}
V(t,y,Z(t,x,y)) = V_0(t,y) + V_\beta(t,y)x^\beta+V_1(t,y)x + o\left
(x^{1+\delta}\right) \quad\mbox{as} \quad x\searrow0.
\end{equation}
Now we separate the terms based on formulas~\reftext{\eqref{trafo_der}} and \reftext{\eqref{def_v}}, i.e., $F=(1+v_x)^{-1}$ and $G=v_y(1+v_x)^{-1}$. With the
notation as in \reftext{\eqref{expansion_v}} or \reftext{\eqref{as_sol}}, we obtain almost
everywhere
%
%
\begin{subequations}\label{expansion_FG}
%
%
\begin{eqnarray}
\noxml{\hspace*{-26pt}}D^\ell F &=& D^\ell\left(\frac1{1+v_1} - \frac{(1+\beta)v_{1+\beta
}}{(1+v_1)^2}x^\beta- \frac{2 v_2}{(1+v_1)^2}x\right) +
o(x^{1+\delta
}),\label{expansion_F}
\eqncr
\noxml{\hspace*{-26pt}}D^\ell G &=& D^\ell\left(\frac{(v_0)_y}{1+v_1} - \frac{(1+\beta
)(v_0)_yv_{1+\beta}}{(1+v_1)^2}x^\beta+ \left(\frac{(v_1)_y}{1+v_1} -
\frac{2(v_0)_yv_2}{(1+v_1)^2}\right)x\right) + o(x^{1+\delta})
\label{expansion_G}
\end{eqnarray}
\end{subequations}
as $x \searrow 0$, where we need to allow for $ {\left\lvert\ell\right
\rvert} \le2$. With $P$ defined as in \reftext{\eqref{reexpr_v}} and $D^\ell P= D^\ell\bigl(P_0+P_\beta x^\beta+ P_1
x\bigr) + o(x^{1+\delta})$ for $ {\left\lvert\ell
\right\rvert}\le1$, a straight-forward
computation gives
%
%
\begin{subequations}\label{expansion_P}
%
%
\begin{eqnarray}
P_0 &=& \frac12\frac{(v_0)_y^2+1}{(1+v_1)^2},
\eqncr
P_\beta&=& - \frac{(1+\beta)^2v_{1+\beta}\left((v_0)_y^2+1\right
)}{(1+v_1)^3},
\eqncr
P_1 &=& -\frac{(v_0)_{yy}}{1+v_1}+3\frac
{(v_0)_y(v_1)_y}{(1+v_1)^2}-4\frac{\left((v_0)_y^2 +1\right)v_2}{(1+v_1)^3}.
\end{eqnarray}
\end{subequations}
We then find by a direct computation that \reftext{\eqref{asy_V_gen}} holds with
$V_\beta=0$, thus confirming \reftext{\eqref{asy_V}}. More precisely, we find the
following expressions for $V_0$, $V_\beta$, and $V_1$:
%
%
\begin{subequations}\label{coeff_V}
%
%
\begin{eqnarray}
V_0 &=& \frac38 \frac{1+(v_0)_y^2}{(1+v_1)^3}
\begin{pmatrix} (v_0)_y \\
-1
\end{pmatrix}
,\quad\quad
V_\beta=
\begin{pmatrix} 0 \\ 0
\end{pmatrix}
,\label{coeff_V0_Vbeta}
\eqncr
V_1 &=& \frac38
\begin{pmatrix} \frac{6(v_0)_y(v_0)_{yy}}{(1+v_1)^2} - \frac{\left
(6+9(v_0)_y^2\right)(v_1)_y}{(1+v_1)^3} + \frac{6\left((v_0)_y^3 +
(v_0)_y\right)v_2}{(1+v_1)^4}\\
-\frac{2(v_0)_{yy}}{(1+v_1)^2} + \frac{6(v_0)_y(v_1)_y}{(1+v_1)^3} -
\frac{6\left((v_0)_y^2 +1\right)v_2}{(1+v_1)^4}
\end{pmatrix}
.\label{coeff_V1}
\end{eqnarray}
\end{subequations}
Due to the advection equation \reftext{\eqref{def_V}}, if $\partial\{h > 0\}$ is at
time $t=0$ the graph
\begin{equation*}
\Gamma_0:= \left\{\left(y,Z_0^{(0)}(y)\right):\ y\in\mathbb
{R}\right\},
\end{equation*}
then the advected contact line at time $t$, which we denote $\Gamma
_t:=\partial\{h > 0\}$, can be parameterized by
%
%
\begin{equation}\label{contact_1}
\Gamma_t=\{(Y(t,y),Z(t,y)):\ y\in\mathbb{R}\},
\end{equation}
and the above parameterization evolves under the advection equation \reftext{\eqref{advection}}.

We claim that we may express $\Gamma_t$ as the graph of a regular
function, i.e., that there exists a regular function $Z_0:[0,\infty
)\times\mathbb{R}\to\mathbb{R}$ which satisfies $Z_0(0,y) =
Z_0^{(0)}(y)$ and
%
%
\begin{equation}\label{def_Z0}
\Gamma_t=\{(y,Z_0(t,y)):\ y\in\mathbb{R}\}\text{ for all }t\ge0.
\end{equation}
To obtain \reftext{\eqref{def_Z0}}, note that under the condition that
%
%
\begin{equation}\label{graphcondition}
\sup_{t\in I} {\left\lvert(v_0)_y \right\rvert
}_{BC^0(\mathbb{R}_y)}<\infty,
\end{equation}
the velocity $V_0$ is never perpendicular to the $z$-axis during our
time-evolution. As a consequence of the advection equation $\partial_t
h +\nabla_{y,z}\cdot(hV)=0$ we also find that $V_0$ remains always
orthogonal to $\partial\{h > 0\}$, and thus this set remains a graph at
all times. As a result of the regularity of $V_0$ we also find that
$\partial\{h > 0\}$ remains regular at all times. We may ensure
arbitrarily high regularity depending on the parameters $\tilde k$,
$\check k$, and $\breve k$ in our norms according to the bounds \reftext{\eqref{bc0_bounds_summ}}, proved in \reftext{Lemma~\ref{lem:bc0_bounds}}, though the
question of smoothing of the free boundary is not investigated here.
The above sufficient condition \reftext{\eqref{graphcondition}} is obtained from
$ {\left\lVert v^{(0)} \right\rVert}_\mathrm
{init}<\varepsilon$ which we assumed in \reftext{Theorem~\ref{main_th}} due to
our main estimate \reftext{\eqref{main_estimate}}, coupled with
the bound \reftext{\eqref{bc0_grad_init}} of \reftext{Lemma~\ref{lem:bc0_bounds}}.

In order to invert the hodograph transform and to pass from the
understanding of the function $v$ to that of $h$, we use the formulas
%
%
\begin{equation}\label{hodoeq}
\tilde z(t,x,y):=Z(t,x,y) - Z_0(t,y) = \int_0^xZ_x(t,x^\prime
,y)\mathrm{d}
x^\prime, \ \ Z_x=(Z_\mathrm{TW})_x+v_x = 1+v_x.
\end{equation}
From the expansion in powers of $x$ in \reftext{\eqref{as_sol}} we find almost everywhere
%
%
\begin{equation}\label{hodo2}
\tilde z = (1+v_1) x + v_{1+\beta} x^{1+\beta} + v_2x^2 +
o\left(x^{2+\delta
}\right) \quad\mbox{as} \quad x \searrow0,
\end{equation}
from which we infer that $\tilde z(t,x,y)$ is strictly
monotone in $x$ as
$x\searrow0$ and satisfies $\tilde z(t,x,y)\to0$ as \mbox{$x\searrow0$}.
Furthermore, we can invert \reftext{\eqref{hodo2}} for $x$ in a neighborhood of
$0$ and find almost everywhere
%
%
\begin{subequations}\label{hodo_inv}
%
%
\begin{eqnarray}
x&=&\frac{1}{1+v_1} \tilde z - \frac{v_{1+\beta}}{(1+v_1)^{2+\beta}}
\tilde z^{1+\beta} -\frac{v_2}{(1+v_1)^3}\tilde z^2+ o\left(\tilde
z^{2+\delta}\right)
\eqncr
&=&\frac{\tilde z}{1+v_1}\left(1-\frac{v_{1+\beta
}}{(1+v_1)^{1+\beta}}
\tilde z^\beta-\frac{v_2}{(1+v_1)^2}\tilde z+ o\left(\tilde
z^{1+\delta
}\right)\right)
\end{eqnarray}
\end{subequations}
as $\tilde z \searrow0$ and using this in the hodograph transform \reftext{\eqref{hodograph}}, we find via the Taylor expansion of $x^{\frac3 2}$
near $x=1$ that almost everywhere
%
%
\begin{align}\label{hodo_inv_2}
h(t,y,Z(t,x,y))=x^{\frac32}=\frac{\tilde z^{\frac32}}{(1+v_1)^{\frac
32}}\left(1-\frac32\frac{v_{1+\beta}}{(1+v_1)^{1+\beta}}\tilde
z^\beta
-\frac32\frac{v_2}{(1+v_1)^2}\tilde z+ o\left(\tilde z^{1+\delta
}\right
)\right)
\end{align}
as $\tilde z \searrow0$ holds true. Now \reftext{\eqref{def_Z0}}, \reftext{\eqref{hodoeq}},
and \reftext{\eqref{hodo_inv_2}} allow to re-write the equations
expressing the evolution of the moving interface $Z_0(t,y)$ starting at
$Z_0^ {(0)}(y)$ in the original coordinates, leading to \reftext{\eqref{hodo3}}.
We thus observe that up to factoring the traveling-wave profile $\tilde
z^{\frac32}$ the function $h$ has the same form as $v$ itself, and the
supremum norm bounds on $v_1$, $v_{1+\beta}$, and $v_2$ give bounds on
the coefficients of $h$ as well.

\section{Proofs of auxiliary results}\label{app:proofs}

\subsection{Proofs of the norm-equivalence lemmata}\label{app:equivnorm}

\begin{proof}[Proof of \reftext{Lemma~\ref{lem:equivalence_init}}]
Throughout the proof, estimates depend on $\tilde k$, $\check k$,
$\breve k$, and $\delta$. Then the condition $ {\left
\lVert v^{(0)} \right\rVert}_\mathrm
{init}<\infty$ allows to verify the conditions of \reftext{Lemma~\ref{lem:mainhardy}} and to use estimate~\reftext{\eqref{est_hardy}}
with $\rho= - \delta - \frac 1 2-\ell_y$ and $\overline\gamma=0$ in
\begin{eqnarray}
{\left\lVert D_y v^{(0)} \right\rVert}_{\tilde k - 1,-\delta}^{2}
&\sim& {\left\lVert\partial_y v^{(0)} \right\rVert}_{\tilde k - 1, -\delta-1}^{2}
= \sum_{0 \le \ell_x + \ell_y \le \tilde k - 1} \left\lVert \partial_y^{\ell_y+1} D_x^{\ell_x} v\right\rVert_{-\delta-1-\ell_y}^2 \nonumber\\
&\stackrel{\text{\reftext{\eqref{est_hardy}}}}{\lesssim}& \sum_{0 \le \ell_x + \ell_y \le \tilde k - 1} \left\lVert \partial_y^{\ell_y+1} D_x^{\ell_x+1} v\right\rVert_{-\delta-1-\ell_y}^2 \sim {\left\lVert\partial_y D_x v^{(0)} \right\rVert}_{\tilde k - 1, -\delta-1}^{2} \sim {\left\lVert D_y D_x v^{(0)} \right\rVert}_{\tilde k -1, -\delta}^{2} \nonumber\\
&\le& {\left\lVert D_x v^{(0)} \right\rVert}_{\tilde k, -\delta}^{2},\label{bd_xy}
\end{eqnarray}
where we have additionally used the fact that $\partial_y$ commutes
with $D_x$. In order to apply \reftext{Lemma~\ref{lem:mainhardy}},
we infer that
$\partial_y^{\ell_y+1} D_x^{\ell_x} v^{(0)}\left(\overline x_n\right) \to 0$
as $n \to \infty$ for all $(\ell_y,\ell_y)\in \N_0^2$
with $\ell_x + \ell_y \le \tilde k - 1$
and a sequence $\left(\overline x_n\right)_{n \in \N}$ with $\overline x_n \to \infty$
as $n \to \infty$, because
$\left\lVert \partial_y^{\ell_y+1} D_x^{\ell_x} v^{(0)} \right\rVert_{-\delta-2-\ell_y}
\lesssim \left\lVert v^{(0)} \right\rVert_{k,-\delta-1} \le \left\lVert v^{(0)} \right\rVert_\mathrm{init} <\infty$,
where we have used $k \ge \tilde k$.
Applying estimate~\reftext{\eqref{est_hardy}} of
We further obtain
\begin{eqnarray}\nonumber
 {\left\lVert D_y D_x v^{(0)} \right\rVert}_{\tilde k-1,\delta}^{2}
 &\sim&  \sum_{0 \le \ell_x + \ell_y \le \tilde k - 1} {\left\lVert\partial_y^{\ell_y+1} D_x^{\ell_x+1} v^{(0)} \right\rVert}_{\delta-1-\ell_y}^2 \\
 &\stackrel{\text{\reftext{\eqref{est_hardy}}}}{\lesssim}&
\sum_{0 \le \ell_x + \ell_y \le \tilde k - 1} {\left\lVert\partial_y^{\ell_y+1} \left(D_x-1-\beta\right) D_x^{\max\{\ell_x-1,0\}+1} v^{(0)} \right\rVert}_{\delta-1-\ell_y}^2 \nonumber\\
 &\stackrel{\text{\reftext{\eqref{est_hardy}}}}{\lesssim}&
\sum_{0 \le \ell_x + \ell_y \le \tilde k - 1} {\left\lVert\partial_y^{\ell_y+1} \left(D_x-1\right) \left(D_x-1-\beta\right) D_x^{\max\{\ell_x-2,0\}+1} v^{(0)} \right\rVert}_{\delta-1-\ell_y}^2 \nonumber\\
 &\stackrel{\text{\reftext{\eqref{est_hardy}}}}{\lesssim}&
\sum_{0 \le \ell_x + \ell_y \le \tilde k - 1} {\left\lVert\partial_y^{\ell_y+1} \left(D_x+\beta-\tfrac 12\right) \left(D_x-1\right) \left(D_x-1-\beta\right) D_x^{\max\{\ell_x-3,0\}+1} v^{(0)} \right\rVert}_{\delta-1-\ell_y}^2 \nonumber\\
&\stackrel{\text{\reftext{\eqref{est_hardy}}}}{\lesssim}&
\sum_{0 \le \ell_x + \ell_y \le \tilde k - 1} {\left\lVert\partial_y^{\ell_y+1} \tilde q(D_x) D_x^{\max\{\ell_x-4,0\}+1} v^{(0)} \right\rVert}_{\delta-1-\ell_y}^2
\sim {\left\lVert D_y\tilde q(D_x) D_x v^{(0)} \right
\rVert}_{\tilde k-1,\delta}^{2} \nonumber\\
&\le&
 {\left\lVert\tilde q(D_x)D_xv^{(0)} \right\rVert
}_{\tilde k,\delta}^{2} \le \left\lVert v^{(0)} \right\rVert_\mathrm{init}^2,\label{bd_xyqtilde}
\end{eqnarray}
where we have utilized \reftext{Lemma~\ref{lem:mainhardy}} with
$\rho= \delta - \frac 1 2 -\ell_y$ and the choices $\overline\gamma=1+\beta,1, -\beta+\frac12, - \frac 1 2$ for $\ell_y \ge 1$, and
$\overline\gamma=1+\beta,1,-\beta+\frac12$ and $\underline\gamma=-\frac12$ for $\ell_y = 0$. We first verify that, for a sequence $\left(\overline x_n\right)_{n \in \N}$ with $\overline x_n \to \infty$ as $n \to \infty$ there holds, almost everyhwhere in $y \in \R$,
\begin{equation*}
\overline x_n^{-1-\beta}\partial_y^{\ell_y+1} D_x^{\ell_x+1} v^{(0)}\left(\overline x_n,y\right) \to 0, \quad \overline x_n^{-1} \partial_y^{\ell_y+1} (D_x-1-\beta) D_x^{\max\{\ell_x-1,0\}+1} v^{(0)}\left(\overline x_n,y\right) \to 0,
\end{equation*}
and
\begin{equation*}
\overline x_n^{\beta - \frac 1 2} \partial_y^{\ell_y+1} \left(D_x-1\right) (D_x-1-\beta) D_x^{\max\{\ell_x-2,0\}+1} v^{(0)}\left(\overline x_n,y\right) \to 0 \quad \mbox{as} \quad n \to \infty
\end{equation*}
for all $\left(\ell_x,\ell_y\right) \in \N_0^2$ with $\ell_x+\ell_y \le \tilde k-1$,
as well as 
\begin{equation*}
\overline x_n^{\frac 1 2} \partial_y^{\ell_y+1} \left(D_x+\beta-\tfrac 12\right) \left(D_x-1\right) (D_x-1-\beta) D_x^{\max\{\ell_x-1,0\}+1} v^{(0)}\left(\overline x_n,y\right) \to 0 \quad \mbox{as} \quad n \to \infty
\end{equation*}
if $\ell_y \ge 1$. In order to prove these asymptotics, we use the bound
\begin{align*}
& {\left\lVert \partial_y^{\ell_y+1} D_x^{\ell_x+1} v^{(0)} \right\rVert}_{-\delta-2-\ell_y} + {\left\lVert \partial_y^{\ell_y+1} (D_x-1-\beta) D_x^{\max\{\ell_x-1,0\}+1} v^{(0)} \right\rVert}_{-\delta-2-\ell_y} \\
& \quad + {\left\lVert \partial_y^{\ell_y+1} \left(D_x-1\right) (D_x-1-\beta) D_x^{\max\{\ell_x-2,0\}+1} v^{(0)} \right\rVert}_{-\delta-2-\ell_y} \lesssim \left\lVert v^{(0)} \right\rVert_{k,-\delta-1}
\le \left\lVert v^{(0)} \right\rVert_\mathrm{init} <\infty
\end{align*}
and in the last case, for $\ell_y \ge 1$,
\begin{equation*}
{\left\lVert \partial_y^{\ell_y+1} \left(D_x+\beta-\tfrac 12\right) \left(D_x-1\right) \left(D_x-1-\beta\right) D_x^{\max\{\ell_x-3,0\}+1} v^{(0)} \right\rVert}_{-\delta-2-\ell_y} \lesssim \left\lVert v^{(0)} \right\rVert_{k,-\delta-1}
\le \left\lVert v^{(0)} \right\rVert_\mathrm{init} < \infty,
\end{equation*}
which requires $k \ge \tilde k + 4$. In order to verify the remaining condition
\begin{equation*}
\underline x_n^{\frac 1 2} w \left(\underline x_n,y\right) \to 0 \quad \mbox{as} \quad n \to \infty
\end{equation*}
for $w := \partial_y \left(D_x+\beta-\tfrac 12\right) \left(D_x-1\right)
\left(D_x-1-\beta\right) D_x^{\max\{\ell_x-3,0\}+1} v^{(0)}$, for all $\ell_x \in \N_0$ with $\ell_x \le \tilde k - 1$ and a sequence $\left(\underline x_n\right)_{n \in \N}$ with $\underline x_n \to 0$ as $n \to \infty$ almost everyhwere in $y \in \R$,
we note that
\begin{equation*}
\left\lVert \left(D_x+\tfrac 1 2\right) w \right\rVert_{\delta-1} \le \left\lVert \tilde q(D_x)D_xv^{(0)} \right\rVert_{\tilde k,\delta} \le \left\lVert v^{(0)} \right\rVert_\mathrm{init} < \infty,
\end{equation*}
which implies
\begin{equation*}
\underline x_n^{\frac 1 2} w\left(\underline x_n,y\right) = w_{-\frac 1 2}(y) + \int_0^{\underline x_n} x^{\frac 1 2} \left(\left(D_x+\tfrac12\right) w\right)\left(x,y\right) \, \frac{\d x}{x} \quad \to\quad w_{-\frac 1 2}(y) \quad \mbox{as} \quad n \to \infty,
\end{equation*}
because
\begin{equation*}
\left\lvert \underline \int_0^{\underline x_n} x^{\frac 1 2} \left(\left(D_x+\tfrac12\right) w\right)\left(x,y\right) \, \frac{\d x}{x} \right\rvert \le \left(\int_0^{\underline x_n} x^{4-2\delta} \, \frac{\d x}{x}\right)^{\frac 1 2} \left\lvert \left(D_x + \tfrac 1 2\right) v(\cdot,y) \right\rvert_{\delta-1} \to 0 \quad \mbox{as} \quad n \to \infty
\end{equation*}
for a sequence $\left(\underline x_n\right)_{n \in \N}$ such that $\underline x_n \to 0$ as $n \to \infty$ almost everywhere in $y \in \R$. On the other hand,
\begin{equation*}
w_{-\frac 1 2} = (\beta-1) \left(-\tfrac 3 2\right) \left(-\tfrac 3 2-\beta\right) \left(-\tfrac 1 2\right)^{\max\{\ell_x-3,0\}} \left(D_x v^{(0)}\right)_{-\frac 1 2}
\end{equation*}
and $\left\lVert D_x v^{(0)} \right\rVert_{\tilde k, - \delta} \le \left\lVert v^{(0)} \right\rVert_\mathrm{init} < \infty$, which ensures $\left(D_x v^{(0)}\right)_{-\frac 1 2} = 0$ almost everywhere, and therefore $\underline x_n^{\frac 1 2} w(x_n,y) \to 0$ as $n \to \infty$ almost everywhere in $y \in \R$.

\medskip

Similarly to \reftext{\eqref{bd_xyqtilde}} and \reftext{\eqref{bd_xy}}, we obtain the bounds
\begin{subequations}
\begin{align}
 {\left\lVert D_y^2 v^{(0)} \right\rVert}_{\check k -2,
-\delta+1}&\lesssim {\left\lVert D_yD_xv^{(0)} \right
\rVert}_{\check k -1,-\delta+1}\lesssim {\left\lVert
\tilde q(D_x)D_xv^{(0)} \right\rVert}_{\check k, -\delta+1},\label{bd1}\\
 {\left\lVert D_y^2 D_x v^{(0)} \right\rVert}_{\check k
-2,\delta+1}&\lesssim {\left\lVert D_y\tilde
q(D_x)D_xv^{(0)} \right\rVert}_{\check k -1,\delta+1}\lesssim
 {\left\lVert(D_x-3)(D_x-2)\tilde q(D_x)D_xv^{(0)} \right
\rVert}_{\check k,\delta+1},\label{bd2}\\
 {\left\lVert D_y^3 v^{(0)} \right\rVert}_{\breve k
-2,-\delta+2} &\lesssim {\left\lVert D_y^2 D_x v^{(0)} \right\rVert}_{\breve k -1,-\delta+2}\lesssim
 {\left\lVert D_y \tilde q(D_x) D_x v^{(0)} \right
\rVert}_{\breve k,-\delta+2}.\label{bd3}
\end{align}
\end{subequations}
Inequalities \reftext{\eqref{bd1}, \eqref{bd2}, \eqref{bd3}}, together with the definition \reftext{\eqref{norm_init_p}} directly allow to conclude. The proofs are briefly detailed below.

\medskip

\noindent\textit{Proof of \reftext{\eqref{bd1}}.} For the first estimate, we have
\begin{align*}
 {\left\lVert D_y^2 v^{(0)} \right\rVert}_{\check k-2,-\delta+1}^{2}
&\sim  \sum_{0\le \ell_x+\ell_y\le \check k -2}{\left\lVert\partial_y^{\ell_y+2} D_x^{\ell_x} v^{(0)} \right\rVert}_{-\delta-1-\ell_y}^{2}
\stackrel{\text{\reftext{\eqref{est_hardy}}}}{\lesssim}
\sum_{0\le\ell_x+\ell_y\le \check k -2} {\left\lVert\partial_y^{\ell_y+2} D_x^{\ell_x+1} v^{(0)}\right\rVert}_{-\delta-1-\ell_y}^{2} \\ 
&\sim {\left\lVert D_y^2D_x v^{(0)} \right\rVert}_{\check k-2,-\delta+1}^{2} \le {\left\lVert D_yD_xv^{(0)} \right\rVert}_{\check k-1,-\delta+1}^{2},
\end{align*}
where we used \reftext{\eqref{est_hardy} from Lemma~\ref{lem:mainhardy}}, with $\overline\gamma=0$ and $\rho=-\delta - \frac12-\ell_y$ for $\ell_y\ge 0$. The hypotheses of the lemma hold due to the bounds $\left\lVert \partial_y^2 v^{(0)}\right\rVert_{\check k-2,-\delta + 1}\lesssim \left\lVert v^{(0)}\right\rVert_{k,-\delta-1}\le \left\lVert v^{(0)}\right\rVert_\mathrm{init}<\infty$, for which we require $k \ge \check k$.

\medskip

For the second estimate in \reftext{\eqref{bd1}}, we start by writing
\begin{equation*}
\left\lVert D_yD_xv^{(0)}\right\rVert_{\check k-1,-\delta +1}^{2} \sim \sum_{0\le \ell_x+\ell_y\le \check k -1}\left\lVert \partial_y^{\ell_y+1}D_x^{\ell_x+1}v^{(0)}\right\rVert_{-\delta-\ell_y}^{2}, 
\end{equation*}
from which we proceed like for \reftext{\eqref{bd_xyqtilde}}, and this time we apply
\reftext{\eqref{est_hardy}} with $\rho=-\delta+\frac12-\ell_y$, and with the choices
$\overline\gamma=1+\beta,1,-\beta+\frac12,-\frac12$ for $\ell_y\ge 1$ and
$\overline\gamma=1+\beta,1$ and $\underline\gamma=-\beta+\frac12,-\frac 12$
for $\ell_y=0$. The justification of the hypotheses of
\reftext{Lemma \ref{lem:mainhardy}} are then anologous,
and lead to the condition $k \ge \check k + 4$.

\medskip

\noindent\textit{Proof of \reftext{\eqref{bd2}}.} For the first estimate, we obtain
\begin{align*}
 {\left\lVert D_y^2D_x v^{(0)} \right\rVert}_{\check k-2,\delta+1}^{2}
&\sim  \sum_{0\le\ell_x+\ell_y\le \check k-2}{\left\lVert\partial_y^{\ell_y+2} D_x^{\ell_x+1} v^{(0)}\right\rVert}_{\delta-1-\ell_y}^{2}
\stackrel{\text{\reftext{\eqref{est_hardy}}}}{\lesssim}
{\left\lVert\partial_y^{\ell_y+2}\tilde q (D_x)D_x v^{(0)} \right\rVert}_{\delta-1-\ell_y}^{2} \\
&\sim {\left\lVert D_y^2\tilde q(D_x)D_x v^{(0)} \right\rVert}_{\check k-2,\delta+1}^{2}
\le {\left\lVert D_y\tilde q(D_x)D_xv^{(0)} \right\rVert}_{\check k-1,\delta+1}^{2}.
\end{align*}
Here, we have applied \reftext{Lemma~\ref{lem:mainhardy}} with $\rho=\delta-\frac12-\ell_y$ and with the same subdivision of cases for $\underline\gamma$ and $\overline \gamma$ depending on $\ell_y$ as for \reftext{\eqref{bd_xyqtilde}}. In order to prove the hypotheses of decay at infinity required for \reftext{Lemma~\ref{lem:mainhardy}}, we again use the finiteness of $\left\lVert v^{(0)}\right\rVert_{k,-\delta-1}$, to do which we need the condition $k\ge \check k + 4$. 

\medskip

For the second estimate in \reftext{\eqref{bd2}}, we start from
\begin{equation*}
\left\lVert D_y\tilde q(D_x)D_xv^{(0)}\right\rVert_{\check k -1,\delta+1}^{2}
\sim \sum_{0\le \ell_x+\ell_y\le \check k-1}\left\lVert\partial_y^{\ell_y+1}\tilde q(D_x)D_x^{\ell_x+1}v^{(0)}\right\rVert_{\delta-\ell_y}^{2},
\end{equation*}
and then we apply again \reftext{Lemma~\ref{lem:mainhardy}}, this time with $\rho=\delta+\frac12-\ell_y$ and $\overline\gamma=2,3$. In order to justify the application of the lemma, now we need to require $k\ge \check k+6$.

\medskip

\noindent\textit{Proof of \reftext{\eqref{bd3}}.} For the first estimate we write 
\begin{align*}
 {\left\lVert D_y^3 v^{(0)} \right\rVert}_{\breve k-2,-\delta+2}^{2} 
 &\sim \sum_{0\le\ell_x+\ell_y\le \breve k-2}{\left\lVert\partial_y^{\ell_y+3}D_x^{\ell_x} v^{(0)} \right\rVert}_{-\delta-1-\ell_y}^{2}
 \stackrel{\text{\reftext{\eqref{est_hardy}}}}{\lesssim}
\sum_{0\le \ell_x+\ell_y\le \breve k-2} {\left\lVert\partial_y^{\ell_y+3} D_x^{\ell_x+1} v^{(0)} \right\rVert}_{-\delta-1-\ell_y}^{2}\\
&\sim {\left\lVert D_y^2D_x v^{(0)} \right\rVert}_{\breve k-1,-\delta+2}^{2},
\end{align*}
for which, as before, we have applied \reftext{Lemma~\ref{lem:mainhardy}} with $\overline\gamma=0$ and $\rho=-\delta-\frac 12-\ell_y$, to justify which, as above, we require $k \ge \breve k+2$. For the second estimate in \reftext{\eqref{bd2}} we write, along the same lines as above,
\begin{align*}
 {\left\lVert D_y^2D_x v^{(0)} \right\rVert}_{\breve k-1,-\delta+2}^{2} 
&\sim  \sum_{0\le\ell_x+\ell_y\le \breve k-1}{\left\lVert\partial_y^{\ell_y+2} D_x^{\ell_x+1} v^{(0)}\right\rVert}_{-\delta-\ell_y}^{2}
\stackrel{\text{\reftext{\eqref{est_hardy}}}}{\lesssim} 
\sum_{0\le\ell_x+\ell_y\le \breve k-1} {\left\lVert\partial_y^{\ell_y+2}\tilde q (D_x) D_x^{\ell_x+1} v^{(0)} \right\rVert}_{-\delta-\ell_y}^{2} \\
&\sim {\left\lVert D_y\tilde q(D_x)D_x v^{(0)} \right\rVert}_{\breve k,-\delta+2}^{2},
\end{align*}
where we applied \reftext{Lemma~\ref{lem:mainhardy}}
with $\rho=-\delta+\frac 12-\ell_y$ and with $\overline\gamma=1+\beta,1,-\beta+\frac12,-\frac12$ if $\ell_y\ge 1$ and $\overline\gamma=1+\beta,1$ and $\underline\gamma=-\beta+\frac12,-\frac12$ if $\ell_y=0$, which in both cases are justified similarly as for \reftext{\eqref{bd_xyqtilde}} if $k \ge \breve k+5$.

\end{proof}
\begin{proof}[Proof of \reftext{Lemma~\ref{lem:equivalence_rhs}}]
Throughout the proof, estimates depend on $\tilde k$, $\check k$,
$\breve k$, and $\delta$. The proof follows the same lines as the one
of \reftext{Lemma~\ref{lem:equivalence_init}}. By using the hypothesis
${\skliaustask f\skliaustasd}_\mathrm{rhs}<\infty$, we find that
almost everywhere in time $t \in
I$ the norms of the form $ {\left\lVert\cdot\right
\rVert}_{\kappa, \alpha}$ appearing in
the time intervals defining ${\skliaustask\cdot\skliaustasd}_\mathrm
{rhs}$ are all
finite. At such fixed $t$, we apply again repeatedly \reftext{Lemma~\ref{lem:mainhardy}}. In the present case, all roots $\underline \gamma$ and $\overline \gamma$ of polynomials in $D_x$ that appear during the proof are increased by $1$, while the weights $\rho$ to be used are increased by $\frac12$. Furthermore, all the indices $k$, $\tilde k$, $\check k$, and $\breve k$ are shifted by $-2$ compared to that case The subdivision into cases depending on $\ell_y$ presents terms that can be treated by the same strategy as in \reftext{Lemma~\ref{lem:equivalence_init}} and the relative bounds between indices do not change.
\begin{subequations}\label{est_f_equivalence}
\begin{align}
 {\left\lVert D_y f \right\rVert}_{\tilde k - 3, -\delta
+\frac12} &\lesssim {\left\lVert(D_x-1) f \right\rVert
}_{\tilde k-2, -\delta+\frac12},\label{l35a}\\
 {\left\lVert D_y(D_x-1)f \right\rVert}_{\tilde k
-3,\delta+\frac12}&\lesssim {\left\lVert\tilde
q(D_x-1)(D_x-1)f \right\rVert}_{\tilde k - 2, \delta+\frac12},\label{l35b}\\
 {\left\lVert D_y^2f \right\rVert}_{\check k -4, -\delta
+\frac32}&\lesssim {\left\lVert D_y(D_x-1)f \right
\rVert}_{\check k -3,-\delta+\frac32}\lesssim {\left
\lVert\tilde q(D_x-1)(D_x-1)f \right\rVert}_{\check k - 2,-\delta
+\frac32},\label{l35c}\\
 {\left\lVert D_y^2(D_x-1)f \right\rVert}_{\check k -
4,\delta+\frac32}&\lesssim {\left\lVert D_y\tilde
q(D_x-1)(D_x-1)f \right\rVert}_{\check k - 3, \delta+\frac32}\label{l35d}\\
&\lesssim {\left\lVert(D_x-4)(D_x-3)\tilde
q(D_x-1)(D_x-1)f \right\rVert}_{\check k - 2,
\delta+\frac32},\label{l35e}\\
{\left\lVert D_y^3 f \right\rVert}_{\breve k
- 4,-\delta + \frac 5 2} &\lesssim {\left\lVert D_y^2 (D_x-1) f \right\rVert}_{\breve k - 3,-\delta + \frac 5 2} \lesssim
 {\left\lVert D_y \tilde q(D_x-1) (D_x-1) f \right
\rVert}_{\breve k - 2,-\delta + \frac 5 2}.\label{l35f}
\end{align}
\end{subequations}
By comparing estimates~\reftext{\eqref{est_f_equivalence}} to the
definitions \reftext{\eqref{norm_rhs}} and \reftext{\eqref{norm_rhs_p}}
of the norms $\skliaustask \cdot \skliaustasd_\mathrm{rhs}$ and
$\skliaustask \cdot \skliaustasd_\mathrm{rhs}^\prime$, the thesis follows.
The justification of \reftext{\eqref{est_f_equivalence}} mainly follows
the proof \reftext{Lemma~\ref{lem:equivalence_init}}. We summarize
the case discussions as follows, where in each case we use the
norm $\left\lVert f\right\rVert_{k-2,-\delta-\frac12}$ to justify
the application of \reftext{Lemma~\ref{lem:mainhardy}}:
\begin{itemize}
\item For \reftext{\eqref{l35a}} we choose $\overline\gamma=1$ for all $\ell_y \ge 0$, where $\rho = - \delta - \ell_y$, and the hypotheses of \reftext{Lemma~\ref{lem:mainhardy}} are verified under the constraint on indices $k \ge \tilde k$.
\item For \reftext{\eqref{l35b}} we choose $\overline\gamma=2+\beta,2,-\beta+\frac32,\frac12$ for all $\ell_y\ge 0$, where $\rho = \delta - \ell_y$, under the requirement $k \ge \tilde k+4$.
\item For \reftext{\eqref{l35c}} we choose for the first estimate $\overline\gamma=1$ for all $\ell_y \ge 0$, where $\rho = - \delta - \ell_y$. For the second one we choose $\overline\gamma=2+\beta,2,-\beta+\frac32,\frac12$ for all $\ell_y\ge 1$, and $\overline\gamma=2+\beta,2$ and $\underline\gamma=-\beta+\frac32, \frac12$ for $\ell_y=0$, where $\rho = -\delta+1-\ell_y$. Here, we require the constraint $k \ge \check k+4$, and in case $\ell_y=0$ for the case $\underline\gamma$, we use finiteness of $\left\lVert (D_x-1)f\right\rVert_{\tilde k - 2,-\delta +\frac12}$ to justify the application of \reftext{Lemma~\ref{lem:mainhardy}}.
\item For \reftext{\eqref{l35d}} we choose $\overline\gamma=2+\beta,2,-\beta+\frac32,\frac12$ for all $\ell_y\ge 0$, where $\rho = \delta-\ell_y$, and we require the constraint on indices $k \ge \check k+4$.
\item For \reftext{\eqref{l35e}} we choose $\overline\gamma=3,4$ for $\rho = \delta + 1 - \ell_y$, and we require the constraint $k \ge \check k +6$.
\item For \reftext{\eqref{l35f}} we proceed with the same case subdivision and justifications as for \reftext{\eqref{l35c}}, with $\breve k$ replacing $\check k$ but with the same justification of the cases $\underline\gamma$ via the term $\left\lVert(D_x-1)f\right\rVert_{\tilde k -2,-\delta+\frac12}$. The requirement on indices which we need is $k \ge \breve k+5$, coming from the discussion of $\overline\gamma$.
\end{itemize}
\end{proof}
\begin{proof}[Proof of \reftext{Lemma~\ref{lem:equivalence_sol}}]
Throughout the proof, estimates depend on $\tilde k$, $\check k$,
$\breve k$, and $\delta$. First, we recall the definition \reftext{\eqref{not_3barnorm}} of the norms used to define the norm ${\skliaustask v\skliaustasd}_\mathrm{sol}$:
\begin{equation*}
{\skliaustask w\skliaustasd}_{\ell,\gamma}^2 = \sup_{t\in [0,\infty)} {\left\lVert w \right\rVert}_{\ell,\gamma
}^2 +\int_0^\infty {\left\lVert w \right\rVert}_{\ell-2,\gamma
-\frac12}^2\mathrm{d} t +\int_0^\infty {\left\lVert w \right
\rVert}_{\ell+2,\gamma+\frac12}^2\mathrm{d} t.
\end{equation*}
Thus, by applying \reftext{\eqref{traceest}} of \reftext{Lemma~\ref{lem:trace_estimate}} we
can absorb the supremum terms from the formula \reftext{\eqref{norm_sol}} into
the integral terms. Next, like in the proof of \reftext{Lemma~\ref{lem:equivalence_init}}, we use \reftext{Lemma~\ref{lem:mainhardy}} for the choices
\begin{equation*}
(w,\kappa,\alpha)\in\left\{(\partial_t v, 0,0),(v,4,1)\right\}.
\end{equation*}
In this case, we can prove the following bounds if the norms appearing on the right-hand side are finite:
\begin{subequations}\label{l36}
\begin{align}
 {\left\lVert D_yw \right\rVert}_{\tilde k -3 +\kappa,
-\delta-\frac12+\alpha}&\lesssim
 {\left\lVert D_xw \right\rVert}_{\tilde k - 2+\kappa
,-\delta-\frac12+\alpha},\label{l36a}\\
 {\left\lVert D_yD_xw \right\rVert}_{\tilde k - 3+\kappa
,\delta- \frac12+\alpha}&\lesssim
 {\left\lVert\tilde q(D_x)D_xw \right\rVert}_{\tilde k
-2+\kappa,\delta- \frac12+\alpha
},\label{l36b}\\
 {\left\lVert D_y^2w \right\rVert}_{\check k - 4+\kappa,
-\delta+\frac12+\alpha}&\lesssim
 {\left\lVert D_yD_xw \right\rVert}_{\check k - 3+\kappa
, -\delta+\frac12+\alpha}\lesssim
 {\left\lVert\tilde q(D_x)D_xw \right\rVert}_{\check k -
2+\kappa,-\delta+\frac12+\alpha
},\label{l36c}\\
 {\left\lVert D_y^2D_xw \right\rVert}_{\check k -4+\kappa
, \delta+\frac12+\alpha}&\lesssim
 {\left\lVert D_y\tilde q(D_x)D_xw \right\rVert}_{\check
k -3+\kappa,\delta+\frac
12+\alpha}\label{l36d}\\
&\lesssim {\left\lVert(D_x-3)(D_x-2)\tilde q(D_x)D_xw \right
\rVert}_{\check k -2+\kappa
,\delta+\frac12+\alpha},\label{l36e}\\
 {\left\lVert D_y^3 w \right\rVert}_{\breve k -4+\kappa
, -\delta+\frac32+\alpha}& \lesssim
 {\left\lVert D_y^2D_xw \right\rVert}_{\breve
k -3+\kappa,-\delta+\frac
32+\alpha} \lesssim {\left\lVert D_y \tilde q(D_x)D_xw \right
\rVert}_{\breve k -2+\kappa
,-\delta+\frac32+\alpha}.\label{l36f}
\end{align}
\end{subequations}
The above bounds can be justified like in \reftext{Lemma~\ref{lem:equivalence_init}}, with the weights $\rho$ shifted by $-\frac12+\alpha$, whereas the indices $k$, $\tilde k$, $\check k$, and $\breve k$ are all shifted by $+\kappa-2$. This modifies the partition of the roots of polynomials, underlying our polynomial operators in $D_x$, into $\underline\gamma,\overline\gamma$ for the application of the \reftext{Lemma~\ref{lem:mainhardy}}, but the justifications for the hypotheses of that lemma are easily adapted:
\begin{itemize}
\item For \reftext{\eqref{l36a}} we choose $\overline\gamma=1$,
where $\rho = -\delta-1+\alpha-\ell_y$ and where we
require the bound $k \ge \tilde k$.
\item For \reftext{\eqref{l36b}} we have $\rho = \delta-1+\alpha-\ell_y$ and we
choose $\overline\gamma=1+\beta,1,-\beta+\frac12,-\frac12$
for $\alpha=0$ and $\ell_y\ge 0$ as well as for $\alpha=1$ and $\ell_y\ge 1$, while for $\alpha=1$ and $\ell_y=0$ we use 
$\overline\gamma=1+\beta,1$ and $\underline\gamma = -\beta+\frac12,-\frac12$.
Here, we require $k \ge \tilde k+4$ and for the case of $\underline\gamma$
if $\alpha=1$, we use finiteness of
$\left\lVert D_xv\right\rVert_{\tilde k +2,-\delta+\frac12}$.
\item For \reftext{\eqref{l36c}} in the first estimate we choose $\overline\gamma=0$ in all cases since $\rho = -\delta-1+\alpha-\ell_y < \overline\gamma$, whereas for the second one we have $\rho = -\delta+\alpha-\ell_y$ and proceed as follows: We choose $\overline\gamma=1+\beta,1,-\beta+\frac12,-\frac12$ for $\alpha=0$ and $\ell_y\ge 1$ as well as for $\alpha=1$ and $\ell_y\ge 2$.
For the remaining cases, we take $\overline\gamma=1+\beta,1,\underline\gamma=-\beta+\frac12,-\frac12$.
In order to justify the application of \reftext{Lemma~\ref{lem:mainhardy}}
for the cases $\underline\gamma$ we use the finiteness of the
terms $\left\lVert \partial_tD_xv\right\rVert_{\tilde k-2,-\delta-\frac12}$ for $\alpha=0$
and $\left\lVert D_xv\right\rVert_{\tilde k +2,-\delta+\frac12}$ for $\alpha=1$.
\item For \reftext{\eqref{l36d}} we have $\rho = \delta-1+\alpha-\ell_y$ and are led to choose $\overline\gamma=1+\beta,1,-\beta+\frac12,-\frac12$
if $\alpha=0$ and $\ell_y\ge 0$ or if $\alpha=1$ and $\ell_y\ge 1$.
If $\alpha=1$ and $\ell_y=0$, then we take $\overline\gamma=1+\beta,1$ and
$\underline\gamma=-\beta+\frac12,-\frac12$. This requires having $k \ge \check k +4$ and the cases $\underline\gamma$ are justified precisely like the case $\alpha=1$ and $\ell_y=0$ in \reftext{\eqref{l36c}}.
\item For \reftext{\eqref{l36e}} we have $\rho = \delta+\alpha-\ell_y$ and can therefore choose $\overline \gamma=2,3$ in all cases, which requires $k \ge \check k +6$.
\item For both inequalities of \reftext{\eqref{l36f}} we have the same case subdivision and discussion as for \reftext{\eqref{l36c}}, with $\breve k$ replacing $\check k$
except for the justification of the cases $\underline\gamma$, which can be kept the same, and we require $k \ge \breve k +5$.
\end{itemize}
In particular, due to the finiteness hypothesis
${\skliaustask v\skliaustasd}_\mathrm{sol} < \infty$,
all the bounds \reftext{\eqref{l36}} hold almost everywhere in time
$t\in [0,\infty)$ for our choices of $(w,\kappa,\alpha)$,
where for the case $w=\partial_t v$ we also use the fact that
$\partial_t$-derivatives commute with $D_x$ and $D_y$ operators.
By inspecting the norm
$\skliaustask v\skliaustasd_\mathrm{sol}$ (cf.~\reftext{\eqref{norm_sol}}),
we see that this allows to absorb all the
integral terms into the ones appearing in the norm
${\skliaustask v\skliaustasd}_\mathrm{Sol}$ (cf.~\reftext{\eqref{norm_sol_simple}}),
as desired.
\end{proof}
\subsection{Proof of the embedding lemmata}\label{app:embedding}

We first prove an essential elliptic regularity result:
\begin{proof}[Proof of Lemma~\ref{lem:mainhardy}]
We have for $\gamma \in \left\{\underline \gamma, \overline \gamma\right\}$ and $f \in \left\{\underline f, \overline f\right\}$
\begin{equation*}
\left\lvert \left(D_x-\gamma\right) f \right\rvert_\rho^2 = \int_0^\infty x^{-2\rho+2\gamma+1} \left(\partial_x \left(x^{-\gamma} f\right)\right)^2 \d x \gtrsim_{\gamma,\rho} \int_0^\infty x^{-2\rho+2\gamma-1} \left(x^{-\gamma} f\right)^2 \d x = \left\lvert f \right\rvert_\rho^2,
\end{equation*}
where Hardy's inequality (cf.~\cite[Lemma~A.1]{gko.2008}) has been employed with
\begin{equation}\label{proof_hardy}
x_n^{-\gamma} f(x_n) \to 0 \quad \mbox{as} \quad n \to \infty,
\end{equation}
where $\left(x_n\right)_{n\in \N} = \left(\underline x_n\right)_{n \in \N}$ for $\gamma = \underline \gamma$ and $\left(x_n\right)_{n\in \N} = \left(\overline x_n\right)_{n \in \N}$ for $\gamma = \overline \gamma$. The proof is concluded by noting that
\begin{equation*}
\left\lvert f \right\rvert_{1,\rho} \lesssim \left\lvert f \right\rvert_\rho + \left\lvert D_x f \right\rvert_\rho \le (1+\gamma) \left\lvert f\right\rvert_\rho + \left\lvert (D_x-\gamma) f \right\rvert_\rho \stackrel{\text{\reftext{\eqref{proof_hardy}}}}{\lesssim_{\gamma,\rho}} \left\lvert (D_x-\gamma) f \right\rvert_\rho.
\end{equation*}
\end{proof}
We give two auxiliary lemmata.
\begin{lemma}\label{lem:bc0_auxiliary2}
For $w \in C^\infty\left((0,\infty)\right)$, $\tilde w\in\mathbb{R}$,
and $\alpha_1 < \gamma < \alpha_2$, we have
\begin{equation}\label{bc0_auxiliary2}
 {\left\lvert\tilde w \right\rvert
}\lesssim_{\alpha
_1,\alpha_2,\gamma}  {\left\lvert w \right\rvert}_{\alpha_1} +  {\left\lvert w+x^\gamma\tilde w \right
\rvert}_{\alpha_2}.
\end{equation}
\end{lemma}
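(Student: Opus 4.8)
The plan is to isolate the scalar $\tilde w$ by testing the identity $w + x^\gamma\tilde w = g$ (where I abbreviate $g := w + x^\gamma\tilde w$) against a fixed compactly supported weight, and then to estimate the two resulting pairings by the Cauchy--Schwarz inequality: in the weight $\alpha_1$ for the $w$-contribution and in the weight $\alpha_2$ for the $g$-contribution.

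First I would dispose of the trivial case: if $|w|_{\alpha_1} = \infty$ or $|w + x^\gamma\tilde w|_{\alpha_2} = \infty$, then \eqref{bc0_auxiliary2} holds with nothing to prove, so I may assume both are finite; note that $g = w + x^\gamma\tilde w \in C^\infty((0,\infty))$. Next I would fix, once and for all, a nonnegative $\varphi \in C^\infty_{\mathrm c}((0,\infty))$ with $\varphi \not\equiv 0$, and rescale it so that $\int_0^\infty x^\gamma\varphi(x)\,\frac{\mathrm{d}x}{x} = 1$; this is possible since the integral is a strictly positive finite number. Multiplying the identity $x^\gamma\tilde w = g - w$ by $\varphi(x)x^{-1}$ and integrating over $(0,\infty)$ yields the exact formula
\[
\tilde w = \int_0^\infty g(x)\varphi(x)\,\frac{\mathrm{d}x}{x} - \int_0^\infty w(x)\varphi(x)\,\frac{\mathrm{d}x}{x},
\]
where both integrals converge absolutely because $\varphi$ is bounded with compact support in $(0,\infty)$. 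For the first term I would write the integrand as $(x^{-\alpha_2}g)(x^{\alpha_2}\varphi)$ and apply Cauchy--Schwarz with respect to $\frac{\mathrm{d}x}{x}$, obtaining the bound $|g|_{\alpha_2}\big(\int_0^\infty x^{2\alpha_2}\varphi^2\,\frac{\mathrm{d}x}{x}\big)^{1/2}$; analogously the second term is at most $|w|_{\alpha_1}\big(\int_0^\infty x^{2\alpha_1}\varphi^2\,\frac{\mathrm{d}x}{x}\big)^{1/2}$. Since $\varphi$ is supported in a compact subinterval of $(0,\infty)$, the two weighted $L^2$-norms of $\varphi$ are finite constants depending only on $\alpha_1$, $\alpha_2$, $\gamma$ (through the fixed choice of $\varphi$), and summing the two bounds gives exactly \eqref{bc0_auxiliary2}.

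I do not expect any real obstacle here: the only points needing a sentence of care are the absolute convergence of the two pairings (immediate from the compact support of $\varphi$ inside $(0,\infty)$) and the fact that the normalization $\int_0^\infty x^\gamma\varphi\,\frac{\mathrm{d}x}{x} = 1$ can always be arranged. I would also remark that the ordering hypothesis $\alpha_1 < \gamma < \alpha_2$ is not actually needed by this argument; it merely reflects the situation in which the lemma will be invoked, where $x^\gamma$ is the critical power separating the two weights.
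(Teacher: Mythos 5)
Your proof is correct, and it exploits the same core observation as the paper's: all the power weights $x^{-\alpha}$ are comparable to constants on a fixed compact subinterval of $(0,\infty)$, which is what allows the two different weighted norms to be brought into contact. The difference is only in organization. The paper restricts the integral directly to $[1,2]$, applies the pointwise estimate $\lvert x^\gamma\tilde w\rvert^2 \le 2\bigl(\lvert w + x^\gamma\tilde w\rvert^2 + \lvert w\rvert^2\bigr)$ there, and integrates against $\tfrac{\mathrm{d}x}{x}$, reading off $\lvert\tilde w\rvert^2$ from the left-hand side since $\int_1^2 x^{2\gamma}\tfrac{\mathrm{d}x}{x}$ is a fixed positive constant. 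You instead pair the exact identity $x^\gamma\tilde w = g - w$ against a fixed normalized test density $\varphi\tfrac{\mathrm{d}x}{x}$ and estimate the two pairings by Cauchy--Schwarz in the respective weights. These are dual formulations of the same estimate; your version is marginally cleaner because it avoids squaring before integrating, and it makes your closing remark transparent: neither argument actually uses the ordering $\alpha_1 < \gamma < \alpha_2$, which the paper's proof also never invokes. Both observations are accurate.
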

\begin{proof}
By using the inequality
$ {\left\lvert x^\gamma\tilde w \right\rvert}^2\le
2\left( {\left\lvert w+x^\gamma\tilde w \right\rvert}^2
+  {\left\lvert w \right\rvert}^2\right)$, and the fact
that any power of $x$ is bounded by positive constants from
below and above for $x \in [1,2]$, we find
\begin{eqnarray*}
 {\left\lvert\tilde w \right\rvert
}^2_{BC^0(\mathbb{R}_y)} &\lesssim_{\gamma}& \int_1^2 {\left\lvert x^\gamma\tilde w \right\rvert}^2 \frac{\mathrm
{d} x}{x}\lesssim_{\alpha_1,\alpha_2} \int_1^2 x^{- 2\alpha_1}
{\left\lvert w \right\rvert}^2 \frac{\mathrm
{d} x}{x} + \int_1^2 x^{- 2\alpha_2}  {\left\lvert w+x^\gamma\tilde w \right\rvert}^2 \frac{\mathrm{d}
x}{x} \\
&\lesssim& {\left\lvert w \right\rvert}_{\alpha_1}^2+ {\left\lvert w+x^\gamma\tilde w \right\rvert
}_{\alpha_2}^2.
\end{eqnarray*}
\end{proof}
\begin{lemma}\label{lem:bc0_auxiliary}
For $w \in C_\mathrm{c}^0\left([0,\infty)\times\mathbb{R}\right)
\cap C^\infty\left((0,\infty) \times\mathbb{R}\right)$ and $\alpha_1< 0<\alpha_2$
we have with $w_0 := w_{|x = 0}$
\begin{align}\label{bc0_auxiliary}
 {\left\lVert w \right\rVert}_{BC^0\left((0,\infty)_x
\times\mathbb{R}_y\right)}
+ {\left\lVert w-w_0 \right\rVert}_{BC^0\left((0,\infty
)_x \times\mathbb{R}_y\right)}
+ {\left\lvert w_0 \right\rvert}_{BC^0(\mathbb
{R}_y)} \lesssim_{\alpha_1,\alpha_2} {\left\lVert D_x w \right\rVert
}_{1,\alpha_1} +
 {\left\lVert D_x w \right\rVert}_{1, \alpha_2}
\end{align}
and
\begin{align}\label{l2_auxiliary}
 \int_{-\infty}^\infty {\left\lvert w_0 \right\rvert}^2\mathrm{d} y \lesssim_{\alpha_1,\alpha_2} {\left\lVert D_x w \right\rVert
}_{\alpha_1-\frac12}^2 +
 {\left\lVert D_x w \right\rVert}_{\alpha_2-\frac12}^2.
\end{align}
Furthermore, for fixed $\alpha\in\mathbb{R}$ and
$w \in C^\infty\left((0,\infty) \times\mathbb{R}\right)$ we have
\begin{align}\label{supl2_auxiliary}
\sup_{y\in\mathbb{R}}\left\lvert w\right\rvert_\alpha \lesssim_\alpha\left\lVert w\right\rVert_{1,\alpha}.
\end{align}
\end{lemma}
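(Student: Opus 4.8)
The statement to prove is Lemma~\ref{lem:bc0_auxiliary}, consisting of three inequalities \eqref{bc0_auxiliary}, \eqref{l2_auxiliary}, and \eqref{supl2_auxiliary}. Each is a one-dimensional (in $x$) trace/embedding estimate, with $y$ playing the role of a parameter, so the plan is to first establish a purely one-dimensional sublemma and then integrate or take suprema over $y$.

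\textbf{Plan for \eqref{supl2_auxiliary}.} This is the cleanest: for fixed $\alpha$, the inequality $\sup_{y}|w(\cdot,y)|_\alpha^2 \lesssim_\alpha \|w\|_{1,\alpha}^2$ is exactly a one-dimensional Sobolev embedding of $H^1(\mathbb R_y)$ into $BC^0(\mathbb R_y)$ applied to the function $y \mapsto |w(\cdot,y)|_\alpha^2$. First I would write $|w(\cdot,y)|_\alpha^2 \le \int_{-\infty}^\infty |\partial_y(|w(\cdot,y')|_\alpha^2)| \, \mathrm dy' = 2\int |w(\cdot,y')|_\alpha \, |w_y(\cdot,y')|_\alpha \,\mathrm dy'$ (differentiating under the integral defining $|\cdot|_\alpha^2$ and using Cauchy--Schwarz in $x$), then apply Cauchy--Schwarz in $y'$ to bound this by $2\|w\|_\alpha \|w_y\|_\alpha = 2\|w\|_\alpha \|x^{-1}D_y w\|_{\alpha}$, which is controlled by $\|w\|_{1,\alpha}^2$ since $D_y = x\partial_y$ carries the factor $x$ and $\|x^{-1}D_y w\|_\alpha = \|\partial_y w\|_\alpha$; note $\|\partial_y w\|_\alpha$ and $\|w\|_\alpha$ both appear (up to the weight bookkeeping in \eqref{norm_2d_1}) in $\|w\|_{1,\alpha}^2$. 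Taking the supremum over $y$ finishes it.

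\textbf{Plan for \eqref{bc0_auxiliary} and \eqref{l2_auxiliary}.} Here the key is a one-dimensional statement: for $v \in C^0_\mathrm c([0,\infty))\cap C^\infty((0,\infty))$ with $v_0 := v(0)$ and $\alpha_1 < 0 < \alpha_2$, one has $\|v\|_{BC^0} + \|v - v_0\|_{BC^0} + |v_0| \lesssim_{\alpha_1,\alpha_2} |D_x v|_{\alpha_1} + |D_x v|_{\alpha_2}$ pointwise, together with the $L^2$-trace version $|v_0|^2 \lesssim |D_x v|_{\alpha_1 - \frac12}^2 + |D_x v|_{\alpha_2 - \frac12}^2$. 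To get these I would use the fundamental theorem of calculus: $v(x) - v_0 = \int_0^x (D_x v)(x') \frac{\mathrm dx'}{x'}$ for small $x$ and $v(x) = -\int_x^\infty (D_x v)(x') \frac{\mathrm dx'}{x'}$ for large $x$ (the latter by compact support). Splitting the integral at $x = 1$ and inserting $1 = (x')^{\alpha_1}(x')^{-\alpha_1}$ on $(0,1)$ and $1 = (x')^{\alpha_2}(x')^{-\alpha_2}$ on $(1,\infty)$, then applying Cauchy--Schwarz, gives $\|v - v_0\|_{L^\infty(0,1)} \lesssim |D_x v|_{\alpha_1}$ (since $\alpha_1 < 0$ makes $\int_0^1 (x')^{2\alpha_1}\frac{\mathrm dx'}{x'}$ finite) and $\|v\|_{L^\infty(1,\infty)} \lesssim |D_x v|_{\alpha_2}$ (since $\alpha_2 > 0$). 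The value $v_0$ is recovered as the limit $\lim_{x\searrow 0} v(x)$, which exists because the integral $\int_0^x (D_x v)(x')\frac{\mathrm dx'}{x'}$ is absolutely convergent by the same Cauchy--Schwarz bound; then $|v_0| \le \|v\|_{L^\infty(1,\infty)} + \|v - v_0\|_{L^\infty(0,1)}$ (matching on $[1,2]$, say) gives the bound on $|v_0|$ — alternatively, and more directly, one may invoke Lemma~\ref{lem:bc0_auxiliary2} with $w = v - v_0$, $\tilde w = v_0$, $\gamma = 0$. For \eqref{l2_auxiliary} I would instead apply the pointwise bound for $v(\cdot,y)$ with weights shifted by $-\tfrac12$, namely $|v_0(y)|^2 \lesssim |D_x v(\cdot,y)|_{\alpha_1 - \frac12}^2 + |D_x v(\cdot,y)|_{\alpha_2 - \frac12}^2$, and integrate in $y$; the shift by $\tfrac12$ is precisely what converts a pointwise-in-$y$ estimate into an $L^2$-in-$y$ estimate once one recalls (cf.~the computation in \eqref{norm_2d}) that $\|D_x v\|_{\beta}^2 = \int_{-\infty}^\infty |D_x v(\cdot,y)|_{\beta + \frac12}^2 \,\mathrm dy$, so with $\beta = \alpha_j - \frac12$ the right-hand side $\|D_x w\|_{\alpha_j - \frac12}^2$ equals $\int |D_x w(\cdot,y)|_{\alpha_j}^2 \,\mathrm dy$. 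Wait — I must be careful with the exact index bookkeeping; I would double-check against \eqref{norm_2d_1} that the weights line up, which is the one spot where an off-by-$\tfrac12$ error could creep in.

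\textbf{Main obstacle.} The arguments are all elementary (fundamental theorem of calculus plus Cauchy--Schwarz with a weight split at $x=1$), so there is no deep difficulty; the real care needed is in the bookkeeping of the weight exponents and the $-2$ versus $-2\alpha$ and $\pm\frac12$ conventions in the definitions \eqref{norm_1d}, \eqref{norm_2d_1}, so that the three displayed inequalities come out with exactly the stated exponents. A secondary point requiring a line of justification is that $v_0 = w_{|x=0}$ is well-defined (i.e. the trace exists) for functions in the stated regularity class — this follows from absolute convergence of $\int_0^x (D_x w)(x')\frac{\mathrm dx'}{x'}$, which the same Cauchy--Schwarz estimate provides, and from the continuity of $w$ up to $x=0$ assumed via $w \in C^0_\mathrm c([0,\infty)\times\mathbb R)$. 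I would also remark that \eqref{supl2_auxiliary} does not require compact support or a trace, only the plain embedding, which is why it is stated for $w \in C^\infty((0,\infty)\times\mathbb R)$ without the $C^0_\mathrm c$ hypothesis.
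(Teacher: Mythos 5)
Your plan for \eqref{supl2_auxiliary} has a genuine gap, and it is worth pinning down precisely because it concerns exactly the ``order of Cauchy--Schwarz'' issue you flag yourself. After differentiating $\lvert w(\cdot,y')\rvert_\alpha^2$ under the $x$-integral and applying Cauchy--Schwarz \emph{in $x$ first}, you arrive at $\lvert w(\cdot,y)\rvert_\alpha^2 \le 2\int_{\mathbb R}\lvert w(\cdot,y')\rvert_\alpha \lvert \partial_y w(\cdot,y')\rvert_\alpha \,\mathrm dy'$; applying Cauchy--Schwarz in $y'$ then gives $2\bigl(\int\lvert w\rvert_\alpha^2\,\mathrm dy'\bigr)^{1/2}\bigl(\int\lvert \partial_y w\rvert_\alpha^2\,\mathrm dy'\bigr)^{1/2}$. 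But from \eqref{norm_2d_1} and \eqref{norm_1d} one has $\int_{\mathbb R}\lvert f(\cdot,y)\rvert_\gamma^2\,\mathrm dy = \left\lVert f\right\rVert_{\gamma-\frac12}^2$, so your bound is in fact $2\left\lVert w\right\rVert_{\alpha-\frac12}\left\lVert D_y w\right\rVert_{\alpha+\frac12}$, not $2\left\lVert w\right\rVert_{\alpha}\left\lVert D_y w\right\rVert_{\alpha}$. The shifted weights $\alpha\mp\tfrac12$ do \emph{not} cancel against each other once they sit inside two separate norms, and neither $\left\lVert w\right\rVert_{\alpha-\frac12}^2$ nor $\left\lVert D_y w\right\rVert_{\alpha+\frac12}^2$ is controlled by $\left\lVert w\right\rVert_{1,\alpha}^2$. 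The fix is to reverse the order: bound $\sup_y\lvert w(\cdot,y)\rvert_\alpha^2 \le \int_0^\infty x^{-2\alpha}\sup_y w^2\,\frac{\mathrm dx}{x}$, apply the Agmon estimate $\sup_y w(x,\cdot)^2\lesssim \left\lVert w(x,\cdot)\right\rVert_{L^2_y}\left\lVert \partial_y w(x,\cdot)\right\rVert_{L^2_y}$ \emph{at fixed $x$}, and only then apply Young's inequality $ab\le\tfrac12 a^2+\tfrac12 b^2$ pointwise in $x$ before the $x$-integration — this is how the common prefactor $x^{-2\alpha-1}$ gets split evenly and both terms end up with weight $\alpha$.

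For \eqref{bc0_auxiliary} you have the roles of $\alpha_1$ and $\alpha_2$ swapped: near $x=0$ the bound $\lvert w(x,y)-w_0(y)\rvert\le\bigl(\int_0^x (x')^{2\alpha}\frac{\mathrm dx'}{x'}\bigr)^{1/2}\lvert D_x w(\cdot,y)\rvert_\alpha$ requires $\alpha>0$ (so that the first factor converges as $x\searrow0$), hence you must insert $(x')^{\alpha_2}(x')^{-\alpha_2}$ there, not $(x')^{\alpha_1}(x')^{-\alpha_1}$; the weight $\alpha_1<0$ is what you need on the far region $(1,\infty)$. With that fix your FTC-plus-weighted-Cauchy--Schwarz route is a genuine alternative to the paper's argument, which instead writes $s=\log x$, separates regions with a cut-off, uses the Sobolev embedding $W^{1,2}(\mathbb R_s)\hookrightarrow L^\infty$, and absorbs via Lemma~\ref{lem:mainhardy} (Hardy's inequality). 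Your route is more elementary (it is essentially a direct proof of the Hardy step) and avoids the cut-off, at the price of the index bookkeeping you flag. However, note one further step you omit: the FTC argument only yields a bound that is \emph{pointwise in $y$}, namely $\sup_x\lvert w(x,y)\rvert\lesssim\lvert D_x w(\cdot,y)\rvert_{1,\alpha_1}+\lvert D_x w(\cdot,y)\rvert_{1,\alpha_2}$; to take $\sup_y$ and land on $\left\lVert D_x w\right\rVert_{1,\alpha_j}$ you must invoke \eqref{supl2_auxiliary} applied to $D_x w$ — which is precisely what the paper does — so the gap in your treatment of \eqref{supl2_auxiliary} propagates into \eqref{bc0_auxiliary} as well. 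For \eqref{l2_auxiliary} your approach (integrate the pointwise-in-$y$ trace bound, then use $\left\lVert D_x w\right\rVert_{\alpha_j-\frac12}^2=\int\lvert D_x w(\cdot,y)\rvert_{\alpha_j}^2\,\mathrm dy$) coincides with the paper's; do not shift $\alpha_j$ before invoking the pointwise bound, since that would break the hypothesis $\alpha_1<0<\alpha_2$ used to make the FTC integrals converge.
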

\begin{proof}
We first prove \reftext{\eqref{supl2_auxiliary}}. By using standard embeddings, we find
\begin{eqnarray}
\sup_{y \in\mathbb{R}} {\left\lvert w \right
\rvert}_{\alpha}^2&\le& \int_0^\infty x^{-2 \alpha} \sup_{y \in\mathbb
{R}} w^2
\frac{\mathrm{d} x}{x} \nonumber\\
&\lesssim& \int_0^\infty\left(x^{-2 \alpha-1} \int_\mathbb{R}
w^2 \mathrm{d}
y\right)^{\frac1 2} \left(x^{-2\alpha-1} \int_\mathbb{R}(D_y w)^2 \mathrm{d}
y\right)^{\frac1 2} \frac{\mathrm{d} x}{x} \nonumber\\
&\le& \frac12\int_\mathbb{R}\int_0^\infty x^{-2\alpha-1} w^2
\frac{\mathrm{d}
x}{x} \, \mathrm{d} y + \frac12\int_\mathbb{R}\int_0^\infty
x^{-2\alpha-1} (D_y w)^2 \frac{\mathrm{d} x}{x} \, \mathrm{d} y \nonumber\\
&=& \frac12 {\left\lVert w \right\rVert}_{\alpha}^2 
+ \frac12 {\left\lVert D_y w \right\rVert}_{\alpha}^2.\label{auxil_2}
\end{eqnarray}
By definition of the norms ${\left\lVert\cdot\right
\rVert}_{k,\alpha}$ in \reftext{\eqref{norm_2d_1}} trivially $ {\left
\lVert D_y^j w \right\rVert
}_{\alpha} \le {\left\lVert w \right\rVert
}_{1,\alpha}$ for $j \in \{0,1\}$, which concludes the proof of \reftext{\eqref{supl2_auxiliary}}.

\medskip

Next, reasoning like in \cite[Estimate~(8.4)]{ggko.2014}, we
use the Sobolev embedding in $s=\log x$ and \reftext{Lemma~\ref{lem:mainhardy}}
to reduce to norms with $D_x w$ only. We choose a smooth cut-off
function $\chi$ such that $\chi(x)=1$ for $x\le1$ and $\chi(x)=0$ for
$x\ge2$ and obtain
\begin{align}\nonumber
{\left\lvert w_0 \right\rvert}^2 +
{\left\lvert w \right\rvert}_{BC^0((0,\infty)_x)}^2 +
{\left\lvert w-w_0 \right\rvert}_{BC^0((0,\infty)_x)}^2 &\lesssim  {\left\lvert w_0 \right\rvert}^2 +
 {\left\lvert(1-\chi) w \right\rvert}_{BC^0((0,\infty
)_x)}^2 +
 {\left\lvert\chi(w-w_0) \right\rvert}_{BC^0((0,\infty
)_x)}^2\nonumber\\
&\lesssim  {\left\lvert w_0 \right\rvert}^2 +
 {\left\lvert(1-\chi) w \right\rvert}_{W^{1,2}(\mathbb
{R}_s)}^2 +  {\left\lvert\chi(w-w_0) \right\rvert
}_{W^{1,2}(\mathbb{R}_s)}^2\nonumber\\
&\lesssim  {\left\lvert w_0 \right\rvert}^2 +
 {\left\lvert(1-\chi) w \right\rvert}_{1,\alpha_1}^2 +
 {\left\lvert\chi(w-w_0) \right\rvert}_{1,\alpha_2}^2 \nonumber\\
&\lesssim  {\left\lvert w \right\rvert}_{1,\alpha_1}^2
+  {\left\lvert w-w_0 \right\rvert}_{1,\alpha_2}^2
\stackrel{\text{\reftext{\eqref{est_hardy}}}}{\lesssim} {\left\lvert D_x w
\right\rvert}_{\alpha_1}^2 +  {\left\lvert D_x w \right
\rvert}_{\alpha_2}^2.\label{auxil_1}
\end{align}
Here, we have first used the triangle inequality and subsequently the standard
Sobolev embedding on the real line. For the third estimate we have
introduced weights which on the support of the functions that appear in
the norms are in both cases bounded from below by a positive constant.
The second but last estimate follows from
\reftext{Lemma~\ref{lem:bc0_auxiliary2}}.
Finally, in the last estimate we have
used \reftext{Lemma~\ref{lem:mainhardy}}.

\medskip 

By taking the supremum in $y\in\mathbb R$ of \reftext{\eqref{auxil_1}} and summing estimate~\reftext{\eqref{auxil_2}} for $\alpha=\alpha_1$ and $\alpha = \alpha_2$ and for $D_xw$ rather than $w$, we complete the proof of \reftext{\eqref{bc0_auxiliary}}.

\medskip

In order to prove \reftext{\eqref{l2_auxiliary}}, we integrate \reftext{\eqref{auxil_1}} in $y$ and obtain in particular
\begin{equation*}
\int_{-\infty}^\infty \left\lvert w_0\right\rvert^2\mathrm{d} y\lesssim\int_{-\infty}^\infty \left\lvert D_x w\right\rvert^2_{\alpha_1}\mathrm{d}y + \int_{-\infty}^\infty \left\lvert D_x w\right\rvert^2_{\alpha_2}\mathrm{d}y \\
=\left\lVert D_xw\right\rVert_{\alpha_1-\frac12}^2 + \left\lVert D_xw\right\rVert_{\alpha_2-\frac12}^2,
\end{equation*}
which proves \reftext{\eqref{l2_auxiliary}} and concludes the proof.
\end{proof}
\begin{proof}[Proof of \reftext{Lemma~\ref{lem:bc0_bounds}}]
Throughout the proof, estimates depend only on the
number of derivatives of the functions that we need to estimate,
+with implicit constants which depend only on $\tilde k$, $\check k$,
and $\delta$. By an approximation argument using
\reftext{Lemma~\ref{lem:approx_init}},
\reftext{Lemma~\ref{lem:approx_rhs}}, and
\reftext{Lemma~\ref{lem:approx_sol}},
we may assume that all functions are smooth,
compactly supported in $x \in [0,\infty)$ and $y\in\mathbb R$,
and meet expansions \eqref{expansion_v}, \eqref{expansion_f}, and \eqref{expansion_v0} with smooth coefficients and no remainder. 

\medskip
\noindent\textit{Proof of estimates~\reftext{\eqref{bc0_grad_init}} and~\reftext{\eqref{bc0_grad_sol}}.}
Note that \reftext{\eqref{bc0_grad_sol}} follows from
\reftext{\eqref{bc0_grad_init}} by taking the supremum
in $t\in I$ and therefore we only prove the latter.

We write $v_x^{(0)}=x^{-1}D_xv^{(0)}$ and use
estimate~\reftext{\eqref{bc0_auxiliary}} of
\reftext{Lemma~\ref{lem:bc0_auxiliary}}
for $w:=D^\ell v^{(0)}_x$, giving
\begin{align}
& {\left\lVert D^\ell v^{(0)}_x \right\rVert}_{BC^0\left((0,\infty) \times\mathbb{R}\right)} \nonumber\\
& \quad \lesssim  {\left\lVert
D_xD^\ell x^{-1} D_x v^{(0)} \right\rVert}_{1,-\delta} +
{\left\lVert D_xD^\ell x^{-1} D_x v^{(0)} \right\rVert}_{1,\delta}
\nonumber\\
&\quad\lesssim {\left\lVert x^{-1}D_y^{\ell_y}\left(D_x+\ell
_y-1\right)\left(D_x-1\right)^{\ell_x}D_x v^{(0)} \right\rVert
}_{1,-\delta} + {\left\lVert x^{-1}D_y^{\ell_y}\left
(D_x+\ell_y-1\right)\left(D_x-1\right)^{\ell_x}D_x v^{(0)} \right
\rVert}_{1,\delta} \nonumber\\
&\quad\lesssim {\left\lVert D_y^{\ell_y}\left(D_x+\ell
_y-1\right)\left(D_x-1\right)^{\ell_x}D_x v^{(0)} \right\rVert
}_{1,-\delta+1} +  {\left\lVert D_y^{\ell
_y}\left(D_x+\ell_y-1\right)\left(D_x-1\right)^{\ell_x}D_x
v^{(0)} \right\rVert}_{1,\delta+1}. \label{bc0x_bound1}
\end{align}
Now we consider two cases. In case of $\ell_x \ge1$ or $\ell_y=0$, we
have a term
\begin{equation*}
(D_x-1)D_xv^{(0)}=(D_x-1)D_x\left(v^{(0)}-v^{(0)}_0-v^{(0)}_1x\right)
\end{equation*}
appearing in the last two lines of \reftext{\eqref{bc0x_bound1}}. Using the
triangle inequality, we may bound these terms by
\begin{align}\nonumber
& \sum_{0\le {\left\lvert\tilde\ell\right
\rvert}\le {\left\lvert\ell\right\rvert}}
{\left\lVert D^{\tilde\ell} (D_x-1) D_x\left
(v^{(0)}-v^{(0)}_0-v^{(0)}_1x\right) \right\rVert}_{1,-\delta
+1} + \sum_{0\le {\left\lvert\tilde\ell\right
\rvert}\le {\left\lvert\ell\right\rvert}}
{\left\lVert D^{\tilde\ell} (D_x-1) D_x\left
(v^{(0)}-v^{(0)}_0-v^{(0)}_1x\right) \right\rVert}_{1,\delta
+1} \nonumber\\
&\quad\lesssim  {\left\lVert\tilde q(D_x)D_xv^{(0)} \right
\rVert}_{\check k,-\delta+
1} + {\left\lVert(D_x-3)(D_x-2)\tilde q(D_x)D_xv^{(0)}
\right\rVert}_{\check k, \delta
+1}, \label{bc0x_bound2}
\end{align}
for $ {\left\lvert\tilde\ell\right\rvert}\le
 {\left\lvert\ell\right\rvert}\le\check k-2$, and
where in the
last estimate we have applied \reftext{Lemma~\ref{lem:mainhardy}} together with
the fact that $\tilde q(D_x)$ figures the factor $D_x-1$ (cf.~\reftext{\eqref{poly_q2}}). Due to the definition of the initial data norm
${\skliaustask\cdot\skliaustasd}_\mathrm{init}$ (cf.~\reftext{\eqref{norm_init}}), this allows to bound
the first term in \reftext{\eqref{bc0x_bound1}}.

If instead $\ell_y \ge1$ and $\ell_x = 0$, we find in both terms of
the last two lines of \reftext{\eqref{bc0x_bound1}} the quantity
\begin{equation*}
D_y^{\ell_y}(D_x+\ell_y-1)D_xv^{(0)}=D_y^{\ell_y}(D_x+\ell
_y-1)D_x\left
(v^{(0)}-v_0^{(0)}\right),
\end{equation*}
and we recognize that this quantity is $O\left(x^2\right)$ as $x\searrow0$, due to one
$x$-factor coming from the $D_y$-operators and a second one coming from
$v^{(0)}-v_0^{(0)}$. This allows to proceed as above by again applying
\reftext{Lemma~\ref{lem:mainhardy}}. Therefore, we find that in either case

\begin{align}\label{bc0x_bound3}
 {\left\lVert D^\ell v^{(0)}_x \right\rVert}_{BC^0\left
((0,\infty) \times\mathbb{R}\right)}
\lesssim {\left\lVert\tilde q(D_x)D_xv^{(0)} \right
\rVert}_{\check k,-\delta+
1} + {\left\lVert(D_x-3)(D_x-2)\tilde q(D_x)D_xv^{(0)}
\right\rVert}_{\check k, \delta+1}.
\end{align}
This verifies \reftext{\eqref{bc0_grad_init}} for the fist term on the left-hand
side provided $ {\left\lvert\ell\right\rvert}\le\check
k - 2$. For verifying the estimate
for the third term on the left-hand side of \reftext{\eqref{bc0_grad_init}}, we use \reftext{Lemma~\ref{lem:bc0_auxiliary}} and
write analogously to \reftext{\eqref{bc0x_bound1}}

\begin{equation*}
 {\left\lVert D^\ell v^{(0)}_y \right\rVert}_{BC^0\left
((0,\infty) \times\mathbb{R}\right)}
\stackrel{\text{\reftext{\eqref{bc0_auxiliary}}}}{\lesssim}  {\left\lVert
D_y^{\ell_y+1} (D_x+\ell_y)D_x^{\ell_x}v^{(0)} \right\rVert
}_{1,-\delta+1} +  {\left\lVert D_y^{\ell_y+1}
(D_x+\ell_y)D_x^{\ell_x}v^{(0)} \right\rVert}_{1,\delta+1}.
\end{equation*}
Now if $\ell_x \ge1$ or $\ell_y=0$, then at least one $D_x$-derivative
acts on $v^{(0)}$ and cancels the contribution $v^{(0)}_0$.
This, together with the $x$-factor coming from the $D_y$-operator leads
to the expression $O\left(x^2\right)$ as $x\searrow0$.
If instead $\ell_x=0$ and $\ell_y \ge1$,
then we have at least two factors $x$ coming from
the $D_y$-operators, and again the terms are
$O\left(x^2\right)$ as $x\searrow0$. Therefore, we can proceed as in
\reftext{\eqref{bc0x_bound2}} and find the bound
\reftext{\eqref{bc0x_bound3}} for this term as well.

For the remaining terms in \reftext{\eqref{bc0_grad_init}} we proceed by
using \reftext{\eqref{bc0_auxiliary}} of \reftext{Lemma~\ref{lem:bc0_auxiliary}} and then
\reftext{Lemma~\ref{lem:mainhardy}}, to find, first with $w=v^{(0)}$, $w_0=v^{(0)}_0$,
\begin{equation*}
 {\left\lvert v^{(0)}_0 \right\rvert}_{BC^0(\mathbb{R})}
\lesssim  {\left\lVert D_xv^{(0)}\right\rVert
}_{1, -\delta} + {\left\lVert D_x v^{(0)}\right\rVert}_{1,\delta} \lesssim {\left\lVert D_x v^{(0)} \right
\rVert}_{1,-\delta} +
 {\left\lVert\tilde q(D_x)D_x v^{(0)} \right\rVert
}_{1,\delta}
\end{equation*}
next, with $w=\left(v^{(0)}\right)_y$, $w_0=\left(v^{(0)}_0\right)_y$
\begin{eqnarray*}
 {\left\lvert \left(v^{(0)}_0\right)_y \right\rvert}_{BC^0(\mathbb{R})}
&\lesssim&  {\left\lVert D_x w\right\rVert}_{1, -\delta} 
+ {\left\lVert D_xw\right\rVert}_{1,\delta} = {\left\lVert D_yD_x v^{(0)} \right
\rVert}_{1,-\delta+1} +  {\left\lVert D_yD_x v^{(0)} \right\rVert}_{1,\delta+1}\\
&\lesssim&{\left\lVert \tilde q(D_x)D_xv^{(0)}\right\rVert}_{2, -\delta+1} 
+ {\left\lVert (D_x-3)(D_x-2)\tilde q(D_x)D_xv^{(0)}\right\rVert}_{2,\delta+1}
\end{eqnarray*}
and finally, with $w=\left(v^{(0)}\right)_x$, $w_0=v^{(0)}_1$, by commuting derivatives
\begin{eqnarray*}
 {\left\lvert v^{(0)}_1\right\rvert}_{BC^0(\mathbb{R})} 
&\lesssim&  {\left\lVert D_x w\right\rVert}_{1, -\delta} + {\left\lVert D_xw\right\rVert}_{1,\delta} = {\left\lVert (D_x-1)D_x v^{(0)} \right
\rVert}_{1,-\delta+1} +  {\left\lVert (D_x-1)D_x v^{(0)} \right\rVert}_{1,\delta+1}\\
&\lesssim&{\left\lVert \tilde q(D_x)D_xv^{(0)}\right\rVert}_{2, -\delta+1} 
+ {\left\lVert (D_x-3)(D_x-2)\tilde q(D_x)D_xv^{(0)}\right\rVert}_{2,\delta+1}.
\end{eqnarray*}
This completes the proof of \reftext{\eqref{bc0_grad_init}}, by bounding the
second and fourth terms on the left-hand side, provided $\ell^\prime
\le
\min\left\{\tilde k - 1, \check k - 1\right\}$ and $\ell^{\prime
}
\le\min\left\{\tilde k - 2, \check k - 2\right\}$ (cf.~\reftext{\eqref{norm_init}}).

\medskip
\noindent\textit{Proof of estimate~\reftext{\eqref{bc0_v1beta_v2}}.}
We now use \reftext{\eqref{l2_auxiliary}} of 
\reftext{Lemma~\ref{lem:bc0_auxiliary}}, in combination with Lemma~\reftext{\ref{lem:mainhardy}},
to obtain the following bounds, which work similarly as
the previous estimate. 
For the choices $w=v$ or $w = x^{-1} D_x v$ or $w = x^{-1} D_y v$,
$\alpha_1=-\delta$, $\alpha_2=\delta$,
and $\alpha_2 = - \delta + \frac 1 2$
in \reftext{Lemma~\ref{lem:bc0_auxiliary}},
we have, respectively,
\begin{align*}
{\left\lvert v_0\right\rvert}_{L^2\left(\mathbb{R}_y\right)} &\lesssim {\left\lVert D_xv\right\rVert}_{-\delta-\frac12} +{\left\lVert D_x v\right\rVert}_{-\delta+\frac12} \lesssim {\left\lVert v\right\rVert}_{1,-\delta-\frac12} + {\left\lVert D_x v\right\rVert}_{-\delta+\frac12} \\
 {\left\lvert v_1 \right
\rvert}_{L^2\left(\mathbb{R}_y\right)} &\lesssim
{\left\lVert (D_x-1)D_xv\right\rVert}_{-\delta + \frac12}  
+ {\left\lVert (D_x-1)D_xv \right\rVert}_{\delta+\frac12} \lesssim {\left\lVert D_x v \right\rVert}_{1,-\delta+ \frac12} 
+ {\left\lVert\tilde q(D_x)D_x v \right\rVert}_{1,\delta+\frac1 2},\\
 {\left\lvert \left(v_0\right)_y \right
\rvert}_{L^2\left(\mathbb{R}_y\right)} &\lesssim
{\left\lVert D_yD_xv\right\rVert}_{-\delta + \frac12} 
+ {\left\lVert D_yD_xv \right\rVert}_{\delta+\frac12} \lesssim  {\left\lVert D_x v \right\rVert}_{1,-\delta+ \frac12} 
+ {\left\lVert\tilde q(D_x)D_x v \right\rVert}_{1,\delta+\frac1 2}.
\end{align*}
Next, with $w=v_{yy}=x^{-2}D_y^2v$ or $w=v_{xy}=x^{-2}D_yD_xv$ and the same $\alpha_1,\alpha_2$, we find
\begin{align*}
 {\left\lvert (v_0)_{yy} \right\rvert}_{L^2\left(\mathbb{R}_y\right)} &\lesssim
{\left\lVert D_y^2D_x v \right\rVert}_{-\delta + \frac3 2} +
 {\left\lVert D_y^2D_xv \right\rVert}_{\delta+\frac32} \\
&\lesssim  {\left\lVert \tilde q(D_x)D_x v \right\rVert}_{2,-\delta+ \frac3 2} 
+ {\left\lVert (D_x-3)(D_x-2)\tilde q(D_x) D_x v \right\rVert}_{2,\delta+\frac32},\\
{\left\lvert (v_1)_y\right\rvert}_{L^2\left(\mathbb{R}_y\right)} &\lesssim \left\lVert D_y(D_x-1)D_xv\right\rVert_{-\delta+\frac32}+\left\lVert D_y(D_x-1)D_xv\right\rVert_{\delta+\frac32}\\
&\lesssim \left\lVert\tilde q(D_x)D_xv\right\rVert_{1,-\delta+\frac32} + \left\lVert(D_x-3)(D_x-2)\tilde q(D_x)D_xv\right\rVert_{1,\delta+\frac32}.
\end{align*}

\medskip

For $w=x^{-1-\beta} (D_x-1)D_xv$ and $\alpha_1=\delta-\beta$, $\alpha_2=-\delta-\beta+1$, we find
\begin{align*}
 {\left\lvert v_{1+\beta}\right\rvert}_{L^2\left(\mathbb{R}_y\right)} &\lesssim
 {\left\lVert D_xw\right\rVert}_{\delta-\beta-\frac12}
 +{\left\lVert D_xw\right\rVert}_{-\delta-\beta+\frac12} \\
&\lesssim {\left\lVert (D_x-\beta-1)(D_x-1)D_x v \right\rVert}_{\delta + \frac12} 
+ {\left\lVert (D_x-\beta-1)(D_x-1)D_xv \right\rVert}_{-\delta+\frac32} \\
&\lesssim  {\left\lVert \tilde q(D_x)D_x v \right\rVert}_{\delta+ \frac1 2} 
+ {\left\lVert \tilde q(D_x) D_x v \right\rVert}_{-\delta+\frac32},
\end{align*}
and in the same way
\begin{equation*}
 {\left\lvert (v_{1+\beta})_y \right\rvert}_{L^2\left(\mathbb{R}_y\right)} \lesssim
 {\left\lVert \tilde q(D_x)D_x v_y \right\rVert}_{\delta+ \frac1 2} 
+ {\left\lVert \tilde q(D_x) D_x v_y \right\rVert}_{-\delta+\frac32} \sim {\left\lVert D_y \tilde q(D_x)D_x v \right\rVert}_{\delta+ \frac3 2} 
+ {\left\lVert D_y \tilde q(D_x) D_x v \right\rVert}_{-\delta+\frac52}.
\end{equation*}
If we choose $w=x^{-2}(D_x-\beta-1)(D_x-1)D_xv$ and $\alpha_1=-\delta, \alpha_2=\delta$, we find
\begin{align*}
 {\left\lvert v_2 \right\rvert}_{L^2\left(\mathbb{R}_y\right)} &\lesssim
{\left\lVert(D_x-2)(D_x-\beta-1)(D_x-1)D_x v \right\rVert}_{-\delta + \frac3 2} + {\left\lVert (D_x-2)(D_x-\beta-1)(D_x-1)D_xv \right\rVert}_{\delta+\frac32} \\
&\lesssim  {\left\lVert \tilde q(D_x)D_x v \right\rVert}_{-\delta+ \frac3 2} 
+ {\left\lVert (D_x-3)(D_x-2)\tilde q(D_x) D_x v \right\rVert}_{\delta+\frac32}.
\end{align*}
After taking the integral in $t$ over $I$ of the above bounds,
we control $\left\lvert v_{1+\beta} \right\rvert_{BC^0(\R_y)}$ by
$\left\lvert v_{1+\beta} \right\rvert_{H^1(\R_y)}$
using the Sobolev embedding, and we note
that the resulting terms are bounded by
${\skliaustask v\skliaustasd}_\mathrm{sol}^2$.

\medskip

Furthermore, by applying \reftext{Lemma~\ref{lem:bc0_auxiliary}} to
$w=x^{-2}D_yD_xv$ and $\alpha_1=\delta-\tfrac12$ and
$\alpha_2=-\delta+\tfrac12$, we find
\begin{align*}
{\left\lvert\left(v_1\right)_y\right\rvert}_{L^2(\mathbb R_y)}&\lesssim{\left\lVert D_y(D_x-1)D_xv\right\rVert}_{\delta+1} + {\left\lVert D_y(D_x-1)D_xv\right\rVert}_{-\delta+2}\\
&\lesssim{\left\lVert D_y \tilde q(D_x) D_xv\right\rVert}_{\delta+1} + {\left\lVert D_y \tilde q(D_x)D_xv\right\rVert}_{-\delta+2}\\
&\le{\left\lVert D_y\tilde q(D_x)D_xv\right\rVert}_{\check k -1, \delta+1}+ {\left\lVert D_y\tilde q(D_x)D_xv\right\rVert}_{\breve k,-\delta+2},
\end{align*}
and by taking the supremum in $t$ over $I$
we see that the resulting terms are bounded by
${\skliaustask v\skliaustasd}_\mathrm{sol}^2$.

\medskip

Similarly, for $(v_0)_y$ we take $w=x^{-2}D_y^2v$,
$\alpha_1=\delta-\tfrac12$, and $\alpha_2=-\delta+\tfrac12$,
so that we find
\[
{\left\lvert \left(v_0\right)_{yy}\right\rvert}_{L^2(\mathbb R_y)} \lesssim {\left\lVert D_x x^{-2} D_y^2v\right\rVert}_{\delta-1} + {\left\lVert D_x x^{-2}D_y^2 v\right\rVert}_{-\delta} = {\left\lVert D_y^2D_x v\right\rVert}_{\check k -2, \delta +1} + {\left\lVert D_y^2 D_xv\right\rVert}_{\breve k -1, -\delta+2}.
\]
The supremum in $t$ over $I$ of these terms can be controlled as above by
${\skliaustask v\skliaustasd}_\mathrm{sol}^2$, which gives the bound for
$\left\lVert \left(v_1\right)_y\right\rVert_{BC^0(I;L^2(\mathbb R))}$
and $\left\lVert \left(v_0\right)_{yy}\right\rVert_{BC^0(I;L^2(\mathbb R))}$. 

\medskip

In order to prove the bounds for the remaining terms $\left\lVert v_1\right\rVert_{BC^0(I;L^2(\mathbb R))}$ and $\left\lVert \left(v_0\right)_y\right\rVert_{BC^0(I;L^2(\mathbb R))}$,
we now take $w=x^{-1} D_x v$ or $w= x^{-1} D_y v$,
as well as $\alpha_1=\delta-\frac12$ and $\alpha_2=-\delta+\frac12$,
respectively, and we find
\begin{align*}
 {\left\lvert v_1 \right
\rvert}_{L^2\left(\mathbb{R}_y\right)} &\lesssim
{\left\lVert (D_x-1)D_xv\right\rVert}_{\delta}  
+ {\left\lVert (D_x-1)D_xv \right\rVert}_{-\delta+1} \lesssim {\left\lVert \tilde q(D_x)D_x v \right\rVert}_{\delta} 
+ {\left\lVert\tilde q(D_x)D_x v \right\rVert}_{-\delta+1},\\
 {\left\lvert \left(v_0\right)_y \right
\rvert}_{L^2\left(\mathbb{R}_y\right)} &\lesssim
{\left\lVert D_yD_xv\right\rVert}_{\delta}
+ {\left\lVert D_yD_xv \right\rVert}_{-\delta+1},
\end{align*}
after which we may take the supremum in $t$ over $I$ and control the ensuing terms by $\skliaustask v\skliaustasd_\mathrm{sol}$.

\medskip

\noindent\textit{Proof of estimate~\reftext{\eqref{bcint_f1_f2}}.}
We apply \reftext{\eqref{l2_auxiliary}} for $\alpha_1=-\delta,\alpha_2=\delta$, 
for the two different choices $w=x^{-1}f$ and $w=x^{-2}(D_x-1)f$, 
followed by Lemma~\reftext{\ref{lem:mainhardy}}, thus obtaining the two following bounds:
\begin{align*}
\left\lvert f_1\right\rvert_{L^2\left(\mathbb R_y\right)} &\lesssim
\left\lVert(D_x-1)f\right\rVert_{-\delta+\frac12}+\left\lVert(D_x-1)f\right\rVert_{\delta+\frac12} \lesssim \left\lVert(D_x-1)f\right\rVert_{-\delta+\frac12}+\left\lVert\tilde q(D_x-1)(D_x-1)f\right\rVert_{\delta+\frac12},\\
\left\lvert f_2\right\rvert_{L^2\left(\mathbb R_y\right)} &\lesssim
\left\lVert(D_x-2)(D_x-1)f\right\rVert_{-\delta+\frac12}+\left\lVert(D_x-2)(D_x-1)f\right\rVert_{\delta+\frac32}\\
&\lesssim \left\lVert\tilde q(D_x-1)(D_x-1)f\right\rVert_{-\delta+\frac32}
+\left\lVert(D_x-4)(D_x-3)\tilde q(D_x-1)(D_x-1)f\right\rVert_{\delta+\frac32}.
\end{align*}
We may then integrate the two above bounds in $t$ over $I$ and bound
the right-hand sides by ${\skliaustask f\skliaustasd}_\mathrm{rhs}^2$
under the conditions
$0\le\ell\le\tilde k - 3$ and $0\le\ell^\prime\le\check k - 3$ (cf.~\reftext{\eqref{norm_rhs}}).
\end{proof}
%

\subsection{Proofs of the approximation lemmata}\label{app:approx}

\begin{proof}[Proof of \reftext{Lemma~\ref{lem:approx_init}}]
Throughout the proof, estimates depend on $k$, $\tilde k$, $\check k$,
$\breve k$, and $\delta$.

\medskip
\noindent\textit{Introducing a cut-off in the coordinates $s$ and
$\eta$.}
We have to approximate $v^{(0)}$ contemporarily in all the addends
appearing in $ {\left\lVert v^{(0)} \right\rVert}_\mathrm
{init}^2$ (cf.~\reftext{\eqref{norm_init}}). In order to adapt to the standard
theory of Sobolev
spaces, we introduce the Fourier coordinate $\eta\in
\mathbb{R}$ in
the $y$-direction and the variable $s=\log x$ in the $x$-direction.
Through the use of \reftext{\eqref{norm_2d}}, we can rewrite the norms by
effectively replacing $D_y$ by $e^s\eta$ and $D_x$ by $\partial_s$. We
write for example
%
%
\begin{eqnarray}\label{approx_init_example}
 {\left\lVert\tilde q(D_x)D_x v^{(0)} \right\rVert
}_{\tilde k, \delta}^2&=&\sum_{0\le
j+j^\prime\le\tilde k}\int_\mathbb{R}\eta^{2j} {\left
\lvert x^{j-\frac12}D_x^{j^\prime}\tilde q(D_x)D_xv^{(0)} \right
\rvert}_{\delta}^2\mathrm{d}\eta\nonumber\\
&=& \sum_{0\le j+j^\prime\le\tilde k} {\left\lVert\eta
^j\ e^{\left(-\delta-\frac12+j\right)s}\ \tilde q(\partial_s)\partial
_s^{j^\prime+1}v^{(0)} \right\rVert}^2_{L^2(\mathbb{R}
_s\times\mathbb{R}_\eta)},\nobreakpostdisplay
\end{eqnarray}
and a similar identity is valid for the other norms contributing to
${\left\lVert v^{(0)} \right\rVert}_\mathrm{init}$.

We now introduce a cut-off function $\chi_n$ defined as
$\chi_n(s,\eta) := \phi\left(\frac s n\right) \psi\left(\frac\eta n\right)$,
where $\phi$ and $\psi$ are smooth and satisfy
\begin{subequations}\label{approx_init_cut-off_s_eta}
\begin{equation}\label{phi_s}
\phi(s) = 1 \quad \text{for} \quad s \in (-\infty, 1] \quad
\mbox{and} \quad \phi(s) = 0 \quad \text{for} \quad s \in[2,\infty),
\end{equation}
as well as
\begin{equation}\label{psi_eta}
\psi(\eta) = 1 \quad \text{for} \quad \eta \in [-1,1] \quad
\mbox{and} \quad \psi(\eta) = 0 \quad \mbox{for} \quad \eta \notin [-2,2].
\end{equation}
\end{subequations}
Note that in the norms contributing to $ {\left\lVert
v^{(0)} \right\rVert}_\mathrm{init}$
the difference $v^{(0)}-\chi_n v^{(0)}$ produces contributions which
tend to zero as $n\to\infty$: For each term in the norm $
{\left\lVert v^{(0)} \right\rVert}_\mathrm{init}$ we may distribute
derivatives. If some of the
factors from $\tilde q(\partial_s)\partial_s$ fall on the cut-off, then
we lose the structure $\tilde q(\partial_s)\partial_s$. However, the
resulting term can be controlled by using the term $\left\lVert v^{(0)} \right\rVert_{k,-\delta-1}$, provided $k\ge \tilde k+4$. For example for the term \reftext{\eqref{approx_init_example}} we obtain
\begin{align*}
&\sum_{0\le j+j^\prime\le\tilde k} {\left\lVert\eta^j
e^{\left(-\delta-\frac12+j\right)s}\ \tilde q(\partial_s)\partial_s^{j^\prime
+1}(1-\chi_n)v^{(0)} \right\rVert}^2_{L^2(\mathbb{R}
_s\times\mathbb{R}_\eta)}\\
& \quad \lesssim \sum_{0\le j+j^\prime\le\tilde k} {\left\lVert\eta^j
e^{\left(-\delta-\frac12+j\right)s}\ (1-\chi_n) \ \tilde q(\partial_s)\partial_s^{j^\prime
+1} v^{(0)} \right\rVert}^2_{L^2\left(\mathbb{R}
_s\times\mathbb{R}_\eta\right)} \\
& \quad \phantom{\lesssim} + \sum_{\substack{0 \le j \le \tilde k\\ 0 \le j + j^\prime\le\tilde k+4}}
{\left\lVert\eta^j
e^{\left(-\delta+\frac12+j\right)s} \partial_s^{j^\prime} v^{(0)} \right\rVert}^2_{L^2\left([n,2n]\times\mathbb{R}_\eta\right)},
\end{align*}
where the last two terms converge to zero as $n\to\infty$ by
dominated convergence provided $k \ge \tilde k + 4$. A similar reasoning
for the remaining norms contributing to
$ {\left\lVert v^{(0)} \right\rVert}_\mathrm{init}$
shows that $ {\left\lVert v^{(0)}-\chi_nv^{(0)} \right
\rVert}_\mathrm{init}\to0$ as $n\to\infty$.

The approximants $\chi_nv^{(0)}$ already satisfy the property
$(G_\infty
)$ and the smoothness in $y$ since smoothness in $y$ is equivalent to
decay in $\eta$.

\medskip
\noindent\textit{Truncation in $y$.}
At this point we truncate our function in $y$ to ensure smoothness
in $\eta$ (decay in $\eta$ is already valid due to the previous proof step),
and up to passing through a diagonal argument, due to \reftext{\eqref{phi_s}},
we may assume that $v^{(0)}$ is already zero for $x\gg 1$.
We then introduce a cut-off function $\chi\in C^\infty(\mathbb R)$
such that $\chi(y)=1$ for $y\in[-1,1]$ and $\chi(y)=0$ for $y\notin[-2,2]$.
Then we define $\chi_n(y)=\chi\left(\frac y n\right)$ and we claim that
$\left\lVert v^{(0)}-\chi_n v^{(0)}\right\rVert_\mathrm{init}\to 0$ as $n\to\infty$.
This can be proved by considering separately the different terms in
the definition of $\left\lVert\cdot\right\rVert_\mathrm{init}$ and
showing that they all vanish as $n\to\infty$.
We again consider only the term corresponding to \reftext{\eqref{approx_init_example}},
all the others being treated by the same reasoning.
If $D_y$-operators appear in our terms,
then they change the weight by a power of $x$.
However, as we have assumed that $v^{(0)}(x,y)=0$ for $x\gg 1$,
all $x$-factors are uniformly bounded. Thus in all cases we can write
\begin{align*}
\left\lVert \tilde q(D_x)D_x \left(1 - \chi_n\right) v^{(0)}\right\rVert_{\tilde k,\delta}^2
&=\sum_{0\le j+j^\prime\le \tilde k}\int_0^\infty x^{-2\delta-1+2j}\int_{-\infty}^\infty
\left(\partial_y^j \tilde q(D_x)D_x^{j^\prime+1} \left(1-\chi_n(y)\right)v^{(0)}(x,y)\right)^2
\mathrm{d}y \frac{\mathrm{d}x}{x}\\
&\lesssim\sum_{0\le j+j^\prime\le \tilde k}\int_0^\infty x^{-2\delta-1+2j}
\int_{(-\infty,-n]\cup[n,\infty)}\left(\partial_y^j\tilde q(D_x)D_x^{j^\prime+1}
v^{(0)}\right)^2\mathrm{d}y\frac{\mathrm{d}x}{x},
\end{align*}
and each term in the above sum converges to zero as $n\to\infty$ by dominated convergence. 

\medskip
\noindent\textit{Mollification in $s=\log x$.}
As a consequence of the previous step, we may without loss of generality
restrict ourselves to functions $v^{(0)}$ such that $v^{(0)}(x,\eta) =
0$ for $(x,y)$ with $x \gg1$ or $|y|\gg 1$, and which thus are smooth in $\eta$.
As a next step, we prove that we can also approximate by functions 
that are smooth in both $(x,\eta)$.
To this aim, we mollify the function $v^{(0)}$ in the
variable $s=\log x$ with
a mollifier $\chi_\varepsilon$, where $\varepsilon> 0$, $\chi
_\varepsilon(s) = \varepsilon^{-1} \chi_1\left
(\frac s \varepsilon\right)$, and $\chi_1 \in
C^\infty_0(\mathbb{R}^2)$
with $\int\chi_1(s) \, \mathrm{d} s = 1$. Then the
mollification commutes with derivatives $\partial_s$. Furthermore, we
have that
\begin{equation*}
e^{-\rho s} \chi_\varepsilon * v^{(0)} (s,\eta)= \int_{-\infty
}^\infty e^{-\rho\left(s - s^\prime\right)}
\chi_\varepsilon\left(s-s^\prime\right) e^{-\rho s^\prime} v^{(0)}\left(s^\prime,\eta\right) \mathrm{d} s^\prime
\mathrm{d} r^\prime =
\underline\chi_\varepsilon * \underline v^{(0)}(s,\eta),
\end{equation*}
where $\underline\chi_\varepsilon(s) := e^{-\rho s} \chi
_\varepsilon(s)$
and $\underline v^{(0)}(s,\eta) := e^{-\rho s} v^{(0)}(s,\eta)$.
Hence, upon multiplying the mollifier with an appropriate weight, it
can be pulled out of all expressions of the form \reftext{\eqref{approx_init_example}} for the norms contributing to $ {\left
\lVert v^{(0)} \right\rVert}_\mathrm{init}$. Then $\underline\chi
_\varepsilon\to\delta_0$ in
$\mathcal D^\prime$ as $\varepsilon \searrow 0$
regardless of the value of $\rho$ and $\nu$ and the
proof step is concluded.

\medskip
\noindent\textit{Decay conditions as $x\searrow0$.}
By the previous step, we can assume without loss of generality that
$v^{(0)} \in C^\infty\left((0,\infty)_x \times \mathbb{R}_\eta\right)$
with $v^{(0)}(x,y) = 0$ for $(x,y)$ such that $x \gg1$ or
${\left\lvert y\right\rvert} \gg1$. Hence, also
$v^{(0)} = v^{(0)}(x,\eta)$ is smooth.
It therefore remains to discuss the asymptotics of $v^{(0)}$ as $x
\searrow0$.

To verify the decay conditions on $v^{(0)}$, we iteratively apply the
following basic reasoning for $\gamma\in\{0, 1\}$:

If $w_1,w_2$ satisfy $(D_x-\gamma)w^{(1)}=w^{(2)}$ and if $
{\left\lvert w^{(2)} \right\rvert}_\rho < \infty$ for some $\rho
\in\mathbb{R}\setminus\{\gamma\}$
then we can write
\begin{equation}\label{gensolw1}
w^{(1)}(x)=\tilde w(x) + w_\gamma x^\gamma,
\end{equation}
for some $w_\gamma\in\mathbb R$, where the following explicit formulas
for $\tilde w$ hold true:
\begin{equation}\label{explicitw}
\tilde w(x)=\left\{
\begin{array}{ll}
x^{\gamma}\int_0^x\tilde z^{-\gamma}w^{(2)}(\tilde z) \frac{\mathrm
{d}\tilde
z}{\tilde z} & \text{ if }\gamma<\rho,\\[5pt]
-x^{\gamma}\int_x^\infty\tilde z^{-\gamma}w^{(2)}(\tilde z) \frac
{\mathrm{d}
\tilde z}{\tilde z} & \text{ if }\gamma>\rho.
\end{array}
\right.
\end{equation}
Now note that for the part $\tilde w$ the asymptotics required for
applying \reftext{Lemma~\ref{lem:mainhardy}} follow directly from the assumption
$ {\left\lvert w^{(2)} \right\rvert}_{1,\rho}<\infty$
and from the explicit formulas \reftext{\eqref{explicitw}}.

By iteratively using expressions \reftext{\eqref{gensolw1}} and \reftext{\eqref{explicitw}}, we obtain that the function $v^{(0)}$ has the form
\begin{equation*}
v^{(0)}(x,\eta) = v^{(0)}_0(\eta) + v^{(0)}_1(\eta) x + R(x,\eta)
\quad
\mbox{as} \quad x \searrow0,
\end{equation*}
with a remainder term $R$. More precisely, $R$ can be characterized by
noting that by \reftext{Lemma~\ref{lem:mainhardy}} and due to the fact that the
operator $\tilde q(D_x)D_x$ cancels the terms $v_0^{(0)}$ and
$v_1^{(0)}x$ (cf.~\reftext{\eqref{poly_q2}}), we have
\begin{eqnarray*}
 {\left\lVert R \right\rVert}_{\tilde k,\delta
+1}&\lesssim& {\left\lVert(D_x-3)(D_x-2)\tilde
q(D_x)D_xR \right\rVert}_{\check k,\delta+1}\\
&=& {\left\lVert(D_x-3)(D_x-2)\tilde q(D_x)D_x v^{(0)}
\right\rVert}_{\check k,\delta
+1}\le {\left\lVert v^{(0)} \right\rVert}_\mathrm
{init}<\infty,
\end{eqnarray*}
from which it follows that we have $R(x,\eta) = o\left(x^{\frac3 2+\delta}\right)$
as $x\searrow0$ almost everywhere in $\eta\in\mathbb{R}$. Because of
the representation \eqref{explicitw}, the
coefficients $\left(v^{(0)}_0\right)_y$ and $v^{(0)}_1$ are smooth
in $\eta$. Now we define
\begin{equation*}
v^{(0,n)} := v^{(0)}_0 + v^{(0)}_1 x + \chi_n R,
\end{equation*}
where $\chi_n(s) := \chi_1\left(\frac s n\right)$ and $\chi_1$ is a
smooth cut-off function such that
$\chi_1(s) = 1$ for $s \in [-1,\infty)$
and $\chi_1(s) = 0$ for $s \in (-\infty,-2]$. Then like in the
first proof-step, $v^{(0,n)} = v^{(0)}$ for $s \ge-n$, and by
distributing derivatives, by using \reftext{Lemma~\ref{lem:mainhardy}}, and by
dominated convergence due to the finiteness of $ {\left
\lVert v^{(0)} \right\rVert}_\mathrm
{init}$, we deduce $ {\left\lVert v^{(0)} - v^{(0,n)}
\right\rVert}_\mathrm{init} \to0$
as $n \to\infty$. On the other hand, for $s\le-2n$ we have $v^{(0,n)}
:= v^{(0)}_0 + v^{(0)}_1 x$, which concludes the proof.
\end{proof}
\begin{proof}[Proof of \reftext{Lemma~\ref{lem:approx_rhs}}]
Throughout the proof, estimates depend on $k$, $\tilde k$, $\check k$,
$\breve k$, and $\delta$.

\medskip
\noindent\textit{Truncation in $s=\log x$, $\eta$, and $t$.}
Almost everywhere in $t\in I$, the norms appearing in the time integrals from
${\skliaustask f\skliaustasd}_\mathrm{rhs}^2$ (cf.~\reftext{\eqref{norm_rhs}})
are finite, and
therefore we can proceed along the lines of the proof of \reftext{Lemma~\ref{lem:approx_init}}. We work in the variables $t$, $s:=\log x$, and $\eta
$, and again introduce a smooth cut-off function $\chi_n:I\times
[0,\infty)\times\mathbb{R}\to\mathbb{R}$ with values in the
interval $[0,1]$, where
$\chi_n(t,s,\eta)=\sigma_n(t)\chi\left(\frac s n,\tfrac\eta n\right)$, for
all $t\in I$ the function $\chi$ satisfies \reftext{\eqref{approx_init_cut-off_s_eta}}, and moreover
$\sigma_n \in C_\mathrm{c}^\infty\big(\mathring I\big)$ with
$\sigma_n(t) = 1$ for $t \in I_n$, where $I_n \nearrow I$ is a
sequence of compact
intervals exhausting $I$. By the same reasoning as in the first
step of the proof of \reftext{Lemma~\ref{lem:approx_init}}, after integration
over $I$ as well, we find ${\skliaustask f - \chi_nf\skliaustasd
}_\mathrm{rhs}\to0$ as
$n\to\infty$.

\medskip
\noindent\textit{Mollification in $s=\log x$, and $t$, truncation in $y$.}
By the previous step we may assume without loss of generality that
$f(t,x,\eta)$ is zero if $x\gg1$ or $ {\left\lvert\eta
\right\rvert}\gg1$ or $t\notin I'$
for some compact interval $I'\Subset I$. We then perform a truncation in $y$ and a
mollification in the variable $s=\log x$ as in the
proof of \reftext{Lemma~\ref{lem:approx_init}}, which allows to approximate $f$
by functions that are smooth in $x$ and $y$ at fixed~$t$. Then we
perform a further mollification in $t$ and obtain approximants that are
smooth in $t$, $x$, $\eta$, and~$y$.

\medskip
\noindent\textit{Decay conditions as $x\searrow0$.}
For $\gamma\in\{1,2\}$ we apply the reasoning of the corresponding step
of \reftext{Lemma~\ref{lem:approx_init}} to the solution of
$(D_x-\gamma)w^{(1)}=w^{(2)}$ via formulas \reftext{\eqref{gensolw1}} and \reftext{\eqref{explicitw}}. This time we find that $f$ has the form
\begin{equation*}
f(t,x,\eta)=f_1(t,\eta)x + f_2(t,\eta)x^2 + R(t,x,\eta)\quad
\text{as}\quad x\searrow0.
\end{equation*}
Now applying \reftext{Lemma~\ref{lem:mainhardy}} and using the fact that $\tilde
q(D_x-1)(D_x-1)$ cancels the terms $f_1x$ and $f_2x^2$ above, we find
that at fixed $t$ it holds
\begin{eqnarray*}
 {\left\lVert R \right\rVert}_{\check k-2,\delta+\frac
32}&\lesssim& {\left\lVert(D_x-4)(D_x-3)\tilde
q(D_x-1)(D_x-1)R \right\rVert}_{\check k -2,\delta+\frac32}\\
&=& {\left\lVert(D_x-4)(D_x-3)\tilde q(D_x-1)(D_x-1)f
\right\rVert}_{\check k -2,\delta
+\frac32}<\infty.
\end{eqnarray*}
Therefore, $R(t,x,\eta)=o\left(x^{2+\delta}\right)$ as $x\searrow0$. As in the proof of
\reftext{Lemma~\ref{lem:approx_init}}, we infer that
the coefficients $f_1$ and $f_2$ are
smooth in $\eta$. Now we define
\begin{equation*}
f^{(n)} := f_1x + f_2x^2 + \chi_n R,
\end{equation*}
where $\chi_n(s) := \chi_1\left(\frac s n\right)$ and $\chi_1$ is a
smooth cut-off function such that $\chi_1(s) = 1$ for $s \in [-1,\infty)$
and $\chi_1(s) = 0$ for $s \in (-\infty,-2]$. Then like in the
last step of \reftext{Lemma~\ref{lem:approx_init}} we may prove that
${\skliaustask f-f^{(n)}\skliaustasd}_\mathrm{rhs}\to0$ as $n\to
\infty$, and at the same time we
have $f^{(n)} := f_1 x+f_2x^2$ for $x \ll_n 1$, which finalizes the proof.
\end{proof}
\begin{proof}[Proof of \reftext{Lemma~\ref{lem:approx_sol}}]
Throughout the proof, estimates depend on $k$, $\tilde k$, $\check k$,
$\breve k$, and $\delta$.

By using \reftext{Lemma~\ref{lem:equivalence_sol}}, we may approximate in the
norm ${\skliaustask\cdot\skliaustasd}_{\mathrm{Sol}}$ (cf.~\reftext{\eqref{norm_sol_simple}})
which is equivalent to the norm ${\skliaustask\cdot\skliaustasd
}_\mathrm{sol}$
(cf.~\reftext{\eqref{norm_sol}}) for the choice of the time interval to be
$I=[0,\infty)$. The proof is mainly the same as the one of
\reftext{Lemma~\ref{lem:approx_rhs}}, with differences appearing only while
treating the remainder term $R$ and the cut-off in time $t$. In order
to control the effect of the terms in the norm ${\skliaustask\cdot
\skliaustasd}_\mathrm
{Sol}$ involving $\partial_t v$, we use the cut-off function $\chi
(t,s,\eta) = \sigma\left(\tfrac{t}{n}\right)\chi\left(\tfrac
{s}{n},\tfrac{\eta}{n}\right)$, where $\chi$ is a smooth function which
satisfies \reftext{\eqref{approx_init_cut-off_s_eta}} as in the previous proof
and $\sigma:(0,\infty)\to[0,1]$ satisfies
%
%
\begin{equation}\label{approx_cut-off_t}
\sigma(t)=1 \quad\mbox{for}\quad t\in(0,1], \qquad\sigma(t) = 0
\quad
\mbox{for} \quad t \in[2,\infty).
\end{equation}
A $\partial_t$-derivative acting on $\chi_n$ produces an extra factor
bounded in the $BC^0$-norm (in fact the bound of the extra factor also
decays like $O\left(\tfrac1n\right)$, but we do not need to use this)
and with support in the interval $[n,2n]$, which allows to control the
extra terms as $n\to\infty$ by dominated convergence. Concerning the
expansion near $x = 0$, in the last step of the proof we find the expression
\begin{equation*}
v(t,x,\eta)=v_0(t,\eta) + v_1(t,\eta)x +v_{1+\beta}(t,\eta
)x^{1+\beta}+
v_2(t,\eta)x^2 + R(t,x,\eta)\quad\text{ as }\quad x\searrow0.
\end{equation*}
For estimating $R$ we use \reftext{Lemma~\ref{lem:mainhardy}}, and the fact that
the operator $(D_x-2)\tilde q(D_x)D_x$ cancels all the terms different
than $R$ on the right. Then we may estimate for $t\in
I$ as follows
\begin{eqnarray*}
 {\left\lVert R \right\rVert}_{\check k+2,\delta+\frac
32}&\lesssim& {\left\lVert(D_x-3) (D_x-2) \tilde q(D_x) D_x
R \right\rVert}_{\check k + 2, \delta+ \frac3 2}\\
&=& {\left\lVert(D_x-3) (D_x-2) \tilde q(D_x) D_x v \right
\rVert}_{\check k + 2, \delta+
\frac3 2}<\infty,
\end{eqnarray*}
which implies that $R(t,x,\eta)=o\left(x^{2+\delta}\right)$ as $x\searrow0$.
Now we may use the same arguments as in the proof of \reftext{Lemma~\ref{lem:approx_init}} in order
to obtain smoothness in $\eta$ of the above coefficients $v_0$, $v_1$,
$v_{1+\beta}$, and $v_2$. The rest of the
proof, including the construction of $v^{(n)}$ by truncating the
remainder $R$ in the $t$-variable, is like in the proof of \reftext{Lemma~\ref{lem:approx_rhs}}.
\end{proof}
\begin{proof}[Proof of \reftext{Corollary~\ref{coroll:contnorm}}]
On one hand the approximants constructed in \reftext{Lemma~\ref{lem:approx_sol}}
have bounded support in time, i.e., $v_n(t,\cdot,\cdot)=0$ for $t>2n$,
and on the other hand convergence in the ${\skliaustask\cdot
\skliaustasd}_\mathrm
{sol}$-norm implies convergence of $(v_0)_y$ in the supremum norm due to \reftext{\eqref{bc0_grad_sol}} of \reftext{Lemma~\ref{lem:bc0_bounds}}. Moreover, since
$ {\left\lVert v(t,\cdot,\cdot) \right\rVert}_\mathrm
{init}\le{\skliaustask v\skliaustasd}_\mathrm{sol}$, the
above-mentioned support properties of $v_n$ imply $ {\left
\lVert v(t,\cdot,\cdot) \right\rVert}_\mathrm{init} \to0$ as $t\to\infty$.
These considerations allow to prove the
first item of the corollary.

The second item follows due to the fact that the required property
holds for the approximants constructed in \reftext{Lemma~\ref{lem:approx_sol}},
again by using the bound \reftext{\eqref{bc0_grad_sol}} of \reftext{Lemma~\ref{lem:bc0_bounds}}.

Finally, note that the supremum part of the norm \reftext{\eqref{norm_sol}} can
be rewritten as $\sup_{t\in I}\!{\left\lVert v(t,\cdot
,\cdot) \right\rVert}_\mathrm{init}$.
From the smoothness properties of the approximants $v^{(n)}$ from
\reftext{Lemma~\ref{lem:approx_sol}}, we find that $t\mapsto {\left
\lVert v^{(n)}(t,\cdot,\cdot) \right\rVert}_\mathrm{init}$ are
continuous, and from the
approximation property ${\skliaustask v^{(n)}-v\skliaustasd}_\mathrm
{sol}\to0$ we find
that these functions are also uniformly convergent and bounded, so that
the function $t\mapsto {\left\lVert v(t,\cdot,\cdot)
\right\rVert}_\mathrm{init}$ is also
continuous. Now note that the integral terms in the definition of
${\skliaustask v\skliaustasd}_{\mathrm{sol},\tau}$ vanish as $\tau
\searrow0$ by
dominated convergence. Thus
\begin{equation*}
\lim_{\tau\searrow0}{\skliaustask v\skliaustasd}_{\mathrm
{sol},\tau}=\lim_{\tau\searrow
0}\sup_{t\in I_\tau} {\left\lVert v(t,\cdot,\cdot)
\right\rVert}_\mathrm{init}= {\left\lVert v^{(0)} \right
\rVert}_\mathrm{init},
\end{equation*}
which concludes the proof of the last item as well as of the corollary.
\end{proof}
%
%
%

\bibliography{navier_higher_revision}
\bibliographystyle{plain}
\end{document}